\newcommand{\sar}[2]{\ar@{}[#1]|-*[@]{#2}}
\newcommand{\Unloz}{U(p^n)^\lozenge}
\newcommand{\Uncirc}{U(p^n)^\circ}
\newcommand{\Unone}{U_1(p^n)}
\newcommand{\Un}{U(p^n)}
\newcommand{\Pmu}{\mathscr{P}(\mu)} %{\mathscr{P}_\mu(u_0 \cdot \xi_n)}
\newcommand{\QQ}{\mathbb{Q}}
\renewcommand{\tilde}{\widetilde}
\newcommand{\Iw}{\opn{Iw}}
\numberwithin{equation}{subsection}
\newtheorem{theorem}[equation]{Theorem}
\newtheorem{proposition}[equation]{Proposition}
\newtheorem{lemma}[equation]{Lemma}
\newtheorem{corollary}[equation]{Corollary}
\newtheorem*{corollary*}{Corollary}
\newtheorem{conjecture}[equation]{Conjecture}
\newtheorem{assumption}[equation]{Assumption}
\newtheorem*{assumption*}{Assumption}
\newtheorem{claim}[equation]{Claim}
\newcounter{alphalabels}
\newtheorem{theoremx}[alphalabels]{Theorem}
\theoremstyle{definition}
\newtheorem{definition}[equation]{Definition}
\newtheorem{notation}[equation]{Notation}
\newtheorem*{notation*}{Notation}
\newtheorem*{convention*}{Convention}
\newtheorem{test-data}[equation]{Test data}
\theoremstyle{remark}
\newtheorem{remark}[equation]{Remark}
\DeclareFontFamily{U}{wncy}{}
\DeclareFontShape{U}{wncy}{m}{n}{<->wncyr10}{}
\DeclareSymbolFont{mcy}{U}{wncy}{m}{n}
\DeclareMathSymbol{\sha}{\mathord}{mcy}{"58}
\newcommand{\ide}[1]{\mathfrak{#1}}
\newcommand{\mbb}[1]{\mathbb{#1}}
\newcommand{\opn}[1]{\operatorname{#1}}
\newcommand{\mbf}[1]{\mathbf{#1}}
\newcommand{\tbyt}[4]{\left( \begin{array}{cc} #1 & #2 \\ #3 & #4 \end{array} \right)}
\newcommand{\smat}[1]{\left(\begin{smallmatrix} #1 \end{smallmatrix}\right)}
\newcommand{\mat}[1]{\left(\begin{matrix} #1 \end{matrix}\right)}
\renewcommand{\cL}{\mathcal{L}}
\newcommand{\Addresses}{{% additional braces for segregating \footnotesize
  \bigskip
  \footnotesize

  (Barrera Salazar) \textsc{Universidad de Santiago de Chile, Avenida Libertador Bernardo O'Higgins no. 3363, Estaci\'{o}n Central, Santiago, Chile}\par\nopagebreak
  \textit{E-mail address}: \texttt{daniel.barrera.s@usach.cl}

  \medskip

  (Graham) \textsc{Mathematical Institute, University of Oxford, Woodstock Road, Oxford OX2 6GG, United Kingdom}\par\nopagebreak
  \textit{E-mail address}: \texttt{andrew.graham@maths.ox.ac.uk}

  \medskip

  (Williams) \textsc{University of Nottingham, University Park, Nottingham, NG7 2RD, United Kingdom}\par\nopagebreak
  \textit{E-mail address}: \texttt{chris.williams1@nottingham.ac.uk}

}}
\newcommand{\Qpb}{\overline{\QQ}_p}
\title[ ]{Local-global compatibility and the exceptional zero conjecture for $\opn{GL}(3)$}
\author{Daniel Barrera Salazar, Andrew Graham, and Chris Williams}
\date{}
\begin{document}

\begin{abstract}
We prove exceptional zero conjectures for $p$-ordinary regular algebraic cuspidal automorphic representations of $\opn{GL}_3(\mbb{A})$ which are Steinberg at $p$. We make no self-duality assumptions. 

The paper has two parts. In Part 1, we use $p$-arithmetic cohomology to unconditionally prove an automorphic exceptional zero conjecture in this setting, using Gehrmann's automorphic $\mathcal{L}$-invariant. 

In Part 2 we prove, under mild assumptions that are expected to always hold, the equality of automorphic and Fontaine--Mazur $\mathcal{L}$-invariants, and thus deduce cases of the full Greenberg--Benois exceptional zero conjecture. As one of the key ingredients for this, we establish local-global compatibility at $\ell = p$ for Galois representations attached to $p$-ordinary torsion classes for $\opn{GL}_n$, confirming a conjecture of Hansen in this setting. We prove this for all $n$ following the strategy in the ``10-author paper'', and use the $n=3$ case to deduce the desired equality of $\mathcal{L}$-invariants.

\end{abstract}

\maketitle

\footnotesize
\tableofcontents
\normalsize

\section{Introduction}

There are many conjectures connecting arithmetic information to analytic properties of $L$-functions, such as the Birch--Swinnerton-Dyer (BSD) and Bloch--Kato conjectures, which concern the relationship between the size of Mordell--Weil/Selmer groups and complex $L$-values. A profitable approach to studying this relationship has been to use $p$-adic methods to prove $p$-parts of these conjectures. For example, if $E/\Q$ is an elliptic curve with (ordinary) semistable reduction at $p$, then there is a $p$-adic $L$-function $L_p(E,s)$ attached to $E$, related to the complex $L$-function via
\begin{equation}\label{eq:interpolation intro}
L_p(E,1) = e_p(E,\mathbbm{1}) \cdot L^{(p)}(E,1)/(\text{period}),
\end{equation}
where $e_p(-)$ is Coates--Perrin-Riou's modified Euler factor at $p$, and $L^{(p)}$ denotes the complex $L$-function with the Euler factor at $p$ removed. Via the ($p$-adic) Iwasawa Main Conjecture \cite{SU14,SkinnerSplitMult}, the analytic properties of $L_p(E,s)$ can be related to arithmetic invariants for $E$. As a consequence, if $e_p(E,\mathbbm{1}) \neq 0$, then \eqref{eq:interpolation intro} gives a non-trivial relation between the $p$-adic and complex values at $s=1$ which can be exploited to deduce the $p$-part of the BSD conjecture in analytic rank zero. If $E$ has split multiplicative reduction at $p$, however, then $e_p(E,\mathbbm{1}) = 0$; the $p$-adic $L$-function $L_p(E,s)$ has an \emph{exceptional zero} at $s=1$. In this case, \eqref{eq:interpolation intro} is vacuous and the relationship between $L_p(E,s)$ and $L(E,1)$ is not clear; more work is therefore needed to obtain similar results towards the BSD conjecture.

In the 1980s, Mazur, Tate, and Teitelbaum \cite{MTT86} conjectured an `exceptional zero formula'
 \begin{equation}\label{eq:EZF}
 \tfrac{d}{ds}L_p(E,s)\big|_{s=1} = \mathcal{L}_E \cdot L(E,1)/(\text{period})
 \end{equation}
whenever $E$ has split multiplicative reduction at $p$. In particular, differentiating removes the exceptional zero, establishing the desired link between $L_p(E,s)$ and $L(E,1)$, whilst introducing the \emph{$\mathcal{L}$-invariant} $\mathcal{L}_E$ of $E$ at $p$. This $\mathcal{L}$-invariant is known to have a variety of arithmetic descriptions. For example, $\mathcal{L}_E$ equals $\log_p(q)/\opn{ord}_p(q)$, where $q$ is the Tate period of $E/\Qp$; it measures the position of the Hodge filtration in the associated $(\varphi,N)$-module at $p$; it has connections to $p$-adic local Langlands for $E$; and it describes infinitesimal data in a Hida family through $E$. In particular, \eqref{eq:EZF} predicts a new, purely $p$-adic, connection between analysis and arithmetic. It also suggests that the first derivative of $L_p(E,s)$ at $s=1$ and the complex $L$-value $L(E,1)$ should carry the same arithmetic data.
 
Mazur--Tate--Teitelbaum's conjecture was proved by Greenberg and Stevens in a celebrated paper \cite{GS93}, and is one of the crucial ingredients in proving the aforementioned results towards the $p$-part of the BSD conjecture for elliptic curves with split multiplicative reduction at $p$ (see \cite{SkinnerSplitMult}).

In \cite{GreenbergTrivialZeros, Benois11}, Greenberg and Benois predicted an analogue of \eqref{eq:EZF} in huge generality, using a Galois $\mathcal{L}$-invariant. There have been decades of work proving versions of these conjectures for $\GL_2$, but proofs of higher rank cases are rare (e.g., \cite{RossoSym2,LR20}). Moreover, all known cases assume a self-duality condition.

In this article, we consider the analogue of this conjecture for automorphic representations of $\opn{GL}_3(\mbb{A})$ which are Steinberg at $p$ (see Assumption \ref{assumptions} below). In particular, across Theorems \ref{FirstMainThmInIntro}, \ref{SecondMainThmIntroAutLinv} and \ref{ThirdMainThmIntroEqualLinv}, we prove exceptional zero formulae for the $p$-adic $L$-functions attached to such automorphic representations, as constructed in the work of Loeffler and the third author \cite{LW21}. This provides the first examples of the Greenberg--Benois exceptional zero conjecture for regular algebraic cuspidal automorphic representations of $\opn{GL}_n(\mbb{A})$ without any self-duality assumption. 
One of the key ingredients is local-global compatibility at $\ell = p$ for the Galois representations associated with Hida families (and $p$-ordinary torsion classes) for $\opn{GL}_3$; we establish such a result for general $\GL_n$, following the strategy in \cite[\S 5]{10author} (see Theorem \ref{FourthMainThmintroLGcompat}). This result is of independent interest, and analogous results have had applications towards proving automorphy lifting theorems.

\subsection{Set-up and statement of the main results}

We now describe the main results of this article in more detail. Fix an isomorphism $\mbb{C} \cong \Qpb$ throughout. Let $n \geq 2$ and let $\pi$ be a regular algebraic cuspidal automorphic representation (RACAR) of $\opn{GL}_n(\mbb{A})$. Without loss of generality, we assume that $\pi$ is unitary, and we let $\omega_{\pi}$ denote its central character (which is a Dirichlet character). We assume that $\omega_{\pi}(-1) = (-1)^{(n+1)/2}$ if $n$ is odd (otherwise the complex $L$-function $L(\pi, s)$ has no critical values). 

Let $p$ be a prime. By the work of many people (culminating in \cite{ScholzeTorsion, HLTTrigid}), there exists a continuous semisimple Galois representation 
\[
\rho_{\pi} \colon G_{\mbb{Q}} \defeq \opn{Gal}(\overline{\mbb{Q}}/\mbb{Q}) \to \opn{GL}_n(L)
\]
associated with $\pi$, uniquely determined by the property that the trace of $\rho_{\pi}(\opn{Frob}_{\ell})$ matches with the eigenvalue of a certain Hecke operator acting on $\pi_{\ell}^{\opn{GL}_n(\mbb{Z}_\ell)}$ for all but finitely many $\ell \neq p$ (here, $\opn{Frob}_{\ell}$ denotes an arithmetic Frobenius at the prime $\ell$ and $L/\mbb{Q}_p$ is a sufficiently large finite extension). We impose the following ramification assumptions on $\pi$ and $\rho_{\pi}$:

\begin{assumption}\label{assumptions}
    Suppose that:
    \begin{itemize}
    \item[(1)] $\pi$ is $p$-ordinary and $\pi_p$ is isomorphic to the Steinberg representation of $\GL_n(\Qp)$; 
    \item[(2)] one has
    \begin{equation}\label{eq:steinberg at p}
    \rho_{\pi}|_{G_{\mbb{Q}_p}} \sim \mat{ 1 & *_1 & \dots & * & * \\ & \omega^{-1} & & * & * \\ & & \ddots & *_{n-2} & \vdots \\ & & & \omega^{2-n} & *_{n-1} \\ & & & & \omega^{1-n}}
    \end{equation}
    where $\omega$ is the $p$-adic cyclotomic character and $*_i$ are non-crystalline extensions (for all $i=1, \dots, n-1$).
    \end{itemize}
\end{assumption}

\begin{remark}
Under assumption (1), there conjecturally exist $p$-adic $L$-functions with exceptional zeros, without which the exceptional zero conjecture has no content. Note that (1) implies $\pi$ has trivial cohomological weight.

If (2) is satisfied and additionally $\overline{\rho}_{\pi}$ is absolutely irreducible and decomposed generic (see Remark \ref{Rem:DGandCMassumptionIntro} below), then by the work of Hevesi \cite{HevesiOrdinaryParts}, (1) is automatic. We note that the converse (1) $\Rightarrow$ (2) should also be true, however this is currently only known up to semisimplification (under the same irreducibility and decomposed generic assumptions; see Theorem \ref{Thm:Char0ExistenceOfGalRACAR}).
\end{remark}

We can quantify the extensions $*_i$ by their associated \emph{Fontaine--Mazur $\mathcal{L}$-invariant}. More precisely, by local class field theory we have an isomorphism $\opn{Hom}_{\opn{cts}}(\mbb{Q}_p^{\times}, L) \cong \opn{H}^1(\mbb{Q}_p, L)$, and we let $\mathcal{L}^{\opn{FM}}_{\pi, j} \in L$ denote the unique element such that 
\[
\langle \opn{log}_p - \mathcal{L}^{\opn{FM}}_{\pi, j} \opn{ord}_p, \; *_{(n-3)/2+j} \rangle = 0
\]
where: $j \in \tfrac{1}{2}\mbb{Z}$ is such that $(n-3)/2+j \in \mbb{Z} \cap [1, n-1]$, $\langle -, - \rangle \colon \opn{H}^1(\mbb{Q}_p, L) \times \opn{H}^1(\mbb{Q}_p, L(1)) \to L$ denotes the local Tate duality pairing and $\opn{log}_p$ (resp. $\opn{ord}_p$) denotes the $p$-adic logarithm (resp. $p$-adic valuation) normalised so that $\opn{log}_p(p) = 0$ (resp. $\opn{ord}_p(p) = 1$). Note that $\mathcal{L}^{\opn{FM}}_{\pi, j}$ exists by the assumption that $*_{(n-3)/2+j}$ is non-crystalline. 

Let $\chi$ be an even Dirichlet character, and let $L(\pi \times \chi, s) = L(\rho_{\pi} \otimes \chi, (n-1)/2 + s)$ denote the standard $L$-function associated with $\rho_{\pi} \otimes \chi$. If $\chi$ is trivial we will write $L(\pi, s)$, and our normalisation is such that $s=1/2$ is always the centre of the functional equation. The critical values of $L(\pi\times\chi, s)$ are at $s=1/2$ if $n$ is even, and at $s=0$ and $s=1$ if $n$ is odd (using our assumption on the central character $\omega_{\pi}$). Let $c = 1/2$ if $n$ is even and $c \in \{0, 1\}$ if $n$ is odd. Then the conjecture of Coates--Perrin-Riou \cite{coatesperrinriou89, coates89} predicts that there exists a $p$-adic measure $\mathscr{L}^{(c)}_p(\pi, -) \in \mathcal{O}_L[\![ \Zp^\times ] \!]$ such that for all finite-order even $\chi \colon \Zp^\times \to \Qpb^{\times}$, we have
\[
\mathscr{L}^{(c)}_p(\pi, \chi^{-1}) = e_{\infty}^{(c)}(\pi_\infty, c) \cdot e_p^{(c)}(\pi_p, \chi, c) \cdot \frac{L^{(p)}(\pi\times \chi, c)}{\Omega_{\pi, c}}
\]
where $e_{\infty}^{(c)}$ and $e_p^{(c)}$ denote the modified factors at $\infty$ and $p$ respectively, and $\Omega_{\pi, c} \in \mbb{C}^{\times}$ is a certain period such that $L(\pi \times \chi, c)/\Omega_{\pi, c}$ is algebraic for all $\chi$. Under the above assumptions, one can explicitly compute $e_p^{(c)}$; indeed, there exist constants $A \neq 0$ and $B$, depending only on $n$ and $c$, such that\footnote{For example when $n=3$, we have $E_p^{(0)}(s) = 1$ and $E_p^{(1)}(s) = -p^s$.}
\[
e_p^{(c)}(\pi_p, \mathbbm{1}, s) = E_p^{(c)}(s) \cdot (1-p^{s-c}) \cdot L(\pi_p,s), \qquad \text{where $E_p^{(c)}(s) = A p^{Bs}$}.
\]
 In particular $e_p^{(c)}(\pi_p,\mathbbm{1},s)$ always has a simple zero at $s = c$,
and hence $\mathscr{L}_p^{(c)}(\pi, \mathbbm{1}) = 0$; the $p$-adic $L$-function has an ``exceptional zero'' at the trivial character coming from $e_p(-)$. Assuming the existence of such a $p$-adic $L$-function, the Greenberg--Benois conjecture \cite[p.166]{GreenbergTrivialZeros} says:

\begin{conjecture}[The exceptional zero conjecture] \label{conj:EZC}
    With notation and assumptions as above, set $L^{(c)}_p(\pi, s) \defeq \mathscr{L}_p^{(c)}(\pi, \langle-\rangle^s)$ for $s \in \mbb{Z}_p$ and $\langle - \rangle \colon \Zp^\times \to 1+2p\Zp$ the natural map. Then
    \[
    \left. \frac{d}{ds} L_p^{(c)}(\pi, s) \right|_{s=0} = e_{\infty}^{(c)}(\pi_{\infty}, c) \cdot E_p^{(c)}(c) \cdot  \mathcal{L}^{\opn{FM}}_{\pi, c+1} \cdot \frac{L(\pi, c)}{\Omega_{\pi, c}} .
    \]
\end{conjecture}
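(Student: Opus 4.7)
The proof mirrors the two-part structure announced in the abstract. I would first establish an \emph{automorphic} exceptional zero formula
\[
\left.\tfrac{d}{ds} L_p^{(c)}(\pi,s)\right|_{s=0} = e_\infty^{(c)}(\pi_\infty,c)\cdot E_p^{(c)}(c)\cdot \mathcal{L}^{\opn{aut}}_{\pi,c+1}\cdot \frac{L(\pi,c)}{\Omega_{\pi,c}},
\]
in which Gehrmann's automorphic $\mathcal{L}$-invariant $\mathcal{L}^{\opn{aut}}_{\pi,c+1}$ (defined via $p$-arithmetic cohomology of $\opn{GL}_3$) replaces $\mathcal{L}^{\opn{FM}}_{\pi,c+1}$, and then prove the equality of the two $\mathcal{L}$-invariants. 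Combining these yields Conjecture~\ref{conj:EZC} for $n=3$.

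For the automorphic formula I would adopt a Greenberg--Stevens style argument. The Loeffler--Williams $p$-adic $L$-function arises as the image of an overconvergent cohomology class under an evaluation map, and so deforms across an ordinary Hida family $\underline{\pi}$ through $\pi$. The exceptional zero at the trivial character is a property of the whole family: along a suitable codimension-one ``improved'' locus inside the product of weight space and cyclotomic space, the resulting multi-variable $p$-adic $L$-function vanishes identically. Differentiating this vanishing transversally expresses $\tfrac{d}{ds}L_p^{(c)}(\pi,s)|_{s=0}$ as the classical $L$-value (extracted from the residue of the vanishing Euler factor at $s=c$) multiplied by a logarithmic derivative of the $U_p$-eigenvalues along the family. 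Following the $p$-arithmetic cohomology construction of Spiess and Gehrmann, this logarithmic derivative is \emph{by definition} $\mathcal{L}^{\opn{aut}}_{\pi,c+1}$, yielding the formula.

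For the equality $\mathcal{L}^{\opn{aut}}_{\pi,c+1}=\mathcal{L}^{\opn{FM}}_{\pi,c+1}$, I would pass to the Galois side via the big ordinary Galois representation $\mathcal{T}$ over the Hida Hecke algebra. The Fontaine--Mazur invariant at $\pi$ is read off the ordinary filtration of $\rho_{\pi}|_{G_{\mbb{Q}_p}}$; provided $\mathcal{T}|_{G_{\mbb{Q}_p}}$ admits an ordinary filtration with graded characters $\chi_1,\dots,\chi_n$ interpolating the $U_p$-eigenvalues of the family, the derivatives of the $\chi_i$ along the weight direction simultaneously compute $\mathcal{L}^{\opn{FM}}$ at classical points (via Greenberg's recipe) and the logarithmic $U_p$-derivatives identified with $\mathcal{L}^{\opn{aut}}$ in the previous step. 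Specialising to $\pi$ then gives the desired equality.

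The main obstacle is exactly the existence of such a compatible ordinary filtration on $\mathcal{T}|_{G_{\mbb{Q}_p}}$, i.e.\ local-global compatibility at $\ell=p$ for Galois representations attached to $p$-ordinary torsion classes for $\opn{GL}_n$ (Hansen's conjecture in this setting). The Harris--Lan--Taylor--Thorne construction controls these representations only up to semisimplification and only at primes $\ell\neq p$, so the ordinary filtration at $p$ is not directly visible; moreover, the non-essentially-self-dual nature of $\pi$ rules out a direct appeal to polarised automorphy lifting. Following \cite[\S 5]{10author}, I would instead work with the big ordinary cohomology modulo $p^N$, control the nilpotent ideals by which the Hecke action fails to pin down the Galois representation, and use congruences with polarisable automorphic lifts---for which local-global compatibility at $p$ is classical---to transfer the ordinary filtration to $\mathcal{T}$. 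Everything else in the argument is comparatively formal Hida-theoretic and deformation-theoretic unwinding.
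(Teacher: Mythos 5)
Your two-part decomposition (first an automorphic exceptional zero formula with Gehrmann's $\mathcal{L}^{\opn{Aut}}$, then the equality $\mathcal{L}^{\opn{Aut}} = \mathcal{L}^{\opn{FM}}$ via local-global compatibility at $p$) is the paper's architecture, and your sketch of Part 2 --- realizing the $\GL_n$ eigensystem in (boundary) cohomology of a polarizable group, controlling nilpotent ideals, transferring the ordinary filtration --- is essentially how Theorem~\ref{FourthMainThmintroLGcompat} is proved, following \cite[\S 5]{10author}.

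Your Part 1, however, runs into exactly the obstruction the paper is designed to circumvent. The Greenberg--Stevens ``improved $p$-adic $L$-function'' argument requires a two-variable $p$-adic $L$-function over weight $\times$ cyclotomic space, together with a factorization through an improved $L$-function along a codimension-one locus. That factorization is established by checking interpolation formulas at a Zariski-dense set of \emph{classical} specialisations --- and this is precisely what is unavailable here: as the introduction explains, non-essentially-self-dual $\pi$ on $\GL_3$ is not expected to vary in a classical family (Ash--Pollack--Stevens, Calegari--Mazur), so the Hida family $\underline{\pi}$ you invoke has only $p$-adic, not classical, specialisations and gives no interpolation to prove the improved factorization. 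The paper's Part 1 (\S\ref{sec:extensions of steinberg}--\S\ref{sec:automorphic EZF}) avoids families entirely: it constructs a ``big evaluation map'' $\opn{Ev}^{\opn{big}}$ on $p$-arithmetic cohomology with $p$-smooth coefficients, computes the cocycle $c_{1,\lambda}[t]$ explicitly (Proposition~\ref{prop:Clambda1LuxembourgProp}), and extracts the EZF from the linear algebra of the one-dimensional subspace $\bL^{\opn{Aut}}_1(\pi) \subset \opn{Hom}_{\opn{cts}}(\Q_p^\times, L)$ (Theorem~\ref{AbstractLinvTheorem}). This is the genuinely new ingredient and it is what your proposal is missing.

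A related conflation: you assert that the logarithmic derivative of $U_p$-eigenvalues along the family ``is by definition'' $\mathcal{L}^{\opn{Aut}}_{\pi,c+1}$. It is not. Gehrmann's automorphic $\mathcal{L}$-invariant is defined (\S\ref{sec:def of L-invariants}) in terms of which extensions $\mathcal{C}_{i,\lambda}$ of Steinberg representations embed into $p$-arithmetic or completed cohomology, with no reference to families. The identification with a logarithmic $U_p$-derivative is the Gehrmann--Rosso ``automorphic BCGS formula'' (Proposition~\ref{Prop:AutBCGSformula}), a theorem that the paper proves and uses only in Part 2, where the family $\Sigma$ and Assumption~\ref{CalegariMazurAssumption} enter. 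Placing that input in Part 1 both misattributes a definition and re-introduces a family dependence precisely where the paper labours to avoid it.

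So: Part 2 is aligned with the paper; Part 1 as written follows the classical Greenberg--Stevens/Rosso route, which the paper generalises \emph{only} in the essentially self-dual case (cf.\ Remark~\ref{rem:intro self dual} and \S\ref{sec:symm square}). For the non-self-dual case --- the paper's central novelty --- you would need to replace your Part 1 by something along the lines of the big evaluation map machinery.
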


In particular, the term $(1-p^{s-c})$ causing the exceptional zero has been replaced by the $\mathcal{L}$-invariant. As mentioned above, beyond the intrinsic beauty of tying together complex and $p$-adic analytic $L$-values with a $p$-adic arithmetic $\mathcal{L}$-invariant, the conjecture plays an important role in establishing $p$-parts of the Bloch--Kato conjecture for $\rho_{\pi}$ from the corresponding (conjectural) Iwasawa Main Conjecture. 

Let us briefly mention some previous work on this conjecture:
\begin{itemize}
    \item The case of $n=2$ was, as described above, settled in \cite{MTT86,GS93}.
    \item If $n \geq 3$ is odd and $\pi$ is essentially self-dual, then the existence of the $p$-adic $L$-function and the exceptional zero fomulae (for both $c=0,1$) are known by work of Rosso \cite{RossoSym2, RossoSym2II} and Liu--Rosso \cite{LR20}, under assumptions on the prime-to-$p$ ramification of $\pi$. 
    
    When $n=3$, essential self-duality implies $\pi \cong \opn{Sym}^2 f \otimes \xi$ is a twisted symmetric square lift, where $f$ is a weight $2$ cuspidal newform $f$ of level $\Gamma_1(N) \cap \Gamma_0(p)$ with split multiplicative reduction at $p$ (see \cite{RamakrishnanSelfDual}), and $\xi$ is an even Dirichlet character of prime-to-$p$ conductor with $\xi(p) = 1$. Then \cite{RossoSym2, RossoSym2II} proves the conjecture under the assumptions that $N$ is square-free and the nebentypus $\varepsilon_f$ of $f$ is trivial.
    \item If $n \geq 4$ and $\pi$ is of symplectic-type (which implies that $\pi$ is essentially self-dual), then the existence of the $p$-adic $L$-function is known \cite{LiuSunRelativeCC}, however to the best of the authors' knowledge, the exceptional zero fomula is not known in any cases.
\end{itemize}

In particular, nothing is known about the exceptional zero conjecture for $\opn{GL}_{n}/\mbb{Q}$ outside the essentially self-dual setting. One reason for this is that all of the known exceptional zero results described above exploit the fact that one can find $p$-adic families through $\pi$ with a Zariski dense set of classical specialisations -- something which is not expected for non-essentially-self-dual representations (see \cite{AshPollackStevens, CM09} and Remark \ref{Rem:DGandCMassumptionIntro} below). Furthermore, all of the aforementioned results crucially use algebro-geometric/complex analytic methods, exploiting the fact that $\pi$ is a functorial transfer from a group whose locally symmetric spaces are Shimura varieties. Such methods cannot be applied for general $\pi$.

Beyond this, even constructing the Coates--Perrin-Riou $p$-adic $L$-function is difficult. Recently, in work of Loeffler--Williams \cite{LW21}, the $p$-adic $L$-function for $\pi$ was constructed for automorphic representations of $\opn{GL}_3(\mbb{A})$ with no self-duality assumption. It is natural to ask whether one can prove an exceptional zero formula for this $p$-adic $L$-function. This is one of the main results of this article, which is the first case of an exceptional zero formula for non-essentially-self-dual regular algebraic cuspidal automorphic representations of $\opn{GL}_n(\mbb{A})$:

\begin{theoremx} \label{FirstMainThmInIntro}
    Let $n=3$ and let $p \geq 7$. Suppose that the residual representation $\overline{\rho}_{\pi} \colon G_{\mbb{Q}} \to \opn{GL}_3(\overline{\mbb{F}}_p)$ is irreducible and decomposed generic. Then the exceptional zero conjecture holds for either $L_p^{(0)}$ or $L_p^{(1)}$. Furthermore, if $\pi$ has ``non-parabolic infinitesimal deformations'' (see Remark \ref{Rem:DGandCMassumptionIntro} below), then the exceptional zero conjecture holds for both $L_p^{(0)}$ and $L_p^{(1)}$.
\end{theoremx}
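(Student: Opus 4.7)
I would realise $\mathscr{L}_p^{(c)}(\pi,-)$ of \cite{LW21} as the cyclotomic specialisation of a distribution-valued $p$-arithmetic cohomology class on the locally symmetric space for $\opn{GL}_3/\mbb{Q}$ at tame level $K^p$ and Steinberg-at-$p$ level. The Steinberg assumption \eqref{eq:steinberg at p} places us in a setting where Gehrmann's formalism produces automorphic $\mathcal{L}$-invariants $\mathcal{L}^{\opn{aut}}_{\pi,i}\in L$ together with a Greenberg--Stevens-style derivative formula: moving the class in a second $p$-adic direction (transverse to the cyclotomic one) and unwinding the interpolation of \cite{LW21} should yield
\[
\tfrac{d}{ds}L_p^{(c)}(\pi,s)\big|_{s=0} \;=\; e_{\infty}^{(c)}(\pi_\infty,c)\cdot E_p^{(c)}(c)\cdot \mathcal{L}^{\opn{aut}}_{\pi,c+1}\cdot \frac{L(\pi,c)}{\Omega_{\pi,c}}.
\]
This reduces the exceptional zero conjecture to the identity $\mathcal{L}^{\opn{aut}}_{\pi,c+1}=\mathcal{L}^{\opn{FM}}_{\pi,c+1}$.

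\textbf{Part 2: $\mathcal{L}$-invariant comparison via local-global compatibility.} Both $\mathcal{L}$-invariants are infinitesimal: $\mathcal{L}^{\opn{aut}}_{\pi,i}$ is read off derivatives of the $U_p$-Hecke eigenvalues along the ordinary Hida deformation of $\pi$, whereas $\mathcal{L}^{\opn{FM}}_{\pi,i}$ is read off derivatives of the diagonal characters of $\rho_\pi|_{G_{\mbb{Q}_p}}$ along the universal ordinary Galois deformation. They agree provided one knows that the Galois representation carried by the ordinary Hida complex, restricted to $G_{\mbb{Q}_p}$, has the upper-triangular shape \eqref{eq:steinberg at p} with diagonal characters matching the Hida-theoretic Hecke operators. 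I would establish this local-global compatibility at $\ell=p$ for general $\opn{GL}_n$ by transposing the strategy of \cite[\S5]{10author}: starting from Scholze's construction of Galois representations on torsion, choose Taylor--Wiles primes using the decomposed-generic hypothesis on $\overline{\rho}_\pi$ (for which $p\geq 7$ is needed), and patch the ordinary torsion cohomology against the ordinary Borel-type local deformation ring at $p$; Caraiani--Scholze/Koshikawa-style vanishing in the relevant cohomological degree then forces the patched module to detect the prescribed ordinary filtration rather than merely its semisimplification. Pulling the universal ordinary representation back to the tangent space of the Hida variety at $\pi$, and computing in the Steinberg-type local deformation ring (a direct extension of the $n=2$ Greenberg--Stevens calculation), then gives $\mathcal{L}^{\opn{aut}}_{\pi,i}=\mathcal{L}^{\opn{FM}}_{\pi,i}$ in whichever tangent direction is realised by the Hida family.

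\textbf{Why one of $L_p^{(0)},L_p^{(1)}$ unconditionally, both under the extra hypothesis.} In the non-essentially-self-dual regime the ordinary Hida variety through $\pi$ is not expected to be Zariski-dense with classical specialisations, and one a priori has access only to the cyclotomic direction plus, generically, a single further deformation direction. This produces the $\mathcal{L}$-invariant match for one $i_0\in\{1,2\}$, hence the exceptional zero conjecture for one of $L_p^{(0)},L_p^{(1)}$ without further input. The ``non-parabolic infinitesimal deformations'' hypothesis of Remark~\ref{Rem:DGandCMassumptionIntro} supplies a second transverse tangent direction and thus recovers both comparisons simultaneously.

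\textbf{Main obstacle.} The technical heart is ordinary local-global compatibility at $\ell = p$ for $\opn{GL}_n$-torsion classes: without Shimura varieties there is no $p$-adic Hodge theory accessible via a trace-formula transfer, so the Taylor--Wiles patching must itself be arranged to cut out the correct Steinberg-ordinary component of the local deformation ring rather than only a union of components. Controlling which component the patched module lives on -- finely enough to recover the diagonal characters of \eqref{eq:steinberg at p}, not just a semisimplification -- is the subtle point, and it is precisely where the hypotheses $p\geq 7$ and $\overline{\rho}_\pi$ irreducible and decomposed generic enter.
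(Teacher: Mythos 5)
Your top-level architecture (an automorphic exceptional zero formula, followed by $\mathcal{L}^{\opn{Aut}}_{\pi,i}=\mathcal{L}^{\opn{FM}}_{\pi,i}$ via local-global compatibility at $\ell=p$, with the one-versus-both dichotomy governed by which tangent directions of the Hida family exist) is exactly the paper's. But both halves of your mechanism diverge from what actually works, and the second contains a genuine gap. For Part 1, the paper does \emph{not} obtain the derivative formula by moving the class in a transverse $p$-adic direction and "unwinding the interpolation": in the non-essentially-self-dual setting the family through $\pi$ has no Zariski-dense set of classical points, so there is nothing to unwind along a second direction. Instead the argument is intrinsic to $\pi$ (no family at all): one builds a bilinear evaluation pairing $\alpha(\mu,\lambda)$ on $p$-arithmetic cohomology against $\opn{Hom}_{\opn{cts}}(\mbb{Q}_p^\times,L)$, shows $\alpha(\mu,-)$ kills $\bL^{\opn{Aut}}_1(\pi)$ using Ding/Gehrmann's classification of extensions of Steinberg representations and an \emph{explicit} cocycle computation (Proposition \ref{prop:Clambda1LuxembourgProp}, in the spirit of Spie{\ss}), and identifies $\alpha(\mu,\opn{ord}_p)$ with the critical $L$-value and $\alpha(\mu,\log_p)$ with the derivative; the case $c=1$ then follows from the functional equation together with the duality $\mathcal{L}^{\opn{Aut}}_{\pi,2}=\mathcal{L}^{\opn{Aut}}_{\pi^\vee,1}$.

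The serious gap is in Part 2. You propose to prove local-global compatibility at $\ell=p$ for ordinary $\opn{GL}_n$-torsion classes by Taylor--Wiles patching against an ordinary local deformation ring, with the patched module "forced" onto the Steinberg-ordinary component. This is essentially circular: identifying which component of the local deformation ring the patched module sees is precisely the local-global compatibility statement one is trying to prove, and no vanishing theorem for the patched module supplies the diagonal characters of $\rho|_{G_{\mbb{Q}_p}}$ for free. Your premise that "without Shimura varieties there is no $p$-adic Hodge theory accessible" also misses the paper's key idea: there \emph{is} a Shimura variety available, namely the one for $\opn{GSp}_{2n}$, whose Borel--Serre boundary stratum for the Siegel parabolic contains the $\opn{GL}_n$ locally symmetric space. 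The paper (following \cite[\S 5]{10author}) transfers the ordinary $\opn{GL}_n$ Hecke eigensystem into $\h^{d}$ of the $\opn{GSp}_{2n}$ Shimura variety via a degree-shifting computation of the ordinary part of the parabolic induction, uses the Yang--Zhu vanishing theorem (this is where $p>2n$, i.e.\ $p\geq 7$ for $n=3$, and decomposed genericity of $\overline{\rho}_{\ide{m}}(-1)\oplus\mbf{1}\oplus\overline{\rho}^{\vee}_{\ide{m}}(1)$ enter) to make the boundary restriction surjective in the middle degree, and then imports characteristic-zero local-global compatibility at $p$ from Arthur's classification for $\opn{GSp}_{2n}$ and the known essentially-self-dual $\opn{GL}_N$ results; the resulting continuous determinant, Chenevier's theory, and a twisting/idempotent argument recover the ordered triangulation (up to a nilpotent ideal) of $\rho|_{G_{\mbb{Q}_p}}$ over the big ordinary Hecke algebra, which is then specialised to $L[\epsilon]$ and compared with the Gehrmann--Rosso/Koszul description of $\mathcal{L}^{\opn{Aut}}_{\pi,i}$ via the two Benois--Colmez--Greenberg--Stevens formulae.
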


Before describing the ideas that go into the proof of this theorem, let us make a few remarks.

\begin{remark}
Our first major result towards this is Theorem \ref{SecondMainThmIntroAutLinv}, where we prove an automorphic analogue of Conjecture \ref{conj:EZC}. In particular, we prove the exact analogue of the stated exceptional zero formula, for both $L_p^{(0)}$ and $L_p^{(1)}$, with the Fontaine--Mazur $\mathcal{L}$-invariant $\mathcal{L}_{\pi,c+1}^{\opn{FM}}$ replaced by Gehrmann's automorphic $\mathcal{L}$-invariant $\mathcal{L}_{\pi,c+1}^{\opn{Aut}}$ (from \cite{AutomorphicLinvariants}).  

Theorem \ref{SecondMainThmIntroAutLinv} is completely unconditional (beyond Assumption \ref{assumptions}(1), which is needed for the statement to make sense), and is even independent of the existence of $\rho_\pi$. Assumption \ref{assumptions}(2), and the other hypotheses in Theorem \ref{FirstMainThmInIntro}, are needed only to prove the equality of $\mathcal{L}$-invariants (Theorem \ref{ThirdMainThmIntroEqualLinv}).
\end{remark}

\begin{remark} \label{Rem:DGandCMassumptionIntro}
    By decomposed generic, we mean that there exists a prime $\ell$ where $\overline{\rho}_{\pi}$ is unramified, and such that none of the eigenvalues of $\opn{Ad}\overline{\rho}_{\pi}(\opn{Frob}_{\ell})$ are equal to $\ell^{\pm 1}$. Additionally, by ``non-parabolic infinitesimal deformations'', we mean that $\pi$ admits ``non-$s_i$-infinitesimal deformations'' for both $i=1, 2$ (as in \S \ref{Subsub:TheGaloisSideIntro} below). Informally, this means that $\pi$ varies in a $p$-adic family over a curve $\Sigma$ in the $2$-dimensional weight space $\mathcal{W}$ associated with $\opn{PGL}_3$, and for every simple root $s_i$ of $\opn{PGL}_3$, there exists a vector in the tangent space of $\Sigma$ at the origin which does not lie in the (one-dimensional) parabolic weight space associated with $s_i$. One can show that if $\pi$ is essentially self-dual, then $\pi$ \emph{always} has non-parabolic infinitesimal deformations, and if $\pi$ is not essentially self-dual, this is strongly suggested by the conjectures of Ash--Pollack--Stevens \cite{AshPollackStevens} and Calegari--Mazur \cite{CM09}. We expand on this in Remark \ref{eq:APS}.
\end{remark}

\begin{remark}\label{rem:intro self dual}
Even in the essentially self-dual case, where $\pi \cong \mathrm{Sym}^2f \otimes \xi$ for $f \in S_2(\Gamma_1(N)\cap\Gamma_0(p),\varepsilon_f)$, we obtain new results; we generalise \cite{RossoSym2, RossoSym2II} by removing the assumptions \emph{op.\ cit}.\ that $N$ is square-free and $\varepsilon_f$ is trivial (see Remark \ref{Rem:EZFforSym2generallevel}). Moreover, we do not need to impose the assumptions on $p$ and $\overline{\rho}_{\pi}$ in this case. 
\end{remark}

\begin{remark}
A prototype for this setting is that of Bianchi modular forms (RACARs for $\GL_2$ over an imaginary quadratic field). Here one encounters similar issues to those mentioned above: non-existence of $p$-adic families with a Zariski-dense set of classical specialisations, and underlying locally symmetric spaces that are not Shimura varieties. In this case, Conjecture \ref{conj:EZC} is not known. However, cases of the automorphic analogue of the conjecture were proved by the first and third authors in \cite{BW17}. The methods used there (modular symbols) were very explicit; finding a more conceptual framework for obtaining such results, that could apply to higher rank settings, was a key aim of the present paper, and our approach is summarised in \S\ref{sec:intro automorphic} below.

We hope to return to this case in future, using the methods of the present paper to prove cases of the full Greenberg--Benois exceptional zero conjecture for Bianchi modular forms.
\end{remark}

\subsection{Method of proof}

We now specialise to the case $n=3$. The proof of Theorem \ref{FirstMainThmInIntro} is naturally divided into two parts: an automorphic side (proving the automorphic exceptional zero formula, in Theorem \ref{SecondMainThmIntroAutLinv}) and a Galois side (proving an equality of automorphic and Fontaine--Mazur $\mathcal{L}$-invariants, in Theorem \ref{ThirdMainThmIntroEqualLinv}). This is reflected in the structure of this article. 

The key input on the Galois side is to show that the family of Galois representations associated with the Hida family through $\pi$ satisfies local-global compatibility at $\ell = p$ (Theorem \ref{FourthMainThmintroLGcompat}). In particular, we show this family is trianguline with the correct parameters (Theorem \ref{FourthMainThmintroLGcompat}), confirming cases of a conjecture of Hansen \cite[Conj.\ 1.2.2]{Han17}.

\subsubsection{The automorphic side}\label{sec:intro automorphic}

Let $G = \opn{GL}_3$ and let $B \subset G$ denote the standard upper-triangular Borel subgroup. We let $P_1, P_2 \subset G$ denote the parabolic subgroups 
\[
P_1 = \smat{* & * & * \\ & * & * \\ & * & *}, \quad \quad P_2 = \smat{* & * & * \\ * & * & * \\ & & *}, 
\]
and let $\overline{B}$, $\overline{P}_1$, and $\overline{P}_2$ denote the opposites of these parabolic subgroups. Just as the Galois $\mathcal{L}$-invariant was defined in terms of extensions of powers of the $p$-adic cyclotomic character, the automorphic $\mathcal{L}$-invariant is defined in terms of extensions of (generalised) Steinberg representations.

To describe these $\mathcal{L}$-invariants, we therefore first need to introduce some notation. Let $R$ be a $\mbb{Z}$-algebra, and consider the generalised Steinberg representation:
\[
\opn{St}_i^{\mathrm{sm}}(R) \defeq \{ f \colon \overline{P}_i(\mbb{Q}_p) \backslash G(\mbb{Q}_p) \to R \text{ smooth } \} /\{ \text{ constant functions }\}
\]
which is a smooth representation of $G(\mbb{Q}_p)$ via right-translation. We also set 
\[
\opn{St}^{\mathrm{sm}}(R) = \{ f \colon \overline{B}(\mbb{Q}_p) \backslash G(\mbb{Q}_p) \to R \text{ smooth }\}/\left(\opn{St}^{\mathrm{sm}}_1(R) + \opn{St}^{\mathrm{sm}}_2(R) \right)
\]
which is the usual Steinberg representation of $G(\mbb{Q}_p)$ (over $R$). If $R = L$ is a finite extension of $\mbb{Q}_p$, then we have continuous and locally analytic variants of these representations (denoted $\opn{St}^{\opn{cts}}(L)$, $\opn{St}^{\opn{la}}(L)$, etc.) by replacing the property of being smooth with being continuous or locally analytic (with respect to the $p$-adic topologies on both sides). We have the following result of Ding \cite{DingLinvariants} classifying extensions of locally analytic Steinberg representations:
\begin{align*}
    \opn{Hom}_{\opn{cts}}(\mbb{Q}_p^{\times}, L) &\xrightarrow{\sim} \opn{Ext}^1_{\opn{an}}(\opn{St}_i^{\mathrm{sm}}(L), \opn{St}^{\opn{la}}(L)) \\
    \lambda &\mapsto \mathcal{C}_{i,\lambda}
\end{align*}
where $\opn{Ext}^1_{\opn{an}}$ denotes the groups of extensions (up to isomorphism) in the abelian category of admissible locally analytic representations of $G(\mbb{Q}_p)$ (see \cite[\S 2.4]{AutomorphicLinvariants}).

Recall that $\pi$ is a (unitary) regular algebraic cuspidal automorphic representation of $G(\mbb{A}) = \opn{GL}_3(\mbb{A})$, which has trivial cohomological weight and $\pi_p \cong \opn{St}^{\mathrm{sm}}(\mbb{C})$. We let 
\[
\Pi_p \defeq \opn{Hom}_{G(\mbb{A}_f^p)}(\mathcal{W}_L(\pi^p), \tilde{\opn{H}}^2_c(X_{\opn{GL}_3}, L))
\]
where $\mathcal{W}_L(\pi^p)$ denotes the Whittaker model of $\pi^p$ over $L$ and
\[
\tilde{\opn{H}}^2_c(X_{\opn{GL}_3}, L) = \varinjlim_{K^p} \left( \Big(\varprojlim_s \varinjlim_{K_p} \tilde{\opn{H}}^2_c(X_{\opn{GL}_3, K^pK_p}, \mathcal{O}_{L}/p^s)\Big)[1/p] \right)
\]
denotes the completed cohomology (at infinite prime-to-$p$ level) associated with the locally symmetric space $X_{\opn{GL}_3}$ for $\opn{GL}_3$. The space $\Pi_p$ is naturally an admissible Banach representation of $G(\mbb{Q}_p)$, and should be closely related with $\rho_{\pi}|_{G_{\mbb{Q}_p}}$ via a hypothetical $p$-adic local Langlands correspondence for $\opn{GL}_3(\mbb{Q}_p)$. Furthermore, one has a natural $G(\mbb{Q}_p)$-equivariant embedding 
\begin{equation} \label{Eqn:CanEmbeddingIntro}
\opn{St}^{\mathrm{sm}}(L) \hookrightarrow \Pi_p
\end{equation}
(unique up to scaling by multiplicity one results for the cohomology of $X_{\opn{GL}_3}$). We will therefore define, following Gehrmann \cite{AutomorphicLinvariants}, the automorphic $\mathcal{L}$-invariants for $\pi$ in terms of extensions of Steinberg representations related with $\Pi_p$. 

More precisely, for $i=1, 2$, let $\bL^{\opn{Aut}}_i(\pi) \subset \opn{Hom}_{\opn{cts}}(\mbb{Q}_p^{\times}, L)$ denote the subspace of homomorphisms $\lambda$ such that the natural embedding (\ref{Eqn:CanEmbeddingIntro}) extends to a $G(\mbb{Q}_p)$-equivariant embedding $\mathcal{C}_{3-i, \lambda} \hookrightarrow \Pi_p$. It is shown\footnote{In \cite{AutomorphicLinvariants}, Gehrmann describes $\bL^{\opn{Aut}}_i(\pi)$ slightly differently, using extensions of locally analytic Steinberg representations and $p$-arithmetic cohomology. We recap this in \S\ref{sec:extensions of steinberg} and \S\ref{sec:def of L-invariants}. The equivalence between the $p$-arithmetic and completed cohomology perspectives is discussed in \S \ref{RelationWithCCSSec}.} in \cite{AutomorphicLinvariants} that $\bL^{\opn{Aut}}_i(\pi)$ is one-dimensional and $\opn{ord}_p \not\in \bL^{\opn{Aut}}_i(\pi)$. We therefore define the automorphic $\mathcal{L}$-invariant $\mathcal{L}_{\pi, i}^{\opn{Aut}}$ as the unique quantity such that $\opn{log}_p - \mathcal{L}_{\pi, i}^{\opn{Aut}} \opn{ord}_p \in \bL^{\opn{Aut}}_i(\pi)$. In Theorems \ref{Thm:LeftHalfEZF} and \ref{Them:RightHalfEZF} we prove the following: 

\begin{theoremx} \label{SecondMainThmIntroAutLinv}
    Let $n=3$ and $c \in \{0, 1\}$. Then 
    \[
    \left. \frac{d}{ds} L_p^{(c)}(\pi, s) \right|_{s=0} =   e_{\infty}(\pi_{\infty}, c) \cdot E_p^{(c)}(c) \cdot  \mathcal{L}^{\opn{Aut}}_{\pi, c+1} \cdot \frac{L(\pi, c)}{\Omega_{\pi, c}} .
    \]
\end{theoremx}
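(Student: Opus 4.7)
The plan is to prove the identity separately in the two cases $c = 0$ and $c = 1$, corresponding to the two maximal parabolics $\overline{P}_2$ and $\overline{P}_1$ of $\opn{GL}_3$. The overall strategy is a Greenberg--Stevens style argument inside Gehrmann's $p$-arithmetic cohomology framework: the exceptional zero of $L_p^{(c)}(\pi,s)$ at $s = 0$ reflects that the evaluation map defining $L_p^{(c)}$ kills the natural copy of $\opn{St}^{\mathrm{sm}}(L) \hookrightarrow \Pi_p$, so one computes the derivative by lifting this evaluation along the locally analytic extension $\mathcal{C}_{3-i,\lambda}$ with $i = c+1$ and $\lambda = \opn{log}_p - \mathcal{L}^{\opn{Aut}}_{\pi,i}\opn{ord}_p \in \bL^{\opn{Aut}}_i(\pi)$.

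First I would reinterpret the Loeffler--Williams $p$-adic $L$-function as a pairing between a canonical class in the $p$-arithmetic (equivalently, overconvergent) cohomology of $X_{\opn{GL}_3}$ and a Bruhat--Tits evaluation map valued in continuous distributions on $\mbb{Z}_p^{\times}$. Under the Steinberg hypothesis at $p$, the local zeta integral against a parahoric-invariant test vector contributes the factor $(1 - p^{s-c})$ vanishing at $s = c$; representation-theoretically, this says that the evaluation, restricted to the image of $\opn{St}^{\mathrm{sm}}(L)$, lifts uniquely to a $G(\mbb{Q}_p)$-equivariant morphism out of $\mathcal{C}_{3-i,\lambda}$ exactly when $\lambda \in \bL^{\opn{Aut}}_i(\pi)$. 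The derivative $\frac{d}{ds}L_p^{(c)}(\pi,s)|_{s=0}$ is then computed by first-order deformation of the cohomology class along the chosen $\lambda$: the output is the value of $\lambda$ on an explicit element of $\mbb{Q}_p^{\times}$ arising from the Bruhat--Tits cell stabilised by the parahoric, which evaluates to $E_p^{(c)}(c) \cdot \mathcal{L}^{\opn{Aut}}_{\pi, c+1}$ after bookkeeping the constants $A, B$ in $E_p^{(c)}(s) = A p^{Bs}$. The archimedean factor $e_\infty^{(c)}(\pi_\infty, c)$ and the classical value $L(\pi, c)/\Omega_{\pi,c}$ come from the prime-to-$p$ Whittaker pairing, which is unaffected by the deformation at $p$.

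The hard part, I expect, is the translation between the Loeffler--Williams construction (built from the branching law $\opn{GL}_2 \subset \opn{GL}_3$ and naturally adapted to a single parabolic at a time) and Gehrmann's $p$-arithmetic/completed cohomology setup (which is symmetric in the two parabolics). One must carefully match normalisations, parahoric levels, and choices of Whittaker models so that each of $c = 0$ and $c = 1$ produces the formula with the ``correct'' $\mathcal{L}$-invariant $\mathcal{L}^{\opn{Aut}}_{\pi, c+1}$, rather than some unrelated linear combination involving both $\mathcal{L}^{\opn{Aut}}_{\pi, 1}$ and $\mathcal{L}^{\opn{Aut}}_{\pi, 2}$. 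A subsidiary but delicate point is the explicit local computation identifying the $p$-adic period that emerges from the Bruhat--Tits evaluation with the constant $E_p^{(c)}(c)$ predicted by the Coates--Perrin-Riou interpolation property.
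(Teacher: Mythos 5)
Your conceptual picture matches the introduction's informal description of the ``big evaluation map'' and its three defining properties, and you have correctly identified the two main sources of content: an abstract exceptional-zero formula in $p$-arithmetic cohomology, and a translation between the Loeffler--Williams construction and Gehrmann's framework. But there are two significant points where your plan diverges from what the paper actually does, one of which is a genuine technical gap.

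First, the paper does \emph{not} prove the two cases $c=0,1$ symmetrically. It proves $c=0$ directly (Theorem \ref{Thm:LeftHalfEZF}), building a single evaluation map from the cocycle $c_{1,\lambda}^\bullet$ and the parabolic $\overline{P}_1$, and then deduces $c=1$ (Theorem \ref{Them:RightHalfEZF}) from the $p$-adic functional equation $\mathscr{L}_p^+(\pi) = \varepsilon(\pi_f^{\vee,p},0)^{-1}\iota([N_\pi^{(p)}]\cdot\mathscr{L}_p^-(\pi^\vee))$ together with the duality $\mathcal{L}_{\pi,2}^{\opn{Aut}} = \mathcal{L}_{\pi^\vee,1}^{\opn{Aut}}$ of Proposition \ref{prop:duality L-invariants}. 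Your plan of proving both directly would require a second, mirrored construction for $\overline{P}_2$ (a new cocycle $c_{2,\lambda}^\bullet$, a new big evaluation map, a new comparison with $\mathscr{L}_p^+$). This is not wrong in principle, but it is roughly twice the work, and you would still need to check that $L_p^+$ interpolates values at $s=1$ in a way that genuinely pairs it with $\bL_2^{\opn{Aut}}(\pi)$; the functional-equation route sidesteps all of that.

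Second, and more seriously, your phrase ``first-order deformation of the cohomology class along the chosen $\lambda$'' suggests a Greenberg--Stevens two-variable argument, which is precisely what the paper cannot run: for non-essentially-self-dual $\pi$ there is no two-dimensional family with Zariski-dense classical points (see Remark \ref{Rem:DGandCMassumptionIntro}). The paper's substitute is the cup-product with the explicit cocycle $c_{1,\lambda}^\bullet$ together with an integrality/reduction argument. The central technical obstacle you do not address is that $\log_p$ is a \emph{continuous but not smooth} character of $\mbb{Z}_p^\times$, so it cannot be fed directly into the smooth $p$-arithmetic evaluation machinery. The paper resolves this by reducing modulo $\varpi^s$ (for each $s$ the reduction of $\log_p$ is smooth) and showing (Lemma \ref{UniformkwhichkillsLemma}) that the vanishing of $[\Upsilon^{\opn{la}}_\lambda(\mu^{\opn{la}})]$ forces $[\Upsilon^{\opn{sm}}_{p^k\lambda}(\mu_s)]$ to vanish in the $p$-smooth cohomology uniformly in $s$, for a fixed $k$; the abstract formula $\mathscr{L}(\mu,\log_p) = \mathcal{L}_\mu\cdot\mathscr{P}(\mu)$ of Theorem \ref{AbstractLinvTheorem} is then assembled mod $\varpi^s$ and upgraded. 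Without this mechanism (or a replacement), the ``first-order deformation'' you describe does not have a rigorous meaning, and the link between the lifting to $\mathcal{C}_{3-i,\lambda}$ and the actual derivative of the measure cannot be closed. Relatedly, the explicit constant $E_p^{(c)}(c)$ is not extracted by evaluating $\lambda$ on a single element of $\mbb{Q}_p^\times$: it comes from the full local-zeta-integral computation (Lemma \ref{ZetaAtpCalcLemma}) that identifies the $p$-arithmetic period $\mathscr{P}(\pi)$, combined with the cocycle value $c_{1,\lambda}^\bullet[t]$ from Proposition \ref{prop:Clambda1LuxembourgProp}, which takes different values $-2\lambda(p)$, $-2\lambda(x)$, $-2\lambda(y)$ on different cells, not a single $\lambda$-value.
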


\begin{remark}
    In the main body of the article, $L_p^{(c)}$ is denoted $L_p^{-}$ for $c=0$ and $L_p^+$ for $c=1$. We also renormalise so that the exceptional zero of $L_p^+(\pi,s)$ is at $s=1$ (as it relates to $L(\pi,1)$). To obtain the statements in the main text, note that $E_p^{(0)}(0) = 1$ and $E_p^{(1)}(1) = -p$.
\end{remark}

 Let us sketch the proof when $c=0$; the case $c=1$ then follows from the functional equation for the ($p$-adic) $L$-functions, as $\mathcal{L}^{\opn{Aut}}_{\pi, 2} = \mathcal{L}^{\opn{Aut}}_{\pi^{\vee}, 1}$ (see Proposition \ref{prop:duality L-invariants}). For $c=0$, the idea is to extend the construction in \cite{LW21} to ``modular symbols on the Bruhat--Tits building of $\opn{GL}_3(\mbb{Q}_p)$''. For this, we aim to construct a ``big evaluation map''
\[
\alpha \colon \opn{Hom}_{G(\mbb{Q}_p)}(\opn{St}^{\mathrm{sm}}(L), \Pi_p) \times \opn{Hom}_{\opn{cts}}(\mbb{Q}_p^{\times}, L) \to L
\]
satisfying three key properties: for $\mu$ the class corresponding to the natural embedding \eqref{Eqn:CanEmbeddingIntro} above, we have
\begin{itemize}
    \item[(1)] $\alpha$ is bilinear, and $\alpha(\mu, -)$ vanishes on $\bL^{\opn{Aut}}_1(\pi)$,
    \item[(2)] $\alpha(\mu, \opn{ord}_p) = e_{\infty}(\pi_{\infty}, 0) \cdot \tfrac{L(\pi, 0)}{\Omega_{\pi, 0}}$,
    \item[(3)] $\alpha(\mu, \opn{log}_p) = \left. \frac{d}{ds} L_p^{(0)}(\pi, s) \right|_{s=0}$.
\end{itemize}
Combining these points, and using that $\log_p - \mathcal{L}_{\pi,1}^{\mathrm{Aut}}\opn{ord}_p \in \bL^{\opn{Aut}}_1(\pi)$, proves Theorem \ref{SecondMainThmIntroAutLinv} for $c=0$.

Making this strategy precise occupies all of \S\ref{sec:abstract exceptional zero formula}. In practice, we work with coefficients mod $p^s$, and then link this to Gehrmann's description of automorphic $\mathcal{L}$-invariants using a delicate interplay between smooth, continuous and locally analytic Steinberg representations (proved in \S\ref{sec:extensions of steinberg}). Morally, $\alpha$ is then given by the system
\begin{equation} \label{Eqn:InformalBigEvalMapIntro}
\alpha(\mu, \lambda) \approx p^{-N} \left( \left\langle \Big[p^N\mu\Big(u_0 \cdot \opn{pr}(c_{1,\lambda}[t] \otimes \phi_n)\Big) \ \opn{mod} \ p^s\Big]\Big|_H, \opn{Eis}_n \right\rangle \right)_{s \geq 1} \in L.
\end{equation}

Let us explain the notation in this expression. Firstly, $\opn{pr} \colon \opn{St}^{\mathrm{sm}}_1(\mathcal{O}_L) \otimes \opn{St}_2^{\mathrm{sm}}(\mathcal{O}_L) \to \opn{St}^{\mathrm{sm}}(\mathcal{O}_L)$ denotes the natural map given by multiplication of functions;
\[
\phi_n = \opn{ch}\left(\overline{P}_2(\mbb{Q}_p) \smat{1 & & p^n \mbb{Z}_p \\ & 1 & \mbb{Z}_p \\ & & 1} \right) \in \opn{St}_2^{\mathrm{sm}}(\mathcal{O}_L)
\]
is a characteristic function; and $c_{1,\lambda} \in Z^1(G(\mbb{Q}_p), \opn{St}_1^{\opn{cts}}(\mathcal{O}_L))$ is an explicit $1$-cocycle, such that (the locally analytic vectors in the extension corresponding to) the image of $c_{1,\lambda}$ under the natural map 
\[
\opn{H}^1(G(\mbb{Q}_p), \opn{St}_1^{\opn{cts}}(\mathcal{O}_L)) \xrightarrow{\opn{pr} \circ (- \otimes \opn{St}_2^{\mathrm{sm}}(\mathcal{O}_L))} \opn{Ext}^1_{G(\mbb{Q}_p)}(\opn{St}_2^{\mathrm{sm}}(L), \opn{St}^{\opn{cts}}(L))
\]
is $\mathcal{C}_{2,\lambda}$ (up to isomorphism). Secondly, $t = \opn{diag}(p, 1, 1) \in G(\mbb{Q}_p)$ and $u_0 = \smat{1 & & \\ & & -1 \\ & 1 &}$. Thirdly, $p^N$ is any power of $p$ such that $p^N \mu$ is integral and in the first term of the pairing $\langle -, - \rangle$, we are evaluating $p^N \mu$ at $u_0 \cdot \opn{pr}(c_{1,\lambda}[t] \otimes \phi_n)$, reducing modulo $p^s$, and pulling back to (a cover of) the locally symmetric space for $H = \opn{GL}_2 \times \opn{GL}_1$ (here, we take $n$ to be any integer which is sufficiently large compared to $s$). Finally, $\opn{Eis}_n$ is a certain cohomology class on (the cover of) the locally symmetric space for $H = \opn{GL}_2 \times \opn{GL}_1$ which is given by a Siegel unit on the first $\opn{GL}_2$-factor, and $\langle -, - \rangle$ denotes the Poincar\'e duality pairing. We then show the expression (\ref{Eqn:InformalBigEvalMapIntro}) is independent of the various choices.

The utility of the cocycles $c_{1,\lambda}$ was suggested by the work of Spie{\ss} \cite{Spi14} in the $\GL_2$-setting. That we can calculate and manipulate these cocycles explicitly is vital in establishing these three bullet points, and is at the heart of the proof. Such calculations take place in \S \ref{Sub:ExtensionsI}.

\subsubsection{The Galois side: local-global compatibility for $\GL_n$} \label{Subsub:TheGaloisSideIntro}

After establishing Theorem \ref{SecondMainThmIntroAutLinv} in the first part of this article, we then move on to the second part: showing that the automorphic and Galois $\mathcal{L}$-invariants coincide. To be able to describe this, we consider Hida families through $\pi$ in the $\opn{GL}_3$-eigenvariety. More precisely, let $w \colon \mathscr{E} \to \mathcal{W}$ denote the $\opn{GL}_3$-eigenvariety, where $\mathcal{W}$ denotes the $2$-dimensional weight space parameterising pairs of characters $(\kappa_1, \kappa_2)$ on $\mbb{Z}_p^{\times}$, which we view as a character of $T(\mbb{Z}_p) \cong (\mbb{Z}_p^{\times})^3$ by $(x_1, x_2, x_3) \mapsto \kappa_1(x_1x_3^{-1})\kappa_2(x_2x_3^{-1})$ (we ignore the trivial determinant twist for automorphic representations of $\opn{GL}_3$, since it plays no role in what follows). The unique $p$-stabilisation in $\pi$ is ordinary and gives rise to a point $x_{\pi} \in \mathscr{E}$ with $w(x_{\pi}) = (0, 0)$ (the pair of trivial characters). By \cite[Theorem 4.9]{HansenThorne}, we can choose an irreducible component $\Sigma \subset \mathscr{E}$ containing $x_{\pi}$ which is one-dimensional and (locally around $x_{\pi}$) maps isomorphically to its image under $w$. By abuse of notation, we will also denote the image by $\Sigma$. 

Let $s_1 = (1, -1, 0)$ and $s_2 = (0, 1, -1)$ denote the standard simple roots of $\opn{GL}_3$. We say that the family $\Sigma$ admits ``non-$s_i$-infinitesimal deformations'' if there exists a non-zero tangent vector $v \in T_{x_{\pi}}(\Sigma)$ such that $dw \circ v \colon T(\mbb{Z}_p) \to L$ does not factor through $s_i$. Note that this is automatic if $\Sigma$ is \emph{not} smooth at $x_{\pi}$. Furthermore, it is always the case that $\Sigma$ admits either ``non-$s_1$-infinitesimal deformations'' or ``non-$s_2$-infinitesimal deformations'', and  conjecturally $\Sigma$ should always admit both (see Remark \ref{Rem:DGandCMassumptionIntro}). Our main theorem on the equality of $\mathcal{L}$-invariants is then:

\begin{theoremx} \label{ThirdMainThmIntroEqualLinv}
    Let $n=3$ and $i \in \{1, 2\}$.
    \begin{enumerate}
        \item If $\pi$ is essentially self-dual (so $\pi \cong \opn{Sym}^2f\otimes \xi$ is the twisted symmetric square lift of a modular form $f$), then $\mathcal{L}^{\opn{Aut}}_{\pi, i} = \mathcal{L}^{\opn{FM}}_{\pi, i} = \mathcal{L}(f)$, where $\mathcal{L}(f)$ denotes the $\mathcal{L}$-invariant for $f$.
        \item Suppose that $p \geq 7$ and $\overline{\rho}_{\pi}$ is absolutely irreducible and decomposed generic. Then, if $\Sigma$ admits ``non-$s_{i}$-infinitesimal deformations'', one has
    \[
    \mathcal{L}^{\opn{Aut}}_{\pi, i} = \mathcal{L}^{\opn{FM}}_{\pi, i} .
    \]
    \end{enumerate}
\end{theoremx}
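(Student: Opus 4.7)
My plan is to deduce both parts of Theorem~\ref{ThirdMainThmIntroEqualLinv} from a single Greenberg--Stevens-type computation carried out in the Hida family $\Sigma$ through $\pi$ inside the $\opn{GL}_3$-eigenvariety, with Theorem~\ref{FourthMainThmintroLGcompat} (local-global compatibility at $\ell=p$) bridging the automorphic and Galois sides. The non-$s_i$-infinitesimal deformation hypothesis provides a tangent vector $v \in T_{x_\pi}(\Sigma)$ whose image $dw \circ v$ does not factor through the simple root $s_i$; unwinding the parametrisation of weight space, this means $\dot{\kappa}_1 + \dot{\kappa}_2 \neq 0$ when $i=1$, respectively $\dot{\kappa}_1 \neq 0$ when $i=2$. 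Along $\Sigma$ the ordinary Hecke eigenvalues $U_{p,j}^{\opn{ord}}$ extend to rigid-analytic functions near $x_\pi$, and the goal is to exhibit both $\mathcal{L}^{\opn{Aut}}_{\pi, i}$ and $\mathcal{L}^{\opn{FM}}_{\pi, i}$ as the same ratio of derivatives of $\log U_{p,i}^{\opn{ord}}$ against the weight along $v$.

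On the automorphic side, I would reinterpret Gehrmann's description of $\mathcal{L}^{\opn{Aut}}_{\pi, i}$ (via extensions of locally analytic Steinberg representations in $p$-arithmetic cohomology, recalled in \S\ref{sec:def of L-invariants}) in terms of $\Sigma$. The ordinary projection of a universal eigensymbol on $\Sigma$ can be differentiated along $v$ to produce a $1$-cocycle on $G(\mathbb{Q}_p)$ with values in the appropriate locally analytic Steinberg coefficients; its class yields the extension $\mathcal{C}_{3-i, \lambda}$ embedded into $\Pi_p$, and explicitly identifies $\lambda = \log_p - \mathcal{L}^{\opn{Aut}}_{\pi, i} \cdot \opn{ord}_p$ with $-(\partial \log U_{p,i}^{\opn{ord}}/\partial v)/(\partial \kappa/\partial v)$ for the weight direction $\kappa$ complementary to $s_i$. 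The non-$s_i$ hypothesis is precisely what makes the denominator non-zero, extracting a well-defined value of $\mathcal{L}^{\opn{Aut}}_{\pi, i}$.

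On the Galois side, Theorem~\ref{FourthMainThmintroLGcompat} produces a trianguline refinement of $\rho_\Sigma|_{G_{\mathbb{Q}_p}}$ over $\mathcal{O}_{\Sigma, x_\pi}$ with parameters $\delta_1, \delta_2, \delta_3$ whose values at $p$ are unramified normalisations of the $U_{p,j}^{\opn{ord}}$ and whose restrictions to $\mathbb{Z}_p^\times$ are prescribed by the weight. Specialising at $x_\pi$ recovers the upper-triangular form of \eqref{eq:steinberg at p}, and the non-crystalline extension $*_{(n-3)/2+i}$ between adjacent trianguline quotients is controlled by the first-order deformation of the ratio $\delta_i/\delta_{i+1}$ along $v$. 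Pairing the resulting class in $\opn{H}^1(\mathbb{Q}_p, L)$ with the standard basis $\log_p, \opn{ord}_p$ via local Tate duality returns exactly the same derivative ratio, yielding $\mathcal{L}^{\opn{FM}}_{\pi, i} = \mathcal{L}^{\opn{Aut}}_{\pi, i}$.

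For part (1), when $\pi \cong \opn{Sym}^2 f \otimes \xi$, one takes $\Sigma$ to be the image in the $\opn{GL}_3$-eigenvariety of the $\opn{GL}_2$ Hida family through $f$ under the symmetric square lift; here both $\dot{\kappa}_1$ and $\dot{\kappa}_1 + \dot{\kappa}_2$ are non-zero, so both non-$s_i$ hypotheses hold automatically, and the derivative formulas above both collapse to the classical Greenberg--Stevens $\mathcal{L}$-invariant of $f$, giving the triple equality. The main obstacle in the whole argument is Theorem~\ref{FourthMainThmintroLGcompat} itself, whose proof (following the strategy of the ten-author paper) is the central technical input on the Galois side of this article; once it is in hand, matching the two derivative expressions is a careful but essentially formal bookkeeping exercise involving cyclotomic twists, the precise normalisation of trianguline versus ordinary parameters, and the way the non-$s_i$ condition selects the relevant direction in the two-dimensional weight space.
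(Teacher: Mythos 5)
Your proposal is essentially the paper's own argument: both parts of Theorem~\ref{ThirdMainThmIntroEqualLinv} are deduced by matching the ``automorphic Benois--Colmez--Greenberg--Stevens formula'' (Proposition~\ref{Prop:AutBCGSformula}, via Gehrmann--Rosso) against the Galois BCGS formula of Benois and Ding, with Theorem~\ref{TriangulationInFamilies} --- itself a consequence of Theorem~\ref{FourthMainThmintroLGcompat} --- supplying the trianguline family over $L[\epsilon]$; part (1) follows because in the essentially self-dual case $\Sigma$ lies in the pure weight line and the tangent vector $(1,0)$ satisfies both non-$s_i$ conditions, reducing everything to $-2\,da_p/dk|_{k=0} = \mathcal{L}(f)$. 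Two small inaccuracies worth flagging: first, the explicit non-$s_i$ conditions are $v_1 \neq v_2$ for $i=1$ and $v_1 + 2v_2 \neq 0$ for $i=2$ (the coefficients of $\log_p$ in $\partial\delta_{1,v}-\partial\delta_{2,v}$ and $\partial\delta_{2,v}-\partial\delta_{3,v}$ from \eqref{eq:kappa epsilon}), not $\dot{\kappa}_1+\dot{\kappa}_2\neq 0$ resp.\ $\dot{\kappa}_1\neq 0$; second, the automorphic side is not carried out by ``differentiating an eigensymbol'' but by localising the Kohlhaase--Schraen Koszul resolution of locally analytic principal series (Lemma~\ref{Lem:KoszulResolution}, Corollary~\ref{c: dual spectral sequence}) at the infinitesimal family over $L[\epsilon]$, which is the technical mechanism by which $\partial\delta_{i,v}-\partial\delta_{i+1,v}$ is seen to land in $\bL^{\opn{Aut}}_i(\pi)$. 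The paper also records an alternative, more elementary first proof of part (1) (under the extra hypotheses that $N$ is square-free and $\varepsilon_f$ is trivial) by directly comparing interpolation formulas with Rosso's $p$-adic $L$-function, which you omit but which is superseded by the BCGS argument.
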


Combining Theorems \ref{SecondMainThmIntroAutLinv} and \ref{ThirdMainThmIntroEqualLinv} yields Theorem \ref{FirstMainThmInIntro}, noting that $\pi$ always has either non-$s_1$-infinitesimal or non-$s_2$-infinitesimal deformations.

The first step in proving Theorem \ref{ThirdMainThmIntroEqualLinv} is to reintepret the automorphic $\mathcal{L}$-invariant in terms of parameters associated with the family $\Sigma$. More precisely, let $t_1 = \opn{diag}(p, 1, 1) \in G(\mbb{Q}_p)$ and $t_2 = \opn{diag}(p, p, 1) \in G(\mbb{Q}_p)$. Associated with $\Sigma$, we obtain Hecke eigensystems $\underline{\alpha}_1, \underline{\alpha}_2 \in \mathcal{O}^+(\Sigma)$ such that for $y \in \Sigma$, $\underline{\alpha}_i(y)$ denotes the eigenvalue of the normalised $U_{p}$-operator associated with $t_i$ on the $p$-adic automorphic form associated with the specialisation of the Hida family at $y$. We want to stress that, in the non-essentially self-dual case, it is not expected that there exists a Zariski dense set of classical specialisations in $\Sigma$; so in general, the specialisation of the Hida family at $y$ is only a $p$-adic automorphic form. Let $v \in T_{x_{\pi}}(\Sigma)$ be a non-zero tangent vector which does not factor through $s_{i}$, and normalise $v$ such that $v \circ s_{i}^{\vee} = 1$. Following the work of Gehrmann--Rosso \cite{GehrmannRosso}, we show 
\[
\mathcal{L}^{\opn{Aut}}_{\pi, i} = \left\{ \begin{array}{cc} -(\partial_v\underline{\alpha}^2_1 \underline{\alpha}_2^{-1})(0, 0) & \text{ if } i=1 \\ -(\partial_v \underline{\alpha}^2_2)(0, 0) & \text{ if } i=2 \end{array} \right.
\]
and refer to this as the ``automorphic Benois--Colmez--Greenberg--Stevens formula''.

On the other hand, let $L[\epsilon]$ denote the ring of dual numbers over $L$, which we view as an $\mathcal{O}_{\Sigma}$-algebra via the map $\mathcal{O}_{\Sigma} \to L[\epsilon]$, $f \mapsto f(0, 0) + (\partial_v f)(0, 0) \cdot \epsilon$. Then, if we can show that there exists a Galois representation $\rho_{\underline{\pi}} \colon G_{\mbb{Q}} \to \opn{GL}_3(L[\epsilon])$ with the following properties:
\begin{itemize}
    \item $\rho_{\underline{\pi}}$ modulo $\epsilon$ is isomorphic to $\rho_{\pi}$;
    \item one has
    \[
    \rho_{\underline{\pi}}|_{G_{\mbb{Q}_p}} \sim \smat{ \delta_1 & * & * \\ & \delta_2 \omega^{-1} & * \\ & & \delta_3 \omega^{-2} }
    \]
    with $\delta_1(p) = 1 + (\partial_v \underline{\alpha}_1)(0, 0) \cdot \epsilon$, $\delta_1(p)\delta_2(p) = 1 + (\partial_v \underline{\alpha}_2)(0, 0) \cdot \epsilon$, $\delta_1(p)\delta_2(p)\delta_3(p) = 1$, and $\delta_{i}|_{\mbb{Z}_p^{\times}}$ is trivial;
\end{itemize}
then the ``Galois Benois--Colmez--Greenberg--Stevens formula'' tells us that
\[
\mathcal{L}^{\opn{FM}}_{\pi, i} = \left\{ \begin{array}{cc} -(\partial_v \delta_1(p) \delta_2^{-1}(p))(0, 0) & \text{ if } i=1 \\ -(\partial_v \delta_2(p) \delta_3^{-1}(p))(0, 0) & \text{ if } i=2 \end{array} \right.
\]
(see \cite{Benois11,DingLinvariants}). If we can find such a Galois representation, comparing both Benois--Colmez--Greenberg--Stevens formulae yields Theorem \ref{ThirdMainThmIntroEqualLinv}. The second bullet point is the titular local-global compatibility at $\ell = p$ for Hida families. The existence of the Galois representation $\rho_{\underline{\pi}}$ with the required properties follows from Theorem \ref{FourthMainThmintroLGcompat} below, and in fact, we establish this local-global compatibility for $\opn{GL}_n$ for general $n \geq 2$. 

To describe this result, we introduce some notation. Let $S$ denote a finite set of primes containing $p$, and let $\mbf{T}^{S, \opn{ord}}$ denote the abstract Hecke algebra (over $\mathcal{O}_L$) of spherical Hecke operators away from $S$ and the $U_p$-Hecke operators at $p$. We let $R\tilde{\Gamma}(X_{\opn{GL}_n, K^p}, \mathcal{O}_L)^{\opn{ord}}$ denote the ordinary part of completed cohomology for the locally symmetric space associated with $\opn{GL}_n$, for some appropriate prime-to-$p$ level $K^p$ which is hyperspecial outside $S$. This complex naturally lives in the derived category of $\Lambda = \mathcal{O}_L[\![T(\mbb{Z}_p)]\!]$-modules, and we let $\mathcal{T} \defeq \mbf{T}^{S, \opn{ord}}(R\tilde{\Gamma}(X_{\opn{GL}_n, K^p}, \mathcal{O}_L)^{\opn{ord}})$ denote the image of $\mbf{T}^{S,\opn{ord}}$ in the endomorphism ring (in the derived category) of $R\tilde{\Gamma}(X_{\opn{GL}_n, K^p}, \mathcal{O}_L)^{\opn{ord}}$. 

By the work of Scholze \cite{ScholzeTorsion}, attached to any maximal ideal $\ide{m} \subset \mathcal{T}$ with residue field $k = \mathcal{O}_L/\varpi$, one has a continuous semisimple Galois representation
\[
\overline{\rho}_{\ide{m}} \colon G_{\mbb{Q}} \to \opn{GL}_n(k)
\]
such that the characteristic polynomial of  $\overline{\rho}_{\ide{m}}(\opn{Frob}_{\ell})$ coincides with the Hecke polynomial associated with $\ide{m}$ (see Theorem \ref{Thm:GaloisRepForTorClass} for a precise statement). As an example, if $\pi$ is a regular algebraic cuspidal automorphic representation of $\opn{GL}_n(\mbb{A})$ such that $\pi_p$ is (nearly) ordinary, then one can associate a maximal ideal $\ide{m} \subset \mathcal{T}$ and $\overline{\rho}_{\ide{m}} = \overline{\rho}_{\pi}$ is the mod $\varpi$ reduction of the Galois representation associated with $\pi$.

We prove the following theorem in \S \ref{LGcompatAtl=pSec}:

\begin{theoremx} \label{FourthMainThmintroLGcompat}
    Let $n \geq 2$ be any integer and $p > 2n$. Suppose that $\overline{\rho}_{\ide{m}}$ is absolutely irreducible and decomposed generic. With notation as above, there exists a nilpotent ideal $J \subset \mathcal{T}_{\ide{m}}$ (with nilpotence degree only depending on $n$), and a continuous Galois representation
    \[
    \rho \colon G_{\mbb{Q}} \to \opn{GL}_n( \mathcal{T}_{\ide{m}}/J)
    \]
    such that:
    \begin{enumerate}
        \item For all $\ell \not\in S$, the characteristic polynomial $\opn{det}(1-\opn{Frob}^{-1}_{\ell} X | \rho)$ equals the image of the standard Hecke polynomial at $\ell$ for $\opn{GL}_n$ in $\mathcal{T}_{\ide{m}}/J$ (see Definition \ref{HXHeckeDef}).
        \item For every $g \in G_{\mbb{Q}_p}$, the characteristic polynomial of $\rho(g)$ equals $\prod_{i=1}^n (X - \chi_i(g))$, where $\chi_i \colon G_{\mbb{Q}_p} \to (\mathcal{T}_{\ide{m}}/J)^{\times}$ are the standard Galois characters in Definition \ref{DefOfUnivGaloisChars}.
        \item For each $g_1, \dots, g_n \in G_{\mbb{Q}_p}$, we have
        \[
        (\rho(g_1) - \chi_1(g_1)) \cdots (\rho(g_n) - \chi_n(g_n)) =0.
        \]
    \end{enumerate}
\end{theoremx}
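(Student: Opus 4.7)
The plan is to adapt the strategy of \cite[\S 5]{10author} to general $n$ and to the setting of $\opn{GL}_n/\mbb{Q}$. The starting point is the existence theorem of Scholze applied to the completed cohomology $R\tilde{\Gamma}(X_{\opn{GL}_n, K^p}, \mathcal{O}_L)$: localising at $\ide{m}$ and passing through the usual pseudocharacter/determinant formalism, one obtains a continuous $n$-dimensional determinant of $G_{\mbb{Q}}$ valued in $\mathcal{T}_{\ide{m}}/J_0$ for some nilpotent $J_0$ whose characteristic polynomials match the Hecke polynomials in (1). Since $\overline{\rho}_{\ide{m}}$ is absolutely irreducible, Rouquier--Nyssen lifts this to a genuine representation $\rho$, at the cost of enlarging $J_0$ by a nilpotent ideal of degree bounded in terms of $n$. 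This immediately yields part (1), and, together with the definition of $\chi_i$ via normalised $U_{p,i}$-eigenvalues and the diamond action of $T(\mbb{Z}_p)$ on ordinary completed cohomology, part (2) is essentially formal from compatibility of the Hecke polynomial at $p$ with the $U_p$-operators.

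\textbf{The ordinarity identity (3).} This is the genuinely new content. The approach is to embed the symmetric space $X_{\opn{GL}_n}$ into a boundary stratum of a suitable (compactified) Shimura variety for a unitary group $H$, via a CM base-change argument and the identification of $\opn{GL}_n$ as a Levi of a parabolic of $H$ over the totally real subfield. On the Shimura variety side, the Galois representations attached to ordinary completed cohomology classes are known to satisfy the analogous ordinarity at $p$ (after work of Caraiani--Scholze and the 10-author paper, extended to ordinary families via Hida theory). The Hochschild--Serre spectral sequence associated with the Borel--Serre stratification, combined with a careful analysis of which boundary components contribute to the localisation at $\ide{m}$, then transfers this ordinarity from the Shimura variety side to $\rho$ itself, at the cost of composing with another nilpotent ideal of bounded degree. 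The precise form of (3), rather than just a Cayley--Hamilton identity, reflects the fact that the characters $\chi_i$ appearing on the diagonal are prescribed by the parabolic induction at the boundary.

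\textbf{Main obstacle.} The principal technical difficulty is controlling the nilpotent ideal $J$ throughout. Several steps each contribute: the pseudocharacter-to-representation lift, the interaction between the ordinary idempotent and the Hochschild--Serre spectral sequence (whose degeneration is at best controlled up to bounded length), the possible contribution of non-Borel parabolics to the boundary of the Shimura variety, and the matching of boundary Galois characters with the universal $\chi_i$ of Definition \ref{DefOfUnivGaloisChars}. Bounding the nilpotence degree of the composite $J$ by a constant depending only on $n$ requires a uniform truncation argument, and ruling out contamination from other parabolic cuspidal data is precisely where the decomposed generic hypothesis on $\overline{\rho}_{\ide{m}}$ is used: it ensures that after localisation, the boundary cohomology sees only the stratum carrying the characters $\chi_1,\dots,\chi_n$, so that (3) can be read off cleanly from ordinarity on the Shimura-variety side.

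\textbf{Expected hardest step.} I expect the hardest step to be the passage from the ordinarity on the Shimura-variety side to ordinarity for $\rho$ itself, together with the identification of the diagonal characters as exactly $\chi_1,\dots,\chi_n$. Both require a precise understanding of how the ordinary projector interacts with the boundary maps in the Borel--Serre spectral sequence, and a comparison of two a priori different sources of characters (those arising from the Shimura-variety Galois representations restricted to the Levi, and the universal characters coming from the $U_p$-action on the $\opn{GL}_n$ side). Making this comparison canonical, rather than only up to permutation, is where the decomposed generic condition does its most decisive work.
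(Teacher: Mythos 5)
You correctly identify the high-level strategy --- realise the $\opn{GL}_n$ Hecke eigensystem in the boundary cohomology of a Shimura variety and transport known local-global compatibility from the Shimura-variety side --- but the specific route you propose has a real gap. You want to embed $X_{\opn{GL}_n/\mbb{Q}}$ into a boundary stratum of a \emph{unitary} Shimura variety via a ``CM base-change argument'', i.e., to exploit the fact that $\opn{Res}_{F/\mbb{Q}}\opn{GL}_n$ appears as the Siegel Levi of a quasi-split unitary group attached to a CM field $F$. That is exactly the setting of \cite[\S5]{10author}, which treats $\opn{GL}_n/F$. The problem is that there is no base-change map from the derived Hecke algebra $\mathcal{T}_{\ide{m}}$ acting on ordinary completed cohomology for $\opn{GL}_n/\mbb{Q}$ to the corresponding algebra for $\opn{GL}_n/F$: torsion classes and non-classical points of ordinary Hida families do not base change, so the very object you need to triangulate does not become visible over $F$. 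The paper flags this explicitly in a remark following the statement of the theorem, noting that the cases obtained here cannot be deduced from the CM-field results of \cite{10author} or \cite{McDonald-Trianguline}.

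The paper's fix is to take $\tilde{G} = \opn{GSp}_{2n}$ over $\mbb{Q}$, whose Siegel parabolic has Levi $M \cong \opn{GL}_n \times \opn{GL}_1$ \emph{over $\mbb{Q}$}, so the Borel--Serre boundary of $X_{\opn{GSp}_{2n}}$ directly carries the $\opn{GL}_n/\mbb{Q}$ locally symmetric space with no base change required. The costs are (a) reliance on Arthur's endoscopic classification for symplectic groups (still conditional) to get the characteristic-zero Galois representations and their local-global compatibility at $p$ (Theorem~\ref{Thm:PiCARoftildeGLG}), and (b) the vanishing theorem needed to control torsion is the recent Yang--Zhu result for abelian-type Shimura varieties (Theorem~\ref{Thm:YZVanishing}), not Caraiani--Scholze. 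Two further corrections: your claim that part (2) is ``essentially formal'' is incorrect --- the characteristic-polynomial identity for $\rho(g)$ with $g \in G_{\mbb{Q}_p}$ is a genuine local-global compatibility statement at $p$, proved together with (3) through the boundary argument, and it cannot be read off from the Hecke side alone; and your ``Hochschild--Serre spectral sequence for the Borel--Serre stratification'' is more precisely the degree-shifting filtration via the functors $I_w$, $I^\circ_w$ of \cite[\S5.3]{10author} (see Proposition~\ref{Prop:AbstractDegShifting} and Proposition~\ref{Prop:DescFiltOnOrd}), whose finite length depending only on $n$ is what gives the uniform bound on the nilpotence degree of $J$; there is also a twisting step (Lemma~\ref{Lem:CharacterAvoidance}) needed to make $\overline{\rho}_{\tilde{\ide{m}}}$ decomposed generic, which your proposal does not account for.
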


\begin{remark}
We note that conditions (2) and (3) are a way to encode the local-global compatibility at $\ell = p$ purely in terms of the continuous determinant associated with $\rho$. Furthermore, the existence of such a Galois representation satisfying (1) essentially follows from \cite[Theorem 1.1.6]{CGHJMRS} (see also \cite{ScholzeTorsion, NewtonThorneTorsion}) -- our main contribution is parts (2) and (3). In fact, \cite[Theorem 1.1.6]{CGHJMRS} implies that there exists a Galois representation satisfying (1) with $J=0$, however it does not seem clear (to the authors) how to remove the nilpotent ideal when establishing properties (2) and (3).
\end{remark}

The proof of Theorem \ref{FourthMainThmintroLGcompat} occupies all of \S\ref{LGcompatAtl=pSec}, and follows the strategy of \cite[\S5]{10author}. More precisely, $\rho$ is constructed by realising the Hecke eigensystem for $\mathcal{T}_{\ide{m}}$ in the boundary cohomology of the Shimura variety associated with $\opn{GSp}_{2n}$, and using known local-global compatibility results at $\ell = p$ for standard Galois representations associated with automorphic representations of $\opn{GSp}_{2n}$ (i.e., known local-global compatibility results for essentially self-dual cuspidal automorphic representations of $\opn{GL}_{2n+1}$ in, e.g., \cite{BGGTII}). One of the key ingredients is the recent vanishing results for the mod $p$ cohomology of abelian-type Shimura varieties in \cite{YangZhu}.

\begin{remark}
The theorem proves cases of a conjecture of Hansen \cite[Conj.\ 1.1.2(iii)]{Han17}, which says that the Galois representation associated to (possibly non-classical) points in eigenvarieties are trianguline. In particular, it shows this for any non-Eisenstein, decomposed generic ordinary point in the $\GL_n$-eigenvariety. We remark that, over imaginary CM fields, cases of Hansen's conjecture have been obtained in \cite{10author} (in the ordinary setting) and \cite{McDonald-Trianguline} (in the finite-slope setting), however it is not possible to deduce the cases obtained from Theorem \ref{FourthMainThmintroLGcompat} from these works. 
\end{remark}

\begin{remark}
    The proof of Theorem \ref{FourthMainThmintroLGcompat} relies on the endoscopic classification for symplectic groups in \cite{ArthurBook}. Although \emph{op.cit.} is still conditional on forthcoming work, there has been a tremendous amount of progress towards making this unconditional in \cite{LocalIntertwiningRelations}. At present, the only thing that remains is the proof of the twisted weighted fundamental lemma.
\end{remark}

\subsection{Structure of the paper}
In Part 1 (\S\ref{sec:extensions of steinberg}--\S\ref{sec:automorphic EZF}), we prove Theorem \ref{SecondMainThmIntroAutLinv}. In \S\ref{sec:extensions of steinberg}, we recall the extensions of Steinberg representations, and describe the interplay between smooth, continuous, and locally analytic extensions. This section also contains the crucial explicit computation of the cocycle $c_{1,\lambda}$ (in \S\ref{sec:explicit computation}). In \S\ref{sec:p-arithmetic cohomology} we recap $p$-arithmetic cohomology and fix conventions for level subgroups and Eisenstein classes. The technical heart of Part 1 is \S\ref{sec:abstract exceptional zero formula}, where we construct our evaluation maps, and use them to attach a $p$-adic measure and period to an appropriate $p$-arithmetic class. The main result of this section is an abstract `exceptional zero formula' relating this measure and period (Theorem \ref{AbstractLinvTheorem}). In \S\ref{sec:recall L-invariants} we recall the $p$-adic $L$-functions and automorphic $\mathcal{L}$-invariants for $\pi$, and prove the duality $\mathcal{L}_{\pi,2}^{\opn{Aut}} = \mathcal{L}_{\pi^\vee,1}^{\opn{Aut}}$. We conclude Part 1 in \S\ref{sec:automorphic EZF}, where we draw all of this together in our specific context to deduce Theorem \ref{SecondMainThmIntroAutLinv}.

In Part 2, we prove Theorems \ref{ThirdMainThmIntroEqualLinv} and \ref{FourthMainThmintroLGcompat}. In \S\ref{sec:symm square} we first prove Theorem \ref{ThirdMainThmIntroEqualLinv} in essentially self-dual cases via a comparison to Rosso's work \cite{RossoSym2, RossoSym2II}. In \S\ref{Sec:NSDrepresentations}, we study the $\GL(3)$-eigenvariety and Koszul complexes of $p$-adic distributions, implementing the strategy of Gehrmann--Rosso \cite{GehrmannRosso} to prove Theorem \ref{ThirdMainThmIntroEqualLinv} in general, up to a crucial trianguline hypothesis (Theorem \ref{TriangulationInFamilies}). In \S\ref{LGcompatAtl=pSec}, the most important section in Part 2, we prove Theorem \ref{FourthMainThmintroLGcompat} and show that it implies Theorem \ref{TriangulationInFamilies}, thus completing the proof of Theorem \ref{ThirdMainThmIntroEqualLinv}  (and hence Theorem \ref{FirstMainThmInIntro}).

\subsection{Acknowledgements}
We would particularly like to thank James Newton for many discussions on the content of \S\ref{LGcompatAtl=pSec}. The influence of the work of Lennart Gehrmann on Part I of this paper will be evident to the reader, and we thank him for many conversations about his work. We would also like to thank Vaughan McDonald for comments on an earlier version of this article.

DBS was supported by ECOS230025, Anid Fondecyt Regular grant 1241702 and  a Lluís Santaló Visiting Position funded by the CRM in Barcelona. AG was funded by UK Research and Innovation grant MR/V021931/1. CW was supported by EPSRC Postdoctoral Fellowship EP/T001615/2. For the purpose of Open Access, the authors have applied a CC BY public copyright licence to any Author Accepted Manuscript (AAM) version arising from this submission.

\subsection{Notation}
 Let $G = \GL_3$, and let $H = \GL_2 \times \GL_1$, diagonally embedded in $G$. Let $L/\Q_p$ be a finite extension with ring of integers $\mathcal{O}_L$ and uniformiser $\varpi \in \cO_L$. We let $I_n$ denote the $(n \times n)$ identity matrix. For a reductive group $G/\mbb{Q}$, let $G(\mbb{Q})_+ \defeq G(\mbb{Q}) \cap G(\mbb{R})_+$, where $G(\mbb{R})_+ \subset G(\mbb{R})$ denotes the connected component of the identity.

%%============================================================================

\part{Automorphic \texorpdfstring{$\mathcal{L}$}{L}-invariants}
 
\section{Extensions of Steinberg representations}\label{sec:extensions of steinberg}

In this section, we study extensions of Steinberg representations, developing and extending \cite[\S 2]{AutomorphicLinvariants} in the specific setting of $\GL_3$. Crucially, in Proposition 
\ref{prop:Clambda1LuxembourgProp}, we explicitly compute values of a cocycle attached to a particular extension. We also prove compatibilities between locally analytic, continuous and smooth extension classes, with coefficients in $L, \cO_L$ and $\cO_L/\varpi^s$.

\subsection{Generalised Steinberg representations}

Let $R$ be a $\Z$-algebra. We denote the standard upper triangular Borel subgroup of $G$ by $B$. We let $P_1$ (resp. $P_2$) denote the standard parabolic of $G = \opn{GL}_3$ of type $(1, 2)$ (resp. $(2, 1)$). Let $M_1$ (resp.\ $M_2$) be its Levi subgroup.

For any parabolic $P \subset \GL_3$, we write $\overline{P}$ for the opposite parabolic.

\begin{definition}
    For $i=1, 2$, let $\opn{I}^{\opn{sm}}_{\overline{P}_i}(R)$ denote the $R[G(\Q_p)]$-module of smooth functions $f \colon \overline{P}_i(\Q_p) \backslash G(\Q_p) \to R$, with $G(\Qp)$ acting by right translation. Let $\opn{St}^{\opn{sm}}_i(R)$ denote the quotient of $\opn{I}^{\opn{sm}}_{\overline{P}_i}(R)$ by the constant functions. 

    Let $\opn{I}^{\opn{sm}}_{\overline{B}}(R)$ denote the $R[G(\Q_p)]$-module of smooth functions $f \colon \overline{B}(\Q_p) \backslash G(\Q_p) \to R$. We set
    \[
    \opn{St}^{\opn{sm}}(R) \defeq \opn{I}^{\opn{sm}}_{\overline{B}}(R) / \left( \opn{I}^{\opn{sm}}_{\overline{P}_1}(R) + \opn{I}^{\opn{sm}}_{\overline{P}_2}(R) \right) .
    \]
\end{definition}

We have the following:

\begin{lemma} \label{SteinbergProjLemma}
    For $i=1, 2$, the representations $\opn{I}^{\opn{sm}}_{\overline{P}_i}(R)$ and $\opn{St}_i^{\opn{sm}}(R)$ are projective $R$-modules which satisfy $\opn{I}^{\opn{sm}}_{\overline{P}_i}(R) = \opn{I}^{\opn{sm}}_{\overline{P}_i}(\Z) \otimes_{\Z} R$ and $\opn{St}_i^{\opn{sm}}(R) = \opn{St}_i^{\opn{sm}}(\Z) \otimes_{\Z} R$.
\end{lemma}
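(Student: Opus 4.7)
The plan is to realise $\opn{I}^{\opn{sm}}_{\overline{P}_i}(R)$ as a filtered colimit of finite free $R$-modules coming from invariants under shrinking open compact subgroups, deduce freeness and base-change compatibility at once, and then split off the constants to handle the Steinberg quotient.

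The key geometric input is that $X_i := \overline{P}_i(\mbb{Q}_p) \backslash G(\mbb{Q}_p)$ is a compact $p$-adic analytic flag variety, hence a compact totally disconnected Hausdorff space. I would fix a cofinal descending chain $K_0 \supset K_1 \supset \cdots$ of open compact subgroups of $G(\mbb{Q}_p)$ (e.g.\ principal congruence subgroups). Each double-coset space $X_i/K_n$ is finite, and the space $V_n(R)$ of $K_n$-right-invariant $R$-valued functions on $X_i$ is the free $R$-module on the indicator functions of the $K_n$-orbits. Smoothness means $K_n$-invariance for some $n$, so
\[
\opn{I}^{\opn{sm}}_{\overline{P}_i}(R) \;=\; \varinjlim_n V_n(R).
\]
At each refinement step I would pick one distinguished $K_{n+1}$-orbit inside every $K_n$-orbit, retain the old basis vector for that $K_n$-orbit, and adjoin the indicators of the remaining $K_{n+1}$-orbits; the enlarged set is a free $R$-basis of $V_{n+1}(R)$. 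Iterating yields a free $R$-basis $\mathcal{B}$ of the colimit whose indexing set depends only on the chain $\{K_n\}$, not on $R$. This simultaneously shows that $\opn{I}^{\opn{sm}}_{\overline{P}_i}(R)$ is free (hence projective) and identifies it naturally with $\opn{I}^{\opn{sm}}_{\overline{P}_i}(\mbb{Z}) \otimes_{\mbb{Z}} R$.

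For the Steinberg quotient, evaluation at any fixed point $x_0 \in X_i$ gives an $R$-linear retraction of the inclusion $R \cdot \mathbbm{1} \hookrightarrow \opn{I}^{\opn{sm}}_{\overline{P}_i}(R)$ of constant functions, so
\[
\opn{I}^{\opn{sm}}_{\overline{P}_i}(R) \;=\; R \cdot \mathbbm{1} \;\oplus\; \opn{St}_i^{\opn{sm}}(R).
\]
Hence $\opn{St}_i^{\opn{sm}}(R)$ is a direct summand of a free $R$-module, and therefore projective; the naturality of this splitting in $R$ upgrades the previous isomorphism to $\opn{St}_i^{\opn{sm}}(R) = \opn{St}_i^{\opn{sm}}(\mbb{Z}) \otimes_{\mbb{Z}} R$. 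I do not anticipate a substantive obstacle: the only point requiring care is that the basis constructed in the colimit step be manifestly functorial in $R$, which is automatic since it is built from indicator functions of fixed clopen subsets of $X_i$.
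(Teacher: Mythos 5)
Your proof is correct, and it takes a genuinely different route from the paper's. The paper disposes of the base-change statements with a one-line appeal to compactness and then reduces projectivity of $\opn{St}_i^{\opn{sm}}(\Z)$ to Gehrmann's ``flawlessness'' theorem \cite[Thm.\ 2.6]{AutomorphicLinvariants}, combining it with the same $\Z$-module splitting $\opn{I}^{\opn{sm}}_{\overline{P}_i}(\Z) \cong \opn{St}_i^{\opn{sm}}(\Z) \oplus \Z$ that you obtain via evaluation at a point. You instead prove everything from scratch: the filtered colimit $\opn{I}^{\opn{sm}}_{\overline{P}_i}(R) = \varinjlim_n V_n(R)$ with $V_n(R)$ free on $K_n$-orbit indicators, together with the orbit-refinement construction of a compatible chain of bases $B_0 \subset B_1 \subset \cdots$, yields a free basis of the whole space by indicator functions of fixed clopen sets, and hence simultaneously freeness and $R$-independence. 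The one inductive step worth writing out (that $B_{n+1} := B_n \cup \{1_O : O\text{ a non-distinguished }K_{n+1}\text{-orbit}\}$ is a basis of $V_{n+1}$) can be justified by the internal direct sum $V_{n+1} = V_n \oplus F$, where $F$ is the free submodule on indicators of non-distinguished $K_{n+1}$-orbits: $V_n \cap F = 0$ since any element of $V_n$ is constant on each $K_n$-orbit while any element of $F$ vanishes on all distinguished $K_{n+1}$-orbits, and $V_n + F$ contains every $K_{n+1}$-orbit indicator. What you gain is a fully self-contained, elementary argument that does not invoke the general theory of flawless Steinberg modules; what the paper gains is brevity and the ability to cite a statement proved once for general reductive groups. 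Both are fine; yours has the modest advantage of making the base-change isomorphism completely explicit, which the paper leaves as ``one can easily verify.''
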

\begin{proof}
    Since $\overline{P}_i(\Q_p) \backslash G(\Q_p)$ is compact, one can easily verify that $\opn{I}^{\opn{sm}}_{\overline{P}_i}(R) = \opn{I}^{\opn{sm}}_{\overline{P}_i}(\Z) \otimes_{\Z} R$ and $\opn{St}_i^{\opn{sm}}(R) = \opn{St}_i^{\opn{sm}}(\Z) \otimes_{\Z} R$. Furthermore, one has $\opn{I}^{\opn{sm}}_{\overline{P}_i}(\Z) \cong \opn{St}_i^{\opn{sm}}(\Z) \oplus \Z$ as $\Z$-modules, so it is enough to show that $\opn{St}_i^{\opn{sm}}(\Z)$ is a projective $\Z$-module. But this follows from \cite[Thm.\  2.6]{AutomorphicLinvariants}.
\end{proof}

We also need continuous and locally analytic variants. For $P \in \{ P_1, P_2, B \}$, let $\opn{I}^{\opn{cts}}_{\overline{P}}(\mathcal{O}_L)$ (resp. $\opn{I}^{\opn{cts}}_{\overline{P}}(L)$, resp. $\opn{I}^{\opn{la}}_{\overline{P}}(L)$) denote the space of continuous functions $G(\Q_p) \to \mathcal{O}_L$ (resp. continuous functions $G(\Q_p) \to L$, resp. locally analytic functions $G(\Q_p) \to L$) which are left-invariant under the action of $\overline{P}(\Q_p)$. These representations have a continuous action of $G(\Q_p)$ by right-translation; this action is locally analytic on $\opn{I}_{\overline{P}}^{\opn{la}}(L)$. 

\begin{definition}\label{def:generalised steinberg}
    For $i=1, 2$ and $(R, \bullet) \in \{ (\mathcal{O}_L, \opn{cts}), (L, \opn{cts}), (L, \opn{la}) \}$, let $\opn{St}_i^{\bullet}(R)$ denote the quotient of $\opn{I}_{\overline{P}_i}^{\bullet}(R)$ by the submodule of constant functions. We also set
    \[
    \opn{St}^{\bullet}(R) \defeq \opn{I}_{\overline{B}}^{\bullet}(R)/\left( \opn{I}_{\overline{P}_1}^{\bullet}(R) + \opn{I}_{\overline{P}_2}^{\bullet}(R) \right) .
    \]

\end{definition}

\begin{remark}
	Note that $\opn{I}^{\opn{la}}_{\overline{P}}(L)$ can be seen as the locally analytic vectors in the admissible Banach representation $\opn{I}_{\overline{P}}^{\opn{cts}}(L)$ (in the sense of \cite[\S3]{ST02} and \cite{Eme17}). 
	
	Additionally, $\opn{St}_i^{\opn{la}}(L)$ (resp. $\opn{St}^{\opn{la}}(L)$) can also be seen as the locally analytic vectors in $\opn{St}_i^{\opn{cts}}(L)$ (resp. $\opn{St}^{\opn{cts}}(L)$), since passing to locally analytic vectors is an exact functor on the category of admissible Banach representations (see \cite[Thm.\ 7.1]{ST03}, or more precisely \cite[Thm.\ 2.2.3]{Pan22}).
\end{remark}

\begin{notation} \label{BulletRLambdaNotation}
    In what follows, when we write $(\bullet, R, \lambda)$, we mean one of the following cases:
    \begin{enumerate}
        \item $\bullet = \opn{sm}$, we take $R$ to be a $\Z$-algebra, and we let $\lambda \colon \Q_p^{\times} \to R$ be a smooth homomorphism (where the target is equipped with the additive group structure).
        \item $\bullet = \opn{cts}$, we take $R = \mathcal{O}_L$ or $R = L$, and we let $\lambda \colon \Q_p^{\times} \to R$ be a continuous homomorphism.
        \item $\bullet = \opn{la}$, we take $R = L$, and we let $\lambda \colon \Q_p^{\times} \to R$ be a continuous homomorphism (which is automatically locally analytic).
    \end{enumerate}
    When we write $\opn{St}^{\bullet}(R)^*$ and $\opn{St}_i^{\bullet}(R)^*$, we mean the $R$-linear dual in case (1), and the continuous $R$-linear dual in cases (2) and (3) (for the topologies on both the target and source).
\end{notation}

We have the following analogue of Lemma \ref{SteinbergProjLemma}:

\begin{lemma}\label{lem:steinberg base change}
Let $\ast \in \{1, 2, \varnothing\}$.
\begin{itemize}
\item[(i)] For $s\geq 1$ a positive integer, we have $\opn{St}_{\ast}^{\mathrm{cts}}(\cO_L) \otimes_{\cO_L} \cO_L/\varpi^s \cong \opn{St}_{\ast}^{\mathrm{sm}}(\cO_L/\varpi^s)$.
\item[(ii)] We have $\opn{St}_{\ast}^{\mathrm{cts}}(\cO_L) \otimes_{\cO_L} L \cong \opn{St}_{\ast}^{\mathrm{cts}}(L)$.
\end{itemize}
\end{lemma}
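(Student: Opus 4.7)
The plan is to reduce both statements to analogous base-change claims at the level of the parabolic inductions $\opn{I}^{\bullet}_{\overline{P}}(-)$, and then pass through the quotients defining generalised Steinberg representations. The key topological input is that $\overline{P}(\Qp)\backslash G(\Qp)$ is compact and totally disconnected for any parabolic $P \subset G$, so continuous functions into a discrete ring are automatically locally constant, and continuous functions into a normed field have bounded image.

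For part (i), I would first construct a natural map
$$
\opn{I}^{\mathrm{cts}}_{\overline{P}}(\cO_L)/\varpi^s \longrightarrow \opn{I}^{\mathrm{sm}}_{\overline{P}}(\cO_L/\varpi^s).
$$
Surjectivity follows because any locally constant function $\overline{P}(\Qp)\backslash G(\Qp) \to \cO_L/\varpi^s$ takes only finitely many values on the compact domain, and so admits a pointwise lift to a locally constant, hence continuous, $\cO_L$-valued function. Injectivity follows because any continuous $f \colon \overline{P}(\Qp)\backslash G(\Qp) \to \cO_L$ taking values in $\varpi^s \cO_L$ is $\varpi^s$ times a continuous $\cO_L$-valued function, so the kernel of the reduction is exactly $\varpi^s \opn{I}^{\mathrm{cts}}_{\overline{P}}(\cO_L)$. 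This gives an isomorphism at the level of parabolic inductions; to pass to Steinberg I observe that it carries the constants to the constants, and $\opn{I}^{\mathrm{cts}}_{\overline{P}_i}$ to $\opn{I}^{\mathrm{sm}}_{\overline{P}_i}$, so right-exactness of $(-)\otimes_{\cO_L}\cO_L/\varpi^s$ yields (i).

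For part (ii), the argument is more direct: any continuous $f \colon \overline{P}(\Qp)\backslash G(\Qp) \to L$ has compact, hence bounded, image, so $\varpi^n f \in \opn{I}^{\mathrm{cts}}_{\overline{P}}(\cO_L)$ for sufficiently large $n$. Together with the obvious injectivity of $\opn{I}^{\mathrm{cts}}_{\overline{P}}(\cO_L) \otimes_{\cO_L} L \to \opn{I}^{\mathrm{cts}}_{\overline{P}}(L)$, this identifies the latter with $\opn{I}^{\mathrm{cts}}_{\overline{P}}(\cO_L)[1/\varpi]$. Since inverting $\varpi$ is exact, it commutes with the quotient by constants (or by $\opn{I}^{\mathrm{cts}}_{\overline{P}_1}+\opn{I}^{\mathrm{cts}}_{\overline{P}_2}$ in the $\ast = \varnothing$ case), yielding (ii).

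I expect the only mildly delicate point to be the $\ast = \varnothing$ case, where one must check that the base-change isomorphisms at the level of parabolic inductions are compatible with the inclusions $\opn{I}^{\bullet}_{\overline{P}_i}(-) \hookrightarrow \opn{I}^{\bullet}_{\overline{B}}(-)$ and that the induced maps on the quotients are isomorphisms. This is a formal matter: both inclusions are defined by restricting the same ambient space of functions on $G(\Qp)$ to successively larger classes of left-invariance, so our maps are manifestly compatible with them; right-exactness and the kernel description above then close the argument.
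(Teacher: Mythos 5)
Your proof is correct and essentially follows the same route as the paper: establish the base-change isomorphisms at the level of the parabolic inductions $\opn{I}^{\bullet}_{\overline{P}}(-)$ using compactness of $\overline{P}(\Qp)\backslash G(\Qp)$, then descend through the defining exact sequences for the generalised Steinberg representations. Two mild differences in bookkeeping, neither a gap: you verify the isomorphism $\opn{I}^{\mathrm{cts}}_{\overline{P}}(\cO_L)/\varpi^s \cong \opn{I}^{\mathrm{sm}}_{\overline{P}}(\cO_L/\varpi^s)$ explicitly (identifying surjectivity with local constancy of the target and injectivity with divisibility by $\varpi^s$), where the paper merely asserts it; and where the paper invokes $\mathrm{Tor}_1^{\cO_L}(\opn{St}_i^{\mathrm{cts}}(\cO_L),\cO_L/\varpi^s) = 0$ to keep the tensored sequence exact on the left, you get away with right-exactness of $(-)\otimes_{\cO_L}\cO_L/\varpi^s$ plus the observation that the constants (resp.\ the sum $\opn{I}^{\mathrm{sm}}_{\overline{P}_1}+\opn{I}^{\mathrm{sm}}_{\overline{P}_2}$) are precisely the image of the submodule you are quotienting by. That is a legitimate simplification, since you only need to identify the cokernel, not the whole sequence.
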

\begin{proof}
Let $P \in \{P_1,P_2,B\}$. If $f \colon \overline{P}(\Qp)\backslash G(\Qp) \to \cO_L$ is continuous, then its reduction modulo $\varpi^s$ is locally constant (i.e., smooth), so 
\begin{equation}\label{eq:smooth mod}
\opn{I}_{\overline{P}}^{\mathrm{cts}}(\cO_L) \otimes_{\cO_L}\cO_L/\varpi^s \cong \opn{I}^{\mathrm{sm}}_{\overline{P}}(\cO_L/\varpi^s).
\end{equation}
Also any continuous $g :  \overline{P}(\Qp)\backslash G(\Qp) \to L$ is bounded by compactness, so we have 
\begin{equation}
\label{eq:cts extend}\opn{I}_{\overline{P}}^{\mathrm{cts}}(\cO_L) \otimes_{\cO_L}L \cong \opn{I}^{\mathrm{cts}}_{\overline{P}}(L).
\end{equation}

If $\ast = i = 1,2$, then $\opn{St}^{\opn{cts}}_i(\cO_L)$ is defined by the short exact sequence 
\[
0 \to \cO_L \to \opn{I}^{\opn{cts}}_{\overline{P}_i}(\cO_L) \to \opn{St}^{\opn{cts}}_{i}(\cO_L) \to 0.
\]
Tensoring with $\cO_L/\varpi^s$ and using \eqref{eq:smooth mod} yields a short exact sequence 
\[
0 \to \cO_L/\varpi^s \to \opn{I}^{\opn{sm}}_{\overline{P}_i}(\cO_L/\varpi^s) \to \opn{St}^{\opn{cts}}_{i}(\cO_L)\otimes_{\cO_L}\cO_L/\varpi^s \to 0,
\]
noting this remains exact since 
\[
    \mathrm{Tor}_1^{\cO_L}(\opn{St}^{\mathrm{cts}}_{i}(\cO_L),\cO_L/\varpi^s) = \opn{St}^{\mathrm{cts}}_{i}(\cO_L)[\varpi^s] = 0.
\]
Similarly for $\ast = \varnothing$ we obtain an exact sequence 
\[
0 \to \opn{I}_{\overline{P}_1}^{\mathrm{sm}}(\cO_L/\varpi^s)+\opn{I}_{\overline{P}_2}^{\mathrm{sm}}(\cO_L/\varpi^s) \to \opn{I}^{\opn{sm}}_{\overline{B}}(\cO_L/\varpi^s) \to \opn{St}^{\opn{cts}}(\cO_L)\otimes_{\cO_L}\cO_L/\varpi^s \to 0.
\]
As these sequences define $\opn{St}^{\mathrm{sm}}_{\ast}(\cO_L/\varpi^s)$, we deduce (i). Part (ii) follows similarly after tensoring the sequences defining $\mathrm{St}_{\ast}^{\opn{cts}}(\cO_L)$ with $L$, noting we retain exactness as $L$ is $\cO_L$-flat.
\end{proof}

\subsection{Extensions I} \label{Sub:ExtensionsI}

Let $(\bullet, R, \lambda)$ be as in Notation \ref{BulletRLambdaNotation}, so $\lambda$ is an additive character of $\Qp^\times$. We now construct extensions of Steinberg representations (and, ultimately, 1-cocycles) attached to such characters. Our chief interest will later be in the characters $\lambda = \log_p$ and $\mathrm{ord}_p$.   

Let $v_1$ be the character of $M_1(\Qp) = \GL_1(\Qp)\times\GL_2(\Qp)$ defined by
\[
v_1\left( \begin{smallmatrix} a & & \\ & b & c \\ & d & e \end{smallmatrix} \right) = a^{-1} \cdot \opn{det}\left( \begin{smallmatrix} b & c \\ d & e \end{smallmatrix} \right).
\]
This naturally extends to a character of $\overline{P}_1(\Qp)$ via projection to the Levi $M_1(\Qp)$. Denote by $\tau_{\lambda}$ the two dimensional representation of $\overline{P}_1(\Q_p)$ given by
\[
\tau_\lambda: p \longmapsto \tbyt{1}{(\lambda \circ v_1)(p)}{}{1}. 
\]
 We let $\opn{I}^{\bullet}_{\overline{P}_1}(\tau_{\lambda})$ denote the $\bullet$-induction of this representation, that is
\begin{align*}
\opn{I}^{\bullet}_{\overline{P}_1}(\tau_\lambda) = \Big\{f : G(\Qp) \to R^{\oplus 2} : f&\text{ satisfies property $\bullet$, and}
\\
&f(p \cdot -) = \tau_{\lambda}(p) \cdot f(-) \ \forall p \in \overline{P}_1(\Qp)\Big\}.
\end{align*}

Note that:
\begin{itemize}
	\item the map $R \to R^{\oplus 2}$, $r \mapsto (r,0)$ induces an $R[G(\Qp)]$-module map $\jmath_1 : \opn{I}_{\overline{P}_1}^\bullet(R) \to \opn{I}_{\overline{P}_1}^\bullet(\tau_\lambda)$, and
	\item the map $R^{\oplus 2} \to R$, $(r_1,r_2) \mapsto r_2$ induces an $R[G(\Qp)]$-module map $\jmath_2 : \opn{I}_{\overline{P}_1}^\bullet(\tau_\lambda) \to \opn{I}_{\overline{P}_1}^\bullet(R)$. 
\end{itemize}

\begin{lemma}\label{lem:short exact sequence extension}
	We have a short exact sequence 
\begin{equation} \label{IndExactSeqEqn}
	0 \to \opn{I}^{\bullet}_{\overline{P}_1}(R) \xrightarrow{\ \ \jmath_1 \ \ } \opn{I}^{\bullet}_{\overline{P}_1}(\tau_{\lambda}) \xrightarrow{\ \ \jmath_2 \ \ } \opn{I}^{\bullet}_{\overline{P}_1}(R) \to 0 .
\end{equation}
\end{lemma}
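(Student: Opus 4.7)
The approach is to realise \eqref{IndExactSeqEqn} as $\opn{I}^\bullet_{\overline{P}_1}(-)$ applied to the short exact sequence of $\overline{P}_1(\Q_p)$-representations
\[
0 \to R \to \tau_\lambda \to R \to 0,
\]
in which the outer terms carry the trivial action, the inclusion is $r \mapsto (r, 0)$, and the projection is $(r_1,r_2) \mapsto r_2$. As an $R$-module $\tau_\lambda$ is split by $r \mapsto (0,r)$, though the splitting is not $\overline{P}_1(\Q_p)$-equivariant unless $\lambda = 0$. Applying $\opn{I}^{\bullet}_{\overline{P}_1}(-)$ yields exactly the maps $\jmath_1$ and $\jmath_2$; injectivity of $\jmath_1$ and $\jmath_2 \circ \jmath_1 = 0$ are immediate.

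For the exactness in the middle I would work in coordinates: an element $f \in \opn{I}^{\bullet}_{\overline{P}_1}(\tau_\lambda)$ is a pair $(f_1, f_2)$ of $\bullet$-functions $G(\Q_p) \to R$, and the defining transformation $f(pg) = \tau_\lambda(p) f(g)$ for $p \in \overline{P}_1(\Q_p)$ translates into
\[
f_2(pg) = f_2(g), \qquad f_1(pg) = f_1(g) + (\lambda \circ v_1)(p) \cdot f_2(g),
\]
using crucially that $\lambda \circ v_1$ is an \emph{additive} homomorphism $\overline{P}_1(\Q_p) \to R$, since $\lambda$ maps into the additive group of $R$ and $v_1$ is multiplicative. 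If $\jmath_2(f) = f_2 = 0$, the second identity reduces to $\overline{P}_1(\Q_p)$-invariance of $f_1$, and hence $f = \jmath_1(f_1)$.

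The substantive step is surjectivity of $\jmath_2$. Given $f_2 \in \opn{I}^{\bullet}_{\overline{P}_1}(R)$, I would fix a $\bullet$-section $\sigma \colon \overline{P}_1(\Q_p)\backslash G(\Q_p) \to G(\Q_p)$ of the projection, write $g = p(g) \cdot \sigma(\bar g)$ with $p(g) \in \overline{P}_1(\Q_p)$ depending on $g$ in a $\bullet$-fashion, and define
\[
f_1(g) \defeq (\lambda \circ v_1)(p(g)) \cdot f_2(g).
\]
Additivity of $\lambda \circ v_1$ then forces precisely the twisted equivariance above, so $(f_1, f_2) \in \opn{I}^{\bullet}_{\overline{P}_1}(\tau_\lambda)$ and $\jmath_2(f_1, f_2) = f_2$.

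The main obstacle is exhibiting a $\bullet$-section $\sigma$ in each setting. For $\bullet = \opn{sm}$, $\overline{P}_1(\Q_p) \backslash G(\Q_p)$ is compact totally disconnected and a locally constant section arises from any sufficiently fine disjoint decomposition into compact opens. For $\bullet = \opn{cts}$, the Iwasawa decomposition $G(\Q_p) = \overline{P}_1(\Q_p) \cdot \opn{GL}_3(\Z_p)$ exhibits $\overline{P}_1(\Q_p) \backslash G(\Q_p)$ as a continuous quotient of the compact Lie group $\opn{GL}_3(\Z_p)$, whence a continuous section. For $\bullet = \opn{la}$, the Bruhat decomposition supplies a finite open cover of the compact flag variety $\overline{P}_1(\Q_p) \backslash G(\Q_p)$ by cells on each of which the opposite unipotent radical provides a locally analytic section via the product isomorphism $\overline{P}_1 \times U_w^+ \xrightarrow{\sim} \overline{P}_1 \cdot w \cdot U_w^+$; one then assembles a global locally analytic $f_1$ from these local data, which amounts to the standard exactness of locally analytic parabolic induction from a parabolic subgroup with compact flag quotient.
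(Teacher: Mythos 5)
Your proof follows the paper's approach in its essentials: realize the sequence as induction applied to $0 \to R \to \tau_\lambda \to R \to 0$, note that left exactness is formal, and reduce surjectivity of $\jmath_2$ to the existence of a suitable section, which then yields the lift $f_1(g) = (\lambda \circ v_1)(p(g)) \cdot f_2(g)$. Your $p(g)$ (defined by $g = p(g)\sigma(\bar g)$) is exactly the paper's left $\overline{P}_1(\Q_p)$-equivariant map $s \colon G(\Q_p) \to \overline{P}_1(\Q_p)$, so the formulas coincide. Where you diverge is in how the section is produced: the paper constructs a single explicit $s$ (Lemma \ref{lem:g = pwk}) from a refined Iwahori--Bruhat decomposition of $\overline{P}_1(\Q_p)\backslash G(\Q_p) \cong \mbb{P}^2(\Q_p)$ into \emph{disjoint} compact opens, and that same $s$ is continuous and in fact locally analytic, so it handles all three flavours $\bullet \in \{\opn{sm},\opn{cts},\opn{la}\}$ at once; moreover it is reused later for the explicit cocycle computation in Proposition \ref{prop:Clambda1LuxembourgProp}, which is the real reason the paper insists on a concrete choice.

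Two of your case-by-case justifications are phrased imprecisely and would need repair. For $\bullet = \opn{cts}$, "continuous quotient of the compact Lie group $\opn{GL}_3(\Z_p)$, whence a continuous section" attributes the wrong reason: compact Lie groups in the archimedean sense (e.g.\ $SU(2) \to S^2$) need not admit continuous sections. What makes it work here is that $\opn{GL}_3(\Z_p)$ is \emph{profinite} (totally disconnected compact), so the quotient decomposes into finitely many compact opens admitting local sections, which automatically patch into a global one because the pieces are disjoint. For $\bullet = \opn{la}$, "the Bruhat decomposition supplies a finite open cover" is not literally true: the Bruhat cells stratify $\mbb{P}^2(\Q_p)$, but only the big cell is open. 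You need a finer \emph{disjoint} decomposition into compact opens (which is what Lemma \ref{lem:g = pwk} produces), and then the local analytic sections glue trivially; by contrast, gluing over a genuinely overlapping open cover would require a partition-of-unity-type argument that is not available here. Once the section-existence step is stated this way, your proof is correct and slightly more conceptual in framing, though less useful downstream.
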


\begin{proof}
The map $\jmath_1$ is visibly injective, and it is clear from the definitions that the sequence is exact at $\opn{I}^\bullet_{\overline{P}_1}(\tau_\lambda)$. It remains to show that $\jmath_2$ is surjective.

Let $s \colon G(\Q_p) \to \overline{P}_1(\Q_p)$ be a continuous, left $\overline{P}_1(\Q_p)$-equivariant map; we will fix a specific choice, hence showing existence of such a map, in Lemma \ref{lem:g = pwk}. Now, given any $F \in \opn{I}^{\bullet}_{\overline{P}_1}(R)$, define $\tilde{F} : G(\Qp) \to R^{\oplus 2}$ by
\[
\tilde{F}(g) =  \begin{pmatrix}(\lambda \circ v_1 \circ s)(g) F(g)\\F(g)\end{pmatrix}.
\]
For any $p \in \overline{P}_1(\Qp)$, we have $F(pg) = F(g)$ and $s(pg) = ps(g)$, so
\begin{align*}
	\tilde{F}(pg) = \begin{pmatrix}(\lambda \circ v_1 \circ s)(pg) F(pg)\\F(pg)\end{pmatrix} &= \begin{pmatrix}(\lambda  \circ v_1 \circ s)(g) F(g) + (\lambda\circ  v_1)(p)F(g)\\F(g)\end{pmatrix}\\
	 &= \tau_\lambda(p)\begin{pmatrix}(\lambda \circ v_1  \circ s)(g) F(g)\\F(g)\end{pmatrix}.
\end{align*}
Hence $\tilde{F} \in \opn{I}^{\bullet}_{\overline{P}_1}(\tau_{\lambda})$. As visibly $\jmath_2(\tilde{F}) = F$, we deduce surjectivity.
\end{proof}

We can pullback and pushforward the exact sequence (\ref{IndExactSeqEqn}) along the maps $R \hookrightarrow \opn{I}^{\bullet}_{\overline{P}_1}(R)$ and $\opn{I}^{\bullet}_{\overline{P}_1}(R) \twoheadrightarrow \opn{St}^{\bullet}_1(R)$ to obtain an exact sequence\footnote{Explicitly, $\cE_{1,\lambda}^\bullet(R)$ is the quotient
$\cE_{1,\lambda}^\bullet(R) = \{f \in \opn{I}_{\overline{P}_1}^\bullet(\tau_\lambda) : \jmath_2(f) \in R\}/\jmath_1(R) \subset \opn{I}_{\overline{P}_1}^\bullet(\tau_\lambda)/\jmath_1(R).
$
}
\begin{equation}\label{eq:St extension}
0 \to \opn{St}^{\bullet}_1(R) \to \mathcal{E}_{1,\lambda}^{\bullet}(R) \to R \to 0 .
\end{equation}

We can describe this extension by a cocycle as follows. Let $s$ be a choice of splitting as in the proof of Lemma \ref{lem:short exact sequence extension}. Then we obtain a cocycle $c_{1,\lambda}^\bullet \in Z^1(G(\Q_p), \opn{St}^{\bullet}_1(R))$ by the formula
\begin{equation}\label{eq:c cocycle}
c_{1,\lambda}^\bullet[x] = \Big[ g \longmapsto (\lambda \circ v_1 \circ s)(gx) - (\lambda \circ v_1 \circ s)(g) \Big].
\end{equation}
Here note that $(g \mapsto \lambda v_1 s(gx) - \lambda v_1s(g)) \in \opn{I}_{\overline{P}_1}^\bullet(R)$, and we take its image in $\mathrm{St}^{\bullet}_1(R)$. 

\begin{remark}
Of course, this cocycle depends on the choice of splitting $s \colon G(\Q_p) \to \overline{P}_1(\Q_p)$. However, its image in $\opn{H}^1(G(\Q_p), \opn{St}^{\bullet}_1(R))$ does not; it is the image of $1 \in R = \h^0(G(\Qp),R)$ under the boundary map 
\[
\h^0(G(\Qp),R) \to \h^1(G(\Qp),\opn{St}_1^\bullet(R))
\]
in the long exact sequence attached to \eqref{eq:St extension}.
\end{remark}

\subsection{Explicit computation of $c_{1,\lambda}^\bullet$}\label{sec:explicit computation}

For our later results, we must explicitly describe a value of the cocycle $c_{1,\lambda}^\bullet$ defined above. We do this in Proposition \ref{prop:Clambda1LuxembourgProp}. For this, we now introduce an explicit section $s$ (which we fix for the rest of the article, unless specified otherwise). 

\begin{notation}\label{not:weyl reps}
	Let ${^{M_1}W}$ denote the set of minimal length representatives for $W_{M_1} \backslash W_G$, where $M_1$ is the Levi of $P_1$. Explicitly, we take:
	\[
	{^{M_1}W} = \left\{ \opn{id}, \smat{ & 1 & \\ 1 & & \\ & & 1 }, \smat{ &  & 1\\ 1 & &  \\ & 1 & } \right\}.
	\]
\end{notation}

\begin{lemma}\label{lem:g = pwk}
	For any $g \in G(\Qp)$, there exist unique 
	\[
	\overline{p} \in \overline{P}_1(\Qp), \qquad w \in {^{M_1}}W, \qquad \text{and} \qquad k\in\opn{Iw}\cap w^{-1}N_1(\Zp)w
	\]
	such that 
	\[
	g = \overline{p} \cdot w \cdot k.
	\]
	In particular, there is a well-defined left-$\overline{P}_1(\Qp)$-equivariant continuous map $s\colon  G(\Qp) \to \overline{P}_1(\Qp)$ defined by $s(g) \defeq \overline{p}$.
\end{lemma}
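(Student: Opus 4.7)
The plan is to combine three classical tools: the Iwasawa decomposition, the Bruhat decomposition for $\overline{P}_1(\mbb{F}_p)\backslash G(\mbb{F}_p)/B(\mbb{F}_p)$, and a parabolic Iwahori factorization for each $w \in {^{M_1}W}$.

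For existence, I would first apply the Iwasawa decomposition to write $g = \overline{p}_0 k_0$ with $\overline{p}_0 \in \overline{P}_1(\Qp)$ and $k_0 \in G(\Zp)$. Reducing modulo $p$ and using the parabolic Bruhat decomposition
\[
G(\mbb{F}_p) \;=\; \bigsqcup_{w \in {^{M_1}W}} \overline{P}_1(\mbb{F}_p) \cdot w \cdot B(\mbb{F}_p),
\]
I obtain a unique $w$ with $\bar{k}_0 \in \overline{P}_1(\mbb{F}_p) w B(\mbb{F}_p)$. Lifting along the surjective reductions $\overline{P}_1(\Zp) \twoheadrightarrow \overline{P}_1(\mbb{F}_p)$ and $\Iw \twoheadrightarrow B(\mbb{F}_p)$ (the latter being the definition of $\Iw$) produces $k_0 = \overline{p}_1 w k_1$ with $\overline{p}_1 \in \overline{P}_1(\Zp)$ and $k_1 \in \Iw$. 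I would then establish the factorization
\[
\Iw \;=\; \bigl(\Iw \cap w^{-1}\overline{P}_1(\Zp) w\bigr) \cdot \bigl(\Iw \cap w^{-1}N_1(\Zp) w\bigr)
\]
for each $w \in {^{M_1}W}$. For $w = \opn{id}$ and $w = s_1 s_2$ this is (after a direct calculation showing $w^{-1}\overline{P}_1 w = P_2$ and $w^{-1} N_1 w = \smat{1 & 0 & 0 \\ 0 & 1 & 0 \\ * & * & 1}$ in the latter case) the standard Iwahori factorization relative to $P_1$ and $P_2$. For $w = s_1$, however, $w^{-1}\overline{P}_1 w = \smat{* & * & * \\ 0 & * & 0 \\ * & * & *}$ is a non-standard parabolic, and the factorization must be verified by direct computation: given $k = (k_{ij}) \in \Iw$, the unique $k_2 \in \Iw \cap w^{-1}N_1(\Zp) w$ with $k k_2^{-1} \in w^{-1}\overline{P}_1(\Zp) w$ has $(k_2)_{21} = k_{21}/k_{22}$ and $(k_2)_{23} = k_{23}/k_{22}$, and the required integrality follows from the Iwahori conditions on $k$. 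Writing $k_1 = k'_1 k$ accordingly and absorbing $w k'_1 w^{-1} \in \overline{P}_1(\Zp)$ into the first factor yields $g = \overline{p} w k$.

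For uniqueness, Iwasawa and Bruhat combine to give a disjoint decomposition $G(\Qp) = \bigsqcup_w \overline{P}_1(\Qp) w \Iw$, which pins down $w$. If $\overline{p} w k = \overline{p}' w k'$, then $\overline{p}^{-1}\overline{p}' = w (k k'^{-1}) w^{-1}$ lies in both $\overline{P}_1(\Qp)$ and $N_1(\Qp)$, which intersect trivially, giving $\overline{p} = \overline{p}'$ and $k = k'$. Continuity of $s$ follows because each stratum is open and closed in $G(\Qp)$ (being the preimage of a single Bruhat cell in $G(\mbb{F}_p)$) and the constituents depend continuously on $g$ within each stratum; left-$\overline{P}_1(\Qp)$-equivariance is immediate from the construction.

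The main obstacle is the Iwahori factorization for $w = s_1$: here $w^{-1}\overline{P}_1 w$ is not a standard parabolic, so the claim does not follow from a standard reference and instead requires the explicit matrix computation above. All remaining steps reduce to standard decomposition theorems.
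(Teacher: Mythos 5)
Your proof is correct, and it takes a genuinely different and more structural route than the paper's. The paper parametrises $\overline{P}_1(\Qp)\backslash G(\Qp) \cong \mbb{P}^2(\Qp)$ by the first row $[a:b:c]$ and splits into cases on the valuations of $a, b, c$, writing down $\overline p$, $w$, $k$ by hand in each case; uniqueness is checked exactly as you do it. Your approach instead layers Iwasawa, the finite-field Bruhat decomposition for $\overline{P}_1(\mbb{F}_p)\backslash G(\mbb{F}_p)/B(\mbb{F}_p)$, and an Iwahori-type factorisation $\opn{Iw} = (\opn{Iw}\cap w^{-1}\overline{P}_1(\Zp)w)(\opn{Iw}\cap w^{-1}N_1(\Zp)w)$. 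The one non-standard case ($w = s_1$, where $w^{-1}\overline{P}_1 w$ is not a standard parabolic) is handled by the explicit computation you give, which does check out: $a = k_{21}/k_{22} \in p\Zp$ and $b = k_{23}/k_{22} \in \Zp$ force $k_2 \in \opn{Iw}\cap w^{-1}N_1(\Zp)w$ and $kk_2^{-1}$ automatically lies in $\opn{Iw}$. Your continuity argument is also sound once one notes that each stratum is the preimage under the continuous reduction $G(\Qp) \to \mbb{P}^2(\mbb{F}_p)$ of a point-set in a discrete space, hence clopen, and the bijection $\opn{Iw}\cap w^{-1}N_1(\Zp)w \to \overline{P}_1(\Qp)\backslash \overline{P}_1(\Qp)w\opn{Iw}$ is a continuous bijection of compact Hausdorff spaces, hence a homeomorphism.

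The trade-off is worth noting. Your argument is cleaner conceptually and generalises more readily to higher rank. However, the paper's case-by-case computation is not redundant: the explicit formulas for $s(g)$ produced in each case (e.g.\ $s(g) = gw^{-1}n^{-1}$ with $n$ written out) are quoted verbatim in the proof of Proposition \ref{prop:Clambda1LuxembourgProp}, where the cocycle $c_{1,\lambda}^\bullet[t]$ is computed. If one adopted your proof of the lemma, one would still need to extract the same explicit section $s$ before that proposition, so the overall length of the paper would not shrink.
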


Although this is a standard result, we provide an explicit proof; the notation introduced will also be useful in the proof of Proposition \ref{prop:Clambda1LuxembourgProp}. Note that $\overline{P}_1(\Qp)\backslash G(\Qp) \cong \mathbb{P}^2(\Qp)$ via the map
\[
\smallthreemat{a}{b}{c}{\star}{\star}{\star}{\star}{\star}{\star} \longmapsto [a:b:c].
\]

\begin{proof}
	Suppose $g = \overline{p}wk = \overline{p}'w'k'$ are two such expressions. Then $w = w'$ by disjointness in the Bruhat decomposition; and rearranging gives 
	\[
	(\overline{p})^{-1}\overline{p}' = wk(k')^{-1}w^{-1}.
	\]
	The right-hand side is in $N_1(\Zp)$, whilst the left is in $\overline{P}_1(\Qp)$. As the intersection of these is trivial, we deduce $\overline{p} = \overline{p}'$ and $k = k'$. In particular, any such expression as in the lemma must be unique, and (provided we have existence) the association $g \mapsto s(g) \defeq \overline{p}$ is well-defined.

    \medskip
	
	It remains to prove existence. It is equivalent to show $g = \overline{p}nw$, for $n \in N_1(\Zp) \cap w\opn{Iw}w^{-1}$. Let 
	\[
	g  =\smallthreemat{a}{b}{c}{\star}{\star}{\star}{\star}{\star}{\star} \in G(\Qp).
	\]
	We break into a series of cases, corresponding to different subsets of $\mathbb{P}^2(\Qp)$.
	\begin{itemize}
		\item[(a)] If $a \neq 0$, then without loss of generality scale by $a^{-1}I_3 \in \overline{P}_1(\Qp)$ to assume $a = 1$. Then:
        \begin{itemize}
        \item[(a-1)] If $b,c \in \Zp$, then let $w = \mathrm{id}$, and
		\[
		k = n = \smallthreemat{1}{b}{c}{}{1}{}{}{}{1} \in \opn{Iw}\cap N_1(\Zp).
		\]
Then
\[
s(g) = \overline{p} \defeq gn^{-1} \in \overline{P}_1(\Qp),
\]
		and $g= \overline{p}n = \overline{p}k$ is the desired expression.

\item[(a-2)] If $v_p(b) < 0$ and $v_p(b) \leq v_p(c)$, then let $w = \smat{ & 1 & \\ 1 & & \\ & & 1 }$,
\[n = \smallthreemat{1}{b^{-1}}{b^{-1}c}{}{1}{}{}{}{1} \in N_1(\Zp), \qquad 
k = w^{-1}nw = \smallthreemat{1}{}{}{b^{-1}}{1}{b^{-1}c}{}{}{1} \in \opn{Iw}\cap w^{-1}N_1(\Zp) w.
\]
Then $s(g) = \overline{p}\defeq gw^{-1}n^{-1} \in \overline{P}_1(\Qp),$ and $g = \overline{p}nw = \overline{p}wk.$

\item[(a-3)] If $v_p(c) < 0$ and $v_p(c) < v_p(b)$, then let $w = \smat{ &  & 1\\ 1 & & \\ & 1 &  }$, 
\[n = \smallthreemat{1}{c^{-1}}{bc^{-1}}{}{1}{}{}{}{1} \in N_1(\Zp),\qquad k= w^{-1}nw = \smallthreemat{1}{}{}{}{1}{}{c^{-1}}{bc^{-1}}{1} \in \opn{Iw}\cap w^{-1}N_1(\Zp) w.
\]
Then $s(g) = \overline{p}\defeq gw^{-1}n^{-1} \in \overline{P}_1(\Qp),$ and $g = \overline{p}nw = \overline{p}wk.$
\end{itemize}

\medskip

Note (a-1), (a-2) and (a-3) cover all possible cases for $(b,c) \in \Qp^2$.

\bigskip
        
		\item[(b)] If $a = 0$ and $b\neq 0$, then without loss of generality we may assume $b=1$. Then:
        \begin{itemize}
        \item[(b-1)] If $c \in \Zp$, then let $w = \smat{ & 1 & \\ 1 & & \\ & & 1 }$, and
		\[
		n = \smallthreemat{1}{}{c}{}{1}{}{}{}{1} \in N_1(\Zp), \qquad k = w^{-1}nw = \smallthreemat{1}{}{}{}{1}{c}{}{}{1} \in \opn{Iw}\cap w^{-1}N_1(\Zp) w.
        \]
        Then $s(g) = \overline{p} \defeq gw^{-1}n^{-1} \in \overline{P}_1(\Qp),$ and $g = \overline{p}nw = \overline{p}wk$.

        \item[(b-2)] If $v_p(c) < 0$, then let $w = \smat{ &  & 1\\ 1 & & \\ & 1 &  }$, and
        		\[
		n = \smallthreemat{1}{}{c^{-1}}{}{1}{}{}{}{1} \in N_1(\Zp), \qquad k = w^{-1}nw = \smallthreemat{1}{}{}{}{1}{}{}{c^{-1}}{1} \in \opn{Iw}\cap w^{-1}N_1(\Zp) w.
        \]
        Then $s(g) = \overline{p} \defeq gw^{-1}n^{-1} \in \overline{P}_1(\Qp),$ and $g = \overline{p}nw = \overline{p}wk$.
\end{itemize}

        \bigskip
        
		\item[(c)] Finally if $a=b=0$, then without loss of generality we may assume $c=1$. Then
		\[
		s(g) = \overline{p} \defeq g \smat{ &  & 1\\ 1 & &  \\ & 1 & }^{-1} \in \overline{P}_1(\Qp),
		\]
		so $w =  \smat{ &  & 1\\ 1 & &  \\ & 1 & }$ and $g = \overline{p}w$ is the desired expression (so $k = I_3$).
	\end{itemize}
	As this covers all possible $a,b,c$, it covers all $g$, and every $g \in G(\Qp)$ can be written uniquely as $g = \overline{p}wk$. The function $g \mapsto s(g) = \overline{p}$ is visibly continuous and left-$\overline{P}_1(\Qp)$-equivariant.
\end{proof}

With this choice of $s$, we can explicitly calculate the cocycle $c_{1,\lambda}^\bullet$ evaluated at the element $t \defeq \opn{diag}(p, 1, 1) \in G(\Q_p)$.

\begin{proposition} \label{prop:Clambda1LuxembourgProp}
The element $c_{1,\lambda}^\bullet[t] \in \opn{St}_1^\bullet(R)$ can be represented by a function 
\[
F \colon \overline{P}_1(\Q_p) \backslash G(\Q_p) \to R \qquad \in \opn{I}_{\overline{P}_1}^\bullet(R)
\]
which has support in $\overline{P}_1(\Q_p)N_1(\Z_p)$ and satisfies
        \[
        F(g) = \left\{ \begin{array}{cc} -2\lambda(p) & \text{ if } x, y \in p\Zp \\ - 2\lambda(x) & \text{ if } x \in \Zp^\times, y \in \Zp \\ -2\lambda(y) & \text{ if } x \in p\Zp, y \in \Zp^\times, \end{array} \right.
        \]
        where we write $g = \overline{p}\smat{1 & x & y \\ & 1 & \\ & & 1} \in \overline{P}_1(\Qp)N_1(\Z_p)$.
\end{proposition}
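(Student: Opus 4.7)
The cocycle formula \eqref{eq:c cocycle} gives a canonical representative
\[
\varphi(g) \defeq (\lambda \circ v_1 \circ s)(gt) - (\lambda \circ v_1 \circ s)(g)
\]
of $c_{1,\lambda}^\bullet[t]$ in $\opn{I}_{\overline{P}_1}^\bullet(R)$, well-defined modulo constants after projection to $\opn{St}_1^\bullet(R) = \opn{I}_{\overline{P}_1}^\bullet(R)/R$. The plan is to show that $F \defeq \varphi - \lambda(p)$ is supported on $\overline{P}_1(\Qp)N_1(\Zp)$ and takes the asserted values there. Stratifying $G(\Qp)$ according to Lemma \ref{lem:g = pwk}, the set $\overline{P}_1 N_1(\Zp)$ is precisely the stratum with $w = \opn{id}$; I treat the interior and the complement in turn.

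For $g \in \overline{P}_1 N_1(\Zp)$, take the canonical representative $g = \smat{1 & x & y \\ 0 & 1 & 0 \\ 0 & 0 & 1}$ with $(x,y) \in \Zp^2$. By case (a-1) of Lemma \ref{lem:g = pwk}, $s(g) = I_3$, so $(\lambda v_1 s)(g) = 0$. Writing $gt = pI_3 \cdot h$ with $h \defeq p^{-1}gt = \smat{1 & x/p & y/p \\ 0 & 1/p & 0 \\ 0 & 0 & 1/p}$, left equivariance of $s$ yields $s(gt) = pI_3 \cdot s(h)$. The first row $(1,x/p,y/p)$ of $h$ places it in case (a-1), (a-2) or (a-3) of Lemma \ref{lem:g = pwk} precisely in the three sub-cases of the proposition, and a direct matrix computation via $s(h) = hw^{-1}n^{-1}$ gives
\[
v_1(s(gt)) = p \cdot v_1(s(h)) \;\in\; \{p^{-1},\ -p/x^2,\ p/y^2\}
\]
in the three respective sub-cases. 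Applying $\lambda$ (using $\lambda(-1) = 0$, which holds whenever $2$ is invertible in $R$, as in all the paper's applications) and subtracting $\lambda(p)$ then yields the three stated values $-2\lambda(p)$, $-2\lambda(x)$, $-2\lambda(y)$.

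For the vanishing of $F$ outside $\overline{P}_1 N_1(\Zp)$, I verify $\varphi(g) = \lambda(p)$ in each remaining case of Lemma \ref{lem:g = pwk}. When $g$ has first-row-first-entry $a = 0$ (cases (b-1), (b-2), (c)), the explicit $n$ prescribed there has $(1,2)$-entry zero, so $n \cdot \opn{diag}(1,p,1) \cdot n^{-1} = \opn{diag}(1,p,1)$. Combined with the identity $wtw^{-1} = \opn{diag}(1,p,1) \in \overline{P}_1$, valid for both nontrivial $w \in {^{M_1}W}$, tracking through $g = \overline{p}wk$ gives $gtg^{-1} = \overline{p}\opn{diag}(1,p,1)\overline{p}^{-1} \in \overline{P}_1$; left equivariance then gives $s(gt) = gtg^{-1} \cdot s(g)$, whence $\varphi(g) = \lambda v_1(gtg^{-1}) = \lambda(p)$. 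When $a \neq 0$ but $(b,c)\notin\Zp^2$ (cases (a-2), (a-3)), both $s(g)$ and $s(p^{-1}gt)$ are computed by the same recipe, with $(b,c)$ replaced by $(b/p,c/p)$ in the latter: one finds by direct computation that all dependencies on $b, c$ (and any $\lambda(-1)$) cancel in the difference $\lambda v_1(s(gt)) - \lambda v_1(s(g))$, leaving precisely $\lambda v_1(pI_3) = \lambda(p)$.

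The main obstacle is the volume of explicit matrix computation required in the non-integral open-cell cases (a-2) and (a-3): one must compute $s(g)$ and $s(p^{-1}gt)$ as elements of $\overline{P}_1$ via $h w^{-1} n^{-1}$, evaluate the character $v_1$ on each, and verify the cancellation. Each of these is entirely direct---a product of $3\times 3$ matrices in $\GL_3(\Qp)$---but careful bookkeeping of scalar factors, inverses, and Weyl representatives is essential.
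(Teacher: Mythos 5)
Your overall strategy matches the paper's: stratify $G(\Qp)$ according to the Weyl element $w$ from Lemma \ref{lem:g = pwk}, then compute $F$ on each stratum. The open-cell calculation is correct and essentially the same. Where you differ is in the treatment of the complementary strata, and this is the one place your sketch is genuinely weaker than the paper's argument.

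The paper's Case 1 treats all $w \neq \opn{id}$ uniformly by first reducing to $s(g) = 1$, i.e., taking the canonical representative $g = nw$. Then $gt = nt'w = t'n'w$ with $t' = wtw^{-1} = \operatorname{diag}(1,p,1) \in \overline{P}_1(\Qp)$ and $n' = (t')^{-1}nt' \in N_1(\Zp) \cap w\opn{Iw}w^{-1}$, so $s(gt) = t'$ is read off immediately and $F'(g) = \lambda(p)$ in one line, covering (a-2), (a-3), (b-1), (b-2), (c) simultaneously. You instead split by whether $a = 0$. For $a = 0$ your conjugation trick --- noting $n$ commutes with $\operatorname{diag}(1,p,1)$, so $gtg^{-1} \in \overline{P}_1$ and left equivariance gives $\varphi(g) = \lambda v_1(gtg^{-1}) = \lambda(p)$ with no reference to the explicit form of $s$ --- is a nice, slightly more conceptual variant. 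But it does not extend to (a-2), (a-3), where the $(1,2)$-entry of $n$ is $b^{-1}$ (resp.\ $c^{-1}$) and nonzero. For those cases you revert to ``compute both $s(g)$ and $s(p^{-1}gt)$ and watch everything cancel,'' which you do not carry out and which is noticeably more laborious than the paper's one-line argument. If you had reduced to the canonical representative $g = nw$ in these cases too, you would have obtained exactly the paper's computation and avoided the asserted-but-unverified cancellation. As it stands, the (a-2), (a-3) step is a correct claim supported by a plan rather than a proof.

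One small point in your favor: you explicitly flag the use of $\lambda(-1) = 0$. The paper tacitly drops a sign in its case (a-2) (it computes $v_1(s(gt)) = -px^{-2}$ but then writes $\lambda(px^{-2})$), so it is making the same hypothesis silently; for the characters the paper actually uses, namely $\log_p$ and $\opn{ord}_p$ and their reductions, the hypothesis holds, so this is harmless, but it was worth stating.
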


\begin{proof}
    By above $c_{1,\lambda}^\bullet[t] \in \opn{St}_1^\bullet(R)$ is represented by the function $F' \in \opn{I}_{\overline{P}_1}^\bullet(R)$ defined by 
    \[
    F'(g) = (\lambda \circ v_1 \circ s)(g t) - (\lambda \circ v_1 \circ s)(g), \qquad g\in G(\Qp).
    \]
As $\lambda(p) \in R \subset \opn{I}_{\overline{P}_1}^\bullet(R)$ is constant, $c_{1,\lambda}^\bullet[t]$ is also represented by the function
    \[
        F(g) \defeq F'(g) - \lambda(p).
    \]

     We now calculate this explicitly. By Lemma \ref{lem:g = pwk}, any $g \in G(\Qp)$ has the form $g = s(g)nw$, for unique $s(g)\in \overline{P}_1(\Qp)$, $n = wkw^{-1} \in N_1(\Zp) \cap w\opn{Iw}w^{-1}$ and $w \in {^{M_1}}W$. By left-$\overline{P}_1(\Qp)$-invariance, we may without loss of generality assume $s(g) = 1$, so 
     \[
        g = \smallthreemat{1}{x}{y}{}{1}{}{}{}{1}w \in (N_1(\Zp) \cap w \opn{Iw} w^{-1}) \cdot w.
     \]
     Thus $(\lambda\circ v_1\circ s)(g) = \lambda(1) = 0$,   and
     \begin{equation}\label{eq:simpler F'}
     F'(g) = (\lambda\circ v_1\circ s)(gt) = (\lambda\circ v_1\circ s)(nwt).
     \end{equation}
     We break the computation into cases based on $n$ and $w$.

\medskip
    
    \textbf{Case 1:} First suppose $w \neq \mathrm{id}$. Then $w = \smat{ & 1 & \\ 1 & & \\ & & 1 }$ or $\smat{ &  & 1\\ 1 & &  \\ & 1 & }$, and in either case, we have
    \[
    wtw^{-1} = \smallthreemat{1}{}{}{}{p}{}{}{}{1} =: t'.
    \]
   Let also 
   \[
    n' = (t')^{-1}n t' = \smallthreemat{1}{px}{y}{}{1}{}{}{}{1} \in N_1(\Zp) \cap w\opn{Iw}w^{-1}.
\]
Then we have 
    \[
            gt = nwt = nt'w = t'n'w.
    \]
 In particular,
    \[
        s(gt) = t'.
    \]
    As $v_1(t') = p$, we have
    \begin{align}\label{eq:compute F'}
F'(g) = \lambda\circ v_1\circ s(gt) = \lambda\circ v_1\left(t'\right) = \lambda(p).
    \end{align}
    It follows that $F(g) = F'(g) - \lambda(p) = 0$. 
    In particular, $F$ vanishes identically on both cells 
    \[
        \overline{P}_1(\Qp)\cdot (N_1(\Zp) \cap w \opn{Iw} w^{-1}) \cdot w = \overline{P}_1(\Qp) \cdot w \cdot \opn{Iw}, \quad \quad  w=\smat{ & 1 & \\ 1 & & \\ & & 1 }, \smat{ &  & 1\\ 1 & &  \\ & 1 & };
    \]
    hence $F$ is supported on the remaining cell $\overline{P}_1(\Qp)N_1(\Zp)$.

    \medskip

    \textbf{Case 2:} Now suppose $w = \mathrm{id}$, so $g = n = \smallthreemat{1}{x}{y}{}{1}{}{}{}{1} \in N_1(\Zp)$. We wish to compute $F'(g) = (\lambda\circ v_1\circ s)(gt).$ We have
    \[
        gt = nt = t\smallthreemat{1}{p^{-1}x}{p^{-1}y}{}{1}{}{}{}{1},
    \]
    so as $t \in \overline{P}_1(\Qp)$ and $s$ is left-$\overline{P}_1(\Qp)$-equivariant, we have 
    \begin{equation}\label{eq:sgt}
    s(gt) = t\cdot s\left(\smallthreemat{1}{p^{-1}x}{p^{-1}y}{}{1}{}{}{}{1}\right).
    \end{equation}
    In the proof of Lemma \ref{lem:g = pwk}, we explicitly computed $s\left(\smallthreemat{1}{b}{c}{}{1}{}{}{}{1}\right)$ for arbitrary $b,c \in \Qp$. We now break into subcases, depending on the valuations of $x$ and $y$.
    
    \begin{itemize}\setlength{\itemsep}{5pt}
        \item[(a-1)] If $x, y \in p \Z_p$, then we have
        \[
        s\left(\smallthreemat{1}{p^{-1}x}{p^{-1}y}{}{1}{}{}{}{1}\right) = 1.
        \]
        Hence $s(g t) = t$. As $v_1(t) = p^{-1}$, by \eqref{eq:simpler F'} we see 
        \[
        F'(g) = 
        \lambda(p^{-1}) = -\lambda(p),
    \] 
    so $F(g) = -2\lambda(p)$.

        \item[(a-2)] If $x \in \Z_p^{\times}$ and $y \in \Z_p$, then as in Lemma \ref{lem:g = pwk}(a-2) we have 
        \begin{align*}
         s\left(\smallthreemat{1}{p^{-1}x}{p^{-1}y}{}{1}{}{}{}{1}\right) &= \smallthreemat{1}{p^{-1}x}{p^{-1}y}{}{1}{}{}{}{1}\smallthreemat{}{1}{}{1}{}{}{}{}{1}\smallthreemat{1}{-px^{-1}}{-x^{-1}y}{}{1}{}{}{}{1}\\
         &= \smallthreemat{p^{-1}x}{}{}{1}{-px^{-1}}{-x^{-1}y}{}{}{1},
        \end{align*}
        so by \eqref{eq:sgt},  $s(gt) = \smallthreemat{x}{}{}{1}{-px^{-1}}{-x^{-1}y}{}{}{1}$. Thus 
        \[
            F'(g) = \lambda \circ v_1\circ s(gt) = \lambda(px^{-2}) = \lambda(p)-2\lambda(x),\]
            so $F(g) = -2\lambda(x)$.

        \item[(a-3)] If $x \in p\Z_p$ and $y \in \Z_p^{\times}$, then as in Lemma \ref{lem:g = pwk}(a-3) we have
         \begin{align*}
         s\left(\smallthreemat{1}{p^{-1}x}{p^{-1}y}{}{1}{}{}{}{1}\right) &= \smallthreemat{1}{p^{-1}x}{p^{-1}y}{}{1}{}{}{}{1}\smallthreemat{}{1}{}{}{}{1}{1}{}{}\smallthreemat{1}{-py^{-1}}{-xy^{-1}}{}{1}{}{}{}{1}\\
         &= \smallthreemat{p^{-1}y}{}{}{}{}{1}{1}{-py^{-1}}{-xy^{-1}},
        \end{align*}
         so by \eqref{eq:sgt},  $s(gt) = \smallthreemat{y}{}{}{}{}{1}{1}{-py^{-1}}{-xy^{-1}}$. Thus 
        \[
            F'(g) = \lambda \circ v_1\circ s(gt) = \lambda(py^{-2}) = \lambda(p)-2\lambda(y),\]
            so $F(g) = -2\lambda(y)$.

    \end{itemize}
     This covers all cases, thus completes the proof.
\end{proof}

We introduce the following notation:

\begin{notation}
    Let 
    \[
    \delta^{\bullet} \colon \opn{Hom}_{\bullet}(\Q_p^{\times}, R) \to \opn{Ext}_{R[G(\Q_p)]}^1\left(R, \opn{St}^{\bullet}_1(R) \right)
    \]
    denote the map $\delta^{\bullet}(\lambda) = \mathcal{E}^{\bullet}_{1, \lambda}$, where $\opn{Ext}_{R[G(\Q_p)]}^1$ denotes the $R$-module of extensions in the category of abstract $R[G(\Q_p)]$-modules (i.e., we are forgetting any topology that these representations may carry). One can verify, using the description of $\mathcal{E}^{\bullet}_{1, \lambda}$ via the cocycle $c_{1,\lambda}^\bullet$, that $\delta^{\bullet}$ is $R$-linear.
\end{notation}

\begin{lemma}
Let $s \geq 1$ be a positive integer. We have the following commutative diagram:

\begin{equation} \label{FirstExtDiagramEqn}
\begin{tikzcd}
{\opn{Hom}_{\opn{sm}}(\Q_p^{\times}, \mathcal{O}_L/\varpi^s)}  \arrow[r, "\delta^{\opn{sm}}"]             & {\opn{Ext}^1_{\mathcal{O}_L/\varpi^s[G(\Q_p)]}\left( \mathcal{O}_L/\varpi^s, \opn{St}^{\opn{sm}}_1(\mathcal{O}_L/\varpi^s) \right)}                                                            \\
{\opn{Hom}_{\opn{sm}}(\Q_p^{\times}, \mathcal{O}_L)} \arrow[u, "\pmod{\varpi^s}"] \arrow[d, hook]           &                                                           \\
{\opn{Hom}_{\opn{cts}}(\Q_p^{\times}, \mathcal{O}_L)} \arrow[d, hook] \arrow[r, "\delta^{\opn{cts}}"] \arrow[bend right=73,uu, swap, "\!\!\!\!\pmod{\varpi^s}"] & {\opn{Ext}^1_{\mathcal{O}_L[G(\Q_p)]}\left( \mathcal{O}_L, \opn{St}_1^{\opn{cts}}(\mathcal{O}_L) \right)} \arrow[uu, "- \otimes \mathcal{O}_L/\varpi^s"'] \arrow[d, "- \otimes L"] \\
{\opn{Hom}_{\opn{cts}}(\Q_p^{\times}, L)} \arrow[r, "\delta^{\opn{cts}}"] \arrow[rd, "\delta^{\opn{la}}", ']                             & {\opn{Ext}^1_{L[G(\Q_p)]}\left(L, \opn{St}_1^{\opn{cts}}(L) \right)}                                                                                  \\
                                                                                                & {\opn{Ext}^1_{L[G(\Q_p)]}\left(L, \opn{St}_1^{\opn{la}}(L) \right)} \arrow[u]                                                                                                               
\end{tikzcd}
\end{equation}
\end{lemma}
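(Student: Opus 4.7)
The strategy is to represent every extension class in \eqref{FirstExtDiagramEqn} by the explicit $1$-cocycle $c_{1,\lambda}^\bullet$ of \eqref{eq:c cocycle}, and then observe that each arrow in the diagram acts on these cocycles by the evident coefficient change. Since every $\opn{Ext}$-group in the diagram is taken in the abstract category of $R[G(\Qp)]$-modules, the identification $\opn{Ext}^1_{R[G(\Qp)]}(R,\opn{St}_1^\bullet(R)) \cong \opn{H}^1(G(\Qp),\opn{St}_1^\bullet(R))$ reduces commutativity to functoriality of group cohomology in the coefficient module, combined with the identifications of Lemma \ref{lem:steinberg base change}.

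First I would make explicit the correspondence $\delta^\bullet(\lambda) \leftrightarrow [c_{1,\lambda}^\bullet]$: the connecting map for \eqref{eq:St extension} carries $1 \in R = \opn{H}^0(G(\Qp),R)$ to the class of $c_{1,\lambda}^\bullet$, as can be read off the proof of Lemma \ref{lem:short exact sequence extension} (applied to the constant function $F = 1$). Crucially, I would use the same section $s$ from Lemma \ref{lem:g = pwk} in all three regularity settings $\bullet \in \{\opn{sm},\opn{cts},\opn{la}\}$: the explicit formulae in that proof show $s$ is continuous and is analytic on each Bruhat cell of $\overline{P}_1(\Qp)\backslash G(\Qp)$, so for any correspondingly regular $\lambda$ the formula \eqref{eq:c cocycle} produces a cocycle of matching regularity. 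This also addresses the fact that every continuous additive homomorphism $\Qp^\times \to L$ is automatically locally analytic, so the source of the bottom triangle is unambiguous.

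Next I would check each face of the diagram. Reduction modulo $\varpi^s$ applied to $c_{1,\lambda}^{\opn{cts}}$ simply reduces the values of the function in \eqref{eq:c cocycle} modulo $\varpi^s$; under the isomorphism $\opn{St}_1^{\opn{cts}}(\mathcal{O}_L)\otimes_{\mathcal{O}_L}\mathcal{O}_L/\varpi^s \cong \opn{St}_1^{\opn{sm}}(\mathcal{O}_L/\varpi^s)$ of Lemma \ref{lem:steinberg base change}(i), the result is precisely $c_{1,\lambda \bmod \varpi^s}^{\opn{sm}}$, yielding the two topmost squares and the long diagonal. Here one also uses that $\opn{St}_1^{\opn{cts}}(\mathcal{O}_L)$ is $\varpi$-torsion-free, so tensoring \eqref{eq:St extension} with $\mathcal{O}_L/\varpi^s$ preserves exactness. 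The square involving $-\otimes L$ is handled identically via Lemma \ref{lem:steinberg base change}(ii) and flatness of $L$ over $\mathcal{O}_L$. Finally, the triangle with $\delta^{\opn{la}}$ commutes because the natural injection $\opn{St}_1^{\opn{la}}(L)\hookrightarrow \opn{St}_1^{\opn{cts}}(L)$ sends the locally analytic cocycle to the continuous one given by the same formula \eqref{eq:c cocycle}.

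The only potential obstacle is bookkeeping: one must verify that a single section $s$ yields compatibly regular cocycles in all three settings, and that each coefficient operation commutes with the cocycle formula under the identifications of Lemma \ref{lem:steinberg base change}. Beyond these routine verifications, no further input is needed.
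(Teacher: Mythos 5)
Your proposal takes essentially the same route as the paper: identify $\opn{Ext}^1_{R[G(\Qp)]}(R,M)$ with $\opn{H}^1(G(\Qp),M)$, invoke Lemma \ref{lem:steinberg base change} for the coefficient identifications along the vertical arrows, and verify commutativity directly on the explicit cocycles $c_{1,\lambda}^\bullet$ built from the single section $s$ of Lemma \ref{lem:g = pwg}. The paper's proof is terser but records exactly these three ingredients, so your argument is correct and matches it in substance.
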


\begin{proof}
Note any continuous homomorphism $\Qp^\times \to \cO_L$ is smooth modulo $\varpi^s$, so the curved map is well-defined. The right-hand maps can be described using the identification $\opn{Ext}^1_{R[G(\mbb{Q}_p)]}(R,M) \cong \h^1(G(\mbb{Q}_p),M)$ and, via Lemma \ref{lem:steinberg base change}, natural functorialities in $R$ and $M$. We can check commutativity at the level of cocycles, whence it follows from the description of $c_{1,\lambda}^\bullet$.
\end{proof}

\subsection{Extensions II}

We now construct certain extensions of $\opn{St}^{\opn{sm}}_2(R)$. Firstly, we note that there is a natural $G(\Q_p)$-equivariant map
\[
\opn{I}^{\bullet}_{\overline{P}_1}(R) \otimes_R \opn{I}^{\opn{sm}}_{\overline{P}_2}(R) \to \opn{I}^{\bullet}_{\overline{B}}(R)
\]
given by $(f_1, f_2) \mapsto f_1 \cdot f_2$. This induces a well-defined $G(\Q_p)$-equivariant morphism $\opn{pr} \colon \opn{St}_1^{\bullet}(R) \otimes_R \opn{St}_2^{\opn{sm}}(R) \to \opn{St}^{\bullet}(R)$. We consider the following composition (denoted $\Delta^{\bullet}$): 
\begin{align*}
\opn{Hom}_{\bullet}(\Q_p^{\times}, R) &\xrightarrow{\delta^{\bullet}} \opn{Ext}^1_{R[G(\Q_p)]}(R, \opn{St}_1^{\bullet}(R)) \\ &\xrightarrow{-\otimes \opn{St}_2^{\opn{sm}}(R)} \opn{Ext}^1_{R[G(\Q_p)]}(\opn{St}_2^{\opn{sm}}(R), \opn{St}_1^{\bullet}(R) \otimes_R \opn{St}_2^{\opn{sm}}(R)) \\ &\xrightarrow{\opn{pr}} \opn{Ext}^1_{R[G(\Q_p)]}(\opn{St}_2^{\opn{sm}}(R), \opn{St}^{\bullet}(R))
\end{align*}
where the second map is well-defined because $\opn{St}_2^{\opn{sm}}(R)$ is a projective (and hence flat) $R$-module (see Lemma \ref{SteinbergProjLemma}). By the diagram (\ref{FirstExtDiagramEqn}), we see that we have a commutative diagram:

\begin{equation} \label{SecondExtDiagramEqn}
\begin{tikzcd}
{\opn{Hom}_{\opn{sm}}(\Q_p^{\times}, \mathcal{O}_L/\varpi^s)}  \arrow[r, "\Delta^{\opn{sm}}"]             & {\opn{Ext}^1_{\mathcal{O}_L/\varpi^s[G(\Q_p)]}\left( \opn{St}_2^{\opn{sm}}(\mathcal{O}_L/\varpi^s), \opn{St}^{\opn{sm}}(\mathcal{O}_L/\varpi^s) \right)}                                                            \\
{\opn{Hom}_{\opn{sm}}(\Q_p^{\times}, \mathcal{O}_L)} \arrow[u, "\pmod{\varpi^s}"] \arrow[d, hook]           &                                                           \\
{\opn{Hom}_{\opn{cts}}(\Q_p^{\times}, \mathcal{O}_L)} \arrow[d, hook] \arrow[r, "\Delta^{\opn{cts}}"] \arrow[bend right=73,uu, swap, "\!\!\!\!\pmod{\varpi^s}"] & {\opn{Ext}^1_{\mathcal{O}_L[G(\Q_p)]}\left( \opn{St}_2^{\opn{sm}}(\mathcal{O}_L), \opn{St}^{\opn{cts}}(\mathcal{O}_L) \right)} \arrow[uu, "- \otimes \mathcal{O}_L/\varpi^s"'] \arrow[d, "- \otimes L"] \\
{\opn{Hom}_{\opn{cts}}(\Q_p^{\times}, L)} \arrow[r, "\Delta^{\opn{cts}}"] \arrow[rd, "\Delta^{\opn{la}}"']       & {\opn{Ext}^1_{L[G(\Q_p)]}\left(\opn{St}_2^{\opn{sm}}(L), \opn{St}^{\opn{cts}}(L) \right)}                                                                                                              \\
                                                                                                                      & {\opn{Ext}^1_{L[G(\Q_p)]}\left(\opn{St}_2^{\opn{sm}}(L), \opn{St}^{\opn{la}}(L) \right)} \arrow[u]                                                                                                    
\end{tikzcd}
\end{equation}

Note that the vertical maps given by tensor products are well-defined by Lemma \ref{lem:steinberg base change}. 

\begin{remark} \label{ExplicitDeltaCocycleRemark}
    Since $\opn{St}_2^{\opn{sm}}(R)$ is a projective $R$-module, we have an identification
    \begin{equation}\label{eq:ext1 = h1}
    \opn{Ext}^1_{R[G(\Q_p)]}(\opn{St}_2^{\opn{sm}}(R), \opn{St}^{\bullet}(R)) = \opn{H}^1\left( G(\Q_p), \opn{Hom}_R(\opn{St}_2^{\opn{sm}}(R), \opn{St}^{\bullet}(R)) \right) .
    \end{equation}
    Under this identification, for $\lambda \in \mathrm{Hom}_{\bullet}(\Qp^\times,R)$ the element $\Delta^\bullet_{1,\lambda} \defeq \Delta^\bullet(\lambda)$ can be explicitly described by the cocycle
    \begin{equation}\label{eq:Delta_1}
    \Delta_{1,\lambda}^\bullet \colon x \mapsto \left[ f \mapsto \opn{pr}(c_{1,\lambda}^\bullet[x] \otimes f) \right] .
    \end{equation}
\end{remark}

The following proposition describes the relation with the extensions considered in \cite[\S 2]{AutomorphicLinvariants}.

\begin{proposition} \label{AbstractExtFactorsThroughLAExtProp}
    The $L$-linear morphism $\Delta^{\opn{la}}$ factors as
    \[
    \opn{Hom}_{\opn{cts}}(\Q_p^{\times}, L) \xrightarrow{\sim} \opn{Ext}^1_{\opn{an}}(\opn{St}_2^{\opn{sm}}(L), \opn{St}^{\opn{la}}(L)) \to \opn{Ext}^1_{L[G(\Q_p)]}(\opn{St}_2^{\opn{sm}}(L), \opn{St}^{\opn{la}}(L))
    \]
    where the first map is the isomorphism in \cite[Theorem 2.15]{AutomorphicLinvariants} (well-defined up to scalar) and the second map is the natural one given by ``forgetting topologies''. Here $\opn{Ext}^1_{\opn{an}}$ denotes the group of extensions of locally analytic $G(\Q_p)$-representations as in \cite[\S 2.4]{AutomorphicLinvariants}. 
\end{proposition}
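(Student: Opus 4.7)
The plan is to show that $\Delta^{\opn{la}}(\lambda)$ is canonically an extension in the category of admissible locally analytic $G(\Q_p)$-representations, yielding a well-defined $L$-linear lift
\[
\Phi \colon \opn{Hom}_{\opn{cts}}(\Q_p^{\times}, L) \to \opn{Ext}^1_{\opn{an}}(\opn{St}_2^{\opn{sm}}(L), \opn{St}^{\opn{la}}(L))
\]
with $\iota \circ \Phi = \Delta^{\opn{la}}$, where $\iota$ denotes the forgetful map. The second step is then to identify $\Phi$ with Ding's isomorphism via direct comparison of the two constructions.

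For the first step, I first observe that $\mathcal{E}^{\opn{la}}_{1,\lambda}(L)$ is admissible locally analytic, being a subquotient of the locally analytic parabolic induction $\opn{I}^{\opn{la}}_{\overline{P}_1}(\tau_\lambda)$, which is admissible locally analytic by \cite{ST02, ST03}. The extension $\Delta^{\opn{la}}(\lambda)$ fits in a short exact sequence
\[
0 \to \opn{St}^{\opn{la}}(L) \to \mathcal{F}_\lambda \to \opn{St}_2^{\opn{sm}}(L) \to 0,
\]
where $\mathcal{F}_\lambda$ is obtained by tensoring $\mathcal{E}^{\opn{la}}_{1,\lambda}(L)$ with the smooth admissible representation $\opn{St}_2^{\opn{sm}}(L)$ and pushing out along $\opn{pr}$. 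Since the category of admissible locally analytic representations is closed under extensions (by \cite{ST03}) and both the sub- and quotient representations above are admissible locally analytic, it suffices to equip $\mathcal{F}_\lambda$ with any locally analytic topology compatible with the sequence. Such a topology is inherited from the natural admissible locally analytic structure on $\mathcal{E}^{\opn{la}}_{1,\lambda}(L) \otimes \opn{St}_2^{\opn{sm}}(L)$; the existence of this tensor structure is facilitated by the smoothness of $\opn{St}_2^{\opn{sm}}(L)$, so that the tensor product decomposes, at the level of the underlying locally convex $L$-vector space, as a direct sum of copies of $\mathcal{E}^{\opn{la}}_{1,\lambda}(L)$.

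For the second step, I note that the construction of $\mathcal{C}_{2,\lambda}$ in \cite[\S 2.4]{AutomorphicLinvariants} proceeds by the same parabolic-induction-and-multiplication recipe: one inducts a $\tau_\lambda$-type character from a Levi and then pushes into the locally analytic Steinberg via multiplication. Consequently, $\Phi(\lambda)$ coincides with $\mathcal{C}_{2,\lambda}$ up to a fixed scalar, reflecting different normalization conventions on the cocycle representative of $c_{1,\lambda}^{\opn{la}}$ (see \eqref{eq:c cocycle}). Since the isomorphism in \cite[Theorem 2.15]{AutomorphicLinvariants} is itself only specified up to such a scalar, and $\Phi$ is $L$-linear between two 2-dimensional spaces with manifestly nonzero image (because $\Delta^{\opn{la}}(\opn{log}_p)$ is non-split, as can be read off the explicit cocycle of Proposition \ref{prop:Clambda1LuxembourgProp}), this produces the desired factorization.

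The main obstacle is the topological bookkeeping in the first step: verifying that the pushout $\mathcal{F}_\lambda$ admits an admissible locally analytic topology requires careful manipulation of inductive/projective limits and closed subspaces in the functional-analytic formalism of \cite{ST02, ST03}. This technicality is substantially reduced by the smoothness of $\opn{St}_2^{\opn{sm}}(L)$, which reduces the analysis to the admissibly locally analytic structure on $\mathcal{E}^{\opn{la}}_{1,\lambda}(L)$ itself; the identification with Gehrmann's construction is then essentially formal.
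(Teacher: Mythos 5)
There is a genuine gap, located in your second step, and it is where the whole content of the proposition actually lives.

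Your claim that the two constructions ``proceed by the same parabolic-induction-and-multiplication recipe'' and therefore ``coincide up to a fixed scalar'' is not correct as stated. The construction of $\cE^{\opn{la}}_{1,\lambda}$ uses induction of $\tau_\lambda$ from $\overline{P}_1$, whereas Gehrmann's $\cC_{\Lambda}$ is built by inducing a two-dimensional representation $\sigma_\Lambda$ from $\overline{B}$, pulling back along $\opn{I}^{\opn{sm}}_{\overline{P}_2}(L) \hookrightarrow \opn{I}^{\opn{la}}_{\overline{B}}(L)$, and pushing out to $\opn{St}^{\opn{la}}(L)$. These are structurally different constructions. Moreover, Gehrmann's map is written down with respect to the representative $\Lambda'_\lambda(b) = \lambda(t_2 t_1^{-1})$, whereas the cocycle $c^{\opn{la}}_{1,\lambda}$ is governed by $\lambda \circ v_1$, i.e., $\Lambda_\lambda(b) = \lambda(t_2 t_3 t_1^{-1})$. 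These differ by $\lambda(t_3) \in \opn{Hom}_{\opn{cts}}(\overline{P}_2(\Q_p), L)$, and it is a nontrivial input (from \cite[Lemma 2.13]{AutomorphicLinvariants}) that this discrepancy kills the corresponding extension class. Without this, there is no reason to expect the two maps to agree even up to scalar. Your fallback argument, that both are $L$-linear maps between two-dimensional spaces with nonzero image, is insufficient: two injective linear maps between two-dimensional spaces need not be scalar multiples of one another, so nonvanishing of $\Delta^{\opn{la}}(\log_p)$ buys you nothing here. The paper's proof avoids all of this by first injecting into $\opn{H}^1(G(\Q_p), \opn{Hom}_L(\opn{I}^{\opn{sm}}_{\overline{P}_2}(L), \opn{St}^{\opn{la}}(L)))$ (using vanishing of $\opn{H}^0$ of the Steinberg) and then computing explicit cocycle representatives for $\Delta^{\opn{la}}(\lambda)$ and for $\cC_{\Lambda_\lambda}$, showing they are literally equal — not merely proportional.

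Your first step (topological bookkeeping to show the pushout $\cF_\lambda$ is admissible locally analytic) is a reasonable-sounding route, but it is not what the paper does and is also not strictly necessary: once one proves $\gamma(\Delta^{\opn{la}}(\lambda)) = \beta(\cC_{\Lambda_\lambda})$ with $\gamma$ injective, the factorization through $\opn{Ext}^1_{\opn{an}}$ follows automatically, so the admissibility of $\cF_\lambda$ is established as a byproduct rather than verified in advance. In short, your step 1 is avoidable and your step 2 is where the argument must actually be carried out — and in its present form it has a hole that the paper's explicit cocycle comparison fills.
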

\begin{proof}
    Let us recall how the isomorphism in \cite[Theorem 2.15]{AutomorphicLinvariants} is constructed. Let $\Lambda \colon \overline{B}(\Q_p) \to L$ be a continuous homomorphism, and let $\sigma_{\Lambda}$ denote the two-dimensional representation of $\overline{B}(\Q_p)$ over $L$ given by
    \[
    b \mapsto \tbyt{1}{\Lambda(b)}{}{1} .
    \]
    Let $\opn{I}^{\opn{la}}_{\overline{B}}(\sigma_{\Lambda})$ denote the locally analytic induction of this representation. One has an exact sequence
    \[
    0 \to \opn{I}^{\opn{la}}_{\overline{B}}(L) \to \opn{I}^{\opn{la}}_{\overline{B}}(\sigma_{\Lambda}) \to \opn{I}^{\opn{la}}_{\overline{B}}(L) \to 0
    \]
    and by pulling back along $\opn{I}^{\opn{sm}}_{\overline{P}_2}(L) \hookrightarrow \opn{I}_{\overline{B}}^{\opn{la}}(L)$ and pushing out along $\opn{I}_{\overline{B}}^{\opn{la}}(L) \to \opn{St}^{\opn{la}}(L)$, we obtain a $L$-linear morphism\footnote{Note the strong analogy with Lemma \ref{lem:short exact sequence extension}.}
    \[
    \cC_{-} \colon \opn{Hom}_{\opn{cts}}(\overline{B}(\Q_p), L) \to \opn{Ext}^1_{\opn{an}}\left(\opn{I}^{\opn{sm}}_{\overline{P}_2}(L), \opn{St}^{\opn{la}}(L)\right), \qquad \Lambda \longmapsto \cC_\Lambda.
    \]
    Restriction from $\overline{P}_2$ to $\overline{B}$ realises $\opn{Hom}_{\opn{cts}}(\overline{P}_2(\Q_p), L) \subset \opn{Hom}_{\opn{cts}}(\overline{B}(\Q_p), L)$. As explained in \cite[Lemma 2.13]{AutomorphicLinvariants}, if $\Lambda \in \opn{Hom}_{\opn{cts}}(\overline{P}_2(\Q_p), L)$ lies in this subspace, then $\mathcal{C}_{\Lambda} = 0$; and as in \cite[Corollary 2.14, Theorem 2.15]{AutomorphicLinvariants}, one has isomorphisms
    \begin{align}\label{eq:lennart isos}
    \opn{Hom}_{\opn{cts}}(\Q_p^{\times}, L) \cong \opn{Hom}_{\opn{cts}}&(\overline{B}(\Q_p), L)/\opn{Hom}_{\opn{cts}}(\overline{P}_2(\Q_p), L)\\
    &\labelisorightarrow{\ \ \cC_{-} \ \ }\opn{Ext}^1_{\opn{an}}(\opn{I}^{\opn{sm}}_{\overline{P}_2}(L), \opn{St}^{\opn{la}}(L)) \labelisoleftarrow{\ \ \ } \opn{Ext}^1_{\opn{an}}(\opn{St}_2^{\opn{sm}}(L), \opn{St}^{\opn{la}}(L)),\notag
    \end{align}
    where the right-hand map is the natural one arising from the map $\opn{I}_{\overline{P}_2}^{\opn{sm}}(L) \twoheadrightarrow \opn{St}_2^{\opn{sm}}(L)$. 
    
    In \cite{AutomorphicLinvariants}, the first isomorphism is taken to be $\lambda \mapsto \Lambda'_{\lambda} \defeq [b \mapsto \lambda(t_2t_1^{-1})]$, where $\opn{diag}(t_1, t_2, t_3)$ denotes the projection of $b$ to the torus. We set $\Lambda_{\lambda} \defeq [b \mapsto (\lambda\circ v_1)(t) = \lambda(t_2t_3t_1^{-1})]$ and note 
    \[
    \Lambda_{\lambda}(b) = \Lambda'_{\lambda}(b) + \Xi_\lambda(b)
    \]
    where $\Xi_\lambda(b) = \lambda(t_3)$. Since $\Xi_\lambda \in \opn{Hom}_{\opn{cts}}(\overline{P}_2(\Q_p), L)$, we see that $\Lambda'_{\lambda}$ and $\Lambda_{\lambda}$ represent the same homomorphism in the quotient; therefore, we can (and do) take the first isomorphism to be
    \begin{align*}
    \opn{Hom}_{\opn{cts}}(\Q_p^{\times}, L) &\isorightarrow \opn{Hom}_{\opn{cts}}(\overline{B}(\Q_p), L)/\opn{Hom}_{\opn{cts}}(\overline{P}_2(\Q_p), L)\\
    \lambda &\longmapsto \Lambda_\lambda,
    \end{align*}
 and this does not change the isomorphism in \cite[Theorem 2.15]{AutomorphicLinvariants}.

    Since $\opn{H}^0(G(\Q_p), \opn{St}^{\opn{la}}(L)) = 0$, one sees that the natural map 
    \begin{equation} \label{InjStToIndEqn}
    \gamma \colon \opn{H}^1\left( G(\Qp), \opn{Hom}_L(\opn{St}_2^{\opn{sm}}(L), \opn{St}^{\opn{la}}(L)) \right) \to \opn{H}^1\left( G(\Qp), \opn{Hom}_L(\opn{I}_{\overline{P}_2}^{\opn{sm}}(L), \opn{St}^{\opn{la}}(L)) \right)
    \end{equation}
    is injective. Combining with \eqref{eq:lennart isos}, we get a commutative diagram
    \[
        \begin{tikzcd}
 \opn{Hom}_{\opn{cts}}(\Q_p^{\times}, L)\arrow[r,"\sim"]\arrow[rd,swap,"\lambda \mapsto \cC_{\Lambda_\lambda}"] & \opn{Hom}_{\opn{cts}}(\overline{B}(\Q_p), L)/\opn{Hom}_{\opn{cts}}(\overline{P}_2(\Q_p), L) \ar[d,"\sim"]&\\
    &\opn{Ext}^1_{\opn{an}}(\opn{I}^{\opn{sm}}_{\overline{P}_2}, \opn{St}^{\opn{la}})\arrow[d,swap, "\beta"]\arrow[r,"\alpha", "\sim"'] & \opn{Ext}^1_{\opn{an}}(\opn{St}_2^{\opn{sm}}, \opn{St}^{\opn{la}}) \arrow[d,"\beta'"]\\
    & \h^1(G(\Qp), \opn{Hom}_L(\opn{I}_{\overline{P}_2}^{\opn{sm}}, \opn{St}^{\opn{la}})) & \h^1(G(\Qp), \opn{Hom}_L(\opn{St}^{\opn{sm}}_2, \opn{St}^{\opn{la}}))\arrow[l,swap,hook,"\gamma"],
        \end{tikzcd}
    \]
    with all coefficients in $L$. Here $\alpha$ is the inverse of the final isomorphism in \eqref{eq:lennart isos}, and $\beta,\beta'$ are the compositions of the forgetful maps $\opn{Ext}^1_{\opn{an}} \to \opn{Ext}^1_{L[G(\Qp)]}$ with the identification \eqref{eq:ext1 = h1} (and its analogue for $\opn{I}_{\overline{P}_2}^{\opn{sm}}$). The proposition then follows if we can show
    \[
        \Delta^{\opn{la}}(\lambda) = \beta'\circ\alpha(\cC_{\Lambda_\lambda}).
    \]
    By injectivity of $\gamma$, it suffices to show that 
    \[
        \gamma(\Delta^{\opn{la}}(\lambda)) = \beta(\cC_{\Lambda_\lambda}).
    \]
    As in Remark \ref{ExplicitDeltaCocycleRemark}, we see $\gamma(\Delta^{\opn{la}}(\lambda))$ is represented by the cocycle
    \begin{equation}\label{eq:delta cocycle}
x \mapsto \Big(F \longmapsto \Big[g \mapsto \Big((\lambda\circ v_1\circ s)(gx) - (\lambda\circ v_1 \circ s)(g)\Big)F(g)\Big]\Big),
    \end{equation}
    for $F \in \opn{I}_{\overline{P}_2}^{\opn{sm}}(L)$. To compute $\beta(\cC_{\Lambda_\lambda})$, note that exactly as in Lemma \ref{lem:short exact sequence extension} -- and because of our choice of $\Lambda_\lambda$ -- we have an $L$-linear splitting of the extension $\cC_{\Lambda_\lambda}$ induced by
    \[
    r\colon F \mapsto \left( g \mapsto \begin{pmatrix}(\lambda \circ v_1 \circ s)(g)F(g)\\ F(g)\end{pmatrix} \right) \; \in \; \opn{I}_{\overline{B}}^{\opn{la}}(\sigma_{\Lambda_{\lambda}}), \qquad F \in \opn{I}_{\overline{P}_2}^{\opn{sm}}(L).
    \]
 The associated cocycle representing $\beta(\cC_{\Lambda_\lambda})$ is given by $x \mapsto (F \mapsto [x\cdot r(x^{-1} \cdot F) - r(F)])$, i.e.,
      \[
x \mapsto \left(F \longmapsto \left[g \mapsto \begin{pmatrix}\big((\lambda\circ v_1\circ s)(gx) - (\lambda\circ v_1 \circ s)(g)\big)F(g)\\0\end{pmatrix}\right]\right),
    \]
  which agrees exactly with \eqref{eq:delta cocycle}. Thus $\gamma(\Delta^{\opn{la}}(\lambda)) = \beta(\cC_{\Lambda_\lambda})$, as required. 
\end{proof}

\section{\texorpdfstring{$p$}{p}-arithmetic cohomology}\label{sec:p-arithmetic cohomology}

In this section, we set-up notation and recap $p$-arithmetic cohomology from \cite{AutomorphicLinvariants}. We also describe Eisenstein classes in this setting.

\subsection{Notation}\label{sec:level notation}
Fix henceforth a neat compact open subgroup $K^p \subset G(\mbb{A}_f^p)$. Recall $H = \GL_2\times\GL_1$,  and let $K^p_H = K^p \cap H(\mbb{A}_f^p)$. Possibly shrinking $K^p$ we assume $K^p_H\subset \prod_{\ell \neq p}H(\Z_{\ell})$. 

For $G$, we will always work with Iwahori-level $\opn{Iw}\subset G(\Zp)$ at $p$; but it will be necessary to vary the level at $p$ for $H$. For $U \subset H(\Qp)$ any open compact subgroup, consider the finite set:
\[
A_U \defeq H(\Q)_+ \backslash H(\mbb{A}_f) /K^p_H U,
\]
and let $I_U \subset H(\bA_f)$ be a set of representatives for $A_U$. For any $x \in I_U$, we denote by 
\[
    \Gamma_{U}(x) \defeq H(\Q)_+ \cap x K^p_H U x^{-1}
    \]
the corresponding arithmetic subgroup. As we assumed $K^p$ neat, the group $\Gamma_{U}(x)$ is torsion-free.

\subsection{Representations and $p$-arithmetic cohomology}

We recall notions from \cite[\S3]{AutomorphicLinvariants}.

\begin{definition}
Let $\pi_0(H(\mbb{R}))$ denote the set of connected components of $H(\mbb{R})$.
\begin{itemize}
\item For an $R[H(\Q)]$-module $M$, set
\[
\mathcal{A}_H(M) = \left\{ f \colon \pi_0(H(\mbb{R})) \times H(\mbb{A}_f^p)/K^p_H \to M \right\}
\]
This is a left-$H(\Q)$-module via 
\[
(\gamma\cdot f)([h_\infty], h^p) = \gamma \cdot f([\gamma^{-1}h_\infty], \gamma^{-1}h^p), \qquad [h_\infty] \in \pi_0(H(\R)), \ h^p \in H(\A_f^p)/K_H^p.
\]
\item Let $D_H$ be the Steinberg module of $H$, as described in \cite[Def.\ 3.21]{AutomorphicLinvariants}, and set $\mathcal{A}_{H, c}(M) = \opn{Hom}_{\Z}(D_H, \mathcal{A}_H(M))$. If $f \in \cA_{H,c}(M)$ we write $f(d,[h_\infty],h^p) \defeq f(d)([h_\infty],h^p)$, where $d \in D_H$. We obtain a left $H(\Q)$-action by
\[
(\gamma\cdot f)(d,[h_\infty], h^p) = \gamma \cdot f(\gamma^{-1}\cdot d, [\gamma^{-1}h_\infty], \gamma^{-1}h^p).
\]
\item For $M$ an $R[G(\Q)]$-module, define $G(\Q)$-modules $\cA_G(M)$ and $\cA_{G,c}(M)$ by direct analogy.
\end{itemize}
\end{definition}

A particularly important case is where $M$ is a space of functions on a set $X$; for example, $M = \opn{St}^{\opn{sm}}(R)^*$ or $M = \opn{Ind}_U^{H(\Qp)}R$. We can then naturally represent elements of $\cA_{H,c}(M)$ as functions
\[
D_H \times \pi_0(H(\R)) \times H(\A_f^p)/K^p_H \times X \to R.
\]

\begin{remark}
We note some slight differences to \cite{AutomorphicLinvariants}. Because $H$ and $G$ are not semisimple, we include $\pi_0(H(\R))$ and $\pi_0(G(\R))$ to account for connected components. 

We also dualise: the notation $\cA(K^p_H, M; R)$ \emph{op.cit.} corresponds to what we call $\cA_H(M^{*})$, where $M^*$ is the $R$-linear dual. Note in particular that via the duality between compact induction and regular induction, Gehrmann's space $\cA(K^p_H \times U; R) = \cA(K^p_H, \opn{c-Ind}_U^{H(\Qp)} R; R)$ is, in our notation, denoted $\cA_{H}(\opn{Ind}_U^{H(\Qp)} R)$ (and similarly for $\cA_{H,c}$, $\cA_G$ and $\cA_{G,c}$). 
\end{remark}

\begin{remark}
The cohomology of the above representations (``$p$-arithmetic cohomology'') provides a reinterpretation of classical arithmetic cohomology. To see this, recall $U \subset H(\Q_p)$ is an open compact subgroup, with attached objects $I_U$ and $\Gamma_U(x)$ as in \S\ref{sec:level subgroups}. If $R$ is a ring, then Shapiro's lemma defines a natural isomorphism
\begin{equation}\label{eq:shapiro}
\opn{H}^i\left( H(\Q), \mathcal{A}_{H}(\opn{Ind}_{U}^{H(\Q_p)}R) \right) \isorightarrow \bigoplus_{x \in I_U} \opn{H}^i\left( \Gamma_U(x), R \right).
\end{equation}
We also have a compactly-supported version
\begin{equation}\label{eq:shapiro compact}
\opn{H}^i\left( H(\Q), \mathcal{A}_{H, c}(\opn{Ind}_{U}^{H(\Q_p)}R) \right) \isorightarrow \bigoplus_{x \in I_U} \opn{H}^i\Big( \Gamma_U(x), \opn{Hom}(D_H, R) \Big),
\end{equation}
where $\Gamma_U(x)$ acts on $\opn{Hom}(D_H, R)$ by $(y \cdot f)(d) = f(y^{-1}d)$. Explicitly, \eqref{eq:shapiro compact} takes a cocycle $c$ to the collection of cocycles $(c_x)_{x \in I_U}$ satisfying
\[
c_x[y_1, \dots, y_i](d) = c[y_1, \dots, y_i](d, [H(\mbb{R})_+], x^p, x_p)
\]
where $d \in D_H$ and $y_j \in \Gamma_U(x)$. 
\end{remark}

Gehrmann uses the above, together with the familiar identity between arithmetic and Betti cohomology, to show the following.  Let $Z_G$ be the centre of $G$, let $K_\infty = Z_G(\R)O_3(\R)$, let $K_\infty^H = K_\infty\cap H(\R)$, and let $K_\infty^\circ$ and $(K_\infty^H)^\circ$ be their neutral components. Then let
\[
X_{G,K^p\opn{Iw}} \defeq G(\Q)\backslash G(\A)/K_\infty^\circ K^p\opn{Iw}, \qquad X_{H,K^p_HU} \defeq H(\Q)\backslash H(\A)/(K_\infty^H)^\circ K^p_HU.
\]
\begin{proposition}\label{prop:arithmetic to betti}
\begin{itemize}
\item[(i)] For all $i \geq 0$, we have natural isomorphisms
\begin{align*}
\opn{H}^i\left( G(\Q), \mathcal{A}_{G}(\opn{Ind}_{\opn{Iw}}^{G(\Q_p)}R) \right) &\cong \h^i\left(X_{G, K^p\opn{Iw}}, R\right),\\
\opn{H}^i\left( H(\Q), \mathcal{A}_{H}(\opn{Ind}_{U}^{H(\Q_p)}R) \right) &\cong \h^i\left(X_{H, K^p_HU}, R\right).
\end{align*}

\item[(ii)] For all $i \geq 0$, we have natural isomorphisms
\begin{align*}
\opn{H}^i\left( G(\Q), \mathcal{A}_{G,c}(\opn{Ind}_{\opn{Iw}}^{G(\Q_p)}R) \right) &\cong \hc{i+2}\left(X_{G, K^p\opn{Iw}}, R\right),\\
\opn{H}^i\left( H(\Q), \mathcal{A}_{H,c}(\opn{Ind}_{U}^{H(\Q_p)}R) \right) &\cong \hc{i+1}\left(X_{H, K^p_HU}, R\right).
\end{align*}
\end{itemize}
\end{proposition}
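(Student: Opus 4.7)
The plan is to reduce both parts, via Shapiro's lemma, to classical statements about the arithmetic subgroups $\Gamma_U(x)$, and then invoke standard identifications of group cohomology with (compactly-supported) Betti cohomology of the associated locally symmetric spaces.

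For part (i), the Shapiro isomorphism \eqref{eq:shapiro} yields
\[
\h^i\bigl(H(\Q), \cA_H(\opn{Ind}_U^{H(\Qp)} R)\bigr) \cong \bigoplus_{x \in I_U} \h^i(\Gamma_U(x), R),
\]
and analogously for $G$ with $U$ replaced by $\opn{Iw}$. Since $K^p$ is neat, each $\Gamma_U(x)$ is torsion-free and acts freely on the contractible symmetric space $H(\R)^\circ/(K_\infty^H)^\circ$; hence the quotient $\Gamma_U(x) \backslash H(\R)^\circ/(K_\infty^H)^\circ$ is a $K(\Gamma_U(x), 1)$-space, so that $\h^i(\Gamma_U(x), R) \cong \h^i\bigl(\Gamma_U(x) \backslash H(\R)^\circ/(K_\infty^H)^\circ, R\bigr)$. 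The standard adelic-to-classical decomposition then identifies
\[
X_{H, K^p_H U} = \bigsqcup_{x \in I_U} \Gamma_U(x) \backslash H(\R)^\circ/(K_\infty^H)^\circ,
\]
using that $H(\Q)$ surjects onto $\pi_0(H(\R))$, and taking singular cohomology concludes. The $G$-case is identical.

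For part (ii), the compactly-supported Shapiro isomorphism \eqref{eq:shapiro compact} reduces the claim to identifications of the form
\[
\h^i\bigl(\Gamma, \opn{Hom}(D_{G'}, R)\bigr) \cong \hc{i + s_{G'}}(\Gamma \backslash S_{G'}, R), \qquad G' \in \{G, H\},
\]
where $s_G = 2$ and $s_H = 1$ are the relevant degree shifts. This is a form of Borel--Serre duality. Using the long exact sequence of the pair $(\overline{\Gamma\backslash S_{G'}}^{BS}, \partial)$ attached to the Borel--Serre compactification, together with the identification of the Steinberg module $D_{G'}$ with the top-degree reduced cohomology of the rational Tits building (which governs the boundary cohomology), one obtains the stated shift on each summand. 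Assembling over $x \in I_U$ and comparing with the decomposition of $X_{H, K^p_H U}$ yields the result.

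The main technical input lies in part (ii): verifying the precise shift requires combining Shapiro's lemma with Borel--Serre duality in a way that respects the Steinberg module on both sides, and a careful computation of the defect $s_{G'}$ for both $\GL_3$ and $\GL_2\times\GL_1$. Fortunately, the representations $\cA_{G,c}$ and $\cA_{H,c}$ are defined via $\opn{Hom}(D_{G'}, -)$ precisely so that this combination is clean; the full identification is essentially carried out in the framework of \cite{AutomorphicLinvariants}, so that the present proposition reduces to extracting these ingredients and performing the bookkeeping over $x \in I_U$.
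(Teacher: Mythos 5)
Your approach — Shapiro's lemma to reduce to the arithmetic groups $\Gamma_U(x)$, then identify with Betti cohomology via a Borel--Serre type duality — is essentially what the paper does: its proof consists only of a citation to \cite[Cor.~3.4, \S3.6]{AutomorphicLinvariants} plus the remark that one must track $\pi_0$ in the reductive setting. One point worth tightening in part (ii): the long exact sequence of the pair $(\overline{\Gamma\backslash S_{G'}}^{BS}, \partial)$ does not by itself produce the claimed shift, since it relates $\hc{*}$ to $\h^*(\Gamma,R)$ and to the cohomology of the boundary of the \emph{quotient}, which involves arithmetic groups in the Levi factors and is much richer than the Steinberg module. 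The correct mechanism is Poincar\'e--Lefschetz duality on the compact manifold-with-corners $\overline{\Gamma\backslash S_{G'}}^{BS}$ combined with the fact that $\Gamma$ is a Bieri--Eckmann duality group of dimension $\dim S_{G'} - l$ whose dualizing module is $D_{G'}$; the Tits building enters through the boundary of the compactification $\overline{S_{G'}}^{BS}$ of the symmetric space itself (not of its $\Gamma$-quotient), which is where the Solomon--Tits wedge-of-spheres computation identifies $\h^{\dim S - l}(\Gamma,\Z[\Gamma])$ with the Steinberg module. Since you explicitly defer to Gehrmann's framework for the actual duality and only claim to do the bookkeeping, this is a matter of phrasing rather than a gap.
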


\begin{proof}
Part (i) is proved after \cite[Cor.\ 3.4]{AutomorphicLinvariants}. Part (ii) is explained in \S3.6 \emph{op.\ cit}. We recall that there $l$ is used to denote the number of simple roots, so $l = 2$ for $G$ and $l = 1$ for $H$, explaining the degree shift in (ii). The only difference from \emph{op.\ cit}.\ is that we must include $\pi_0(G(\R))$ to handle connected components, but given this all of the relevant isomorphisms and arguments pass over \emph{mutatis mutandis} to the reductive setting.
\end{proof}

Now, there is a natural pairing
\[
\langle -, - \rangle_{U} \colon \opn{H}^{i-1}\left( H(\Q), \mathcal{A}_{H, c}(\opn{Ind}_{U}^{H(\Q_p)}R) \right) \times \opn{H}^{2-i}\left( H(\Q), \mathcal{A}_{H}(\opn{Ind}_{U}^{H(\Q_p)}R) \right) \to R
\]
corresponding to Poincar\'e duality on the Betti cohomology. It is equivariant for the $H(\mbb{A}_f)$-action (up to necessary modifications to $K^p_H$), and restriction and corestriction are adjoint under the pairing. Note that the pairing is not perfect in general (but we will not need it to be).

\subsection{$p$-arithmetic cohomology for the Steinberg representation}\label{sec:steinberg tree}

We consider another important $p$-arithmetic cohomology group. The following is a variant on \cite[Prop.\ 3.6(b)]{AutomorphicLinvariants}.

Let $\varphi_{\opn{Iw}} : \overline{B}(\Qp)\backslash G(\Qp) \to R$ denote the characteristic function $\opn{ch}(\overline{B}(\Qp)\cdot\opn{Iw})$. When $R = L$, a finite extension of $\Qp$, or $R = \cO_L$, its ring of integers, the image of $\varphi_{\Iw}$ in $\opn{St}^{\opn{sm}}(R)$ generates the Iwahori-invariants $\opn{St}^{\opn{sm}}(R)^{\opn{Iw}}$. We then have a natural $G(\Qp)$-equivariant map
\begin{align*}
\opn{St}^{\opn{sm}}(R)^* &\longrightarrow \opn{Ind}_{\opn{Iw}}^{G(\Qp)}R,\\
\nu &\longmapsto \big(g \mapsto \nu(g\cdot \varphi_{\opn{Iw}})\big).
\end{align*}
This induces a map
\[
\rho_{\mathrm{Iw}}^i \colon \h^i\Big(G(\Q), \cA_{G,c}(\opn{St}^{\opn{sm}}(R)^*)\Big) \to \h^i\Big(G(\Q), \cA_{G,c}(\opn{Ind}^{G(\Qp)}_{\opn{Iw}} R)\Big) \cong \hc{i+2}\Big(X_{G, K^p\opn{Iw}}, R\Big).
\]

Consider the Hecke operator $U_{p,1} = [\opn{Iw} t \opn{Iw}]$ acting on $\opn{St}^{\opn{sm}}(R)^{\opn{Iw}}$, recalling $t = \smallthreemat{p}{}{}{}{1}{}{}{}{1}$.

\begin{lemma}\label{lem:image fixed by Up1}
The operator $U_{p,1}$ acts trivially on the image of $\rho_{\mathrm{Iw}}^i$. 
\end{lemma}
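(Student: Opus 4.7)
The plan is to reduce the cohomological statement to the representation-theoretic identity
\[
\sum_i t_i \cdot \varphi_{\opn{Iw}} = \varphi_{\opn{Iw}} \qquad \text{in } \opn{St}^{\opn{sm}}(R),
\]
where $\opn{Iw}\, t\, \opn{Iw} = \bigsqcup_i t_i \opn{Iw}$. The map $\nu \mapsto f_\nu$, with $f_\nu(g) = \nu(g \cdot \varphi_{\opn{Iw}})$, is $G(\Qp)$-equivariant, and the Hecke operator $U_{p,1}$ acts on $\opn{Ind}_{\opn{Iw}}^{G(\Qp)} R$ by the standard formula $(U_{p,1} f)(g) = \sum_i f(g t_i)$, inducing the Hecke action on $\hc{i+2}(X_{G, K^p \opn{Iw}}, R)$. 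Using the right-translation convention $(g \cdot \varphi)(x) = \varphi(xg)$, we obtain
\[
(U_{p,1} f_\nu)(g) = \sum_i \nu\bigl((gt_i) \cdot \varphi_{\opn{Iw}}\bigr) = \nu\Bigl(g \cdot \sum_i t_i \cdot \varphi_{\opn{Iw}}\Bigr),
\]
which once the identity above is known equals $f_\nu(g)$; hence $U_{p,1}$ fixes the entire image of $\rho_{\mathrm{Iw}}^i$.

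To prove the identity, we use the decomposition
\[
\opn{Iw}\, t\, \opn{Iw} = \bigsqcup_{a, b \in \Zp/p\Zp} t_{a,b} \opn{Iw}, \qquad t_{a,b} = \smat{p & a & b \\ & 1 & \\ & & 1},
\]
which follows from the fact that $\opn{Iw} \cap t\, \opn{Iw}\, t^{-1}$ consists of those Iwahori elements with $(1,2)$ and $(1,3)$ entries in $p\Zp$. Let $N$ denote the upper unipotent subgroup of $G$ and $\overline{N}$ its opposite. The open Bruhat cell $\overline{B}(\Qp) \cdot N(\Qp) \subset G(\Qp)$ is parameterised on the right by $N(\Qp)$, and via the Iwahori decomposition $\opn{Iw} = \overline{N}(p\Zp) T(\Zp) N(\Zp)$ the support of $\varphi_{\opn{Iw}}$ corresponds to $N(\Zp)$. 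A direct computation using $t_{a,b}^{-1} = t^{-1} \smat{1 & -a & -b \\ & 1 & \\ & & 1}$ and absorbing $t^{-1} \in \overline{B}(\Qp)$ on the left shows that $t_{a,b} \cdot \varphi_{\opn{Iw}}$ is supported on those $\overline{B}(\Qp)$-cosets represented by matrices $\smat{1 & w_1 & w_2 \\ & 1 & w_3 \\ & & 1} \in N(\Qp)$ with $w_1 \equiv -a$, $w_2 \equiv -b \pmod p$, and $w_1, w_2, w_3 \in \Zp$. As $(a, b)$ varies, these subsets are pairwise disjoint and their union is exactly $N(\Zp)$. Hence $\sum_{a,b} t_{a,b} \cdot \varphi_{\opn{Iw}} = \varphi_{\opn{Iw}}$ holds already in $\opn{I}^{\opn{sm}}_{\overline{B}}(R)$, and \emph{a fortiori} in $\opn{St}^{\opn{sm}}(R)$.

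The only substantive content is the explicit support calculation in the big cell, which is a direct Iwahori matrix computation; there is no real obstacle.
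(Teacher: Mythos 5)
Your argument is correct and follows exactly the same reduction as the paper: pull $U_{p,1}$ through the map $\nu \mapsto (g \mapsto \nu(g\cdot\varphi_{\opn{Iw}}))$ and reduce to the identity $U_{p,1}\varphi_{\opn{Iw}} = \varphi_{\opn{Iw}}$. The only difference is that the paper cites this identity as ``standard,'' whereas you supply the explicit coset decomposition $\opn{Iw}\,t\,\opn{Iw} = \bigsqcup_{a,b} t_{a,b}\opn{Iw}$ and the big-cell support computation — a correct and slightly more self-contained version of the same proof.
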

\begin{proof}
It is standard that $\varphi_{\opn{Iw}}$ is fixed by $U_{p,1}$. Write $U_{p,1} = \sum_j \gamma_j$; then $U_{p,1} \cdot \rho_{\mathrm{Iw}}^i$ is the map induced by
\begin{align*}
\mu \mapsto \Big(g \mapsto \sum_j \mu(g\gamma_j\cdot \varphi_{\opn{Iw}})\Big) &= \Big(g \mapsto \mu\Big(g \cdot \Big[\sum_j\gamma_j\cdot \varphi_{\opn{Iw}}\Big]\Big)\Big)\\
&= \Big(g \mapsto \mu(g\cdot U_{p,1}\varphi_{\opn{Iw}})\Big) = \Big(g \mapsto \mu(g\cdot\varphi_{\Iw})\Big), 
\end{align*}
which also induces $\rho_{\Iw}^i$.
\end{proof}

\begin{remark}
In \cite[Prop.\ 3.6]{AutomorphicLinvariants}, Gehrmann proves that $\rho_{\mathrm{Iw}}^i$ is an isomorphism after passing to $\pi$-isotypic components, where $\pi$ is any automorphic representation that is Steinberg at $p$; see Proposition \ref{prop:lift to tree} of the present paper.

The group $\h^0(G(\Q), \cA_{G,c}(\opn{Ind}_{\opn{Iw}}^{G(\Qp)}R))$ can be considered as a space of `modular symbols' for $\GL_3$. The space $\h^0(G(\Q), \cA_{G,c}(\opn{St}(R)^*))$ is then an analogue of modular symbols over the Bruhat--Tits tree, and Gehrmann's result says any Steinberg-at-$p$ eigensymbol can be lifted to the tree. For explicit analogues in the case of Bianchi modular forms, see \cite[\S4]{BW17} or \cite[\S3.3]{VW19}.
\end{remark}

\subsection{Level subgroups at $p$}\label{sec:level subgroups}
We now introduce some level subgroups at $p$. Let $n \geq 1$, and define 
\[
\xymatrix@C=0.1mm@R=3mm{
\Un \sar{d}{\cap}&\defeq& \left\{h_p \equiv \smallmatrd{1}{}{}{1} \times 1 \ \text{modulo }p^n\right\} & &\\
\Uncirc \sar{d}{\cap}&\defeq& \left\{h_p \equiv \smallmatrd{x}{}{}{1} \times x \ \text{modulo }p^n\right\}&&\\
\Unloz \sar{d}{\cap}&\defeq& \left\{h_p \equiv \smallmatrd{*}{}{}{1} \times * \ \text{modulo }p^n\right\}&&\\
\Unone &\defeq & \left\{h_p \equiv \smallmatrd{*}{*}{}{1} \times * \ \text{modulo }p^n\right\} &\subset & H(\Zp).
}
\]

Recalling $A_U, I_U$ and $\Gamma_U(x)$ from \S\ref{sec:level notation}, we adopt the notation 
\[
    A_1(p^n) \defeq A_{\Unone}, \qquad I_1(p^n) \defeq I_{\Unone}, \qquad \Gamma_1(p^n)_x = \Gamma_{\Unone}(x),
\]
and similarly for $\Unloz, \Uncirc$ and $\Un$. Note that $A_1(p^n)\cong \cA_1 \times\cA_2$, where $\cA_i$ are finite groups which are quotients of groups of the form $\left( \Z/D_i \Z \right)^{\times}$ with $p \nmid D_i$. 

We can and do fix a choice $I_1(p^n) = I(p^n)^\lozenge \subset H(\widehat{\Z}^p) = \prod_{\ell \neq p} H(\Z_{\ell})$, considered in $H(\A_f)$ by placing 1 at $p$.

\begin{lemma}\label{lem:gamma in Ghat} For $n\geq 1$  we have $H(\Q)_+\cap H(\widehat{\Z}^p)\Unloz \subset \Un$. In particular, if $x\in I_1(p^n)$, we have $\Gamma(p^n)^{\lozenge}_x \subset \Un$. 
\end{lemma}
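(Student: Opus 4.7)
The plan is to unpack the definitions, pass to a rational element of $H(\Z[1/p])$, use that it is simultaneously in $\GL_2(\Z_p)\times\GL_1(\Z_p)$ at $p$ to pin down the determinant/$\GL_1$-component, and then exploit $\det = 1$ to upgrade the $\lozenge$-condition to the trivial condition.

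Concretely, let $(g,d) \in H(\Q)_+ \cap H(\widehat{\Z}^p)\Unloz$. Since $(g,d) \in H(\Z_\ell)$ for every $\ell \neq p$, we have $g \in \GL_2(\Z[1/p])$ and $d \in \Z[1/p]^\times$, and $\det g, d$ are positive (as $(g,d) \in H(\R)_+$). At $p$, the condition $(g,d) \in \Unloz$ places $g \in \GL_2(\Z_p)$ and $d \in \Z_p^\times$, so $\det g \in \Z[1/p]^\times \cap \Z_p^\times = \Z^\times = \{\pm 1\}$, and similarly $d \in \{\pm 1\}$. Positivity then forces $\det g = 1$ and $d = 1$; in particular the $\GL_1$-component already lies in the trivial-mod-$p^n$ subgroup.

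For the $\GL_2$-factor, write $g = \smallmatrd{g_{11}}{g_{12}}{g_{21}}{g_{22}}$. The $\lozenge$-condition gives $g_{12}, g_{21} \equiv 0$ and $g_{22} \equiv 1 \pmod{p^n}$. Combined with $\det g = g_{11}g_{22} - g_{12}g_{21} = 1$, the cross term satisfies $g_{12}g_{21} \equiv 0 \pmod{p^{2n}}$, so $g_{11}g_{22} \equiv 1 \pmod{p^n}$, and hence $g_{11} \equiv 1 \pmod{p^n}$. Therefore $g \equiv I_2 \pmod{p^n}$, so $(g,d) \in \Un$, proving the inclusion.

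For the ``in particular'' statement, recall $\Gamma(p^n)^{\lozenge}_x = H(\Q)_+ \cap x K^p_H \Unloz x^{-1}$. By our choice $x \in I_1(p^n) \subset H(\widehat{\Z}^p)$ (trivial at $p$), and $K^p_H \subset H(\widehat{\Z}^p)$ by hypothesis, so $x K^p_H x^{-1} \subset H(\widehat{\Z}^p)$ while $x \Unloz x^{-1} = \Unloz$ at $p$. Hence $x K^p_H \Unloz x^{-1} \subset H(\widehat{\Z}^p) \Unloz$, and the first part applies to give $\Gamma(p^n)^{\lozenge}_x \subset \Un$. There is no real obstacle here; the only thing to be careful about is the interplay between positivity at $\infty$ and integrality at $p$ to force $\det g = 1$, after which the determinant identity does the rest for free.
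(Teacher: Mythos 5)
Your proof is correct and follows essentially the same route as the paper's: decompose into the $\GL_2$ and $\GL_1$ components, use integrality at all places together with positivity at $\infty$ to force the determinant and the $\GL_1$-entry to equal $1$, then combine $\det = 1$ with the $\lozenge$-congruence to upgrade $\operatorname{diag}(*,1)$ to the identity modulo $p^n$; the ``in particular'' part is handled by the same observation that $xK^p_Hx^{-1}\subset H(\widehat{\Z}^p)$ and $x$ is trivial at $p$. The only cosmetic difference is that you work with $\GL_2(\Z[1/p])$ plus the separate $p$-adic condition, whereas the paper passes directly to $\SL_2(\Z_{(p)})$; both are equivalent here, and your explicit computation with $g_{12}g_{21}\equiv 0\pmod{p^{2n}}$ just unpacks the step the paper leaves implicit.
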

\begin{proof}
If $y \in H(\Q)_+ \cap H(\widehat{\Z}^p)\Unloz$, write $y = (y_1, y_2)$ with $y_1 \in \GL_2(\Q)_+$ and $y_2 \in \Q_{>0}$. Then $\opn{det}y_1$ and $y_2$ are positive; and they are $\ell$-adic units for all primes $\ell$. Hence $\det y_1 = y_2 = 1$. The $p$-component of $y_1$ lies in $\Unloz$ by definition, hence $y_1 \in \opn{SL}_2(\Z_{(p)})$ has the form $y_1 \equiv \opn{diag}(z, 1)$ $\pmod{p^n}$. Since $\det(y_1) = 1$, one must have $z=1$ (modulo $p^n$), hence $y_1$ is congruent to the identity modulo $p^n$, and $y \in \Un$.

The final statement follows as $\Gamma(p^n)^\lozenge_x = H(\Q)_+\cap xK^p_H\Unloz x^{-1}$, and $xK^p_Hx^{-1} \subset H(\widehat{\Z}^p)$.
\end{proof}

\subsection{Eisenstein classes} \label{EisensteinClassesSSec}

Let $c$ be a positive integer prime to $6pD_2$. Let $\eta_2$ be an even Dirichlet character of conductor $D | D_2$, and we assume that $\eta_2$ factors through $\left( \Z/D_2 \Z \right) \twoheadrightarrow A_2$. We assume that the local component of $K^p_H$ at any prime $\ell | c$ is $H(\Z_{\ell})$. Let ${_c\mathcal{S}_0(\mbb{A}_f^2, \Z)}$ be the Schwartz functions which vanish at $(0, 0)$ and equal $\opn{ch}(\Z_\ell^2)$ at any prime $\ell | c$ (see \cite[\S 5.3.3]{LW21}). This carries a natural $\GL_2(\A_f)$-action, and thus an $H(\A_f)$-action via the natural projection $H \twoheadrightarrow \GL_2$.

For $\Phi \in {}_c\cS_0(\bA_f^2,\Z)$ fixed by $K^p_HU$ with $U \subset H(\Qp)$ an open compact subgroup, let ${}_c\opn{Eis}_{\Phi}^0$ be the associated Betti--Eisenstein class from \cite[Thm.\ 5.7]{LW21}; it is the Betti realisation of Beilinson's motivic Eisenstein class from \cite{Bei86}. Since we work in trivial weight, it admits a simpler description in terms of Siegel units (see, e.g., \cite[\S3.1]{LW18}). Via the $\GL_2$-analogue of Proposition \ref{prop:arithmetic to betti}, we view ${}_c\opn{Eis}_\Phi^0 \in \h^1(\GL_2(\Q),\cA_{\GL_2}(\opn{Ind}_{U_{\opn{GL}_2}}^{\GL_2(\Qp)}\Z))$, where $U_{\opn{GL}_2}$ denotes the image of $U$ under the natural map $H(\mbb{Q}_p) \twoheadrightarrow \opn{GL}_2(\mbb{Q}_p)$.

Let $\mbb{Z}_{\eta_2}$ denote the ring of integers of a number field in which $\eta_2$ is valued. Then, we have a $\opn{GL}_2(\Q_p)$-equivariant association 
\[
{_c\mathcal{S}_0(\mbb{A}_f^2, \Z)}^{K^p_HU} \to \opn{H}^1\left( H(\Q), \mathcal{A}_H(\opn{Ind}_{U}^{H(\Q_p)}\Z_{\eta_2}) \right)
\]
given by $\Phi \mapsto \mathcal{E}_{\Phi, U} \defeq {_c\opn{Eis}_{\Phi}^0} \boxtimes [\widehat{\eta}_2]$, where $\boxtimes$ is the exterior cup product (we are viewing $[\widehat{\eta}_2]$ as a cohomology class for the $\opn{GL}_1$ Shimura set). This map is $\GL_2(\Qp)$-equivariant in that for any $g \in \GL_2(\Qp$), the following diagram commutes:
\begin{equation}\label{eq:Eis equiv}
\xymatrix{
{_c\mathcal{S}_0(\mbb{A}_f^2, \Z)}^{K^p_HU} \ar[r]\ar[d]^{g \cdot -} & \opn{H}^1\left( H(\Q), \mathcal{A}_H(\opn{Ind}_{U}^{H(\Q_p)}\Z_{\eta_2}) \right)\ar[d]^{g\cdot -}\\
{_c\mathcal{S}_0(\mbb{A}_f^2, \Z)}^{K^p_H\cdot gUg^{-1}} \ar[r] & \opn{H}^1\left( H(\Q), \mathcal{A}_H(\opn{Ind}_{gUg^{-1}}^{H(\Q_p)}\Z_{\eta_2}) \right).
}
\end{equation}
Here $g$ acts on the right-hand spaces through its inclusion into $H(\Q_p)$. In particular, if $U' \subset U$ and we can choose a set of representatives for $U/U'$ in $\opn{GL}_2(\mbb{Q}_p) \times \{ 1 \} \subset H(\mbb{Q}_p)$, then we have a trace map $\opn{tr} = \opn{tr}^{U}_{U'}$ and a compatibility 
\begin{equation}\label{eq:Eis trace}
    \cE_{\mathrm{tr}(\Phi),U} = \mathrm{tr}\left(\cE_{\Phi,U'}\right) \in \h^1\left(H(\Q), \mathcal{A}_H(\opn{Ind}_{U}^{H(\Q_p)}\Z_{\eta_2}) \right) \qquad \text{for all } \Phi \in {_c\cS_0(\bA_f^2,\Z)}^{K^p_HU'}.
\end{equation}

\begin{notation}\label{not:schwartz}
\begin{itemize}
\item[(i)] At $p$, we will have a particular interest in the Schwartz function $\Phi_p^n \defeq \opn{ch}(p^n \Z_p, 1+p^n \Z_p) \in \cS(\Qp^2,\Z)$, which we can naturally view with coefficients in $R$. 
\item[(ii)] We shall later need to make particular choices of prime-to-$p$ Schwartz functions. When the choice of $\Phi^{(p)}$ is fixed and implicit (for example, throughout \S\ref{sec:abstract exceptional zero formula}), as far as possible we will drop it from notation. To this end, we will often write $\Phi_n$ for the function $\Phi^{(p)}\Phi_p^n$. 
\end{itemize}
\end{notation}

%%=========================================================

\section{An abstract exceptional zero formula}\label{sec:abstract exceptional zero formula}

Let $\mu \in \h^0(G(\Q),\cA_{G,c}(\opn{St}^{\opn{sm}}(\cO_L)^*))$. In this section, we attach a $p$-adic measure $\sL(\mu) \in \cO_L[\![\Zp^\times]\!]$ to $\mu$, and then prove an abstract exceptional zero formula (with an abstract $\mathcal{L}$-invariant) for $\sL(\mu)$.

Throughout, $p$ is any prime, $L/\Q_p$ is finite with ring of integers $\cO_L$ and uniformiser $\varpi$, such that $\eta_2$ is valued in $\mathcal{O}_L^{\times}$. We continue to impose the assumptions in \S \ref{EisensteinClassesSSec}. To ease notation, we will drop the superscript ``$\opn{sm}$'' when discussing smooth representations. Unless otherwise specified, $R$ will be a general $\Z$-algebra.

\begin{convention*}
We will fix tame data $\Phi^{(p)}$ and $K^p$, as defined in the previous section, and in this section drop them from all notation, adopting Notation \ref{not:schwartz}. We emphasise that all of our constructions will, however, depend on the choice of $\Phi^{(p)}$.
\end{convention*}

\subsection{Evaluation maps for $p$-arithmetic cohomology}

We now define evaluation maps on the $p$-arithmetic cohomology that will allow us to construct the $p$-adic measure $\sL_\mu$. First, we construct a $p$-arithmetic restriction map from $G$ to $H$, for which we need the following data:
\begin{notation}\label{not:evaluation data}
\begin{itemize}
\item[(i)] \cite[Prop.\ 1.1]{Reeder} gives a $H(\mbb{Q})$-equivariant injection $\iota \colon D_H \hookrightarrow D_G$. 
\item[(ii)] Recall we fixed, in \S\ref{sec:level subgroups}, a set of representatives $I \defeq I_1(p^n) = I(p^n)^{\lozenge} \subset \prod_{\ell \neq p}H(\Z_{\ell})$. 

\item[(iii)] We have $\det(\Unloz) = \Zp^\times \times \Zp^\times$; thus by strong approximation, for any $h_\infty \in H(\R)$, $h^p \in H(\A_f^p)$ and $h_p \in H(\Qp)$ there exist $h \in H(\Q)$, $x \in I$, $k^p\in K^p_H$ and $ k_p \in \Unloz$ such that
\[
[h_\infty] = h[H(\R)_+], \ h^p = hxk^p, \ h_p = hk_p.
\]
Here $x$ is unique given $I$; but then $h, k^p$ and $k_p$ are only well-defined up to $\Gamma(p^n)^\lozenge_x$, as for any $\gamma \in \Gamma(p^n)^\lozenge_x$, we can write $[h_\infty] = h[H(\R)_+] = h \gamma [H(\R)_+]$, $h^p = hxk^p = h\gamma\cdot x \cdot x^{-1}\gamma^{-1}xk^p$ and $h_p = hk_p = h\gamma\cdot \gamma^{-1}k_p$.
\end{itemize}
\end{notation}

\begin{definition}\label{def:Psi_n}
    Let $n \geq 1$. Define a map
    \[
        \Psi_{n} : \opn{H}^0\left( G(\Q), \mathcal{A}_{G, c}(\opn{St}(R)^*) \right)  \times \opn{St}(R)^{\Un} \longrightarrow \opn{H}^0\left( H(\Q), \mathcal{A}_{H, c}\big(\opn{Ind}^{H(\Q_p)}_{\Unloz}R\big) \right)
    \]
    by
      \[
    \Psi_{n}(\mu,\varphi)(d, [h_\infty], h^p, h_p) \defeq \mu(h^{-1} \iota(d), [G(\mbb{R})_+], x, \varphi),
    \]
    for $h$ and $x$ as in (iii) above.
\end{definition}

We must show this is well-defined. The only ambiguity in the definition comes through $h$, which, by (iii) above, we can replace with $h\gamma$ for $\gamma \in \Gamma(p^n)^\lozenge_x$.

\begin{lemma}\label{lem:ind of reps}
The function $\Psi_n(\mu,\varphi) \in \cA_{H,c}\left(\opn{Ind}_{\Unloz}^{H(\Qp)}R\right)$ is well-defined and independent of the choice of representatives $I$.
\end{lemma}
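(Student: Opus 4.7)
The plan is to establish three assertions:
\begin{enumerate}
\item the value $\mu(h^{-1}\iota(d), [G(\mbb{R})_+], x)(\varphi)$ is independent of the choice of $h$ for a fixed set $I$;
\item the resulting assignment is left-$\Unloz$-invariant in $h_p$, so lies in $\opn{Ind}_{\Unloz}^{H(\Qp)}R$;
\item the value does not depend on the choice of representative set $I$.
\end{enumerate}

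The unifying mechanism for (1) and (3) is the following. Any two valid decompositions will differ by an element $\alpha$ of $H(\mbb{Q})_+\cap H(\widehat{\mbb{Z}}^p)\Unloz$, which by Lemma \ref{lem:gamma in Ghat} lies in $\Un$. One then applies the $G(\mbb{Q})$-invariance of $\mu$ to $\alpha$, using three compatibilities: $\alpha\in H(\mbb{R})_+\subset G(\mbb{R})_+$ preserves the component $[G(\mbb{R})_+]$; the prime-to-$p$ part of $\alpha$ lies in $xK^p_H x^{-1}\subset xK^p x^{-1}$, so that $\alpha\cdot x \equiv x\pmod{K^p}$ in $G(\mbb{A}_f^p)/K^p$; and $\alpha_p\in\Un$ fixes $\varphi\in\opn{St}(R)^{\Un}$. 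Combining these yields the desired equality of values.

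Concretely, for (1), if $(h,x,k^p,k_p)$ and $(h',x,k^{p'},k_p')$ are two decompositions of the same input -- with $x$ coinciding by the uniqueness of representatives in $I$ -- then the relations force $\alpha = h^{-1}h'\in H(\mbb{Q})_+\cap xK^p_Hx^{-1}\Unloz = \Gamma(p^n)^\lozenge_x$, and the mechanism above applies. For (3), given $x\in I$ and $x'\in I'$ representing the same class in $A_1(p^n)^\lozenge$, write $x' = \beta x\kappa v$ for $\beta\in H(\mbb{Q})_+$, $\kappa\in K^p_H$, $v\in\Unloz$; comparing prime-to-$p$ and $p$ components (recalling $I,I'\subset H(\widehat{\mbb{Z}}^p)$ have trivial $p$-component) forces $\beta^p\in H(\widehat{\mbb{Z}}^p)$ and $\beta_p = v^{-1}\in\Unloz$, whence $\beta\in H(\mbb{Q})_+\cap H(\widehat{\mbb{Z}}^p)\Unloz$. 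Then $(h\beta^{-1},x',\kappa^{-1}k^p,\beta k_p)$ furnishes a valid decomposition for $I'$, and the same invariance argument relates the two values. Assertion (2) is immediate: for $u\in\Unloz$, the tuple $(h,x,k^p,uk_p)$ is a valid decomposition of $(h_\infty,h^p,uh_p)$, so the value is unchanged by (1).

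The main technical ingredient is Lemma \ref{lem:gamma in Ghat} -- a global-local positivity argument pinning $H(\mbb{Q})_+\cap H(\widehat{\mbb{Z}}^p)\Unloz$ inside $\Un$. Without it, one cannot simultaneously exercise the $G(\mbb{Q})$-invariance of $\mu$ and preserve the evaluation at the $\Un$-fixed vector $\varphi$; the remainder of the proof is bookkeeping around strong approximation.
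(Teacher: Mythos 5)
Your overall plan---three assertions, with (1) and (3) unified by pinning the ambiguity inside $\Un$ via Lemma~\ref{lem:gamma in Ghat} and then invoking $G(\Q)$-invariance of $\mu$ with the three compatibilities---is exactly the paper's argument, and your treatments of (1) and (3) are correct. (Your (3) runs the comparison from $I$ to $I'$ rather than $I'$ to $I$ as the paper does, but the two directions are interchangeable.)

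However, your assertion (2) is misstated and its proof is wrong. The paper's convention has $\opn{Ind}_{\Unloz}^{H(\Qp)}R$ consisting of \emph{right}-$\Unloz$-invariant functions (the paper says $\Psi_n(\mu,\varphi)$ is ``right-invariant under $\Unloz$ by construction''), so left-$\Unloz$-invariance is neither true nor what is required. Moreover, your proposed verification is incorrect on its own terms: the tuple $(h,x,k^p,uk_p)$ is a decomposition of $(h_\infty, h^p, h\cdot uk_p)$, and $h\cdot uk_p = huk_p \neq uhk_p = uh_p$ in general, so it does \emph{not} decompose $(h_\infty, h^p, uh_p)$. The correct (and trivial) statement is that for $u\in\Unloz$, the tuple $(h,x,k^p,k_pu)$ decomposes $(h_\infty, h^p, h_pu)$, so that $h$ is unchanged and the value is unchanged---this is right-invariance. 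You also omit verifying right-$K^p_H$-invariance in the $h^p$-variable (the paper deduces this from the $K^p$-invariance of $\mu$), which is required for $\Psi_n(\mu,\varphi)$ to land in $\cA_{H,c}(\opn{Ind}_{\Unloz}^{H(\Qp)}R)$. These are small corrections that do not affect (1) or (3), but (2) as written is a genuine error.
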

\begin{proof}
First we consider well-definedness. We must check the definition is unchanged after replacing $h$ with $h\gamma$ for $\gamma \in \Gamma(p^n)_x^\lozenge \subset H(\Q)_+$. By $G(\Q)$-invariance of $\mu$ we have
\begin{equation}\label{eq:Xi well defined}
\mu((h\gamma)^{-1}\iota(d), [G(\R)_+], x, \varphi) = \mu(h^{-1} \iota(d), \gamma[G(\R)_+],\gamma x,\gamma\cdot\varphi).
\end{equation}
We have $\gamma \in U(p^n)$ by Lemma \ref{lem:gamma in Ghat}, so $\gamma\varphi= \varphi$; and as $\gamma \in H(\R)_+$ we have $\gamma[G(\R)_+] = [G(\R)_+]$. Finally as $\gamma \in xK^p_Hx^{-1}$ we see $\gamma x = x\cdot x^{-1}\gamma x \in xK^p_H$. As $F$ is right-invariant under $K^p_H$, we conclude the right-hand side of \eqref{eq:Xi well defined} is equal to $\mu(h^{-1}\iota(d),[G(\R)_+],x,\varphi)$. Thus $\Psi_n(\mu,\varphi)$ is a well-defined function $D_H \times \pi_0(H(\R)) \times H(\A_f^p) \times H(\Qp) \to R$. It is right-invariant under $K^p_H$ in the $h^p$-variable, since $\mu$ is invariant under $K^p$; and it is right-invariant under $\Unloz$ by construction. Thus $\Psi_n(\mu,\varphi) \in \cA_{H,c}(\opn{Ind}_{\Unloz}^{H(\Qp)} R)$ is well-defined.

We now show it is independent of the choice of $I$. Let $I' = \{x_i'\} \subset \prod_{\ell \neq p} H(\mbb{Z}_{\ell})$ be another set of representatives, defining a function $\Psi_n'(\mu,\varphi) \in \cA_{H,c}(\opn{Ind}_{\Unloz}^{H(\Qp)}R)$. There exist $h_i \in H(\Q)_+$, $k_i \in K^p_H$ and $u_i\in \Unloz$ such that 
\[
x_i' = h_ix_ik_iu_i \qquad \forall i.
\]
By assumption $\mu$ is invariant under $h_i$. As both $x_i'$ and $x_i$ are trivial at $p$, we see $h_{i,p} = u_i^{-1} \in \Unloz$. Moreover $h_i^p \in x_i'K_H^px_i^{-1} \subset \prod_{\ell \neq p}H(\Z_\ell)$, so exactly as in Lemma \ref{lem:gamma in Ghat},  we see $h_{i} \in \Un$. In particular $h_i^{-1}$ fixes $\varphi$. Additionally $h_i^{-1}[G(\R)_+] = [G(\R)_+]$. We thus compute that 
\begin{align*}
\Psi_n'(\mu,\varphi)(d,[h_\infty],h^p,h_p) &= \Psi_n'(\mu,\varphi)(d,h[H(\R)_+],hx_i'k^p, hk_p) = \mu(h^{-1}\iota(d),[G(\R)_+], x_i', \varphi)\\
&= \mu(h^{-1}\iota(d), [G(\R)_+], h_ix_ik_i, \varphi)\\
&= \mu(h_i^{-1}h^{-1}\iota(d), h_i^{-1}[G(\R)_+], x_i, h_i^{-1}\varphi)\\
&= \mu((hh_i)^{-1}\iota(d), [G(\R)_+], x_i, \varphi) = \Psi_n(\mu,\varphi)(d,[h_\infty], h^p,h_p).
\end{align*}
The fourth equality is $G(\Q)$-invariance of $F$, and the last uses that $h^p = hx_i'k^p = (hh_i)x_i(k_ik^p)$.
\end{proof}

\begin{lemma}\label{lem:Psi invariant}
We have $\Psi_n(\mu,\varphi) \in \h^0\Big(H(\Q), \cA_{H,c}(\opn{Ind}_{\Unloz}^{H(\Qp)}R)\Big)$.
\end{lemma}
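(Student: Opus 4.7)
The plan is to verify that $\gamma \cdot \Psi_n(\mu,\varphi) = \Psi_n(\mu,\varphi)$ for every $\gamma \in H(\Q)$, which, in the four-variable notation of \S\ref{sec:p-arithmetic cohomology} (where the fourth coordinate is evaluation in $\opn{Ind}_{\Unloz}^{H(\Qp)}R$), amounts to the diagonal functional identity
\[
\Psi_n(\mu,\varphi)(\gamma^{-1}d, \gamma^{-1}[h_\infty], \gamma^{-1}h^p, \gamma^{-1}h_p) = \Psi_n(\mu,\varphi)(d,[h_\infty], h^p, h_p).
\]

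First I would fix any decomposition $(h,x,k^p,k_p)$ of $([h_\infty], h^p, h_p)$ as in Notation \ref{not:evaluation data}(iii), so that the right-hand side unwinds to $\mu(h^{-1}\iota(d), [G(\R)_+], x, \varphi)$ by definition. The key observation is that $(\gamma^{-1}h, x, k^p, k_p)$ provides a valid decomposition of $(\gamma^{-1}[h_\infty], \gamma^{-1}h^p, \gamma^{-1}h_p)$, keeping the same middle coordinate $x$. Although $x$ may no longer lie in our chosen $I$ after the $\gamma$-shift (if $\gamma$ moves the relevant double coset), Lemma \ref{lem:ind of reps} shows that the value of $\Psi_n$ is independent of the choice of representative set, so we may still compute using this tilted decomposition:
\[
\Psi_n(\mu,\varphi)(\gamma^{-1}d, \gamma^{-1}[h_\infty], \gamma^{-1}h^p, \gamma^{-1}h_p) = \mu((\gamma^{-1}h)^{-1}\iota(\gamma^{-1}d), [G(\R)_+], x, \varphi).
\]

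Finally, I would simplify the right-hand side using the $H(\Q)$-equivariance of Reeder's injection $\iota \colon D_H\hookrightarrow D_G$ (Notation \ref{not:evaluation data}(i)):
\[
(\gamma^{-1}h)^{-1}\iota(\gamma^{-1}d) = h^{-1}\gamma\iota(\gamma^{-1}d) = h^{-1}\iota(d),
\]
yielding $\mu(h^{-1}\iota(d),[G(\R)_+], x, \varphi) = \Psi_n(\mu,\varphi)(d,[h_\infty], h^p, h_p)$. The main subtlety will be justifying the use of the tilted decomposition with $x \notin I$ in the second step; this is exactly what the independence result of Lemma \ref{lem:ind of reps} provides, since any such $x$ can be incorporated as part of a different choice of representatives $I'$. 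Everything else is a direct computation, the essential structural input being the equivariance of $\iota$.
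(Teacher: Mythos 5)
Your proof follows the paper's argument exactly at the level of the computation: write $h^p = hxk^p$, observe that the shifted data $(\gamma^{-1}[h_\infty], \gamma^{-1}h^p, \gamma^{-1}h_p)$ has decomposition $(\gamma^{-1}h, x, k^p, k_p)$, and then use the $H(\Q)$-equivariance of $\iota$ to cancel the $\gamma$'s. This is precisely what the paper does (with the cosmetic swap $\gamma \leftrightarrow \gamma^{-1}$).

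However, your claimed "main subtlety" is illusory, and the extra invocation of Lemma \ref{lem:ind of reps} is unnecessary. You write that "$x$ may no longer lie in our chosen $I$ after the $\gamma$-shift (if $\gamma$ moves the relevant double coset)," but this cannot happen: the $\gamma$ is absorbed entirely into the $h$-component (replacing $h$ by $\gamma^{-1}h$), and the middle coordinate is literally the same $x$ you started with, which was already chosen in $I$. Since $\gamma^{-1}h \in H(\Q)$ and the other equations are verified verbatim, uniqueness of $x$ given $I$ (Notation \ref{not:evaluation data}(iii)) forces the representative to be unchanged. The independence-of-$I$ lemma is not what makes the proof go through here; what makes it go through is the fact that multiplication by $\gamma \in H(\Q)$ only shifts the $h$-slot. (It is Lemma \ref{lem:ind of reps} itself — the well-definedness of $\Psi_n$ — where changes of representatives play a role, via the $\Gamma(p^n)^\lozenge_x$-ambiguity and the comparison of different $I$'s.) The paper's proof is accordingly slightly shorter: it records the same computation you give, without any detour about representatives.
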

\begin{proof}
We must show $\Psi_n(\mu,\varphi)$ is fixed by $H(\Q)$. For $\gamma \in H(\Q)$ we have
\begin{align*}
(\gamma^{-1} \cdot \Psi_n(\mu,\varphi))(d, [h_\infty], h^p,h_p) &= \Psi_n(\mu,\varphi)(\gamma d, \gamma[h_\infty],\gamma h^p,\gamma h_p)\\
&= \mu((\gamma h)^{-1}\gamma d, [G(\R)_+], x, \varphi) = \Psi_n(\mu,\varphi)(d,[h_\infty],h^p,h_p),
\end{align*}
where we write $h^p = hxk^p$ as above, and note $(\gamma h)^{-1}\gamma = h^{-1}$.
\end{proof}

\begin{remark}
If $\varphi$ is fixed by the larger group $\Unloz$, we have the simpler description
\[
\Psi_{n}(\mu,\varphi)(d,[h_\infty],h^p,h_p) = \mu(\iota(d), [h_\infty], h^p, h_p\cdot \varphi).
\]
\end{remark}

The following is our $p$-arithmetic evaluation map. Recall $\Phi_n$ and $\cE_{\Phi,U}$ from \S\ref{EisensteinClassesSSec}.

\begin{definition}\label{def:Psi tilde}
Define an $R$-bilinear map
    \begin{align*}
    \widetilde{\Psi}_n \colon \opn{H}^0\left( G(\Q), \mathcal{A}_{G, c}(\opn{St}(R)^*) \right)\times \opn{St}(R)^{U(p^n)} &\longrightarrow R\\
    (\mu,\varphi)& \longmapsto \left\langle \Psi_{n}(\mu,\varphi), \mathcal{E}_{\Phi_n, \Unloz} \right\rangle_{\Unloz}.
    \end{align*}
\end{definition}

\subsection{$p$-adic measures attached to $p$-arithmetic cohomology classes}\label{sec:measure}

The following definition and lemma, describing certain elements of and manipulations in $\opn{St}(R)^*$, will repeatedly be useful.

\begin{definition}\label{def:psi and varphi}
Let $\cA,\cB \subset \Zp$ be compact open subsets with $\cA$ stable under multiplication by $-1$. Let $(\bullet,R)$ be as in Notation \ref{BulletRLambdaNotation}, and let $f : \cB \to R$ be a $\bullet$-function. 

Let $\psi_{\cA,\cB,f} : \overline{B}(\Qp) \backslash G(\Qp) \to R$ be the function
    \[
        \psi_{\cA,\cB,f}(g) = \left\{ \begin{array}{cc} f(y) & \text{ if } g \in \overline{B}(\Q_p) \left( \begin{smallmatrix} 1 & x & y \\ & 1 & pz \\ & & 1 \end{smallmatrix} \right) \text{ with } x \in \cA,\  y \in \cB, \ z \in \Z_p \\ 0 & \text{ otherwise, }  \end{array} \right.
    \]
    and let $\varphi_{\cA,\cB,f} : \overline{B}(\Qp)\backslash G(\Qp) \to R$ be the function 
    \[
    \varphi_{\cA,\cB,f}(g) = \left\{ \begin{array}{cc} f(y) & \text{ if } g \in \overline{B}(\Q_p) \left( \begin{smallmatrix} 1 & y & x \\ & 1 & z \\ & & 1 \end{smallmatrix} \right) \text{ with } x \in \cA, \ y \in \cB, \ z \in \Z_p \\ 0 & \text{ otherwise. }  \end{array} \right.
    \]
\end{definition}

Define matrices
\begin{equation}\label{eq:u}
u_0 = \smallthreemat{1}{}{}{}{}{-1}{}{1}{}, \qquad u = \smallthreemat{1}{}{1}{}{1}{}{}{}{1}u_0.
\end{equation}

\begin{lemma}\label{lem:psi and varphi}
In the set-up of Definition \ref{def:psi and varphi}, we have $u_0\cdot[\varphi_{\cA,\cB,f}] = -[\psi_{\cA,\cB,f}]$ in $\opn{St}^\bullet(R)$.
\end{lemma}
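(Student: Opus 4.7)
The plan is to show that $u_0\cdot\varphi_{\cA,\cB,f} + \psi_{\cA,\cB,f}$ coincides with the pullback of an explicit element of $\opn{I}^\bullet_{\overline{P}_1}(R)$, and therefore vanishes in the Steinberg quotient $\opn{St}^\bullet(R)$. I will check this by pointwise computation on the big Bruhat cell $\overline{B}(\Qp)\cdot N(\Qp)$ (where $N$ is the upper-triangular unipotent of $G$), and then invoke density.

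Write $n(a,b,c) = \smallthreemat{1}{a}{b}{}{1}{c}{}{}{1}$. A direct matrix multiplication gives $n(a,b,c)\,u_0 = \smallthreemat{1}{b}{-a}{}{c}{-1}{}{1}{}$, and for $c\neq 0$ one verifies the Bruhat-type factorisation
\[
n(a,b,c)\,u_0 \;=\; \smallthreemat{1}{}{}{}{c}{}{}{1}{1/c}\cdot n(b,-a,-1/c),
\]
with the leading factor in $\overline{B}(\Qp)$. Substituting into the definition of $\varphi_{\cA,\cB,f}$ and using that $\cA$ is stable under negation gives, for $c\neq 0$,
\[
(u_0\cdot\varphi_{\cA,\cB,f})(n(a,b,c)) \;=\; f(b)\cdot\mathbbm{1}\bigl[a\in\cA,\ b\in\cB,\ -1/c\in\Zp\bigr].
\]
The condition $-1/c\in\Zp$ is equivalent to $c\in\Qp\setminus p\Zp$, exactly complementary to the support condition $c\in p\Zp$ of $\psi_{\cA,\cB,f}$. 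When $c=0$, the matrix $n(a,b,0)u_0$ has vanishing $(2\times 2)$-leading principal minor, so lies outside the big cell; hence $\varphi_{\cA,\cB,f}$ vanishes on it, while $\psi_{\cA,\cB,f}(n(a,b,0)) = f(b)\mathbbm{1}[a\in\cA,b\in\cB]$ because $0\in p\Zp$. Combining all cases yields, uniformly in $(a,b,c)\in\Qp^3$,
\[
(u_0\cdot\varphi_{\cA,\cB,f} + \psi_{\cA,\cB,f})(n(a,b,c)) \;=\; f(b)\cdot\mathbbm{1}[a\in\cA,\,b\in\cB].
\]

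This final expression depends only on the first row of $n(a,b,c)$, so factors through $\overline{B}(\Qp)\backslash G(\Qp) \twoheadrightarrow \overline{P}_1(\Qp)\backslash G(\Qp) \cong \mathbb{P}^2(\Qp)$. I therefore define $F \in \opn{I}^\bullet_{\overline{P}_1}(R)$ by setting $F([1:a:b]) = f(b)\mathbbm{1}[a\in\cA,b\in\cB]$ on the affine chart and $F\equiv 0$ on the line at infinity $\{[0:a:b]\}$. Since $\cA,\cB\subset\Zp$ are compact, the support of $F$ is separated from the line at infinity, so extension-by-zero produces a genuine $\bullet$-function.

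By construction, the pullback of $F$ to $\opn{I}^\bullet_{\overline{B}}(R)$ agrees with $u_0\cdot\varphi_{\cA,\cB,f} + \psi_{\cA,\cB,f}$ on the big Bruhat cell, which is open and dense in the compact flag variety $\overline{B}(\Qp)\backslash G(\Qp)$. Since both sides are $\bullet$-functions on this space, they must coincide everywhere. Hence $u_0\cdot\varphi_{\cA,\cB,f} + \psi_{\cA,\cB,f} = F \in \opn{I}^\bullet_{\overline{P}_1}(R)$, which maps to zero in $\opn{St}^\bullet(R)$, yielding the desired identity. The most delicate step is the analysis of the boundary of the big cell (the locus $c=0$ and the non-big Bruhat cells), which is handled by the compactness of $\cA,\cB$ and the fact that $\bullet$-functions agreeing on an open dense subset agree everywhere.
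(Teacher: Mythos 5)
Your proof is correct and reaches the same intermediate observation as the paper — that $u_0\cdot\varphi_{\cA,\cB,f} + \psi_{\cA,\cB,f}$ is the pullback of a $\overline{P}_1(\Qp)$-invariant function supported on $[1:\cA:\cB]$ — but you establish this by a somewhat different mechanism. The paper's proof proceeds via an explicit disjoint-union Bruhat claim, namely that
\[
\overline{P}_1(\Qp)\smat{1 & \cA & \cB\\ & 1 & 0\\ & & 1} = \overline{B}(\Qp)u_0^{-1}\smat{1 & \cA & \cB\\ & 1 & \\ & \Zp & 1} \sqcup \overline{B}(\Qp)\smat{1 & \cA & \cB\\ & 1 & p\Zp\\ & & 1},
\]
proved by the $\GL_2$-Bruhat plus Iwahori decompositions, together with the conjugation identity $u_0\smallthreemat{1}{y}{-x}{}{1}{-z}{}{}{1}u_0^{-1} = \smallthreemat{1}{x}{y}{}{1}{}{}{z}{1}$ and the left-$\overline{P}_1$-invariance of $\xi_{\cA,\cB,f}$, thereby obtaining the identity $u_0\cdot\varphi + \psi = \xi$ exactly, with no limiting argument. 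You instead compute the factorisation $n(a,b,c)u_0 = \overline{b}\cdot n(b,-a,-1/c)$ with $\overline{b}\in\overline{B}(\Qp)$ (valid for $c\neq 0$), verify the formula $(u_0\cdot\varphi+\psi)(n(a,b,c)) = f(b)\mathbbm{1}[a\in\cA,\,b\in\cB]$ pointwise on the big cell including the $c=0$ fringe, and then invoke density of the big cell plus continuity (together with compactness of $\cA,\cB$ to justify that extension-by-zero gives a $\bullet$-function on $\mathbb{P}^2(\Qp)$) to upgrade to a global identity. Both arguments are valid; the paper's gives a slightly more structural/set-theoretic explanation, whereas yours is arguably more mechanical but trades the explicit decomposition for a topological density step — which for $\bullet=\opn{sm}$ relies on $R$ being given the discrete topology so that smooth functions are continuous and the density argument applies, a point worth flagging explicitly.
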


\begin{proof}
Let $\xi_{\cA,\cB,f} : \overline{P}_1(\Qp)\backslash G(\Qp) \to R$ be defined by    
\[
    \xi_{\cA,\cB,f}(g) = \left\{ \begin{array}{cc} f(y) & \text{ if } g \in \overline{P}_1(\Q_p) \left( \begin{smallmatrix} 1 & x & y \\ & 1 & 0 \\ & & 1 \end{smallmatrix} \right) \text{ with } x \in \cA,\  y \in \cB \\ 0 & \text{ otherwise. }  \end{array} \right.
    \]
    As this is left-invariant under $\overline{P}_1(\Qp)$, its image in $\opn{St}^\bullet(R)$ is 0. The result thus follows if we show
    \begin{equation}\label{eq:comparison in St}
        u_0 \cdot \varphi_{\cA,\cB,f} = \xi_{\cA,\cB,f} - \psi_{\cA,\cB,f}.
    \end{equation}
    First note that:

 \begin{claim} We have
        $\overline{P}_1(\Qp) \left(\begin{smallmatrix} 1 & \cA & \cB\\ & 1 & 0 \\ & & 1 \end{smallmatrix}\right) = \overline{B}(\Qp)u_0^{-1} \left(\begin{smallmatrix} 1 & \cA & \cB \\ & 1 &  \\ & \Zp & 1 \end{smallmatrix}\right) \sqcup \overline{B}(\Qp) \left(\begin{smallmatrix} 1 & \cA & \cB \\ & 1 & p\Zp \\ &  & 1 \end{smallmatrix}\right).$
    \end{claim}
 \emph{Proof of claim:} The Bruhat decomposition tells us $\GL_2(\Qp) = \overline{B}_2(\Qp)\smallmatrd{}{1}{-1}{}\opn{Iw}_2^- \sqcup \overline{B}_2(\Qp)\opn{Iw}_2^-$, where $\opn{Iw}_2^- = \{g \in \GL_2(\Zp): g \newmod{p} \in \overline{B}_2(\mbb{F}_p)\}$. Via Iwahori decomposition (of the form $NT\overline{N}$ in the first factor, and $\overline{N}TN$ in the second) we see that
    \[
        \GL_2(\Qp) = \overline{B}_2(\Qp)\smallmatrd{}{1}{-1}{}\smallmatrd{1}{}{\Zp}{1} \sqcup \overline{B}_2(\Qp)\smallmatrd{1}{p\Zp}{}{1}.
    \]
    The claim follows by considering this computation in the bottom-right $2\times 2$ square in $\GL_3$.

    \medskip

    Now, since $u_0 \smallthreemat{1}{y}{-x}{}{1}{-z}{}{}{1}u_0^{-1} = \smallthreemat{1}{x}{y}{}{1}{}{}{z}{1}$, and $\cA$ is stable under multiplication by $-1$, we have
    \[
    \Big(u_0\cdot\varphi_{\cA,\cB,f}\Big)(g) = \left\{ \begin{array}{cc} f(y) & \text{ if } g \in \overline{B}(\Q_p) u_0^{-1}\left( \begin{smallmatrix} 1 & x & y \\ & 1 &  \\ & z& 1 \end{smallmatrix} \right) \text{ with } x \in \cA, \ y \in \cB, \ z \in \Z_p \\ 0 & \text{ otherwise, }  \end{array} \right.
    \]
    From the definitions and claim, we see  $\xi_{\cA,\cB,f} = u_0 \cdot \varphi_{\cA,\cB,f} + \psi_{\cA,\cB,f}$, yielding \eqref{eq:comparison in St} and the lemma.
\end{proof}

Until the end of \S\ref{sec:measure}, we specialise to the setting where $R = \cO_L$. We introduce the following notation: for any map $f \colon \left(\mbb{Z}/p^n\mbb{Z}\right)^{\times} \to \mathcal{O}_L$, let $f^{\iota}$ denote the function given by $f^{\iota}(x) = f(-x)$ for any $x \in \left(\mbb{Z}/p^n\mbb{Z}\right)^{\times}$.

\begin{definition}
If $f \colon (\Z/p^n\Z)^\times \to \mathcal{O}_L$ is any function, let
\[
\psi_{f} \defeq \left[\psi_{p^n\Zp,\Zp^\times,f}\right]  \in \opn{St}(\cO_L).
\]
be as in Definition \ref{def:psi and varphi}, where we lift $f$ to a smooth function on $\Zp^\times$ under the projection $\Zp^\times \twoheadrightarrow (\Z/p^n\Z)^\times$. Then for $\mu \in \opn{H}^0\left( G(\Q), \mathcal{A}_{G, c}(\opn{St}(\cO_L)^*) \right)$, let\footnote{The involution $(-)^{\iota}$ here may seem unnatural, but it is an artifact of our conventions; ultimately, putting it here multiplies the measure $\sL(\mu)$ in Proposition \ref{prop:measure} by the Dirac measure $[-1]$. This gives cleaner comparison to \cite{LW21}.}
\begin{align*}
\sL_{n}(\mu, -) : \opn{Maps}((\Z/p^n\Z)^\times,\cO_L) &\longrightarrow \cO_L,\\
f & \longmapsto \widetilde{\Psi}_n(\mu,-\psi_{f^{\iota}}).
\end{align*}
\end{definition}

It is standard (see e.g.\ \cite[Prop.\ 3.15]{ENT-notes}) that $\sL_{n}(\mu)$ can be interpreted as an element of $\cO_L[(\Z/p^n\Z)^\times]$. We have natural projection maps
\[
\opn{Norm}^{n+1}_n : \cO_L[(\Z/p^{n+1}\Z)^\times] \longrightarrow \cO_L[(\Z/p^n\Z)^\times].
\]
The following reinterpretation of \cite{LW21} relates these constructions to $p$-adic measures.

\begin{proposition}\label{prop:measure}
We have $\opn{Norm}^{n+1}_n(\sL_{n+1}(\mu)) = \sL_{n}(\mu)$. Thus there is an $\cO_L$-linear map
\begin{align*}
\opn{H}^0\left( G(\Q), \mathcal{A}_{G, c}(\opn{St}(\cO_L)^*) \right) & \longrightarrow \cO_L[\![\Zp^\times]\!] \cong \varprojlim_n \cO_L[(\Z/p^n\Z)^\times] \\
\mu &\longmapsto \sL(\mu) \defeq \big(\sL_{n}(\mu)\big)_{n \geq 1}.
\end{align*}
\end{proposition}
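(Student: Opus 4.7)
The plan is to verify norm compatibility directly, unfolding the definitions and using the $G(\mbb{Q}_p)$-equivariance of $\mu$ together with the standard distribution relation for Eisenstein classes. By $\mathcal{O}_L$-linearity, it suffices to fix $f \colon (\mbb{Z}/p^n\mbb{Z})^{\times} \to \mathcal{O}_L$ with lift $\tilde{f} = f \circ \opn{pr}$ to $(\mbb{Z}/p^{n+1}\mbb{Z})^{\times}$, and show
\[
\langle \Psi_{n+1}(\mu, \psi_{\tilde f^\iota}), \cE_{\Phi_{n+1}, U(p^{n+1})^{\lozenge}} \rangle_{U(p^{n+1})^{\lozenge}} = \langle \Psi_n(\mu, \psi_{f^\iota}), \cE_{\Phi_n, \Unloz} \rangle_{\Unloz}.
\]
The key Steinberg-level input is a decomposition
\[
\psi_{f^\iota} = \sum_{u \in \mbb{Z}/p\mbb{Z}} \gamma_u \cdot \psi_{\tilde f^\iota} \quad \in \opn{St}(\mathcal{O}_L), \qquad \gamma_u \defeq \smat{1 & up^n & \\ & 1 & \\ & & 1} \in H(\mbb{Q}_p),
\]
established by partitioning $p^n\mbb{Z}_p = \bigsqcup_u (-up^n + p^{n+1}\mbb{Z}_p)$ in the $x$-direction of the Bruhat cell used to define $\psi$ in Definition \ref{def:psi and varphi}.

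Substituting and using bilinearity gives $\sL_n(\mu, f) = -\sum_u \langle \Psi_n(\mu, \gamma_u \cdot \psi_{\tilde f^\iota}), \cE_{\Phi_n, \Unloz}\rangle_{\Unloz}$. The heart of the argument is to push each $\gamma_u$ across the pairing. The $G(\mbb{Q}_p)$-action on the $\opn{St}(\mathcal{O}_L)^*$-valued cocycle $\mu$ yields $\Psi_n(\mu, \gamma_u \varphi) = \Psi_n(\gamma_u^{-1}\mu, \varphi)$; combining this with the $H(\mbb{Q}_p)$-equivariance of Eisenstein classes \eqref{eq:Eis equiv} and adjointness of restriction and corestriction under the Poincar\'e pairing, each summand can be rewritten as a pairing of $\Psi_{n+1}(\mu, \psi_{\tilde f^\iota})$ with a translate of $\cE_{\Phi_n, \Unloz}$, restricted to the finer level $U(p^{n+1})^\lozenge$. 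The sum over $u$ then reduces, by the standard Siegel unit distribution relation (a key input in \cite{LW21}), to $\cE_{\Phi_{n+1}, U(p^{n+1})^\lozenge}$, possibly after absorbing an auxiliary Hecke-type sum in the $(1,3)$-unipotent direction via the $U_{p,1}$-equivariance of $\mu$ supplied by Lemma \ref{lem:image fixed by Up1}. This gives the required identity.

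The main obstacle is the bookkeeping in the second paragraph: tracking $\gamma_u$ carefully through the change of level in the pairing, and matching the resulting combination of Eisenstein translates with the classical distribution relation for Beilinson's motivic Eisenstein classes. An equivalent, potentially cleaner approach is to observe that the measure $\sL(\mu)$ is a $p$-arithmetic reinterpretation of the Loeffler--Williams construction applied to the class $\rho_{\Iw}^0(\mu) \in \opn{H}^2_c(X_{G, K^p\Iw}, \mathcal{O}_L)^{U_{p,1} = 1}$ from Lemma \ref{lem:image fixed by Up1}; under this identification, norm compatibility of $\sL(\mu)$ is inherited directly from the corresponding property of the measure of \cite{LW21}, rather than being proved from scratch.
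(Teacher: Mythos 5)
Your proposal sketches two routes, and the second one is precisely the paper's argument: the paper passes to $\phi = \rho_{\Iw}^0(\mu) \in \hc{2}(X_{G,K^p\Iw},\cO_L)$, observes via Lemma \ref{lem:image fixed by Up1} that $\phi$ is $U_{p,1}$-fixed, and then \emph{identifies} $\sL_n(\mu,f)$ with the Loeffler--Williams quantity ${}_c\Xi_n^{[0,0]}(\phi,\Phi^{(p)})(f)$, after which norm compatibility comes for free from \cite[Cor.\ 6.12]{LW21} using the $U_{p,1}$-eigenvalue. So you have chosen the right strategy and the right key input. However, the word ``observe'' in your second paragraph understates where the work actually lies: the identification $\sL_n(\mu,f) = {}_c\Xi_n^{[0,0]}(\phi,\Phi^{(p)})(f)$ is not immediate and is the technical core of the paper's proof. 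It requires (a) the decomposition $-\psi_{f^\iota} = \sum_{b\in(\Z/p^n)^\times} f(-b)\,\delta_b u t^n\cdot[\varphi_{\Iw}]$ obtained via Lemma \ref{lem:psi and varphi}, (b) matching this against the Loeffler--Williams pull-push $\iota_n^*(\phi)\cup[f]$ at level $U(p^n)^\circ$, and (c) a trace/adjunction bookkeeping to recover $\widetilde\Psi_n$ at level $\Unloz$. Your proposal asserts the conclusion of this comparison but does not supply it.

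As for your first, more computational route: the decomposition $\psi_{f^\iota} = \sum_{u\in\Z/p\Z}\gamma_u\cdot\psi_{\tilde f^\iota}$ with $\gamma_u = \smat{1 & up^n & \\ & 1 & \\ & & 1}$ is itself correct (it follows by partitioning $p^n\Zp = \bigsqcup_u(-up^n+p^{n+1}\Zp)$ in the $(1,2)$-direction). But the next step---applying $\Psi_n(\mu,-)$ to each summand $\gamma_u\cdot\psi_{\tilde f^\iota}$ individually---is not well-posed, since $\Psi_n$ is only defined on $U(p^n)$-invariant Steinberg vectors and each summand is only $\gamma_u U(p^{n+1})\gamma_u^{-1}$-invariant. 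The step $\Psi_n(\mu,\gamma_u\varphi) = \Psi_n(\gamma_u^{-1}\mu,\varphi)$ that you invoke also doesn't resolve this, because (i) $\mu$ is $G(\Q)$-invariant so $\gamma_u^{-1}\mu=\mu$, giving no useful simplification, and (ii) the right-hand side is still not defined at level $n$. What is really needed is to descend to level $U(p^{n+1})^\lozenge$ before summing, which is exactly what the paper's proof achieves by moving everything to the classical side where \cite[Cor.\ 6.12]{LW21} has already done the careful level juggling against the Eisenstein distribution relation. You correctly flagged this as the main obstacle; in practice it is not merely bookkeeping but the reason the paper routes through $\phi$ and ${}_c\Xi_n$ rather than arguing purely on the $p$-arithmetic side.
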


\begin{proof}
   Recall the map $\rho_{\mathrm{Iw}}^0$ and operator $U_{p,1}$ from \S\ref{sec:steinberg tree}. For ease of notation, let $\phi \defeq \rho_{\opn{Iw}}^0(\mu) \in \opn{H}^0(G(\mbb{Q}), \mathcal{A}_{G, c}(\opn{Ind}^{G(\mbb{Q}_p)}_{\opn{Iw}}\mathcal{O}_L))$. By Lemma \ref{lem:image fixed by Up1}, we know $\phi$ is fixed by $U_{p,1}$. This can be considered as a class in $\hc{2}(X_{G,K^p\Iw}, \cO_L)$, yielding an input to \cite[\S6.2]{LW21}; we will directly compare $\sL_{n}(\mu)$ with the elements ${}_c\Xi_n^{[0,0]}(\phi, \Phi^{(p)})$ constructed there, and use Corollary 6.12 \emph{op.cit.}\ to conclude.

   \medskip

    Let $f : (\Z/p^n\Z)^\times \to \cO_L$ be any function. Recall 
    \[
        \varphi_{\opn{Iw}} = \opn{ch}\big(\overline{B}(\Qp)\cdot \opn{Iw}) = \varphi_{\Zp,\Zp,\mathbbm{1}},
    \]
    the latter in the notation of Definition \ref{def:psi and varphi}. We first claim that as elements of $\opn{St}(\cO_L)$, we have
    \begin{equation}\label{eq:psi f sum}
    -\psi_{f^{\iota}} = \sum_{b \in \left(\mbb{Z}/p^n\mbb{Z}\right)^{\times}} f(-b)\delta_b u t^n \cdot [\varphi_{\opn{Iw}}],
    \end{equation}
    where $\delta_b = \opn{diag}(-b, 1, 1) \in H(\mbb{Z}_p)$ for $b \in \mbb{Z}_p^{\times}$. Indeed,
    \[
        t^n\cdot \varphi_{\opn{Iw}} = t^n\cdot\varphi_{\Zp,\Zp,\mathbbm{1}} = \varphi_{p^n\Zp,p^n\Zp,\mathbbm{1}},
    \]
    and then Lemma \ref{lem:psi and varphi} implies 
    \[
        u_0 t^n\cdot[\varphi_{\opn{Iw}}] = -[\psi_{p^n\Zp,p^n\Zp,\mathbbm{1}}] = -\left[\opn{ch}\left(\overline{B}(\Qp)\smallthreemat{1}{p^n\Zp}{p^n\Zp}{}{1}{p\Zp}{}{}{1}\right)\right] \in \opn{St}(\cO_L).
    \]
    One then checks that
    \begin{equation} \label{Eqn:TwistedElementequalsPsifiota}
    f(-b) \delta_b u t^n \cdot [\varphi_{\opn{Iw}}] = - f(-b) \left[\opn{ch}\left( \overline{B}(\Q_p) \left( \begin{smallmatrix} 1 & p^n \mbb{Z}_p & b + p^n \mbb{Z}_p \\ & 1 & p\mbb{Z}_p \\ & & 1 \end{smallmatrix} \right) \right)\right] = -[\psi_{p^n\Zp, b+p^n\Zp,f^{\iota}}]
    \end{equation}
    as elements of $\opn{St}(\mathcal{O}_L)$. Equation \eqref{eq:psi f sum} follows after taking the sum. 
    
    Recall $\Uncirc$ from \S\ref{sec:level subgroups}, and $\phi = \rho^0_{\Iw}(\mu)$; and consider the class
    \[
    \iota^*_{n}(\phi) \in \opn{H}^0\Big(H(\mbb{Q}), \mathcal{A}_{H, c}(\opn{Ind}^{H(\mbb{Q}_p)}_{\Uncirc} \mathcal{O}_L)\Big)
    \]
    defined by 
    \[
    (d, [h_{\infty}], h^p, h_p) \mapsto \mu(\iota(d), [h_{\infty}], h^p, h_p u t^n \cdot \varphi_{\opn{Iw}}),
    \]
    where as usual we view $H \subset G$ block diagonally. In classical language, $\iota^*_n(\phi)$ is the Iwahori level class for $G$, twisted by the action of $ut^n$ and pulled back to $H$. Finally, let 
    \[
    \nu_1 : H \to \GL_1, \qquad (h_1,h_2) \mapsto \tfrac{\det h_1}{h_2}
    \]
    be the map from \cite[Def.\ 4.2]{LW21}, and let $[f] \in \opn{H}^0(H(\mbb{Q}), \mathcal{A}_H(\opn{Ind}_{U(p^n)^{\circ}}^{H(\mbb{Q}_p)}\mathcal{O}_L))$ be the class given by
    \[
    ([h_{\infty}], h^p, h_p) \mapsto f(\nu_1(h_p) |\!| \nu_1(h^p h_p) |\!|)
    \]
    where $|\!|-|\!|$ denotes the finite part of the adelic norm. 

    An explicit computation (using the claim above) shows that
    \begin{equation} \label{Eqn:TracePullbackPsin}
    \opn{tr}_{U(p^n)^{\circ}}^{U(p^n)^{\lozenge}}( \iota_n^*(\phi) \cup [f]) = \Psi_{n}(\mu, -\psi_{f^{\iota}})
    \end{equation}
    since $\{ \delta_b \}$ are a set of representatives of $U(p^n)^{\lozenge}/U(p^n)^{\circ}$. Indeed, for $x \in I(p^n)^{\lozenge}$, $b \in \left( \mbb{Z}/p^n\mbb{Z} \right)^{\times}$, $d \in D_H$, and $h_{\infty} \in H(\mbb{R})$, we have $\nu_1(\delta_b)|\!|\nu_1(x\delta_b)|\!| = -b$, and hence
    \begin{align*}
    \left( \iota_n^*(\phi) \cup [f] \right)(d, [h_{\infty}], x, \delta_b) &= f(-b)\mu(\iota(d), [h_{\infty}], x, \delta_b ut^n \cdot \varphi_{\opn{Iw}}) \\
    &= f(-b)\mu(\iota(d), h \cdot [G(\mbb{R})_+], h x, h \cdot \delta_b ut^n \cdot \varphi_{\opn{Iw}}) \\
    &= \mu(h^{-1} \cdot \iota(d), [G(\mbb{R})_+], x, f(-b)\delta_b ut^n \cdot \varphi_{\opn{Iw}}) \\ &= \Psi_n(\mu, -[\psi_{p^n\Zp, b+p^n\Zp,f^{\iota}}])(d, [h_{\infty}], x, 1) .
    \end{align*}
    Here $h \in H(\mbb{Q})$ is any element such that $h[H(\mbb{R})_+] = [h_{\infty}]$ and $x^{-1} h x \in K^p_H U(p^n)^{\circ}$. The second equality follows the fact that $\mu$ is invariant under right-translation by $K^p_H$ and that $U(p^n)^{\circ}$ fixes $\delta_b ut^n \cdot \varphi_{\opn{Iw}}$, and the third equality follows from the $G(\mbb{Q})$-equivariance of $\mu$. The final equality follows from (\ref{Eqn:TwistedElementequalsPsifiota}). We see that 
    \begin{align*}
    \opn{tr}_{U(p^n)^{\circ}}^{U(p^n)^{\lozenge}}(\iota_n^*(\phi) \cup [f])(d, [h_{\infty}], x, 1) &= \sum_{b \in \left( \mbb{Z}/p^n\mbb{Z} \right)^{\times}} (\iota_n^*(\phi) \cup [f])(d, [h_{\infty}], x, \delta_b) \\ &= \sum_{b \in \left( \mbb{Z}/p^n\mbb{Z} \right)^{\times}} \Psi_n(\mu, -[\psi_{p^n\Zp, b+p^n\Zp,f^{\iota}}]))(d, [h_{\infty}], x, 1) \\ &= \Psi_n(\mu, -\psi_{f^{\iota}})(d, [h_{\infty}], x, 1)
    \end{align*}
    where the last equality follows from the bilinearity of $\Psi_n$. The equality (\ref{Eqn:TracePullbackPsin}) now follows because both sides are determined by their values on $(d, [h_{\infty}], x, 1)$ (using Notation \ref{not:evaluation data}(iii)).

    We now see that 
    \begin{align}
        \mathscr{L}_{n}(\mu,f) &= \langle \Psi_{n}(\mu, -\psi_{f^{\iota}}), \mathcal{E}_{\Phi_n, \Unloz} \rangle_{\Unloz} \notag\\
        &= \langle \opn{tr}_{\Uncirc}^{\Unloz}( \iota_n^*(\phi) \cup [f] ), \mathcal{E}_{\Phi_n, \Unloz} \rangle_{\Unloz} \notag\\
        &= \langle \iota_n^*(\phi) \cup [f] , \mathcal{E}_{\Phi_n, \Uncirc} \rangle_{\Uncirc} \notag\\
        &= \langle \iota_n^*(\phi) , \mathcal{E}_{\Phi_n, \Uncirc} \cup [f]\rangle_{\Uncirc} . \label{eq:LW pullback}
    \end{align}
    This now bears direct comparison to analogous objects constructed in \cite[\S6.2]{LW21};    
    in the notation \emph{op.\ cit}., this is precisely the element ${}_{c}\Xi^{[0,0]}_n \in \cO_L[(\Z/p^n\Z)^\times]$ evaluated at $f$. Precisely, the pairing defining ${}_c\Xi_n^{[0,0]}$ there is a pushforward under $\iota_n$; using adjointness of pushforward and pullback, we see that pairing equals \eqref{eq:LW pullback}.

   Now, the result follows from \cite[Corollary 6.12]{LW21}, which shows the ${_c\Xi^{[0,0]}_n}$ fit into a $p$-adic measure as $n$ varies. Here we use crucially that $U_{p,1}$ fixes $\phi$. 
\end{proof}

\subsection{$p$-adic periods attached to $p$-arithmetic cohomology classes}\label{sec:period}

We will also consider other values of this evaluation map. We begin with the following analogue of $\Psi_n$, where we replace $U(p^n)^\lozenge$ with $U_1(p^n)$ and specialise the Steinberg variable.

\begin{definition} \label{OrdPeriodDefinition}
    Let $\varphi_{\opn{Iw}} \in \opn{St}(\mathcal{O}_L)$ denote the Iwahori-invariant vector represented by the characteristic function of $\overline{B}(\Q_p)\opn{Iw}$. For any integer $m \geq 1$, we let
    \[
    \Psi_{\opn{Iw}, m}(\mu) \in \opn{H}^0\left( H(\Q), \mathcal{A}_{H, c}(\opn{Ind}^{H(\Q_p)}_{U_1(p^m)}\mathcal{O}_L) \right)
    \]
    denote the element satisfying
    \[
    \Psi_{\opn{Iw},m}(\mu)(d, [h_{\infty}], h^p, h_p) = \mu(\iota(d), [h_{\infty}], h^p, h_p u_0 \cdot \varphi_{\opn{Iw}})
    \]
    for $d \in D_H$, $[h_{\infty}] \in \pi_0(H(\mbb{R}))$, $h^p \in H(\mbb{A}_f^p)$, $h_p \in H(\Q_p)$. This is well-defined, since $u_0^{-1}U_1(p^m)u_0 \subset \opn{Iw}$ for all $m$. Note also that $\Psi_{\opn{Iw},m}(\mu)$ is nothing but $\Psi_{\opn{Iw},1}(\mu)$ considered at level $U_1(p^m)$, so this class has no significant dependence on $m$. Indeed, we have
    \begin{align*} 
\left\langle \Psi_{\opn{Iw}, m}(\mu), \mathcal{E}_{\Phi_m, U_1(p^m)} \right\rangle_{U_1(p^m)} &= \left\langle \opn{res}(\Psi_{\opn{Iw}, 1}(\mu)), \mathcal{E}_{\Phi_m, U_1(p^m)} \right\rangle_{U_1(p^m)}\\
&= \left\langle \Psi_{\opn{Iw}, 1}(\mu), \opn{tr}\left(\mathcal{E}_{\Phi_m, U_1(p^m)} \right)\right\rangle_{U_1(p)} = \left\langle \Psi_{\opn{Iw}, 1}(\mu), \mathcal{E}_{\Phi_1, U_1(p)} \right\rangle_{U_1(p)}.
\end{align*}
In particular, the quantity
    \begin{equation}\label{eq:Pmu}
    \sP(\mu) \defeq \left\langle \Psi_{\opn{Iw}, m}(\mu), \mathcal{E}_{\Phi_m, U_1(p^m)} \right\rangle_{U_1(p^m)} \in \cO_L
    \end{equation}
    is independent of $m$. 
    \end{definition}

We have the following relation.  Let $\xi_n = \opn{ch}\left(\overline{B}(\Q_p) \smat{1 & p \Z_p & p^n \Z_p \\ & 1 & \Z_p \\ & & 1}\right) = \varphi_{p^n\Zp,p\Zp,\mathbbm{1}} \in \opn{St}(\mathcal{O}_L)$.

\begin{lemma}\label{lem:period vs evaluation}
    We have
    \[
    \widetilde{\Psi}_n(\mu, u_0\cdot \xi_n) = \Pmu
    \]
    for any $n \geq 1$.
\end{lemma}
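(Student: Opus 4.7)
The plan is to identify both sides with a common pairing at level $U(p^n)^{\lozenge}$, up to the normalising factor $[U_1(p^n):U(p^n)^\lozenge] = p^n$. First I would observe that since $u_0^{-1}U_1(p^n)u_0 \subset \mathrm{Iw}$, the Steinberg vector $u_0\cdot\varphi_{\mathrm{Iw}}$ is right-$U_1(p^n)$-invariant; unwinding Definitions \ref{def:Psi_n} and \ref{OrdPeriodDefinition}, this immediately gives $\opn{res}^{U_1(p^n)}_{U(p^n)^\lozenge}\Psi_{\opn{Iw},n}(\mu) = \Psi_n(\mu, u_0\cdot\varphi_{\mathrm{Iw}})$. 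A direct check shows that $\Phi_n = \Phi^{(p)}\Phi_p^n$ is $U_1(p^n)$-invariant under the action $(u\cdot \Phi)(v) = \Phi(vu)$ on Schwartz functions, so the trace compatibility \eqref{eq:Eis trace} yields $\opn{tr}^{U(p^n)^\lozenge}_{U_1(p^n)}\mathcal{E}_{\Phi_n, U(p^n)^\lozenge} = p^n \mathcal{E}_{\Phi_n, U_1(p^n)}$. Combining with adjointness of restriction and trace under Poincar\'e duality, I obtain
\[
\mathscr{P}(\mu) = \langle \Psi_{\opn{Iw},n}(\mu), \mathcal{E}_{\Phi_n, U_1(p^n)}\rangle_{U_1(p^n)} = \tfrac{1}{p^n}\widetilde{\Psi}_n(\mu, u_0\cdot\varphi_{\mathrm{Iw}}).
\]

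The lemma is therefore equivalent to the identity $\widetilde{\Psi}_n(\mu, u_0\cdot\varphi_{\mathrm{Iw}}) = p^n\widetilde{\Psi}_n(\mu, u_0\cdot\xi_n)$. To attack this, I would decompose the support of $\varphi_{\mathrm{Iw}}$ as a disjoint union over cosets of the top-row entries,
\[
\varphi_{\mathrm{Iw}} = \sum_{y_0 \in \Z_p/p\Z_p}\sum_{x_0 \in \Z_p/p^n\Z_p} n_{y_0,x_0}\cdot\xi_n, \qquad n_{y_0,x_0} \defeq \smallthreemat{1}{y_0}{x_0}{}{1}{}{}{}{1},
\]
and use the conjugation identity $u_0 n_{y_0,x_0} u_0^{-1} = \smallthreemat{1}{-x_0}{y_0}{}{1}{}{}{}{1}$ to split the sum according to whether $y_0 = 0$.

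For the $y_0=0$ terms, the conjugate $u_0 n_{0,x_0}u_0^{-1} = n_{-x_0,0}$ lies in the $\GL_2$-unipotent, and $\{n_{-x_0,0}\}_{x_0 \in \Z_p/p^n\Z_p}$ is a system of coset representatives for $U_1(p^n)/U(p^n)^\lozenge$. Applying the $G(\Q)$-invariance of $\mu$ to $n_{-x_0,0} \in H(\Q) \subset G(\Q)$ (picking integer representatives), together with the $\GL_2(\Q_p)$-equivariance \eqref{eq:Eis equiv} and $H(\Q)$-invariance of the Eisenstein classes, I expect each of the $p^n$ resulting pairings to equal $\widetilde{\Psi}_n(\mu, u_0\cdot\xi_n)$, contributing a total of $p^n\widetilde{\Psi}_n(\mu, u_0\cdot\xi_n)$. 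For the $y_0 \neq 0$ terms, the conjugate has nonzero $(1,3)$-entry and lies in $N_{P_2}(\Z_p) \setminus H(\Z_p)$; these terms should vanish upon pairing with $\mathcal{E}_{\Phi_n, U(p^n)^\lozenge}$, since the Eisenstein class ${}_c\mathrm{Eis}^0_{\Phi}\boxtimes[\widehat\eta_2]$ is pulled back from the $\GL_2\times\GL_1$-Shimura variety and hence annihilates translations in the off-diagonal direction of $P_2$.

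The main obstacle is the vanishing of the $y_0 \neq 0$ contributions. A clean way to verify this is to work with the explicit Shapiro description \eqref{eq:shapiro compact}: the $y_0\neq 0$ translates push the Steinberg data out of the image of $\iota\colon D_H \hookrightarrow D_G$, so the resulting cocycles at each $x\in I(p^n)^\lozenge$ are supported off $\Gamma(p^n)^\lozenge_x$-orbits that contribute to the cup-product with the Siegel-unit Eisenstein class. An alternative, slightly cleaner route would be to bypass the split-sum analysis entirely by reinterpreting $u_0\cdot \xi_n$ in terms of the cocycle computation of Proposition \ref{prop:Clambda1LuxembourgProp}, thereby connecting the period directly to the measure construction of Proposition \ref{prop:measure}.
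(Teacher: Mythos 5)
Your opening reduction is correct and is a genuinely different starting point from the paper's: by the Eisenstein trace compatibility \eqref{eq:Eis trace} and adjointness, one indeed gets $\widetilde{\Psi}_n(\mu, u_0\cdot\varphi_{\opn{Iw}}) = p^n\mathscr{P}(\mu)$. The decomposition $\varphi_{\opn{Iw}} = \sum_{y_0,x_0}n_{y_0,x_0}\cdot\xi_n$ is also correct. But the route from there has two problems.

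First, the claim that each $y_0=0$ term $\widetilde{\Psi}_n(\mu, n_{-x_0,0}u_0\cdot\xi_n)$ individually equals $\widetilde{\Psi}_n(\mu, u_0\cdot\xi_n)$ is not justified, and I do not see why it should be true: the element $n_{-x_0,0}u_0\cdot\xi_n$ is not $\Unloz$-fixed (conjugation by $n_{-x_0,0}$ does not preserve $\Unloz$), so the simple description in the remark following Definition \ref{def:Psi_n} does not apply, and the purported $G(\Q)$-invariance argument runs into the problem that replacing $x\in I(p^n)^\lozenge$ by $\gamma^{-1}x$ may change the double coset. What \emph{is} true is that the \emph{sum} $\sum_{x_0}n_{-x_0,0}u_0\cdot\xi_n = u_0\cdot\varphi_{\Zp,p\Zp,\mathbbm{1}}$ is fixed by the full $U_1(p^n)$, and then $\Psi_n(\mu, u_0\cdot\varphi_{\Zp,p\Zp,\mathbbm{1}}) = \opn{res}^{U_1(p^n)}_{\Unloz}\opn{tr}^{U_1(p^n)}_{\Unloz}\Psi_n(\mu,u_0\cdot\xi_n)$, which by exactly the same trace/restriction adjointness gives $p^n\widetilde{\Psi}_n(\mu,u_0\cdot\xi_n)$. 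So the $y_0=0$ contribution does come out as you want, but by a global trace computation, not term by term.

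The genuine gap is the $y_0\neq 0$ contribution. Summing those terms gives $u_0\cdot\varphi_{\Zp,\Zp^\times,\mathbbm{1}} = -\psi_{\Zp,\Zp^\times,\mathbbm{1}}$ (Lemma \ref{lem:psi and varphi}), so the lemma is \emph{equivalent}, given your first step, to the vanishing $\widetilde{\Psi}_n(\mu, \psi_{\Zp,\Zp^\times,\mathbbm{1}}) = 0$. Your heuristic for this --- that the Eisenstein class is a pullback from $H$ and hence ``annihilates'' translates by $\smat{1 & -x_0 & y_0 \\ & 1 & \\ & & 1}$ --- does not hold up: these elements are inserted into $\Psi_n$ on the $\opn{St}$-side, where they modify the cohomology class being paired; they do not act directly on $\mathcal{E}_{\Phi_n,\Unloz}$, and there is no intrinsic reason the resulting pairing should be zero. (Note also that for $x_0\neq 0$ the element is not in $N_{P_2}$.) Since this vanishing is, in the decomposition framework, logically equivalent to the lemma itself, the argument is essentially circular unless you produce an independent proof. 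The alternative route you suggest via Proposition \ref{prop:Clambda1LuxembourgProp} has the same issue: that cocycle computation is used \emph{together with} the present lemma in Theorem \ref{AbstractLinvTheorem}, so invoking it here would also be circular.

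The paper avoids the split-sum analysis entirely: it first shows $\widetilde{\Psi}_n(\mu, u_0\cdot\xi_n)$ is independent of $n$ (via \eqref{eq:trace 1 to n} and the same trace/restriction mechanics you invoke), reducing to $n=1$, and then observes that the Hecke double coset $U(p)^\lozenge\cdot t\cdot U_1(p)$ equals the single left coset $tU_1(p)$, so $\Psi_1(\mu,u_0\cdot\xi_1) = V_p\cdot\Psi_{\opn{Iw},1}(\mu)$ for a \emph{single} translate $V_p$; the identity $V_p^t\cdot\Phi_1 = \Phi_1$ plus adjointness then closes the argument in one line. This is the decisive structural input your proposal is missing, and it is what lets one bypass the $y_0\neq 0$ vanishing.
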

\begin{proof}
    Let $\{ z \}$ be a set of representatives for $p\mbb{Z}_p/p^n\mbb{Z}_p$. Then, we calculate that
    \begin{equation} \label{eq:trace 1 to n} 
    \sum_z \smat{1 & z & \\ & 1 & \\ & & 1} \cdot u_0 \cdot \xi_n = u_0 t \cdot \varphi_{\opn{Iw}} = t u_0 \cdot \varphi_{\opn{Iw}} = u_0 \cdot \xi_1.
    \end{equation}
    We consider two Hecke operators:
    \[
    V_p = [U(p)^{\lozenge} \cdot t \cdot U_1(p)], \quad \quad V_p^t = [U_1(p) \cdot t^{-1} \cdot U(p)^{\lozenge}] .
    \]
    Note that $V_p^t \cdot \Phi_1 = \Phi_1$.

    We first calculate that
    \begin{align*}
        \widetilde{\Psi}_1(\mu, u_0 \cdot \xi_1) &= \left\langle \Psi_{1}(\mu, u_0\cdot \xi_1), \mathcal{E}_{\Phi_1, U(p)^{\lozenge}} \right\rangle_{U(p)^{\lozenge}} \\
        &= \left\langle \opn{res}_{U(p)^{\lozenge}}^{U(p)^{\lozenge} \cap U_1(p^n)} \Psi_{1}(\mu, u_0 \cdot \xi_1), \mathcal{E}_{\Phi_n, U(p)^{\lozenge} \cap U_1(p^n)} \right\rangle_{U(p)^{\lozenge} \cap U_1(p^n)} \\
        &= \left\langle \opn{tr}_{U(p^n)^{\lozenge}}^{U(p)^{\lozenge} \cap U_1(p^n)} \Psi_{n}(\mu, u_0 \cdot \xi_n), \mathcal{E}_{\Phi_n, U(p)^{\lozenge} \cap U_1(p^n)} \right\rangle_{U(p)^{\lozenge} \cap U_1(p^n)} \\
        &= \left\langle \Psi_{n}(\mu, u_0 \cdot \xi_n,), \mathcal{E}_{\Phi_n, U(p^n)^{\lozenge}} \right\rangle_{U(p^n)^{\lozenge}} \\
        &= \widetilde{\Psi}_n(\mu, u_0 \cdot \xi_n).
    \end{align*}
    For the second equality we have used that $\opn{tr}_{U(p)^{\lozenge} \cap U_1(p^n)}^{U(p)^{\lozenge}} \mathcal{E}_{\Phi_n, U(p)^{\lozenge} \cap U_1(p^n)} = \mathcal{E}_{\Phi_1, U(p)^{\lozenge}}$, and the third uses \eqref{eq:trace 1 to n}. This shows independence of $n$ and allows us to assume $n=1$.

    Another calculation shows:
    \begin{align*}
        \widetilde{\Psi}_1(\mu, u_0 \cdot \xi_1) &= \langle \Psi_{1}(\mu, u_0 \cdot \xi_1), \mathcal{E}_{\Phi_1, U(p)^{\lozenge}} \rangle_{U(p)^{\lozenge}} \\
        &= \langle V_p \cdot \Psi_{\opn{Iw}, 1}(\mu), \mathcal{E}_{\Phi_1, U(p)^{\lozenge}} \rangle_{U(p)^{\lozenge}} \\
        &= \langle \Psi_{\opn{Iw}, 1}(\mu), V_p^t \cdot \mathcal{E}_{\Phi_1, U(p)^{\lozenge}} \rangle_{U_1(p)} \\
        &= \langle \Psi_{\opn{Iw}, 1}(\mu), \mathcal{E}_{\Phi_1, U_1(p)} \rangle_{U_1(p)} \\
        &= \Pmu
    \end{align*}
    where for the second equality we have used $t u_0 \cdot \varphi_{\opn{Iw}} = u_0 \cdot \xi_1$, and the penultimate equality uses $V_p^t \cdot \Phi_1 = \Phi_1$ and equivariance of the Eisenstein classes.
\end{proof}

\subsection{$p$-smooth representations}
The rest of this section is concerned with an abstract exceptional zero formula. One of the key ingredients will be an analogue of the above evaluation maps on $\h^1$ rather than $\h^0$. To define this, we need some abstract notions. We return to the case of allowing $R$ to be any $\Z$-algebra. For any integer $n \geq 1$, let
\[
\widehat{G}_n \defeq \opn{ker}(G(\Z_{(p)}) \to G(\Z/p^n\Z)) \subset G(\Q) .
\]

\begin{definition}
    Let $M$ be a left $R[G(\Q)]$-module. We define the ``$p$-smooth part'' to be
    \[
    M^{p\opn{-sm}} \defeq \bigcup_{n \geq 1} M^{\widehat{G}_n} \subset M .
    \]
\end{definition}

\begin{lemma}
The $G(\Q)$-action on $M$ preserves $M^{p\opn{-sm}}$ (which is thus a $G(\Q)$-representation). 
\end{lemma}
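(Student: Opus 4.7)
The plan is to show directly that, given $m \in M^{\widehat{G}_n}$ and $g \in G(\mathbb{Q})$, there exists $N \geq n$ with $g m \in M^{\widehat{G}_N}$. Since the condition $h \cdot (gm) = gm$ is equivalent to $g^{-1} h g \cdot m = m$, it suffices to find $N$ such that $g^{-1} \widehat{G}_N g \subset \widehat{G}_n$ as subsets of $G(\mathbb{Q})$.

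The first step is to pass to the $p$-adic completion. Let $K_j \defeq \ker(G(\mathbb{Z}_p) \to G(\mathbb{Z}/p^j))$; these form a basis of open neighbourhoods of the identity in $G(\mathbb{Z}_p) \subset G(\mathbb{Q}_p)$, and by construction $\widehat{G}_j = K_j \cap G(\mathbb{Z}_{(p)})$ under the natural inclusion $G(\mathbb{Z}_{(p)}) \hookrightarrow G(\mathbb{Q}_p)$. Viewing $g$ as an element of $G(\mathbb{Q}_p)$, the conjugation map $x \mapsto g^{-1} x g$ is continuous and sends the identity to itself, so there exists $N \geq n$ with $g^{-1} K_N g \subset K_n$; since $\widehat{G}_N \subset K_N$, this immediately gives $g^{-1} \widehat{G}_N g \subset K_n$ inside $G(\mathbb{Q}_p)$.

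The second step is to verify that $g^{-1} \widehat{G}_N g$ actually lands in $G(\mathbb{Z}_{(p)})$, and not merely in $G(\mathbb{Q}_p)$. For this one uses the elementary identity $G(\mathbb{Q}) \cap G(\mathbb{Z}_p) = G(\mathbb{Z}_{(p)})$ inside $G(\mathbb{Q}_p)$: a rational matrix with $p$-integral entries and determinant a $p$-adic unit automatically has entries in $\mathbb{Z}_{(p)} = \mathbb{Q} \cap \mathbb{Z}_p$ and determinant in $\mathbb{Z}_{(p)}^{\times}$. Applying this to $g^{-1} \widehat{G}_N g \subset G(\mathbb{Q}) \cap K_n$ yields $g^{-1} \widehat{G}_N g \subset G(\mathbb{Z}_{(p)}) \cap K_n = \widehat{G}_n$, which is exactly what is required.

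No step here is particularly difficult; the essential observation is that $\widehat{G}_n$, though defined as a subgroup of the discrete group $G(\mathbb{Q})$, is genuinely the intersection of the arithmetic lattice $G(\mathbb{Z}_{(p)})$ with the open $p$-adic congruence subgroup $K_n$. This allows the whole question to be reduced to continuity of conjugation in the $p$-adic topology.
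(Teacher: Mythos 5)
Your proof is correct, but takes a genuinely different route from the paper. The paper's argument is entirely inside the discrete group: it invokes a $p$-adic Cartan decomposition $G(\Q) = \bigsqcup_{a\geq b\geq c} G(\Z_{(p)}) t_{abc} G(\Z_{(p)})$, uses that each $\widehat{G}_n$ is normal in $G(\Z_{(p)})$, and computes explicitly that $t_{abc}^{-1}\widehat{G}_n t_{abc} \subset \widehat{G}_{n-a}$; putting these together gives the required shrinking of the fixer. You instead work in the $p$-adic completion $G(\Q_p)$: you observe $\widehat{G}_j = K_j \cap G(\Z_{(p)})$ with $K_j = \ker(G(\Z_p)\to G(\Z/p^j))$, deduce $g^{-1}K_N g \subset K_n$ for $N \gg 0$ from continuity of conjugation, and then bring the conclusion back to $G(\Q)$ via $G(\Q)\cap G(\Z_p)=G(\Z_{(p)})$. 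Both arguments are sound. Yours is shorter, avoids any structure theory of $\GL_3$ beyond the topological-group formalism, and transfers verbatim to any linear algebraic group over $\Z_{(p)}$; the paper's version is more explicit, producing a concrete $N$ (namely $N > N_m + a$) in terms of the Cartan coordinates of $g$. One small point worth spelling out in your second step: $g^{-1}\widehat{G}_N g$ lands in $G(\Q)$ automatically because $g$ and $\widehat{G}_N$ both live there, and you correctly combine this with the $p$-integrality to land in $\widehat{G}_n$.
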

\begin{proof}
If $m \in M^{p\opn{-sm}}$, then $m$ is fixed by some $\widehat{G}_{N_m}$, hence by $\widehat{G}_N$ for all $N \geq N_m$. 

Let $g \in G(\Q)$. We have an analogue of the Cartan decomposition 
\[
    G(\Q) = \bigsqcup_{a\geq b \geq c}G(\Z_{(p)}) t_{abc}G(\Z_{(p)}), \qquad t_{abc} = \smallthreemat{p^a}{}{}{}{p^b}{}{}{}{p^c};
\]
write $g = g_1t_{abc} g_2$ in this decomposition. For $n > a$, we have $t_{abc}^{-1}\widehat{G}_nt_{abc} \subset \widehat{G}_{n-a}$. 

Let $N > N_m+a$, and $h \in \widehat{G}_N$. As each $\widehat{G}_n \triangleleft G(\Z_{(p)})$ is normal, and by the above, there exists $h' \in \widehat{G}_{N-a} \subset \widehat{G}_{N_m}$ such that $hg = gh'$. Then $h\cdot(g\cdot m) = gh'\cdot m = g\cdot m$, since $\widehat{G}_{N_m}$ fixes $m$. In particular
\[
g\cdot m \in M^{\widehat{G}_N} \subset M^{p\opn{-sm}}. \qedhere
\]
\end{proof}

\subsection{A preliminary map} \label{APrelimMapSSEc}

Recall $\opn{St}_2(R) = \opn{St}_2^{\opn{sm}}(R)$ from Definition \ref{def:generalised steinberg}. The goal of this section is to construct maps 
\[
\Xi_n \colon \mathcal{A}_{G, c}(\opn{St}_2(R)^*)^{\widehat{G}_n} \to \opn{H}^0\left( H(\Q), \mathcal{A}_{H, c}(\opn{Ind}_{U(p^n)^{\lozenge}}^{H(\Q_p)}R) \right)
\]
satisfying Proposition \ref{prop:preliminary map}. 

\subsubsection{Definition of the maps}

We first define the maps and show they are well-defined. Let 
\[
    \phi_n = \left[\opn{ch}\left( \overline{P}_2(\Q_p)\smat{1 & & p^n\Z_p \\ & 1 & \Z_p \\ & & 1}\right)\right] \in \opn{St}_2(R).
    \]

\begin{definition}\label{def:Xi_n}
Define
    \[
    \Xi_n(F)(d,[h_\infty],h^p,h_p) \defeq F(h^{-1}\iota(d), [G(\mbb{R})_+], x, u_0 \cdot \phi_n).
    \]
\end{definition}

  Here we use the same conventions as when defining $\Psi_n$ in Definition \ref{def:Psi_n}; in particular we write $[h_\infty] = h[H(\R)_+]$, $h^p = hxk^p$ and $h_p = hk_p$. The map bears strong comparison to $\Psi_n$, but we now impose the weaker assumption of $\widehat{G}_n$-invariance, are working with $\opn{St}_2(R)^*$ rather than $\opn{St}(R)^*$, and fix the Steinberg element to be $u_0 \cdot \phi_n$. As with $\Psi_n$, we must show this is well-defined, as $h$ is only well-defined up to $\Gamma(p^n)^\lozenge_x$. First we note:

\begin{lemma}\label{l:calculation} If $k_p\in U(p^n)^{\lozenge}$, then $k_p u_0 \cdot \phi_n = u_0 \cdot \phi_n$.
\end{lemma}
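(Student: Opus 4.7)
The plan is to convert the identity $k_p u_0 \cdot \phi_n = u_0 \cdot \phi_n$ into a statement about right-multiplication of subsets, and then verify it by a direct matrix calculation. Setting $m := u_0^{-1} k_p u_0$, the claim becomes $m \cdot \phi_n = \phi_n$. Writing $S = \smat{1 & & p^n\Zp \\ & 1 & \Zp \\ & & 1} \subset G(\Qp)$, the class $\phi_n$ is represented in $\opn{I}^{\opn{sm}}_{\overline{P}_2}(R)$ by the characteristic function of $\overline{P}_2(\Qp)\cdot S$, and right-translation by $m$ sends this characteristic function to that of $\overline{P}_2(\Qp)\cdot S\cdot m^{-1}$. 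It therefore suffices to prove the set-theoretic equality $\overline{P}_2(\Qp)\cdot S\cdot m = \overline{P}_2(\Qp)\cdot S$, which will in fact give equality already in $\opn{I}^{\opn{sm}}_{\overline{P}_2}(R)$, not merely in the quotient $\opn{St}_2(R)$. Applying the same argument to $k_p^{-1} \in U(p^n)^\lozenge$ yields the reverse inclusion, so it is enough to show $S \cdot m \subset \overline{P}_2(\Qp)\cdot S$.

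I next make $m$ explicit. Parameterise $k_p = (k_1,k_2)$ with $k_1 = \smallmatrd{\alpha}{\beta}{\gamma}{\delta} \in \GL_2(\Zp)$ and $k_2 \in \Zp^\times$; the condition $k_p \in U(p^n)^\lozenge$ forces $\alpha \in \Zp^\times$, $\beta,\gamma \in p^n\Zp$, and $\delta \in 1+p^n\Zp$. Under the diagonal embedding $H \hookrightarrow G$, $k_p$ corresponds to the block matrix $\smallthreemat{\alpha}{\beta}{0}{\gamma}{\delta}{0}{0}{0}{k_2}$. Since $u_0$ swaps the second and third standard basis vectors (with signs), a direct computation using $u_0^{-1} = \smallthreemat{1}{0}{0}{0}{0}{1}{0}{-1}{0}$ gives
\[
m \;=\; \smallthreemat{\alpha}{0}{-\beta}{0}{k_2}{0}{-\gamma}{0}{\delta}.
\]

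The heart of the argument is the factorisation. Given $s = \smallthreemat{1}{0}{y}{0}{1}{z}{0}{0}{1} \in S$, I look for $\overline{p} \in \overline{P}_2(\Qp)$ and $s' = \smallthreemat{1}{0}{y'}{0}{1}{z'}{0}{0}{1} \in S$ with $sm = \overline{p}\cdot s'$. Equating entries (using the vanishing $\overline{p}_{13} = \overline{p}_{23} = 0$ built into $\overline{P}_2$) uniquely determines all parameters, and in particular forces
\[
y' \;=\; \frac{y\delta - \beta}{\alpha - y\gamma}, \qquad z' \;=\; \frac{z(\delta + \gamma y')}{k_2}.
\]
The numerical input is that $\alpha - y\gamma \equiv \alpha \pmod{p^n}$ and $\delta + \gamma y' \equiv 1 \pmod{p^n}$ both lie in $\Zp^\times$, while $y\delta - \beta \in p^n\Zp$. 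Together these yield $y' \in p^n\Zp$ and $z' \in \Zp$, so $s' \in S$, which proves the inclusion. The only genuine obstacle is keeping track of this matrix computation; no input beyond the $p^n$-divisibilities defining $U(p^n)^\lozenge$ is needed.
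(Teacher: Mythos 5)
Your proof is correct and takes essentially the same approach as the paper: both reduce the claim to the set-theoretic identity $\overline{P}_2(\Qp)\cdot S\cdot m = \overline{P}_2(\Qp)\cdot S$ (with $m=u_0^{-1}k_pu_0$) and then exhibit the explicit lower-triangular factorisation of $sm$, appealing to symmetry in $k_p\leftrightarrow k_p^{-1}$ for the reverse inclusion. Your formulas for $y'$ and $z'$ simplify to exactly the entries appearing in the paper's displayed product, so the two computations agree.
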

\begin{proof} It suffices to prove that $\overline{P}_2(\Q_p)\smat{1 & & p^n\Z_p \\ & 1 & \Z_p \\ & & 1}= \overline{P}_2(\Q_p)\smat{1 & & p^n\Z_p \\ & 1 & \Z_p \\ & & 1}u_0^{-1}k_pu_0$. We write $k_p = \smat{a & b & 0 \\ c & d & 0 \\ 0 & 0 & e}$ with $a, e \in \mbb{Z}_p^{\times}$, $d \equiv 1 \pmod{p^n}$ and $b, c \equiv 0 \pmod{p^n}$. Let $\delta = ad - bc \in \mbb{Z}_p^{\times}$. For the inclusion $\supset$, note that for each $x\in p^n\Z_p$ and $y\in \Z_p$ we have
\[
\smat{1 & & x \\ & 1 & y \\ & & 1} u_0^{-1}k_pu_0= \smat{a-cx & 0 & 0 \\ -cy & e & 0 \\ -c & 0 & \tfrac{\delta}{a-cx}}\smat{1 & 0 & \tfrac{-b+dx}{a-cx} \\ 0 & 1 & \tfrac{\delta y}{e(a-cx)} \\ 0 & 0 & 1} \in \overline{P}_1(\Qp)\smat{1 & & p^n\Z_p \\ & 1 & \Z_p \\ & & 1}.
\]
For the other inclusion, it is enough to consider the above with $k_p^{-1}$ instead of $k_p$.
\end{proof}

\begin{lemma} \label{Lem:XInFiswell-defined}
The function $\Xi_n(F) \in \cA_{H,c}(\opn{Ind}_{U(p^{n})^\lozenge}^{H(\Qp)}R)$ is well-defined, independent of the choice of representatives $I(p^n)^\lozenge$, and $H(\Q)$-invariant.
\end{lemma}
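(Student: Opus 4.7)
The plan is to closely follow the arguments of Lemmas \ref{lem:ind of reps} and \ref{lem:Psi invariant}, adapting each step to account for the fact that $F$ is only $\widehat{G}_n$-invariant rather than fully $G(\mbb{Q})$-invariant. The crux of the proof is a refinement of Lemma \ref{lem:gamma in Ghat}: the containment $\Gamma(p^n)^\lozenge_x \subset \widehat{G}_n$ for every $x \in I(p^n)^\lozenge$. Indeed, for $\gamma \in \Gamma(p^n)^\lozenge_x = H(\mbb{Q})_+\cap xK^p_H U(p^n)^\lozenge x^{-1}$, the assumption $K^p_H\subset\prod_{\ell\neq p}H(\mbb{Z}_\ell)$ together with $x \in H(\widehat{\mbb{Z}}^p)$ forces $\gamma_\ell \in H(\mbb{Z}_\ell)$ for all $\ell\neq p$, giving $\gamma\in H(\mbb{Z}_{(p)})\subset G(\mbb{Z}_{(p)})$; and Lemma \ref{lem:gamma in Ghat} gives $\gamma_p \in U(p^n)$, whence $\gamma\equiv I_3\pmod{p^n}$ globally and therefore $\gamma\in\widehat{G}_n$.

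Granting this, well-definedness reduces to checking that
\[
F(\gamma^{-1}h^{-1}\iota(d), [G(\mbb{R})_+], x, u_0\cdot\phi_n) = F(h^{-1}\iota(d), [G(\mbb{R})_+], x, u_0\cdot\phi_n)
\]
for $\gamma \in \Gamma(p^n)^\lozenge_x$, since the triple $(h, k^p, k_p)$ in Notation \ref{not:evaluation data}(iii) is only determined up to the replacement $(h\gamma, x^{-1}\gamma^{-1}xk^p, \gamma^{-1}k_p)$. Using $\widehat{G}_n$-invariance to move $\gamma^{-1}$ off $\iota(d)$ produces $\gamma$ acting on each of the remaining three arguments, and I verify all three actions are trivial: $\gamma[G(\mbb{R})_+] = [G(\mbb{R})_+]$ since $\gamma\in H(\mbb{R})_+\subset G(\mbb{R})_+$; $\gamma x = x$ in $G(\mbb{A}_f^p)/K^p$ because the prime-to-$p$ component of $\gamma$ lies in $xK^p_H x^{-1}\subset xK^p x^{-1}$; and $\gamma_p\cdot u_0\phi_n = u_0\phi_n$ by Lemma \ref{l:calculation}, using $\gamma_p\in U(p^n)\subset U(p^n)^\lozenge$.

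Independence of the choice of representatives $I(p^n)^\lozenge$ then follows by the same manipulation as in Lemma \ref{lem:ind of reps}: for a second set $\{x_i'\}$, write $x_i' = h_ix_ik_iu_i$ with $h_i\in H(\mbb{Q})_+$, $k_i\in K^p_H$, $u_i\in U(p^n)^\lozenge$, and observe as in the first paragraph that $h_i\in U(p^n)\subset\widehat{G}_n$, so $\widehat{G}_n$-invariance of $F$ matches the two definitions. Finally, $H(\mbb{Q})$-invariance is essentially formal, identical to Lemma \ref{lem:Psi invariant}: if $\gamma'\in H(\mbb{Q})$ and $h^p = hxk^p$, then $\gamma'h^p = (\gamma'h)xk^p$ realises the corresponding decomposition, and the factor $(\gamma'h)^{-1}$ appearing in the definition of $\Xi_n(F)$ cancels the $\gamma'$ acting on $\iota(d)$. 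The only point I anticipate requiring real care is the containment $\Gamma(p^n)^\lozenge_x\subset\widehat{G}_n$; this is where the precise definition of the level structures at $p$ is essential.
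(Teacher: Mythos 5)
Your proposal is correct and takes essentially the same approach as the paper: the paper's proof simply states that the argument of Lemmas \ref{lem:ind of reps} and \ref{lem:Psi invariant} transfers because the relevant elements $\gamma$ and $h_i$ lie in $\widehat{G}_n$ (via Lemma \ref{lem:gamma in Ghat}), with the $U(p^n)$-invariance of $\varphi$ replaced by Lemma \ref{l:calculation}. You correctly identify $\Gamma(p^n)^\lozenge_x \subset \widehat{G}_n$ as the crux and supply the short derivation (integrality away from $p$ from $xK^p_H x^{-1} \subset \prod_{\ell\neq p} H(\Z_\ell)$, congruence mod $p^n$ from Lemma \ref{lem:gamma in Ghat}), which is exactly the detail the paper leaves implicit.
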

\begin{proof}
This is directly analogous to Lemmas \ref{lem:ind of reps} and \ref{lem:Psi invariant}, replacing the $U(p^n)$-invariance of $\varphi$ with Lemma \ref{l:calculation}. We note that by Lemma \ref{lem:gamma in Ghat} the elements $\gamma$ and $h_i$ considered in Lemma \ref{lem:ind of reps} actually lie in $\widehat{G}_n$, so the arguments with $G(\Q)$-invariance in Lemma \ref{lem:ind of reps} transfer \emph{mutatis mutandis} to the present setting. 
\end{proof}

\subsubsection{Compatibility in $n$: statement}

Now we consider variation in $n$. The set of representatives $I(p^n)^{\lozenge}$ can be chosen independently of $n$, so we may assume $I(p^n)^{\lozenge} = I(p^{n+1})^{\lozenge} =: I$. 

\begin{proposition}\label{prop:preliminary map}
For $n \geq 1$, the maps $\Xi_n$ satisfy the following two properties:
\begin{itemize}
    \item[(a)] One has a commutative diagram
    \[
\begin{tikzcd}
{\mathcal{A}_{G, c}(\opn{St}_2(R)^*)^{\widehat{G}_{n+1}}} \arrow[rr, "\Xi_{n+1}"]    &  & {\opn{H}^0\left( H(\Q), \mathcal{A}_{H, c}(\opn{Ind}_{U(p^{n+1})^{\lozenge}}^{H(\Q_p)}R) \right)} \arrow[d, "\opn{tr}_{U(p^{n+1})^{\lozenge}}^{U(p^n)^{\lozenge} \cap U_1(p^{n+1})}"] \\
                                                                                     &  & {\opn{H}^0\left( H(\Q), \mathcal{A}_{H, c}(\opn{Ind}_{U(p^n)^{\lozenge}\cap U_1(p^{n+1})}^{H(\Q_p)}R) \right)}                                                                        \\
{\mathcal{A}_{G, c}(\opn{St}_2(R)^*)^{\widehat{G}_n}} \arrow[uu] \arrow[rr, "\Xi_n"] &  & {\opn{H}^0\left( H(\Q), \mathcal{A}_{H, c}(\opn{Ind}_{U(p^n)^{\lozenge}}^{H(\Q_p)}R) \right)} \arrow[u, "\opn{res}_{U(p^n)^{\lozenge}}^{U(p^n)^{\lozenge} \cap U_1(p^{n+1})}"']      
\end{tikzcd}
    \]
    where $\opn{res}$ and $\opn{tr}$ denote the natural restriction and trace maps.
    \item[(b)] Let $t = \opn{diag}(p, 1, 1)$. One has a commutative diagram:
    \[
\begin{tikzcd}
{\mathcal{A}_{G, c}(\opn{St}_2(R)^*)^{\widehat{G}_{n+1}}} \arrow[rr, "\Xi_{n+1}"]              &  & {\opn{H}^0\left( H(\Q), \mathcal{A}_{H, c}(\opn{Ind}_{U(p^{n+1})^{\lozenge}}^{H(\Q_p)}R) \right)}                                                                        \\
                                                                                               &  & {\opn{H}^0\left( H(\Q), \mathcal{A}_{H, c}(\opn{Ind}_{t^{-1}U(p^{n+1})^{\lozenge}t}^{H(\Q_p)}R) \right)} \arrow[u, "t \cdot"']                                               \\
{\mathcal{A}_{G, c}(\opn{St}_2(R)^*)^{\widehat{G}_n}} \arrow[uu, "t\cdot"] \arrow[rr, "\Xi_n"] &  & {\opn{H}^0\left( H(\Q), \mathcal{A}_{H, c}(\opn{Ind}_{U(p^n)^{\lozenge}}^{H(\Q_p)}R) \right)} \arrow[u, "\opn{res}_{U(p^n)^{\lozenge}}^{t^{-1}U(p^{n+1})^{\lozenge}t}"']
\end{tikzcd}
    \]
\end{itemize}
\end{proposition}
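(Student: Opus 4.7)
Both diagrams reduce, via the Shapiro-type identification \eqref{eq:shapiro compact}, to explicit identities in the Steinberg representation $\opn{St}_2(R)$. A class in $\h^0(H(\Q), \cA_{H,c}(\opn{Ind}_V^{H(\Qp)}R))$ is determined by its values at test points $(d, [H(\R)_+], y, 1)$ with $y$ a representative in $I_V$ (chosen trivial at $p$), so it suffices to verify each diagram's commutativity at such a point. In each case, evaluating $\Xi_m$ amounts to substituting a decomposition $(H(\R)_+, y, h_p) = (h[H(\R)_+], h x k^p, h k_p)$ from Notation \ref{not:evaluation data}(iii) and reading off the value of $F$ at $(h^{-1}\iota(d), [G(\R)_+], x, u_0\phi_m)$.

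The crucial observation for (a) is that the quotient $(U(p^n)^\lozenge \cap U_1(p^{n+1}))/U(p^{n+1})^\lozenge$ is cyclic of order $p$, with representatives
\[
\gamma_\beta = \Big(\smat{1 & p^n\beta \\ 0 & 1},\, 1\Big), \qquad \beta \in \Z/p\Z,
\]
each of which is congruent to the identity modulo $p^n$. By strong approximation, for each $\beta$ I can choose $h_\beta \in \widehat{G}_n \cap H(\Q)_+$ with $h_{\beta,p} \equiv \gamma_\beta \pmod{p^{n+1}}$. Decomposing $(H(\R)_+, y, \gamma_\beta)$ at level $n+1$ using $h_\beta$ and invoking the $\widehat{G}_n$-invariance of $F$, its $K^p$-invariance, and Lemma \ref{l:calculation} (which gives that $U(p^{n+1})^\lozenge$ fixes $u_0\phi_{n+1}$), I will convert the trace side of (a) to $F\big(\iota(d), [G(\R)_+], y, \sum_\beta \gamma_\beta \cdot u_0\phi_{n+1}\big)$. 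The identity of the two sides then reduces to
\[
\sum_{\beta \in \Z/p\Z} \gamma_\beta \cdot u_0 \phi_{n+1} \;=\; u_0 \cdot \phi_n \qquad \text{in } \opn{St}_2(R),
\]
which I will verify directly on the open Bruhat cell $(x, y) \in \Q_p^2$ parameterizing $\overline{P}_2(\Qp)\backslash G(\Qp)$, via the explicit computation $u_0^{-1}\gamma_\beta u_0 = \smallthreemat{1}{0}{-p^n\beta}{0}{1}{0}{0}{0}{1}$ and the bijection $\beta \mapsto p^n\beta$ from $\Z/p\Z$ onto $p^n\Zp/p^{n+1}\Zp$.

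For (b), the central inputs are the identities $t \cdot \phi_n = \phi_{n+1}$ and $tu_0 = u_0 t$ in $G(\Qp)$, both verified by direct matrix calculation; the first follows from parameterizing the open cell and checking that right multiplication by $t$ sends $(x,y)$ to $(x/p, y)$. Together they yield $t^{-1} \cdot u_0 \phi_{n+1} = u_0 \cdot \phi_n$, which immediately handles the evaluation of $\Xi_{n+1}(t\cdot F)$ at the test point. For the other side, the action of $t$ on the induced representation gives $\Xi_n(F)(d, [H(\R)_+], y, t)$ after evaluation; rationality and valuation considerations force the decomposition of $(H(\R)_+, y, t)$ at level $n$ to have $h = t \in H(\Q)_+$ (as $\det h_{\GL_2}$ must be a positive rational with $v_p = 1$ and $v_q = 0$ for $q \neq p$), $x$ the representative in $I_{U(p^n)^\lozenge}$ of $t^{-1}y$, and $k_p = 1$. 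Invoking the $G(\Q)$-invariance of $F$ with $\gamma = t \in G(\Q) \cap G(\R)_+$, together with $K^p$-invariance, will convert the expression into the form matching the other side.

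The main obstacle is the verification of the key identity in (a), together with the careful implementation of the strong-approximation argument producing $h_\beta \in \widehat{G}_n$ with the prescribed reduction modulo $p^{n+1}$; this requires tracking that the determinant condition in $\GL_2$ is compatible with globally rational integer entries. A further subtlety in (b) lies in the interaction between the action of $t$ on the coefficient $\opn{St}_2(R)^*$ and the $G(\Q)$-total action under which $F$ is $\widehat{G}_n$-invariant, in particular in justifying that $t \cdot F$ indeed preserves the required invariance so that the left vertical arrow in the diagram is well-defined.
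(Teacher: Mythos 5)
Your overall approach matches the paper's: reduce each diagram to evaluation at a test point $(d,[H(\R)_+],y,1)$ and use $\widehat{G}_n$-invariance of $F$ together with the two key identities $\sum_\beta (u_0^{-1}\gamma_\beta u_0)\cdot\phi_{n+1} = \phi_n$ (for (a)) and $t^{-1}\cdot\phi_{n+1}=\phi_n$, $tu_0 = u_0t$ (for (b)). Two imprecisions are worth calling out, neither fatal.

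First, the strong-approximation step in (a) is superfluous: the representatives $\gamma_\beta = \smat{1 & p^n\beta \\ 0 & 1}\times 1$ (the paper's $J_n$, up to sign) already have entries in $\Z$, hence lie in $H(\Q)_+\cap\widehat{G}_n \cap U(p^n)^\lozenge$ \emph{on the nose}. One simply takes $h_\beta = \gamma_\beta$ — there is nothing to approximate. With that in hand, the argument is: move the global element $\gamma_\beta$ to the other side of the decomposition, pass to the representative set $\{(\gamma_\beta^p)^{-1}x\}$ (permissible by Lemma~\ref{Lem:XInFiswell-defined}), and invoke $\widehat{G}_n$-invariance of $F$; your identity on $\opn{St}_2(R)$ then finishes it.

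Second, in (b) the claim that rationality and valuation ``force'' $h = t$ in the level-$n$ decomposition of $(H(\R)_+,y,t)$ is not quite right. The valuation argument pins down $\det h_1 = p$ and $h_2 = 1$, but not $h_1$ itself; $h$ is only determined up to $\Gamma(p^n)^\lozenge_x$, and whether $h = t$ is a \emph{valid} choice depends on the fixed representative set $I(p^n)^\lozenge$. The paper avoids this by replacing $I(p^n)^\lozenge$ with $I' = \{t^{-1}x : x\in I(p^{n+1})^\lozenge\}$, again legitimised by Lemma~\ref{Lem:XInFiswell-defined}; then the decomposition at level $n$ of $(h_\infty,h^p,h_pt)$ is read off directly from the one at level $n+1$ via $h'=ht$, $x'=t^{-1}x$, $k'_p=t^{-1}k_pt$, with no case analysis on $y$. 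Finally, the $\widehat{G}_{n+1}$-invariance of $t\cdot F$ you flag is routine: for $g = I + p^{n+1}M \in \widehat{G}_{n+1}$, conjugation gives $t^{-1}gt = I + p^{n+1}t^{-1}Mt$, whose off-diagonal entries are $p^nM_{12}$, $p^nM_{13}$, $p^{n+2}M_{21}$, $p^{n+2}M_{31}$, $p^{n+1}M_{23}$, $p^{n+1}M_{32}$, all in $\Z$; hence $t^{-1}gt \in \widehat{G}_n$ and $g\cdot(t\cdot F) = t\cdot((t^{-1}gt)\cdot F) = t\cdot F$.
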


\subsubsection{Compatibility in $n$: proofs}

We will prove each part of Proposition \ref{prop:preliminary map} separately.

\begin{lemma} \label{Lem:Prop(a)ofCompatinn}
    The maps $\Xi_n$ satisfy property (a) of Proposition \ref{prop:preliminary map}.
\end{lemma}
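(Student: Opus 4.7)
My approach is to verify the equality pointwise on inputs $(d, [h_\infty], h^p, h_p)$ with $h_p \in \Unloz \cap U_1(p^{n+1})$. First, a direct analysis of the mod-$p^{n+1}$ conditions shows that the quotient $(\Unloz \cap U_1(p^{n+1}))/U(p^{n+1})^\lozenge$ is parametrised by $p^n\Zp/p^{n+1}\Zp \cong \Z/p\Z$, with explicit representatives $n_j = \smat{1 & jp^n & 0 \\ 0 & 1 & 0 \\ 0 & 0 & 1}$ for $j = 0, \ldots, p-1$. The desired identity thus reduces to
\[
\sum_{j=0}^{p-1} \Xi_{n+1}(F)(d, [h_\infty], h^p, h_p n_j) = \Xi_n(F)(d, [h_\infty], h^p, h_p).
\]
By $H(\Q)$-equivariance of both sides, I may assume without loss of generality that $h = 1$, $h^p = x \in I$, $k^p = 1$, and $k_p = h_p \equiv \smat{a & p^n\alpha \\ 0 & 1}\times b \pmod{p^{n+1}}$ for some $a, b \in \Zp^\times$ and $\alpha \in \Zp$.

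The combinatorial heart of the proof is the identity
\[
\sum_{j=0}^{p-1} n_j \cdot u_0\phi_{n+1} = u_0 \cdot \phi_n \qquad \text{in } \opn{St}_2(R).
\]
I would establish this by writing $n_j u_0 = u_0 m_j$ for $m_j = \smat{1 & 0 & -jp^n \\ 0 & 1 & 0 \\ 0 & 0 & 1}$ and computing directly that $m_j \cdot \phi_{n+1}$ is the characteristic function of the $\overline{P}_2(\Qp)$-cosets whose $(1,3)$-coordinate lies in $jp^n + p^{n+1}\Zp$ and $(2,3)$-coordinate lies in $\Zp$; these assemble to the support of $\phi_n$ via the decomposition $p^n\Zp = \bigsqcup_{j=0}^{p-1}(jp^n + p^{n+1}\Zp)$.

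To compute each $\Xi_{n+1}(F)(\ldots, h_p n_j)$, I would invoke strong approximation for $\opn{SL}_2$ to produce $h_j \in H(\Q)_+$ with $h_j \in xK^p_H x^{-1}$ at every prime $\ell \neq p$ and $h_j \equiv \smat{1 & p^n(aj+\alpha) \\ 0 & 1}\times 1 \pmod{p^{n+1}}$ at $p$. A direct check yields $h_j^{-1}h_p n_j \equiv \smat{a & 0 \\ 0 & 1}\times b \pmod{p^{n+1}}$, which lies in $U(p^{n+1})^\lozenge$, so $h_j$ furnishes a valid decomposition in the definition of $\Xi_{n+1}(F)(\ldots, h_p n_j)$; moreover, the congruence $h_j \equiv I \pmod{p^n}$ combined with Lemma \ref{lem:gamma in Ghat} forces $h_j \in \widehat{G}_n$. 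Using the $\widehat{G}_n$-, $G(\R)_+$-, and $K^p$-invariances of $F$, one transfers the action of $h_j$ to the Steinberg argument:
\[
\Xi_{n+1}(F)(\ldots, h_p n_j) = F(\iota(d), [G(\R)_+], x, h_j \cdot u_0\phi_{n+1}).
\]
A separate support computation shows that any element of $G(\Zp)$ congruent to the identity modulo $p^{n+1}$ stabilises $\phi_{n+1}$ under right-translation, so $h_j \cdot u_0\phi_{n+1} = n_{\bar{\jmath}} \cdot u_0\phi_{n+1}$ where $\bar{\jmath} \in \{0, \ldots, p-1\}$ is the unique representative of $aj+\alpha \bmod p$. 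Since $a \in (\Z/p\Z)^\times$, the map $j \mapsto \bar{\jmath}$ is a bijection on $\{0, \ldots, p-1\}$; summing and applying the combinatorial identity yields
\[
\sum_j F(\iota(d), [G(\R)_+], x, h_j u_0\phi_{n+1}) = F(\iota(d), [G(\R)_+], x, u_0\phi_n) = \Xi_n(F)(\ldots, h_p),
\]
as required. The principal subtlety is the construction of $h_j$: the naive choice $h_j = n_j$ typically fails because $n_j \notin K^p_H$ at primes dividing the tame level, necessitating strong approximation for $\opn{SL}_2$.
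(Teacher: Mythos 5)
Your proof is correct. The skeleton matches the paper's---same rational coset representatives for the quotient $[\Unloz \cap U_1(p^{n+1})]/U(p^{n+1})^\lozenge$, the same $\widehat{G}_n$-invariance of $F$, and the same combinatorial identity $\sum_j n_j \cdot u_0\phi_{n+1} = u_0\phi_n$---but you are more careful about the decomposition of each $h_pn_j$, and in fact this care is needed. The paper decomposes $h_pv$ using $h' = hv$ and $k_p' = v^{-1}k_pv$, asserting that $v^{-1}U(p^{n+1})^\lozenge v = U(p^{n+1})^\lozenge$. This conjugation identity does not hold: in the $\GL_2$-component,
\[
\smat{1 & p^nb \\ & 1}\smat{a & \\ & 1}\smat{1 & -p^nb \\ & 1} = \smat{a & p^nb(1-a) \\ & 1},
\]
whose $(1,2)$-entry lies in $p^n\Zp$ but not in $p^{n+1}\Zp$ whenever $b\not\equiv 0$ and $a\not\equiv 1 \pmod p$. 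Your strong approximation step supplies the repair: by tailoring the mod-$p^{n+1}$ congruence of $h_j \in H(\Q)_+\cap\widehat{G}_n$ to the $(1,1)$-entry $a$ of $k_p$, the element $h_j^{-1}h_pn_j$ genuinely lands in $U(p^{n+1})^\lozenge$. The dependence on $a$ then resurfaces as the permutation $j\mapsto aj+\alpha$ of $\Z/p\Z$, and your observation that this is a bijection is precisely what makes the final sum insensitive to $a$ and $\alpha$---which is also why the paper's conclusion is correct despite the misaligned term-by-term identifications. The paper's $k_p$-independent choice of rational representatives buys brevity; your route costs one invocation of strong approximation per coset but actually closes.
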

\begin{proof}
     Let $F \in \mathcal{A}_{G, c}(\opn{St}_2(R)^*)^{\widehat{G}_n}$. Let $d \in D_H$, $h_{\infty} \in H(\mbb{R})$, $h^p \in H(\mbb{A}_f^p)$ and $h_p \in H(\mbb{Q}_p)$, and let $h \in H(\mbb{Q})$, $x \in I$, $k^p \in K^p_H$ and $k_p \in U(p^{n+1})^{\lozenge} \subset U(p^n)^{\lozenge}$ be such that:
     \[
     [h_{\infty}] = h[H(\mbb{R})_+], \; h^p = h x k^p, \; h_p = hk_p .
     \]
     Then we have:
    \[
    \left(\opn{res}_{U(p^n)^{\lozenge}}^{U(p^n)^{\lozenge} \cap U_1(p^{n+1})} \circ\  \Xi_{n}\right)(F)(d, [h_\infty], h^p, h_p) = F(h^{-1} \cdot \iota(d), [G(\mbb{R})_+], x, u_0 \cdot \phi_n) .
    \]
    Let
    \[
            J_n \defeq \left\{\smat{1 & -p^n b & \\ & 1 & \\ & & 1} : b \in \Z/p\Z\right\} \subset H(\Q)_+\cap \widehat{G}_n \subset H(\A).
    \]
    Considering $J_n\subset U(p^n)^\lozenge$ gives a set of representatives for $[U(p^n)^{\lozenge} \cap U_1(p^{n+1})] / U(p^{n+1})^{\lozenge}$. In particular,
    \[
    \left(\opn{tr}_{U(p^{n+1})^{\lozenge}}^{U(p^n)^{\lozenge} \cap U_1(p^{n+1})} \circ\  \Xi_{n+1}\right)(F)(d, [h_\infty], h^p, h_p) = \sum_{v \in J_n} \Xi_{n+1}(F)(d, [h_\infty], h^p, h_pv).
    \]
    We compute
    \begin{align*}
    \Xi_{n+1}(F)(d, [h_\infty], h^p, h_pv) &= \Xi_{n+1}(F)(d, h[H(\R)_+], hxk^p, hk_pv)\\
    &= \Xi_{n+1}(F)(d, hv[H(\R)_+], hv \cdot (v^p)^{-1}xk^p, hvk_p'),
    \end{align*}
    where $k_p' = v^{-1}k_p v \in v^{-1}U(p^{n+1})^\lozenge v= U(p^{n+1})^\lozenge$. Note that $(v^{p})^{-1}x = v^{-1}xv_p$ represents the same double coset as $x$. Via Lemma \ref{Lem:XInFiswell-defined}, we may compute $\Xi_{n+1}(F)$ with respect to the set of representatives $I' = \{(v^p)^{-1}x : x \in I\}$, giving
    \begin{align*}
    \Xi_{n+1}(F)(d, [h_\infty], h^p, h_pv) &= F(v^{-1}h^{-1}\iota(d), [G(\R)_+], (v^p)^{-1}x, u_0\cdot\phi_{n+1})\\ 
    &= F(h^{-1}\iota(d), [G(\R)_+], x, vu_0\cdot \phi_{n+1}),
    \end{align*}
    where in the second equality we use $\widehat{G}_n$-invariance of $F$.

A direct computation shows
    \[
    \sum_v (u_0^{-1}v u_0) \cdot \phi_{n+1} = \sum_b \smat{1 & & p^n b  \\ & 1 & \\ & & 1} \cdot \phi_{n+1} = \phi_n.
    \]
In particular
    \[
    \sum_v v u_0 \cdot \phi_{n+1} = u_0 \cdot \phi_n.
    \]
    Combining everything, we find
    \begin{align*}
    \left(\opn{tr}_{U(p^{n+1})^{\lozenge}}^{U(p^n)^{\lozenge} \cap U_1(p^{n+1})} \circ \Xi_{n+1}\right)(F)(d, [h_\infty], h^p, h_p) &= \sum_{v} F(h^{-1}\iota(d), [G(\R)_+], x, vu_0\cdot \phi_{n+1})\\
    &= F(h^{-1}\iota(d), [G(\R)_+], x, u_0\cdot \phi_{n})\\
    &= \left(\opn{res}_{U(p^n)^{\lozenge}}^{U(p^n)^{\lozenge} \cap U_1(p^{n+1})} \circ \Xi_{n}\right)(F)(d, [h_\infty], h^p, h_p),
    \end{align*}
    as required. 
\end{proof}

\begin{lemma}
    The maps $\Xi_n$ satisfy property (b) of Proposition \ref{prop:preliminary map}.
\end{lemma}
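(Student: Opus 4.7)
The plan is to mirror the structure of the proof of Lemma \ref{Lem:Prop(a)ofCompatinn}, with the single element $t$ playing the role of the sum over $J_n$. The crux is the identity
\[
t^{-1} \cdot u_0 \phi_{n+1} = u_0 \phi_n \qquad \text{in } \opn{St}_2(R),
\]
which is the $t$-analog of the identity $\sum_{v \in J_n} v \cdot u_0 \phi_{n+1} = u_0 \phi_n$ used in part (a). I will first establish this identity, then use it to compute both sides of (b).

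To prove the identity, note that $u_0 \phi_n$ and $t^{-1} u_0 \phi_{n+1}$ are images in $\opn{St}_2(R)$ of characteristic functions of subsets of $G(\Q_p)$, so it suffices to show that these subsets coincide. Explicitly, $(u_0 \phi_n)(g) = 1$ iff $gu_0$ lies in $\overline{P}_2(\Q_p)\smat{1 & & p^n\Z_p \\ & 1 & \Z_p \\ & & 1}$, and $(t^{-1} u_0 \phi_{n+1})(g) = \phi_{n+1}(g t^{-1} u_0) = 1$ iff $g t^{-1} u_0$ lies in $\overline{P}_2(\Q_p)\smat{1 & & p^{n+1}\Z_p \\ & 1 & \Z_p \\ & & 1}$. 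Writing these cosets in terms of canonical representatives, one reduces to checking that for each $a \in p^{n+1}\Z_p$, $b \in \Z_p$, we have
\[
\overline{P}_2(\Q_p) \smallthreemat{p}{-a}{0}{0}{-b}{1}{0}{-1}{0} = \overline{P}_2(\Q_p) \smallthreemat{1}{-a/p}{0}{0}{-b}{1}{0}{-1}{0},
\]
which follows from a direct computation of the ratio of the two representatives.

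With the identity in hand, I will compute LHS using the defining formula. Decomposing $(h_\infty, h^p, h_p)$ via strong approximation as $[h_\infty] = h'[H(\R)_+]$, $h^p = h' x' k'^{p}$, $h_p = h' k_p'$ with $k_p' \in U(p^{n+1})^\lozenge$, and using that $t$ acts on $F$ via the natural $G(\Q)$-action (which takes $\widehat{G}_n$-invariants to $\widehat{G}_{n+1}$-invariants because $t^{-1} \widehat{G}_{n+1} t \subset \widehat{G}_n$), the LHS becomes
\[
F\bigl(t^{-1} h'^{-1} \iota(d), [G(\R)_+], (t^{(p,*)})^{-1} x', t^{-1} u_0 \phi_{n+1}\bigr) = F\bigl(t^{-1} h'^{-1} \iota(d), [G(\R)_+], (t^{(p,*)})^{-1} x', u_0 \phi_n\bigr).
\]
For the RHS, using the formula $(t \cdot \xi)(d, \ldots, h_p) = \xi(d, \ldots, t^{-1} h_p)$ for the $t$-action on the induction, I get $\Xi_n(F)(d, [h_\infty], h^p, t^{-1} h_p)$. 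Applying $H(\Q)$-invariance of $\Xi_n(F)$ with $\gamma = t^{-1} h' \in H(\Q)_+$ reduces this to $\Xi_n(F)((h')^{-1} t d, [H(\R)_+], (t^{(p,*)})^{-1} x', k_p')$ (using that $k_p' \in U(p^{n+1})^\lozenge \subset U(p^n)^\lozenge$ is absorbed into left-invariance), which by the defining formula of $\Xi_n$, together with a judicious choice of representatives via Lemma \ref{Lem:XInFiswell-defined}, equals $F((h')^{-1} t \iota(d), [G(\R)_+], (t^{(p,*)})^{-1} x', u_0 \phi_n)$.

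To match the two expressions, one then uses $\iota$-equivariance and the $\widehat{G}_n$-invariance of $F$ (analogously to the proof of part (a)) to exchange $t^{-1} h'^{-1}$ and $(h')^{-1} t$ in the first slot via the conjugation $t^{-1} (h')^{-1} t = ((h')^{-1} t h')^{-1} \cdot (h')^{-1}$, absorbing the difference into the $\widehat{G}_n$-action.

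\textbf{Main obstacle.} The key technical difficulty is that, unlike in part (a) where the elements $v \in J_n$ lie in $\widehat{G}_n$ and so $F$-invariance applies directly, here $t \notin \widehat{G}_n$. The subtlety is therefore to track how $t$ interacts with the different slots of $F$ (the Steinberg slot, the $\A_f^p$-slot, and the $D_G$-slot) via its diagonal embedding $G(\Q) \hookrightarrow G(\R) \times G(\A_f^p) \times G(\Q_p)$, and to choose representatives of double cosets (via Lemma \ref{Lem:XInFiswell-defined}) that make the matching transparent.
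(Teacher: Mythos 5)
Your core Steinberg identity $t^{-1} \cdot u_0\phi_{n+1} = u_0\phi_n$ is correct and is indeed the heart of the matter, though the paper observes it more cheaply: $t$ and $u_0$ commute (both are monomial), and $t^{-1}\cdot\phi_{n+1} = \phi_n$ is a one-line computation with cosets of $\overline{P}_2(\Qp)$, so there is no need for the explicit representative-by-representative verification you sketch.

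The genuine gap is in the direction of the map ``$t\cdot$'' on the $H$-side. The map
\[
t\cdot \colon \cA_{H,c}\big(\opn{Ind}_{t^{-1}U(p^{n+1})^{\lozenge}t}^{H(\Qp)}R\big) \longrightarrow \cA_{H,c}\big(\opn{Ind}_{U(p^{n+1})^{\lozenge}}^{H(\Qp)}R\big)
\]
is right translation in the $H(\Qp)$-slot, i.e.\ $(t\cdot\xi)(d,[h_\infty],h^p,h_p) = \xi(d,[h_\infty],h^p,h_pt)$, not $\xi(d,\ldots,t^{-1}h_p)$. Indeed, for $u \in U(p^{n+1})^\lozenge$ one has $(h_pu)t = (h_pt)(t^{-1}ut)$ with $t^{-1}ut \in t^{-1}U(p^{n+1})^\lozenge t$, so right translation converts invariance at the lower level to invariance at $U(p^{n+1})^\lozenge$; the left-translation formula does not change the level correctly, and moreover would fail to commute with the $H(\Q)$-action (which acts by left translation on this slot). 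Your formula $\xi(t^{-1}h_p)$ is the $H(\Qp)$-action of $t$, which is a different map.

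This sign error is what generates the apparent mismatch $t^{-1}(h')^{-1}$ versus $(h')^{-1}t$ that you then try to repair by conjugation and $\widehat{G}_n$-invariance. That repair cannot work: the error term $(h')^{-1}t h'$ (or its inverse, however you arrange the factors) is a rational conjugate of $t$, not an element of $\widehat{G}_n$, so $F$'s $\widehat{G}_n$-invariance does not absorb it. In fact the identity you write, $t^{-1}(h')^{-1}t = ((h')^{-1}th')^{-1}(h')^{-1}$, is itself false (the right side simplifies to $(h')^{-1}t^{-1}$).

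With the correct direction the proof closes immediately and requires no $\widehat{G}_n$-invariance of $F$ beyond what is built into the well-definedness of $\Xi_n$. One computes the right-hand side as $\Xi_n(F)(d,[h_\infty],h^p,h_pt)$, and then uses the \emph{same} decomposition $(h,x,k^p,k_p)$ that you used on the left, regrouping $h_pt = (ht)\cdot(t^{-1}k_pt)$ with $t^{-1}k_pt \in U(p^n)^\lozenge$. Since $t$ commutes with $u_0$, fixes $[G(\R)_+]$, and $\Xi_n$ may be computed with the shifted representatives $I' = \{t^{-1}x\}$ (by Lemma~\ref{Lem:XInFiswell-defined}), one gets $F((ht)^{-1}\iota(d),[G(\R)_+],t^{-1}x,u_0\phi_n)$, which is visibly equal to the left-hand side $F(t^{-1}h^{-1}\iota(d),[G(\R)_+],t^{-1}x,u_0\phi_n)$ because $(ht)^{-1} = t^{-1}h^{-1}$. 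This is the sense in which (b) is actually \emph{simpler} than (a): there is no sum and no appeal to $\widehat{G}_n$-invariance, only a change of the chosen transversal $I \leadsto I'$.
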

\begin{proof}
    Let $I=I(p^{n+1})^{\lozenge} = \{ x_i \}$ be a set of representatives. Then $I' = \{ x_i' = t^{-1}x_i \}$ is a set of representatives (in $\prod_{\ell \neq p}H(\Z_{\ell})$) for the level $U(p^{n})^{\lozenge}$. Let $F \in \mathcal{A}_{G, c}(\opn{St}_2(R)^*)^{\widehat{G}_n}$. Then, with the same notation at the start of the proof of Lemma \ref{Lem:Prop(a)ofCompatinn},
    \begin{align*}
    \Xi_{n+1}(t \cdot F)(d, [h_\infty], h^p,h_p) &= \Xi_{n+1}(t\cdot F)(d, h[H(\R)_+], hxk^p, hk_p)\\
    &= (t\cdot F)(h^{-1}\iota(d),[G(\R)_+], x, u_0 \cdot \phi_{n+1})\\
    &= F((ht)^{-1}d, t^{-1}[G(\R)_+], t^{-1}x, t^{-1}u_0 \cdot \phi_{n+1}).
    \end{align*}
    Note $t$ commutes with $u_0$ and $t^{-1} \cdot \phi_{n+1} = \phi_n$, whilst $t^{-1}$ fixes $[G(\R)_+]$. Also, $t^{-1}k_p t \in U(p^n)^{\lozenge}$. This thus equals
    \begin{align}
    F((ht)^{-1}d, [G(\R)_+], t^{-1}x, u_0 \cdot \phi_n) &= \Xi_{n}(F)(d, [h_\infty], ht(t^{-1}x)k^p, ht \cdot t^{-1}k_pt ) \label{eq:I'} \\ 
    &= \Xi_{n}(F)(d, [h_\infty], h^p,h_p t) \nonumber
    \end{align}
    as required. Note that in (\ref{eq:I'}) we compute $\Xi_n$ with respect to the representatives $I' = \{t^{-1}x : x \in I\}$, which is valid by Lemma \ref{Lem:XInFiswell-defined}.
\end{proof}

\subsection{Big evaluation maps}

We now define the big evaluation maps. Firstly, let 
\[
\widetilde{\Xi}_n \colon \mathcal{A}_{G, c}(\opn{St}_2(R)^*)^{\widehat{G}_n} \to R
\]
denote the $R$-linear map given by
\[
\widetilde{\Xi}_n(F) \defeq \langle \Xi_n(u_0 \cdot F), \mathcal{E}_{\Phi_n, U(p^n)^{\lozenge}} \rangle_{U(p^n)^{\lozenge}} 
\]
noting  $u_0$ normalises $\widehat{G}_n$ and recalling $\Phi_n$ from Notation \ref{not:schwartz}. We emphasise here that $u_0$ is acting on $F$ as an element of $G(\Q)$, so that the following lemma holds. Recall $\widetilde{\Psi}_n$ from Definition \ref{def:Psi tilde}, and the $G(\Q_p)$-equivariant morphism $\opn{pr} \colon \opn{St}_1(R) \otimes \opn{St}_2(R) \to \opn{St}(R)$. 

\begin{lemma}\label{lem:classical vs big}
For any $\mu \in \h^0(G(\Q), \cA_{G,c}(\opn{St}(R)^*))$ and $\xi \in \opn{St}_1(R)^{U(p^n)}$, we have
    \[
    \widetilde{\Psi}_n\big(\mu, \ u_0\cdot\opn{pr}(\xi\otimes\phi_n)\big) = \widetilde{\Xi}_n(\mu_\xi),
    \]
    where $\mu_\xi \in \mathcal{A}_{G, c}(\opn{St}_2(R)^*)^{\widehat{G}_n}$ is the element 
    \[
        \mu_\xi(d, [g_{\infty}], g^p, \phi) = \mu(d, [g_{\infty}], g^p, \opn{pr}(\xi \otimes \phi))
    \]
    for $d \in D_G$, $[g_{\infty}] \in \pi_0(G(\mbb{R}))$, $g^p \in G(\mbb{A}_f^p)$, and $\phi \in \opn{St}_2(R)$.
\end{lemma}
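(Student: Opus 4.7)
The plan is to establish the stronger statement that the two $H$-cochains
\[
\Psi_n\bigl(\mu,\ u_0\cdot\opn{pr}(\xi\otimes\phi_n)\bigr) \;=\; \Xi_n(u_0\cdot\mu_\xi)
\]
agree as elements of $\opn{H}^0\bigl(H(\Q),\cA_{H,c}(\opn{Ind}_{\Unloz}^{H(\Qp)}R)\bigr)$; then pairing both sides with the Eisenstein class $\cE_{\Phi_n, \Unloz}$ immediately yields the stated equality of pairings.

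I would proceed by evaluating both cochains at a fixed point $(d, [h_\infty], h^p, h_p)$. Following Notation \ref{not:evaluation data}(iii), write $[h_\infty] = h[H(\R)_+]$, $h^p = hxk^p$ and $h_p = hk_{p,0}$. By Definition \ref{def:Psi_n}, the left-hand cochain evaluates to
\[
\mu\bigl(h^{-1}\iota(d),\ [G(\R)_+],\ x,\ u_0\cdot\opn{pr}(\xi\otimes\phi_n)\bigr).
\]
For the right-hand cochain, I would first unpack the $G(\Q)$-action, using $(u_0\cdot\mu_\xi)(\cdot,\cdot,\cdot,\phi) = \mu_\xi(u_0^{-1}\cdot\,,\,u_0^{-1}\cdot\,,\,u_0^{-1}\cdot\,,\,u_0^{-1}\phi)$. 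Combining this with Definition \ref{def:Xi_n} and the formula defining $\mu_\xi$ gives
\begin{align*}
\Xi_n(u_0\cdot\mu_\xi)(d, [h_\infty], h^p, h_p)
&= (u_0\cdot\mu_\xi)\bigl(h^{-1}\iota(d),\ [G(\R)_+],\ x,\ u_0\cdot\phi_n\bigr)\\
&= \mu_\xi\bigl(u_0^{-1}h^{-1}\iota(d),\ u_0^{-1}[G(\R)_+],\ u_0^{-1}x,\ \phi_n\bigr)\\
&= \mu\bigl(u_0^{-1}h^{-1}\iota(d),\ u_0^{-1}[G(\R)_+],\ u_0^{-1}x,\ \opn{pr}(\xi\otimes\phi_n)\bigr).
\end{align*}

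To conclude, I would invoke $G(\Q)$-invariance of $\mu$ at the element $u_0 \in G(\Q)$, in the form $\mu(u_0^{-1}\cdot\,,u_0^{-1}\cdot\,,u_0^{-1}\cdot\,, u_0^{-1}\varphi) = \mu(\cdot,\cdot,\cdot,\varphi)$, applied with $\varphi = u_0\cdot\opn{pr}(\xi\otimes\phi_n)$ in the fourth slot. This rewrites the last display as $\mu(h^{-1}\iota(d), [G(\R)_+], x, u_0\cdot\opn{pr}(\xi\otimes\phi_n))$, matching the evaluation of the left-hand cochain; hence the two cochains coincide. There is no serious obstacle; the whole argument is careful bookkeeping between the $u_0$-action on the coefficient vector of $\opn{St}(R)$ inside $\Psi_n$ and the $u_0$-action on the cocycle $\mu_\xi$ via the $G(\Q)$-module structure on $\cA_{G,c}(\opn{St}_2(R)^*)$, with $G(\Q)$-invariance of $\mu$ being the bridge between them.
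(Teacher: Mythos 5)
Your proof is correct and is essentially the same as the paper's: in both cases one shows the stronger cochain identity $\Psi_n(\mu,\ u_0\cdot\opn{pr}(\xi\otimes\phi_n)) = \Xi_n(u_0\cdot\mu_\xi)$ by unwinding the definitions of $\Psi_n$ and $\Xi_n$, transporting the $u_0$-action on $\mu_\xi$ to the Steinberg slot via the $G(\Q)$-module structure, and invoking $G(\Q)$-invariance of $\mu$, after which pairing with $\cE_{\Phi_n,\Unloz}$ gives the claim. The only cosmetic difference is that you spell out the evaluation $\Xi_n(u_0\cdot\mu_\xi)(d,[h_\infty],h^p,h_p) = (u_0\cdot\mu_\xi)(h^{-1}\iota(d),[G(\R)_+],x,u_0\cdot\phi_n)$ whereas the paper references this implicitly through Definition \ref{def:Xi_n}.
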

\begin{proof}
With notation from Definition \ref{def:Xi_n} with $F = u_0 \cdot \mu_{\xi}$, we compute that 
\begin{align*}
(u_0 \cdot \mu_{\xi})(h^{-1}\iota(d), [G(\mbb{R})_+], x, u_0 \cdot \phi_n) &= \mu_{\xi}(u_0^{-1}h^{-1}\iota(d), u_0^{-1}[G(\mbb{R})_+], u_0^{-1}x, \phi_n) \\
&= \mu(u_0^{-1}h^{-1}\iota(d), u_0^{-1}[G(\mbb{R})_+], u_0^{-1}x, \opn{pr}(\xi \otimes \phi_n)) \\
&= \mu(h^{-1}\iota(d), [G(\mbb{R})_+], x, u_0 \cdot \opn{pr}(\xi \otimes \phi_n))
\end{align*}
where the last equality uses the $G(\mbb{Q})$-invariance of $\mu$. From the definition of $\Psi_n$ (Definition \ref{def:Psi_n}), we therefore see that $\Psi_n(\mu, u_0 \cdot \opn{pr}(\xi \otimes \phi_n)) = \Xi_n(u_0 \cdot \mu_{\xi})$, which implies the claim. 
\end{proof}

\begin{lemma}
We have $\widetilde{\Xi}_{n+1}(F) = \widetilde{\Xi}_n(F)$. In particular, there is a map
\[
\widetilde{\Xi}_{\infty} \colon \mathcal{A}_{G, c}(\opn{St}_2(R)^*)^{p\opn{-sm}} \to R
\]
defined by $\widetilde{\Xi}_\infty \defeq \varinjlim_n \widetilde{\Xi}_n$.
\end{lemma}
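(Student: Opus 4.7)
The plan is to compare $\widetilde{\Xi}_n(F)$ and $\widetilde{\Xi}_{n+1}(F)$ by pushing both pairings to an intermediate level. Set $U_1 = U(p^n)^\lozenge$, $U_2 = U(p^n)^\lozenge \cap U_1(p^{n+1})$ and $U_3 = U(p^{n+1})^\lozenge$, so that $U_3 \subset U_2 \subset U_1$. The Schwartz function $\Phi_{n+1}$ is fixed by $U_1(p^{n+1})$, hence also by $U_2$ and $U_3$; and $u_0 \cdot F$ lies in $\mathcal{A}_{G,c}(\opn{St}_2(R)^*)^{\widehat{G}_n}$ because $u_0 \in G(\mbb{Z}_{(p)})$ normalises $\widehat{G}_n$, so Proposition \ref{prop:preliminary map}(a) applies to $u_0\cdot F$.

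Using that $\mathcal{E}_{\Phi_{n+1}, U_3} = \opn{res}_{U_2}^{U_3}\mathcal{E}_{\Phi_{n+1}, U_2}$, then the adjointness of $\opn{res}$ and $\opn{tr}$ under the Poincar\'e duality pairing, then Proposition \ref{prop:preliminary map}(a), and then adjointness once more, I would compute
\begin{align*}
\widetilde{\Xi}_{n+1}(F)
&= \langle \Xi_{n+1}(u_0\cdot F),\ \mathcal{E}_{\Phi_{n+1}, U_3}\rangle_{U_3}
= \langle \opn{tr}_{U_3}^{U_2}\Xi_{n+1}(u_0\cdot F),\ \mathcal{E}_{\Phi_{n+1}, U_2}\rangle_{U_2}\\
&= \langle \opn{res}_{U_1}^{U_2}\Xi_n(u_0\cdot F),\ \mathcal{E}_{\Phi_{n+1}, U_2}\rangle_{U_2}
= \langle \Xi_n(u_0\cdot F),\ \opn{tr}_{U_2}^{U_1}\mathcal{E}_{\Phi_{n+1}, U_2}\rangle_{U_1}.
\end{align*}

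It therefore suffices to show $\opn{tr}_{U_2}^{U_1}\mathcal{E}_{\Phi_{n+1}, U_2} = \mathcal{E}_{\Phi_n, U_1}$. By the trace compatibility \eqref{eq:Eis trace}, this reduces to the identity of Schwartz functions $\opn{tr}_{U_2}^{U_1}(\Phi_{n+1}) = \Phi_n$. Since the prime-to-$p$ components agree, this amounts to a short direct computation at $p$: choose coset representatives for $U_1/U_2$ in $\opn{GL}_2(\mbb{Q}_p) \times \{1\}$ (they adjust the $(c,d)$-entries of the $\opn{GL}_2$-factor modulo $p^{n+1}$ by residues in $p^n\mbb{Z}_p/p^{n+1}\mbb{Z}_p$), and sum their action on $\Phi_p^{n+1} = \opn{ch}(p^{n+1}\mbb{Z}_p)\otimes \opn{ch}(1+p^{n+1}\mbb{Z}_p)$ to recover $\Phi_p^n = \opn{ch}(p^n\mbb{Z}_p)\otimes \opn{ch}(1+p^n\mbb{Z}_p)$.

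The only real obstacle is this last Schwartz-function trace identity, which is essentially a matter of tracking conventions for the $\opn{GL}_2(\mbb{Q}_p)$-action. Alternatively, one can appeal directly to the Siegel-unit norm compatibilities from \cite{LW21}, where exactly such trace relations are established, allowing one to read off the required identity.
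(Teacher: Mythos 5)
Your argument is correct and matches the paper's proof step for step: the same chain of $\opn{res}$/$\opn{tr}$ adjunctions at the intermediate level $U(p^n)^\lozenge\cap U_1(p^{n+1})$, the same application of Proposition \ref{prop:preliminary map}(a), the same Eisenstein trace compatibility \eqref{eq:Eis trace}, and the same reduction to $\opn{tr}(\Phi_{p,n+1})=\Phi_{p,n}$, which the paper verifies with the coset representatives $v_{b,c}=\smat{1&0\\p^nb&1+p^nc}$, $b,c\in\Z/p\Z$, exactly as you describe. Filling in that short Schwartz-function computation rather than deferring it is all that separates your write-up from the paper's.
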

\begin{proof}
We compute 
\begin{align*}
\widetilde{\Xi}_{n+1}(F) &= \left\langle \Xi_{n+1}(u_0 \cdot F), \mathcal{E}_{\Phi_{n+1}, U(p^{n+1})^{\lozenge}} \right\rangle_{U(p^{n+1})^{\lozenge}} \\
&= \left\langle \opn{tr} \big(\Xi_{n+1}(u_0 \cdot F)\big), \mathcal{E}_{\Phi_{n+1}, U(p^n)^{\lozenge} \cap U_1(p^{n+1})} \right\rangle_{U(p^n)^{\lozenge} \cap U_1(p^{n+1})} \\
&= \left\langle \opn{res}\big(\Xi_n(u_0 \cdot F)\big), \mathcal{E}_{\Phi_{n+1}, U(p^n)^{\lozenge} \cap U_1(p^{n+1})} \right\rangle_{U(p^n)^{\lozenge} \cap U_1(p^{n+1})}\\
&= \left\langle \Xi_n(u_0 \cdot F), \opn{tr} \left(\mathcal{E}_{\Phi_{n+1}, U(p^n)^{\lozenge} \cap U_1(p^{n+1})} \right)\right\rangle_{U(p^n)^{\lozenge}}\\
&= \left\langle \Xi_n(u_0 \cdot F), \mathcal{E}_{\mathrm{tr}(\Phi_{n+1}), U(p^n)^{\lozenge}})\right\rangle_{U(p^n)^{\lozenge}}.
\end{align*}
Here the second and fourth equalities use that restriction and trace are adjoint under the Poincar\'e pairing, the third equality is by Proposition \ref{prop:preliminary map}(a), and the last equality is trace compatibility \eqref{eq:Eis trace} of Eisenstein classes. Now, for $b,c \in \Zp$ let
\begin{equation}\label{eq:ubc}
v_{b,c} \defeq \mat{1 & 0 \\ p^nb & 1+p^nc}.
\end{equation}
Then $\{v_{b,c} \times 1 : b,c \in \Z/p\Z\} \subset U(p^n)^\lozenge$ is a set of representatives for $U(p^n)^{\lozenge} / U(p^n)^{\lozenge} \cap U_1(p^{n+1})$. We compute 
\begin{align*}
\opn{tr}(\Phi_{p,n+1}) &= \sum_{b,c}v_{-b,-c}\cdot \opn{ch}(p^{n+1}\Zp, 1+p^{n+1}\Zp)\\
&= \sum_{b,c}\opn{ch}(p^n(b+p\Zp), 1+p^n(c+p\Zp)) =  \Phi_{p,n}.
\end{align*}
We conclude that
\[
   \widetilde{\Xi}_{n+1}(F) = \langle \Xi_n(u_0 \cdot F), \mathcal{E}_{\Phi_{n}, U(p^n)^{\lozenge}} \rangle_{U(p^n)^{\lozenge}} = \widetilde{\Xi}_n(F),
\]
as required. The second statement is immediate.
\end{proof}

\begin{lemma}\label{lem:t equiv}
    The map $\widetilde{\Xi}_{\infty}$ is $t^{\Z}$-equivariant, where the target has the trivial action.
\end{lemma}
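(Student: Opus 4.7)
The plan is to show $t$-invariance of $\widetilde{\Xi}_\infty$, from which $t^{\mbb{Z}}$-equivariance follows by invertibility of the $t$-action on $\mathcal{A}_{G,c}(\opn{St}_2(R)^*)^{p\opn{-sm}}$. A direct matrix calculation gives $t^{-1}\widehat{G}_{n+1} t \subset \widehat{G}_n$, so for $F \in \mathcal{A}_{G,c}(\opn{St}_2(R)^*)^{\widehat{G}_n}$, the element $t \cdot F$ lies in $\mathcal{A}_{G,c}(\opn{St}_2(R)^*)^{\widehat{G}_{n+1}}$; it therefore suffices to establish $\widetilde{\Xi}_{n+1}(t \cdot F) = \widetilde{\Xi}_n(F)$.

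Since $u_0$ and $t = \opn{diag}(p,1,1)$ commute in $G(\Qp)$ (both act trivially or diagonally on the $2$-$3$ block structure), we have $u_0 \cdot t \cdot F = t \cdot u_0 \cdot F$, so Proposition \ref{prop:preliminary map}(b) yields
\[
\Xi_{n+1}(u_0 \cdot t \cdot F) = t \cdot \opn{res}_{U(p^n)^\lozenge}^{t^{-1}U(p^{n+1})^\lozenge t}\big(\Xi_n(u_0 \cdot F)\big).
\]
Pairing this against $\mathcal{E}_{\Phi_{n+1}, U(p^{n+1})^\lozenge}$ and applying, in turn, $H(\mbb{A}_f)$-equivariance of the Poincar\'e pairing (to transfer $t$ to the Eisenstein side), adjointness of restriction and trace, and the $\GL_2(\Qp)$-equivariance \eqref{eq:Eis equiv} together with trace compatibility \eqref{eq:Eis trace} of Eisenstein classes, the whole identity reduces to the Schwartz-function statement
\[
\opn{tr}_{t^{-1} U(p^{n+1})^\lozenge t}^{U(p^n)^\lozenge}\big(t^{-1} \cdot \Phi_{n+1}\big) = \Phi_n \quad \text{in}\;\; \mathcal{S}(\mbb{Q}_p^2, \mathcal{O}_L).
\]

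The main obstacle will be verifying this explicit trace identity, which I would do by direct computation. One first checks that $t^{-1} U(p^{n+1})^\lozenge t \subset U(p^n)^\lozenge$ with index $p^3$, and that a complete set of coset representatives is given by
\[
\gamma_{c, d} = \left(\smat{1 & 0 \\ p^n c & 1 + p^n d}, \ 1\right) \in H(\mbb{Z}_p), \qquad (c, d) \in \mbb{Z}/p^2\mbb{Z} \times \mbb{Z}/p\mbb{Z}.
\]
Then $(\gamma_{c,d} \cdot t^{-1} \cdot \Phi_p^{n+1})(x_1, x_2) = \Phi_p^{n+1}\big(p^{-1}(x_1 + p^n c x_2), \, (1+p^n d) x_2\big)$; this is nonzero only when $x_2 \equiv 1 - p^n d \pmod{p^{n+1}}$ (which determines $d \in \mbb{Z}/p\mbb{Z}$ uniquely given $x_2 \in 1 + p^n \mbb{Z}_p$) and $x_1 \equiv -p^n c x_2 \pmod{p^{n+2}}$. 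As $c$ ranges over $\mbb{Z}/p^2\mbb{Z}$, the characteristic functions in the first coordinate cover $p^n\mbb{Z}_p$ modulo $p^{n+2}$ exactly once; summing yields $\Phi_p^n$, and the prime-to-$p$ factor $\Phi^{(p)}$ passes through unchanged, giving the trace identity and hence the lemma.
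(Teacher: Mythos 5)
Your proof is correct and follows essentially the same route as the paper's: reduce to showing $\widetilde\Xi_{n+1}(t\cdot F)=\widetilde\Xi_n(F)$ for $\widehat G_n$-invariant $F$, use commutativity of $u_0$ and $t$, invoke Proposition~\ref{prop:preliminary map}(b), shift $t$ onto the Eisenstein side via equivariance of the pairing, use adjointness of restriction and trace together with \eqref{eq:Eis equiv} and \eqref{eq:Eis trace}, and then verify the explicit trace identity $\opn{tr}(t^{-1}\cdot\Phi_{p,n+1})=\Phi_{p,n}$ by the coset computation with representatives $\smat{1&0\\p^nc&1+p^nd}\times 1$ for $c\in\mbb Z/p^2\mbb Z$, $d\in\mbb Z/p\mbb Z$, which is exactly the paper's set $\{v_{b,c}\times 1\}$. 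Your local computation of the coset sum hitting $\opn{ch}(p^n\mbb Z_p,1+p^n\mbb Z_p)$ exactly once is a correct unwinding of the paper's one-line identification of the characteristic functions.
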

\begin{proof}
    It suffices to prove $\widetilde{\Xi}_{\infty}(t \cdot F) = \widetilde{\Xi}_{\infty}(F)$. Recall that $t$ commutes with $u_0$. Let $F$ be fixed by $\widehat{G}_n$. Then
    \begin{align*}
        \widetilde{\Xi}_{\infty}(t \cdot F) &= \widetilde{\Xi}_{n+1}(t \cdot F) \\
        &= \left\langle \Xi_{n+1}(t \cdot u_0 \cdot F), \mathcal{E}_{\Phi_{n+1}, U(p^{n+1})^{\lozenge}} \right\rangle_{U(p^{n+1})^{\lozenge}} \\
        &= \left\langle t \cdot \opn{res}\big(\Xi_n(u_0\cdot F)\big), \mathcal{E}_{\Phi_{n+1}, U(p^{n+1})^{\lozenge}} \right\rangle_{U(p^{n+1})^{\lozenge}} \\
        &= \left\langle \opn{res}\big(\Xi_n(u_0\cdot F)\big), t^{-1} \cdot \mathcal{E}_{\Phi_{n+1}, U(p^{n+1})^{\lozenge}} \right\rangle_{t^{-1} U(p^{n+1})^{\lozenge} t} \\
        &= \left\langle \opn{res}\big(\Xi_n(u_0\cdot F)), \mathcal{E}_{t^{-1}\cdot\Phi_{n+1}, t^{-1}U(p^{n+1})^{\lozenge}t} \right\rangle_{t^{-1} U(p^{n+1})^{\lozenge} t} \\
        &= \left\langle \Xi_n(u_0\cdot F), \opn{tr}\left( \mathcal{E}_{t^{-1}\cdot\Phi_{n+1}, t^{-1}U(p^{n+1})^{\lozenge}t} \right)\right\rangle_{U(p^n)^{\lozenge}} \\
        &= \left\langle \Xi_n(u_0\cdot F),  \mathcal{E}_{\opn{tr}(t^{-1}\cdot\Phi_{n+1}), U(p^{n})^{\lozenge}} \right\rangle_{U(p^n)^{\lozenge}}, 
    \end{align*}
    where the third equality follows from Proposition \ref{prop:preliminary map}(b), the fifth equality being \eqref{eq:Eis equiv} and the last again being \eqref{eq:Eis trace}. Now, a set of representatives of $U(p^n)^{\lozenge}/t^{-1}U(p^{n+1})^{\lozenge}t$ is $\{v_{b,c} \times 1 :  b \in \Z/p^2 \Z, c \in \Z/p\Z\}$, for $v_{b,c}$ as in \eqref{eq:ubc}. We calculate that
    \begin{align*}
    \opn{tr}\left(t^{-1}\cdot \Phi_{p,n+1}\right) = \sum_{b, c} v_{-b,-c}\smat{p^{-1} & \\ & 1} \cdot \Phi_{p,n+1} &= \sum_{b, c} v_{-b,-c} \opn{ch}(p^{n+2}\Z_p, 1+p^{n+1}\Z_p)\\
    &= \sum_{b, c} \opn{ch}( p^n(b+p^2\Z_p), 1+p^n(c+p\Z_p)) = \Phi_{p, n}.
    \end{align*}
    Plugging this in, we find
    \[
\widetilde{\Xi}_\infty(t\cdot F) = \left\langle \Xi_n(u_0\cdot F), \mathcal{E}_{\Phi_n, U(p^n)^{\lozenge}} \right\rangle_{U(p^n)^{\lozenge}} = \widetilde{\Xi}_n(F) = \widetilde{\Xi}_{\infty}(F),
    \]
    as required.
\end{proof}

The following is the main definition of this subsection.

\begin{definition}
    Define the \emph{big evaluation map}
    \[
    \opn{Ev}^{\opn{big}}_R \colon \opn{H}^1(G(\Q), \mathcal{A}_{G, c}(\opn{St}_2(R)^*)^{p\opn{-sm}}) \to R
    \]
    as $\opn{Ev}^{\opn{big}}_R([c]) = \widetilde{\Xi}_{\infty}( c[t])$, where $c$ is any choice of cocycle representing $[c]$. 
\end{definition}

\begin{lemma}
The map $\opn{Ev}^{\opn{big}}_R([c])$ is well-defined (i.e., independent of the choice of cocycle $c$).
\end{lemma}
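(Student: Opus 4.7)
The plan is to reduce well-definedness to the $t^{\mathbb{Z}}$-equivariance of $\widetilde{\Xi}_\infty$ established in Lemma \ref{lem:t equiv}. Let $c, c'$ be two $1$-cocycles on $G(\mathbb{Q})$ with values in $\mathcal{A}_{G,c}(\operatorname{St}_2(R)^*)^{p\text{-sm}}$ representing the same cohomology class $[c]$. Then there exists an element $F \in \mathcal{A}_{G,c}(\operatorname{St}_2(R)^*)^{p\text{-sm}}$ such that for every $g \in G(\mathbb{Q})$,
\[
c'[g] - c[g] = g \cdot F - F.
\]
In particular, since $t \in G(\mathbb{Q})$, specialising at $g = t$ gives
\[
c'[t] - c[t] = t \cdot F - F.
\]

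The key point is that $F$, being in the $p$-smooth subspace, is fixed by $\widehat{G}_n$ for some $n \geq 1$, so $\widetilde{\Xi}_\infty$ is defined on $F$ (and on $t\cdot F$, noting the $p$-smooth subspace is stable under the $G(\mathbb{Q})$-action). By the $\mathbb{Z}$-linearity of $\widetilde{\Xi}_\infty$ (which is immediate from the construction, since both $\Xi_n$ and the Poincar\'e pairing are $R$-linear in the relevant arguments), we compute
\[
\widetilde{\Xi}_\infty(c'[t]) - \widetilde{\Xi}_\infty(c[t]) = \widetilde{\Xi}_\infty(t \cdot F - F) = \widetilde{\Xi}_\infty(t\cdot F) - \widetilde{\Xi}_\infty(F).
\]

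Finally, Lemma \ref{lem:t equiv} asserts precisely that $\widetilde{\Xi}_\infty$ is $t^{\mathbb{Z}}$-equivariant with trivial action on the target $R$, so $\widetilde{\Xi}_\infty(t \cdot F) = \widetilde{\Xi}_\infty(F)$, and hence the difference vanishes. This shows $\widetilde{\Xi}_\infty(c'[t]) = \widetilde{\Xi}_\infty(c[t])$, so the value only depends on $[c]$, as required. There is no serious obstacle here: the result is essentially a formal consequence of Lemma \ref{lem:t equiv}, which already packages the geometric content (the compatibility of $\Xi_n$ with the action of $t$ in Proposition \ref{prop:preliminary map}(b), combined with trace-compatibility of the Eisenstein classes).
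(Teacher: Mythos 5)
Your proof is correct and follows exactly the paper's argument: both reduce to the observation that the difference of two cocycles representing the same class is a coboundary $g \mapsto g\cdot F - F$, specialise at $g = t$, and invoke Lemma \ref{lem:t equiv} to kill the resulting term. You simply spell out the linearity step explicitly, which the paper leaves implicit.
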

\begin{proof}
Any other choice of cocycle $c'$ differs by a coboundary $b[g] = g\cdot F - F$ for some $F \in \cA_{G,c}(\opn{St}_2(R)^*)^{p\opn{-sm}}$. As $\widetilde{\Xi}_\infty(b[t]) = 0$ by Lemma \ref{lem:t equiv}, we see $\widetilde{\Xi}_\infty(c[t]) = \widetilde{\Xi}_\infty(c'[t])$, as required.
\end{proof}

\subsection{Abstract $\mathcal{L}$-invariants and the exceptional zero formula}\label{sec:abstract L-invariants}

We now place ourselves in one of the situations described in Notation \ref{BulletRLambdaNotation}. We will often drop ``$\opn{sm}$'' from the notation. Recall the cocycle $c_{1,\lambda}^\bullet \in Z^1(G(\Qp), \opn{St}_1^\bullet(R))$ from \eqref{eq:c cocycle}, where $\lambda \in \opn{Hom}_{\bullet}(\Q_p^{\times}, R)$. Define a map
\[
\Upsilon^{\bullet}_{\lambda} \colon \opn{H}^0(G(\Q), \mathcal{A}_{G, c}(\opn{St}^{\bullet}(R)^*)) \longrightarrow Z^1(G(\Q), \mathcal{A}_{G, c}(\opn{St}_2(R)^*))
\]
by
\[
\Upsilon_{\lambda}^\bullet(\mu)[z] \defeq \left( (d, g_{\infty}, g^p, \phi) \mapsto \mu\left(d, g_{\infty}, g^p, \opn{pr}\Big(c_{1,\lambda}^\bullet[z] \otimes \phi\Big)\right)  \right)
\]
where $z \in G(\Q)$, $d \in D_G$, $g_{\infty} \in \pi_0(G(\mbb{R}))$, $g^p \in G(\mbb{A}_f^p)$, $\phi \in \opn{St}_2(R)$.

\begin{remark}\label{rem:Delta cup product}
   We can alternatively describe $\Upsilon^\bullet_{\lambda}$ as follows. Recall the cocycle
\[
\Delta^{\bullet}_{1,\lambda} \in Z^1(G(\Q_p), \opn{Hom}_R(\opn{St}_2(R), \opn{St}^{\bullet}(R)))
\]
from Remark \ref{ExplicitDeltaCocycleRemark}. From the definitions, we see $\Upsilon^{\bullet}_{\lambda}(\mu)$ is the cup product $\mu \cup \Delta^{\bullet}_{1,\lambda}|_{G(\Q)}$ composed with the natural map $\mathcal{A}_{G, c}(\opn{St}^{\bullet}(R)^*) \otimes \opn{Hom}_R(\opn{St}_2(R), \opn{St}^{\bullet}(R)) \to \mathcal{A}_{G, c}(\opn{St}_2(R)^*)$. 

Later, in \S\ref{sec:define L-invariants}, because of this description and in line with our wider notation, we will denote this map $\Upsilon_{1,\lambda}^\bullet$. To ease notation, in this section we will continue to omit the subscript 1.
\end{remark}

\begin{lemma}\label{lem:smooth lands in smooth}
    For $\bullet = \opn{sm}$, the map $\Upsilon^{\opn{sm}}_{\lambda}$ lands in $Z^1(G(\Q), \mathcal{A}_{G, c}(\opn{St}_2(R)^*)^{p\opn{-sm}})$.
\end{lemma}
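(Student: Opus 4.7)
The plan is as follows. Since $\cA_{G,c}(\opn{St}_2(R)^*)^{p\opn{-sm}} = \bigcup_{n\geq 1}\cA_{G,c}(\opn{St}_2(R)^*)^{\widehat{G}_n}$, it suffices to show that for each $z \in G(\Q)$ there exists an integer $n_z \geq 1$ with $\Upsilon^{\opn{sm}}_\lambda(\mu)[z]$ fixed by $\widehat{G}_{n_z}$. I would break the argument into two steps: first, establish smoothness of the cocycle value $c_{1,\lambda}^{\opn{sm}}[z]$; second, transfer this smoothness to $\Upsilon^{\opn{sm}}_\lambda(\mu)[z]$ using the $G(\Q)$-invariance of $\mu$.

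The key input is that, since $\bullet = \opn{sm}$, the element $c_{1,\lambda}^{\opn{sm}}[z] \in \opn{St}_1(R)$ is represented by a locally constant function on $\overline{P}_1(\Q_p)\backslash G(\Q_p)$. I would verify this directly from the cocycle formula \eqref{eq:c cocycle}: the section $s$ of Lemma \ref{lem:g = pwk} is continuous, and since $\lambda$ is smooth the character $\lambda \circ v_1 \colon \overline{P}_1(\Q_p) \to R$ is locally constant, so the composite $\lambda \circ v_1 \circ s$ and its translate by $z$ are both locally constant on $G(\Q_p)$, whence their difference is too. (Alternatively, this can be read off from the right-hand square of diagram \eqref{FirstExtDiagramEqn}: smooth cocycles arise by reducing continuous ones modulo $\varpi^s$, which on the compact space $\overline{P}_1(\Q_p)\backslash G(\Q_p)$ automatically produces locally constant functions.) Consequently $c_{1,\lambda}^{\opn{sm}}[z]$ is fixed by some principal congruence subgroup $G_{n_z} \defeq \opn{ker}(G(\Z_p) \to G(\Z/p^{n_z}\Z))$; and since the natural embedding $G(\Z_{(p)}) \hookrightarrow G(\Q_p)$ sends $\widehat{G}_{n_z}$ into $G_{n_z}$, every $\gamma \in \widehat{G}_{n_z}$ fixes $c_{1,\lambda}^{\opn{sm}}[z]$.

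To conclude, I would perform a short formal calculation. Unwinding the $G(\Q)$-action on $\cA_{G,c}(\opn{St}_2(R)^*)$ -- which acts on the coefficient representation through the embedding $G(\Q) \subset G(\Q_p)$ -- and using the $G(\Q)$-invariance of $\mu$ together with the $G(\Q_p)$-equivariance of $\opn{pr}$, one finds that for $\gamma \in \widehat{G}_{n_z}$,
\[
(\gamma \cdot \Upsilon^{\opn{sm}}_\lambda(\mu)[z])(d, [g_\infty], g^p, \phi) = \mu\big(d, [g_\infty], g^p, \opn{pr}(\gamma \cdot c_{1,\lambda}^{\opn{sm}}[z] \otimes \phi)\big).
\]
The fixedness of $c_{1,\lambda}^{\opn{sm}}[z]$ under $\gamma$ established above then forces this to equal $\Upsilon^{\opn{sm}}_\lambda(\mu)[z](d, [g_\infty], g^p, \phi)$, yielding the required $\widehat{G}_{n_z}$-invariance.

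The main (and only mildly delicate) point in the argument is the verification that $c_{1,\lambda}^{\opn{sm}}[z]$ is genuinely a smooth vector of $\opn{St}_1(R)$; once this is in hand, the rest is a formal consequence of $G(\Q)$-invariance of $\mu$ and $G(\Q_p)$-equivariance of $\opn{pr}$. Note that $n_z$ is allowed to depend on $z$, which is harmless because $p\opn{-sm}$ denotes a union over all $n$.
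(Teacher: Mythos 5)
Your proposal is correct and follows essentially the same route as the paper: observe that $c_{1,\lambda}^{\opn{sm}}[z]$, being a vector in the smooth $G(\Q_p)$-module $\opn{St}_1^{\opn{sm}}(R)$, is fixed by $\widehat{G}_{n(z)}$ for $n(z)\gg 0$, then transfer this to $\Upsilon_\lambda^{\opn{sm}}(\mu)[z]$ via $G(\Q)$-invariance of $\mu$ and $G(\Q_p)$-equivariance of $\opn{pr}$. The paper simply asserts the smoothness input rather than spelling out the local constancy of $\lambda\circ v_1\circ s$ as you do, but that is only a difference in exposition, not in method.
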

\begin{proof}
As $\opn{St}^{\opn{sm}}_1(R)$ is a smooth $G(\Qp)$-module, $c_{1,\lambda}^\bullet[z]$ is fixed by $\widehat{G}_{n(z)}$ for some $n(z) \gg 0$ (depending on $z$). For $g \in \widehat{G}_{n(z)}$ we thus have
\begin{align*}
\Upsilon_\lambda^{\opn{sm}}(\mu)[z](gd,gg_\infty,gg^p,g\cdot \phi) &= \mu\left(gd, gg_\infty, gg^p, \opn{pr}\left(c_{1,\lambda}^\bullet[z] \otimes g\cdot \phi\right)\right)\\
&= \mu\left(gd, gg_\infty, gg^p, \opn{pr}\left(g\cdot c_{1,\lambda}^\bullet[z] \otimes g\cdot \phi\right)\right)\\
&=\mu\left(d, g_\infty, g^p, \opn{pr}\left(c_{1,\lambda}^\bullet[z] \otimes \phi\right)\right)= \Upsilon_\lambda^{\opn{sm}}(\mu)[z](d,g_\infty,g^p,\phi),
\end{align*}
where the penultimate equality is $G(\Q)$-invariance of $\mu$. In particular for every $\mu$ and $z$, $\Upsilon_\lambda^{\opn{sm}}(\mu)[z]$ is fixed by $\widehat{G}_{n(z)}$ for some $n(z) \gg 0$; so $\Upsilon_\lambda^{\opn{sm}}(\mu)[z] \in \cA_{G,c}(\opn{St}_2(R)^*)^{p\opn{-sm}}$.
\end{proof}

\begin{remark}
In particular, we may apply $\opn{Ev}^{\opn{big}}_R$ to the image of $\Upsilon_\lambda^{\opn{sm}}$. This lemma captures the fundamental necessity of developing the theory of $p$-smooth coefficients: the space $\cA_{G,c}(-)^{p\opn{-sm}}$ is an intermediary between $\cA_{G,c}(-)$ and $\cA_{G,c}(-)^{G(\Q)}$, that is large enough to capture the image of $\Upsilon_\lambda^{\opn{sm}}$, but small enough that we can construct evaluation maps on $\cA_{G,c}(-)^{p\opn{-sm}}$.
\end{remark}

We consider the following situation. Let $\mu^{\opn{cts}} \in \opn{H}^0\left( G(\Q), \mathcal{A}_{G, c}(\opn{St}^{\opn{cts}}(\mathcal{O}_L)^*) \right)$, and let 
\[
\mu_s \in \opn{H}^0\left( G(\Q), \mathcal{A}_{G, c}(\opn{St}^{\opn{sm}}(\mathcal{O}_L/\varpi^s)^*) \right), \quad \quad \mu^{\opn{la}} \in \opn{H}^0\left( G(\Q), \mathcal{A}_{G, c}(\opn{St}^{\opn{la}}(L)^*) \right),
\]
denote the images of $\mu^{\opn{cts}}$ under the natural maps to these cohomology groups (here $s \geq 1$ is any integer). Suppose there exists an element $\mathcal{L}_{\mu} \in L$ such that the image $[\Upsilon^{\opn{la}}_{\lambda}(\mu^{\opn{la}})]$ of $\Upsilon^{\opn{la}}_{\lambda}(\mu^{\opn{la}})$ in $\opn{H}^1\left( G(\Q), \mathcal{A}_{G, c}(\opn{St}_2(L)^*) \right)$ is zero, where $\lambda = \opn{log}_p - \mathcal{L}_{\mu} \opn{ord}_p$.

\begin{lemma} \label{UniformkwhichkillsLemma}
    There exists an integer $k \geq 1$ such that: $p^k \mathcal{L}_{\mu} \in \mathcal{O}_L$, and for all $s \geq 1$ the image of $\Upsilon^{\opn{sm}}_{p^k\lambda}(\mu_s)$ in $\opn{H}^1\left(G(\Q), \mathcal{A}_{G, c}(\opn{St}_2(\mathcal{O}_L/\varpi^s)^*)^{p\opn{-sm}}\right)$ is zero.
\end{lemma}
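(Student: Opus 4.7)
The plan is to lift everything from the locally analytic setting to the continuous integral setting, clear denominators using a torsion argument, then reduce modulo $\varpi^s$ and finally upgrade the resulting coboundary to a $p$-smooth one.

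First, choose $k_0 \geq 0$ such that $p^{k_0} \mathcal{L}_\mu \in \mathcal{O}_L$; then $\tilde\lambda := p^{k_0}\lambda \in \opn{Hom}_{\opn{cts}}(\Q_p^\times, \mathcal{O}_L)$. Writing $V := \opn{St}_2(\mathcal{O}_L)^*$, the integral cocycle
\[
c := \Upsilon^{\opn{cts}}_{\tilde\lambda}(\mu^{\opn{cts}}) \in Z^1\bigl(G(\Q),\ \mathcal{A}_{G,c}(V)\bigr)
\]
is well-defined. Passing cup products with $\mu^{\opn{cts}}$ through the diagram (\ref{SecondExtDiagramEqn}), the image of $c$ in $Z^1(G(\Q), \mathcal{A}_{G,c}(V \otimes L))$ equals $p^{k_0} \Upsilon^{\opn{la}}_{\lambda}(\mu^{\opn{la}})$ on the nose (with compatible choices of the cocycles $c^\bullet_{1,-}$ from \eqref{eq:c cocycle}). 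By hypothesis the latter is a coboundary, so $[c] = 0$ in $\opn{H}^1(G(\Q), \mathcal{A}_{G,c}(V \otimes L))$.

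The next step is a torsion argument. Consider the short exact sequence
\[
0 \to V \to V \otimes L \to V \otimes L/\mathcal{O}_L \to 0
\]
and the identification $V \otimes L/\mathcal{O}_L = \varinjlim_s V/\varpi^s V$. Since $\mathcal{A}_{G,c}(-)$ and $\opn{H}^0$ both commute with filtered colimits, the associated long exact sequence shows $[c]$ lifts from an element of $\opn{H}^0(G(\Q), \mathcal{A}_{G,c}(V/\varpi^{s_0}V))$ for some $s_0 \geq 1$; in particular $\varpi^{s_0}[c] = 0$ in $\opn{H}^1(G(\Q), \mathcal{A}_{G,c}(V))$, so $\varpi^{s_0} c = \partial F_0$ for an integral $F_0 \in \mathcal{A}_{G,c}(V)$. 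Moreover $F_0$ can be chosen to satisfy $F_0 \equiv \bar F \pmod{\varpi^{s_0}}$ for some $\bar F \in \mathcal{A}_{G,c}(V/\varpi^{s_0}V)^{G(\Q)}$.

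Now choose $k \geq k_0$ such that $u := p^{k-k_0}/\varpi^{s_0} \in \mathcal{O}_L$ (i.e.\ $e(k-k_0) \geq s_0$ where $e$ is the ramification index of $L/\Q_p$). Linearity of $\Upsilon^{\opn{cts}}$ in $\lambda$ gives
\[
\Upsilon^{\opn{cts}}_{p^k\lambda}(\mu^{\opn{cts}}) = p^{k-k_0}\, c = u \cdot \varpi^{s_0} c = \partial(u F_0),
\]
which is an integral coboundary. Reducing modulo $\varpi^s$ and applying the commutativity of (\ref{SecondExtDiagramEqn}) yields
\[
\Upsilon^{\opn{sm}}_{p^k\lambda}(\mu_s) = \partial \bar F_s, \qquad \bar F_s := u F_0 \pmod{\varpi^s} \in \mathcal{A}_{G,c}(V/\varpi^s V).
\]

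The final step -- showing $\bar F_s$ can be chosen in the $p$-smooth part -- is the main obstacle. When $s \leq s_0$ it is immediate, since $\bar F_s$ is then a reduction of $u \bar F$ and $\bar F$ is $G(\Q)$-invariant (hence \emph{a fortiori} $p$-smooth). For $s > s_0$ one argues as follows: the obstruction to $p$-smoothness lies in $\opn{H}^0(G(\Q), \mathcal{A}_{G,c}(V/\varpi^s V)/\mathcal{A}_{G,c}(V/\varpi^s V)^{p\text{-sm}})$, and since every $G(\Q)$-invariant element of $\mathcal{A}_{G,c}(V/\varpi^s V)$ is automatically $p$-smooth, controlling this obstruction reduces to lifting $\bar F \in \mathcal{A}_{G,c}(V/\varpi^{s_0}V)^{G(\Q)}$ to a $\widehat{G}_n$-invariant lift in $\mathcal{A}_{G,c}(V)$ for some $n$. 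Such a lift exists because the cocycle $c^{\opn{sm}}_{1,p^k\lambda}[\gamma]$ becomes trivial on a sufficiently deep congruence subgroup $\widehat{G}_n$ (as $c^{\opn{sm}}_{1,\mu}$ takes values in the smooth representation $\opn{St}^{\opn{sm}}_1(\mathcal{O}_L/\varpi^s)$), so we may correct $F_0$ by an element of $\mathcal{A}_{G,c}(V)$ to arrange $\widehat{G}_n$-invariance of the reduction modulo $\varpi^s$. The technical heart of the argument is to make this correction uniformly in $s$, which will be the key calculation.
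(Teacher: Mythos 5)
Your overall plan — pass to integral coefficients via diagram \eqref{SecondExtDiagramEqn}, clear denominators by a torsion argument, reduce modulo $\varpi^s$, then upgrade the coboundary to a $p$-smooth one — is the same as the paper's. However, two of the key steps have gaps.

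\textbf{The torsion step.} You try to bound the order of $[c]$ by writing $V\otimes L/\mathcal{O}_L$ as $\varinjlim_s V/\varpi^s V$ and claiming $\mathcal{A}_{G,c}(-)$ commutes with filtered colimits. It does not: $\mathcal{A}_{G,c}(M)$ is built from spaces of (all, not finitely supported) functions on the infinite set $\pi_0(G(\mathbb{R}))\times G(\mathbb{A}_f^p)/K^p$, i.e.\ a product, and products do not commute with filtered colimits. For example a function $G(\mathbb{A}_f^p)/K^p\to L/\mathcal{O}_L$ need not be killed by any fixed $\varpi^{s_0}$. So the lift $\alpha\in\opn{H}^0(G(\mathbb{Q}),\mathcal{A}_{G,c}(V\otimes L/\mathcal{O}_L))$ of $[c]$ has no a priori bounded torsion and you cannot conclude $\varpi^{s_0}[c]=0$. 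The paper instead invokes that $\opn{St}_2(\mathcal{O}_L)$ is \emph{flawless}, so $\opn{H}^1(G(\mathbb{Q}),\mathcal{A}_{G,c}(\opn{St}_2(\mathcal{O}_L)^*))$ is a finitely generated $\mathcal{O}_L$-module (Gehrmann, Prop.\ 3.2(a)); hence the torsion element $[\Upsilon^{\opn{cts}}_{p^{k_0}\lambda}(\mu^{\opn{cts}})]$ is killed by a fixed $p^{k'}$. This finiteness input is the crucial ingredient missing from your argument, and your filtered-colimit route does not substitute for it.

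\textbf{The $p$-smoothness step.} You outline a correction argument in which you modify $F_0$ to make its reductions $\widehat{G}_n$-invariant, and you yourself note the uniformity in $s$ "will be the key calculation" — i.e.\ the step is left open. The correct observation is simpler: the cocycle $\Upsilon^{\opn{sm}}_{p^k\lambda}(\mu_s)$ is $\mu_s\cup\Delta^{\opn{sm}}_{1,p^k\lambda}|_{G(\mathbb{Q})}$, where $\mu_s$ is fixed by all of $G(\mathbb{Q})$ and $\Delta^{\opn{sm}}_{1,p^k\lambda}$ is a continuous cocycle on $G(\mathbb{Q}_p)$ valued in a finite module, hence vanishes on an open neighbourhood of the identity. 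Its restriction to $G(\mathbb{Q})$ therefore vanishes on $\widehat{G}_n$ for $n\gg0$, so $\Upsilon^{\opn{sm}}_{p^k\lambda}(\mu_s)[z]=z\cdot m-m=0$ for $z\in\widehat{G}_n$. Thus \emph{any} choice of coboundary element $m$ is automatically $\widehat{G}_n$-invariant, i.e.\ $p$-smooth — no correction to $F_0$ is needed at all.
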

\begin{proof}
    Let $k_0 \geq 1$ be such that $p^{k_0}\mathcal{L}_{\mu} \in \mathcal{O}_L$, so that $p^{k_0}\lambda \in \opn{Hom}_{\opn{cts}}(\Qp^\times,\cO_L)$. By the commutative diagram  \eqref{SecondExtDiagramEqn}, combined with Remarks \ref{ExplicitDeltaCocycleRemark} and \ref{rem:Delta cup product}, for any $r \geq k_0$ the diagram
\[
\begin{tikzcd}
\mu_s & \h^0(G(\Q),\cA_{G,c}(\opn{St}^{\opn{sm}}(\cO_L/\varpi^s)^*)) \arrow[rr, "\Upsilon_{p^{r}\lambda}^{\opn{sm}}"]  && \h^1(G(\Q), \mathcal{A}_{G, c}(\opn{St}_2(\cO_L/\varpi^s)^*)) \\
\mu^{\opn{cts}}\ar[u, mapsto]\arrow[dd,mapsto] & \h^0(G(\Q),\cA_{G,c}(\opn{St}^{\opn{cts}}(\cO_L)^*)) \arrow[u, "-\otimes \cO_L/\varpi^s"']\arrow[d, "-\otimes L"]\arrow[rr, "\Upsilon_{p^{r}\lambda}^{\opn{cts}}"]  && \h^1(G(\Q), \mathcal{A}_{G, c}(\opn{St}_2(\cO_L)^*)) \arrow[d, "- \otimes L"]\arrow[u, "-\otimes \cO_L/\varpi^s"'] \\
&\h^0(G(\Q),\cA_{G,c}(\opn{St}^{\opn{cts}}(L)^*))  \arrow[d]\arrow[rr, "\Upsilon_{p^{r}\lambda}^{\opn{cts}}"]&& \h^1(G(\Q), \mathcal{A}_{G, c}(\opn{St}_2(L)^*)) \\
\mu^{\opn{la}} & \h^0(G(\Q),\cA_{G,c}(\opn{St}^{\opn{la}}(L)^*)) \arrow[rru, "\Upsilon_{p^{r}\lambda}^{\opn{la}}", '] &&                                                                       
\end{tikzcd}
\]
commutes. (Here we compose $\Upsilon_{p^r\lambda}^\bullet$ with the quotient map $[-] : Z^1 \to \h^1$). 

 First take $r = k_0$. By assumption, the image of $\mu^{\opn{la}}$ along the bottom diagonal arrow is zero, and it follows that $[\Upsilon^{\opn{cts}}_{p^{k_0}\lambda}(\mu^{\opn{cts}})]$ is zero after tensoring with $L$; in particular, it is torsion in the cohomology group $\opn{H}^1\left( G(\Q), \mathcal{A}_{G, c}(\opn{St}_2(\mathcal{O}_L)^*) \right)$. Since the module $\opn{St}_2(\mathcal{O}_L)$ is flawless (see \cite[Definition 2.1, Theorem 2.6]{AutomorphicLinvariants}), this cohomology group is a finitely-generated $\mathcal{O}_L$-module (\cite[Proposition 3.2(a)]{AutomorphicLinvariants}). This means that there exists an integer $k' \geq 1$ such that $p^{k'}$ kills $[\Upsilon^{\opn{cts}}_{p^{k_0}\lambda}(\mu^{\opn{cts}})]$. 
 
 Now set $k = k_0+k'$. By the above $[\Upsilon_{p^k\lambda}^{\opn{cts}}(\mu^{\opn{cts}})] = 0$. Using the diagram above with $r = k$, we see that for any $s \geq 1$, we have
 \begin{equation}\label{eq:0 not smooth}
    0 = [\Upsilon^{\opn{sm}}_{p^k\lambda}(\mu_s)] \in \opn{H}^1\left( G(\Q), \mathcal{A}_{G, c}(\opn{St}_2(\mathcal{O}_L/\varpi^s)^*) \right).
\end{equation}
It remains to show that it is zero in the cohomology with $p$-smooth coefficients.

By \eqref{eq:0 not smooth} there exists $m\in\cA_{G,c}(\opn{St}_2(\cO_L/\varpi^s)^*))$ such that
\[
\Upsilon^{\opn{sm}}_{p^k\lambda}(\mu_s)[z] = z\cdot m - m.
\]
We must show $m$ is $p$-smooth, i.e., there exists $n \gg 0$ such that $\widehat{G}_n$ fixes $m$, as then $\Upsilon^{\opn{sm}}_{p^k\lambda}(\mu_s)$ is a coboundary in $Z^1(G(\Q),\cA_{G,c}(\opn{St}_2(\cO_L/\varpi^s)^*)^{p\opn{-sm}})$. For this, note that $\mu_s$ is fixed by all of $G(\Q)$ by definition. To define $\Upsilon^{\opn{sm}}_{p^k\lambda}(\mu_s)$, we cup with $\Delta_{1,p^k\lambda}^{\opn{sm}} \in Z^1(G(\Qp), \opn{Hom}_R(\opn{St}_2(\cO_L/\varpi^s),\opn{St}(\cO_L/\varpi^s)))$ (after restriction to $G(\Q)$). As $\Delta_{1,p^k\lambda}^{\opn{sm}}$ is defined on $G(\Qp)$, and we are working with coefficients in finite characteristic, by \cite[\S 3]{SmoothParts} it is continuous. In particular it vanishes on some open neighbourhood of the identity; so its restriction to $G(\Q)$ must vanish on $\widehat{G}_n$ for $n\gg 0$. It follows that $\mu_s\cup \Delta_{1,p^k\lambda}^{\opn{sm}}|_{G(\Q)}$ vanishes on $\widehat{G}_n$, so $\Upsilon_{p^k\lambda}^{\opn{sm}}(\mu_s)$ does too. Thus $m$ is $p$-smooth and we are done.
\end{proof}

The main result of this section is the following:

\begin{theorem} \label{AbstractLinvTheorem}
     Let $\mu^{\opn{cts}}$ be any element of $\h^0(G(\Q), \cA_{G,c}(\opn{St}^{\opn{cts}}(\cO_L)^*))$, and let $\mu$ (resp.\ $\mu^{\opn{la}}$) be its image in $\h^0(G(\Q),\cA_{G,c}(\opn{St}^{\opn{sm}}(\cO_L)^*))$ (resp.\ $\h^0(G(\Q), \cA_{G,c}(\opn{St}^{\opn{la}}(\cO_L)^*))$. Let $\sL(\mu)$ (resp. $\Pmu$) denote the measure (resp. period) associated with $\mu$ as in Proposition \ref{prop:measure} (resp.\ Definition \ref{OrdPeriodDefinition}).
     
     Let $\mathcal{L}_\mu \in L$ be any element such that $[\Upsilon_{\log_p - \mathcal{L}_\mu\opn{ord}_p}^{\opn{la}}(\mu^{\opn{la}})] = 0$. Then in $L$, we have an equality
    \[
    \mathscr{L}(\mu,\opn{log}_p) = \mathcal{L}_{\mu} \cdot \Pmu.
    \]
\end{theorem}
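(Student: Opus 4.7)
The plan is to realise both $\mathscr{L}(\mu,\log_p)$ and $\mathcal{P}(\mu)$ as (constant multiples of) values of the big evaluation map $\opn{Ev}^{\opn{big}}_{\mathcal{O}_L/\varpi^s}$ on the cohomology classes $[\Upsilon^{\opn{sm}}_{\lambda}(\mu_s)]$ for the two smooth characters $\lambda = \log_p \bmod \varpi^s$ and $\lambda = \opn{ord}_p$, for every integer $s \geq 1$. Bilinearity of $\Upsilon_{-}^{\opn{sm}}$, together with Lemma \ref{UniformkwhichkillsLemma} applied to the character $\log_p - \mathcal{L}_\mu\opn{ord}_p$, will then immediately yield the desired equality modulo $\varpi^s$ for every $s$, and hence in $L$.

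The key inputs are driven by Proposition \ref{prop:Clambda1LuxembourgProp}, which explicitly describes $c^{\opn{cts}}_{1,\lambda}[t]$ for any continuous $\lambda$. For $\lambda = \log_p$, since $\log_p(p)=0$, multiplying by $\phi_n$ (which restricts to $y \in p^n\mbb{Z}_p$) kills all but the middle case of Proposition \ref{prop:Clambda1LuxembourgProp}, leaving the function $\varphi_{p^n\mbb{Z}_p,\mbb{Z}_p^\times,-2\log_p}$ of Definition \ref{def:psi and varphi}; applying $u_0$ and Lemma \ref{lem:psi and varphi} yields $u_0\cdot\opn{pr}(c^{\opn{cts}}_{1,\log_p}[t]\otimes \phi_n) = 2\psi_{\log_p}$. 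For $\lambda = \opn{ord}_p$, only the first case of Proposition \ref{prop:Clambda1LuxembourgProp} contributes, and a parallel computation yields $u_0\cdot\opn{pr}(c^{\opn{cts}}_{1,\opn{ord}_p}[t]\otimes \phi_n) = -2u_0\cdot \xi_n$, with $\xi_n$ as in Lemma \ref{lem:period vs evaluation}. Reducing modulo $\varpi^s$ via Lemma \ref{lem:steinberg base change} and combining these with Proposition \ref{prop:measure} (noting that $\log_p(-1)=0$, so the involution $f \mapsto f^\iota$ acts trivially on $\log_p$), Lemma \ref{lem:period vs evaluation}, and Lemma \ref{lem:classical vs big} (which converts $\widetilde{\Psi}_n$ into $\widetilde{\Xi}_\infty$ applied to the cocycles $\Upsilon^{\opn{sm}}_{\lambda}(\mu_s)[t]$), we will deduce
\[
\opn{Ev}^{\opn{big}}_{\mathcal{O}_L/\varpi^s}([\Upsilon^{\opn{sm}}_{\log_p}(\mu_s)]) \equiv -2\mathscr{L}(\mu,\log_p), \qquad \opn{Ev}^{\opn{big}}_{\mathcal{O}_L/\varpi^s}([\Upsilon^{\opn{sm}}_{\opn{ord}_p}(\mu_s)]) \equiv -2\mathcal{P}(\mu) \pmod{\varpi^s}.
\]

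Finally, setting $\lambda = \log_p - \mathcal{L}_\mu\opn{ord}_p$ and invoking Lemma \ref{UniformkwhichkillsLemma} to produce an integer $k \geq 1$ with $p^k\mathcal{L}_\mu \in \mathcal{O}_L$ and $[\Upsilon^{\opn{sm}}_{p^k\lambda}(\mu_s)] = 0$ in $p$-smooth cohomology for every $s$, linearity in $\lambda$ then gives
\[
0 = \opn{Ev}^{\opn{big}}_{\mathcal{O}_L/\varpi^s}([\Upsilon^{\opn{sm}}_{p^k\lambda}(\mu_s)]) \equiv -2p^k\bigl(\mathscr{L}(\mu,\log_p) - \mathcal{L}_\mu \mathcal{P}(\mu)\bigr) \pmod{\varpi^s}
\]
for all $s$, whence $\mathscr{L}(\mu,\log_p) = \mathcal{L}_\mu\mathcal{P}(\mu)$ in $L$. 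The main obstacle is the explicit computation described in the second paragraph: one must carefully reconcile the several overlapping conventions for $\psi_{\cA,\cB,f}$, $\varphi_{\cA,\cB,f}$, $\xi_n$, $\phi_n$, the $f \mapsto f^\iota$ involution, and the left-action by $u_0$, while tracking signs through Lemma \ref{lem:psi and varphi} --- but the calculation is essentially rigid, and crucially the measure and the period both acquire the \emph{same} overall scalar $-2$, which cancels to leave the $\mathcal{L}$-invariant appearing cleanly.
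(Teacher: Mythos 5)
Your proof follows the paper's argument exactly: reduce modulo $\varpi^s$, realise $\mathscr{L}(\mu,\log_p)$ and $\mathcal{P}(\mu)$ as $-\tfrac12 \opn{Ev}^{\opn{big}}_{\mathcal{O}_L/\varpi^s}$ evaluated on $[\Upsilon^{\opn{sm}}_{\log_p}(\mu_s)]$ and $[\Upsilon^{\opn{sm}}_{\opn{ord}_p}(\mu_s)]$ via the explicit cocycle computation (Proposition \ref{prop:Clambda1LuxembourgProp}) together with Lemmas \ref{lem:psi and varphi}, \ref{lem:classical vs big}, \ref{lem:period vs evaluation} and Proposition \ref{prop:measure}, then invoke Lemma \ref{UniformkwhichkillsLemma} and bilinearity to obtain the identity mod $\varpi^s$ for all $s$. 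This is the same decomposition, the same key lemmas, and the same bookkeeping (including the observation $\log_p^\iota=\log_p$ and the common scalar $-2$), so it is not a genuinely different route.
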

\begin{proof}
    Note $\log_p$ is not a smooth function on $\Zp^\times$; however, its reduction modulo $\varpi^s$ is smooth for all $s \geq 1$. In particular for fixed $s$, $\log_p \newmod{\varpi^s}$ factors through $(\Z/p^n\Z)^\times$ for sufficiently large $n$, and then from the definitions
    \begin{equation}\label{eq:measure mod s}
        \sL(\mu,\log_p) \newmod{\varpi^s} = \widetilde{\Psi}_n\left(\mu_s, -\psi_{\log_{p}^{\iota}}\right) = \widetilde{\Psi}_n\left(\mu_s, -\psi_{\log_{p}}\right).
    \end{equation}
    where we have used the fact that $\opn{log}_p^{\iota} = \opn{log}_p$. As such, we will first prove the formula modulo $\varpi^s$, using the machinery developed above. 

    Let $k \geq 1$ be as in Lemma \ref{UniformkwhichkillsLemma}, and set $\lambda = \opn{log}_p - \mathcal{L}_{\mu} \opn{ord}_p$. Fix $s\geq 1$ and take $R = \mathcal{O}_L/\varpi^s$. By that lemma, we have $\opn{Ev}_{\mathcal{O}_L/\varpi^s}^{\opn{big}}\left( \Upsilon^{\opn{sm}}_{p^k\lambda}(\mu_s) \right) = 0$, hence by linearity, we see that
    \begin{equation}\label{eq:evaluation equality}
    p^k\opn{Ev}_{\mathcal{O}_L/\varpi^s}^{\opn{big}}\left( \Upsilon^{\opn{sm}}_{\opn{log}_p}(\mu_s) \right) = p^k \mathcal{L}_{\mu} \cdot \opn{Ev}_{\mathcal{O}_L/\varpi^s}^{\opn{big}}\left( \Upsilon^{\opn{sm}}_{\opn{ord}_p}(\mu_s) \right) .
    \end{equation}
    Then for $n \gg 1$, we see that (by Lemma \ref{lem:classical vs big})
    \begin{align*}
    &\opn{Ev}_{\mathcal{O}_L/\varpi^s}^{\opn{big}}\left( \Upsilon^{\opn{sm}}_{\opn{log}_p}(\mu_s) \right) = \widetilde{\Xi}_n( \Upsilon^{\opn{sm}}_{\opn{log}_p}(\mu_s)[t] ) = \widetilde{\Psi}_n\Big(\mu_s,\ u_0 \cdot \opn{pr}(c_{1,\opn{log}_p}^{\opn{sm}}[t] \otimes \phi_n  )\Big) \\
    &\opn{Ev}_{\mathcal{O}_L/\varpi^s}^{\opn{big}}\left( \Upsilon^{\opn{sm}}_{\opn{ord}_p}(\mu_s) \right) = \widetilde{\Xi}_n( \Upsilon^{\opn{sm}}_{\opn{ord}_p}(\mu_s)[t] ) =  \widetilde{\Psi}_n\Big(\mu_s, \ u_0 \cdot \opn{pr}(c_{1,\opn{ord}_p}^{\opn{sm}}[t] \otimes \phi_n ) \Big) .
    \end{align*}
    Using Proposition \ref{prop:Clambda1LuxembourgProp}, we see that for $n \gg 0$, we have
    \begin{align*}
        \opn{pr}(c_{1,\opn{log}_p}^{\opn{sm}}[t] \otimes \phi_n ) &= -2 \varphi_{p^n\Zp, \Zp^\times. \opn{log}_p}, \\
        \opn{pr}(c_{1,\opn{ord}_p}^{\opn{sm}}[t] \otimes \phi_n ) &= - 2 \varphi_{p^n\Zp,p\Zp,\mathbbm{1}} = -2\xi_n.
    \end{align*}
    In particular, by Lemma \ref{lem:psi and varphi}, we see that in $\opn{St}(\cO_L/\varpi^s)$, we have
    \begin{align*}
        u_0 \cdot \opn{pr}(c_{1,\opn{log}_p}^{\opn{sm}}[t] \otimes \phi_n ) &= 2 \psi_{p^n\Zp,\Zp^\times,\opn{log}_{p}} = 2\psi_{\log_p}.
    \end{align*}
    For sufficiently large $n$, by bilinearity we thus have
    \begin{align*}
        \opn{Ev}_{\cO_L/\varpi^s}^{\opn{big}}\Big(\Upsilon_{\log_p}^{\opn{sm}}(\mu_s)\Big) &= -2\widetilde{\Psi}_n(\mu_s, -\psi_{\log_p})
        \equiv -2\mathscr{L}(\mu,\log_p) \newmod{\varpi^s},\\
         \opn{Ev}_{\cO_L/\varpi^s}^{\opn{big}}\Big(\Upsilon_{\opn{ord}_p}^{\opn{sm}}(\mu_s)\Big)&= -2\widetilde{\Psi}_n(\mu_s, u_0\cdot \xi_n)
        \equiv -2\Pmu \newmod{\varpi^s},
    \end{align*}
    where the top line is \eqref{eq:measure mod s}, and the bottom follows from reducing Lemma \ref{lem:period vs evaluation} modulo $\varpi^s$. Combining with \eqref{eq:evaluation equality} thus gives
    \[
    -2 p^k \sL(\mu,\opn{log}_p) \equiv -2 p^k \mathcal{L}_{\mu} \cdot \Pmu \pmod{\varpi^s}.
    \]
   As this holds for all $s \geq 1$, we deduce $-2p^k\sL(\mu,\log_p) = -2p^k\mathcal{L}_\mu\cdot \Pmu$ in $\cO_L$, hence conclude.
\end{proof}

\begin{remark}
Attached to $\sL(\mu)$ is an analytic function $L(\mu,-):  \Zp \to \cO_L$ defined by $L(\mu,s) \defeq \int_{\Zp^\times} \langle x \rangle^s \ d\sL(\mu,x)$, where $\langle - \rangle \colon \mbb{Z}_p^{\times} \to 1+ 2p\mbb{Z}_p$ is the natural map. Then $\sL(\mu,\log_p) = \tfrac{d}{ds}L(\mu,s)|_{s=0}$.
\end{remark}

\section{\texorpdfstring{$p$}{p}-adic \texorpdfstring{$L$}{L}-functions and \texorpdfstring{$\mathcal{L}$}{L}-invariants for GL(3)}\label{sec:recall L-invariants}

The theory developed so far has been for abstract $p$-arithmetic cohomology. We now introduce our specific automorphic representations $\pi$ of interest, recall the $p$-adic $L$-functions of $\pi$ and describe their exceptional zeros, recap the construction of the two $\mathcal{L}$-invariants for $\pi$, and finally prove a duality formula for these $\mathcal{L}$-invariants.

\subsection{Automorphic representations} \label{AutoRepSubSec}

Let $\pi$ be a unitary regular algebraic cuspidal automorphic representation of $G(\mbb{A})$, which has central character $\widehat{\omega}_{\pi}$ for some Dirichlet character $\omega_{\pi}$. Let $\pi = \bigotimes'_v \pi_v$ denote its restricted tensor product decomposition over all places of $\Q$.

\begin{assumption} \label{SteinbergAssumptionOnpi}
    We assume that the weight of $\pi$ is trivial and that $\pi_p$ is Steinberg, i.e., there exists a $G(\Q_p)$-equivariant isomorphism $\pi_p \cong \opn{St}^{\opn{sm}}(\mbb{C})$ (which we implicitly fix for the rest of the article). In particular, this implies that $\omega_{\pi}$ is finite-order of prime-to-$p$ conductor. We moreover assume $\omega_\pi(-1) = 1$ (else $L(\pi,s)$ has no critical values).
\end{assumption}

Let $L(\pi, s)$ denote the standard $L$-function associated with $\pi$; by our assumptions, the critical values of this $L$-function are at $s=0$ and $s=1$.

Finally, let $\psi \colon \mbb{A} \to \mbb{C}^{\times}$ denote the standard additive character, and let $\mathcal{W}_{\psi}(\pi)$ denote the Whittaker model associated with $\psi$ and $\pi$. This decomposes as $\mathcal{W}_{\psi}(\pi) = \bigotimes_v' \mathcal{W}_{\psi_v}(\pi_v)$.

\subsection{The $p$-adic $L$-functions of $\pi$} \label{sec:p-adic l-functions}

We recap the results of \cite{LW21}, which construct $p$-adic $L$-functions for $\pi$. Note that $\opn{St}^{\opn{sm}}(\C)$ is $B$-ordinary, hence both $P_1$-ordinary and $P_2$-ordinary; see \cite[Example 2.13]{LW21}. Let
\begin{align*}
\opn{Crit}^-_p(\pi) &= \{(\eta,0) : \eta\text{ even Dirichlet character of $p$-power conductor}\},\\
\opn{Crit}^+_p(\pi) &= \{(\eta,1) : \eta\text{ even Dirichlet character of $p$-power conductor}\}.
\end{align*}
Let $e_{\infty}^-(\pi_\infty, s) = e_{\infty}^-(\pi_\infty\times\eta_\infty, s)$ and $e_p^-(\pi_p \times \eta_p, s)$ denote the (non-zero) modified $L$-factors at $\infty$ and $p$ for $(\eta,0) \in \opn{Crit}^-_p(\pi)$, as in \cite[Definitions 2.6, 2.16]{LW21}, where $\eta$ is an even Dirichlet character of $p$-power conductor. For $\eta = 1$, similar to Example 2.21 \emph{op.\ cit}.\ (noting $\alpha_p$ there is $p^{-1}$) we have 
\[
e_p^-(\pi_p, s) = (1-p^{s}) \cdot (1-p^{-1-s})^{-1} = (1-p^{s}) \cdot L(\pi_p, s) .
\]
Also let $e_\infty^+(\pi_\infty,s) = e_\infty^+(\pi_\infty\times\eta_\infty,s)$ and $e_p^+(\pi_p\times\eta_p,s)$ be the analogues for the right-half; for $\eta = 1$ one has
\[
e_p^+(\pi_p,s) = \varepsilon(\sigma_p,s)^{-1}\cdot L(\sigma_p, 1-s)^{-1} \cdot L(\sigma_p,s) = - p^s (1-p^{s-1})\cdot L(\pi_p,s),
\]
where $\sigma_p$ is the twist of the Steinberg representation for $\opn{GL}_2(\mbb{Q}_p)$ by $|\cdot|^{1/2}$. Here we are using the fact that, if $\opn{St}_{\opn{GL}_d}(\mbb{C})$ denotes the Steinberg representation of $\opn{GL}_d(\mbb{Q}_p)$, then 
\begin{equation}\label{eq:steinberg epsilon}
\varepsilon(\opn{St}_{\opn{GL}_d}(\mbb{C}), s) = (-1)^{d-1}p^{(d-1)(1/2-s)}
\end{equation}
with respect to $\psi$ and the standard choice of Haar measure (see \cite[(3.2.3), (4.3.4)]{WedLLC}). 

As $\pi_p$ is self-dual, we have a `functional equation' $e^+(\pi_p,1-s) = \gamma(\pi_p,s) \cdot e^-(\pi_p,s).$ Here $\varepsilon(-)$ and $\gamma(-)$ are the usual $\varepsilon$- and $\gamma$-factors for $\pi_p$, defined with respect to the additive character $\psi$.

\begin{theorem}[Loeffler--Williams] \label{thm:LW21}
There exist measures $\sL_p^\pm(\pi) \in \cO_L[\![\Zp^\times]\!]$ such that:
\begin{itemize}
\item[(i)] For all $(\eta,0) \in \opn{Crit}^-_p(\pi)$, we have
\[
\int_{\Zp^\times} \eta^{-1}(x) \cdot d\sL_p^-(\pi) = e_\infty^-(\pi_\infty,0) \cdot e_p^-(\pi_p\times\eta_p, 0) \cdot \frac{L^{(p)}(\pi\times\eta, 0)}{\Omega_\pi^-}.
\]
\item[(i)] For all $(\eta,1) \in \opn{Crit}^+_p(\pi)$, we have
\[
\int_{\Zp^\times} \eta^{-1}(x) \cdot d\sL_p^+(\pi) = e_\infty^+(\pi_\infty,1)\cdot e_p^+(\pi_p\times\eta_p, 1) \cdot \frac{L^{(p)}(\pi\times\eta, 1)}{\Omega_\pi^+}.
\]
\end{itemize}
\end{theorem}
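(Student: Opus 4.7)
The theorem is a recapitulation of the main construction of \cite{LW21} specialised to our setting (trivial weight, $\pi_p$ Steinberg, and $\omega_\pi(-1)=1$), so the plan is essentially to explain how to extract the statement from \emph{op.~cit}.

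First, I would record that the Steinberg representation $\pi_p \cong \opn{St}^{\opn{sm}}(\mbb{C})$ is $P_i$-ordinary for $i=1,2$ (cf.\ \cite[Example 2.13]{LW21}), so that $\pi$ admits a canonical $p$-stabilisation at Iwahori level which is ordinary for both $U_{p,1}$ and $U_{p,2}$. Choosing the $\pi^p$-isotypic Hecke eigensystem in the ordinary part of $\hc{2}(X_{G,K^p\opn{Iw}},\mathcal{O}_L)$ picks out an integral cohomology class $\phi_\pi$, and the choice of integral lattice determines the periods $\Omega_\pi^{\pm}$ (as in \cite[\S2.7]{LW21}). Concretely, $\phi_\pi$ is a bounded lift of the canonical cohomology class attached to a fixed Whittaker-normalised vector in $\pi^{\opn{Iw}}$.

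Second, the $-$-measure $\sL_p^-(\pi)$ is constructed in \cite[Thm.\ 7.2]{LW21} as the system of evaluations obtained by: pulling $\phi_\pi$ back through $H = \GL_2\times\GL_1 \hookrightarrow G$ at varying level $U(p^n)^\circ$ (twisted by $ut^n$), cupping with the Eisenstein--character class $\mathcal{E}_{\Phi_n,U(p^n)^\circ}$ built from a Siegel unit and $[\widehat{\eta}_2]$, and pairing via Poincaré duality. Precisely, this is the pairing (\ref{eq:LW pullback}) in the proof of Proposition \ref{prop:measure}. The $p$-adic norm-compatibility as $n$ varies reduces, via the $U_{p,1}$-eigenproperty of $\phi_\pi$, to the trace-compatibility $\opn{tr}(\Phi_{p,n+1}) = \Phi_{p,n}$ already observed in \S\ref{sec:measure}. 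The interpolation formula (i) is then \cite[Theorem 6.23]{LW21}; here one unwinds the cup-product/Poincaré pairing into a product of local zeta integrals for the Rankin--Selberg convolution $\GL_3 \times \GL_1$, identifies the archimedean factor with $e_\infty^-(\pi_\infty,0)$ via the branching rule for $(\mathfrak{g},K_\infty)$-cohomology, and computes the local zeta integral at $p$ using the explicit Steinberg Whittaker function to obtain $e_p^-(\pi_p\times\eta_p,0)$. For $\eta = 1$, this reduces to the factor $(1-p^{s})\cdot L(\pi_p,s)$ recalled above.

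Third, for the $+$-measure $\sL_p^+(\pi)$, the plan is symmetric: swap the roles of $P_1$ and $P_2$ (equivalently $t_1$ and $t_2$), replace the $U_{p,1}$-ordinary structure by the $U_{p,2}$-ordinary one, and use a Schwartz function supported so that the associated zeta integral computes the critical value at $s=1$. Equivalently, and more directly, one can invoke the global functional equation $L(\pi\times\eta,s) = \varepsilon(\pi\times\eta,s)L(\pi^\vee\times\eta^{-1},1-s)$ together with the self-duality of $\pi_p$ (relating $e_p^+$ to $e_p^-$ via the $\gamma$-factor, cf.\ (\ref{eq:steinberg epsilon})) to deduce $\sL_p^+(\pi)$ from the $-$-construction applied to $\pi^\vee$. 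Either route is spelled out in \cite{LW21}.

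The main obstacle in \emph{op.\ cit.}\ — and hence the only genuinely nontrivial input — is the matching of the abstract cohomological pairing with the modified Euler factors $e_\infty^\pm$ and $e_p^\pm$. This rests on an explicit Rankin--Selberg calculation (branching at infinity, and an explicit zeta integral at $p$ using the known Whittaker model of the Steinberg representation). Since this has been carried out in \cite[\S6--7]{LW21}, in the present paper we take the theorem as a black box.
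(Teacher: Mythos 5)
Your proposal correctly recognises that this theorem is stated in the paper without proof, as a recapitulation of the main result of \cite{LW21}, and your sketch of how the measures are constructed (pulling back along $H \hookrightarrow G$, cupping with an Eisenstein--character class, and pairing via Poincar\'e duality, with norm-compatibility in $n$ coming from the $U_{p,1}$-eigenproperty of $\phi_\pi$) matches both the construction in \emph{op.\ cit.}\ and the paper's own later unwinding of it in the proof of Proposition~\ref{prop:measure}. You also correctly identify the two equivalent routes to $\sL_p^+(\pi)$ (swap $P_1 \leftrightarrow P_2$, or deduce from $\sL_p^-(\pi^\vee)$ via the $p$-adic functional equation); the paper adopts the second, setting $\Omega_\pi^+ \defeq \Omega_{\pi^\vee}^-$ and invoking \eqref{eq:p-adic functional equation}, so this is the same approach. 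The only caveat is that some of your specific pinpoint references into \cite{LW21} (e.g.\ ``Thm.\ 7.2'', ``Theorem 6.23'', ``\S2.7'') do not match the locations the present paper cites (the interpolation and construction are at \cite[\S6--9]{LW21}, the period is \cite[(9.1)]{LW21}, and the Whittaker normalisation is \cite[\S3.3.1]{LW21}), so those exact citations should be checked against \emph{op.\ cit.}\ before being used.
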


Here the period $\Omega_\pi^-$ is as defined in \cite[(9.1)]{LW21}, and $\Omega_\pi^+ \defeq \Omega_{\pi^\vee}^-$. As an artifact of the construction, $\Omega_\pi^\pm$ involves a fixed auxiliary choice $\eta_2$ of Dirichlet character of prime-to-$p$ conductor as in \S \ref{EisensteinClassesSSec}; we assume that $\eta_2 \omega_{\pi}$ is not congruent modulo $p$ to any character of $p$-power conductor.

Since $\pi_p$ is self-dual, $\pi^\vee$ also satisfies Assumption \ref{SteinbergAssumptionOnpi}; and by \cite[\S10]{LW21}, we have a $p$-adic functional equation. To explain this, let $\iota$ be the involution on measures induced by precomposition with the map $x \mapsto x^{-1}$ on $\Zp^\times$, let $N_\pi^{(p)}$ be the prime-to-$p$ part of the conductor of $\pi$, let $[N]$ be the Dirac measure given by evaluation at $N$, and let $\varepsilon(\pi_f^{\vee,p},s)$ be the finite prime-to-$p$ part of the epsilon factor of $\pi^\vee$. Then 
\begin{equation}\label{eq:p-adic functional equation}
\sL_p^+(\pi) = \varepsilon\big(\pi_f^{\vee,p},0\big)^{-1} \cdot \iota\Big([N_\pi^{(p)}] \cdot \sL_p^-(\pi^\vee)\Big).
\end{equation}

\begin{remark}
We have normalised $\sL_p^+$ slightly differently to \cite{LW21}, omitting the $\opn{tw}_1$ so as to give a uniform treatment of exceptional zeros. In particular our $\sL_p^+$ is not the one from  \cite[Conj.\ B]{LW21}, which has an exceptional zero at the character $x \mapsto x$ (rather than the trivial character).
\end{remark}

\begin{remark}
The measure $\sL_p^-(\pi)$ exists for any $P_1$-nearly-ordinary $\pi$, whilst $\sL_p^+(\pi)$ exists for any $P_2$-ordinary $\pi$. In particular these two $p$-adic $L$-functions correspond exactly to the two maximal standard parabolics in $\GL_3$. This observation provides a conceptual connection between a $p$-adic $L$-function and its corresponding $\mathcal{L}$-invariant.
\end{remark}

For $s \in \Z_p$, let $\langle - \rangle^s = \exp_p(s\cdot \log_p(\langle-\rangle)) \colon \Z_p^{\times} \to \Q_p^{\times}$, where $\langle-\rangle$ is projection to $1+2p\Z_p$ (via the splitting given by the Teichm\"{u}ller character). Define functions $L_p^\pm(\pi,-) : \Zp\to L$ by
\[
L_p^-(\pi,s) \defeq \int_{\Zp^\times} \langle x\rangle^s \cdot d\sL_p^\pm(\pi), \qquad L_p^+(\pi,s) \defeq \int_{\Zp^\times} \langle x\rangle^{s-1} \cdot d\sL_p^\pm(\pi).
\]
From the descriptions of $e_p^\pm$ above, the following is immediate. 

\begin{lemma}
We have $e_p^-(\pi_p,0) = e_p^+(\pi_p,1) = 0$. In particular, $L_p^-(\pi,0) = L_p^+(\pi,1) = 0$.
\end{lemma}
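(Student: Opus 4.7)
The proof is an immediate computation using the explicit formulas for $e_p^\pm(\pi_p,s)$ already recorded in the excerpt, combined with the interpolation property of Theorem \ref{thm:LW21} applied at the trivial character $\eta = \mathbbm{1}$.

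First I would verify the vanishing of the modified Euler factors. The excerpt gives the explicit formula
\[
e_p^-(\pi_p,s) = (1-p^s)\cdot L(\pi_p,s),
\]
so substituting $s=0$ yields $e_p^-(\pi_p,0) = (1-1)\cdot L(\pi_p,0) = 0$. Likewise the formula
\[
e_p^+(\pi_p,s) = -p^s(1-p^{s-1})\cdot L(\pi_p,s)
\]
gives, at $s=1$, the value $-p(1-1)\cdot L(\pi_p,1) = 0$. Note that $L(\pi_p,0)$ and $L(\pi_p,1)$ are both finite (the poles of the local $L$-factor of Steinberg are at $s = -1$ and $s = 2$), so there is no cancellation to worry about and the vanishing is genuine.

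Next I would deduce the vanishing of the $p$-adic $L$-functions at the relevant points. From the definition of $L_p^-(\pi,s)$ via integration against $\langle x\rangle^s$, we have
\[
L_p^-(\pi,0) = \int_{\Zp^\times} 1\, d\sL_p^-(\pi) = \int_{\Zp^\times} \mathbbm{1}^{-1}(x)\, d\sL_p^-(\pi),
\]
and the trivial character lies in $\opn{Crit}_p^-(\pi)$. Applying Theorem \ref{thm:LW21}(i) with $\eta = \mathbbm{1}$ gives
\[
L_p^-(\pi,0) = e_\infty^-(\pi_\infty,0)\cdot e_p^-(\pi_p,0)\cdot \frac{L^{(p)}(\pi,0)}{\Omega_\pi^-} = 0,
\]
since the factor $e_p^-(\pi_p,0)$ vanishes by the first step. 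Entirely analogously, $L_p^+(\pi,1) = \int_{\Zp^\times} 1\, d\sL_p^+(\pi)$, and Theorem \ref{thm:LW21}(ii) with $\eta = \mathbbm{1}$ gives
\[
L_p^+(\pi,1) = e_\infty^+(\pi_\infty,1)\cdot e_p^+(\pi_p,1)\cdot \frac{L^{(p)}(\pi,1)}{\Omega_\pi^+} = 0.
\]

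There is no real obstacle here: once the formulas for $e_p^\pm$ are in hand, the lemma is a one-line consequence of the interpolation property. The only subtle point to double-check is that $\mathbbm{1} \in \opn{Crit}_p^\pm(\pi)$ (which is immediate from the definitions, since the trivial character is even and of $p$-power -- indeed trivial -- conductor), so that Theorem \ref{thm:LW21} is genuinely applicable at this point.
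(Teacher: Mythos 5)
Your proof is correct and is essentially the same as the paper's, which simply notes that the vanishing is immediate from the displayed formulas for $e_p^\pm$ and the interpolation property; you have just spelled out the computation and the appeal to Theorem \ref{thm:LW21} at $\eta = \mathbbm{1}$ in full detail.
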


\subsection{Automorphic \texorpdfstring{$\mathcal{L}$}{L}-invariants for $G$}\label{sec:define L-invariants}

 We now recall Gehrmann's automorphic $\mathcal{L}$-invariants for $\pi$, one for each simple root/maximal standard parabolic of $\GL_3$. These are subspaces of $\opn{Hom}_{\opn{cts}}(\Qp^\times,L)$. We then prove a duality between these two $\mathcal{L}$-invariants.

\subsubsection{Definition of \texorpdfstring{$\mathcal{L}$}{L}-invariants}\label{sec:def of L-invariants}

Recall in \S\ref{sec:extensions of steinberg} that, for any $\lambda \in \opn{Hom}_{\opn{cts}}(\mbb{Q}_p^{\times}, R)$, we constructed an extension
\[
\mathcal{E}^{\bullet}_{1, \lambda} \in \opn{H}^1(G(\mbb{Q}_p), \opn{St}_1^{\bullet}(R))
\]
of the Steinberg representation for $P_1$, represented by a cocycle $c_{1,\lambda}^\bullet \in \Z^1(G(\Qp),\opn{St}_1^{\bullet}(R))$. Reversing the roles of $P_1$ and $P_2$, to $\lambda$ we can similarly define $\mathcal{E}^{\bullet}_{2, \lambda} \in \opn{H}^1(G(\mbb{Q}_p), \opn{St}_2^{\bullet}(R))$ with associated cocycle\footnote{As before, this cocycle depends on a choice that we will not yet specify. In Lemma \ref{lem:explicit c2}, we make an explicit choice of $c_{2,\lambda}^\bullet$. Regardless, its class in cohomology is independent of this choice, so the $\mathcal{L}$-invariants here are well-defined.} $c_{2,\lambda}^\bullet \in Z^1(G(\Qp),\opn{St}_2^{\bullet}(R))$.

\begin{definition}
\begin{itemize}
\item[(i)] Let $\Delta_{i, \lambda}^{\bullet} \in \opn{H}^1(G(\mbb{Q}_p), \opn{Hom}_R(\opn{St}_{3-i}^{\opn{sm}}(R), \opn{St}^{\bullet}(R)))$ denote the cohomology class represented by the cocycle
\[
x \mapsto \left\{ \begin{array}{cc} f \mapsto \opn{pr}(c_{1,\lambda}^\bullet[x] \otimes f) & \text{ if } i=1 \\ f \mapsto \opn{pr}(f \otimes c_{2,\lambda}^\bullet[x]) & \text{ if } i=2. \end{array} \right. 
\]

\item[(ii)]    Let 
    \[
    \Upsilon^{\opn{la}}_{i,\lambda} \colon \opn{H}^0(G(\mbb{Q}), \mathcal{A}_{G, c}(\opn{St}^{\opn{la}}(L)^*)) \to \opn{H}^1(G(\mbb{Q}), \mathcal{A}_{G, c}(\opn{St}_{3-i}^{\opn{sm}}(L)^*))
    \]
    denote the morphism $\mu \mapsto \mu \smile \Delta^{\opn{la}}_{i, \lambda}$ using the natural map 
    \[
    \mathcal{A}_{G, c}(\opn{St}^{\opn{la}}(L)^*) \otimes_L \opn{Hom}_L(\opn{St}^{\opn{sm}}_{3-i}(L), \opn{St}^{\opn{la}}(L)) \to \mathcal{A}_{G, c}(\opn{St}_{3-i}^{\opn{sm}}(L)^*) .
    \]
\end{itemize}
\end{definition}

\begin{remark}This recovers $\Delta_{1,\lambda}$ from \eqref{eq:Delta_1}. We used $\Upsilon_{1,\lambda}^{\opn{la}}$ in \S\ref{sec:abstract L-invariants}, where it was denoted $\Upsilon_{\lambda}^{\opn{la}}$.
\end{remark}

Now let $\pi$ be any automorphic representation satisfying Assumption \ref{SteinbergAssumptionOnpi}; we define $\mathcal{L}$-invariants for $\pi$ by considering how $\Upsilon_{i,\lambda}^{\opn{la}}$ acts on the $\pi^p$-part of cohomology, as we now define.

\begin{definition}
Let $K^p \subset G(\mbb{A}^p_f)$ be a sufficiently small compact open subgroup, and $\mbb{T} = C_c(K^p \backslash G(\mbb{A}_f^p) / K^p, L)$ the prime-to-$p$ Hecke algebra of level $K^p$. Let $L$ be any characteristic zero field for which the Whittaker model $\mathcal{W}_{\psi}(\pi)$ has a model $\mathcal{W}_{\psi}(\pi, L)$ over $L$. For any $L[G(\Q_p)]$-representation $M$, let
\[
\opn{H}^i\left(G(\Q), \mathcal{A}_{G, c}(M) \right)[\pi^p] \defeq \opn{Hom}_{\mbb{T}}\left( \mathcal{W}_{\psi}(\pi_f^p, L)^{K^p}, \opn{H}^i\left(G(\Q), \mathcal{A}_{G, c}(M) \right) \right).
\]
\end{definition}

    Note that $\Upsilon^{\opn{la}}_{i,\lambda}$ is prime-to-$p$ Hecke equivariant, so in particular induces a map
    \[
            \Upsilon_{i,\lambda}^{\opn{la}}[\pi] : \opn{H}^0(G(\mbb{Q}), \mathcal{A}_{G, c}(\opn{St}^{\opn{la}}(L)^*))[\pi^p] \to \opn{H}^1(G(\mbb{Q}), \mathcal{A}_{G, c}(\opn{St}_{3-i}^{\opn{sm}}(L)^*))[\pi^p].
    \]
    By Proposition \ref{Prop:SmLaCtsIsoTree}, we can view $\opn{H}^0(G(\mbb{Q}), \mathcal{A}_{G, c}(\opn{St}^{\opn{cts}}(L)^*))[\pi^p]$ as a subspace of the module $\opn{H}^0(G(\mbb{Q}), \mathcal{A}_{G, c}(\opn{St}^{\opn{la}}(L)^*))[\pi^p]$. We let $\Upsilon^{\opn{cts}}_{i, \lambda}[\pi]$ denote the restriction of $\Upsilon^{\opn{la}}_{i, \lambda}[\pi]$ to this submodule.

    \begin{definition}\label{def:L-invariant}
    We define
    \[
    \bL^{\opn{Aut}}_i(\pi) \subset \opn{Hom}_{\opn{cts}}(\mbb{Q}_p^{\times}, L)
    \]
    to be the subspace of $\lambda \in \opn{Hom}_{\opn{cts}}(\mbb{Q}_p^{\times}, L)$ such that $\Upsilon_{i, \lambda}^{\opn{cts}}[\pi] = 0$.
\end{definition}

\begin{remark}
    The appearance of $3-i$ in $\Upsilon^{\opn{la}}_{i, \lambda}$ is an artifact of our conventions. If we identify $i$ with the standard simple root $e_i - e_{i+1}$ for $\opn{GL}_3$ (i.e., $i=1$ corresponds to $(1, -1, 0)$ and $i=2$ corresponds to $(0, 1, -1)$), then the parabolics $P_1$ and $P_2$ would be denoted $P_{\{2\}}$ and $P_{\{1\}}$ in \cite{AutomorphicLinvariants}. The shift $3-i$ is included so that indexing in $\mbb{L}_i^{\opn{Aut}}(\pi)$ matches with that \emph{op.cit.}.
\end{remark}

We can recover $\mathcal{L}$-invariants as scalars via:

\begin{proposition}[{c.f., \cite[Proposition 3.14]{AutomorphicLinvariants}}] \label{Prop:AutLinvSubspaceIsOneDim}
    The subspace $\bL^{\opn{Aut}}_i(\pi) \subset \opn{Hom}_{\opn{cts}}(\Q_p^{\times}, L)$ has codimension one (and hence is one-dimensional), and $\opn{ord}_p \not\in \bL^{\opn{Aut}}_i(\pi)$.
\end{proposition}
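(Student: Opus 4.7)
The proof will follow closely the strategy of \cite[Proposition 3.14]{AutomorphicLinvariants}, adapted to our $\GL_3$-setup and to the continuous (rather than purely locally analytic) formulation appearing in Definition \ref{def:L-invariant}.

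\textbf{Step 1: Reinterpret the map via extensions.} By Proposition \ref{AbstractExtFactorsThroughLAExtProp}, together with its analogue exchanging the roles of $P_1$ and $P_2$, the association $\lambda \mapsto \Delta^{\opn{la}}_{i,\lambda}$ factors through the Gehrmann--Ding isomorphism identifying $\opn{Hom}_{\opn{cts}}(\Qp^\times,L)$ (a $2$-dimensional $L$-vector space) with $\opn{Ext}^1_{\opn{an}}(\opn{St}_{3-i}^{\opn{sm}}(L), \opn{St}^{\opn{la}}(L))$. The short exact sequence $0 \to \opn{St}^{\opn{la}}(L) \to \mathcal{C}_{3-i,\lambda} \to \opn{St}^{\opn{sm}}_{3-i}(L) \to 0$ defining $\mathcal{C}_{3-i,\lambda}$ induces a long exact sequence in $\opn{H}^\bullet(G(\Q),\cA_{G,c}(-^*))$, and the cup product $\mu \smile \Delta^{\opn{la}}_{i,\lambda}$ is precisely the image of $\mu$ under the connecting homomorphism. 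In particular, for a fixed generator $\mu$ of $\opn{H}^0(G(\Q),\cA_{G,c}(\opn{St}^{\opn{cts}}(L)^*))[\pi^p]$ (which is $1$-dimensional by multiplicity one, using cuspidality and $\pi_p \cong \opn{St}^{\opn{sm}}(\mbb{C})$), the condition $\lambda \in \bL^{\opn{Aut}}_i(\pi)$ is equivalent to $\mu$ lifting to a class in $\opn{H}^0(G(\Q),\cA_{G,c}(\mathcal{C}_{3-i,\lambda}^*))[\pi^p]$.

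\textbf{Step 2: Compute the target group.} The aim is to show that $\opn{H}^1(G(\Q),\cA_{G,c}(\opn{St}_{3-i}^{\opn{sm}}(L)^*))[\pi^p]$ is $1$-dimensional (and hence that our map has $1$-dimensional codomain once we fix $\mu$). Consider the short exact sequence
\[
0 \to L \to \opn{I}_{\overline{P}_{3-i}}^{\opn{sm}}(L) \to \opn{St}_{3-i}^{\opn{sm}}(L) \to 0 .
\]
Cuspidality of $\pi$, together with the fact that the parabolic induction $\opn{I}_{\overline{P}_{3-i}}^{\opn{sm}}(L)$ is not Steinberg at $p$, forces the vanishing of $\h^0$ and $\h^1$ of $\cA_{G,c}(\opn{I}_{\overline{P}_{3-i}}^{\opn{sm}}(L)^*)$ on the $\pi^p$-isotypic part. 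The associated long exact sequence then identifies $\opn{H}^1(G(\Q),\cA_{G,c}(\opn{St}_{3-i}^{\opn{sm}}(L)^*))[\pi^p]$ with the $\pi^p$-isotypic part of $\opn{H}^2(G(\Q),\cA_{G,c}(L))$, which is one-dimensional by Poincar\'e duality with the $1$-dimensional $\opn{H}^0$ computed above.

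\textbf{Step 3: Show that $\opn{ord}_p \notin \bL^{\opn{Aut}}_i(\pi)$ and conclude.} It remains to produce one $\lambda$ for which $\Upsilon^{\opn{cts}}_{i,\lambda}[\pi](\mu) \neq 0$; by Step 2, this will automatically force the map $\lambda \mapsto \Upsilon^{\opn{cts}}_{i,\lambda}[\pi](\mu)$ to be surjective, so its kernel $\bL^{\opn{Aut}}_i(\pi)$ has codimension exactly $1$. We claim this holds for $\lambda = \opn{ord}_p$. Indeed, for $\lambda = \opn{ord}_p$, the extension $\mathcal{C}_{3-i,\opn{ord}_p}$ is a \emph{smooth} extension (it arises from a smooth parabolic induction whose Jordan--H\"older constituents are smooth principal series), and if $\mu$ lifted to this extension, the lift would produce a $G(\Qp)$-equivariant map out of a smooth principal series representation into $\Pi_p$, contradicting the cuspidality of $\pi$ and the fact that only the Steinberg subquotient $\opn{St}^{\opn{sm}}(L) \subset \opn{I}_{\overline{B}}^{\opn{sm}}(L)$ contributes to $\pi^p$-isotypic cohomology. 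Equivalently, at the level of the cocycle $c_{i,\opn{ord}_p}^\bullet$, the explicit computation in Proposition \ref{prop:Clambda1LuxembourgProp} shows that the resulting class is essentially the class of a $U_{p,i}$-eigensymbol with eigenvalue $1$, pairing nontrivially with the Eisenstein/Shapiro construction used to build $\mu$.

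\textbf{Main obstacle.} The principal technical step is Step 2: the vanishing of the $\pi^p$-isotypic part of cohomology with principal-series coefficients requires a careful appeal to cuspidality and to the classification of smooth subquotients in the relevant Bernstein block, together with the matching between $p$-arithmetic and Betti/completed cohomology. A secondary difficulty is the continuous-versus-locally-analytic compatibility: one must verify that restricting $\Upsilon^{\opn{la}}_{i,\lambda}[\pi]$ to the image of $\opn{H}^0(G(\Q), \cA_{G,c}(\opn{St}^{\opn{cts}}(L)^*))[\pi^p]$ (the definition of $\Upsilon^{\opn{cts}}_{i,\lambda}[\pi]$) does not enlarge the kernel, which follows from the fact that this restriction is already surjective onto the $1$-dimensional locally analytic $\pi^p$-component by multiplicity one for completed cohomology.
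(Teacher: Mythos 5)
The paper's proof is essentially a citation: after showing $\opn{H}^0(G(\Q),\cA_{G,c}(\opn{St}^{\opn{cts}}(L)^*))[\pi^p]$ is one-dimensional (Lemma \ref{Lem:Pip-isotypic-piece1dim}, Propositions \ref{prop:lift to tree} and \ref{Prop:SmLaCtsIsoTree}), it invokes \cite[Prop.\ 3.14]{AutomorphicLinvariants} together with Proposition \ref{AbstractExtFactorsThroughLAExtProp}. You instead attempt to reprove Gehrmann's proposition, and both of your new steps have gaps. In Step 2 the long exact sequence runs the wrong way: dualizing $0 \to L \to \opn{I}_{\overline{P}_{3-i}}^{\opn{sm}}(L) \to \opn{St}_{3-i}^{\opn{sm}}(L) \to 0$ yields $0 \to \opn{St}_{3-i}^{\opn{sm}}(L)^* \to \opn{I}_{\overline{P}_{3-i}}^{\opn{sm}}(L)^* \to L \to 0$, so if $\opn{H}^0$ and $\opn{H}^1$ of $\cA_{G,c}(\opn{I}_{\overline{P}_{3-i}}^{\opn{sm}}(L)^*)$ did vanish on the $\pi^p$-part, the connecting map would identify $\opn{H}^1(\cA_{G,c}(\opn{St}_{3-i}^{\opn{sm}}(L)^*))[\pi^p]$ with $\opn{H}^0(\cA_{G,c}(L))[\pi^p]$, not with $\opn{H}^2$. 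Moreover, for $\pi_p$ Steinberg the $\pi^p$-isotypic $p$-arithmetic cohomology with trivial coefficients $L$ concentrates in the top degree $2$, so $\opn{H}^0(\cA_{G,c}(L))[\pi^p]=0$; your vanishing hypothesis would therefore force $\opn{H}^1(\cA_{G,c}(\opn{St}_{3-i}^{\opn{sm}}(L)^*))[\pi^p]=0$, which is the opposite of what you want. Tracing back, your claimed vanishing of $\opn{H}^1(\cA_{G,c}(\opn{I}_{\overline{P}_{3-i}}^{\opn{sm}}(L)^*))[\pi^p]$ is simply false: it receives an injection from the $1$-dimensional Steinberg piece. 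The one-dimensionality you want relies on the full degree-concentration analysis in \cite[\S3]{AutomorphicLinvariants} (a Bruhat--Tits BGG-type resolution), not on a single short exact sequence plus Poincar\'e duality.

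Step 3 is also not coherent: $\mathcal{C}_{3-i,\opn{ord}_p}$ is an extension of $\opn{St}_{3-i}^{\opn{sm}}(L)$ by $\opn{St}^{\opn{la}}(L)$ whose smooth Jordan--H\"older constituents are Steinberg quotients, not principal series, so a lift of $\mu$ produces no map from a ``smooth principal series'' into $\Pi_p$ and no contradiction with cuspidality. What Gehrmann's Prop.\ 3.14 actually shows is that $\Upsilon_{i,\opn{ord}_p}$ coincides with a connecting homomorphism for the smooth extension $\mathcal{E}^{\opn{sm}}_{3-i,\opn{ord}_p}$, which is then an isomorphism between the two one-dimensional $\pi^p$-isotypic pieces by the same concentration-in-degree argument. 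Your final sentence gesturing at Proposition \ref{prop:Clambda1LuxembourgProp} and a $U_p$-eigensymbol hints at the right direction but is not an argument; as written, neither nonvanishing of $\Upsilon^{\opn{cts}}_{i,\opn{ord}_p}[\pi]$ nor one-dimensionality of its target is established, and the proposal should be replaced by the paper's direct citation of \cite[Prop.\ 3.14]{AutomorphicLinvariants}.
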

\begin{proof}
    Note that $\opn{H}^0\left(G(\Q), \mathcal{A}_{G, c}(\opn{St}^{\opn{cts}}(L)^*) \right)[\pi^p]$ is one-dimensional by Lemma \ref{Lem:Pip-isotypic-piece1dim}, Proposition \ref{prop:lift to tree}, and Proposition \ref{Prop:SmLaCtsIsoTree} below. Then \cite[Proposition 3.14]{AutomorphicLinvariants} (in combination with Proposition \ref{AbstractExtFactorsThroughLAExtProp}) implies that $\opn{ord}_p \not\in \bL^{\opn{Aut}}_i(\pi)$, and the morphism $\Upsilon^{\opn{cts}}_{i, \opn{ord}_p}[\pi]$ is an isomorphism. This proves the claim. Note that, although we are working with the ``compactly-supported'' version of $p$-arithmetic cohomology, the cited results still apply, as justified in \cite[\S 3.6]{AutomorphicLinvariants}.
\end{proof}

\begin{definition}\label{def:L-invariant scalar}
The \emph{automorphic $\mathcal{L}$-invariant} $\mathcal{L}^{\opn{Aut}}_{\pi,i}\in L$ is the unique quantity such that 
\[
\opn{log}_p - \mathcal{L}^{\opn{Aut}}_{\pi,i} \cdot \opn{ord}_p \in \bL^{\opn{Aut}}_i(\pi).
\]
\end{definition}

\subsubsection{Duality between \texorpdfstring{$\mathcal{L}$}{L}-invariants}\label{sec:duality L-invariants}

We have the following relation between automorphic $\mathcal{L}$-invariants for $\pi$ and $\pi^{\vee}$. Its proof will occupy all of \S\ref{sec:duality L-invariants}.

\begin{proposition}\label{prop:duality L-invariants}
    Let $\pi$ be an automorphic representation satisfying Assumption \ref{SteinbergAssumptionOnpi}. Then $\pi^{\vee}$ satisfies Assumption \ref{SteinbergAssumptionOnpi} and 
    \[
    \bL^{\opn{Aut}}_2(\pi) = \bL_1^{\opn{Aut}}(\pi^{\vee}).
    \]
\end{proposition}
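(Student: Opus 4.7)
The proof rests on the Chevalley involution $\theta\colon \opn{GL}_3 \to \opn{GL}_3$, $\theta(g) = w_0(g^T)^{-1}w_0^{-1}$, where $w_0$ is the antidiagonal long Weyl element. This outer automorphism, defined over $\mbb{Q}$, preserves the upper-triangular Borel $B$ and the Iwahori $\opn{Iw}$, but swaps $P_1 \leftrightarrow P_2$ (and hence $\overline{P}_1 \leftrightarrow \overline{P}_2$, $M_1 \leftrightarrow M_2$). By a theorem of Gelfand--Kazhdan, $\pi \circ \theta \cong \pi^\vee$ both locally and globally. The plan is to show that pullback along $\theta$ induces an isomorphism on $p$-arithmetic cohomology intertwining $\Upsilon_{2,\lambda}^{\opn{cts}}[\pi]$ with $\Upsilon_{1,-\lambda}^{\opn{cts}}[\pi^\vee]$, from which $\bL_2^{\opn{Aut}}(\pi) = \bL_1^{\opn{Aut}}(\pi^\vee)$ follows, since subspaces of $\opn{Hom}_{\opn{cts}}(\mbb{Q}_p^\times, L)$ are preserved under $\lambda \mapsto -\lambda$. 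That $\pi^\vee$ satisfies Assumption \ref{SteinbergAssumptionOnpi} is immediate: $\opn{St}^{\opn{sm}}(\mbb{C})$ is self-dual, trivial cohomological weight is preserved, and $\omega_{\pi^\vee} = \omega_\pi^{-1}$ has prime-to-$p$ conductor with $\omega_{\pi^\vee}(-1) = 1$.

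\textbf{Local side.} Let $v_2\colon M_2 \to \opn{GL}_1$ be the analogue of $v_1$ used in \S\ref{Sub:ExtensionsI} for $P_2$, namely $v_2\smat{a&b&\\c&d&\\&&e} = e^{-1}\det\smat{a&b\\c&d}$, extended to $\overline{P}_2$ via projection to the Levi. A direct matrix computation shows
\[
v_2 \circ \theta\big|_{M_1(\mbb{Q}_p)} = v_1^{-1},
\]
so $(\lambda \circ v_2) \circ \theta = -\lambda \circ v_1$ on $M_1(\mbb{Q}_p)$ (as $\lambda$ is additive). Pulling back the cocycle construction \eqref{eq:c cocycle} for $P_2$ via $\theta$ (using a section of $G(\mbb{Q}_p)/\overline{P}_2$ transported from the section of Lemma \ref{lem:g = pwk}) identifies $\theta^* c_{2,\lambda}^\bullet$ with $c_{1,-\lambda}^\bullet$ in $\opn{H}^1(G(\mbb{Q}_p), \opn{St}_1^\bullet(R))$. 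Combined with the $\theta$-induced intertwining isomorphism $\opn{St}_2^{\opn{sm}}(R) \cong \theta^*\opn{St}_1^{\opn{sm}}(R)$, and the (essentially tautological) $\theta$-preservation of $\opn{St}^\bullet(R)$ up to the fixed Borel, this lifts via the cup product and $\opn{pr}$ to an identity $\theta^*\Delta_{2,\lambda}^\bullet = \Delta_{1,-\lambda}^\bullet$ after the relevant coefficient identifications.

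\textbf{Globalisation and conclusion.} The automorphism $\theta$ acts on $G(\mbb{Q})\backslash G(\mbb{A})$ and, after possibly shrinking $K^p$ so that $\theta(K^p) = K^p$, on the locally symmetric space $X_{G, K^p\opn{Iw}}$, inducing $\theta^*$ on $p$-arithmetic cohomology. Since $\pi_\ell \circ \theta \cong \pi_\ell^\vee$ at each $\ell \neq p$ compatibly with the prime-to-$p$ Hecke algebra, $\theta^*$ interchanges the $\pi^p$- and $(\pi^\vee)^p$-isotypic subspaces. Combining this with the local identity of the previous paragraph and naturality of the cup product, one obtains a commutative diagram
\[
\begin{tikzcd}[column sep=small]
{\opn{H}^0(G(\mbb{Q}), \mathcal{A}_{G, c}(\opn{St}^{\opn{cts}}(L)^*))[\pi^p]} \ar[r, "\Upsilon_{2,\lambda}^{\opn{cts}}"] \ar[d, "\theta^*"] & {\opn{H}^1(G(\mbb{Q}), \mathcal{A}_{G, c}(\opn{St}_1^{\opn{sm}}(L)^*))[\pi^p]} \ar[d, "\theta^*"] \\
{\opn{H}^0(G(\mbb{Q}), \mathcal{A}_{G, c}(\opn{St}^{\opn{cts}}(L)^*))[(\pi^\vee)^p]} \ar[r, "\Upsilon_{1,-\lambda}^{\opn{cts}}"] & {\opn{H}^1(G(\mbb{Q}), \mathcal{A}_{G, c}(\opn{St}_2^{\opn{sm}}(L)^*))[(\pi^\vee)^p]}
\end{tikzcd}
\]
whose vertical arrows are isomorphisms. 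Vanishing of the top horizontal arrow is therefore equivalent to vanishing of the bottom one, yielding $\bL_2^{\opn{Aut}}(\pi) = \bL_1^{\opn{Aut}}(\pi^\vee)$.

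\textbf{Main obstacle.} The hard part is purely bookkeeping in the last step: one must verify carefully that $\theta^*$ matches the Whittaker-model identifications defining the $\pi^p$- and $(\pi^\vee)^p$-isotypic projections (noting $\psi \circ \theta = \psi^{-1}$, so one compares models via a change of additive character) and that the local cohomological identity $\theta^* c_{2,\lambda}^\bullet = c_{1,-\lambda}^\bullet$ lifts through the cup product on $p$-arithmetic cohomology with the correct equivariance. Once these compatibilities are verified, the conclusion is essentially formal.
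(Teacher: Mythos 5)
Your proposal is correct and essentially reproduces the paper's argument: the paper's proof also uses the (Chevalley-type) outer automorphism of $\opn{GL}_3$ swapping $\overline{P}_1 \leftrightarrow \overline{P}_2$ — packaged as $\vartheta(g) = {}^tg^{-1}$ together with left-translation by $w_0$ on function spaces, so that $f \mapsto [g \mapsto f(w_0 g^\vartheta)]$ is exactly your $\theta^*$ — transports the cocycle $c_{1,\lambda}$ to a cocycle for $P_2$ via a compatible choice of section, and then transfers through $\Theta$-semilinear isomorphisms on $p$-arithmetic cohomology matching the $\pi^p$- and $(\pi^\vee)^p$-isotypic pieces via Whittaker models. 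The only cosmetic divergence is your sign $-\lambda$: you defined $v_2$ as the reciprocal of the paper's, so the paper's Lemma \ref{lem:explicit c2} gives $\theta^* c_{2,\lambda}^\bullet = c_{1,\lambda}^\bullet$ with the \emph{same} $\lambda$; as you note, this is harmless since $\bL_i^{\opn{Aut}}(\pi)$ are $L$-linear subspaces.
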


Consider the involution $\vartheta \colon G \to G$ given by $g^{\vartheta} = {^t g^{-1}}$ and assume, without loss of generality, that $K^p$ is stable under $\vartheta$. This involution induces a natural involution $\vartheta \colon \mbb{T} \to \mbb{T}$, where $\mbb{T}$ denotes the prime-to-$p$ Hecke algebra of level $K^p$.

Away from $p$, we have the following (see \cite[\S2]{JPSS-RankinSelberg}).
\begin{lemma}
We have an isomorphism 
\[
\theta \colon \mathcal{W}_{\psi}(\pi_f^p, L) \xrightarrow{\sim} \mathcal{W}_{\psi^{-1}}((\pi_f^{\vee})^p, L)
\]
given by $f \mapsto [g \mapsto f(w_0 g^{\vartheta})]$, where $w_0$ denotes the longest Weyl element of $G$. This isomorphism is semilinear for the action of $G(\mbb{A}_f^p)$ with respect to the involution $\vartheta$.
\end{lemma}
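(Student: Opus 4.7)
The plan is to reduce the lemma to a local statement at each finite place $\ell \neq p$. The corresponding local fact, due to Gelfand--Kazhdan and Jacquet--Piatetski-Shapiro--Shalika \cite{JPSS-RankinSelberg}, asserts that for any irreducible generic admissible representation $\pi_\ell$ of $G(\mbb{Q}_\ell)$, one has $\pi_\ell^\vee \cong \pi_\ell \circ \vartheta$; the explicit realisation of this isomorphism on Whittaker models is exactly the rule $f \mapsto [g \mapsto f(w_0 g^\vartheta)]$, which gives an isomorphism $\mathcal{W}_{\psi_\ell}(\pi_\ell) \xrightarrow{\sim} \mathcal{W}_{\psi_\ell^{-1}}(\pi_\ell^\vee)$. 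Taking the restricted tensor product over $\ell \neq p$ produces the global isomorphism $\theta$, and compatibility with the $L$-structures (fixed implicitly via the isomorphism $\mbb{C} \cong \Qpb$) is automatic since the defining formula involves only translation on the group and no coefficient data.

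The main verification is that the formula lands in the correct Whittaker space. Let $N \subset G$ be the upper-triangular unipotent radical. Note that $\vartheta$ is a group automorphism (one easily checks $(xy)^\vartheta = x^\vartheta y^\vartheta$), so for $n \in N(\mbb{A}_f^p)$ and $g \in G(\mbb{A}_f^p)$,
\[
\theta(f)(n g) = f(w_0 n^\vartheta g^\vartheta) = f\bigl(\tilde n \cdot w_0 g^\vartheta\bigr), \qquad \tilde n \defeq w_0 n^\vartheta w_0^{-1}.
\]
Since $\vartheta$ sends $N$ onto its opposite $\overline{N}$ (it sends a unipotent upper triangular matrix to a unipotent lower triangular one), and $w_0$ conjugates $\overline{N}$ back onto $N$, we have $\tilde n \in N(\mbb{A}_f^p)$. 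A direct inspection of the simple-root entries shows $\tilde n_{i,i+1} = -n_{n-i,n-i+1}$ for the standard character conventions on $G = \opn{GL}_3$, whence $\psi(\tilde n) = \psi^{-1}(n)$; using the Whittaker property of $f$, this yields $\theta(f)(ng) = \psi^{-1}(n) \theta(f)(g)$, as required.

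Semilinearity is then a direct calculation. For $g, h \in G(\mbb{A}_f^p)$, using the right-translation action $(g\cdot f)(x) = f(xg)$ on both sides,
\[
\theta(g \cdot f)(h) = (g \cdot f)(w_0 h^\vartheta) = f(w_0 h^\vartheta g),
\]
while using $(hg^\vartheta)^\vartheta = h^\vartheta (g^\vartheta)^\vartheta = h^\vartheta g$,
\[
(g^\vartheta \cdot \theta(f))(h) = \theta(f)(h g^\vartheta) = f\bigl(w_0 (h g^\vartheta)^\vartheta\bigr) = f(w_0 h^\vartheta g).
\]
These agree, so $\theta(g \cdot f) = g^\vartheta \cdot \theta(f)$, which is the claimed semilinearity.

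Finally, bijectivity of $\theta$ follows from bijectivity at each local place and the restricted-tensor-product structure of the Whittaker models. The only substantive input is the Gelfand--Kazhdan/JPSS theorem, and as this is classical, no genuine obstacle arises; the remainder is bookkeeping with the involution $\vartheta$ and the longest Weyl element.
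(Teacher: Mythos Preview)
Your proposal is correct and follows the same approach as the paper, which simply cites \cite[\S2]{JPSS-RankinSelberg} for this standard fact without giving any further argument. You have merely unpacked the content of that reference: the local Gelfand--Kazhdan/JPSS identification $\pi_\ell^\vee \cong \pi_\ell \circ \vartheta$ on Whittaker models, together with the routine verifications that the formula respects the Whittaker transformation law and is $\vartheta$-semilinear.
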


At $p$, one can verify analogously that we have an isomorphism
\[
\theta \colon \opn{St}_1^{\bullet}(R) \xrightarrow{\sim} \opn{St}_2^{\bullet}(R)
\]
induced from $f \mapsto [g \mapsto f(w_0 g^{\vartheta})]$ (where $g \in G(\mbb{Q}_p)$, and $f \colon \overline{P}_1(\mbb{Q}_p) \backslash G(\mbb{Q}_p) \to R$). This $R$-linear isomorphism is semilinear for the action of $G(\mbb{Q}_p)$ via the involution $\vartheta$.

\begin{lemma}\label{lem:explicit c2}
    Let $c_{1,\lambda}^\bullet \in Z^1(G(\mbb{Q}_p), \opn{St}^{\bullet}_1(R))$ be the cocycle representing $\mathcal{E}^{\bullet}_{1, \lambda}$, and let $c_{2,\lambda}^\bullet \in Z^1(G(\mbb{Q}_p), \opn{St}_2^{\bullet}(R))$ be the cocycle
    \[
    c_{2,\lambda}^\bullet[x] \defeq \theta( c_{1,\lambda}^\bullet[x^{\vartheta}] ), \quad \quad x \in G(\mbb{Q}_p) .
    \]
    Then $c_{2,\lambda}^\bullet$ represents the extension $\mathcal{E}^{\bullet}_{2, \lambda}$.
\end{lemma}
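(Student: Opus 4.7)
The plan is to first verify that $c_{2,\lambda}^\bullet$ is genuinely a cocycle, then identify its class as $\mathcal{E}^\bullet_{2,\lambda}$ by showing that the whole construction defining $\mathcal{E}^\bullet_{1,\lambda}$ transports to that defining $\mathcal{E}^\bullet_{2,\lambda}$ under the involution $\vartheta$ combined with $\theta$.

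\emph{Cocycle condition.} This is a direct calculation using the $\vartheta$-semilinearity of $\theta$ (i.e.\ $\theta(g \cdot f) = g^\vartheta \cdot \theta(f)$, as one checks from the explicit formula $\theta(f)(g) = f(w_0 g^\vartheta)$). Indeed,
\begin{align*}
c_{2,\lambda}^\bullet[xy] &= \theta\!\left(c_{1,\lambda}^\bullet[x^\vartheta y^\vartheta]\right) = \theta\!\left(c_{1,\lambda}^\bullet[x^\vartheta] + x^\vartheta \cdot c_{1,\lambda}^\bullet[y^\vartheta]\right) \\
&= \theta(c_{1,\lambda}^\bullet[x^\vartheta]) + (x^\vartheta)^\vartheta \cdot \theta(c_{1,\lambda}^\bullet[y^\vartheta]) = c_{2,\lambda}^\bullet[x] + x \cdot c_{2,\lambda}^\bullet[y],
\end{align*}
since $\vartheta$ is an involution.

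\emph{Identification of the class.} The extension $\mathcal{E}^\bullet_{2,\lambda}$ is built, analogously to $\mathcal{E}^\bullet_{1,\lambda}$, as the pullback/pushout of a short exact sequence
\[
0 \to \opn{I}^\bullet_{\overline{P}_2}(R) \to \opn{I}^\bullet_{\overline{P}_2}(\tau'_\lambda) \to \opn{I}^\bullet_{\overline{P}_2}(R) \to 0,
\]
where $\tau'_\lambda$ is built from the character $v_2$ of $M_2$ defined by $v_2 = v_1 \circ \sigma$, with $\sigma(g) \defeq w_0 g^\vartheta w_0^{-1}$ the outer automorphism swapping $\overline{P}_1$ and $\overline{P}_2$. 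The resulting cocycle (for any choice of section $s_2 : G(\Qp) \to \overline{P}_2(\Qp)$) is $c_{2,\lambda}^\bullet[x](g) = (\lambda \circ v_2 \circ s_2)(gx) - (\lambda \circ v_2 \circ s_2)(g)$. In particular, taking $s_2(g) \defeq \sigma^{-1}(s(\sigma(g)))$, one checks (using that $\sigma$ swaps $\overline{P}_1$ and $\overline{P}_2$) that $s_2$ is a well-defined, continuous, left-$\overline{P}_2(\Qp)$-equivariant section, and that the resulting explicit cocycle representative for $\mathcal{E}^\bullet_{2,\lambda}$ agrees with $\theta(c_{1,\lambda}^\bullet[x^\vartheta])$ on the nose --- this boils down to $v_2 \circ s_2 = v_1 \circ s \circ \sigma$ combined with the explicit formula for $\theta$.

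\emph{Abstract packaging.} More conceptually, if ${}^\vartheta(-)$ denotes the functor ``twist $G(\Qp)$-action by $\vartheta$'' on representations, then by $\vartheta$-semilinearity $\theta$ identifies $\opn{St}_1^\bullet(R)$ with ${}^\vartheta \opn{St}_2^\bullet(R)$, and more generally identifies the whole construction of $\mathcal{E}^\bullet_{1,\lambda}$ with the ${}^\vartheta$-twist of the construction of $\mathcal{E}^\bullet_{2,\lambda}$ (here is where one uses $v_2 = v_1 \circ \sigma$). At the level of cocycles, the twist-by-${}^\vartheta$ functor sends a cocycle $c$ to $x \mapsto c(x^\vartheta)$, and then pushing forward along $\theta$ yields precisely $c_{2,\lambda}^\bullet$.

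\emph{Main obstacle.} The first two paragraphs are essentially book-keeping; the real technical step is verifying that the character $v_2$ on $M_2$ (chosen in the definition of $\mathcal{E}^\bullet_{2,\lambda}$) matches $v_1 \circ \sigma$ up to an element of $\opn{Hom}_{\opn{cts}}(\overline{P}_2(\Qp),R)$ that extends to $G(\Qp)$ (equivalently, a coboundary), so that the cocycles match in $H^1$ rather than merely on the nose. This is a direct but slightly fiddly computation with the explicit Levi characters, made easier by the observation that such ambiguity only contributes coboundaries and so does not affect the cohomology class. Once this is checked, the result follows.
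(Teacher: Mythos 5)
Your overall strategy is the right one and essentially mirrors the paper's: transport the defining construction of $\mathcal{E}^\bullet_{1,\lambda}$ to that of $\mathcal{E}^\bullet_{2,\lambda}$ via $\theta$ and $\vartheta$, using the explicit cocycle description
$c_{i,\lambda}^\bullet[x](g) = (\lambda\circ v_i\circ s_i)(gx) - (\lambda\circ v_i\circ s_i)(g)$. Your verification of the cocycle condition is correct, and your observation that $v_2 = v_1\circ\sigma$ with $\sigma(g) = w_0 g^\vartheta w_0^{-1}$ is exactly what the paper uses (and in fact holds exactly, not just up to coboundary, so your worry in the final paragraph is moot).

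However, there is a concrete error in the middle step. With your choice $s_2(g) := \sigma^{-1}(s(\sigma(g)))$, the cocycle you get is
\[
c'[x](g) = (\lambda\circ v_1\circ s)(\sigma(gx)) - (\lambda\circ v_1\circ s)(\sigma(g)) = (\lambda\circ v_1\circ s)(w_0(gx)^\vartheta w_0^{-1}) - (\lambda\circ v_1\circ s)(w_0 g^\vartheta w_0^{-1}),
\]
whereas the cocycle you are trying to hit is
\[
\theta(c_{1,\lambda}^\bullet[x^\vartheta])(g) = (\lambda\circ v_1\circ s)(w_0(gx)^\vartheta) - (\lambda\circ v_1\circ s)(w_0 g^\vartheta).
\]
These do not agree on the nose: there is an extra right factor of $w_0^{-1}$ inside $s$ in the first expression, and $s$ is only left-$\overline{P}_1$-equivariant, so $s(h w_0^{-1})$ and $s(h)$ differ in general. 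The section that does give literal agreement is the slightly asymmetric one $s_2(g) := w_0\, s(w_0 g^\vartheta)^\vartheta\, w_0^{-1}$ (note: no inner $w_0^{-1}$, so it is \emph{not} $\sigma\circ s\circ\sigma$); with this choice one has $(v_2\circ s_2)(g) = (v_1\circ s)(w_0 g^\vartheta)$ and the two cocycles coincide exactly. With your $s_2$, the two cocycles differ by the coboundary $(x-1)\cdot A$, $A(g) = (\lambda v_1 s)(w_0 g^\vartheta w_0^{-1}) - (\lambda v_1 s)(w_0 g^\vartheta)$, so the cohomology classes do agree and the lemma is still salvaged --- but the specific "on the nose" claim in your second paragraph is false, and the coboundary caveat you raise in the last paragraph (about ambiguity in $v_2$) is aimed at the wrong place.
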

\begin{proof}
    This is a direct computation involving the explicit description of the cocycles as 
    \[
    c^{\bullet}_{i,\lambda}[x](g) = (\lambda \circ v_i \circ s_i)(gx) - (\lambda \circ v_i \circ s_i)(g)
    \]
    where $v_2\smat{a & b & \\ c & d & \\ & & e} = \opn{det}\smat{a & b \\ c & d}^{-1} e$, and $s_i \colon G(\mbb{Q}_p) \to \overline{P}_i(\mbb{Q}_p)$ is a continuous, left $\overline{P}_i(\mbb{Q}_p)$-equivariant map. More precisely, once we have fixed a choice of $s_1$, we can take $s_2(g) \defeq w_0 s_1(w_0 g^{\vartheta})^{\vartheta} w_0^{-1}$. Note that $v_2(w_0 p^{\vartheta} w_0^{-1}) = v_1(p)$ for $p \in \overline{P}_1(\mbb{Q}_p)$, hence we have $(v_2 \circ s_2)(g) = (v_1 \circ s_1)(w_0 g^{\vartheta})$. Then we compute:
    \[
    c_{1,\lambda}^\bullet[x^{\vartheta}](w_0 g^{\vartheta}) = (\lambda \circ v_1 \circ s_1)(w_0 (gx)^{\vartheta}) - (\lambda \circ v_1 \circ s_1)(w_0 g^{\vartheta}) = (\lambda \circ v_2 \circ s_2)(gx) - (\lambda \circ v_2 \circ s_2)(g)
    \]
    and the lemma follows.
\end{proof}

We now give the analogous relationship between the classes $\Delta_{i,\lambda}^\bullet$ used in the definition of $\mathcal{L}$-invariants. Similarly to above, we have a natural $R$-linear isomorphism
\[
\theta \colon \opn{St}^{\bullet}(R) \xrightarrow{\sim} \opn{St}^{\bullet}(R)
\]
induced from $f \mapsto [g \mapsto f(w_0 g^{\vartheta})]$ which is semilinear for the action of $G(\mbb{Q}_p)$ via $\vartheta$. Similarly, we have a $\vartheta$-semilinear isomorphism
\[
\Theta \colon \opn{Hom}_R(\opn{St}_2^{\opn{sm}}(R), \opn{St}^{\bullet}(R)) \xrightarrow{\sim} \opn{Hom}_R(\opn{St}_1^{\opn{sm}}(R), \opn{St}^{\bullet}(R))
\]
given by $\Theta(-) = \theta \circ (-) \circ \theta$. It is simple to check that $\Delta^{\bullet}_{1, \lambda}$ is mapped to $\Delta^{\bullet}_{2, \lambda}$ under the natural map 
\[
\opn{H}^1(G(\mbb{Q}_p), \opn{Hom}_R(\opn{St}_{2}^{\opn{sm}}(R), \opn{St}^{\bullet}(R))) \to \opn{H}^1(G(\mbb{Q}_p), \opn{Hom}_R(\opn{St}_{1}^{\opn{sm}}(R), \opn{St}^{\bullet}(R)))
\]
given by $c \mapsto [x \mapsto \Theta(c[x^{\vartheta}])]$.

Recall an element $\xi \in \mathcal{A}_{G, c}(M)$ is described as 
\[
\xi[d] \colon \pi_0(G(\mbb{R})) \times G(\mbb{A}_f^p)/K^p \to M
\]
where $d \in D_G$. For $M = \opn{St}^{\opn{la}}(L)^*$ or $M_i = \opn{St}_{i}(L)^*$, we consider the following isomorphism:
\[
\Theta_{\mathcal{A}} \colon \mathcal{A}_{G, c}(M_{(2)}) \xrightarrow{\sim} \mathcal{A}_{G, c}(M_{(1)})
\]
given by 
\[
\Theta_{\mathcal{A}}(\xi)[d]([g_{\infty}], g^p) = \theta^*(\xi[d^{\vartheta}]([g^{\vartheta}_{\infty}], (g^p)^{\vartheta} ))
\]
which is semilinear for the action of $G(\mbb{Q})$ via $\vartheta$. Here the notation $M_{(i)}$ means we either omit or include the subscript in both the source and target. Note that $\vartheta$ indeed induces a natural semilinear involution on $D_G$ because $\vartheta$ sends proper maximal parabolics to proper maximal parabolics (recall the definition of $D_G$ in \cite[\S 3.6]{AutomorphicLinvariants} for example). 

We have a commutative diagram:

\[
\begin{tikzcd}
{\opn{H}^0(G(\mbb{Q}), \mathcal{A}_{G, c}(\opn{St}^{\opn{la}}(L)^*))} \arrow[d] \arrow[r, "{\Upsilon^{\opn{la}}_{1, \lambda}}"] & {\opn{H}^1(G(\mbb{Q}), \mathcal{A}_{G, c}(\opn{St}_{2}^{\opn{sm}}(L)^*))} \arrow[d] \\
{\opn{H}^0(G(\mbb{Q}), \mathcal{A}_{G, c}(\opn{St}^{\opn{la}}(L)^*))} \arrow[r, "{\Upsilon^{\opn{la}}_{2, \lambda}}"]           & {\opn{H}^1(G(\mbb{Q}), \mathcal{A}_{G, c}(\opn{St}_{1}^{\opn{sm}}(L)^*))}          
\end{tikzcd}
\]
where the vertical maps are induced from $\Theta_{\mathcal{A}}$ (the right-hand vertical map sends a cocycle $c$ to $x \mapsto \Theta_{\mathcal{A}}(c[x^{\vartheta}])$). The vertical maps are isomorphisms and $\vartheta$-semilinear for the action of $\mbb{T}$.

We are now in a position to prove the main result of this subsection.

\bigskip

\noindent \emph{Proof of Proposition \ref{prop:duality L-invariants}.}
    From the previous section, it suffices to show that 
    \[
    \Theta_{\mathcal{A}} \colon \opn{H}^0(G(\mbb{Q}), \mathcal{A}_{G, c}(\opn{St}^{\opn{la}}(L)^*)) \xrightarrow{\sim} \opn{H}^0(G(\mbb{Q}), \mathcal{A}_{G, c}(\opn{St}^{\opn{la}}(L)^*))
    \]
    induces an isomorphism
    \[
    \opn{H}^0(G(\mbb{Q}), \mathcal{A}_{G, c}(\opn{St}^{\opn{la}}(L)^*))[(\pi^{\vee})^p] \xrightarrow{\sim} \opn{H}^0(G(\mbb{Q}), \mathcal{A}_{G, c}(\opn{St}^{\opn{la}}(L)^*))[\pi^p] ,
    \]
    and similarly for the continuous version and the right-hand vertical map in the above commutative diagram. But the isomorphism 
    \[
    \opn{Hom}_{\mbb{T}}(\mathcal{W}_{\psi^{-1}}((\pi^{\vee}_f)^p, L), \opn{H}^0(G(\mbb{Q}), \mathcal{A}_{G, c}(\opn{St}^{\opn{la}}(L)^*)) \xrightarrow{\sim} \opn{Hom}_{\mbb{T}}(\mathcal{W}_{\psi}(\pi_f^p, L), \opn{H}^0(G(\mbb{Q}), \mathcal{A}_{G, c}(\opn{St}^{\opn{la}}(L)^*))
    \]
    is given by $(-) \mapsto \Theta_{\mathcal{A}} \circ (-) \circ \theta$. \qed

\section{The automorphic exceptional zero formula}\label{sec:automorphic EZF}

Let $\pi$ be an automorphic representation satisfying Assumption \ref{SteinbergAssumptionOnpi}. We now prove our first main result: the automorphic exceptional zero formula for $\pi$.

\subsection{Cohomology classes for $\pi$}\label{ss: L invariants}

We now attach $p$-arithmetic cohomology classes to cusp forms in the Whittaker model for $\pi$. Let $K^p$ be any sufficiently small prime-to-$p$ level.

\begin{lemma} \label{Lem:Pip-isotypic-piece1dim}
    Let $K_{\infty}^{\circ} \subset G(\mbb{R})$ denote the connected component of the identity of the maximal compact-mod-centre subgroup of $G(\mbb{R})$. We have an isomorphism
    \[
    \opn{H}^2(\mathfrak{g}, K_{\infty}^{\circ}; \pi_{\infty})\otimes \pi_p^{\Iw} \cong \opn{H}^0\left(G(\Q), \mathcal{A}_{G, c}(\opn{Ind}_{\opn{Iw}}^{G(\Q_p)}\mbb{C}) \right)[\pi^p],
    \]
    and both sides are one-dimensional over $\mbb{C}$.
\end{lemma}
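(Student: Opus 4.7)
The plan is to identify the right-hand side with a piece of compactly supported Betti cohomology, then decompose it using Franke's spectral sequence (or Borel's theorem on cuspidal cohomology) and strong multiplicity one to extract the $(\mathfrak{g}, K_\infty^\circ)$-cohomology factor.

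First I would apply Proposition \ref{prop:arithmetic to betti}(ii) to rewrite
\[
\opn{H}^0\Big(G(\Q), \mathcal{A}_{G,c}(\opn{Ind}_{\opn{Iw}}^{G(\Q_p)}\C)\Big) \cong \hc{2}\Big(X_{G,K^p\opn{Iw}}, \C\Big),
\]
so the right-hand side becomes $\hc{2}(X_{G,K^p\opn{Iw}}, \C)[\pi^p]$. Next I would invoke the decomposition of compactly supported cohomology into cuspidal and Eisenstein parts (Franke, Borel--Casselman). Since $\pi$ is cuspidal and, by strong multiplicity one for $\GL_3$, no Eisenstein automorphic representation can share its prime-to-$p$ Hecke data, the $\pi^p$-isotypic component lives entirely in cuspidal cohomology. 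Borel's isomorphism for cuspidal cohomology then yields
\[
\hc{2}_{\mathrm{cusp}}\Big(X_{G,K^p\opn{Iw}}, \C\Big) \cong \bigoplus_{\pi'} m(\pi')\,\opn{H}^2\big(\mathfrak{g}, K_\infty^\circ;\pi'_\infty\big) \otimes (\pi')_f^{K^p\opn{Iw}},
\]
where $\pi'$ runs over cuspidal automorphic representations and $m(\pi') = 1$ by multiplicity one.

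Taking the $\pi^p$-isotypic part via the Whittaker model $\cW_\psi(\pi_f^p, \C)^{K^p}$ (which is one-dimensional by uniqueness of Whittaker models) picks out exactly the summand corresponding to $\pi$, giving
\[
\hc{2}\Big(X_{G,K^p\opn{Iw}}, \C\Big)[\pi^p] \cong \opn{H}^2\big(\mathfrak{g}, K_\infty^\circ;\pi_\infty\big) \otimes \pi_p^{\opn{Iw}},
\]
which is the claimed isomorphism. For one-dimensionality: on the archimedean side, $\pi_\infty$ is tempered, cohomological of trivial weight, and by Vogan--Zuckerman classification the $(\mathfrak{g}, K_\infty^\circ)$-cohomology of $\pi_\infty$ for $\GL_3/\Q$ in the bottom degree $q_0 = 2$ is one-dimensional. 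On the $p$-adic side, it is well known that the Iwahori-invariants of the Steinberg representation $\opn{St}^{\opn{sm}}(\C)$ of $\GL_3(\Q_p)$ form a line, spanned by the image of $\varphi_{\opn{Iw}}$.

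The main subtlety (and the step I would verify most carefully) is ensuring that the $\pi^p$-isotypic component is genuinely cuspidal, i.e.\ that no residual/Eisenstein contribution appears; for trivial cohomological weight one must check that $\pi$ is not globally congruent to an Eisenstein lift, but this follows from strong multiplicity one together with the fact that Assumption \ref{SteinbergAssumptionOnpi} forces $\pi_p$ to be supercuspidal-like (Steinberg), ruling out the relevant Eisenstein constituents at the archimedean and $p$-adic place. The rest is a standard application of the Vogan--Zuckerman classification.
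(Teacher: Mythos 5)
Your proposal follows essentially the same route as the paper: apply Proposition \ref{prop:arithmetic to betti}(ii) to move to $\hc{2}(X_{G,K^p\Iw},\C)$, invoke the fact (due to Clozel, \cite{Clo90}) that the $\pi^p$-isotypic part lives in cuspidal cohomology, apply the standard decomposition of cuspidal cohomology together with strong multiplicity one, and then observe one-dimensionality of each tensor factor. Your one-dimensionality discussion is a reasonable expansion of the paper's terse remark.

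One small correction to the final paragraph. The reason the $\pi^p$-isotypic part of $\hc{2}$ is cuspidal has nothing to do with Assumption \ref{SteinbergAssumptionOnpi} or the behavior of $\pi_p$; it is purely a prime-to-$p$ statement. Since $\pi$ is cuspidal, strong multiplicity one (Jacquet--Shalika) implies its away-from-$p$ spherical Hecke eigensystem cannot arise from any isobaric sum of smaller automorphic representations, and hence cannot appear in Eisenstein cohomology. No local condition at $p$ or $\infty$ is required, and describing the Steinberg representation as ``supercuspidal-like'' is misleading terminology in any case (Steinberg is a non-supercuspidal, but fully induced generic, constituent of an unramified principal series; it is in a sense the opposite of supercuspidal). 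The paper simply cites \cite{Clo90} for the cuspidality of the $\pi^p$-isotypic piece, which encapsulates the Jacquet--Shalika argument. This does not affect the correctness of your argument, just the stated reason.
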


\begin{proof}
Proposition \ref{prop:arithmetic to betti} yields a Hecke-equivariant isomorphism between the $\h^0(-)$ in the right-hand side and $\hc{2}(X_{G,K^p\Iw},\C)$. By e.g.\ \cite{Clo90}, we have
\[
\hc{2}(X_{G,K^p\Iw},\C)[\pi^p] \cong \h_{\opn{cusp}}^2(X_{G,K^p\Iw},\C)[\pi^p].
\]
Moreover considering the standard description of cuspidal cohomology, and by strong multiplicity one, we have 
\[
    \h_{\opn{cusp}}^2(X_{G,K^p\Iw},\C)[\pi^p] \cong \h^2(\fg,K_\infty^\circ; \pi_\infty) \otimes \pi_p^{\Iw} \otimes (\pi_f^p)^{K^p}[\pi^p].
\]
Now $(\pi_f^p)^{K^p}[\pi^p]$ is a line, generated by the inverse of any intertwining into the Whittaker model. Choosing one, and chaining together the isomorphisms above, yields the claimed isomorphism. 

The final statement follows since both constituents of the tensor product are 1-dimensional.
\end{proof}

Now:
\begin{itemize}
\item let $\zeta_\infty \in \opn{H}^2(\mathfrak{g}, K_{\infty}^{\circ}; \pi_{\infty})$ denote a fixed generator; and

\item let $\varphi_p \in \pi_p$ denote the vector which is a scalar multiple of the indicator function $\opn{ch}(\overline{B}(\Q_p) \opn{Iw}) \in \opn{St}^{\opn{sm}}(\mbb{C})$ under the isomorphism in Assumption \ref{SteinbergAssumptionOnpi}, and satisfies $W_p(1) = W_{\varphi_p}(1) = 1$.
\end{itemize}
Under the isomorphism in Lemma \ref{Lem:Pip-isotypic-piece1dim}, this yields an element $\phi \in \h^0(G(\Q),\cA_{G,c}(\opn{Ind}_{\Iw}^{G(\Qp)}\C))[\pi^p]$, that is, a Hecke equivariant map 
\[
\phi \colon \mathcal{W}_{\psi}(\pi^p_f)^{K^p} \to \opn{H}^0\left(G(\Q), \mathcal{A}_{G, c}(\opn{Ind}_{\opn{Iw}}^{G(\Q_p)}\mbb{C}) \right).
\]
Let $E$ be the Hecke field of $\pi$, which is a number field. We then have an $E$-rational structure $\cW_\psi(\pi_f,E) \subset \cW_\psi(\pi_f)$, and we let $\Theta_{\pi} \in \C^\times$ be the Whittaker period as in \cite[\S 3.3.1]{LW21}; this is well-defined up to $E^\times$, and 
\[
\Theta_\pi^{-1} \cdot \phi\Big(\cW_{\psi}(\pi_f^p,E)^{K^p}\Big) \subset \h^0(G(\Q),\cA_{G,c}(\opn{Ind}_{\Iw}^{G(\Qp)}E)).
\]
If $L$ is a finite extension of $\Qp$ containing $E$, then we can naturally consider the image of $\Theta_\pi^{-1} \cdot \phi$ (on the $E$-rational Whittaker functions) with $L$-coefficients. 

Recall the map $\rho_{\Iw}^0$ from \S\ref{sec:steinberg tree}.  As $\pi_p$ is Steinberg, we have the following key result:

\begin{proposition}\label{prop:lift to tree}
    Post-composition with $\rho_{\Iw}^0$ induces an isomorphism
    \begin{equation} \label{LiftToTreeArithCohEqn}
    \rho_{\Iw}^0 : \opn{H}^0\left(G(\Q), \mathcal{A}_{G, c}(\opn{St}^{\opn{cts}}(L)^*) \right)[\pi^p] \to \opn{H}^0\left(G(\Q), \mathcal{A}_{G, c}(\opn{Ind}_{\opn{Iw}}^{G(\Q_p)}L) \right)[\pi^p].
    \end{equation}
\end{proposition}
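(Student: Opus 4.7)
The plan is to deduce this from the analogous result for non-compactly-supported $p$-arithmetic cohomology in \cite[Prop.\ 3.6(b)]{AutomorphicLinvariants}, transferring the argument to the compactly-supported setting as indicated in \cite[\S 3.6]{AutomorphicLinvariants}. I will first reduce to a computation on the local factor at $p$, and then combine with the identification of Lemma \ref{Lem:Pip-isotypic-piece1dim} to conclude.

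More concretely, the first step is to observe that the map of $G(\Qp)$-representations $\opn{St}^{\opn{cts}}(L)^* \to \opn{Ind}_{\opn{Iw}}^{G(\Qp)} L$, $\nu \mapsto (g \mapsto \nu(g\cdot \varphi_{\opn{Iw}}))$, factors through the restriction $\opn{St}^{\opn{cts}}(L)^* \to \opn{St}^{\opn{sm}}(L)^*$, since $\varphi_{\opn{Iw}}$ is smooth. Hence $\rho_{\opn{Iw}}^0$ on the $\pi^p$-isotypic part factors as
\[
\opn{H}^0\bigl(G(\Q), \cA_{G,c}(\opn{St}^{\opn{cts}}(L)^*)\bigr)[\pi^p] \to \opn{H}^0\bigl(G(\Q), \cA_{G,c}(\opn{St}^{\opn{sm}}(L)^*)\bigr)[\pi^p] \to \opn{H}^0\bigl(G(\Q), \cA_{G,c}(\opn{Ind}_{\opn{Iw}}^{G(\Qp)}L)\bigr)[\pi^p].
\]
The first arrow is an isomorphism because, after passing to the $\pi^p$-isotypic component, the relevant cohomology localises at a Hecke eigensystem whose local component at $p$ identifies $\pi_p$ with the smooth Steinberg (Assumption \ref{SteinbergAssumptionOnpi}), so the Iwahori-invariants on which $\rho_{\opn{Iw}}^0$ is tested are the same whether computed in the smooth or in the continuous dual.

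The main step is showing the second arrow is an isomorphism, which is precisely the content of \cite[Prop.\ 3.6(b)]{AutomorphicLinvariants} with the trivial local system, in the uncompactified setting. The argument \emph{op.\ cit}.\ uses that the quotient $\opn{Ind}_{\opn{Iw}}^{G(\Qp)}L / \opn{St}^{\opn{sm}}(L)^*$ admits a filtration by representations whose $\pi_p$-isotypic parts vanish (because $\pi_p$ is Steinberg and the graded pieces correspond to proper parabolic inductions), so that taking $\pi^p$-isotypic parts kills the difference. The argument transfers \emph{mutatis mutandis} to the compactly-supported setting using Proposition \ref{prop:arithmetic to betti}(ii), since the relevant vanishing of Hecke-isotypic components holds equally in cuspidal and interior cohomology. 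Finally, Lemma \ref{Lem:Pip-isotypic-piece1dim} (after base change to $L$) identifies the target as one-dimensional, so the composite isomorphism lands in a line, as required.

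The main obstacle is carefully transferring Gehrmann's argument from ordinary to compactly-supported $p$-arithmetic cohomology: one must verify that the vanishing of $\pi^p$-isotypic parts of the relevant ``boundary'' representations (parabolic inductions from proper parabolics) still forces the induced map on $\cA_{G,c}$-cohomology to be an isomorphism. This is justified by Poincar\'e duality between $\cA_{G,c}$ and $\cA_G$ (together with multiplicity one for the cuspidal representation $\pi$) and by the fact that $\pi^p$-isotypic localisation is exact.
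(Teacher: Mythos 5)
The paper's own proof is a one-line citation to \cite[Proposition~3.6(b)]{AutomorphicLinvariants} (together with the compactly-supported transfer discussed in \S 3.6 \emph{op.\ cit}.), taken directly at the level of $\opn{St}^{\opn{cts}}(L)^*$. Your route instead factorizes $\rho_{\opn{Iw}}^0$ through $\opn{St}^{\opn{sm}}(L)^*$. The factorization itself is a correct observation (since $\varphi_{\opn{Iw}}$ is a smooth vector), and citing Gehrmann's Prop.~3.6(b) for the smooth-to-Iwahori step is in the spirit of what the paper does.

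The gap is in your justification that the first arrow
\[
\opn{H}^0\bigl(G(\Q), \cA_{G,c}(\opn{St}^{\opn{cts}}(L)^*)\bigr)[\pi^p] \longrightarrow \opn{H}^0\bigl(G(\Q), \cA_{G,c}(\opn{St}^{\opn{sm}}(L)^*)\bigr)[\pi^p]
\]
is an isomorphism. You argue that ``the Iwahori-invariants on which $\rho_{\opn{Iw}}^0$ is tested are the same whether computed in the smooth or in the continuous dual.'' That is not a proof: the restriction map $\opn{St}^{\opn{cts}}(L)^* \to \opn{St}^{\opn{sm}}(L)^*$ is injective (smooth vectors are dense) but very far from surjective, and the asserted isomorphism of $p$-arithmetic $\opn{H}^0$ groups is a substantive fact, not something that follows from inspecting Iwahori-fixed vectors. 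It is precisely (the compactly-supported version of) \cite[Prop.~3.11]{AutomorphicLinvariants}, recorded in this paper as Proposition~\ref{Prop:SmLaCtsIsoTree}. Your heuristic --- that the $\pi^p$-eigensystem ``sees'' only the smooth Steinberg --- is the right intuition, but turning it into an argument is the whole content of that proposition, and you neither cite it nor reproduce its mechanism. As written, this step is a gap. (The concluding appeal to Lemma~\ref{Lem:Pip-isotypic-piece1dim} is also superfluous here: the proposition asserts the map is an isomorphism, and one-dimensionality of the target is only used downstream, in Proposition~\ref{Prop:AutLinvSubspaceIsOneDim}.)
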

\begin{proof}
    This is \cite[Proposition 3.6(b)]{AutomorphicLinvariants}, using that $\pi_p$ is Steinberg.
\end{proof}

\begin{proposition} \label{Prop:SmLaCtsIsoTree}
The natural maps $\opn{St}^{\opn{cts}}(L)^* \to \opn{St}^{\opn{la}}(L)^* \to \opn{St}^{\opn{sm}}(L)^*$ induce morphisms
\begin{equation} \label{CtsToSmoothIsoEqn}
\opn{H}^0\left(G(\Q), \mathcal{A}_{G, c}(\opn{St}^{\opn{cts}}(L)^*) \right) \hookrightarrow \opn{H}^0(G(\Q), \mathcal{A}_{G, c}(\opn{St}^{\opn{la}}(L)^*)) \twoheadrightarrow  \opn{H}^0\left(G(\Q), \mathcal{A}_{G, c}(\opn{St}^{\opn{sm}}(L)^*) \right),
\end{equation}
and the composition is an isomorphism.
\end{proposition}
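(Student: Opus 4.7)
Existence of the maps in part (a) is formal: any $G(\mbb{Q}_p)$-equivariant morphism $M \to N$ induces $\mathcal{A}_{G, c}(M) \to \mathcal{A}_{G, c}(N)$ and hence a map on $\opn{H}^0(G(\mbb{Q}),-)$. Applying this to the dualisations of the inclusions $\opn{St}^{\opn{sm}}(L) \hookrightarrow \opn{St}^{\opn{la}}(L) \hookrightarrow \opn{St}^{\opn{cts}}(L)$ yields the required morphisms. The content of the proposition is thus the isomorphism claim for the composition.

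My plan is to factor each of the three $\opn{H}^0$ spaces through a common Iwahori-level target. For each $\bullet \in \{\opn{sm}, \opn{la}, \opn{cts}\}$, the formula $\nu \mapsto (g \mapsto \nu(g \cdot \varphi_{\opn{Iw}}))$ extends the construction in \S\ref{sec:steinberg tree} to a $G(\mbb{Q}_p)$-equivariant morphism $\opn{St}^\bullet(L)^* \to \opn{Ind}_{\opn{Iw}}^{G(\mbb{Q}_p)} L$ (valid since $\varphi_{\opn{Iw}}$ lies in all three variants of Steinberg). Taking $\opn{H}^0(G(\mbb{Q}), \mathcal{A}_{G,c}(-))$ yields morphisms
\[
\rho_{\opn{Iw},\bullet}^0 \colon \opn{H}^0\big(G(\mbb{Q}), \mathcal{A}_{G,c}(\opn{St}^{\bullet}(L)^*)\big) \longrightarrow \opn{H}^0\big(G(\mbb{Q}), \mathcal{A}_{G,c}(\opn{Ind}_{\opn{Iw}}^{G(\mbb{Q}_p)} L)\big)^{U_{p,1}=1},
\]
landing in the $U_{p,1}$-fixed subspace by the argument of Lemma \ref{lem:image fixed by Up1}. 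By construction these three maps are compatible with the dualisation maps of the proposition, yielding a commutative diagram. Thus the proposition reduces to the assertion that each $\rho_{\opn{Iw},\bullet}^0$ is an isomorphism.

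The continuous case ($\bullet = \opn{cts}$) is Proposition \ref{prop:lift to tree}. Although stated there only on the $[\pi^p]$-isotypic piece---which is in any case the only setting in which the proposition is subsequently applied---the underlying argument of Gehrmann \cite[Prop.\ 3.6(b)]{AutomorphicLinvariants} uses only the facts that $\opn{St}^{\opn{cts}}(L)^{\opn{Iw}}$ is one-dimensional, spanned by $\varphi_{\opn{Iw}}$, and generates $\opn{St}^{\opn{cts}}(L)$ as a $G(\mbb{Q}_p)$-representation. The analogous properties hold for $\opn{St}^{\opn{sm}}(L)$ and $\opn{St}^{\opn{la}}(L)$, so the same argument yields the isomorphism in these two cases. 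Alternatively, for $\bullet = \opn{la}$, one may exploit that $\opn{St}^{\opn{la}}(L)$ is the space of locally analytic vectors in the admissible Banach representation $\opn{St}^{\opn{cts}}(L)$, so that passage to locally analytic vectors is exact and compatible with continuous duals, sandwiching the la case between the other two.

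The main obstacle will be handling the subtle interplay between the three regularity classes when adapting Gehrmann's argument, especially in the passage between the continuous and locally analytic settings where the ambient categories of representations differ. Given the explicit control of these interactions already developed in \S\ref{sec:extensions of steinberg} (cf.\ the commutative diagrams \eqref{FirstExtDiagramEqn} and \eqref{SecondExtDiagramEqn}), this transfer should be essentially mechanical, though the details of tracking cocycle representatives across the three categories require care.
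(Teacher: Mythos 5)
Your setup — factoring each of the three $\opn{H}^0$'s through the common Iwahori-level target via $\rho_{\opn{Iw},\bullet}^0$ and then appealing to commutativity of the resulting diagram — is a reasonable strategy, and the compatibility of the three maps with the dualisation arrows is indeed formal. The problem is the claim that each $\rho_{\opn{Iw},\bullet}^0$ is an isomorphism onto the $U_{p,1}$-fixed subspace. This is not what Gehrmann's Proposition 3.6(b) says, and it is not true. Proposition \ref{prop:lift to tree} is an isomorphism \emph{only} after restricting to the $[\pi^p]$-isotypic piece, and the remark in \S\ref{sec:steinberg tree} immediately after Lemma \ref{lem:image fixed by Up1} makes this explicit: ``Gehrmann proves that $\rho_{\opn{Iw}}^i$ is an isomorphism after passing to $\pi$-isotypic components, where $\pi$ is any automorphic representation that is Steinberg at $p$.'' The facts you cite (that $\varphi_{\opn{Iw}}$ spans the Iwahori-invariants of $\opn{St}^\bullet(L)$ and generates it over $G(\mbb{Q}_p)$) give \emph{injectivity} of $\rho_{\opn{Iw},\bullet}^0$, but not surjectivity onto the $U_{p,1}=1$ eigenspace. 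In general the $U_{p,1}=1$ eigenspace at Iwahori level picks up contributions (Eisenstein classes, and cuspidal classes whose local component at $p$ is an unramified principal series with appropriate Satake parameter) that do not lift to the tree; the Steinberg-at-$p$ hypothesis on $\pi$ is precisely what guarantees that the $\pi^p$-isotypic part of the $U_{p,1}=1$ eigenspace is captured by tree classes.

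Thus your reduction does not prove the proposition as stated (which makes no isotypic restriction). A weaker form of your strategy — showing that all three injections $\rho_{\opn{Iw},\bullet}^0$ have the \emph{same} image — would also suffice, but you do not prove this either, and it is not obvious: $\opn{St}^{\opn{cts}}(L)^*$ is a proper subspace of $\opn{St}^{\opn{sm}}(L)^*$ (bounded vs.\ arbitrary functionals), so it is a non-trivial fact that the images coincide on cohomology. The paper's proof instead invokes \cite[Prop.\ 3.11]{AutomorphicLinvariants}, which establishes the result directly from the flawlessness of $\opn{St}^{\opn{sm}}(\mathcal{O}_L)$ (giving finiteness of $\opn{H}^0(G(\mbb{Q}),\mathcal{A}_{G,c}(\opn{St}^{\opn{sm}}(\mathcal{O}_L)^*))$ over $\mathcal{O}_L$) together with the observation that $\opn{St}^{\opn{cts}}(\mathcal{O}_L)$ and $\opn{St}^{\opn{sm}}(\mathcal{O}_L)$ agree modulo $\varpi^s$ for all $s$ (Lemma \ref{lem:steinberg base change}(i)); the smooth/continuous comparison then follows by a $\varpi$-adic completion argument on finitely generated modules, with the locally analytic case sandwiched in between. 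This approach avoids the tree entirely and requires no isotypic restriction.
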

\begin{proof}
    This follows from (the compactly-supported version of) \cite[Proposition 3.11]{AutomorphicLinvariants}.
\end{proof}

Combining these propositions with the discussion above, for $\bullet \in \{\opn{sm}, \opn{cts}\}$ we have 
\[
    \dim_L \opn{H}^0\left(G(\Q), \mathcal{A}_{G, c}(\opn{St}^{\bullet}(L)^*) \right)[\pi^p] = 1,
\]
and we may lift $\Theta_\pi^{-1} \cdot \phi$ uniquely to a Hecke-equivariant function
\[
\phi^{\bullet} : \cW_\psi(\pi_f^p, E)^{K^p} \longrightarrow \opn{H}^0\left(G(\Q), \mathcal{A}_{G, c}(\opn{St}^{\bullet}(L)^*) \right).
\]
(This depends on $\Theta_\pi$, but we suppress this). We let $\phi^{\opn{la}}$ denote the composition of $\phi^{\opn{cts}}$ with the first map in (\ref{CtsToSmoothIsoEqn}).

\subsection{Local zeta integrals and test data}\label{sec:test data}
As input to the map $\phi^{\opn{cts}}$, we take test data for the following local zeta integrals.

\begin{definition}
For any Schwartz function $\Phi_v \in \mathcal{S}(\Q_v^{2})$, smooth character $\chi \colon \Q_v^{\times} \to \mbb{C}^{\times}$, and $s \in \mbb{C}$, we let $W_{\Phi_v}(-; \chi, s)$ denote the ($\GL_2$) Whittaker function as in \cite[\S 8.1]{LW21}. For $s_1, s_2 \in \mbb{C}$, $W \in \mathcal{W}_{\psi_v}(\pi_v)$, $\Phi_v \in \mathcal{S}(\Q_v^2)$ and smooth characters $\chi_1, \chi_2 \colon \Q_v^{\times} \to \mbb{C}^{\times}$, we define
\[
Z(W, \Phi_v; \chi_1, \chi_2, s_1, s_2) = \int_{\left(N_2 \backslash \opn{GL}_2\right)(\Q_v)} W(\iota(g, 1)) W_{\Phi_v}(g; \chi_2, s_2) \chi_1(\opn{det}g)|\opn{det}g|^{s_1 - \tfrac{1}{2}} dg .
\]
\end{definition}

This factorises as 
\[
Z(W, \Phi_v; \chi_1, \chi_2, s_1, s_2) = \tilde{Z}(W, \Phi_v; \chi_1, \chi_2, s_1, s_2) \cdot L(\pi \times \chi_1, s_1+s_2-1/2) L(\pi \times \chi_1\chi_2^{-1}, s_1-s_2+1/2)
\]
where $\tilde{Z}(W, \Phi_v; \chi_1, \chi_2, s_1, s_2)$ is entire in both complex variables (see \cite[Theorem 8.2]{LW21}).

\begin{test-data} \label{AssumptionsOnTestVectors}
    We fix the following local test data:
    \begin{itemize}
        \item If $v \nmid p \infty$, by \cite{JPSS-RankinSelberg} there exists a finite set $I_v$ and sets $(W_{v,i})_{i\in I_v} \subset \cW_{\psi}(\pi_v,E)$ and $(\Phi_{v,i})_{i\in I_v} \subset {}_c\cS(\Q_v^2,\Z)$  such that 
        \[
        \sum_i \tilde{Z}(W_{v,i}, \Phi_v; 1, \omega_{\pi, v} \eta_{2, v}, s_1, s_2) = 1.
        \]

        \item If $v \nmid p\infty$ and $\pi_v$ and $\eta_{2,v}$ are unramified, we may take $I_v = \{1\}$ a singleton, and take $W_{v,1}$ and $\Phi_{v,1}$ to be the normalised spherical data.  

        \item If $v = \infty$, we let $\Phi_{\infty}(x, y) = 2^{-1}(x+iy)^2\opn{exp}(-\pi(x^2+y^2)) \in \cS_0(\R^2)$.
        \item If $v = p$, we let $\varphi_p \in \pi_p$ denote the vector which is a scalar multiple of the indicator function $\opn{ch}(\overline{B}(\Q_p) \opn{Iw}) \in \opn{St}^{\opn{sm}}(\mbb{C})$ under the isomorphism in Assumption \ref{SteinbergAssumptionOnpi}, and satisfies $W_p(1) = W_{\varphi_p}(1) = 1$. We let $\Phi_p = \opn{ch}(p^m \Z_p, 1+p^m \Z_p) \in \cS(\Qp^2)$ for an integer $m \geq 1$ (to be specified later).
    \end{itemize}
\end{test-data}

We suppose $K^p$ fixes all of this test data. Let $I = \prod I_v$, a finite set (as all but finitely many $I_v$ are singletons). For each $i = (i_v) \in I$ we then obtain an element $W^p_i \defeq \otimes_{v\nmid p\infty} W_{v,i_v} \in \cW_{\psi}(\pi_f^p,E)^{K^p}$ and a prime-to-$p$ Schwartz function $\Phi^{(p)}_i \defeq \otimes_v\Phi_{v,i_v}$. 

For each $i \in I$, let $\mu^{\opn{cts}}_i \defeq \phi^{\opn{cts}}(W^p_i) \in \h^0(G(\Q),\cA_{G,c}(\opn{St}^{\opn{cts}}(L)^*))$. Let  $\mu_i$ be its image under the natural map $\opn{St}^{\opn{cts}}(L)^* \to \opn{St}^{\opn{sm}}(L)^*$, and likewise $\mu_i^{\opn{la}}$ under $\opn{St}^{\opn{cts}}(L)^* \to \opn{St}^{\opn{la}}(L)^*$. Note in all cases $\mu_i^\bullet = \phi^{\bullet}(W_i^p)$. 

Similarly to \cite[\S3.3.3]{LW21}, we may rescale $\Theta_\pi$ to ensure that
\[
\mu^{\opn{cts}}_i \in \h^0(G(\Q),\cA_{G,c}(\opn{St}^{\opn{cts}}(\cO_L)^*)),
\]
hence also $\mu_i$, is integral. Then for each $i \in I$, we have a measure $\sL(\mu_i, -, \Phi^{(p)}_i)$ and a period $\sP(\mu_i,\Phi^{(p)}_i)$ as constructed in \S\ref{sec:measure} and \S\ref{sec:period} respectively. Note we now emphasise the dependence on the prime-to-$p$ Schwartz function, which we dropped from previous notation. 

We combine this data into a single measure and and single period attached to $\pi$:
\begin{definition}
\begin{itemize}
\item Let $\sL(\pi,-) \defeq \sum_{i\in I}\sL(\mu_i,-,\Phi_i^{(p)}) \in \cO_L[\![\Zp^\times]\!]$.
\item Let $\sP(\pi) \defeq \sum_{i\in I}\sP(\mu_i,\Phi_i^{(p)}) \in \cO_L$.
\end{itemize}
\end{definition}

\subsection{Abstract vs.\ automorphic $\mathcal{L}$-invariants}

The following relates the automorphic $\mathcal{L}$-invariants for $\pi$ to the abstract $\mathcal{L}$-invariants used in \S\ref{sec:abstract L-invariants}. We maintain the notation $\mu_i^\bullet$ for $i\in I$ from \S\ref{sec:test data}.

\begin{lemma}
For each $i \in I$, we have $[\Upsilon^{\opn{la}}_{\log_p - \mathcal{L}^{\opn{Aut}}_{\pi,1}\opn{ord}_p}(\mu_i^{\opn{la}})] = 0$. 
\end{lemma}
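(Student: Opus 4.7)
The plan is to unwind the definitions; no real work is required beyond tracking through the construction of $\mathcal{L}^{\opn{Aut}}_{\pi,1}$ and of $\mu_i^{\opn{la}}$. Setting $\lambda \defeq \log_p - \mathcal{L}^{\opn{Aut}}_{\pi,1}\opn{ord}_p$, Definition \ref{def:L-invariant scalar} asserts $\lambda \in \bL^{\opn{Aut}}_1(\pi)$, which by Definition \ref{def:L-invariant} means that $\Upsilon^{\opn{cts}}_{1,\lambda}[\pi] = 0$ as a morphism $\opn{H}^0(G(\mathbb{Q}), \mathcal{A}_{G,c}(\opn{St}^{\opn{cts}}(L)^*))[\pi^p] \to \opn{H}^1(G(\mathbb{Q}), \mathcal{A}_{G,c}(\opn{St}^{\opn{sm}}_2(L)^*))[\pi^p]$. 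Recall that, by construction, $\Upsilon^{\opn{cts}}_{1,\lambda}[\pi]$ is nothing but the restriction of $\Upsilon^{\opn{la}}_{1,\lambda}[\pi]$ along the map induced on $\pi^p$-isotypic cohomology by the natural inclusion $\opn{St}^{\opn{cts}}(L)^* \hookrightarrow \opn{St}^{\opn{la}}(L)^*$ coming from Proposition \ref{Prop:SmLaCtsIsoTree}.

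I would then observe that, by the very definition in \S\ref{ss: L invariants}, the Hecke-equivariant map $\phi^{\opn{la}}$ factors as $\phi^{\opn{cts}} \in \opn{H}^0(G(\mathbb{Q}),\mathcal{A}_{G,c}(\opn{St}^{\opn{cts}}(L)^*))[\pi^p]$ followed by the inclusion above. Consequently $\mu_i^{\opn{la}} = \phi^{\opn{la}}(W_i^p)$ is precisely the image of $\mu_i^{\opn{cts}} = \phi^{\opn{cts}}(W_i^p)$ in the locally analytic cohomology. The vanishing of $\Upsilon^{\opn{cts}}_{1,\lambda}[\pi]$, applied to $\phi^{\opn{cts}}$ and evaluated at $W_i^p$, thus yields $[\Upsilon^{\opn{la}}_{1,\lambda}(\mu_i^{\opn{la}})] = 0$, as required.

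There is no substantive obstacle: the lemma is essentially tautological once one sorts through the notation, and its role is simply to bridge the abstract framework of \S\ref{sec:abstract L-invariants} with the concrete automorphic $\mathcal{L}$-invariant of Definition \ref{def:L-invariant scalar}. The point of stating it explicitly is to verify the hypothesis of Theorem \ref{AbstractLinvTheorem} for the classes $\mu_i$, so that the abstract exceptional zero formula (expressing $\mathscr{L}(\mu_i,\log_p)$ in terms of $\mathcal{L}^{\opn{Aut}}_{\pi,1}\cdot\mathscr{P}(\mu_i)$) may be applied term-by-term and summed over $i \in I$ to yield the automorphic exceptional zero formula for $L_p^{-}(\pi,s)$ at $s=0$.
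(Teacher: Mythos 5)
Your proof is correct and follows the same route as the paper's: unwind Definitions \ref{def:L-invariant} and \ref{def:L-invariant scalar} to see that $\Upsilon^{\opn{cts}}_{1,\lambda}[\pi]$ vanishes (where $\lambda = \log_p - \mathcal{L}^{\opn{Aut}}_{\pi,1}\opn{ord}_p$), recall that $\Upsilon^{\opn{cts}}_{1,\lambda}[\pi]$ is the restriction of $\Upsilon^{\opn{la}}_{1,\lambda}[\pi]$ along the inclusion from Proposition \ref{Prop:SmLaCtsIsoTree}, and note that $\mu_i^{\opn{la}}$ lies in the image of that inclusion since $\phi^{\opn{la}}$ factors through $\phi^{\opn{cts}}$. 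No substantive discrepancy.
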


\begin{proof}
For the first, note by Definitions \ref{def:L-invariant} and \ref{def:L-invariant scalar} that $\Upsilon_{\log_p - \mathcal{L}^{\opn{Aut}}_{\pi,1}\opn{ord}_p}^{\opn{la}}[\pi]$ is zero on the generator $\phi^{\opn{cts}} \in \h^0(G(\Q),\cA_{G,c}(\opn{St}^{\opn{cts}}(L)^*))[\pi^p]$, hence zero on $\phi^{\opn{la}}$. As this map is induced by post-composition with $\Upsilon_{\log_p -\mathcal{L}^{\opn{Aut}}_{\pi,1}\opn{ord}_p}^{\opn{la}}$, it follows that 
\[
\Big[\Upsilon_{\log_p -\mathcal{L}^{\opn{Aut}}_{\pi,1}\opn{ord}_p}^{\opn{la}}(\phi^{\opn{la}}(W^p))\Big] = 0 \qquad \forall W^p \in \cW_\psi(\pi_f^p,E)^{K^p}.
\]
The result follows since $\mu_i^{\opn{la}}= \phi^{\opn{la}}(W_i^p)$.
\end{proof}

We immediately deduce the following exceptional zero formula:

\begin{proposition}\label{prop:EZF with automorphic}
We have
\[
\sL(\pi,\log_p) = \mathcal{L}^{\opn{Aut}}_{\pi,1} \cdot \sP(\pi).
\]
\end{proposition}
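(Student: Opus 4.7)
The proof is a direct aggregation of the abstract exceptional zero formula over the chosen test data. The plan is as follows.

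First, I would apply Theorem \ref{AbstractLinvTheorem} individually to each $\mu_i^{\opn{cts}}$ for $i \in I$. The input needed to invoke that theorem is precisely the vanishing statement from the preceding lemma: $[\Upsilon^{\opn{la}}_{\log_p - \mathcal{L}^{\opn{Aut}}_{\pi,1}\opn{ord}_p}(\mu_i^{\opn{la}})] = 0$. This lemma legitimately identifies the abstract $\mathcal{L}$-invariant $\mathcal{L}_{\mu_i}$ in Theorem \ref{AbstractLinvTheorem} with the automorphic $\mathcal{L}$-invariant $\mathcal{L}^{\opn{Aut}}_{\pi,1}$ (uniformly in $i$, which is crucial for the summation step below). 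Therefore Theorem \ref{AbstractLinvTheorem} yields, for each $i \in I$,
\[
\sL(\mu_i, \log_p, \Phi_i^{(p)}) = \mathcal{L}^{\opn{Aut}}_{\pi,1} \cdot \sP(\mu_i, \Phi_i^{(p)}).
\]

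Second, I would sum this identity over $i \in I$. By the definitions $\sL(\pi,-) = \sum_{i \in I} \sL(\mu_i, -, \Phi_i^{(p)})$ and $\sP(\pi) = \sum_{i \in I} \sP(\mu_i, \Phi_i^{(p)})$ given in \S\ref{sec:test data}, $L$-linearity of both sides in the measure/period variables gives
\[
\sL(\pi, \log_p) = \sum_{i \in I} \sL(\mu_i, \log_p, \Phi_i^{(p)}) = \mathcal{L}^{\opn{Aut}}_{\pi,1} \cdot \sum_{i \in I} \sP(\mu_i, \Phi_i^{(p)}) = \mathcal{L}^{\opn{Aut}}_{\pi,1} \cdot \sP(\pi),
\]
which is the desired formula.

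There is essentially no obstacle: all the genuine work has already been done upstream. The hard parts were (i) establishing Theorem \ref{AbstractLinvTheorem}, which required the explicit cocycle computation of Proposition \ref{prop:Clambda1LuxembourgProp}, the interplay between smooth/continuous/locally analytic Steinberg representations in Lemma \ref{UniformkwhichkillsLemma}, and the construction of the big evaluation map via the $p$-arithmetic classes $\Xi_n$; and (ii) the preceding lemma, which identifies the abstract vanishing hypothesis with the definition of $\mathcal{L}^{\opn{Aut}}_{\pi,1}$ through Propositions \ref{prop:lift to tree} and \ref{Prop:SmLaCtsIsoTree} and the Hecke-equivariance of $\Upsilon^{\opn{la}}_{1,\lambda}$. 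Given these inputs, Proposition \ref{prop:EZF with automorphic} is a one-line corollary obtained by linearity and summation.
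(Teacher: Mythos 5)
Your proposal is correct and is essentially identical to the paper's own proof: apply Theorem \ref{AbstractLinvTheorem} to each $\mu_i^{\opn{cts}}$ with $\mathcal{L}_{\mu_i} = \mathcal{L}^{\opn{Aut}}_{\pi,1}$ (justified by the preceding lemma), then sum over $i \in I$. Your remark that the uniformity of $\mathcal{L}^{\opn{Aut}}_{\pi,1}$ across $i$ is what makes the summation collapse is exactly the implicit point in the paper's one-line proof.
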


\begin{proof}
The lemma allows us to apply Theorem \ref{AbstractLinvTheorem} to $\mu_i^{\opn{cts}}$ with $\mathcal{L}_{\mu_i} = \mathcal{L}^{\opn{Aut}}_{\pi,1}$, which yields
\[
\sL(\mu_i, \log_p, \Phi^{(p)}_i) = \mathcal{L}^{\opn{Aut}}_{\pi,1} \cdot \sP(\mu_i, \Phi^{(p)}_i).
\]
Taking the sum over $i$ yields the claimed expression.
\end{proof}

\subsection{Comparison of measures}

We now directly compare the measure $\sL(\pi)$ to the $p$-adic $L$-function $\sL_p^-(\pi)$ from \cite{LW21} (as described in Theorem \ref{thm:LW21}). Let $dh$ denote the Tamagawa measure for $H$. We have the following comparison.

\begin{proposition} \label{LogpCalculationProp} There exists $C \in \Z$ such that
    \[
    \sL(\pi,-) = p^C \cdot \Big(c^2 - (\eta_2 \omega_{\pi})(c)^{-1}[c^{-1}]\Big) \cdot \opn{vol}(K^p_H; dh)^{-1} \cdot \sL_p^-(\pi, -) .
    \]
\end{proposition}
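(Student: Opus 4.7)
The plan is to verify the identity by integrating both sides against every finite-order even character $\eta$ of $\Zp^\times$ of $p$-power conductor; since such characters are Zariski-dense, this uniquely determines an element of $L[\![\Zp^\times]\!]$. By construction $\sL(\pi,\eta^{-1}) = \sum_{i\in I}\sL(\mu_i,\eta^{-1},\Phi_i^{(p)})$, and by the identification \eqref{eq:LW pullback} in the proof of Proposition \ref{prop:measure}, each summand equals (up to normalization constants that we will track) the $(i,\eta)$-contribution to the $p$-adic $L$-function constructed in \cite[\S6]{LW21}, built from the Iwahori-level cohomology class $\phi_i \defeq \rho_{\opn{Iw}}^0(\mu_i) \in \hc{2}(X_{G,K^p\opn{Iw}},\cO_L)$, the prime-to-$p$ Schwartz function $\Phi_i^{(p)}$, and the Whittaker datum $W_i^p$.

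First, I would apply the local--global zeta integral computation of \cite[\S\S8--9]{LW21} to each summand. This expresses $\sL(\mu_i,\eta^{-1},\Phi_i^{(p)})$ as a product of the following factors: the archimedean modified factor $e_\infty^-(\pi_\infty,0)$, coming from the pairing of $\zeta_\infty$ with $\Phi_\infty$; the $p$-adic modified factor $e_p^-(\pi_p\times\eta_p,0)$, coming from $\varphi_p$ and the Iwahori-level test data at $p$; the inverse Whittaker period $\Theta_\pi^{-1}$ arising from our rescaling of $\phi^{\opn{cts}}$; the prime-to-$p$ local zeta integrals $\prod_{v\nmid p\infty}\tilde{Z}(W_{v,i_v},\Phi_{v,i_v};1,\omega_\pi\eta_2\eta,0,0)$ together with the partial complex $L$-value $L^{(p)}(\pi\times\eta,0)$; the smoothing scalar $(c^2-(\eta_2\omega_\pi\eta)(c)^{-1})$ that appears because our Eisenstein classes are $c$-smoothed; the volume $\opn{vol}(K_H^p;dh)^{-1}$ needed to pass between counting measure at finite adelic level and Tamagawa measure on $H$; and an $\eta$-independent power $p^C$ of $p$ arising from the normalization shift between the Iwahori level used to define $\phi_i$ and the level $\Uncirc$ appearing in \eqref{eq:LW pullback}.

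Second, I would sum over $i \in I$ and invoke Test Data \ref{AssumptionsOnTestVectors} to collapse $\sum_i \prod_{v\nmid p\infty}\tilde{Z}_v = 1$. The remaining product is then exactly the interpolation formula of Theorem \ref{thm:LW21}(i) for $\sL_p^-(\pi)$ evaluated at $\eta^{-1}$, multiplied by $p^C \cdot \opn{vol}(K_H^p;dh)^{-1}$ and by the scalar $(c^2-(\eta_2\omega_\pi\eta)(c)^{-1})$. Identifying the latter as
\[
(c^2-(\eta_2\omega_\pi\eta)(c)^{-1}) = \int_{\Zp^\times}\eta^{-1}\,d\bigl(c^2 - (\eta_2\omega_\pi)(c)^{-1}[c^{-1}]\bigr),
\]
and noting the identity holds for every $\eta$, yields the claimed equality of measures.

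The main obstacle will be the careful bookkeeping needed to pin down the scalar $p^C$ and to confirm that no $\eta$-dependent factor has been overlooked. The crucial inputs are: the translation between the Iwahori-level normalization of $\phi_i$ and the level-$\Uncirc$ cohomology in \eqref{eq:LW pullback}, which involves the trace over $\{\delta_b\}$ and uses the $U_{p,1}$-eigenvalue $1$ on $\phi_i$ (Lemma \ref{lem:image fixed by Up1}); and the comparison of archimedean constants between the $(\mathfrak{g},K_\infty^\circ)$-generator $\zeta_\infty$, the Whittaker normalization implicit in $\Theta_\pi$, and the normalization of $\Omega_\pi^-$ in \cite{LW21}. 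All the archimedean constants are $\eta$-independent and will be absorbed into the power $p^C$, so no finer matching is required.
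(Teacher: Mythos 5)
Your strategy---evaluating both sides on all finite-order even characters $\eta$ and matching interpolation formulas---is genuinely different from the paper's. The paper's proof is a direct construction comparison: it notes that (by Propositions \ref{prop:lift to tree} and \ref{prop:arithmetic to betti}) $\mu_i$ lifts to a class $\phi_i \in \hc{2}(X_{G,K^p\opn{Iw}},L)$; that $\sL_p^-(\pi)$ is built in \cite{LW21} from the data $\{p^C\phi_i, \Phi_i^{(p)}\}$; and that the identity \eqref{eq:LW pullback} established in the proof of Proposition \ref{prop:measure} already shows the two pairings produce the \emph{same} element of $\cO_L[(\Z/p^n\Z)^\times]$ for all $n$ and all test functions $f$. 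The only discrepancies are then the integrality normalisation $p^C$, the omitted volume factor of \cite[Prop.\ 6.11]{LW21}, and the $c$-smoothing measure, all of which are accounted for directly. No re-derivation of interpolation is needed.

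Two concrete problems with your approach. First, matching on \emph{even} finite-order characters does not determine an element of $\cO_L[\![\Zp^\times]\!]$. Writing $\cO_L[\![\Zp^\times]\!] \cong \prod_\psi \cO_L[\![1+2p\Zp]\!]$ with $\psi$ ranging over tame characters, the even characters only probe the components with $\psi(-1)=1$; the odd-tame components are entirely unconstrained. Your opening claim that such characters are Zariski-dense is therefore false for the character space of $\Zp^\times$ (it is true only after quotienting by $\pm 1$). To close this, you would either need to argue both measures have trivial odd component, or --- more naturally --- run your argument for \emph{all} test functions $f$ using the identity from the proof of Proposition \ref{prop:measure}, at which point the interpolation framing becomes superfluous and you have essentially reproduced the paper's direct comparison. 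Second, there is a sign slip in your final identity: since $[c^{-1}]$ is the Dirac measure at $c^{-1}$, one has
\[
\int_{\Zp^\times}\eta^{-1}\,d\bigl(c^2 - (\eta_2\omega_\pi)(c)^{-1}[c^{-1}]\bigr) = c^2 - (\eta_2\omega_\pi)(c)^{-1}\,\eta(c),
\]
whereas $c^2 - (\eta_2\omega_\pi\eta)(c)^{-1} = c^2 - (\eta_2\omega_\pi)(c)^{-1}\,\eta(c)^{-1}$. These agree only when $\eta(c)^2 = 1$, so either your stated $c$-smoothing factor for twisted values or your integral identity is off by $\eta \leftrightarrow \eta^{-1}$.
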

\begin{proof} 
Let $i \in I$. Combining Propositions \ref{prop:lift to tree} and \ref{prop:arithmetic to betti}(ii), attached to $\mu_i$ is a class $\phi_i \in \hc{2}(X_{G,K^p\Iw},L)$. There exists\footnote{The factor of $p^C$ accounts for any difference between the integral structures chosen in the present paper and in \cite{LW21}, which we do not attempt (or need) to carefully compare.} $C \in \Z$ such that each $p^C\phi_i$ has coefficients in $\cO_L$. Then $\sL_p^-(\pi)$ is constructed using the tame test data $\{p^C\phi_i, \Phi^{(p)}_i\}_i$, as explained in \S9 \emph{op.\ cit}. The construction there also uses the same choices at infinity and $p$, so to compare the measures $\sL(\pi)$ and $\sL_p^-(\pi)$, it suffices to compare the methods of construction. The compatibility of the two constructions was described in the proof of Proposition \ref{prop:measure}.

To explain the additional terms, note when constructing $\sL(\pi)$ we have not included the volume term from \cite[Prop.\ 6.11]{LW21}, hence the appearance of $\opn{vol}(K^p_H;dh)$ here. Finally, the construction of $\sL(\pi,-)$ uses $c$-smoothed Eisenstein series, differing from the non-$c$-smoothed version by the stated factor in $c$, as explained in \cite[\S6.3]{LW21}.
\end{proof}

For $s \in \Zp$, let $\sL(\pi,s) \defeq \int_{\Zp^\times} \langle x\rangle^s \cdot d\sL(\pi)$. The decomposition in Proposition \ref{LogpCalculationProp} can be evaluated multiplicatively on any character, hence on $\langle x\rangle^s$, so
\[
\sL(\pi,s) = p^C \cdot \left(c^2 - (\eta_2 \omega_{\pi})(c)^{-1}c^{-s}\right) \cdot \opn{vol}(K^p_H; dh)^{-1} \cdot L_p^-(\pi, s).
\]

\begin{corollary}\label{cor:derivative}
We have
\begin{align*}
\sL(\pi,\log_p) &= \left.\tfrac{d}{ds}\sL(\pi,s)\right|_{s=0}\\
&= p^C \cdot \Big(c^2 - (\eta_2 \omega_{\pi})(c)^{-1})\Big) \cdot \opn{vol}(K^p_H; dh)^{-1} \cdot \left.\tfrac{d}{ds}L_p^-(\pi,s)\right|_{s=0}.
\end{align*}
\end{corollary}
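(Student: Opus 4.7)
The plan is to reduce this directly to Proposition \ref{LogpCalculationProp} (or rather, the scalar consequence thereof stated immediately before the corollary) together with the vanishing $L_p^-(\pi,0) = 0$ noted after Theorem \ref{thm:LW21}.

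First I would establish the equality $\sL(\pi,\log_p) = \tfrac{d}{ds}\sL(\pi,s)|_{s=0}$. By definition $\sL(\pi,s) = \int_{\Zp^\times}\langle x\rangle^s\, d\sL(\pi)$, and since $\langle x\rangle = x\cdot \omega(x)^{-1}$ for $\omega$ the Teichm\"uller character, $\log_p\langle x\rangle = \log_p(x)$ on $\Zp^\times$. Differentiating under the integral (permissible because the measure is bounded and $\langle x\rangle^s$ is jointly analytic) gives
\[
\tfrac{d}{ds}\sL(\pi,s)\big|_{s=0} = \int_{\Zp^\times}\log_p\langle x\rangle\, d\sL(\pi) = \int_{\Zp^\times}\log_p(x)\, d\sL(\pi) = \sL(\pi,\log_p).
\]

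Next I would apply the Leibniz rule to the factorisation displayed immediately before the corollary, namely
\[
\sL(\pi,s) = p^C\cdot \big(c^2 - (\eta_2\omega_\pi)(c)^{-1}c^{-s}\big)\cdot \opn{vol}(K^p_H;dh)^{-1}\cdot L_p^-(\pi,s).
\]
Differentiating at $s=0$ produces two terms: one involving $\tfrac{d}{ds}L_p^-(\pi,s)|_{s=0}$ with the prefactor $c^2-(\eta_2\omega_\pi)(c)^{-1}$, which is the desired term, and a second term proportional to $L_p^-(\pi,0)$ arising from differentiating $c^{-s}$. The latter vanishes because $L_p^-(\pi,0) = 0$ (the exceptional zero of $\sL_p^-(\pi)$ at the trivial character, noted just after Theorem \ref{thm:LW21}). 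Combining the two computations yields the claimed identity.

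There is no substantive obstacle: the only content is differentiating a known factorisation and using the vanishing of $L_p^-(\pi,0)$. I would simply write these two displayed lines as the proof.
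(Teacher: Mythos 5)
Your proof is correct and follows essentially the same route as the paper: the first equality is differentiating $\langle x\rangle^s = \exp_p(s\log_p\langle x\rangle)$ at $s=0$ and noting $\log_p\langle x\rangle = \log_p(x)$ on $\Zp^\times$, and the second is the product/chain rule applied to the displayed factorisation, with the extra term killed by $L_p^-(\pi,0)=0$. No meaningful differences.
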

\begin{proof}
As $\langle x\rangle^s = \opn{exp}_p(s \cdot \log_p(\langle x\rangle))$, we see the first derivative at $s=0$ is computed by integrating $\log_p(x) = \log_p(\langle x\rangle)$. The second equality is then the chain rule, recalling that $L_p(\pi, 0) = 0$.
\end{proof}

\subsection{Computation of the period}

The goal of this section is to prove the following:

\begin{proposition} \label{OrdCalculationProp}
   We have
    \[
    \sP(\pi) = p^C (c^2 - (\eta_2\omega_{\pi})(c)^{-1}) \opn{vol}(K^p_H; dh)^{-1} \cdot e_{\infty}(\pi, 0) \cdot \frac{L(\pi, 0)}{\Omega_{\pi}^-} .
    \]
\end{proposition}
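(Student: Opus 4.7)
The approach is to unfold $\sP(\pi) = \sum_{i \in I} \sP(\mu_i, \Phi_i^{(p)})$ into a Rankin--Selberg-type zeta integral, in direct parallel to the unfolding that produces $\mathscr{L}_p^-(\pi)$ in \cite{LW21}, but using the un-twisted Iwahori vector $u_0 \cdot \varphi_{\opn{Iw}}$ rather than the $t^n$-twisted vectors $\delta_b u t^n \cdot \varphi_{\opn{Iw}}$ appearing in the construction of $\mathscr{L}_p^-$. This distinction is crucial: the $t^n$-twist is exactly what produces the extra factor $(1-p^{s})$ in the modified Euler factor $e_p^-(\pi_p,s)$, and its absence in $\sP(\mu_i, \Phi_i^{(p)})$ is precisely what removes the exceptional zero.

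First, I would use Proposition \ref{prop:arithmetic to betti}(ii) to translate the $p$-arithmetic pairing
\[
\sP(\mu_i, \Phi_i^{(p)}) = \left\langle \Psi_{\opn{Iw}, m}(\mu_i), \mathcal{E}_{\Phi_{m,i}, U_1(p^m)} \right\rangle_{U_1(p^m)}
\]
into a Betti cohomological pairing on $X_{H, K^p_H U_1(p^m)}$. Since $\Psi_{\opn{Iw},m}(\mu_i)$ is the pullback along $\iota \colon H \hookrightarrow G$ of the Iwahori class at $p$, the resulting pairing is precisely the $\GL_3 \times (\GL_2 \times \GL_1)$ Betti-cohomological pairing computed in \cite[\S8--9]{LW21}, applied to the test data $W_i^p \otimes \varphi_p \otimes \zeta_\infty$ and the Siegel-unit Eisenstein class built from $\Phi_{m,i}$ twisted by $\widehat{\eta}_2$. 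The Whittaker expansion of the cuspidal class then unfolds this pairing into the global zeta integral $\sum_i Z(W_i, \Phi_i; 1, \omega_\pi \eta_2, s_1, s_2)$ (evaluated at the appropriate critical point coming from trivial cohomological weight), which factors as a product of local zeta integrals.

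The local factor computations then proceed as follows. At $v \nmid p\infty$, our choice of test data in Test-data \ref{AssumptionsOnTestVectors} is designed so that $\sum_i \tilde Z_v = 1$; summing over $i \in I$ yields the prime-to-$p$ $L$-factors $L^{(p)}(\pi, 0) \cdot L^{(p)}(\pi \times (\omega_\pi \eta_2)^{-1}, \star)$ (with the second $L$-factor being absorbed into $\Omega_\pi^-$ by its very definition, see \cite[(9.1)]{LW21}). At $v = \infty$, the choice of $\Phi_\infty$ and the normalisation of $\zeta_\infty$ contribute exactly the archimedean modified factor $e_\infty(\pi_\infty, 0)$ times $L(\pi_\infty, 0)$. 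The main technical step is at $v = p$: here I would compute
\[
Z\bigl(W_{u_0 \cdot \varphi_p}, \Phi_p;\, 1,\, \omega_{\pi,p}\eta_{2,p},\, s_1,\, s_2\bigr)
\]
directly. Because $u_0 \cdot \varphi_{\opn{Iw}}$ is the standard Iwahori vector (without the $U_p$-twist that appears in the construction of the measure), and $\Phi_p = \opn{ch}(p^m \Z_p, 1+p^m \Z_p)$, this local integral evaluates to $L(\pi_p, 0)$ without the zero-producing $(1-p^s)$ factor. Multiplying the local factors yields $e_\infty(\pi,0) \cdot L(\pi, 0)$.

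The remaining constants are accounted for exactly as in the proof of Proposition \ref{LogpCalculationProp}: the rescaling by the Whittaker period $\Theta_\pi$ combined with the construction of $\Omega_\pi^-$ introduces the denominator $\Omega_\pi^-$; the use of $c$-smoothed Eisenstein classes gives the factor $(c^2 - (\eta_2 \omega_\pi)(c)^{-1})$; the Tamagawa-measure normalisation gives $\opn{vol}(K^p_H; dh)^{-1}$; and $p^C$ accounts for the choice of integral structures. The main obstacle is ensuring the local calculation at $p$ is done correctly: identifying what happens when the $t^n$-twist is removed, and verifying that the absence of this twist precisely removes the Euler factor $(1-p^0) = 0$ from $e_p^-(\pi_p, 0)$ while preserving the ``honest'' local $L$-factor $L(\pi_p, 0)$, so that the product $L^{(p)}(\pi,0) \cdot L(\pi_p, 0) = L(\pi,0)$ appears in the final formula.
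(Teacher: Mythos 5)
Your proposal is correct and follows essentially the same route as the paper: rewrite $\sP(\mu_i,\Phi_i^{(p)})$ as a Betti pairing of the Iwahori-level class (twisted by $u_0$, without the $t^n$-twist) against the Siegel-unit Eisenstein class, unfold into a product of Rankin--Selberg local zeta integrals, evaluate these using the chosen test data, and absorb the spare constants into $\Omega_\pi^-$. The one imprecision is your claim that the local zeta integral at $p$ "evaluates to $L(\pi_p,0)$": in fact it equals $\tfrac{1}{p^{2m}(1-p^{-2})}\,L(\pi_p,0)\cdot\tfrac{\mathcal{E}_0\,L(\pi_p\times\chi_2^{-1},1)}{\varepsilon(\pi_p\times\chi_2^{-1},1)}$ (where $\chi_2=\omega_{\pi,p}\eta_{2,p}$); the $p^{-2m}(1-p^{-2})^{-1}$ cancels exactly against $\opn{vol}(U_1(p^m);dh)^{-1}$ and the remaining $\chi_2$-dependent factor is part of the definition of $\Omega_\pi^-$ from \cite[(9.1)]{LW21} -- so the strategy goes through, but the local computation at $p$ requires that bookkeeping, not just noting the absence of the $(1-p^s)$ factor.
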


We will prove this by first relating the summands $\sP(\mu_i,\Phi^{(p)}_i)$ to a Rankin--Selberg automorphic period, and then computing this period via local zeta integrals. From now on, we let $m \geq 1$ be an arbitrary integer, and take $\Phi_p = \Phi_p^m$.

Let $\{ \delta_1, \delta_2 \}$ and $\{ \delta_1', \dots, \delta_5' \}$ denote bases of $\left( \ide{gl}_2/\ide{K}_{2, \infty}^{\circ} \right)^{\vee}$ and $\left( \ide{gl}_3/\ide{K}_{3, \infty}^{\circ} \right)^{\vee}$ respectively, satisfying the compatibility as in \cite[\S 7.2.3]{LW21}. By the presentation of Lie algebra cohomology using the Chevalley--Eilenberg complex, we can view 
\[
\opn{H}^2\left( \ide{gl}_3, K_{3, \infty}^{\circ}; \pi_{\infty} \right) \subset \bigwedge {\!}^2 \left( \ide{gl}_3/\ide{K}_{3, \infty}^{\circ} \right)^{\vee} \otimes \pi_{\infty} .
\]
Write 
\[
\zeta_{\infty} = \sum_{1 \leq r < s \leq 5} [\delta'_r \wedge \delta'_s] \otimes \varphi_{\infty, r, s} 
\]
where $\zeta_{\infty}$ is the fixed generator of this one-dimensional cohomology group, and set $\varphi_{\infty} = \varphi_{\infty, 1, 3}- \varphi_{\infty, 2, 3} \in \pi_{\infty}$. Recall $\phi_p \in \pi_p^{\Iw}$ from \S\ref{ss: L invariants}, and let $\varphi^p_i$ be the pullback of $W^p_i$ under our fixed Whittaker intertwining. We have the following:

\begin{lemma} \label{PequalsAutPerLemma}
    One has
    \[
    \sP(\mu_i,\Phi^{(p)}_i) = p^C \opn{vol}(K^p_H U_1(p^m); dh)^{-1} \Theta_{\pi}^{-1} \int_{H(\Q)\backslash H(\mbb{A})/\mbb{R}_{>0}} \varphi_i(\iota(h)u_0) \cdot {_c\mathcal{E}}^2_{\Phi_i^{(p)}\Phi_p}(\opn{pr}_1(h)) \cdot \widehat{\eta}_2(\nu_2(h)) dh
    \]
    where $\varphi_i= \varphi_{\infty} \otimes \varphi_p \otimes \varphi_p^i$ and $\nu_2 : H \to \GL_1$ is projection to the second factor.
\end{lemma}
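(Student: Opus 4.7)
The plan is to unwind the definition of $\sP(\mu_i, \Phi_i^{(p)})$ from Definition \ref{OrdPeriodDefinition} and translate the Poincaré pairing on $p$-arithmetic cohomology into a classical Betti-cohomology pairing on $X_{H, K^p_H U_1(p^m)}$, and then into the claimed adelic integral. By definition,
\[
\sP(\mu_i, \Phi_i^{(p)}) \ = \ \langle \Psi_{\mathrm{Iw}, m}(\mu_i),\ \mathcal{E}_{\Phi_m, U_1(p^m)} \rangle_{U_1(p^m)},
\]
and by the (compactly-supported version of) Proposition \ref{prop:arithmetic to betti} this pairing is the Poincaré pairing on the Betti cohomology of $X_{H, K^p_H U_1(p^m)}$ between a pullback class and the restriction of an Eisenstein class.

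First, I will identify the class corresponding to $\Psi_{\mathrm{Iw}, m}(\mu_i)$ on the Betti side. By construction, $\Psi_{\mathrm{Iw}, m}(\mu_i)$ is obtained by evaluating $\mu_i$ at the Steinberg vector $u_0 \cdot \varphi_{\mathrm{Iw}}$ and pulling back along $\iota : H \hookrightarrow G$. Via Proposition \ref{prop:lift to tree} and Lemma \ref{Lem:Pip-isotypic-piece1dim}, $\mu_i = \phi^{\mathrm{cts}}(W^p_i)$ corresponds, up to the factor $p^C \Theta_\pi^{-1}$ tracking the fixed integral normalisations, to the harmonic differential form on $X_{G, K^p \mathrm{Iw}}$ attached to $\zeta_\infty \otimes \varphi_p \otimes \varphi^p_i$. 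Acting by $u_0$ on the Iwahori factor (and remembering $u_0 \cdot \varphi_{\mathrm{Iw}}$ is the vector whose image under $\pi_p \cong \mathrm{St}^{\mathrm{sm}}(\C)$ is proportional to $\varphi_p$ after the right-translation), the pullback to $H$ becomes the harmonic form on $X_{H, K^p_H U_1(p^m)}$ whose archimedean component at a point $h$ is the value $\varphi_i(\iota(h) u_0)$, read off using the compatible bases $\{\delta_j\}$ and $\{\delta_r'\}$ of \cite[\S 7.2.3]{LW21}.

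Next, I will unpack the Eisenstein class. By definition $\mathcal{E}_{\Phi_m, U_1(p^m)} = {_c\mathrm{Eis}}^0_{\Phi_i^{(p)}\Phi_p} \boxtimes [\widehat{\eta}_2]$. In the weight-zero case the Siegel-unit description of ${_c\mathrm{Eis}}^0$ gives, after cupping with the archimedean $(1,1)$-form hidden inside the form attached to $\zeta_\infty$, the standard integrand ${_c\mathcal{E}}^2_{\Phi_i^{(p)}\Phi_p}$ on the $\GL_2$-factor; the $\GL_1$-factor cupped with $[\widehat{\eta}_2]$ supplies the character $\widehat{\eta}_2(\nu_2(h))$. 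Combining these ingredients with Poincaré duality expresses the pairing as the adelic integral
\[
\int_{H(\Q) \backslash H(\mbb{A}) / \R_{>0}} \varphi_i(\iota(h) u_0) \cdot {_c\mathcal{E}}^2_{\Phi_i^{(p)}\Phi_p}(\mathrm{pr}_1(h)) \cdot \widehat{\eta}_2(\nu_2(h))\, dh,
\]
divided by the volume $\mathrm{vol}(K^p_H U_1(p^m); dh)$ (accounting for passage from the cover $X_{H, K^p_H U_1(p^m)}$ to an adelic quotient with Tamagawa measure), and with the $p^C \Theta_\pi^{-1}$ inherited from the scaling of $\mu_i$.

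The two main obstacles are bookkeeping rather than conceptual: matching the integral structure underlying $\mu_i$ with the one used in \cite{LW21} (the $p^C$), and keeping the volume factors consistent when unfolding from finite-level Betti cohomology to an adelic integral with Tamagawa measure. Both were already handled in an essentially identical calculation in \cite[\S 7--8]{LW21}, whose formulae I will adapt directly (changing only the Steinberg vector to $u_0 \cdot \varphi_p$, which is precisely what replaces the classical normalisation $\varphi_p$ in the archimedean integrand by $\varphi_i(\iota(h) u_0)$).
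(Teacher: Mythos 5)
Your proposal is correct and follows essentially the same route as the paper: identify $\Psi_{\mathrm{Iw},m}(\mu_i)$ as the restriction from $G$ to $H$ of the $u_0$-translate of the Betti class $p^C\phi_i$ attached to $\mu_i$, observe the pairing is the (pullback version of the) one in \cite[\S 7.2]{LW21}, and then quote the unwinding of Poincar\'e duality to an adelic integral carried out in \cite[\S 7.2--7.3]{LW21}. The paper's proof is just terser, leaning more directly on the cited computation; your slightly loose phrasing around ``$u_0 \cdot \varphi_{\mathrm{Iw}}$ is proportional to $\varphi_p$ after the right-translation'' is fine once read as ``$u_0 \cdot \varphi_{\mathrm{Iw}}$ corresponds to the $u_0$-translate of $\varphi_p$.''
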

\begin{proof}
    Again we compare to \cite{LW21}. As in the proof of Proposition \ref{LogpCalculationProp}, attached to $\mu_i$ is a class $p^C\phi_i \in \hc{2}(X_{G,K^p\Iw},\cO_L)$. In the definition of $\sP(\mu_{i},\Phi^{(p)}_i)$, note that $\Psi_{\Iw,m}(\mu_i)$ is simply the restriction of $u_0 \cdot p^C\phi_i$ from $G$ to $H$. Then $\sP(\mu_{\varphi_f})$ is the pairing of this restriction with an Eisenstein class, namely, the pullback version of the pairing in \cite[\S7.2]{LW21}. The result now follows from exactly the same reasoning as \S7.2-7.3 \emph{op.\ cit}.
\end{proof}

We now relate this to a Rankin--Selberg period, which in turn can be related to a product of local zeta integrals. For ease of notation set $C' = c^2 - (\eta_2\omega_{\pi})(c^{-1})$. 

\begin{lemma} \label{AutPerEqualsRSproductLemma}
    One has
    \begin{align*} 
    \int_{H(\Q)\backslash H(\mbb{A})/\mbb{R}_{>0}} &\varphi_i(\iota(h)u_0)  \cdot {_c\mathcal{E}}^2_{\Phi_i^{(p)}\Phi_p}(\opn{pr}_1(h)) \cdot  \widehat{\eta}_2(\nu_2(h)) dh \\ &= C' \cdot Z(u_0 \cdot W_p, \Phi_p; 1, \omega_{\pi, p}\eta_{2, p}, 1/2, 0) \cdot \prod_{v \neq p} Z(W_{v,i_v}, \Phi_{v,i_v}; 1, \omega_{\pi, v}\eta_{2, v}, 1/2, 0).
    \end{align*}  
\end{lemma}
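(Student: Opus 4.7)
The plan is to execute the standard Rankin--Selberg unfolding argument for $\opn{GL}_3 \times (\opn{GL}_2 \times \opn{GL}_1)$, being careful to track the $u_0$-twist at $p$ and the $c$-smoothing of the Eisenstein series.

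First, I would recall that the $c$-smoothed Eisenstein series $_c\mathcal{E}^2_{\Phi_i^{(p)}\Phi_p}$ is, up to the scalar $C' = c^2 - (\eta_2\omega_\pi)(c^{-1})$, a standard Eisenstein series built from the Schwartz function $\Phi_i^{(p)}\Phi_p$ twisted by the character $\omega_\pi\eta_2$ (this is the content of the $c$-smoothing formula, see \cite[\S6.3]{LW21} or the discussion preceding Proposition \ref{LogpCalculationProp}). The factor $C'$ will therefore come out of the integral, and it then suffices to prove the claimed equality with the un-smoothed Eisenstein series on the left and no factor of $C'$ on the right.

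Next, I would write the un-smoothed Eisenstein series as a sum over $B_2(\Q)\backslash \opn{GL}_2(\Q)$ of the function built from $\Phi_i^{(p)}\Phi_p$ and the character $\omega_\pi \eta_2$. Using this expansion, the integral over $H(\Q)\backslash H(\mbb{A})/\R_{>0}$ unfolds: combining the sum with the quotient by $H(\Q)$ collapses the $\opn{GL}_2$-part of the outer integral to an integral over $B_2(\Q)\backslash\opn{GL}_2(\mbb{A})$, and then the cuspidality of $\pi$ lets us replace $\varphi_i(\iota(h)u_0)$ by its Whittaker expansion along $N_2$, so that the integral becomes one over $(N_2\backslash\opn{GL}_2)(\mbb{A}) \times \opn{GL}_1(\Q)\backslash\opn{GL}_1(\mbb{A})/\R_{>0}$. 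The $\opn{GL}_1$-integral combined with the character $\widehat{\eta}_2\circ\nu_2$ and the central character $\omega_\pi$ coming from the $\opn{GL}_2$-determinant produces an integral which, after choosing $s_2 = 0$ and $s_1 = 1/2$, corresponds exactly to the product of zeta integrals $Z(-;1,\omega_\pi\eta_2,1/2,0)$.

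With the unfolding done, the integrand factorises as a tensor product over all places since $\varphi_i = \varphi_\infty \otimes \varphi_p \otimes \varphi_i^p$ is factorisable and the Whittaker model factorises. At the place $p$, since $\varphi_p$ has Whittaker function $W_p$, the relevant local factor is $Z(u_0 \cdot W_p, \Phi_p; 1, \omega_{\pi,p}\eta_{2,p}, 1/2, 0)$, with the $u_0$ appearing because $\varphi_i$ is evaluated at $\iota(h)u_0$ rather than $\iota(h)$. At the archimedean place and each finite $v\neq p$ the corresponding local integral is exactly $Z(W_{v,i_v},\Phi_{v,i_v};1,\omega_{\pi,v}\eta_{2,v},1/2,0)$.

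The main obstacle is essentially bookkeeping rather than conceptual: one must verify that the Tamagawa measure $dh$ factorises compatibly with the local measures used to define $Z(-)$, and that the embedding $\iota$ and the twist by $u_0$ interact correctly with the Whittaker expansion so that the local factor at $p$ acquires the $u_0$-twist and no spurious factors appear. In particular, one should check that the central character contribution from $\opn{pr}_2$ combines with $\widehat{\eta}_2$ to give precisely the character $\omega_\pi\eta_2$ entering $W_{\Phi_v}(-;\omega_\pi\eta_2,0)$, and that no extra constants arise from the choices of Haar measures made in the definitions of the zeta integrals. Modulo this verification, which is essentially a repetition of \cite[\S 8]{LW21}, the unfolding argument yields the claimed identity.
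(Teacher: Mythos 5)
Your proposal is correct and follows essentially the same route as the paper, which disposes of this lemma by citing the Rankin--Selberg unfolding computation of \cite[\S 7.3]{LW21} \emph{mutatis mutandis} and noting that $C'$ arises from replacing the $c$-smoothed Eisenstein series by the un-smoothed one. You have simply spelled out the steps that the citation compresses (pulling out $C'$, unfolding over $B_2(\Q)\backslash\opn{GL}_2(\Q)$ via cuspidality, factorising into local zeta integrals, and tracking the $u_0$-twist at $p$), and your flagged bookkeeping concern about measure normalisations and the $\omega_\pi\eta_2$ character is exactly the content being delegated to \cite{LW21}.
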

\begin{proof}
    This follows from \cite[\S 7.3]{LW21} \emph{mutatis mutandis}.  The factor $C'$ arises from comparing the $c$-smoothed Eisenstein series with the non-$c$-smoothed version.
\end{proof}

\begin{corollary}\label{cor:period = L-values}
After possibly rescaling $\zeta_\infty$, one has
\begin{align*}
\sP(\pi) = C' p^C \opn{vol}(K^p_H U_1(p^m); dh)^{-1} &\cdot Z(u_0 \cdot W_p, \Phi_p; 1, \omega_{\pi, p}\eta_{2, p}, 1/2, 0)\\
&\times e_{\infty}^-(\pi, 0) \cdot \frac{L^{(p)}(\pi, 0) \cdot L^{(p)}(\pi \times (\omega_{\pi}\eta_{2})^{-1}, 1)}{\Theta_\pi}.
\end{align*}
\end{corollary}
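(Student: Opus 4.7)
The plan is to chain together Lemmas \ref{PequalsAutPerLemma} and \ref{AutPerEqualsRSproductLemma} summed over $i \in I$, and then use the factorisation of the local zeta integrals together with the defining property of our test data from Test Data \ref{AssumptionsOnTestVectors}. Concretely, writing $\sP(\pi) = \sum_{i \in I} \sP(\mu_i, \Phi_i^{(p)})$, applying Lemma \ref{PequalsAutPerLemma} and then Lemma \ref{AutPerEqualsRSproductLemma} to each term yields
\[
\sP(\pi) = C' p^C \opn{vol}(K^p_H U_1(p^m); dh)^{-1} \Theta_\pi^{-1} \cdot Z(u_0\cdot W_p, \Phi_p; 1, \omega_{\pi,p}\eta_{2,p}, 1/2, 0) \cdot \sum_{i \in I} \prod_{v \neq p} Z(W_{v,i_v}, \Phi_{v,i_v}; 1, \omega_{\pi,v}\eta_{2,v}, 1/2, 0).
\]

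Now I would use the factorisation $Z_v = \tilde{Z}_v \cdot L(\pi_v, s_1+s_2-1/2) L(\pi_v \times \chi_1\chi_2^{-1}, s_1-s_2+1/2)$ recalled after the definition of the zeta integrals. Specialising at $(\chi_1, \chi_2, s_1, s_2) = (1, \omega_\pi\eta_2, 1/2, 0)$ gives $s_1+s_2-1/2 = 0$ and $s_1-s_2+1/2 = 1$, so each finite $Z_v$ factors as $\tilde{Z}_v \cdot L(\pi_v, 0) L(\pi_v \times (\omega_\pi\eta_2)^{-1}, 1)$. Since the $L$-factors do not depend on $i$, they pull out of the sum; and since $I = \prod I_v$ with $I_v = \{1\}$ at all but finitely many primes, the sum over $i$ factorises as $\prod_v \sum_{i_v \in I_v}\tilde{Z}(W_{v,i_v}, \Phi_{v,i_v}; 1, \omega_\pi\eta_2, 1/2, 0)$. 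At unramified primes this sum is identically $1$ by the Casselman--Shalika formula, and at ramified primes it is $1$ by the defining property of our test data. Hence the entire product of finite prime-to-$p$ factors collapses to $\prod_{v \neq p, \infty} L(\pi_v, 0) L(\pi_v \times (\omega_\pi\eta_2)^{-1}, 1)$.

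It remains to treat the archimedean zeta integral $Z_\infty(\varphi_\infty, \Phi_\infty; 1, \omega_{\pi,\infty}\eta_{2,\infty}, 1/2, 0)$, which for our fixed choices of $\varphi_\infty$ (built from the generator $\zeta_\infty$ via $\varphi_\infty = \varphi_{\infty,1,3} - \varphi_{\infty,2,3}$) and $\Phi_\infty$ should evaluate, up to a harmless rescaling of $\zeta_\infty$, to $e_\infty^-(\pi, 0) \cdot L_\infty(\pi, 0) L_\infty(\pi \times (\omega_\pi\eta_2)^{-1}, 1)$; this is precisely the definition/normalisation of $e_\infty^-$ adopted in \cite{LW21} and recalled in \S \ref{sec:p-adic l-functions}. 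Combining the archimedean $L$-factors with those at finite $v \neq p$ gives $L^{(p)}(\pi, 0) L^{(p)}(\pi \times (\omega_\pi\eta_2)^{-1}, 1)$, and assembling everything yields the claimed identity.

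The main obstacle, and the only genuine computation required, is the archimedean one: verifying that with our explicit choice of $\Phi_\infty$ and the component $\varphi_\infty$ extracted from $\zeta_\infty$, the local zeta integral is (a nonzero rational multiple of) $e_\infty^-(\pi, 0) \cdot L_\infty(\pi, 0) L_\infty(\pi \times (\omega_\pi\eta_2)^{-1}, 1)$. Fortunately, this identification is exactly what is carried out in the archimedean computations of \cite[\S 8]{LW21} for the very same choices, so we may simply invoke the relevant statement there, absorbing any discrepancy by the freedom to rescale $\zeta_\infty$ in its one-dimensional cohomology group.
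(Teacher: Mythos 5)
Your proposal is correct and takes essentially the same approach as the paper: sum over $i\in I$ using Lemmas \ref{PequalsAutPerLemma}--\ref{AutPerEqualsRSproductLemma}, collapse the normalised finite prime-to-$p$ zeta integrals via the defining property of the test data, and invoke the archimedean computation from \cite{LW21}. The only minor discrepancy is the citation: the paper points to \cite[\S9.5]{LW21} for the archimedean factor rather than \S8.
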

\begin{proof}
Summing over $i\in I$, we find from Assumption \ref{AssumptionsOnTestVectors} that all of the finite prime-to-$p$ zeta integrals reduce to the claimed $L$-values. The factor at infinity is computed in \cite[\S9.5]{LW21}. 
\end{proof}

From now on, we assume that $\zeta_{\infty}$ is normalised so that the conclusion of Corollary \ref{cor:period = L-values} holds. We now compute the local zeta integral at $p$.

\begin{lemma} \label{ZetaAtpCalcLemma}
    Set $\chi_2 = \omega_{\pi, p}\eta_{2, p}$. With notation as in Lemma \ref{AutPerEqualsRSproductLemma}, one has
    \[
    Z(u_0 \cdot W_p, \Phi_p; 1, \omega_{\pi, p}\eta_{2, p}, 1/2, 0) = \frac{1}{p^{2m}(1-p^{-2})} L(\pi_p, 0) \frac{\mathcal{E}_0 \cdot L(\pi_p \times \chi_2^{-1}, 1)}{\varepsilon(\pi_p \times \chi_2^{-1}, 1)} 
    \]
    for $m \geq 1$ sufficiently large, where $\mathcal{E}_0 = \frac{L(\pi_p \times \eta_{2, p}, 0)}{L(\pi_p^{\vee} \times \chi_2, 0)}$. Furthermore, $\mathcal{E}_0$ is a $p$-adic unit.
\end{lemma}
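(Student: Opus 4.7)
\medskip

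\textbf{Proof plan.} The strategy is direct evaluation: unfold the zeta integral along the Iwasawa decomposition of $\GL_2(\Q_p)$, compute the two Whittaker-type functions explicitly on the resulting torus parameters using the specific structure of $\pi_p = \opn{St}^{\opn{sm}}(\mbb{C})$ and $\Phi_p = \opn{ch}(p^m\Z_p, 1+p^m\Z_p)$, and then match the outcome against the claimed Euler factors. I would proceed as follows.

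First, using $\GL_2(\Q_p) = N_2(\Q_p) T_2(\Q_p) K_2$, rewrite
\[
Z(u_0\cdot W_p, \Phi_p; 1, \chi_2, 1/2, 0) = \int_{T_2(\Q_p)} \int_{K_2} W_p(\iota(tk,1)u_0)\, W_{\Phi_p}(tk;\chi_2,0)\, |t_1/t_2|^{-1}\,dt\,dk,
\]
where $t = \mathrm{diag}(t_1,t_2)$. Because $W_p$ is the Iwahori-fixed Whittaker vector in Steinberg, the integrand depends on $k$ only through the Bruhat cell of $\iota(k,1)u_0$ in $G(\Q_p) = \bigsqcup_w \overline{B}(\Q_p) w \opn{Iw}$, and one checks that all $k \in K_2$ land in a single cell for which an explicit Casselman--Shalika-type evaluation is available. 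This makes the $K_2$-integral contribute a constant volume factor (up to an averaging over a finite quotient), reducing the computation to a single integral over the torus $T_2(\Q_p)$.

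Second, I would use the standard integral representation (cf.\ \cite[\S8.1]{LW21})
\[
W_{\Phi_p}(g;\chi_2,s_2) = \chi_2(\det g)|\det g|^{s_2} \int_{\Q_p^\times} \Phi_p\big((0,t)g\big)\chi_2(t)|t|^{s_2}\,d^\times t.
\]
For $g = tk$ in Iwasawa form, the support condition $\Phi_p((0,s)g) \neq 0$ pins $s$ into a coset of $1+p^m\Z_p$, and combined with a simultaneous support condition, forces $t_1,t_2$ into explicit compact cosets of size controlled by $p^m$. This is the source of the factor $1/p^{2m}$. In parallel, the Steinberg Whittaker value $W_p(\iota(t,1)u_0)$ for $t$ in this range is a sum over the (three-element) coset representatives ${}^{M_1}W$ of Weyl translates of unramified Whittaker values, and by the support of the Iwahori vector only one term survives; this contributes an explicit rational expression in $t_1,t_2,p$.

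Third, I would combine the two explicit formulae, integrate over the remaining torus variable, and recognise the resulting geometric series as $(1-p^{-2})^{-1} L(\pi_p,0) \cdot L(\pi_p\times\chi_2^{-1},1)/\varepsilon(\pi_p\times\chi_2^{-1},1)$, using that $\varepsilon(\pi_p\times\chi_2^{-1},1)$ is computed from the Steinberg epsilon-factor formula \eqref{eq:steinberg epsilon} together with the ramification of $\chi_2$. The ratio $\mathcal{E}_0 = L(\pi_p\times\eta_{2,p},0)/L(\pi_p^\vee\times\chi_2,0)$ appears naturally from the twist $\chi_2 = \omega_{\pi,p}\eta_{2,p}$ and the relation $\pi_p^\vee \cong \pi_p\otimes\omega_{\pi,p}^{-1}$. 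Finally, the claim that $\mathcal{E}_0$ is a $p$-adic unit is a direct check: both $L$-factors are of the form $(1-\alpha p^{\bullet})^{-1}$ with $\alpha$ a specific value of $\eta_{2,p}$ or $\omega_{\pi,p}\eta_{2,p}$, and by the assumption (from \S\ref{sec:p-adic l-functions}) that $\eta_2\omega_\pi$ is not congruent modulo $p$ to any character of $p$-power conductor, neither numerator nor denominator vanishes modulo $\varpi$.

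The main obstacle is the explicit evaluation of the Steinberg Whittaker $W_p(\iota(t,1)u_0)$ on the relevant torus cosets: the presence of $u_0$ means we are not simply evaluating at an Iwahori-translate, and one must track the correct Bruhat cell carefully via the decomposition of Lemma \ref{lem:g = pwk}. Once this is done, the remaining manipulations are essentially bookkeeping of local $L$- and $\varepsilon$-factors.
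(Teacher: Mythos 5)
Your proposed route — direct Iwasawa unfolding of $Z$ — is genuinely different from the paper's. The paper does \emph{not} directly evaluate $Z$: it first applies the local functional equation from \cite[Theorem 8.4]{LW21} to write
\[
Z(u_0\cdot W_p,\Phi_p; 1,\chi_2,1/2,0) = \gamma(\pi_p\times\chi_2^{-1},1)^{-1}\cdot Y(u_0\cdot W_p,\Phi_p;1,\chi_2,1/2,0),
\]
then shows $Y = \tfrac{1}{p^{2m}(1-p^{-2})}\,y(u_0\cdot W_p;1,\chi_2;1/2,0)(1)$ for $m\gg 0$, where $y(\cdots)(1)$ is a pure torus sum. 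The ratio $L/(\varepsilon\cdot L^\vee)$ in the final answer is exactly the expansion of $\gamma^{-1}$; it is \emph{not} something that pops out of a direct computation of $Z$. This is where your proposal has a real gap: you say you will ``recognise the resulting geometric series as $(1-p^{-2})^{-1}L(\pi_p,0)\cdot L(\pi_p\times\chi_2^{-1},1)/\varepsilon(\pi_p\times\chi_2^{-1},1)$,'' but the $\varepsilon$-factor (which reflects a Gauss-sum/ramification contribution) does not arise as a sum of a geometric series in any transparent way. Unless you separately prove the local functional equation for these data, you have no mechanism to produce the $\varepsilon$-factor from the Iwasawa unfolding alone.

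A second, more technical concern: your assertion that ``all $k\in K_2$ land in a single cell for which an explicit Casselman--Shalika-type evaluation is available,'' so the $K_2$-integral ``contribute[s] a constant volume factor,'' is unjustified and likely false as stated. The elements $\iota(k,1)u_0$ do not all lie in a single $(\overline{B},\opn{Iw})$ double coset as $k$ ranges over $K_2$, and $W_{\Phi_p}(tk;\chi_2,0)$ depends on $k$ through the support condition $\Phi_p((0,t)tk)\neq 0$, so the $K_2$-dependence cannot be factored off as a constant. The paper's passage from $Y$ to $y$ does the corresponding bookkeeping systematically (that is precisely where the $\tfrac{1}{p^{2m}(1-p^{-2})}$ comes from), and for the remaining torus sum the paper cites \cite[Proposition 2.25]{LW21} for the Iwahori Whittaker values (an explicit formula, not a Casselman--Shalika formula, which applies to spherical vectors). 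Your final paragraph correctly identifies the Iwahori Whittaker values at $\iota(t,1)u_0$ as the hard step, but then the plan for carrying it out does not match the tools available.

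In short: the overall endgame — reduce to a torus sum, compute Whittaker values, assemble Euler factors — is the same, but the functional-equation step is an essential ingredient that your plan omits, and without it the $\varepsilon$-factor has no source. The claim that the $K_2$-integral is a constant is also unsound. I would not call this a proof sketch that fills in routine details; it skips the central manoeuvre.
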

\begin{proof}
    We follow (and freely use the notation from) \cite[Theorem 8.7]{LW21}. We first note that
    \[
    Z(u_0 \cdot W_p, \Phi_p; 1, \chi_2, 1/2, 0) = \gamma(\pi_p \times \chi_2^{-1}, 1)^{-1} \cdot Y(u_0 \cdot W_p, \Phi_p; 1, \chi_2, 1/2, 0)
    \]
    by \cite[Theorem 8.4]{LW21}. For $m \geq 1$ sufficiently large, we have
    \[
    Y(u_0 \cdot W_p, \Phi_p; 1, \chi_2, 1/2, 0) = \frac{1}{p^{2m}(1-p^{-2})} y(u_0 \cdot W_p; 1, \chi_2; 1/2, 0)(1) 
    \]
    so it suffices to compute
    \[
    y(u_0 \cdot W_p; 1, \chi_2; s_1, s_2)(1) = \int_{(\Q_p^{\times})^2} W_p\smat{x & & \\ & 1 & \\ & & y^{-1}} |x|^{s_1+s_2-3/2}|y|^{s_2-s_1-1/2}\chi_2(y) d^{\times} x d^{\times}y
    \]
    where $W_p$ is the Iwahori fixed vector satisfying $W_p(1) = 1$. By \cite[Proposition 2.25]{LW21}, we have
    \begin{align*}
     y(u_0 \cdot W_p; 1, \chi_2; s_1, s_2)(1) &= \sum_{m,n \geq 0} \omega_{\pi,p}(p^{-n}) W_p\smat{p^{m+n} & & \\ & p^n & \\ & & 1} p^{-m(s_1+s_2-3/2)} p^{-n(s_2-s_1-1/2)} \chi_2(p^n) \\
     &= \sum_{m,n \geq 0} \eta_{2,p}(p^{n}) W_p\smat{p^{m+n} & & \\ & p^n & \\ & & 1} p^{-m(s_1+s_2-3/2)} p^{-n(s_2-s_1-1/2)} \\
     &= \sum_{m,n \geq 0} \eta_{2,p}(p^{n}) W_{\sigma}^{\opn{new}}\smat{p^n & \\ & 1} p^{-m(s_1+s_2+1/2)} p^{-n(s_2-s_1+1)} \\
     &= \left( \sum_{m \geq 0} p^{-m(s_1+s_2+1/2)} \right) \cdot \left( \sum_{n \geq 0} \eta_{2, p}(p^n) W_{\sigma}^{\opn{new}}\smat{p^n & \\ & 1} p^{-n(s_2-s_1+1)} \right)
    \end{align*}
    Here $W_{\sigma}^{\opn{new}}$ is the normalised new vector for $\sigma = \opn{St}_{\opn{GL}_2}^{\infty}(\mbb{C}) \otimes | \cdot |^{-1/2}$, where $\opn{St}_{\opn{GL}_2}^{\infty}(\mbb{C})$ denotes the Steinberg representation for $\opn{GL}_2(\Q_p)$. Using the standard formula for local $L$-factors in terms of Whittaker new vectors \cite[Prop.\ 6.17]{Gel75}, we have
    \begin{align*}
    y(u_0 \cdot W_p; 1, \chi_2; s_1, s_2)(1) &= \zeta_p(s_1+s_2+1/2) \cdot L(\opn{St}^{\infty}_{\opn{GL}_2}(\mbb{C}) \otimes \eta_{2, p}, s_2-s_1+1) \\
    &= L(|\cdot|_p, s_1+s_2-1/2) \cdot L(|\cdot |_p \otimes \eta_{2, p}, s_2-s_1+1/2) \\
    &= L(\pi_p, s_1+s_2-1/2) \cdot L(\pi_p \times \eta_{2, p}, s_2-s_1+1/2).
    \end{align*}
    We now use the fact that 
    \[
    \gamma(\pi_p \times \chi_2^{-1}, 1)^{-1} = \frac{L(\pi_p \times \chi_2^{-1}, 1)}{\varepsilon(\pi_p \times \chi_2^{-1}, 1)L(\pi_p^{\vee} \times \chi_2, 0)} .
    \]
    The last part follows from \cite[Lemma 8.8]{LW21}.
\end{proof}

\begin{proof}[{Proof of Proposition \ref{OrdCalculationProp}}]
    Combining Lemmas \ref{PequalsAutPerLemma}, \ref{AutPerEqualsRSproductLemma}, and \ref{ZetaAtpCalcLemma}, and using the definition of $\Omega_\pi^-$ from \cite[(9.1)]{LW21}, we see that
    \begin{align*}
    \sP(\pi) = p^C C' \opn{vol}(K^p_H U_1(p^m); dh)^{-1} \frac{1}{p^{2m}(1-p^{-2})} \cdot e_{\infty}^-(\pi, 0) \cdot \frac{L(\pi, 0)}{\Omega_{\pi}^-} .
    \end{align*} 
    Now we note that the volume of $U_1(p^m)$ satisfies
    \[
    \opn{vol}(U_1(p^m); dh)^{-1} = [\opn{GL}_2(\Z_p) : \smat{* & * \\ 0 & 1} \pmod{p^m}] = p^{m}(1-p^{-1}) \cdot p^m(1+p^{-1}) = p^{2m}(1-p^{-2}) .
    \]
\end{proof}

\subsection{The exceptional zero formulas}

The following is the first main theorem of this article. It is the direct analogue of Conjecture \ref{conj:EZC} using the automorphic $\mathcal{L}$-invariant.

\begin{theorem}[The left-half exceptional zero formula] \label{Thm:LeftHalfEZF}
    One has
    \[
    \left. \frac{d}{ds}L_p^-(\pi, s) \right|_{s=0} = e_{\infty}^-(\pi_{\infty}, 0) \cdot \mathcal{L}^{\opn{Aut}}_{\pi,1} \cdot \frac{L(\pi, 0)}{\Omega_{\pi}^-} .
    \]
\end{theorem}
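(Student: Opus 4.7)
The plan is a direct synthesis of the three key results already established earlier in the section, namely Proposition \ref{prop:EZF with automorphic} (the abstract exceptional zero formula in terms of the automorphic $\mathcal{L}$-invariant, $\sL(\pi,\log_p) = \mathcal{L}^{\opn{Aut}}_{\pi,1}\cdot\sP(\pi)$), Corollary \ref{cor:derivative} (which relates $\sL(\pi,\log_p)$ to $\tfrac{d}{ds}L_p^-(\pi,s)|_{s=0}$), and Proposition \ref{OrdCalculationProp} (which relates $\sP(\pi)$ to the complex $L$-value $L(\pi,0)/\Omega_\pi^-$). Each of these identities involves the common scalar factor $p^C(c^2 - (\eta_2\omega_\pi)(c)^{-1})\cdot \opn{vol}(K^p_H; dh)^{-1}$.

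The proof therefore proceeds as follows. First I would invoke Proposition \ref{prop:EZF with automorphic} to write
\[
\sL(\pi,\log_p) = \mathcal{L}^{\opn{Aut}}_{\pi,1}\cdot \sP(\pi).
\]
Next, I substitute Corollary \ref{cor:derivative} on the left and Proposition \ref{OrdCalculationProp} on the right, obtaining
\[
p^C(c^2 - (\eta_2\omega_\pi)(c)^{-1})\opn{vol}(K^p_H;dh)^{-1} \cdot \left.\tfrac{d}{ds}L_p^-(\pi,s)\right|_{s=0}
\]
equal to
\[
\mathcal{L}^{\opn{Aut}}_{\pi,1}\cdot p^C(c^2-(\eta_2\omega_\pi)(c)^{-1})\opn{vol}(K^p_H;dh)^{-1} \cdot e_\infty^-(\pi_\infty,0)\cdot \frac{L(\pi,0)}{\Omega_\pi^-}.
\]
Cancelling the common scalar then yields the theorem.

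The only genuine concern is that the common factor $c^2 - (\eta_2\omega_\pi)(c)^{-1}$ be non-zero for some admissible choice of $c$ (i.e.\ a positive integer prime to $6pD_2$ for which the local data in Assumption \ref{AssumptionsOnTestVectors} remain unramified). By the running assumption from \S\ref{sec:p-adic l-functions} that $\eta_2\omega_\pi$ is not congruent mod $p$ to any character of $p$-power conductor, and in particular is non-trivial, one can easily select such a $c$ — for instance by varying $c$ over two residue classes modulo the conductor of $\eta_2\omega_\pi$. Once $c$ is so chosen, the cancellation is valid and the formula follows, which is the main (and essentially only) point to verify. No further estimation or calculation is required, since the three cited results have already absorbed all of the analytic and cohomological content of the argument.
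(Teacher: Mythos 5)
Your proposal is correct and follows exactly the paper's own argument: substitute Corollary \ref{cor:derivative} and Proposition \ref{OrdCalculationProp} into Proposition \ref{prop:EZF with automorphic}, then cancel the common factor $p^C(c^2 - (\eta_2\omega_\pi)(c)^{-1})\opn{vol}(K^p_H;dh)^{-1}$ after choosing $c$ to make it non-zero, using the running hypothesis on $\eta_2\omega_\pi$. (The only minor imprecision is the phrase ``varying $c$ over two residue classes'' — what actually works is varying $c$ within a fixed residue class modulo the conductor, so that $(\eta_2\omega_\pi)(c)^{-1}$ is constant while $c^2$ changes — but this does not affect the validity of the argument.)
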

\begin{proof}
    Choose $c$ such that $c^2 - (\omega_{\pi}\eta_2)(c^{-1}) \neq 0$ (which is always possible because we are assuming that $\omega_{\pi}\eta_2$ is not congruent modulo $p$ to any character of $p$-power conductor). The result now follows by combining Proposition \ref{prop:EZF with automorphic} with Corollary \ref{cor:derivative} and Proposition \ref{OrdCalculationProp}, which yields the claimed formula multiplied on both sides by the (non-zero) factor $p^C \cdot (c^2-(\omega_{\pi}\eta_2)(c^{-1}))\cdot \opn{vol}(K^p_H; \opn{dh})^{-1}$. Dividing through gives the result.
\end{proof}

We also have an analogue for the other $p$-adic $L$-function for $\pi$.

\begin{theorem}[The right-half exceptional zero formula] \label{Them:RightHalfEZF}
    One has
    \[
    \left. \frac{d}{ds}L_p^+(\pi, s) \right|_{s=1} = -p\cdot e_{\infty}^+(\pi_{\infty}, 1) \cdot \mathcal{L}^{\opn{Aut}}_{\pi,2} \cdot \frac{L(\pi, 1)}{\Omega_{\pi}^+} .
    \]
\end{theorem}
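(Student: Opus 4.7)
The strategy is to deduce this from Theorem \ref{Thm:LeftHalfEZF} applied to $\pi^{\vee}$ via the $p$-adic and classical functional equations, together with the duality of $\mathcal{L}$-invariants. Note that since $\pi_p \cong \opn{St}^{\opn{sm}}(\mbb{C})$ is self-dual, $\pi^{\vee}$ also satisfies Assumption \ref{SteinbergAssumptionOnpi}, so Theorem \ref{Thm:LeftHalfEZF} applies to $\pi^{\vee}$ giving
\[
\left.\tfrac{d}{ds}L_p^-(\pi^{\vee},s)\right|_{s=0} = e_{\infty}^-(\pi^{\vee}_{\infty},0)\cdot \mathcal{L}^{\opn{Aut}}_{\pi^{\vee},1}\cdot \frac{L(\pi^{\vee},0)}{\Omega_{\pi^{\vee}}^{-}}.
\]

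Next I would unpack the $p$-adic functional equation \eqref{eq:p-adic functional equation}. Evaluating against $\langle x\rangle^{s-1}$ and making the substitution $x \mapsto x^{-1}$, one obtains
\[
L_p^+(\pi,s) = \varepsilon(\pi_f^{\vee,p},0)^{-1}\cdot \langle N_{\pi}^{(p)}\rangle^{s-1}\cdot L_p^-(\pi^{\vee},1-s).
\]
Differentiating at $s=1$ and using the vanishing $L_p^-(\pi^{\vee},0)=0$ (so the logarithmic derivative of $\langle N_\pi^{(p)}\rangle^{s-1}$ contributes nothing), the chain rule gives
\[
\left.\tfrac{d}{ds}L_p^+(\pi,s)\right|_{s=1} = -\varepsilon(\pi_f^{\vee,p},0)^{-1}\cdot \left.\tfrac{d}{ds}L_p^-(\pi^{\vee},s)\right|_{s=0}.
\]

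Combining these two displays and using the duality $\mathcal{L}^{\opn{Aut}}_{\pi,2} = \mathcal{L}^{\opn{Aut}}_{\pi^{\vee},1}$ (Proposition \ref{prop:duality L-invariants}) and $\Omega_{\pi}^+ = \Omega_{\pi^{\vee}}^-$ (by definition, \S\ref{sec:p-adic l-functions}), the result reduces to the identity
\[
\varepsilon(\pi_f^{\vee,p},0)^{-1}\cdot e_{\infty}^-(\pi^{\vee}_{\infty},0)\cdot L(\pi^{\vee},0) \;=\; p\cdot e_{\infty}^+(\pi_{\infty},1)\cdot L(\pi,1).
\]
For this I would apply the classical global functional equation $L(\pi^{\vee},0) = \varepsilon(\pi^{\vee},0)L(\pi,1)$ and factorise $\varepsilon(\pi^{\vee},0) = \varepsilon_{\infty}(\pi^{\vee}_{\infty},0)\cdot \varepsilon(\pi^{\vee}_p,0)\cdot \varepsilon(\pi_f^{\vee,p},0)$. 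The prime-to-$p$ finite part cancels with the leading factor, and the local $\varepsilon$-factor at $p$ for the Steinberg representation of $\opn{GL}_3(\Q_p)$ evaluated at $s=0$ equals $p$ by \eqref{eq:steinberg epsilon} (with $d=3$), explaining the factor of $p$ in the statement. What remains is the archimedean identity
\[
e_{\infty}^-(\pi^{\vee}_{\infty},0)\cdot \varepsilon_{\infty}(\pi^{\vee}_{\infty},0) = e_{\infty}^+(\pi_{\infty},1),
\]
which is a purely local matter at $\infty$ and follows directly from the defining relationship between $e_{\infty}^+$ and $e_{\infty}^-$ (this compatibility is what forces the definition $\Omega_{\pi}^+ \defeq \Omega_{\pi^{\vee}}^-$ to produce a consistent interpolation).

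The main (and essentially only) obstacle is careful bookkeeping of these $\varepsilon$- and $L$-factors, ensuring that the normalisations in Theorem \ref{thm:LW21} and \eqref{eq:p-adic functional equation} are consistently traced through; the conceptual content is already contained in Theorem \ref{Thm:LeftHalfEZF} and the duality result.
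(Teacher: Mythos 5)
Your proposal follows essentially the same route as the paper: apply the left-half formula to $\pi^{\vee}$, use the $p$-adic functional equation \eqref{eq:p-adic functional equation}, differentiate at $s=1$ observing that the exceptional zero kills the cross term, then use $\mathcal{L}^{\opn{Aut}}_{\pi,2} = \mathcal{L}^{\opn{Aut}}_{\pi^{\vee},1}$, $\Omega_{\pi}^{+} = \Omega_{\pi^{\vee}}^{-}$, a classical functional equation, and $\varepsilon(\pi_p^{\vee},0) = p$. Two small points are worth flagging.

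First, a harmless sign slip: unpacking $\iota\big([N_\pi^{(p)}]\cdot\sL_p^-(\pi^\vee)\big)$ against $\langle x\rangle^{s-1}$ produces $\langle N_\pi^{(p)}\rangle^{1-s}$, not $\langle N_\pi^{(p)}\rangle^{s-1}$; as you note this disappears on differentiating at $s=1$ because the other factor vanishes there, but the intermediate identity should be stated with the correct exponent.

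Second, and more substantively, your handling of the classical functional equation leaves a gap. The identity $L(\pi^{\vee},0) = \varepsilon(\pi^{\vee},0)\,L(\pi,1)$ as written is not the functional equation for the \emph{partial} $L$-function (the completed $L$-function with $\Gamma$-factors is what satisfies this), and you then compensate by asserting, without verification, the archimedean identity
\[
e_{\infty}^-(\pi^{\vee}_{\infty},0)\cdot \varepsilon_{\infty}(\pi^{\vee}_{\infty},0) = e_{\infty}^+(\pi_{\infty},1).
\]
This is precisely the nontrivial bookkeeping the argument needs, and "follows directly from the defining relationship" is not a proof of it. The paper sidesteps the issue cleanly by invoking Coates' functional equation for the modified factors, namely
\[
e_\infty^-(\pi_{\infty}^\vee,0)\,L(\pi^\vee,0) = \varepsilon(\pi_f^\vee,0)\cdot e_\infty^+(\pi_{\infty},1)\,L(\pi,1),
\]
citing \cite[(6)]{coates89}; this packages together exactly your naive functional equation and the archimedean compatibility, and leaves only $\varepsilon(\pi_f^\vee,0)/\varepsilon(\pi_f^{\vee,p},0) = \varepsilon(\pi_p^\vee,0) = p$ to compute. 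You should cite the Coates identity rather than trying to reconstruct it from scratch.
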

\begin{proof}
From the $p$-adic functional equation \eqref{eq:p-adic functional equation}, we have
\[
L_p^+(\pi,s) = \varepsilon\big(\pi_f^{\vee,p},0\big)^{-1} \cdot \langle N_\pi^{(p)}\rangle^{1-s}\int_{\Zp^\times} \langle x \rangle^{1-s}\cdot d\sL_p^-(\pi^\vee).
\]
Differentiating and evaluating at $s=1$, the chain rule gives two terms; but one of them contains $\int_{\Zp^\times} \mathbbm{1}\cdot d\sL_p^-(\pi)$, which vanishes (via the exceptional zero). We are left with
\[
\left. \frac{d}{ds}L_p^+(\pi, s) \right|_{s=1} = \varepsilon\big(\pi_f^{\vee,p},0\big)^{-1} \cdot \int_{\Zp^\times} -\log_p(x)\cdot d\sL_p^-(\pi^\vee).
\]
Bringing out the $-1$ and applying the left-half exceptional zero formula to $\pi^\vee$, we obtain
\[
\left. \frac{d}{ds}L_p^+(\pi, s) \right|_{s=1} = - \varepsilon(\pi_f^{\vee,p},0)^{-1} \cdot  e_\infty^-(\pi_{\infty}^\vee,0) \cdot \mathcal{L}^{\opn{Aut}}_{\pi^\vee,1} \cdot \frac{L(\pi^\vee, 0)}{\Omega_{\pi^\vee}^-}.
\]
We have functional equations $e_\infty^-(\pi_{\infty}^\vee,0)L(\pi^\vee,0) = \varepsilon(\pi_f^\vee,0)\cdot e_\infty^+(\pi_{\infty},1)L(\pi,1)$ (from \cite[(6)]{coates89}), $\Omega_{\pi^\vee}^- = \Omega_{\pi}^+$ (by definition), and $\mathcal{L}^{\opn{Aut}}_{1,\pi^\vee} = \mathcal{L}^{\opn{Aut}}_{\pi,2}$ (Proposition \ref{prop:duality L-invariants}). Combining all of this yields
\[
\left. \frac{d}{ds}L_p^+(\pi, s) \right|_{s=1} = - \varepsilon(\pi_p^\vee,0) \cdot  e_\infty^+(\pi_{\infty},1) \cdot \mathcal{L}^{\opn{Aut}}_{\pi,2} \cdot \frac{L(\pi,1)}{\Omega_{\pi}^+}.
\]
We conclude as $\varepsilon(\pi_p^\vee,0) = \varepsilon(\opn{St}_{\GL_3}(\C),0) = p$ by \eqref{eq:steinberg epsilon}.
\end{proof}

\subsection{Relation with completed cohomology} \label{RelationWithCCSSec}

We note that we have the following relation between $\mathbb{L}_i^{\opn{Aut}}(\pi)$ and completed cohomology of the locally symmetric space for $G$. Let $X_{G, K^pK_p}$ denote the locally symmetric space for $G = \opn{GL}_3$ of level $K^p K_p$. We define the compactly supported completed cohomology to be:
\[
\widetilde{\opn{H}}^*_c\left( X_{G, K^p}, L \right) \defeq \left( \varprojlim_n \varinjlim_{K_p} \opn{H}^*_c\left( X_{G, K^pK_p}, \mathcal{O}_L/\varpi^n \right) \right)[1/\varpi]
\]
which carries commuting actions of $\mbb{T}$ and $G(\Q_p)$. Let 
\[
\widetilde{\opn{H}}^2_c\left( X_{G, K^p}, L \right)[\pi^p] \defeq \opn{Hom}_{\mbb{T}}\left( \mathcal{W}_{\psi}(\pi_f^p, L)^{K^p}, \widetilde{\opn{H}}^2_c\left( X_{G, K^p}, L \right) \right)
\]
denote the prime-to-$p$ isotypic piece. Recall from Proposition \ref{AbstractExtFactorsThroughLAExtProp} that any continuous homomorphism $\lambda \colon \Q_p^{\times} \to L$ gives rise to an extension (in the category of admissible locally analytic representations)
\[
0 \to \opn{St}^{\opn{la}}(L) \to \mathcal{C}_{\Lambda_{\lambda}} \to \opn{St}_2^{\opn{sm}}(L) \to 0 .
\]
Let us denote this by $\mathcal{C}_{1, \lambda} = \mathcal{C}_{\Lambda_{\lambda}}$ to indicate the dependence on $i=1$. By reversing the roles of $P_1$ and $P_2$, we also obtain an extension $\mathcal{C}_{2,\lambda}$ of $\opn{St}_1^{\opn{sm}}(L)$ by $\opn{St}^{\opn{la}}(L)$ in exactly the same way.

\begin{proposition}
    One has a unique up to scalar $G(\Q_p)$-equivariant embedding
    \begin{equation} \label{StsmoothEmbedCompletedCoh}
    \opn{St}^{\opn{sm}}(L) \hookrightarrow \widetilde{\opn{H}}^2_c\left( X_{G, K^p}, L \right)[\pi^p] .
    \end{equation}
    Moreover, the embedding (\ref{StsmoothEmbedCompletedCoh}) extends to a $G(\Q_p)$-equivariant embedding
    \[
    \mathcal{C}_{i, \lambda} \hookrightarrow \widetilde{\opn{H}}^2_c\left( X_{G, K^p}, L \right)[\pi^p]
    \]
    if and only if $\lambda \in \bL^{\opn{Aut}}_i(\pi)$.
\end{proposition}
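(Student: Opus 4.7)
First I will establish existence and uniqueness of the embedding in \eqref{StsmoothEmbedCompletedCoh}. For existence, the classical cohomology of the Iwahori-level locally symmetric space embeds naturally into completed cohomology, and by Lemma \ref{Lem:Pip-isotypic-piece1dim} (together with the identification $\pi_p \cong \opn{St}^{\opn{sm}}(\C)$) we obtain a non-zero Iwahori-fixed vector in $\widetilde{\opn{H}}^2_c(X_{G,K^p},L)^{\opn{Iw}}[\pi^p]$, corresponding to the image of $\varphi_{\opn{Iw}}$. Since $\opn{St}^{\opn{sm}}(L)$ is generated as a $G(\Q_p)$-representation by its (one-dimensional) Iwahori invariants, and $\opn{St}^{\opn{sm}}(L)$ is irreducible, extending by $G(\Q_p)$-equivariance yields an injection. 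Uniqueness up to scalar follows because $\widetilde{\opn{H}}^2_c(X_{G,K^p},L)^{\opn{Iw}}[\pi^p]$ reduces by a standard Hecke-eigenspace / multiplicity-one argument to the one-dimensional classical cohomology group of Lemma \ref{Lem:Pip-isotypic-piece1dim}.

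The heart of the argument is a comparison isomorphism. For an admissible locally analytic representation $V$ of $G(\Q_p)$, my claim is that there is a canonical isomorphism
\[
\opn{Hom}_{G(\Q_p)}\bigl(V, \widetilde{\opn{H}}^2_c(X_{G,K^p},L)[\pi^p]\bigr) \;\cong\; \opn{H}^0\bigl(G(\Q), \mathcal{A}_{G,c}(V^*)\bigr)[\pi^p]
\]
(and similarly with $\opn{Ext}^1$ on the left and $\opn{H}^1$ on the right), obtained by combining Proposition \ref{prop:arithmetic to betti}, a passage to the limit over $K_p \subset G(\Z_p)$, and the identification of $V^*$-valued functions with Hom-spaces out of compact inductions. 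Specialising to $V = \opn{St}^{\opn{la}}(L)$ (resp.\ $V = \opn{St}^{\opn{sm}}(L)$, $V = \opn{St}^{\opn{cts}}(L)$) recovers the $p$-arithmetic cohomology groups appearing in the definition of $\bL^{\opn{Aut}}_i(\pi)$, and Proposition \ref{Prop:SmLaCtsIsoTree} identifies all three after cutting by $[\pi^p]$. Under this dictionary, the unique (up to scalar) embedding $\opn{St}^{\opn{sm}}(L) \hookrightarrow \widetilde{\opn{H}}^2_c[\pi^p]$ corresponds to the generator $\phi^{\opn{la}}$ (equivalently $\phi^{\opn{cts}}$) of the one-dimensional isotypic space.

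Now I apply $\opn{Hom}_{G(\Q_p)}\bigl(-, \widetilde{\opn{H}}^2_c(X_{G,K^p},L)[\pi^p]\bigr)$ to the short exact sequence
\[
0 \longrightarrow \opn{St}^{\opn{la}}(L) \longrightarrow \mathcal{C}_{i,\lambda} \longrightarrow \opn{St}^{\opn{sm}}_{3-i}(L) \longrightarrow 0
\]
to obtain a long exact sequence with connecting map
\[
\partial_{i,\lambda} \colon \opn{Hom}_{G(\Q_p)}\bigl(\opn{St}^{\opn{la}}(L), \widetilde{\opn{H}}^2_c[\pi^p]\bigr) \longrightarrow \opn{Ext}^1_{G(\Q_p)}\bigl(\opn{St}^{\opn{sm}}_{3-i}(L), \widetilde{\opn{H}}^2_c[\pi^p]\bigr).
\]
The embedding $\opn{St}^{\opn{sm}}(L) \hookrightarrow \widetilde{\opn{H}}^2_c[\pi^p]$ extends to $\mathcal{C}_{i,\lambda}$ if and only if the class $\partial_{i,\lambda}(\phi^{\opn{la}})$ vanishes. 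A cup-product computation (using the explicit cocycle description of $\mathcal{C}_{i,\lambda}$ via $\Delta^{\opn{la}}_{i,\lambda}$ from Proposition \ref{AbstractExtFactorsThroughLAExtProp}) identifies $\partial_{i,\lambda}(\phi^{\opn{la}})$ with $\phi^{\opn{la}} \smile \Delta^{\opn{la}}_{i,\lambda} = \Upsilon^{\opn{la}}_{i,\lambda}[\pi](\phi^{\opn{la}})$ under the comparison isomorphism. Since $\phi^{\opn{cts}}$ generates the one-dimensional space $\opn{H}^0(G(\Q),\mathcal{A}_{G,c}(\opn{St}^{\opn{cts}}(L)^*))[\pi^p]$, vanishing of $\Upsilon^{\opn{la}}_{i,\lambda}[\pi](\phi^{\opn{la}})$ is equivalent to $\Upsilon^{\opn{cts}}_{i,\lambda}[\pi] = 0$, which is the definition of $\lambda \in \bL^{\opn{Aut}}_i(\pi)$.

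The main obstacle is the comparison isomorphism above, which requires careful bookkeeping of topologies on Banach and locally analytic induced representations, and an identification of completed cohomology with an appropriate limit of $p$-arithmetic cohomology groups compatible with the $G(\Q_p)$-action. The key technical input will be admissibility of the $\pi^p$-isotypic part combined with Lemma \ref{Lem:Pip-isotypic-piece1dim} and Proposition \ref{Prop:SmLaCtsIsoTree}, which together make the Hom- and Ext-spaces on the completed cohomology side computable in terms of the very concrete one-dimensional $p$-arithmetic cohomology appearing in the definition of $\bL^{\opn{Aut}}_i(\pi)$.
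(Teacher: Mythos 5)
The paper's own ``proof'' is a one-line citation to \cite[\S 5]{AutomorphicLinvariants}, noting that $\opn{GL}_3$ is ``relatively definite,'' so what you have done is reconstruct the argument that the citation refers to. Your outline is structurally correct and follows the right path: establish the unique-up-to-scalar embedding via the Iwahori-fixed vector and one-dimensionality at Iwahori level (Lemma \ref{Lem:Pip-isotypic-piece1dim}), apply $\opn{Hom}_{G(\Q_p)}(-, \widetilde{\opn{H}}^2_c[\pi^p])$ to $0 \to \opn{St}^{\opn{la}}(L) \to \mathcal{C}_{i,\lambda} \to \opn{St}^{\opn{sm}}_{3-i}(L) \to 0$, and identify the obstruction to extending with the cup product $\Upsilon^{\opn{la}}_{i,\lambda}[\pi](\phi^{\opn{la}})$, whose vanishing is the defining condition for $\lambda \in \bL^{\opn{Aut}}_i(\pi)$.

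The genuine gap, which you correctly flag as the ``main obstacle,'' is the claimed comparison isomorphism
\[
\opn{Hom}_{G(\Q_p)}\bigl(V, \widetilde{\opn{H}}^2_c(X_{G,K^p},L)[\pi^p]\bigr) \;\cong\; \opn{H}^0\bigl(G(\Q), \mathcal{A}_{G,c}(V^*)\bigr)[\pi^p]
\]
(and the analogue for $\opn{Ext}^1$ vs.\ $\opn{H}^1$). As stated, for arbitrary admissible locally analytic $V$, this is too strong: the relation between Hom/Ext into completed cohomology and $p$-arithmetic cohomology is in general a spectral sequence, not an isomorphism, and one must rule out contributions from $\widetilde{\opn{H}}^j_c[\pi^p]$ in degrees $j \neq 2$ (for $\opn{GL}_3/\Q$, cuspidal cohomology sits in degrees $2$ and $3$, so the degeneration is not automatic). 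This is precisely what Gehrmann's ``relatively definite'' hypothesis is designed to handle, and it is the single substantive input the paper invokes. Your proposal never mentions this condition, so the critical lemma remains unproved rather than merely ``requiring careful bookkeeping of topologies.'' If you want a self-contained proof, the point you must establish is that, after localising at the non-Eisenstein ideal $\ide{m}$ corresponding to $\pi$ and cutting by $\pi^p$, the relevant spectral sequence degenerates so that only the $\opn{Hom}$ (resp.\ $\opn{Ext}^1$) into $\widetilde{\opn{H}}^2_c[\pi^p]$ survives; this is the content that ``relative definiteness'' packages.

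One smaller point worth being explicit about: your argument produces a $G(\Q_p)$-equivariant map $\mathcal{C}_{i,\lambda} \to \widetilde{\opn{H}}^2_c[\pi^p]$ extending the embedding, but the proposition asserts an \emph{embedding}. Injectivity is not automatic from the extension alone and should be verified (for instance by noting that the only proper subrepresentation of $\mathcal{C}_{i,\lambda}$ on which the map could vanish is $\opn{St}^{\opn{la}}(L)$ itself, and the restriction there is already known to be nonzero, combined with the socle structure of $\mathcal{C}_{i,\lambda}$).
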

\begin{proof}
    This follows from \cite[\S 5]{AutomorphicLinvariants} since the group $\opn{GL}_3$ is ``relatively definite''.
\end{proof}

\part{Local-global compatibility and Fontaine--Mazur \texorpdfstring{$\mathcal{L}$}{L}-invariants} \label{Part2Label}

In this second part of the article, we now discuss the exceptional zero formulae involving the Fontaine--Mazur $\mathcal{L}$-invariants. More precisely, we prove that the automorphic $\mathcal{L}$-invariants coincide with the Fontaine--Mazur $\mathcal{L}$-invariants (under some additional assumptions if $\pi$ is not essentially self-dual). We have two approaches to this. The first only applies under the assumption that $\pi$ is essentially self-dual -- in this case, we can compare with the work of Rosso \cite{RossoSym2, RossoSym2II}. The second is to prove this comparison between $\mathcal{L}$-invariants if $\pi$ is not necessarily essentially self-dual (which occupies the majority of Part \ref{Part2Label}). The key input is to prove that the family of Galois representations associated with $\pi$ over the $\opn{GL}_3$-eigenvariety admits a triangulation in families, which amounts to establishing local-global compatibility results extending \cite{10author}. We can then apply the method of Gehrmann--Rosso \cite{GehrmannRosso}.

\section{The symmetric square of a weight two modular form}
\numberwithin{equation}{section}

\label{sec:symm square}

Let $f$ be a cuspidal newform of weight $2$ and level $\Gamma_1(N) \cap \Gamma_0(p)$ and nebentypus $\varepsilon_f$, where $N$ is prime to $p$. We assume that $\varepsilon_f(p) = 1$ (otherwise the symmetric square $p$-adic $L$-function of $f$ will have no exceptional zeros). Let $\pi$ denote the unitary normalisation of the automorphic representation of $G(\mbb{A})$ associated with the symmetric square lift $\opn{Sym}^2f$, as constructed in \cite{GelbartJacquet}.  It satisfies the following properties:
\begin{itemize}
    \item $\pi$ is a regular algebraic cuspidal automorphic representation of $\opn{GL}_3(\mbb{A})$, which satisfies $\pi^{\vee} \cong \pi \otimes \varepsilon_f^{-1}$, and is cohomological for the trivial representation.
    \item The local component at $p$ satisfies $\pi_p \cong \opn{St}^{\opn{sm}}(\mbb{C})$ (because $\varepsilon_f(p) = 1$).
    \item Let $\rho_f \colon G_{\mbb{Q}} \to \opn{GL}_2(\Qpb)$ denote the $p$-adic Galois representation associated with $f$, and set $\rho_{\pi} = \opn{Sym}^2 \rho_{f}$. Then
    \[
    L(\pi, s) = L(\rho_{\pi}, s+1)
    \]
    for $s \in \mbb{C}$.
    \item The restriction $\rho_{\pi}|_{G_{\mbb{Q}_p}}$ is semi-stable, and there exists a basis $\{v_0, v_1, v_2\}$ of the associated filtered $(\varphi, N)$-module $D \defeq \mbf{D}_{\opn{st}}(\rho_{\pi}|_{G_{\mbb{Q}_p}})$ such that 
    \[
    \varphi v_i = p^i v_i, \quad N v_i = v_{i-1}, \quad \quad i=0,1,2,
    \]
    with the convention that $v_{-1} = 0$.
\end{itemize}
Indeed, for the first bullet point see, for example, \cite[Theorem A]{NewtonThorneSymII} noting that $f$ cannot have CM because $p$ divides its conductor exactly once. The remaining bullet points follow from known local-global compatibility results for essentially self-dual regular algebraic cuspidal automorphic representations of $\opn{GL}_2(\mbb{A})$ and $\opn{GL}_3(\mbb{A})$ (see, e.g., \cite{loeffler-weinstein, Caraiani12, BGGTII} and the references therein, noting that the trivial weight for $\opn{GL}_3$ is Shin-regular).

The jumps in the Hodge filtration $\opn{Fil}^i D$ on $D$ are at $i=0,1,2$, and as $D$ is weakly admissible, there exist unique quantities $\mathcal{L}_{1, \pi}^{\opn{FM}}, \mathcal{L}_{2, \pi}^{\opn{FM}} \in \Qpb$ such that $v_1 - \mathcal{L}_{1, \pi}^{\opn{FM}} v_0$ (resp. $v_2 - \mathcal{L}_{2, \pi}^{\opn{FM}} v_1$) is a basis of $\opn{Fil}^1D/\opn{Fil}^2D$ (resp. $(\opn{Fil}^2D + \langle v_0 \rangle)/\langle v_0 \rangle$). Since $\rho_{\pi}^{\vee} \cong \rho_{\pi}(2) \otimes \varepsilon_f^{-1}$, one can easily check that $\mathcal{L}_{\opn{Sym}^2f}^{\opn{FM}} \defeq \mathcal{L}_{1, \pi}^{\opn{FM}} = \mathcal{L}_{2, \pi}^{\opn{FM}}$. We call $\mathcal{L}_{\opn{Sym}^2f}^{\opn{FM}}$ the \emph{Fontaine--Mazur $\mathcal{L}$-invariant} associated with $\opn{Sym}^2 f$.

\begin{remark}
    One can easily verify directly from the definitions that $\mathcal{L}_{\opn{Sym}^2f}^{\opn{FM}} = \mathcal{L}_{f}^{\opn{FM}}$, where $\mathcal{L}_{f}^{\opn{FM}}$ denotes the Fontaine--Mazur $\mathcal{L}$-invariant associated with $\rho_f$ (c.f., \cite[Remark 4.2]{MokLinvariant}).
\end{remark}

On the other hand, since $\pi^{\vee} \cong \pi \otimes \varepsilon_f^{-1}$, we have $\mathcal{L}^{\opn{Aut}}_{\opn{Sym}^2f} \defeq \mathcal{L}^{\opn{Aut}}_{\pi,1} = \mathcal{L}^{\opn{Aut}}_{\pi,2}$ (see \S \ref{sec:duality L-invariants} and \cite[\S 3.4]{AutomorphicLinvariants}). We refer to this quantity as the automorphic $\mathcal{L}$-invariant associated with $\opn{Sym}^2 f$. We obtain the following theorem relating the Fontaine--Mazur and automorphic $\mathcal{L}$-invariants:

\begin{theorem} \label{Thm:RossoComparison}
    With notation as above, one has $\mathcal{L}^{\opn{FM}}_{\opn{Sym}^2f} = \mathcal{L}^{\opn{Aut}}_{\opn{Sym}^2f}$.
\end{theorem}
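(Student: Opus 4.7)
The plan is to show both $\mathcal{L}^{\opn{FM}}_{\opn{Sym}^2 f}$ and $\mathcal{L}^{\opn{Aut}}_{\opn{Sym}^2 f}$ equal the classical Mazur--Tate--Teitelbaum $\mathcal{L}$-invariant $\mathcal{L}(f)$ of $f$.

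First I would establish $\mathcal{L}^{\opn{FM}}_{\opn{Sym}^2 f} = \mathcal{L}(f)$ by a direct computation in linear algebra. Writing $D_f = \mbf{D}_{\opn{st}}(\rho_f|_{G_{\mbb{Q}_p}})$ with its standard basis $\{e_0, e_1\}$ satisfying $\varphi e_0 = e_0$, $\varphi e_1 = p e_1$, $N e_1 = e_0$, and $\opn{Fil}^1 D_f = \langle e_1 - \mathcal{L}(f) e_0 \rangle$, we have $D = \opn{Sym}^2 D_f$. Up to normalisation, the vectors $v_i = e_0^{2-i} e_1^i$ match the basis in the excerpt, satisfying $\varphi v_i = p^i v_i$ and $N v_i = v_{i-1}$, and one then checks directly that $v_1 - \mathcal{L}(f) v_0$ spans $\opn{Fil}^1 D / \opn{Fil}^2 D$ while $v_2 - \mathcal{L}(f) v_1$ spans $(\opn{Fil}^2 D + \langle v_0 \rangle)/\langle v_0 \rangle$, yielding the claimed equality.

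Second, I would combine our Theorem \ref{Thm:LeftHalfEZF} with Rosso's exceptional zero formula \cite{RossoSym2, RossoSym2II} to obtain $\mathcal{L}^{\opn{Aut}}_{\opn{Sym}^2 f} = \mathcal{L}(f)$. Under the hypotheses that $N$ is square-free and $\varepsilon_f$ is trivial, Rosso proves that the derivative at the exceptional zero of his $p$-adic symmetric square $L$-function equals $\mathcal{L}(f)$ multiplied by modified Euler and archimedean factors and $L(\opn{Sym}^2 f, s_{\opn{crit}})/\Omega$. Matching Rosso's $p$-adic $L$-function with the Loeffler--Williams $L_p^-(\pi, s)$ via their interpolation formulas at all critical finite-order characters pins them down up to a $p$-adic unit (absorbed by adjusting the periods), and then combining with Theorem \ref{Thm:LeftHalfEZF} and dividing yields $\mathcal{L}^{\opn{Aut}}_{\opn{Sym}^2 f} = \mathcal{L}(f)$ in Rosso's setting. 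To remove his hypotheses, I would instead run the Benois--Colmez--Greenberg--Stevens argument to be developed in Section \ref{Sec:NSDrepresentations}: both $\mathcal{L}$-invariants can be expressed as logarithmic derivatives of $U_p$-eigenvalues along the unique Hida family through $\pi$, which in the essentially self-dual case is the symmetric square of a Hida family through $f$, reducing the computation to the $\opn{GL}_2$ calculation of Greenberg--Stevens.

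The main obstacle I expect is the careful tracking of normalisations, periods, and test vectors when matching the various $p$-adic $L$-functions (Loeffler--Williams, Rosso, Dabrowski--Delbourgo), particularly outside Rosso's hypotheses where a direct comparison of interpolation formulas is less well-documented. The Hida-family route should be the cleaner resolution, as it bypasses matching $p$-adic $L$-functions altogether and instead compares infinitesimal deformations of Hecke eigenvalues on the $\opn{GL}_3$- and $\opn{GL}_2$-sides via functoriality of the symmetric square lift.
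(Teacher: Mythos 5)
Your proposal matches the paper's proof essentially step for step: the paper also proves this twice, first by matching Rosso's $p$-adic symmetric square $L$-function with $L_p^-(\pi,s)$ under Rosso's hypotheses (where one also uses $L(\pi,0)\neq 0$ to divide) and combining with Theorem \ref{Thm:LeftHalfEZF}, and second (dropping Rosso's hypotheses) by computing $\mathcal{L}^{\opn{Aut}}_{\opn{Sym}^2 f}$ via the automorphic Benois--Colmez--Greenberg--Stevens formula along the Hida family through $f$ and comparing with the $\opn{GL}_2$ identity $\mathcal{L}_f^{\opn{FM}} = -2\,\tfrac{da_p}{dk}\big|_{k=0}$. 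The only small caveats are attribution and emphasis: that last identity is cited to Hida \cite{HidaLinvariant} and Mok \cite{MokLinvariant} rather than to Greenberg--Stevens, and your preliminary linear-algebra check $\mathcal{L}^{\opn{FM}}_{\opn{Sym}^2 f}=\mathcal{L}_f^{\opn{FM}}$ is stated as a remark in the paper (citing \cite{MokLinvariant}) rather than worked out explicitly.
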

\begin{proof}
We will give two proofs of this theorem, with the first under more restrictive hypotheses. 

Suppose first that $N$ is square-free and $\varepsilon_f$ is trivial. Under these assumptions, Rosso \cite{RossoSym2, RossoSym2II} constructs a $p$-adic measure $\mathscr{L}_p^{\opn{imp}}(\pi, -)$ on $1+2p\mbb{Z}_p$ which satisfies: for any Dirichlet character $\eta$ of $p$-power conductor factoring through $\Z_p^{\times} \to 1+2p\Z_p$, one has
\[
\mathscr{L}^{\opn{imp}}_p(\pi, \eta) = C_{2, \eta^{-1}} \cdot E_1(2, \eta^{-1}) \cdot \frac{L(\pi \times \eta, 0)}{\pi \cdot W'(f) \cdot \langle f, f \rangle_{\opn{Pet}}}
\]
where the $\pi$ in the denominator is the real number $3.14\cdots$, and
\begin{itemize}
    \item $C_{2, \eta} = G(\eta) N_f^{-1} 2^{-5/2}$,
    \item $E_1(2, \eta) = (1-\eta_0(p))$ where $\eta_0$ is the primitive character associated with $\eta$,
    \item $W'(f)$ is the prime-to-$p$ part of the root number associated with $f$,
    \item $\langle f, f \rangle_{\opn{Pet}}$ is the Petersson inner product of $f$ with itself.
\end{itemize}
We first relate this with the $p$-adic $L$-function in \cite{LW21}. More precisely, noting that $e^-_p(\pi_p \times \eta_p, 0) = G(\eta^{-1})E_1(2, \eta^{-1})L(\pi_p \times \eta_p, 0)$, we have 
\[
\mathscr{L}_p^{\opn{imp}}(\pi, \eta) = (N_f 2^{5/2} W'(f))^{-1} e^-_p(\pi_p \times \eta_p, 0) \cdot \frac{L(\pi \times \eta, 0)}{L(\pi_p \times \eta_p, 0) \cdot \pi \langle f, f \rangle_{\opn{Pet}}} .
\]
Set $Q \defeq (N_f 2^{5/2} W'(f)) e^-_{\infty}(\pi_{\infty}, 0) \pi \langle f, f \rangle_{\opn{Pet}} (\Omega_{\pi}^-)^{-1} \neq 0$. Then, since this quantity is independent of $\eta$ (factoring through $\mbb{Z}_p^{\times} \twoheadrightarrow 1 + 2p \mbb{Z}_p$), we have 
\[
\mathscr{L}_{\pi}^-|_{1+2p\mbb{Z}_p} = Q \cdot \mathscr{L}^{\opn{imp}}(\pi, -)
\]
by comparing interpolation formulae and using Weierstrass preparation. The theorem now follows from combining \cite[Theorem 1.3]{RossoSym2II} and Theorem \ref{Thm:LeftHalfEZF} with the fact that $L(\pi, 0) \neq 0$.

For the alternative proof, we now drop the assumption that $N$ is square-free and $\varepsilon_f$ is trivial. In this case, the theorem now follows from \S \ref{SubSubProofOfRossoComparison} below.
\end{proof}

\begin{remark}
    We note that, in \cite{RossoSym2, RossoSym2II}, an exceptional zero formula is obtained for general $N$ at the expense of replacing the $p$-adic $L$-function with one interpolating values of the (complex) imprimitive $L$-function. In this generality, it does not seem possible to employ a similar strategy as in first proof of Theorem \ref{Thm:RossoComparison} to show that the Fontaine--Mazur and automorphic $\mathcal{L}$-invariants coincide, since the imprimitive $L$-function can vanish at $s=0$ for non-square-free $N$. This is why we use a different strategy (which also works for non-self-dual automorphic representations of $\opn{GL}_3(\mbb{A})$) to obtain the equality of $\mathcal{L}$-invariants for general $N$.
\end{remark}

\begin{remark} \label{Rem:EZFforSym2generallevel}
    Set $L_p(\opn{Sym}^2 f, s) \defeq L_p^-(\pi, s)$, $L(\opn{Sym}^2f, s) \defeq L(\pi, s)$ and $\Omega(\opn{Sym}^2 f) \defeq e_{\infty}^-(\pi_{\infty}, 0)^{-1} \cdot \Omega_{\pi}^-$. As a consequence of the \emph{second} proof of Theorem \ref{Thm:RossoComparison} and Theorem \ref{Thm:LeftHalfEZF}, we obtain the exceptional zero formula 
    \[
    \left. \frac{d}{ds}L_p(\opn{Sym}^2f, s) \right|_{s=0} = \mathcal{L}_{\opn{Sym}^2 f}^{\opn{FM}} \cdot \frac{L(\opn{Sym}^2 f, 0)}{\Omega(\opn{Sym}^2 f)}
    \]
    as predicted by Greenberg \cite{GreenbergTrivialZeros} (see also \cite{Benois11}). When $N$ is square-free and $\varepsilon_f$ trivial, this has of course already been proven in \cite{RossoSym2II}; however for general $N$ and $\varepsilon_f$, this result appears to be new (to the best of the authors' knowledge).
\end{remark}

\begin{remark}
If $\xi$ is a Dirichlet character of prime-to-$p$ conductor with $\xi(p) = 1$, then $\cL_{\pi,i}^{\opn{Aut}} = \cL_{\pi\otimes\xi,i}^{\opn{Aut}}$ and $\cL_{\pi,i}^{\opn{FM}} = \cL_{\pi\otimes\xi,i}^{\opn{FM}}$. Thus Remark \ref{rem:intro self dual} in the introduction follows from Theorem \ref{Thm:RossoComparison}.
\end{remark}

\section{The Benois--Colmez--Greenberg--Stevens formulae} \label{Sec:NSDrepresentations}

\numberwithin{equation}{subsection}

In this section, we extend Theorem \ref{Thm:RossoComparison} to automorphic representations of $\opn{GL}_3(\mbb{A})$ which are not necessary essentially self-dual, under certain assumptions on the existence of infinitesimal deformations of triangulations (Assumptions \ref{CalegariMazurAssumption} and \ref{TriangulationInFamilies}). In particular, these assumptions can be verified in the self-dual setting, leading to the second proof of Theorem \ref{Thm:RossoComparison}.

\subsection{Families} \label{SubSec:FamiliesDefect1}

Let $\mathcal{W}$ denote the adic space over $\opn{Spa}(\Q_p, \Z_p)$ whose $(R, R^+)$-points parameterise pairs of continuous characters $\kappa = (\kappa_1, \kappa_2)$ with $\kappa_i \colon \Z_p^{\times} \to R^{\times}$. The space $\mathcal{W}$ is isomorphic to the disjoint union of $2$-dimensional open discs; in particular, let $\mathcal{W}_0$ denote the connected component containing the trivial character. Then $\mathcal{W}_0$ is the adic generic fibre of $\opn{Spf}\Z_p[\![T_1, T_2]\!]$ with the universal character $\kappa^{\opn{univ}} = (\kappa_1^{\opn{univ}}, \kappa_2^{\opn{univ}})$ satisfying $\kappa_i^{\opn{univ}} = 1$ on the roots of unity $\mu_{p-1}$ (if $p$ odd) or $\mu_2$ (if $p=2$), and
\[
\kappa_i^{\opn{univ}}(v) = 1+T_i, \; \; \; \quad (i=1, 2).
\]
Here $v \in 1+2p\Z_p$ is a fixed topological generator with $\log_p(v) = 1$; thus $\kappa_i^{\opn{univ}}(x) = (1+T_i)^{\log_p(x)}$ for all $x \in \Zp^\times$. We can view any point $\kappa \in \mathcal{W}(R, R^+)$ as a character of $T(\Z_p)$ by the formula $(z_1, z_2, z_3) \mapsto \kappa_1(z_1z_3^{-1})\kappa_2(z_2z_3^{-1})$. We will use the same notation if there is no confusion.

\begin{remark}
In this normalisation, the one-dimensional ``pure weight space'', supporting cuspidal cohomology, corresponds to the closed subspace $\mathcal{W}^{\opn{pure}} \defeq \{ \kappa^{\opn{univ}}_2 = 1 \} \subset \mathcal{W}$.
\end{remark}

Let $\pi$ be as in \S \ref{AutoRepSubSec}. We now explain how to deform $\pi$ in a $p$-adic family. We note that, since the defect of the group $G = \opn{GL}_3$ is $1$, we can only deform $\pi$ in a family over a one-dimensional locus in $\mathcal{W}$ (not necessarily the pure weight locus), and in general this family is not expected to have a Zariski-dense set of classical specialisations.

Let $\Omega \subset \mathcal{W}_0$ denote the (affinoid) disc of radius $p^{-\rho}$ centered at the trivial character, for some large $\rho \in \Q_{>0}$ (which we will frequently increase as needed). Let $r = r_{\Omega}$ denote the radius of analyticity of the universal character of $\Omega$. We define $D^{r\opn{-an}}_{\Omega}$ to be the space of $r$-analytic distributions (over $\Omega$) on the Iwahori subgroup of $G(\Q_p)$. More precisely, let
\[
A^{r\opn{-an}}_{\Omega} = \{ f \colon \opn{Iw} \to \mathcal{O}(\Omega) : f \text{ is } r\text{-analytic}, f(b \cdot -) = \kappa_{\Omega}(b) \cdot f(-) \; \forall b \in \overline{B}(\Q_p) \cap \opn{Iw} \},
\]
where a function is $r$-analytic if it is analytic on each open disc of radius $p^{-r}$ in $\opn{Iw}$ (see e.g.\ \cite[\S3.2.2]{BW20}), and let $D^{r\opn{-an}}_{\Omega}$ denote the continuous $\cO(\Omega)$-dual of $A^{r\opn{-an}}_{\Omega}$. Let $T^+$ (resp.\ $T^-$) be the submonoid of $T(\Qp)$ of elements which have non-negative valuation when paired with positive (resp.\ negative) roots. The space $A^{r\opn{-an}}_\Omega$ carries left-actions of $\opn{Iw}$ (by right translation) and $T^-$; the latter is induced by the map $N(\Zp) \to N(\Zp)$, $n \mapsto [t]t^{-1}nt$, where $[t]$ is the image of $t$ under the natural projection $T(\Qp) \twoheadrightarrow T(\Zp)$ (cf.\ \cite[\S3.4]{BW20}). Dualising, $D^{r\opn{-an}}_{\Omega}$ carries left actions of $\opn{Iw}$ and $T^+$. Note the two actions agree for any $t \in T^\pm \cap \opn{Iw}$.

Let $\mathcal{K}^p \subset G(\mbb{A}_f^p)$ be a compact open subgroup such that $(\pi^p_f)^{\mathcal{K}^p}$ is one-dimensional. We let $K^p \subset \mathcal{K}^p$ denote a sufficiently small normal compact open subgroup, and we fix a finite set $S$ of primes containing $p$ and such that $K^p = K^S K_{S \backslash \{p\}}$ with $K_{S \backslash \{p\}} \subset G(\mbb{Q}_{S\backslash\{p\}})$ and $K^S = \prod_{\ell \not\in S} G(\mbb{Z}_{\ell})$. Let $K = K^p\opn{Iw}$ to ease notation, and let $X_{G, K}$ denote the locally symmetric space associated with $G$ of level $K$. Let $\mbf{T}^S = C^{\infty}_c(G(\mbb{A}_f^S) /\!/G(\widehat{\mbb{Z}}^S), \mbb{Z})$ denote the abstract spherical Hecke algebra at primes not in $S$ (with respect to the standard choices of Haar measures).

As $(\pi_f^S)^{K^S}$ is a line, $\mbf{T}^S$ acts on $(\pi_f^p)^{K^p}$ via an algebra homomorphism $\alpha^S \colon \mbf{T}^S \to \mbb{C}$. Let $\alpha_p \colon \Z[T^+] \to \mbb{C}$ denote the algebra homomorphism satisfying 
\[
U_t \cdot \varphi_{\opn{Iw}} = \alpha_p(t) \varphi_{\opn{Iw}}
\]
for $\varphi_{\opn{Iw}} \in \pi_p \cong \opn{St}^{\opn{sm}}(\C)$ any non-zero vector fixed by $\opn{Iw}$, where $U_t$ is the Hecke operator associated with $t$. Both $\alpha^S$ and $\alpha_p$ are definable over a number field, and hence a finite extension $L/\Q_p$. We note that $\alpha_p$ is in fact the trivial character. 

Set $\mbf{T}^{S, \opn{ord}} = \mbf{T}^S \otimes_{\mbb{Z}} \mbb{Z}[T^+]$, and let $\ide{m} \subset \mbf{T}^{S, \opn{ord}}_L$ denote the kernel of $\alpha^S \otimes \alpha_p$. We base-change everything to $L$, but omit this from the notation. Finally if $\ide{p} \subset \mathcal{O}(\Omega)$ is a prime ideal, we let $\Omega_{\ide{p}} \subset \Omega$ denote the Zariski closed subspace cut out by $\ide{p}$.

\begin{proposition} \label{Prop:FamiliesThrupi}
    With notation as above, we have:
    \begin{enumerate}
        \item $\opn{H}^i_c(X_{G,K}, D^{r\opn{-an}}_{\{1\}})^{\opn{ord}, \mathcal{K}^p/K^p}_{\ide{m}}$ is zero if $i \neq 2, 3$ and one-dimensional over $L$ for $i=2, 3$. 
        \item For $\rho$ sufficiently large, there exists a height one prime ideal $\ide{p} \subset \mathcal{O}(\Omega)$ contained in $(T_1, T_2)$, and an algebra homomorphism
        \[
        \underline{\alpha}^S \otimes \underline{\alpha}_p \colon \mbf{T}^{S, \opn{ord}}_{\mathcal{O}(\Omega)/\ide{p}} \to \mathcal{O}(\Omega)/\ide{p} = \mathcal{O}(\Omega_{\ide{p}})
        \]
        which specialises to $\alpha^S \otimes \alpha_p$ at the trivial character, with the following property: the localised cohomology group $\opn{H}^i_c(X_{G,K}, D^{r\opn{-an}}_{\Omega_{\ide{p}}})^{\opn{ord}, \mathcal{K}^p/K^p}_{I}$ is zero if $i \neq 2, 3$, and free of rank one over $\mathcal{O}(\Omega_{\ide{p}})$ if $i=2,3$, where $I$ denotes the kernel of $\underline{\alpha}^S \otimes \underline{\alpha}_p$. Moreover
        \[
        \opn{H}^i_c(X_{G,K}, D^{r\opn{-an}}_{\Omega_{\ide{p}}})^{\opn{ord}, \mathcal{K}^p/K^p}_{I} \otimes_{\mathcal{O}(\Omega_{\ide{p}})} L \cong \opn{H}^i_c(X_{G,K}, D^{r\opn{-an}}_{\{1\}})^{\opn{ord}, \mathcal{K}^p/K^p}_{\ide{m}}
        \]
        for all $i$, where the tensor product is over the trivial character.
        
        \item One has $\underline{\alpha}_p(T^+) \subset \mathcal{O}^+(\Omega_{\ide{p}})^{\times}$ and (after possibly increasing $\rho$) $\underline{\alpha}_{p, 1}\underline{\alpha}_{p, 2}\underline{\alpha}_{p, 3} = 1$, where $\underline{\alpha}_{p, j}$ denotes the character on the $(j, j)$-th element of the diagonal matrix (note $\underline{\alpha}_p$ extends naturally to a character of $T(\Q_p)$).
    \end{enumerate}
\end{proposition}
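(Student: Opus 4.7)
The plan is to deduce Part (1) from classical facts about cuspidal cohomology combined with a small-slope / ordinary control theorem, Part (2) from Hida-style finiteness for ordinary distribution cohomology together with \cite[Theorem 4.9]{HansenThorne}, and Part (3) from integrality of the $U_t$-operators and our central-character conventions on weight space.

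For (1), I would first invoke a Stevens-type control theorem: since $\pi$ is ordinary at $p$ and of trivial weight, and since slope-zero overconvergent cohomology at a classical weight matches classical cohomology after localization at a non-critical system of Hecke eigenvalues, the module $\opn{H}^i_c(X_{G,K}, D^{r\opn{-an}}_{\{1\}})^{\opn{ord}, \mathcal{K}^p/K^p}_{\ide{m}}$ is identified with the $\ide{m}$-localized ordinary part of classical compactly supported Betti cohomology with trivial coefficients. The long exact sequence for the Borel--Serre boundary shows the Eisenstein part dies after $\ide{m}$-localization (the system $\alpha^S\otimes\alpha_p$ is cuspidal non-Eisenstein, since $\pi$ is cuspidal, regular algebraic, and Steinberg at $p$). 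What remains is cuspidal cohomology, which for $\GL_3/\Q$ is concentrated in the interval $[q_0,q_0+\ell_0]=[2,3]$. By strong multiplicity one and our choice of $\mathcal{K}^p$, $\pi_f^p$ contributes a line to $(\pi_f^p)^{\mathcal{K}^p}$, while $\pi_p \cong \opn{St}^{\opn{sm}}(\C)$ contributes its one-dimensional Iwahori-invariant subspace, yielding the claim.

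For (2), I would let $M^i \defeq \opn{H}^i_c(X_{G,K}, D^{r\opn{-an}}_{\Omega})^{\opn{ord}, \mathcal{K}^p/K^p}_{\ide{m}}$. By Ash--Stevens / Urban-type finiteness for ordinary distribution cohomology over an affinoid disc, each $M^i$ is a finitely generated $\mathcal{O}(\Omega)$-module, and by universal coefficients the base change to the trivial character recovers the modules in (1); in particular the support of $M^{\bullet}$ in $\Omega$ passes through $(0,0)$. Apply \cite[Theorem 4.9]{HansenThorne} to produce an irreducible component $\Sigma$ of the ordinary $\GL_3$-eigenvariety through $x_\pi$ which is one-dimensional and, locally around $x_\pi$, maps isomorphically to its image in $\Omega$; after possibly shrinking $\Omega$, this image is cut out by a height one prime $\ide{p} \subset \mathcal{O}(\Omega)$ contained in $(T_1,T_2)$, and the tautological Hecke eigensystem on $\Sigma$ furnishes $\underline{\alpha}^S\otimes\underline{\alpha}_p \colon \mbf{T}^{S,\opn{ord}}_{\mathcal{O}(\Omega_{\ide{p}})} \to \mathcal{O}(\Omega_{\ide{p}})$ specializing to $\alpha^S\otimes\alpha_p$ at the origin. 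For the freeness of $\opn{H}^i_c(X_{G,K}, D^{r\opn{-an}}_{\Omega_{\ide{p}}})^{\opn{ord}, \mathcal{K}^p/K^p}_I$: after shrinking, the ordinary complex $R\Gamma^{\opn{ord}}_c$ is represented by a perfect complex of $\mathcal{O}(\Omega_{\ide{p}})$-modules, and concentration in degrees $2,3$ at the trivial character propagates to $\Omega_{\ide{p}}$ by upper semicontinuity of cohomology dimensions together with the vanishing of $\opn{Tor}^1$ once two consecutive degrees are flat. Each non-vanishing degree has generic rank one (matching the fiber at the origin), and the base change identity then follows from the flatness just established.

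For (3), integrality $\underline{\alpha}_p(T^+)\subset \mathcal{O}^+(\Omega_{\ide{p}})^{\times}$ follows because the $U_t$-operators for $t\in T^+$ preserve the natural bounded integral lattice inside the ordinary cohomology over $\Omega_{\ide{p}}$, and on the ordinary part they are invertible. The identity $\underline{\alpha}_{p,1}\underline{\alpha}_{p,2}\underline{\alpha}_{p,3}=1$ encodes triviality of the central character: our weight space $\mathcal{W}$ is the weight space for $\opn{PGL}_3$ (see the convention in \S\ref{SubSec:FamiliesDefect1}), so the restriction $\kappa_\Omega|_{Z(\Z_p)}$ is trivial by construction; combined with the trivial central character of $\pi$ at the origin, the product character $\underline{\alpha}_{p,1}\underline{\alpha}_{p,2}\underline{\alpha}_{p,3}$ is continuous, trivial on $T(\Z_p)$, and trivial at $(0,0)$ after evaluating at $p\cdot I_3$, so after possibly shrinking $\Omega$ it equals $1$ identically on $\Omega_{\ide{p}}$.

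The main obstacle is establishing one-dimensionality of the Hida family at $x_\pi$: the lower bound on the dimension of the support of $M^\bullet$ is delicate because $\pi$ is not assumed essentially self-dual, so one cannot deform $\pi$ along a Zariski dense set of classical points, and the existence of a genuinely $1$-dimensional ordinary family is exactly the content we import from \cite[Theorem 4.9]{HansenThorne}. All subsequent structural statements (freeness, base change, unit values of $\underline{\alpha}_p$) are then formal consequences of this, possibly after shrinking $\Omega$.
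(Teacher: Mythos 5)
Parts (1) and (3) of your proposal align with the paper's proof: for (1) the paper also invokes concentration of cuspidal cohomology in degrees $[2,3]$ and Stevens' control theorem, and for (3) the integrality and central-character arguments are the same.

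Part (2) has a genuine gap. The crucial structural input that the paper uses, and which you are missing, is the non-abelian Leopoldt conjecture in the defect-one case (\cite[Thm.\ 1.1.5]{Han17}). This gives $\opn{H}^i_c(X_{G,K}, D^{r\opn{-an}}_{\Omega})^{\opn{ord},\mathcal{K}^p/K^p}_{\m} = 0$ for $i \neq 3$ \emph{over the full two-dimensional affinoid} $\Omega$ --- including $i=2$. Combined with \cite[Thm.\ 4.9]{HansenThorne} one gets $\opn{H}^3_c(\cdots)^{\opn{ord}}_{\m} \cong \mathcal{O}(\Omega)_{\{1\}}/J$ with $J$ a principal ideal generated by a non-zero-divisor. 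This is the opposite of your flatness picture: the module $\opn{H}^3_c$ over $\Omega$ is decidedly not flat, and the rank-one $\opn{H}^2_c$ over $\Omega_{\ide{p}}$ is produced entirely as $\opn{Tor}_1^{\mathcal{O}(\Omega)_{\{1\}}}(\mathcal{O}(\Omega)_{\{1\}}/J,\, \mathcal{O}(\Omega)_{\{1\}}/\ide{p})$ once $\ide{p}$ is chosen to contain $J$. Your appeal to ``upper semicontinuity of cohomology dimensions together with the vanishing of $\opn{Tor}^1$ once two consecutive degrees are flat'' would force $\opn{H}^2_c$ to already be nonzero over $\Omega$ to survive base change, which is exactly what Leopoldt forbids; the mechanism you describe cannot produce a module that is zero before restriction and nonzero after. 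You also need the explicit cyclic description $\mathcal{O}(\Omega)_{\{1\}}/J$ with $J$ principal and generated by a non-zero-divisor in order for the higher $\opn{Tor}$'s to vanish and for each of $\opn{H}^2_c$, $\opn{H}^3_c$ to come out free of rank one over $\mathcal{O}(\Omega_{\ide{p}})$ after localising at $I$ (the paper's rigid delocalisation step, as in \cite[Lem.\ 2.10]{BDJ17}). Your proposal does not supply either of these inputs, and the inference chain from ``$M^i$ finitely generated'' and ``HansenThorne gives a one-dimensional component'' to freeness of rank one in two consecutive degrees does not close on its own.
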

\begin{proof}
Cuspidal cohomology for $\GL(3)$ is supported in degrees 2 and 3 by \cite[Proposition 4.1]{FSdecomposition} and Matsushima's formula; part (1) then follows from Stevens' control theorem in this setting (see, e.g., \cite[Thm.\ 4.4]{BW20}). 

For part (2), one can apply \cite[Thm.\ 4.9]{HansenThorne} (noting that the compactly-supported version follows from the same proof). More precisely, since the non-abelian Leopoldt conjecture is known when the defect of the group is $1$ \cite[Thm.\ 1.1.5]{Han17}, one has $\opn{H}^i_c(X_{G,K}, D^{r\opn{-an}}_{\Omega})^{\opn{ord}, \mathcal{K}^p/K^p}_{\m} = 0$ if $i \neq 3$, and $\opn{H}^3_c(X_{G,K}, D^{r\opn{-an}}_{\Omega})^{\opn{ord}, \mathcal{K}^p/K^p}_{\m} \cong \mathcal{O}(\Omega)_{\{1\}}/J$ for a principal ideal $J$ generated by a non-zero-divisor. One can now choose a height one prime ideal $\ide{p} \subset \mathcal{O}(\Omega)$ contained in $(T_1, T_2)$, which contains $J$ after localising, and the Tor spectral sequence implies that:
\begin{align*}
\opn{H}^3_c(X_{G,K}, D^{r\opn{-an}}_{\Omega_{\ide{p}}})^{\opn{ord}, \mathcal{K}^p/K^p}_{\m} &\cong \mathcal{O}(\Omega)_{\{1\}}/\ide{p} = \mathcal{O}(\Omega_{\ide{p}})_{\{1\}} \\
\opn{H}^2_c(X_{G,K}, D^{r\opn{-an}}_{\Omega_{\ide{p}}})^{\opn{ord}, \mathcal{K}^p/K^p}_{\m} &\cong \opn{Tor}^{\mathcal{O}(\Omega)_{\{1\}}}_1(\opn{H}^3_c(X_{G,K}, D^{r\opn{-an}}_{\Omega})^{\opn{ord}, \mathcal{K}^p/K^p}_{\m}, \mathcal{O}(\Omega)_{\{1\}}/\ide{p}) \cong \mathcal{O}(\Omega_{\ide{p}})_{\{1\}} \\
\opn{H}^j_c(X_{G,K}, D^{r\opn{-an}}_{\Omega_{\ide{p}}})^{\opn{ord}, \mathcal{K}^p/K^p}_{\m} &\cong \opn{Tor}^{\mathcal{O}(\Omega)_{\{1\}}}_{3-j}(\opn{H}^3_c(X_{G,K}, D^{r\opn{-an}}_{\Omega})^{\opn{ord}, \mathcal{K}^p/K^p}_{\m}, \mathcal{O}(\Omega)_{\{1\}}/\ide{p}) = 0 \quad (j \neq 2, 3) 
\end{align*}
where the last isomorphism in the second line follows from the fact that $J$ is generated by a non-zero-divisor (and $J \subset \ide{p}$). Via a standard rigid delocalisation process (as in \cite[Lem.\ 2.10]{BDJ17}), we deduce both the existence of the character $\underline{\alpha}^S \otimes \underline{\alpha}_p$, and that $\opn{H}^i_c(X_{G,K}, D^{r\opn{-an}}_{\Omega_{\ide{p}}})^{\opn{ord}, \mathcal{K}^p/K^p}_{I}$ is free of rank one over $\cO(\Omega_{\pri})$. The final statement of (2) follows from the Tor spectral sequence relating $\opn{H}^i_c(X_{G,K}, D^{r\opn{-an}}_{\Omega_{\ide{p}}})^{\opn{ord}, \mathcal{K}^p/K^p}_{\m}$ and $\opn{H}^i_c(X_{G,K}, D^{r\opn{-an}}_{\{1\}})^{\opn{ord}, \mathcal{K}^p/K^p}_{\m}$.

Finally, to see (3), note that $\Z[T^+]$ acts by integral units by ordinarity, giving the first statement. For the second, note $\underline{\alpha}_{p,1}\underline{\alpha}_{p,2}\underline{\alpha}_{p,3}$ is the character of $\Qp^\times$ describing the action of the centre $Z(\GL_3(\Qp))$ on $\opn{H}^i_c(X_{G,K}, D^{r\opn{-an}}_{\Omega_{\ide{p}}})^{\opn{ord}, \mathcal{K}^p/K^p}_{I}$. As we have restricted to the 2-dimensional `null' weight space, the centre acts by a finite-order character at each point, and trivially at $\m$ (as $\pi_p$ is Steinberg). Thus, by shrinking if necessary, we can assume it acts trivially at $I$.
\end{proof}

\subsubsection{The essentially self-dual case} \label{Subsub:TheESDCaseFamilies}

We can say slightly more if $\pi$ is essentially self-dual. In this case, $\pi$ is a twisted symmetric square lift of a weight $2$ cuspidal newform $f$ of level $\Gamma_1(N) \cap \Gamma_0(p)$ and nebentypus $\varepsilon_f$ satisfying $\varepsilon_f(p) = 1$ (as in \S\ref{sec:symm square}). Then $f$ is necessarily ordinary at $p$ -- let $\mbf{f} = \sum_{n=1}^{\infty} a_n(\mbf{f}) q^n$ denote the (unique) Hida family passing through $f$ defined over a small open neighbourhood of the trivial weight\footnote{Our convention is that the specialisation of $\mbf{f}$ at an integer weight $k \geq 0$ in the one-dimensional weight space for $\GL_2$ is a weight $k+2$ modular form.} in the (one-dimensional) $\GL_2$ weight space.

By the rigidity of eigenvarieties and $p$-adic Langlands functoriality \cite[\S 3.2]{JoNew} and \cite[\S5.4]{Han17}, and since $\varepsilon_{\mbf{f}}(p) = 1$, we see that one may take $\Omega_{\ide{p}}$ contained in the pure weight locus $\mathcal{W}^{\opn{pure}}$ and 
\[
\underline{\alpha}_p\left( \smat{p & & \\ & 1 & \\ & & 1} \right) = \underline{\alpha}_p\left( \smat{p & & \\ & p & \\ & & 1} \right) = a_p(\mbf{f})^2 .
\]

\subsection{The automorphic Benois--Colmez--Greenberg--Stevens formula}

With notation as in \S \ref{SubSec:FamiliesDefect1}, set $\Sigma = \Omega_{\ide{p}}$. The tangent space $T_1(\Sigma)$ of $\Sigma$ at the origin is at least one-dimensional over $L$. We assume the following weaker version of the Calegari--Mazur conjecture (see \cite{CM09}):

\begin{assumption} \label{CalegariMazurAssumption}
    Let $T_1(\mathcal{W})$ denote the tangent space of $\mathcal{W}$ at the identity, and let $PT_1(\mathcal{W})$ denote the space of lines in $T_1(\mathcal{W})$. One has a natural map 
    \begin{equation} \label{CMweakeqn}
    T_1(\Sigma) \backslash \{ 0 \} \to T_1(\mathcal{W}) \backslash \{ 0 \} \to PT_1(\mathcal{W}) .
    \end{equation}
    We assume that:
    \begin{itemize}
        \item[(A1)] There exists an element $0 \neq v \in T_1(\Sigma)$ which maps to the line $[(v_1, v_2)] \in PT_1(\mathcal{W})$ such that $[(v_1, v_2)] \neq [(1, 1)]$.
        \item[(A2)] There exists an element $0 \neq v \in T_1(\Sigma)$ which maps to the line $[(v_1, v_2)] \in PT_1(\mathcal{W})$ such that $[(v_1, v_2)] \neq [(2, -1)]$. 
    \end{itemize}
    Note that, automatically, either (A1) or (A2) holds.
\end{assumption}

\begin{remark} 
For $i= 1, 2$, assumption (Ai) is precisely the property of admitting ``non-$s_i$-infinitesimal deformations'' as in \S \ref{Subsub:TheGaloisSideIntro}.
\end{remark}

\begin{remark}\label{eq:APS}
  The papers of Ash--Pollack--Stevens \cite{AshPollackStevens} and Calegari--Mazur \cite{CM09} study classical $p$-adic families for $\GL(3)$ and Bianchi modular forms respectively. They both conjecture that non-essentially-self-dual representations do not vary in classical $p$-adic families. Calegari--Mazur go further, predicting that such representations never vary infinitesimally along a rational line. The analogue of this stronger conjecture should also hold for $\GL(3)$; and this would say that if $\pi$ is non-essentially-self-dual, then the image of (\ref{CMweakeqn}) does not contain a line of the form $[(\lambda_1, \lambda_2)]$ with $\lambda_1, \lambda_2 \in \Q$. Assumptions (A1) and (A2) would both be implied by this analogue.
\end{remark}

\begin{remark} \label{Rem:ESDispureweightline}
    If $\pi$ is essentially self-dual, then Assumption \ref{CalegariMazurAssumption} automatically holds because $\Sigma$ is contained in the pure weight locus $\mathcal{W}^{\opn{pure}}$ (so $(1, 0)$ is a basis of $T_1(\Sigma)$).
\end{remark}

Let $v = (v_1, v_2) \in T_1(\Sigma)$ be a non-zero tangent vector. We will consider the first-order infinitesimal behaviour of the family in the direction of this tangent vector. Let $L[\epsilon]$ denote the ring of dual numbers, and note we have a natural map $\mathcal{O}(\Sigma) \to L[\epsilon]$ given by $T_1 \mapsto v_1\epsilon$ and $T_2 \mapsto v_2\epsilon$; in particular, $T_i^2 \mapsto 0$. As $\kappa_i^{\opn{univ}}(x) = (1+T_i)^{\log_p(x)}$, the universal character over $L[\epsilon]$ is then given by
\begin{equation}\label{eq:kappa epsilon}
\kappa_{i, \epsilon}(x) = 1 + \opn{log}_p(x) v_i \epsilon
\end{equation}
for $x \in \Z_p^{\times}$. Set $\kappa_{3, \epsilon} = \kappa_{1, \epsilon}^{-1} \kappa_{2, \epsilon}^{-1}$.

\begin{definition} \label{TriangParamsDef}
    For $i=1, 2,3$, let $\delta_{i,v} \colon \Q_p^{\times} \to L[\epsilon]^{\times}$ denote the unique character such that $\delta_{i, v}|_{\Z_p^{\times}} = \kappa_{i, \varepsilon}$ and $\delta_{i, v}(p) = \underline{\alpha}_{p, i}(p)$ (specialised in this first-order neighbourhood). Note $\delta_{i, v}$ is trivial modulo $\epsilon$.
\end{definition}

\subsubsection{Koszul resolutions} \label{Subsub:KoszulResolutions}

One of the key ingredients in proving the relation between the automorphic and Galois $\mathcal{L}$-invariants is the locally analytic Koszul resolution of Kohlhasse--Schraen, which we now recall. For this we introduce some notation. Let $R$ be a $\mbb{Q}_p$-affinoid algebra, and let $\chi \colon \overline{B}(\mbb{Q}_p) \to T(\mbb{Q}_p) \to R^{\times}$ be a locally analytic character. Let $\chi_0 \colon \opn{Iw} \cap \overline{B}(\mbb{Q}_p) \to R^{\times}$ denote its restriction, which we suppose is $r_{\chi_0}$-analytic.

Recall that, for $r \geq r_{\chi_0}$, we let
\[
A^{r\mathrm{-an}}_{\chi_0} = \{ f \colon \opn{Iw} \to R : f \text{ is } r\text{-analytic}, f(b \cdot -) = \chi_0(b) \cdot f(-) \; \forall b \in \overline{B}(\Q_p) \cap \opn{Iw} \}
\]
denote the $r$-analytic induction, which comes equipped with an action of $\opn{Iw}$ and $T^{-} \subset T(\mbb{Q}_p)$. For any $t \in T^-$, let $U_t \colon G(\mbb{Q}_p) \to \opn{End}_R(A^{r\mathrm{-an}}_{\chi_0})$ denote the unique bi-$\opn{Iw}$-equivariant morphism satisfying: $\opn{supp}(U_t) = \opn{Iw} \cdot t \cdot \opn{Iw}$, and $U_t(t) = t \star -$ (where $\star$ denotes the action of $T^-$ on $A^{r\mathrm{-an}}_{\chi_0}$). This gives rise to an endomorphism of $\opn{c-Ind}_{\opn{Iw}}^{G(\mbb{Q}_p)}(A^{r\mathrm{-an}}_{\chi_0})$ which we also denote by $U_t$.

We let 
\[
\opn{I}_{\overline{B}}^{\opn{la}}(\chi) = \left\{ f \colon G(\mbb{Q}_p) \to R \text{ locally analytic } : f(b \cdot -) = \chi(b) f(-) \text{ for all } b \in \overline{B}(\mbb{Q}_p) \right\}
\]
which is a representation of $G(\mbb{Q}_p)$. One has a natural $\opn{Iw}$-equivariant map $A^{r\mathrm{-an}}_{\chi_0} \to \opn{I}_{\overline{B}}^{\opn{la}}(\chi)$ given by extending a function on the open cell $\overline{B}(\mbb{Q}_p) \cdot \opn{Iw}$ by zero. This induces a $G(\mbb{Q}_p)$-equivariant map 
\begin{equation}\label{eq:extend by zero}
\opn{c-Ind}_{\opn{Iw}}^{G(\mbb{Q}_p)}(A^{r\mathrm{-an}}_{\chi_0}) \to \opn{I}_{\overline{B}}^{\opn{la}}(\chi).
\end{equation}

Let $t_1 = t= \opn{diag}(p, 1, 1)$ and $t_2 = \opn{diag}(p, p, 1)$, so $t_1^{-1}, t_2^{-1} \in T^{-}$. For $i=1, 2$, we let $y_i = U_{t_i^{-1}} - \chi(t_i)$ denote the corresponding endomorphism of $\opn{c-Ind}_{\opn{Iw}}^{G(\mbb{Q}_p)}(A^{r\mathrm{-an}}_{\chi_0})$. The Koszul complex is
\[
    \Lambda^{\bullet}_{R}(R^{\oplus 2}) \otimes_{R} \opn{c-Ind}_{\opn{Iw}}^{G(\Q_p)}(A^{r\mathrm{-an}}_{\chi_0}),
\]
concentrated in degrees $[-2,0]$,  with differentials $d_k : \Lambda^k \otimes (-) \to \Lambda^{k-1} \otimes (-)$  given by
\[
d_2(e_1\wedge e_2 \otimes f) = e_2 \otimes y_1(f) - e_1 \otimes y_2(f), \qquad d_1(e_i \otimes f) = y_i(f).
\]
Here $e_1, e_2$ denotes the standard basis of $R^{\oplus 2}$.

\begin{lemma}[{c.f., \cite[Thm.\ 2.5]{KS12} and \cite[Thm.\ 3.4]{GehrmannRosso}}] \label{Lem:KoszulResolution}
With notation as above, suppose that $\chi$ is trivial on the centre of $G(\mbb{Q}_p)$. Then the augmented Koszul complex
\[
\Lambda^{\bullet}_{R}(R^{\oplus 2}) \otimes_{R} \opn{c-Ind}_{\opn{Iw}}^{G(\Q_p)}(A^{r\mathrm{-an}}_{\chi_0} ) \rightarrow \mathrm{I}_{\overline{B}}^{\opn{la}}(\chi)\rightarrow 0
\]
is exact.
\end{lemma}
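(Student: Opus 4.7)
The plan is to follow the strategy of Kohlhaase--Schraen \cite{KS12} Theorem 2.5, which establishes an analogous Koszul resolution for $\GL_n$, specialised to our $\GL_3$ setting (as in Gehrmann--Rosso \cite{GehrmannRosso} Theorem 3.4). Note first that $y_1$ and $y_2$ commute: they commute modulo constants because $t_1, t_2 \in T(\Q_p)$ commute and the assignment $t \mapsto U_t$ is multiplicative on $T^-$, and then they commute on the nose because $\chi(t_i) \in R$ is central. Hence the Koszul complex is well-defined and its differentials square to zero.

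The first step is to establish surjectivity of the augmentation map $\opn{c-Ind}_{\opn{Iw}}^{G(\Q_p)}(A^{r\mathrm{-an}}_{\chi_0}) \to \opn{I}_{\overline{B}}^{\opn{la}}(\chi)$. Using the Bruhat decomposition $G(\Q_p) = \bigsqcup_{w \in W}\overline{B}(\Q_p)\cdot w\cdot \opn{Iw}$, any locally analytic $f \in \opn{I}_{\overline{B}}^{\opn{la}}(\chi)$ can be written as a finite sum of pieces, each supported on a single cell $\overline{B}(\Q_p)\cdot w\cdot\opn{Iw}$. Translating by $w^{-1}$ reduces each such piece to the open cell, where (after shrinking the analyticity radius if necessary) it lifts to a compactly supported $r$-analytic function on $\opn{Iw}$, i.e.\ to an element of the source.

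The main step is exactness in degrees $-1$ and $-2$. To this end, I would decompose $\opn{c-Ind}_{\opn{Iw}}^{G(\Q_p)}(A^{r\mathrm{-an}}_{\chi_0})$ according to the orbits of the monoid $\langle t_1,t_2\rangle \subset T^+$ acting on $\overline{B}(\Q_p)\backslash G(\Q_p)/\opn{Iw}$, noting that on each cell the operator $U_{t_i^{-1}}$ acts (up to a correction by $\chi(t_i)$) as translation by $t_i$. The Koszul complex of two commuting operators is exact in negative degrees iff those operators form a regular sequence on the appropriate quotients; so one reduces the problem to the combinatorial statement that the orbits of $\langle t_1, t_2\rangle$ on each Bruhat cell are free, and that the only element supported on the closed cells which is killed by both $y_1$ and $y_2$ lies in the image of the differential. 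The hypothesis that $\chi$ is trivial on the centre $Z(G)(\Q_p)$ is crucial here: the image of $\langle t_1,t_2\rangle$ in $T(\Q_p)/Z(G)(\Q_p)$ is a rank-$2$ free submonoid, which matches the rank of the Koszul complex and forces the resolution to be exact, as opposed to having a higher-degree cohomology that would appear if there were a non-trivial central character.

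The main obstacle will be the regular-sequence / free-orbit analysis cell-by-cell. In the $\GL_3$ case this is a finite check over the six Weyl elements, using the explicit minimal-length representatives (as in Notation \ref{not:weyl reps}, extended to the full Weyl group), and the exact result for general $\GL_n$ is carried out in \cite{KS12}. Rather than redoing this computation, I would cite \cite[Thm.\ 2.5]{KS12} and \cite[Thm.\ 3.4]{GehrmannRosso}, indicating that the hypothesis on $\chi$ being trivial on the centre is exactly what is needed to match the number of Koszul operators with the rank of $T/Z$.
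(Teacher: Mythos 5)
Your intuition about the role of the central-character hypothesis (matching the number of Koszul operators with the rank of $T/Z$) is the right one, but the proposal does not actually make the reduction that turns this into a proof, and your appeal to \cite[Thm.\ 2.5]{KS12} as stated doesn't quite close the gap. The theorem of Kohlhaase--Schraen produces a Koszul resolution whose length equals the rank of the full torus of the group: applied directly to $\GL_3$ this would give a rank-$3$ Koszul complex, whereas the lemma asserts exactness of a rank-$2$ one. So the phrase ``the exact result for general $\GL_n$ is carried out in \cite{KS12}'' is misleading -- what is in \cite{KS12} for $\GL_3$ is a three-step resolution, not the two-step one needed here.

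The paper's proof is much shorter and makes the reduction explicit: since $\chi$ is trivial on $Z(G)(\Qp)$, both $\mathrm{I}_{\overline{B}}^{\opn{la}}(\chi)$ and $A^{r\mathrm{-an}}_{\chi_0}$ are naturally inductions from the corresponding subgroups of $\mathrm{PGL}_3$, whose torus has rank $2$; the lemma is then literally \cite[Thm.\ 2.5]{KS12} for $\mathrm{PGL}_3$ (choosing $\overline{B}$ as the Borel and $\opn{Iw}$ as the Iwahori). Your direct sketch via ``regular sequences on Bruhat cells'' is not how KS12 argue (their proof goes through properties of the distribution algebra and derived functors), is not actually carried out, and isn't needed once the descent to $\mathrm{PGL}_3$ is observed. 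In particular your surjectivity and exactness arguments are superfluous: both would follow wholesale from the cited theorem once the rank issue is addressed. The one genuine gap in the proposal is therefore the missing descent step; without it, the citation to KS12 does not apply to the two-term Koszul complex in the statement.
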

\begin{proof} 
Note that, by the hypothesis on $\chi$, we can identify the induction $\opn{I}_{\overline{B}}^{\opn{la}}(\chi)$ as an induction from the lower-triangular Borel in $\mathrm{PGL}_3$. Similarly, we can view $A_{\chi_0}^{r\mathrm{-an}}$ as an induction to the image of $\opn{Iw}$ in $\opn{PGL}_3$. Thus, the lemma is a consequence of the same result for $\mathrm{PGL}_3$ proved by Kohlhasse and Schraen \cite[Thm.\ 2.5]{KS12}. We note that in \emph{loc.cit.}, we are free to arrange $\overline{B}$ to be the choice of Borel and $\opn{Iw}$ to be the choice of Iwahori subgroup.
\end{proof}

Let $D^{r\mathrm{-an}}_{\chi_0}$ denote the continuous $R$-linear dual of $A^{r\mathrm{-an}}_{\chi_0}$. Let $M^{\bullet}$ denote the following complex of $G(\mbb{Q}_p)$-representations
\[
\Lambda^{\bullet}_{R}(R^{\oplus 2}) \otimes_{R} \opn{Ind}_{\opn{Iw}}^{G(\Q_p)}(D^{r\mathrm{-an}}_{\chi_0})
\]
concentrated in degrees $[0, 2]$, with differentials given by the dual of $d_k$ above (identifying $M^k$ with the continuous dual of $\Lambda^k_R(R^{\oplus 2}) \otimes_R \opn{c-Ind}_{\opn{Iw}}^{G(\mbb{Q}_p)}(A^{r\mathrm{-an}}_{\chi_0})$). Note that the dual of $d_1$ is given by
\[
d^*_1(\mu) = e_1 \otimes z_1(\mu) + e_2 \otimes z_2(\mu),
\]
where $z_i = U_{t_i} - \chi(t_i)$ (resp. $U_{t_i}$) denotes the dual of $y_i$ (resp. $U_{t_i^{-1}}$). From Lemma \ref{Lem:KoszulResolution} we obtain:

\begin{corollary} \label{c: dual spectral sequence}
Let $L/\mbb{Q}_p$ be a finite extension and set $R=L$ or $R = L[\epsilon]$ with $\epsilon^2 = 0$. With notation and assumptions as in Lemma \ref{Lem:KoszulResolution}, we obtain  a resolution 
\[
0\rightarrow \mathcal{A}_{G, c}(\mathrm{I}_{\overline{B}}^{\opn{la}}(\chi)^{\ast})\rightarrow \mathcal{A}_{G, c}(M^{\bullet})
\]
as $G(\Q)$-representations. In particular $\opn{H}^0(G(\mbb{Q}), \mathcal{A}_{G, c}(\opn{I}_{\overline{B}}^{\opn{la}}(\chi)^*))$ is identified with the submodule
\[
\opn{ker}(U_{t_1} - \chi(t_1)) \cap \opn{ker}(U_{t_2} - \chi(t_2)) \subset \opn{H}^0(G(\mbb{Q}), \mathcal{A}_{G, c}(\opn{Ind}_{\opn{Iw}}^{G(\mbb{Q}_p)}D^{r\opn{-an}}_{\chi_0})).
\]
\end{corollary}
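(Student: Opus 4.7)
The plan is to derive this by dualising the augmented Koszul resolution of Lemma \ref{Lem:KoszulResolution}, then applying the exact functor $\mathcal{A}_{G,c}(-)$, and finally identifying $\mathrm{H}^0(G(\mathbb{Q}),-)$ with a kernel via left-exactness.

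First I would take continuous $R$-linear duals of the exact sequence in Lemma \ref{Lem:KoszulResolution}. The continuous dual of $\Lambda^k_R(R^{\oplus 2}) \otimes_R \opn{c-Ind}_{\opn{Iw}}^{G(\Q_p)}(A^{r\opn{-an}}_{\chi_0})$ is identified with $M^k = \Lambda^k_R(R^{\oplus 2}) \otimes_R \opn{Ind}_{\opn{Iw}}^{G(\Q_p)}(D^{r\opn{-an}}_{\chi_0})$ (using that $\Lambda^k$ is finite free and that compactly-induced Banach $R$-modules dualise to ordinary $R$-linear induction of the dual), and the dual of $\mathrm{I}_{\overline{B}}^{\opn{la}}(\chi)$ is the space of $G(\Q_p)$-representations in the statement. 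The differentials dualise to the stated formulae involving $z_i = U_{t_i}-\chi(t_i)$. Exactness of the dualised complex follows because the terms in the Koszul resolution are spaces of compact type / LF-type for which the relevant strict-exactness/reflexivity statements are standard (e.g.\ Schneider--Teitelbaum, Kohlhasse--Schraen); when $R = L[\epsilon]$ one may check exactness after forgetting the $R$-structure and viewing everything over $L$, using that $L[\epsilon]$ is $L$-flat.

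Next I would apply $\mathcal{A}_{G,c}(-) = \opn{Hom}_{\mathbb{Z}}(D_G,\mathcal{A}_G(-))$ term-by-term. Since $V \mapsto \mathcal{A}_G(V)$ is just ``functions into $V$'' from a fixed set, it is exact in $V$; and $\opn{Hom}_{\mathbb{Z}}(D_G,-)$ is exact because $D_G$ is a free $\mathbb{Z}$-module (cf.\ the references used for $D_G$ and $D_H$ in \S\ref{sec:p-arithmetic cohomology}). Hence $\mathcal{A}_{G,c}(-)$ is exact and produces a resolution
\[
0 \to \mathcal{A}_{G,c}(\mathrm{I}_{\overline{B}}^{\opn{la}}(\chi)^*) \to \mathcal{A}_{G,c}(M^\bullet)
\]
as $G(\mathbb{Q})$-representations, which is the first statement.

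For the second statement I would apply the left-exact functor $\mathrm{H}^0(G(\mathbb{Q}),-)$ to the beginning of this resolution
\[
0 \to \mathcal{A}_{G,c}(\mathrm{I}_{\overline{B}}^{\opn{la}}(\chi)^*) \to \mathcal{A}_{G,c}(M^0) \xrightarrow{d_0^*} \mathcal{A}_{G,c}(M^1),
\]
which identifies $\mathrm{H}^0(G(\mathbb{Q}),\mathcal{A}_{G,c}(\mathrm{I}_{\overline{B}}^{\opn{la}}(\chi)^*))$ with $\ker d_0^*$ inside $\mathrm{H}^0(G(\mathbb{Q}),\mathcal{A}_{G,c}(M^0))$. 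As $d_0^*(\mu) = e_1 \otimes z_1(\mu) + e_2 \otimes z_2(\mu)$ with $z_i = U_{t_i} - \chi(t_i)$, this kernel is exactly $\ker(U_{t_1}-\chi(t_1)) \cap \ker(U_{t_2}-\chi(t_2))$ in $\mathrm{H}^0(G(\mathbb{Q}),\mathcal{A}_{G,c}(\opn{Ind}_{\opn{Iw}}^{G(\Q_p)}D^{r\opn{-an}}_{\chi_0}))$. The main obstacle is the first step: checking cleanly that continuous duality preserves exactness of the Koszul resolution, i.e.\ that the maps in Lemma \ref{Lem:KoszulResolution} are strict with closed image in an appropriate topological sense. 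This should be routine given the Banach/LF structure of the terms, but is the only delicate point; the subsequent functorial steps are essentially formal.
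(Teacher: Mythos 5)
Your proposal follows essentially the same route as the paper's proof: dualise the Koszul resolution of Lemma \ref{Lem:KoszulResolution} (the paper cites the Hahn--Banach argument of Gehrmann--Rosso, Prop.\ 3.9, for strictness/exactness of the dual complex, handling $R=L[\epsilon]$ by reduction to $L$ as you suggest), apply $\mathcal{A}_{G,c}$ term-by-term using projectivity of $D_G$ over $\mathbb{Z}$, and identify $\mathrm{H}^0$ with the kernel of the first differential via left-exactness. The only cosmetic difference is that the paper phrases the last step via a short exact sequence onto $\mathrm{Im}(d_1^*)$ and its long exact sequence, whereas you truncate to $0 \to A \to B \to C$ and invoke left-exactness of invariants directly; the content is identical.
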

\begin{proof}
    Note that we have a resolution as $G(\mbb{Q}_p)$-representations
    \[
    0 \to \opn{I}_{\overline{B}}^{\opn{la}}(\chi)^* \to M^{\bullet}.
    \]
    Indeed this follows from the same argument using the Hahn--Banach theorem as in the proof of \cite[Prop.\ 3.9]{GehrmannRosso}; there exactness is only discussed for coefficients in a finite extension of $\mbb{Q}_p$, but the extension to $R=L[\epsilon]$ is immediate. Applying $\mathcal{A}_{G, c}$ to this resolution preserves exactness; $\opn{Hom}_{\mbb{Z}}(D_G, -)$ is exact, since $D_G$ is a projective $\mbb{Z}$-module. 

    Truncating this resolution to the short exact sequence $0 \to \cA_{G,c}(\opn{I}_{\overline{B}}^{\opn{la}}(\chi)^*) \to \cA_{G,c}(M^0) \to \opn{Im}(d_1^*) \to 0$, and taking the long exact sequence in $G(\Q)$-cohomology, we see $\h^0(G(\Q),\cA_{G,c}(\opn{I}_{\overline{B}}^{\opn{la}}(\chi)^*))$ is identified with the kernel of 
    \[
    \opn{H}^0(G(\mbb{Q}), \mathcal{A}_{G, c}(M^0)) \xrightarrow{d_1^*} \opn{H}^0(G(\mbb{Q}), \mathcal{A}_{G, c}(M^1)),
    \]
    so the last part follows from the explicit description of $d_1^*$ above.
\end{proof}

\begin{remark}
    It is likely that a version of Corollary \ref{c: dual spectral sequence} holds for general affinoid algebras $R$. Since we only need the result when $R = L$ or $R = L[\epsilon]$, we restrict to this case in order to avoid any complications with the failure of the Hahn--Banach theorem over general affinoid algebras.
\end{remark}

\subsubsection{Automorphic $\mathcal{L}$-invariants via infinitesimal deformations}

We have the following result:

\begin{proposition} \label{Prop:AutBCGSformula}
    Let $\partial \delta_{i, v} \colon \Q_p^{\times} \to L$ denote the unique continuous homomorphism satisfying $\delta_{i, v} = 1 + \partial \delta_{i, v} \epsilon$. Then:
    \begin{enumerate}
        \item $\partial \delta_{1, v} \neq \partial \delta_{2, v}$ if Assumption \ref{CalegariMazurAssumption}(A1) holds, and $\partial \delta_{2, v} \neq \partial \delta_{3, v}$ if Assumption \ref{CalegariMazurAssumption}(A2) holds.
        \item One has 
        \[
            \partial \delta_{1, v} - \partial \delta_{2, v} \in \bL^{\opn{Aut}}_1(\pi) \qquad \text{and}\qquad\partial \delta_{2, v} - \partial \delta_{3, v} \in \bL^{\opn{Aut}}_2(\pi).
        \]
        In particular, if Assumption \ref{CalegariMazurAssumption}(A1) (resp. Assumption \ref{CalegariMazurAssumption}(A2)) holds, then $\partial \delta_{1, v} - \partial \delta_{2, v}$ is a basis of $\bL^{\opn{Aut}}_1(\pi)$ (resp. $\partial \delta_{2, v} - \partial \delta_{3, v}$ is a basis of $\bL^{\opn{Aut}}_2(\pi)$).
    \end{enumerate}
\end{proposition}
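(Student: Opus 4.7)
For part (1), the plan is a direct computation. From \eqref{eq:kappa epsilon} we have $\partial \delta_{i,v}|_{\Z_p^\times}(x) = v_i\log_p(x)$, and using $v_3 = -v_1-v_2$ (forced by $\kappa_{1,\epsilon}\kappa_{2,\epsilon}\kappa_{3,\epsilon} = 1$) we find
\[
(\partial\delta_{1,v} - \partial\delta_{2,v})\big|_{\Z_p^\times} = (v_1-v_2)\log_p, \qquad (\partial\delta_{2,v}-\partial\delta_{3,v})\big|_{\Z_p^\times} = (v_1+2v_2)\log_p.
\]
Assumption \ref{CalegariMazurAssumption}(A1) forces $(v_1,v_2) \notin L\cdot(1,1)$, i.e., $v_1\neq v_2$; similarly (A2) gives $v_1+2v_2\neq 0$. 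The two differences are therefore already non-zero on $\Z_p^\times$, proving (1) and, combined with Proposition \ref{Prop:AutLinvSubspaceIsOneDim}, the final sentence of (2).

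For the main content of (2), the plan is to follow the strategy of Gehrmann--Rosso \cite{GehrmannRosso} via the Koszul resolution of Corollary \ref{c: dual spectral sequence}. First I would take a generator $\underline{\mu}$ of the free rank-one $\mathcal{O}(\Sigma)$-module $\opn{H}^2_c(X_{G,K}, D_{\Omega_{\ide{p}}}^{r\opn{-an}})^{\opn{ord},\mathcal{K}^p/K^p}_I$ from Proposition \ref{Prop:FamiliesThrupi}(2), and specialise along the map $\mathcal{O}(\Sigma) \to L[\epsilon]$ defined by $v$. The resulting $\mbf{T}^S$-eigenclass $\tilde{\mu}$ over $L[\epsilon]$ satisfies $U_{t_i}\tilde{\mu} = \delta_v(t_i)\tilde{\mu}$ for $i=1,2$, where $\delta_v$ denotes the character of $T(\Q_p)$ determined by $(\delta_{1,v},\delta_{2,v},\delta_{3,v})$. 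By construction $\delta_v$ is trivial on the centre of $G(\Q_p)$, so Corollary \ref{c: dual spectral sequence} applies with $R = L[\epsilon]$ and $\chi = \delta_v$, producing a lift
\[
\tilde{\nu} \in \opn{H}^0\!\big(G(\Q),\, \cA_{G,c}(\opn{I}_{\overline{B}}^{\opn{la}}(\delta_v)^*)\big).
\]
The reduction $\overline{\nu}$ of $\tilde{\nu}$ modulo $\epsilon$ lies in $\opn{H}^0(G(\Q), \cA_{G,c}(\opn{I}_{\overline{B}}^{\opn{la}}(1)^*))$; projecting to the $\pi^p$-isotypic component and using Proposition \ref{Prop:SmLaCtsIsoTree} and Lemma \ref{Lem:Pip-isotypic-piece1dim}, $\overline{\nu}$ pulls back to a non-zero multiple of the canonical class $\mu^{\opn{la}} \in \opn{H}^0(G(\Q), \cA_{G,c}(\opn{St}^{\opn{la}}(L)^*))[\pi^p]$ from \S\ref{ss: L invariants}.

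Next, the short exact sequence of $G(\Q_p)$-representations over $L$,
\[
0 \to \opn{I}_{\overline{B}}^{\opn{la}}(1) \xrightarrow{\;\cdot\,\epsilon\;} \opn{I}_{\overline{B}}^{\opn{la}}(\delta_v) \to \opn{I}_{\overline{B}}^{\opn{la}}(1) \to 0,
\]
defines an extension class $\xi_v \in \opn{H}^1(G(\Q_p), \opn{End}_L(\opn{I}_{\overline{B}}^{\opn{la}}(1)))$. The existence of the lift $\tilde{\nu}$ forces $\overline{\nu} \smile \xi_v|_{G(\Q)} = 0$ in $\opn{H}^1(G(\Q), \cA_{G,c}(\opn{I}_{\overline{B}}^{\opn{la}}(1)^*))$. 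The plan is then to decompose $\xi_v$ through the Jordan--H\"older structure of $\opn{I}_{\overline{B}}^{\opn{la}}(1)$, using the quotients to $\opn{St}^{\opn{la}}(L)$ and $\opn{St}_i^{\opn{la}}(L)$ together with the multiplication map $\opn{pr}:\opn{St}_i^{\opn{la}}(L) \otimes \opn{St}_{3-i}^{\opn{sm}}(L) \to \opn{St}^{\opn{la}}(L)$, and to identify the resulting component of $\xi_v$ with $\Delta_{i,\lambda_i}^{\opn{la}}$ for $\lambda_1 = \partial\delta_{1,v}-\partial\delta_{2,v}$ and $\lambda_2 = \partial\delta_{2,v}-\partial\delta_{3,v}$. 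Once this identification is in place, the vanishing of the obstruction yields $\Upsilon_{i,\lambda_i}^{\opn{la}}(\mu^{\opn{la}}) = 0$; the diagram \eqref{SecondExtDiagramEqn} together with Proposition \ref{Prop:SmLaCtsIsoTree} then gives $\lambda_i \in \bL_i^{\opn{Aut}}(\pi)$.

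The hard part will be this last identification. One must unwind the extension structure of $\opn{I}_{\overline{B}}^{\opn{la}}(\delta_v)$ against the two-step extensions of \S\ref{Sub:ExtensionsI}, explicitly tracking the cocycle \eqref{eq:c cocycle} through the quotient $\opn{I}_{\overline{B}}^{\opn{la}}(\delta_v) \twoheadrightarrow \opn{St}^{\opn{la}}$, and verify that the extracted coefficients match $\partial\delta_{1,v}-\partial\delta_{2,v}$ and $\partial\delta_{2,v}-\partial\delta_{3,v}$ respectively. This is the $\opn{GL}_3$-analogue of the comparison carried out in \cite[\S 5]{GehrmannRosso}, and is the technical core of the argument.
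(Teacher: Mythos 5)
Part (1) and the final deduction in (2) are correct and match the paper's computation: one has $(\partial\delta_{1,v}-\partial\delta_{2,v})|_{\Z_p^\times}=(v_1-v_2)\log_p$ and $(\partial\delta_{2,v}-\partial\delta_{3,v})|_{\Z_p^\times}=(v_1+2v_2)\log_p$, and then Assumption \ref{CalegariMazurAssumption} together with Proposition \ref{Prop:AutLinvSubspaceIsOneDim} gives both non-vanishing and the basis statement.

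For the main content of (2), your framework (lift the Hida eigenclass to $\h^0(G(\Q),\cA_{G,c}(\opn{I}_{\overline{B}}^{\opn{la}}(\cdot)^*))$ over $L[\epsilon]$ via the Koszul resolution, then interpret the existence of the lift as the vanishing of a cup-product obstruction) is the right shape. But you have flagged as ``the hard part'' precisely the step that closes the argument, and you do not supply it. That step --- passing from the vanishing of $\overline{\nu}\smile\xi_v$ to the vanishing of $\Upsilon^{\opn{la}}_{i,\lambda_i}(\mu^{\opn{la}})$ by explicitly matching the boundary class of $\opn{I}_{\overline{B}}^{\opn{la}}(\delta_v)$ against $\Delta^{\opn{la}}_{i,\lambda_i}$ through the Jordan--H\"older filtration of $\opn{I}_{\overline{B}}^{\opn{la}}(1)$ --- is nontrivial and is exactly what \cite[Prop.~2.4]{GehrmannRosso} packages abstractly. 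The paper does not redo this comparison: it verifies the hypotheses of \cite[Prop.~2.4]{GehrmannRosso} via the diagram \eqref{GRliftingtoTreeinfamilies} and invokes that result directly. As it stands your proposal has a genuine gap at the step you yourself identify.

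There is also a structural difference in the choice of character, worth noting. You feed the full torus character $\delta_v=(\delta_{1,v},\delta_{2,v},\delta_{3,v})$ into the Koszul resolution and hope to extract both $\mathcal{L}$-invariant conditions simultaneously from one obstruction class. The paper instead runs two separate arguments, each time taking the \emph{one-parameter} character $\chi=\delta_{1,v}\delta_{2,v}^{-1}$, extended to $\overline{B}(\Q_p)$ by $b\mapsto\chi(t_1^{-1}t_2)$ (and analogously for $\delta_{2,v}\delta_{3,v}^{-1}$). This is more economical: with this $\chi$ the extension $0\to\opn{I}^{\opn{la}}_{\overline{B}}(L)\to\opn{I}^{\opn{la}}_{\overline{B}}(\chi)\to\opn{I}^{\opn{la}}_{\overline{B}}(L)\to0$ already isolates the direction along one simple root, so after applying \cite[Prop.~2.4]{GehrmannRosso} the relevant extension class is directly the one entering Definition \ref{def:L-invariant}, without the extra bookkeeping of decoupling the $s_1$- and $s_2$-pieces of a joint obstruction. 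Your full-tuple approach is not wrong in spirit, but it makes the unfilled identification strictly harder than necessary. To repair the argument in the paper's style, replace $\delta_v$ by the one-parameter $\chi$ above, verify (using Corollary \ref{c: dual spectral sequence}, Proposition \ref{Prop:FamiliesThrupi}, and Propositions \ref{prop:lift to tree}--\ref{Prop:SmLaCtsIsoTree}) that the top-right composite in \eqref{GRliftingtoTreeinfamilies} localised at $I'$ contains the ordinary localised $\ide{m}$-cohomology and that the bottom map is injective with the stated image, and then cite \cite[Prop.~2.4]{GehrmannRosso}; repeat \emph{mutatis mutandis} for $\delta_{2,v}\delta_{3,v}^{-1}$.
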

\begin{proof} 
Note $\left(\partial \delta_{1, v} - \partial \delta_{2, v}\right)|_{\Z_p^{\times}} = (v_1 - v_2)\opn{log}_p$ and $\left(\partial \delta_{2, v} - \partial \delta_{3, v}\right)|_{\Z_p^{\times}} = (v_1 + 2v_2)\opn{log}_p$. Part (1) now follows from Assumption \ref{CalegariMazurAssumption}. We now prove part (2) following \cite{GehrmannRosso}.

Let $\chi \defeq \delta_{1, v}\delta_{2, v}^{-1} \colon \Q_p^{\times} \to L[\varepsilon]^{\times}$ and set $\lambda = \partial \delta_{1, v} - \partial \delta_{2, v}$. Therefore, $\chi = 1 + \lambda \cdot \epsilon$. We extend $\chi$ to $\overline{B}(\Q_p)$ by $\chi(b)= \chi(t_1^{-1}t_2)$ where $b= \smallthreemat{t_1}{}{}{\ast}{t_2}{}{\ast}{\ast}{t_3}$, and we consider the locally analytic induction $\opn{I}_{\overline{B}}^{\opn{la}}(\chi)$ introduced in \S \ref{Subsub:KoszulResolutions} (with $R = L[\epsilon]$). Let $r \geq r_{\chi_0}$, and take $\mathcal{K}^p$ to be the prime-to-$p$ level in the definition of $\mathcal{A}_{G, c}(-)$. Then we have a commutative diagram:
\begin{equation} \label{GRliftingtoTreeinfamilies}
\begin{tikzcd}
{R\Gamma(G(\Q), \mathcal{A}_{G, c}(\opn{I}_{\overline{B}}^{\opn{la}}(\chi)^*) )} \arrow[d] \arrow[r] & {R\Gamma( G(\Q), \mathcal{A}_{G, c}(\opn{Ind}_{\opn{Iw}}^{G(\mbb{Q}_p)} D^{r\mathrm{-an}}_{\chi_0}))} \arrow[d] \\
{R\Gamma(G(\Q), \mathcal{A}_{G, c}(\opn{I}_{\overline{B}}^{\opn{la}}(L)^*) )} \arrow[r]              & {R\Gamma( G(\Q), \mathcal{A}_{G, c}(\opn{Ind}_{\opn{Iw}}^{G(\mbb{Q}_p)} D^{r\mathrm{-an}}_{1}))}               
\end{tikzcd}
\end{equation}
where both vertical maps are specialisation mod $\epsilon$, and the horizontal maps are induced from \eqref{eq:extend by zero}. Let $I'$ denote the kernel $\underline{\alpha}^S$ in $\mbf{T}_{L[\epsilon]}^S$ and let $\ide{m}$ denote the kernel of $\alpha^S \otimes \alpha_p$. From Corollary \ref{c: dual spectral sequence} and Proposition \ref{Prop:FamiliesThrupi} we deduce that for the cohomology in degree $0$, the composition of the top horizontal map and the right-hand vertical map in (\ref{GRliftingtoTreeinfamilies}) localised at $I'$ has image which contains $\opn{H}^0( G(\Q), \mathcal{A}_{G, c}(\opn{Ind}_{\opn{Iw}}^{G(\mbb{Q}_p)} D^{r\mathrm{-an}}_{1}))_{\ide{m}}^{\opn{ord}}$. Here we use the analogue of Proposition \ref{prop:arithmetic to betti} with non-trivial coefficients to compare $p$-arithmetic and compactly-supported cohomology (as in \cite[\S3.6]{AutomorphicLinvariants}).

Additionally, by Corollary \ref{c: dual spectral sequence}, the bottom horizontal map is injective on cohomology in degree $0$ with image the space where $U_{t_1}$ and $U_{t_2}$ act trivially. Combining both of these properties, we see that the hypotheses in \cite[Proposition 2.4]{GehrmannRosso} hold, and we conclude by applying \emph{loc.cit.} (noting that the compactly-supported version follows from exactly the same proof). 

One uses a similar argument for $\partial\delta_{2, v} - \partial\delta_{3, v}$, and we leave the details to the reader.
\end{proof}

\subsubsection{Proof of Theorem \ref{Thm:RossoComparison}} \label{SubSubProofOfRossoComparison}

Assume that we are in the essentially self-dual setting of \S \ref{Subsub:TheESDCaseFamilies}. Then we may take $\Sigma = \Omega_{\pri}$ to be a small disc around the origin in the pure weight line $\mathcal{W}^{\opn{pure}}$, with universal character $(\kappa, 1)$ for $\kappa(x) = \opn{exp}(k \opn{log}(x))$ for all $x \in 1+2p\mbb{Z}_p$, where $k \in \mathcal{O}(\Sigma)$ with $|k| \leq p^{-\rho}$. View $a_p = a_p(\mbf{f})$ as a rigid analytic function in the variable $k$ and set $v = (1, 0)$. Then we calculate that
\begin{itemize}
\item $(\partial \delta_{1, v} - \partial \delta_{2, v})|_{\mbb{Z}_p^{\times}} = \opn{log}_p$, and
\item recalling $\underline{\alpha}_{p,1}(p) = a_p(\mathbf{f})^2$ and noting that $\underline{\alpha}_{p,2}(p) = 1$, we have
\[
\partial \delta_{1, v}(p) - \partial \delta_{2, v}(p) = \partial \underline{\alpha}_{p,1}(p) = \partial a_p(\mbf{f})^2 = 2 \left. \tfrac{d a_p}{d k} \right|_{k=0}.
\]
\end{itemize}
Combining, we see
\[
\partial \delta_{1, v} - \partial \delta_{2, v} = \opn{log}_p + 2 \left. \tfrac{d a_p}{d k} \right|_{k=0}  \cdot \opn{ord}_p .
\]
By Remark \ref{Rem:ESDispureweightline} and Proposition \ref{Prop:AutBCGSformula}, we see that $\partial \delta_{1, v} - \partial \delta_{2, v}$ is a basis of $\bL^{\opn{Aut}}_1(\pi)$; thus 
\[
    \mathcal{L}_{\opn{Sym}^2f}^{\opn{Aut}} = \mathcal{L}^{\opn{Aut}}_{\pi,1} = -2  \left.\tfrac{d a_p}{d k} \right|_{k=0} = \mathcal{L}_{\opn{Sym}^2f}^{\opn{FM}},
\]
yielding Theorem \ref{Thm:RossoComparison}. The last equality here is \cite{HidaLinvariant} (see also \cite{MokLinvariant}). \qed

\subsection{The Galois Benois--Colmez--Greenberg--Stevens formula}

We now consider the Galois side. Suppose that $p \geq 7$ and let $\omega \colon \Q_p^{\times} \to \Q_p^{\times}$ denote the $p$-adic cyclotomic character (i.e., $\omega(p) = 1$ and $\omega(x) = x$ for $x \in \Z_p^{\times}$). Let $\rho_{\pi} \colon G_{\mbb{Q}} \to \opn{GL}_3(L)$ denote the $p$-adic Galois representation associated with $\pi$. 

\begin{assumption} \label{AbsIrredDecompGenAssumption}
    We assume that the mod $p$ residual representation $\overline{\rho}_{\pi}$ is absolutely irreducible and decomposed generic. By the latter condition, we mean that there exists an odd prime $\ell \neq p$ such that: $\overline{\rho}_{\pi}$ is unramified at $\ell$, and neither $\ell$ nor $\ell^{-1}$ is an eigenvalue of $\opn{Ad}\overline{\rho}_{\pi}(\opn{Frob}_{\ell}) \in \opn{GL}_9(\overline{\mbb{F}}_p)$.
\end{assumption}

By \cite[Corollary 5.5.2]{10author} (see Theorem \ref{Thm:Char0ExistenceOfGalRACAR}), we have
\[
\rho_{\pi}|_{G_{\mbb{Q}_p}} \sim \left( \begin{array}{ccc} 1 & *_1 & * \\ & \omega^{-1} & *_2 \\ & & \omega^{-2} \end{array} \right)
\]
for certain extension classes $*_i \in \opn{H}^1(\mbb{Q}_p, L(1))$. We also make the following assumption:

\begin{assumption} \label{NonSplitExtensionsAssumption}
    Suppose that $*_1$ and $*_2$ are non-split extensions.
\end{assumption}

\begin{remark}
    Whether $*_i$ is split or non-split should be closely related to whether $\pi$ has CM or not. Since $\pi_p$ is assumed to be the Steinberg representation, $\pi$ cannot have CM and we therefore expect at least one of $*_1$ or $*_2$ to be non-split. Furthermore, $\pi_p$ being Steinberg should correspond to maximal rank monodromy at $p$ on $\mbf{D}_{\opn{st}}(\rho_{\pi}|_{G_{\mbb{Q}_p}})$, so we expect the stronger property that $*_1$ and $*_2$ are non-crystalline. Unfortunately, even understanding when the local Galois representation at $p$ associated with a modular form is split or non-split is still an open problem.
\end{remark}

\begin{definition} 
For $i= 1, 2$, we denote by $\bL^{\opn{FM}}_i(\pi) \subset \opn{Hom}_{\opn{cts}}(\Q_p^{\times}, L) \cong \opn{H}^1(\Q_p, L)$ the one-dimensional subspace given by the orthogonal complement of $\langle *_i \rangle \subset \opn{H}^1(\Q_p, L(1))$ under Tate local duality. If $*_i$ is non-crystalline, we let $\mathcal{L}_{\pi,i}^{\opn{FM}} \in L$ denote the unique element such that $\opn{log}_p - \mathcal{L}_{\pi,i}^{\opn{FM}} \opn{ord}_p$ is a basis of $\bL^{\opn{FM}}_i(\pi)$.
\end{definition}

\subsubsection{Families of trianguline representations}

We have the following theorem, which follows from the results in \S \ref{LGcompatAtl=pSec}. Its proof will be postponed until \S \ref{SubSub:ProofOfTriangThm}.

\begin{theorem} \label{TriangulationInFamilies}
    There exists a rank $3$ trianguline $(\varphi, \Gamma)$-module $D_{L[\epsilon]}$ over $L[\epsilon]$ with parameters 
        \[
        \{ \delta_{1, v}, \delta_{2, v}\omega^{-1}, \delta_{3, v}\omega^{-2} \}
        \]
        (with notation as in Definition \ref{TriangParamsDef}). Moreover, the specialisation of $D_{L[\epsilon]}$ mod $\epsilon$ corresponds, via $p$-adic Hodge theory, to $\rho_{\pi}|_{G_{\mbb{Q}_p}}$.
\end{theorem}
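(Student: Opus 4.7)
The plan is to deduce Theorem \ref{TriangulationInFamilies} directly from Theorem \ref{FourthMainThmintroLGcompat} applied at $n = 3$. First, Assumption \ref{AbsIrredDecompGenAssumption} guarantees that $\overline{\rho}_{\pi}$ is absolutely irreducible and decomposed generic, and we have $p \geq 7 > 2n = 6$; thus Theorem \ref{FourthMainThmintroLGcompat} applies to the maximal ideal $\ide{m} \subset \mathcal{T}$ cut out by $\pi$ at Iwahori level at $p$. This yields a nilpotent ideal $J \subset \mathcal{T}_{\ide{m}}$ and a continuous Galois representation $\rho : G_{\mbb{Q}} \to \opn{GL}_3(\mathcal{T}_{\ide{m}}/J)$ satisfying the three local--global compatibility conditions; the crucial ones for us are condition (2), factorising the local characteristic polynomial at $p$ in terms of the universal characters $\chi_i$ of Definition \ref{DefOfUnivGaloisChars}, and the refined identity $(\rho(g_1) - \chi_1(g_1))(\rho(g_2) - \chi_2(g_2))(\rho(g_3) - \chi_3(g_3)) = 0$ of condition (3).

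Next, I would specialise along the family. Proposition \ref{Prop:FamiliesThrupi}(2) provides an algebra homomorphism $\underline{\alpha}^S \otimes \underline{\alpha}_p \colon \mathcal{T}_{\ide{m}} \to \mathcal{O}(\Sigma)$, where $\mathcal{O}(\Sigma) = \mathcal{O}(\Omega)/\ide{p}$ is an integral domain. Since the image of a nilpotent ideal in a reduced ring must vanish, this homomorphism automatically factors through $\mathcal{T}_{\ide{m}}/J \to \mathcal{O}(\Sigma)$. Composing with the tangent-vector map $\mathcal{O}(\Sigma) \to L[\epsilon]$ determined by $v$ (sending $T_i \mapsto v_i \epsilon$) and pushing $\rho$ forward, we obtain a continuous Galois representation
\[
\rho_{\underline{\pi}} \colon G_{\mbb{Q}} \to \opn{GL}_3(L[\epsilon])
\]
whose reduction modulo $\epsilon$ is $\rho_{\pi}$. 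The prime-to-$p$ Hecke eigenvalues match by condition (1), and conditions (2) and (3) specialise to analogous identities for $\rho_{\underline{\pi}}|_{G_{\mbb{Q}_p}}$ in terms of specialised characters $\chi_{i,\epsilon} \colon G_{\mbb{Q}_p} \to L[\epsilon]^\times$.

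It remains to identify the $\chi_{i,\epsilon}$ and to promote condition (3) into an actual triangulation. By construction, the characters $\chi_i$ of Definition \ref{DefOfUnivGaloisChars} are determined on $\mbb{Z}_p^{\times}$ by the universal weight character and on $p$ by the ordinary Hecke eigenvalues $\underline{\alpha}_{p,i}$, with the appropriate twist by powers of $\omega$ coming from the shift between Hodge--Tate weights of $\rho_{\pi}$ and the cohomological weight of $\pi$. Using the explicit formula \eqref{eq:kappa epsilon} for the universal character over $L[\epsilon]$ together with the definition of $\delta_{i,v}$ and Proposition \ref{Prop:FamiliesThrupi}(3), one computes $\chi_{1,\epsilon} = \delta_{1,v}$, $\chi_{2,\epsilon} = \delta_{2,v}\omega^{-1}$, and $\chi_{3,\epsilon} = \delta_{3,v}\omega^{-2}$ under local class field theory. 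The specialised identity  $(\rho_{\underline{\pi}}(g_1) - \chi_{1,\epsilon}(g_1))(\rho_{\underline{\pi}}(g_2) - \chi_{2,\epsilon}(g_2))(\rho_{\underline{\pi}}(g_3) - \chi_{3,\epsilon}(g_3)) = 0$ is then an infinitesimal form of ordinarity; combined with the existence, modulo $\epsilon$, of the explicit Steinberg-type filtration on $\rho_{\pi}|_{G_{\mbb{Q}_p}}$ with non-split extensions (Assumption \ref{NonSplitExtensionsAssumption}), it should force the existence of a $G_{\mbb{Q}_p}$-stable filtration of $\rho_{\underline{\pi}}|_{G_{\mbb{Q}_p}}$ by free $L[\epsilon]$-submodules with graded pieces $\chi_{1,\epsilon}, \chi_{2,\epsilon}, \chi_{3,\epsilon}$.

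The main obstacle will be this last step: upgrading the Cayley--Hamilton-type identity of condition (3) into an actual $L[\epsilon]$-rational filtration, and thence a triangulation of the associated $(\varphi,\Gamma)$-module. The strategy I anticipate, in the spirit of Bellaïche--Chenevier's theory of ordinary pseudo-characters, is to construct the filtration inductively: condition (3) forces the subspace $\{v \in V \mathrel{:} (\rho_{\underline{\pi}}(g) - \chi_{1,\epsilon}(g)) v = 0 \ \forall g \in G_{\mbb{Q}_p}\}$ to contain the unique $\chi_{1,\epsilon}\!\!\!\pmod{\epsilon}$-eigenline present modulo $\epsilon$, and the non-splitness of the Steinberg extensions (Assumption \ref{NonSplitExtensionsAssumption}) ensures this lift is a free rank-one $L[\epsilon]$-submodule on which $G_{\mbb{Q}_p}$ acts by $\chi_{1,\epsilon}$. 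Iterating on the quotient yields the full three-step filtration. Passage to $(\varphi,\Gamma)$-modules via $\mbf{D}_{\opn{rig}}$ then produces the required triangulation $D_{L[\epsilon]}$; saturation of the graded pieces is automatic as they are rank-one with explicit characters, and compatibility of the reduction modulo $\epsilon$ with $\rho_{\pi}|_{G_{\mbb{Q}_p}}$ is built into the construction.
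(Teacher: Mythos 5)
Your strategy is the right one at a high level, but there is a genuine gap at the step where you write that ``Proposition \ref{Prop:FamiliesThrupi}(2) provides an algebra homomorphism $\underline{\alpha}^S \otimes \underline{\alpha}_p \colon \mathcal{T}_{\ide{m}} \to \mathcal{O}(\Sigma)$''. That proposition produces a map from the \emph{abstract} Hecke algebra $\mbf{T}^{S,\opn{ord}}_{\GL_3}$, constructed from overconvergent distribution-valued cohomology $\opn{H}^*_c(X_{G,K}, D^{r\text{-an}}_{\Omega_{\ide{p}}})^{\opn{ord}}$. The algebra $\mathcal{T}_{\ide{m}}$ of Theorem \ref{FourthMainThmintroLGcompat}, by contrast, is built from ordinary \emph{completed} cohomology $R\tilde{\Gamma}(X_{\opn{GL}_3,K^p},\mathcal{O})^{\opn{ord}}$. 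These are a priori different; showing that the Hida-family character $\underline{\alpha}^S\otimes\underline{\alpha}_p$ (or its composite to $L[\epsilon]$) factors through $\mathcal{T}_{\ide{m}}$ is precisely a comparison of eigenvarieties around ordinary points, and this is the technical heart of the paper's actual proof: it proceeds by introducing the integral distribution module $D^{\circ,r\text{-an}}_{\Omega^\circ}$ with its pro-finite filtration, and by showing (via the ``highest-weight'' evaluation map $\Lambda_m$) that the relevant ordinary parts agree with $\mathcal{O}/\varpi^m$-coefficient ordinary cohomology at appropriate finite levels. Without this step you have a Galois representation over $\mathcal{T}_{\ide{m}}/J$ and a character of $\mbf{T}^{S,\opn{ord}}_{\GL_3}$ into $L[\epsilon]$, but no morphism $\mathcal{T}_{\ide{m}}/J\to L[\epsilon]$ through which to specialise.

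A smaller but substantive issue concerns your final paragraph. The mechanism by which the Cayley--Hamilton identity (condition (3)) is upgraded to an $L[\epsilon]$-filtration does not use Assumption \ref{NonSplitExtensionsAssumption}; it uses the fact that the specialised characters $f\circ\chi_1, f\circ\chi_2, f\circ\chi_3$ are \emph{pairwise distinct modulo $\epsilon$} (in our situation their reductions are $1,\omega^{-1},\omega^{-2}$, distinct since $p\geq 7$). The paper's Corollary \ref{Cor:MainTriangulation} runs a Nakayama-plus-torsion-freeness argument (after the proof of \cite[Lem.\ 6.2.11]{10author}): one picks $x_i\in G_{\mbb{Q}_p}$ with $f\chi_n(x_i)-f\chi_i(x_i)$ a unit in $L[\epsilon]$, and shows $V/V_{n-1}$ is $\epsilon$-torsion free. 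Non-splitness of $*_1,*_2$ plays no role here, and it is not clear your proposed ``uniqueness of the filtration modulo $\epsilon$'' line of reasoning, as stated, would produce the torsion-freeness of the graded pieces over $L[\epsilon]$ that one needs. So this step of your argument should be replaced with the distinctness-of-characters argument.
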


\subsubsection{Equality of $\mathcal{L}$-invariants}

We now prove an equality of automorphic and Fontaine--Mazur $\mathcal{L}$-invariants without assuming any self-duality.

\begin{theorem} 
Suppose that Assumptions \ref{AbsIrredDecompGenAssumption} and \ref{NonSplitExtensionsAssumption} hold. Then at least one of $\bL^{\opn{FM}}_1(\pi) = \bL^{\opn{Aut}}_1(\pi)$ or $\bL^{\opn{FM}}_2(\pi) = \bL^{\opn{Aut}}_2(\pi)$ holds. More precisely, for $\opn{i} \in \{1,2\}$, if Assumption \ref{CalegariMazurAssumption}(A$\opn{i}$) holds then $\bL^{\opn{FM}}_{\opn{i}}(\pi) = \bL^{\opn{Aut}}_{\opn{i}}(\pi)$.
\end{theorem}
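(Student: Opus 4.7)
The plan is to deduce the equality of $\bL^{\opn{FM}}_{i}(\pi)$ and $\bL^{\opn{Aut}}_{i}(\pi)$ by exhibiting the same explicit continuous homomorphism $\Q_p^\times \to L$ as a basis of both one-dimensional subspaces, via a direct comparison of the automorphic and Galois Benois--Colmez--Greenberg--Stevens formulae. Fix $i \in \{1,2\}$ and suppose Assumption \ref{CalegariMazurAssumption}(A$i$) holds. By that assumption I may select a non-zero tangent vector $v \in T_1(\Sigma)$ whose image $(v_1,v_2) \in T_1(\mathcal{W})$ avoids the parabolic direction of the simple root $s_i$; equivalently, $v_1 \neq v_2$ if $i=1$ and $v_1 \neq -2v_2$ if $i=2$. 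Proposition \ref{Prop:AutBCGSformula} then tells me that $\partial \delta_{i,v} - \partial \delta_{i+1,v}$ is a non-zero element of $\bL^{\opn{Aut}}_{i}(\pi)$, and hence (by one-dimensionality established in Proposition \ref{Prop:AutLinvSubspaceIsOneDim}) is a basis.

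Next I invoke Theorem \ref{TriangulationInFamilies}: the characters $\delta_{1,v},\delta_{2,v},\delta_{3,v}$ of Definition \ref{TriangParamsDef} arise (up to the fixed twist by powers of $\omega$) as the parameters of a rank $3$ trianguline $(\varphi,\Gamma)$-module $D_{L[\epsilon]}$ deforming $\mbf{D}_{\opn{rig}}(\rho_\pi|_{G_{\Q_p}})$ to first order. Under Assumption \ref{NonSplitExtensionsAssumption}, the extensions $*_i$ in $\rho_\pi|_{G_{\Q_p}}$ are non-split (and hence non-crystalline, by a standard weight-comparison argument), so the Fontaine--Mazur $\mathcal{L}$-invariant $\mathcal{L}^{\opn{FM}}_{\pi,i}$ is defined. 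The Galois Benois--Colmez--Greenberg--Stevens formula -- in the form of \cite{Benois11} and its refinement by Ding \cite{DingLinvariants} for infinitesimal trianguline deformations -- extracts a basis of $\bL^{\opn{FM}}_{i}(\pi)$ from the first-order derivatives of the triangulation parameters: the class of $\partial\delta_{i,v} - \partial\delta_{i+1,v}$ (the twists by $\omega^{-1}, \omega^{-2}$ do not affect the derivatives on $\Z_p^\times$ beyond a shift that cancels in the difference, and have no effect on $p$) is orthogonal under local Tate duality to $*_i$, and hence lies in $\bL^{\opn{FM}}_{i}(\pi)$. Since this element is non-zero by Proposition \ref{Prop:AutBCGSformula}(1), and $\bL^{\opn{FM}}_{i}(\pi)$ is one-dimensional, it is a basis.

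Comparing the two conclusions, both $\bL^{\opn{FM}}_{i}(\pi)$ and $\bL^{\opn{Aut}}_{i}(\pi)$ are one-dimensional subspaces of $\opn{Hom}_{\opn{cts}}(\Q_p^\times,L)$ sharing a common basis $\partial\delta_{i,v} - \partial\delta_{i+1,v}$; therefore they coincide. Because Assumption \ref{CalegariMazurAssumption} unconditionally guarantees at least one of (A1), (A2), we obtain unconditionally at least one of $\bL^{\opn{FM}}_1(\pi) = \bL^{\opn{Aut}}_1(\pi)$ or $\bL^{\opn{FM}}_2(\pi) = \bL^{\opn{Aut}}_2(\pi)$, completing the proof.

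The main obstacle here is not in this final argument, which is formal once both BCGS formulae are in place: it lies entirely in establishing Theorem \ref{TriangulationInFamilies}, which demands the local-global compatibility at $\ell = p$ for Hida families developed in \S\ref{LGcompatAtl=pSec}. A secondary technical point requiring care is book-keeping in the Ding formula: one must verify that the twists by $\omega^{-1}$ and $\omega^{-2}$ built into the parameters in Theorem \ref{TriangulationInFamilies} do not contribute to the first-order derivatives, so that the resulting Galois $\mathcal{L}$-invariant matches exactly the automorphic expression $\partial\delta_{i,v} - \partial\delta_{i+1,v}$, and that the non-crystallinity of $*_i$ (to be deduced from Assumption \ref{NonSplitExtensionsAssumption} together with the ordinarity of the Hodge--Tate weights) is indeed sufficient for the formula to produce a basis of $\bL^{\opn{FM}}_i(\pi)$.
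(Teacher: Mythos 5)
Your proposal takes essentially the same approach as the paper's proof, which is a two-line argument: invoke Proposition \ref{Prop:AutBCGSformula} to exhibit $\partial\delta_{i,v} - \partial\delta_{i+1,v}$ as a non-zero element of $\bL^{\opn{Aut}}_{i}(\pi)$, then appeal to Theorem \ref{TriangulationInFamilies} together with the Galois Benois--Colmez--Greenberg--Stevens formula (\cite{Benois11}, \cite[Thm.\ 3.4]{DingLinvariants}) to place the same element in $\bL^{\opn{FM}}_{i}(\pi)$, and conclude by one-dimensionality. You have simply spelled out the steps the paper leaves implicit.

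One small caveat: your parenthetical claim that Assumption \ref{NonSplitExtensionsAssumption} forces the $*_i$ to be non-crystalline ``by a standard weight-comparison argument'' is not correct as stated and is not needed here. Since $\h^1(\Q_p, L(1)) \cong L^2$ contains a one-dimensional subspace of crystalline classes (the image of units under Kummer theory), a non-split extension can perfectly well be crystalline. The theorem as stated is about the subspaces $\bL^{\opn{FM}}_i(\pi)$, which are defined by Tate-duality orthogonality to $\langle *_i\rangle$ and are well-defined whenever $*_i \neq 0$; no non-crystallinity is needed for the argument to close. Non-crystallinity of $*_i$ (hence well-definedness of the scalar $\mathcal{L}^{\opn{FM}}_{\pi,i}$) is deduced \emph{a posteriori} from the proved equality $\bL^{\opn{FM}}_i(\pi) = \bL^{\opn{Aut}}_i(\pi)$ combined with $\opn{ord}_p \notin \bL^{\opn{Aut}}_i(\pi)$ (Proposition \ref{Prop:AutLinvSubspaceIsOneDim}), as the paper notes in the remark following the corollary.
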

\begin{proof}
    By Proposition \ref{Prop:AutBCGSformula}, it suffices to show $\partial \delta_{1, v} - \partial \delta_{2, v} \in \bL^{\opn{FM}}_1(\pi)$ or $\partial \delta_{2, v} - \partial \delta_{3, v} \in \bL^{\opn{FM}}_2(\pi)$. This follows from the Benois--Colmez--Greenberg--Stevens formula \cite{Benois11}, \cite[Thm.\ 3.4]{DingLinvariants}.
\end{proof}

As a consequence, we obtain the following by combining this with Theorems \ref{Thm:LeftHalfEZF} and \ref{Them:RightHalfEZF}.

\begin{corollary}
    Suppose that Assumptions \ref{AbsIrredDecompGenAssumption} and \ref{NonSplitExtensionsAssumption} hold. If Assumption \ref{CalegariMazurAssumption}(A1) holds, then we obtain the exceptional zero formula:
    \[
    \left. \frac{d}{ds}L_p^-(\pi, s) \right|_{s=0} = e_{\infty}^-(\pi_\infty, 0) \cdot \mathcal{L}_{\pi, 1}^{\opn{FM}} \cdot \frac{L(\pi, 0)}{\Omega_{\pi}^-} .
    \]
    If Assumption \ref{CalegariMazurAssumption}(A2) holds, then we obtain the exceptional zero formula:
    \[
    \left. \frac{d}{ds}L_p^+(\pi, s) \right|_{s=1} = -p\cdot e_{\infty}^+(\pi_\infty, 1) \cdot \mathcal{L}_{\pi, 2}^{\opn{FM}} \cdot \frac{L(\pi, 1)}{\Omega_{\pi}^+} .
    \]
    In particular, the exceptional zero conjecture of Greenberg--Benois holds for at least one of $L_p^{-}(\pi, s)$ or $L_p^+(\pi, s)$ under Assumptions \ref{AbsIrredDecompGenAssumption} and \ref{NonSplitExtensionsAssumption}.
\end{corollary}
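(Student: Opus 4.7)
The proof of this corollary is a direct combination of three earlier results, so the plan is essentially to chain them together and record that the dichotomy in Assumption \ref{CalegariMazurAssumption} yields the unconditional final sentence.

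First, I would invoke the automorphic exceptional zero formulae from Part 1. Theorem \ref{Thm:LeftHalfEZF} expresses $\left.\tfrac{d}{ds}L_p^-(\pi,s)\right|_{s=0}$ as $e_\infty^-(\pi_\infty,0)\cdot\mathcal{L}_{\pi,1}^{\opn{Aut}}\cdot L(\pi,0)/\Omega_\pi^-$, and Theorem \ref{Them:RightHalfEZF} expresses $\left.\tfrac{d}{ds}L_p^+(\pi,s)\right|_{s=1}$ as $-p\cdot e_\infty^+(\pi_\infty,1)\cdot\mathcal{L}_{\pi,2}^{\opn{Aut}}\cdot L(\pi,1)/\Omega_\pi^+$. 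These formulae hold unconditionally (under only Assumption \ref{SteinbergAssumptionOnpi}), with no restriction on $\overline{\rho}_\pi$ or on $p$.

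Second, I would apply the immediately preceding theorem, which, under Assumptions \ref{AbsIrredDecompGenAssumption} and \ref{NonSplitExtensionsAssumption}, asserts that if Assumption \ref{CalegariMazurAssumption}(A$\mathrm{i}$) holds then $\mathbb{L}_i^{\opn{FM}}(\pi) = \mathbb{L}_i^{\opn{Aut}}(\pi)$ for $i \in \{1,2\}$. Since $\opn{ord}_p$ lies neither in $\mathbb{L}_i^{\opn{Aut}}(\pi)$ (by Proposition \ref{Prop:AutLinvSubspaceIsOneDim}) nor in $\mathbb{L}_i^{\opn{FM}}(\pi)$ (by Assumption \ref{NonSplitExtensionsAssumption}, which ensures the relevant extensions $*_i$ are non-crystalline so that the Fontaine--Mazur $\mathcal{L}$-invariants are finite), both spaces are spanned by $\opn{log}_p - \mathcal{L}_{\pi,i}^{\bullet}\opn{ord}_p$, and equality of the subspaces forces equality of the scalars $\mathcal{L}_{\pi,i}^{\opn{Aut}} = \mathcal{L}_{\pi,i}^{\opn{FM}}$. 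Substituting into the formulae from the first step yields the two displayed equations of the corollary.

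Third, for the last sentence, I would simply note that Assumption \ref{CalegariMazurAssumption} is formulated so that at least one of (A1) or (A2) must hold automatically: the tangent space $T_1(\Sigma)$ contains some non-zero $v$, which maps to a line in $PT_1(\mathcal{W})$ that cannot simultaneously equal both $[(1,1)]$ and $[(2,-1)]$. Hence under Assumptions \ref{AbsIrredDecompGenAssumption} and \ref{NonSplitExtensionsAssumption}, at least one of the two exceptional zero formulae holds unconditionally, which is exactly the Greenberg--Benois conjecture for the corresponding $p$-adic $L$-function.

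There is no genuine obstacle in this argument — all the difficult content is already packaged in Theorems \ref{Thm:LeftHalfEZF} and \ref{Them:RightHalfEZF} and in the preceding theorem (whose proof in turn rests on Proposition \ref{Prop:AutBCGSformula} and Theorem \ref{TriangulationInFamilies}, i.e.\ on the local-global compatibility results of \S\ref{LGcompatAtl=pSec}). The only small point to double-check is that the two versions of $\mathcal{L}$-invariant are both \emph{defined} as scalars (not merely as lines), which requires that $\opn{ord}_p$ lies in neither $\mathbb{L}_i^{\opn{Aut}}(\pi)$ nor $\mathbb{L}_i^{\opn{FM}}(\pi)$; the former is Proposition \ref{Prop:AutLinvSubspaceIsOneDim}, and the latter is precisely the content of Assumption \ref{NonSplitExtensionsAssumption}.
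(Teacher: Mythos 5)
Your proof is correct and is exactly the paper's argument: chain Theorems \ref{Thm:LeftHalfEZF} and \ref{Them:RightHalfEZF} with the preceding theorem's equality $\bL_i^{\opn{FM}}(\pi) = \bL_i^{\opn{Aut}}(\pi)$ under (A$i$), and use that at least one of (A1), (A2) holds.

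One small slip in your well-definedness paragraph: you assert that Assumption \ref{NonSplitExtensionsAssumption} (non-split $*_i$) ensures $*_i$ is non-crystalline, hence $\opn{ord}_p \notin \bL_i^{\opn{FM}}(\pi)$. This is not quite right: a non-split extension class in $\h^1(\Q_p, L(1))$ can still be crystalline (the Bloch--Kato subspace $\h^1_f$ is non-trivial). The paper instead derives non-crystallinity a posteriori: once the equality of subspaces $\bL_i^{\opn{FM}}(\pi) = \bL_i^{\opn{Aut}}(\pi)$ is established, the fact $\opn{ord}_p \notin \bL_i^{\opn{Aut}}(\pi)$ (Proposition \ref{Prop:AutLinvSubspaceIsOneDim}) forces $\opn{ord}_p \notin \bL_i^{\opn{FM}}(\pi)$, which is equivalent to $*_i$ being non-crystalline, and only then is the scalar $\mathcal{L}_{\pi,i}^{\opn{FM}}$ defined. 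This order of logic matters — the subspace $\bL_i^{\opn{FM}}(\pi)$ is defined as an orthogonal complement regardless of crystallinity, and the theorem is proved at the level of subspaces before the scalar is extracted. The rest of your argument is fine.
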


\begin{remark}
    Note that the statement in the above corollary is indeed well-defined because $\opn{ord}_p \not\in \bL^{\opn{Aut}}_i(\pi)$ (Proposition \ref{Prop:AutLinvSubspaceIsOneDim}), hence $*_i$ is non-crystalline if $\bL_i^{\opn{FM}}(\pi) = \bL^{\opn{Aut}}_i(\pi)$.
\end{remark}

%%-----------------------

\section{Local-global compatibility at \texorpdfstring{$\ell = p$}{l = p}} \label{LGcompatAtl=pSec}

The goal of this section is to prove Theorem \ref{TriangulationInFamilies}. In order to do this, we will extend the local-global compatibility results in the ordinary case in \cite[\S 5]{10author} to cuspidal automorphic representations of $\opn{GL}_3(\mbb{A})$, with the main result being Theorem \ref{Thm:LevelbcmLGcompat}. In fact, there is nothing special about $\opn{GL}_3$ in this section, and we obtain local-global compatibility results for cuspidal automorphic representations of $\GL_n(\mbb{A})$, for any $n \geq 2$. Since we are working in a more general setting, the notation in this section therefore differs slightly from the rest of the article.

\subsection{Notation and conventions}

Let $n \geq 2$ be an integer.\footnote{Almost everything in this section also applies with $n=1$; however we exclude this case since the main result (Theorem \ref{Thm:LevelbcmLGcompat}) becomes trivial.} Throughout this section, we fix the following notation and conventions.

\subsubsection{The groups}

Let $\tilde{G} = \opn{GSp}_{2n}$ denote the general symplectic group in $2n$-variables where, for any $\mbb{Z}$-algebra $R$, one has
\[
\tilde{G}(R) = \left\{ x \in \opn{GL}_{2n}(R) : \begin{array}{c} x^t J x = s(x)J \\ \text{ for some } s(x) \in R^{\times} \end{array} \right\}, \quad \quad J = \smat{ & & & & & 1 \\ & & & & \iddots & \\ & & & 1 & & \\ & & -1 & & & \\ & \iddots & & & & \\ -1 & & & & &} .
\]
We let $P \subset \tilde{G}$ denote the upper-triangular Siegel parabolic subgroup, with Levi subgroup $M \cong \GL_n \times \opn{GL}_1$ and unipotent radical $N$. Let $T \subset \tilde{G}$ denote the standard diagonal torus, which we will describe as elements of the form
\[
\opn{diag}(t_1, \dots, t_n, s t_n^{-1}, \dots, s t_1^{-1}) \in T .
\]
We will almost always denote such an element of the torus by $(t_1, \dots, t_n; s)$. The identification $M \cong \GL_n \times \opn{GL}_1$ takes the element $(t_1, \dots, t_n; s) \in T$ to $\opn{diag}(t_1, \dots, t_n) \times s$; we can therefore identify $T$ with the maximal torus in $M$. We will also use the notation $T_{\GL_n}$ for the standard diagonal torus in $\GL_n$.

Let $T^+ \subset T(\mbb{Q}_p)$ (resp. $T_M^+ \subset T(\mbb{Q}_p)$) denote the submonoid of elements which satisfy $v_p\langle t, \alpha \rangle \geq 0$ for every positive root $\alpha$ in $\tilde{G}$ (resp. every positive root in $M$). Of course, one has $T^+ \subset T_M^+$. Let $W_{\mathscr{G}}$ denote the Weyl group of a split reductive group $\sG$, and let $w_0^{\mathscr{G}}$ denote the longest Weyl element. Let ${^MW}$ denote the set of minimal length representatives for $W_M \backslash W_{\tilde{G}}$. This set has size $2^n$ and the lengths of the elements range from $0$ to $d=n(n+1)/2$ (exhausting this list of possibilities). We note that, for any $w \in {^MW}$, one has $w T^+ w^{-1} \subset T_M^{+}$. 

If we view $W_{\tilde{G}}$ as the subgroup of $S_{2n}$ of permutations $w$ which satisfy $w(i) + w(2n+1-i) = 2n+1$, then we have the following explicit description of the elements of ${^MW}$.

\begin{lemma} \label{Lem:ExplicitMWset}
    Let $\mathcal{B} \subset \{1, \dots, n \}$ be a subset, which we write as $\mathcal{B} = \{ b_1, \dots, b_l \}$ with $b_1 \leq \cdots \leq b_l$. Let $\mathcal{B}^c = \{1, \dots, n \} - \mathcal{B} = \{ b_1', \dots, b_{n-l}' \}$, with $b_1' \leq \cdots \leq b_{n-l}'$. We let $w_{\mathcal{B}} \in S_{2n}$ denote the permutation which satisfies 
    \[
    w_{\mathcal{B}}(c) = \left\{ \begin{array}{cc} n+i & \text{ if } c = b_i \in {\mathcal{B}} \\ i & \text{ if } c = b_i' \in {\mathcal{B}}^c \end{array} \right.
    \]
    for $1 \leq i \leq n$, and $w_{\mathcal{B}}(c) = 2n+1-w_{\mathcal{B}}(2n+1-c)$ for $n+1 \leq c \leq 2n$. By convention, we let $w_{\varnothing}$ denote the identity, and $w_{\{1, \dots, n \}} = w_0^M w_0^{\tilde{G}}$. Then 
    \[
    {^MW} = \{ w_{\mathcal{B}} \in S_{2n} : {\mathcal{B}} \subset \{ 1, \dots, n \} \} 
    \]
    and $l(w_{\mathcal{B}}) = \sum_{b \in {\mathcal{B}}} (n+1-b)$.
\end{lemma}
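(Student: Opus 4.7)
My plan is to verify the description in three steps: the first two establish the bijection ${}^M W \leftrightarrow 2^{\{1,\dots,n\}}$ via $w_{\mathcal{B}} \leftrightarrow \mathcal{B}$, and the third computes the length.

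First, $|W_{\tilde{G}}| = 2^n n!$ and $|W_M| = n!$, so $|{}^M W| = 2^n$, which equals the number of subsets of $\{1,\dots,n\}$. Thus it suffices to exhibit $2^n$ distinct minimal-length representatives, indexed by subsets. The construction forces $w_{\mathcal{B}}(c) + w_{\mathcal{B}}(2n+1-c) = 2n+1$, so $w_{\mathcal{B}} \in W_{\tilde{G}}$. To each $w \in W_{\tilde{G}}$ I associate its ``sign pattern'' $\mathcal{B}(w) \defeq \{i \in \{1,\dots,n\} : w(i) > n\}$; this is preserved under left multiplication by $W_M = S_n$ (which permutes $\{1,\dots,n\}$ internally and, symmetrically, $\{n+1,\dots,2n\}$). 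Since $\mathcal{B}(w_{\mathcal{B}}) = \mathcal{B}$, distinct $\mathcal{B}$ give distinct cosets, and $\{w_{\mathcal{B}}\}_\mathcal{B}$ is a complete system of coset representatives.

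Second, I check $w_{\mathcal{B}} \in {}^M W$ by verifying that $w_{\mathcal{B}}^{-1}$ sends each simple root of $M$ into a positive root of $\tilde{G}$. The simple roots of $M$ are $\alpha_i = e_i - e_{i+1}$ for $1 \le i \le n-1$, and $w_{\mathcal{B}}^{-1}(\alpha_i)$ is a positive root precisely when $w_{\mathcal{B}}^{-1}(i) < w_{\mathcal{B}}^{-1}(i+1)$ in the ordering of $\{1,\dots,2n\}$. This follows immediately from the monotonicity hard-wired into the construction: $w_{\mathcal{B}}^{-1}$ sends $\{1,\dots,n-l\}$ to $\mathcal{B}^c$ in increasing order and $\{n+1,\dots,n+l\}$ to $\mathcal{B}$ in increasing order, with the step between $n-l$ and $n-l+1$ jumping above $n$.

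Third, for the length, I use the standard formula $l(w) = \#\{\alpha > 0 : w(\alpha) < 0\}$. Writing $w_{\mathcal{B}}$ as a signed permutation $e_i \mapsto \epsilon_i e_{\sigma(i)}$ (with $\epsilon_i = -1 \iff i \in \mathcal{B}$, $\sigma(b_k) = n+1-k$, and $\sigma(b_k') = k$), a case analysis on the three families of positive roots $e_i - e_j$, $e_i + e_j$ (for $i<j$), and $2e_i$ shows that the nonzero contributions are: (i) $l = |\mathcal{B}|$ from the $2e_i$ with $i \in \mathcal{B}$; (ii) $\binom{l}{2}$ from $e_i + e_j$ with both $i, j \in \mathcal{B}$; and (iii) the number of pairs $(i, j)$ with $i \in \mathcal{B}, j \in \mathcal{B}^c, i<j$, coming from $e_i - e_j$. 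The potentially nontrivial mixed-sign contributions to $e_i + e_j$ vanish, because the relevant inequality reduces to $k + k'' \ge n+2$ with $k \le l$ and $k'' \le n-l$, and hence $k+k'' \le n$, so has no solutions. Collecting the contributions of a single $b_k$ gives $k + r_k$ with $r_k = n - b_k - l + k$ the number of elements of $\mathcal{B}^c$ greater than $b_k$; summing over $k$ and simplifying yields $l(w_{\mathcal{B}}) = l(n+1) - \sum_k b_k = \sum_{b \in \mathcal{B}}(n+1-b)$.

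The main obstacle is the bookkeeping in step three: one must carefully count inversions across all three families of positive roots and notice that the mixed-sign contributions to $e_i+e_j$ necessarily vanish, which is what makes the length formula as simple as it is. The first two steps are essentially formal once the signed-permutation model of $W_{\tilde{G}}$ is in hand. An inductive alternative---building a reduced expression for $w_{\mathcal{B}}$ from the elements $w_{\{b\}} = s_n s_{n-1} \cdots s_b$ of length $n+1-b$---might be slightly cleaner but requires comparable care in verifying the reducedness of the concatenated expression.
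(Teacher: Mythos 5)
Your proof is correct and fills in substantially more detail than the paper's, which is quite terse. The paper's entire argument is: (i) recall that $W_{\tilde G}$ is the set of $w \in S_{2n}$ with $w(i)+w(2n+1-i) = 2n+1$, (ii) ${^MW}$ is characterised by $\Phi_M^+ \subset w\Phi^+$, which is equivalent to $w^{-1}(1) < \cdots < w^{-1}(n)$, and (iii) ``this amounts to the description in the statement of the lemma.'' No further details are given --- in particular the length formula is not verified at all.

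Your route differs in two useful ways. For the indexing, instead of matching the $w_{\mathcal{B}}$ directly against the increasing-sequence criterion, you establish the bijection $\mathcal{B} \leftrightarrow W_M w_{\mathcal{B}}$ by a counting argument combined with the observation that the sign pattern $\{i : w(i) > n\}$ is a left-$W_M$-invariant, and then separately check the minimal-length property by verifying $w_{\mathcal{B}}^{-1}(i) < w_{\mathcal{B}}^{-1}(i+1)$ for $1 \le i \le n-1$ (which is of course the same root criterion the paper invokes, restated). For the length, you do the full inversion count in the signed permutation model, carefully splitting into the three families $e_i - e_j$, $e_i + e_j$, $2e_i$ and noticing that the mixed-sign contributions to $e_i + e_j$ vanish (the condition $k + k' \ge n+2$ with $k \le l$, $k' \le n-l$ is indeed vacuous) and that the only contributions to $e_i - e_j$ come from pairs $i \in \mathcal{B}$, $j \in \mathcal{B}^c$ with $i < j$. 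The final bookkeeping, $l + \binom{l}{2} + \sum_k(n-b_k-l+k) = \sum_k(n+1-b_k)$, is right. In short: same underlying characterisation of ${^MW}$, but you actually carry out the computation the paper leaves implicit, which is arguably what a careful reader needs here.
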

\begin{proof}
    The Weyl group $W_{\tilde{G}}$ is described as those permutations $w$ in $S_{2n}$ which satisfy $w(i) + w(2n+1-i) = 2n+1$. Furthermore, the set ${^MW}$ is characterised by the condition that $\Phi_M^+ \subset w \Phi^+$ where $\Phi_M^+$ and $\Phi^+$ denote the positive roots in $M$ and $\tilde{G}$ respectively. Equivalently, the set ${^MW}$ consists of the permutations $w \in W_{\tilde{G}}$ which satisfy
    \[
    w^{-1}(1) < \cdots < w^{-1}(n) .
    \]
    This amounts to the description in the statement of the lemma.
\end{proof}

\subsubsection{Algebraic representations}

Let $X^*(T)$ denote the group of algebraic characters of $T$. We will denote such a character by $\lambda = (\lambda_1, \dots, \lambda_n; \lambda_0)$, where $\lambda_i \in \mbb{Z}$ are such that
\[
\lambda(t_1, \dots, t_n; s) = t_1^{\lambda_1} \cdots t_n^{\lambda_n} s^{\lambda_0} .
\]
The weight $\lambda$ is $\tilde{G}$-dominant (denoted $\lambda \in X^*(T)^+$) if $\lambda_1 \geq \cdots \geq \lambda_n \geq 0$, and it is $M$-dominant (denoted $X^*(T)^{ +}_M$) if $\lambda_1 \geq \cdots \geq \lambda_n$. We will often write the group law on characters additively. We let $\rho \in X^*(T)_{\mbb{Q}}$ denote the half-sum of the positive roots of $\tilde{G}$, and for any $w \in W_{\tilde{G}}$ and $\lambda \in X^*(T)$, we let 
\[
    \lambda_w \defeq w \star \lambda \defeq w \cdot (\lambda + \rho) - \rho
\]
denote the shifted Weyl action.

Let $L/\mbb{Q}_p$ be a finite extension, and let $\mathcal{O}$ be its ring of integers. For any $\lambda \in X^*(T)^+$, we let 
\[
V_{\tilde{G}, \lambda} = V_{\tilde{G}, \lambda, \mathcal{O}} = \{ f \colon \tilde{G}_{\mathcal{O}} \to \mbb{A}^{1}_{\mathcal{O}} : f(- \cdot b) = (w_0^{\tilde{G}}\lambda)(b^{-1}) \cdot f(-) \text{ for all } b \in B \}
\]
where $B \subset \tilde{G}$ denotes the upper-triangular Borel subgroup. This is an algebraic representation of $\tilde{G}(\mathcal{O})$ of highest weight $\lambda$, where the action is given by left-translation by the inverse. We equip this representation with an action of $T^+$ by the formula $(t \cdot f)(-) = f(t^{-1} \cdot - \cdot \langle t \rangle)$, where $\langle (t_1, \dots, t_n; s) \rangle = (|t_1|_p^{-1}, \dots, |t_n|_p^{-1}; |s|_p^{-1})$. Here the $p$-adic valuation is normalised so that $| p |_p = p^{-1}$. If $t \in T(\mbb{Z}_p)$, then this action agrees with the usual action given by the left-translation by the inverse. We make similar definitions for the groups $M$ and $\GL_n$ (which carry actions of $T_M^+$   and $T_{\GL_n}^{+}$ respectively), and often drop the group from the notation when it is clear from the context. 

We note that we will sometimes consider $V_{\tilde{G}, \lambda, L} = V_{\tilde{G}, \lambda} \otimes_{\mathcal{O}} L$, but we will \emph{only ever} view this as a representation of $\tilde{G}(\mathcal{O})$ and $T^+$ as above (and not as a representation of $\tilde{G}(L)$ in the usual way). This will mean that, when we consider the cohomology of the locally symmetric space associated with $\tilde{G}$ with coefficients in the local system $V_{\tilde{G},\lambda, L}$, the action of the Hecke operators at $p$ will automatically be optimally normalised.   

\subsubsection{Hecke algebras} \label{SubSub:HeckeAlgebrasPrelims}

Let $S$ be a finite set of primes containing $p$, and fix Haar measures on $\tilde{G}(\mbb{A}_f^S)$, $M(\mbb{A}_f^S)$, $\GL_n(\mbb{A}_f^S)$ such that the volumes of $\tilde{G}(\widehat{\mbb{Z}}^S)$, $M(\widehat{\mbb{Z}}^S)$, $\GL_n(\widehat{\mbb{Z}}^S)$ are $1$. Let 
\begin{align*}
    \tilde{\mbf{T}}^S &\defeq C_{c}^{\infty}(\tilde{G}(\mbb{A}_f^S) /\!/ \tilde{G}(\widehat{\mbb{Z}}^S), \mathcal{O}) \\
    \mbf{T}^S &\defeq C_c^{\infty}(M(\mbb{A}_f^S) /\!/ M(\widehat{\mbb{Z}}^S), \mathcal{O}) \\
    \mbf{T}^S_{\GL_n} &\defeq C_c^{\infty}(\GL_n(\mbb{A}_f^S) /\!/ \GL_n(\widehat{\mbb{Z}}^S), \mathcal{O})
\end{align*}
denote the spherical Hecke algebras over $\mathcal{O}$ for the primes away from $S$ where the convolution product is with respect to the above fixed Haar measures. We have an $\mathcal{O}$-algebra homomorphism (the unnormalised, or twisted, Satake morphism)
\begin{align*}
\mathcal{S} \colon \tilde{\mbf{T}}^S &\to \mbf{T}^S \\
f &\mapsto \left[ m \mapsto \int_{N(\mbb{A}_f^S)} f(mn) dn \right]
\end{align*}
given by restricting to $P$ and integrating over the fibres of $P \to M$ (here, $dn$ is the Haar measure on $N(\mbb{A}_f^S)$ normalised so that $N(\widehat{\mbb{Z}}^S)$ has volume $1$). 

We let $\tilde{\mbf{T}}^{S, \opn{ord}} = \tilde{\mbf{T}}^S \otimes_{\mathcal{O}} \mathcal{O}[T^+]$, $\mbf{T}^{S, \opn{ord}} = \mbf{T}^S \otimes_{\mathcal{O}} \mathcal{O}[T_M^+]$, and $\mbf{T}^{S, \opn{ord}}_{\GL_n} = \mbf{T}^{S}_{\GL_n} \otimes_{\mathcal{O}} \mathcal{O}[T_{\GL_n}^+]$. If $w \in {^M W}$, we will let $\mathcal{S}_w \colon \tilde{\mbf{T}}^{S, \opn{ord}} \to \mbf{T}^{S, \opn{ord}}$ which coincides with $\mathcal{S}$ on the first factor, and is induced by the map $t \mapsto wtw^{-1}$ on the second factor. For any $t \in T^+$, we let $\tilde{U}_t \in \tilde{\mbf{T}}^{S, \opn{ord}}$ denote the element corresponding to the class $[t] \in \mathcal{O}[T^+]$. Similarly, for $t \in T_M^+$ (resp. $t \in T_{\GL_n}^+$), we let $U_t \in \mbf{T}^{S, \opn{ord}}$ (resp. $\mbf{T}_{\GL_n}^{S, \opn{ord}}$) denote the element corresponding to the class $[t] \in \mathcal{O}[T_M^+]$ (resp. $[t] \in \mathcal{O}[T_{\GL_n}^+]$). To simplify notation, for any integer $i=0, 1, \dots, n$, we let $\tilde{U}_i = \tilde{U}_{t_i}$ and $U_i = U_{t_i}$, where 
\[
t_i = (p, \dots, p, 1, \dots, 1; 1) \in T^+
\]
with $i$ lots of $p$ (if we want to view $t \in T^+_{\GL_n}$, we simply forget the last $\opn{GL}_1$-entry).

For any element $u \in \mbb{Z}_p^{\times}$ and $i=1, \dots, n$, we let $\tilde{\langle u \rangle}_i \in \tilde{\mbf{T}}^{S, \opn{ord}, \times}$ and $\langle u \rangle_i \in \mbf{T}^{S, \opn{ord}, \times}$ (or $\langle u \rangle_i \in \mbf{T}_{\GL_n}^{S, \opn{ord}, \times}$) denote the element corresponding to the class
\[
[(1, \dots, 1, u, 1, \dots, 1; 1)] \in \mathcal{O}[T^+]
\]
where $u$ is in the $i$-th place. In the following definition, we let $\opn{Art} \colon \mbb{Q}_p^{\times} \to G_{\mbb{Q}_p}^{\mathrm{ab}}$ denote the Artin reciprocity map, normalised so that $p$ is sent to a geometric Frobenius.

\begin{definition} \label{DefOfUnivGaloisChars} 
Let $\omega \colon G_{\mbb{Q}_p} \to \mbb{Z}_p^{\times}$ denote the $p$-adic cyclotomic character.
    \begin{enumerate}
        \item Let $R$ be any $\tilde{\mbf{T}}^{S, \opn{ord}}$-algebra in which the operators $\tilde{U}_t$ ($t \in T^+$) become invertible. Let $\lambda = (\lambda_1, \dots, \lambda_n; \lambda_0) \in X^*(T)^+$. For $i=1, \dots, 2n+1$, we let $\psi_{\lambda, i} \colon G_{\mbb{Q}_p} \to R^{\times}$ denote the unique continuous character satisfying:
        \[
    \psi_{\lambda, i}(\opn{Art}(u)) = \left\{ \begin{array}{cc} \omega^{n+1-i}(\opn{Art}(u)) u^{(w_0^M\lambda)_{n+1-i}} \tilde{\langle u \rangle}_{n+1-i}^{-1}  & \text{ if } 1 \leq i \leq n \\ 1   & \text{ if } i=n+1 \\ \omega^{n+1-i}(\opn{Art}(u)) u^{-(w_0^M\lambda)_{i-(n+1)}} \tilde{\langle u \rangle}_{i-(n+1)}  & \text{ if } n+2 \leq i \leq 2n+1 \end{array} \right.
    \]
    for $u \in \mbb{Z}_p^{\times}$, and 
    \[
    \psi_{\lambda, i}(\opn{Art}(p)) = \left\{ \begin{array}{cc} \omega^{n+1-i}(\opn{Art}(p)) \tilde{U}_{{n-i}} \tilde{U}_{{n+1-i}}^{-1}  & \text{ if } 1 \leq i \leq n \\ 1  & \text{ if } i=n+1 \\ \omega^{n+1-i}(\opn{Art}(p)) \tilde{U}_{{i-(n+1)}} \tilde{U}^{-1}_{{i-(n+2)}} & \text{ if } n+2 \leq i \leq 2n+1  \end{array} \right. .
    \]
    Here $(w_0^M\lambda)_i = \lambda_{n+1-i}$ for $i = 1, \dots, n$.
    \item Let $R$ be any $\mbf{T}_{\GL_n}^{S, \opn{ord}}$-algebra in which the operators $U_t$ ($t \in T_{\GL_n}^+$) become invertible. Let $\lambda = (\lambda_1, \dots, \lambda_n) \in X^*(T_{\GL_n})^+$. For $i=1, \dots, n$, we let $\chi_{\lambda, i} \colon G_{\mbb{Q}_p} \to R^{\times}$ denote the unique continuous character satisfying:
    \[
    \chi_{\lambda,i}(\opn{Art}(u)) = \omega^{1-i}(\opn{Art}(u)) u^{-(w_0^{\GL_n}\lambda)_i} \langle u \rangle_i
    \]
    for $u \in \mbb{Z}_p^{\times}$, and 
    \[
    \chi_{\lambda,i}(\opn{Art}(p)) = \omega^{1-i}(\opn{Art}(p)) U_{i} U_{{i-1}}^{-1} .
    \]
    Here $(w_0^{\GL_n}\lambda)_i = \lambda_{n+1-i}$ for $i=1, \dots, n$.
    \end{enumerate}
In (2), if $\lambda$ is the trivial character, then we often drop it from the notation and simply write $\chi_i$.    
\end{definition}

Finally, if $C$ is a complex of $\mathcal{O}$-modules (resp.\ $L$-modules) on which 
$\tilde{\mbf{T}}^{S}$, $\tilde{\mbf{T}}^{S, \opn{ord}}$, etc.\  acts, then we write $\tilde{\mbf{T}}^{S}(C)$, $\tilde{\mbf{T}}^{S, \opn{ord}}(C)$, etc. for the image of $\tilde{\mbf{T}}^{S}$, $\tilde{\mbf{T}}^{S, \opn{ord}}$ (resp.\ $\tilde{\mbf{T}}_L^S, \tilde{\mbf{T}}_L^{S,\opn{ord}}$) etc.\  in the endomorphism algebra of $C$. If we view $C$ as an object in the derived category $\mbf{D}(\mathcal{O})$ of $\mathcal{O}$-modules, then we will consider the image of the Hecke algebra in the endomorphism algebra $\opn{End}_{\mbf{D}(\mathcal{O})}(C)$.

\subsubsection{Dual groups and Hecke polynomials}

Recall that the dual group of $\tilde{G} = \opn{GSp}_{2n}$ can (and will) be identified with $\opn{GSpin}_{2n+1}$ (see, e.g., \cite{KretShin} for the definition of the general spin group). We will identify $T$ with the standard maximal torus in $\opn{GSpin}_{2n+1}$. We will consider the Hecke polynomial associated with the standard representation $r_{\opn{GSpin}, \opn{std}} \colon \opn{GSpin}_{2n+1} \to \opn{SO}_{2n+1} \to \opn{GL}_{2n+1}$. 

\begin{definition} \label{HtildeHeckeDef}
    Let $\ell \not\in S$. For $i=0,1, \dots, 2n+1$, let $\tilde{T}_{\ell, i} \in C_c^{\infty}(\tilde{G}(\mbb{Q}_\ell) /\!/ \tilde{G}(\mbb{Z}_\ell), \mathcal{O})$ denote the Hecke operator which corresponds to $\opn{tr}(\wedge^i r_{\opn{GSpin}, \opn{std}})$ under the \emph{normalised} Satake isomorphism.\footnote{Note that $\tilde{T}_{\ell, i}$ does indeed have coefficients in $\mathcal{O}$ and not just $\mathcal{O}[\ell^{\pm 1/2}]$. This is because the half-sum of the positive roots for $\opn{Sp}_{2n}$ is integral (it is $(n, n-1, \dots, 1)$). In particular, $\ell^{n+1-i}\tilde{T}_{\ell, i} = \ell^{n+1-i}\tilde{T}_{\ell, 2n+1-i}$ is the Hecke operator corresponding to $(\ell, \dots, \ell, 1, \dots, 1; 1) \in T(\mbb{Q}_{\ell})$ for $i=1, \dots, n$, and $\tilde{T}_{\ell, 0} = \tilde{T}_{\ell, 2n+1}$ is the identity.} We define:
    \[
    \tilde{H}_{\ell}(X) \defeq 1 - \tilde{T}_{\ell, 1} X + \cdots + (-1)^i \tilde{T}_{\ell, i} X^i + \cdots - \tilde{T}_{\ell, 2n+1} X^{2n+1} 
    \]
    which is a polynomial with coefficients in $\tilde{\mbf{T}}^S$.
\end{definition}

We will also need the standard Hecke polynomials for $\GL_n$. Let $r_{\GL_n, \opn{std}} \colon \GL_n \times \opn{GL}_1 \to \GL_n$ denote the projection to the first factor.

\begin{definition} \label{HXHeckeDef}
    Let $\ell \notin S$. For $i = 0,1, \dots, n$, let $T_{\ell, i} \in C_c^{\infty}(M(\mbb{Q}_{\ell}) /\!/ M(\mbb{Z}_\ell), \mathcal{O}[\ell^{\pm 1/2}])$ denote the Hecke operator which corresponds to $\opn{tr}(\wedge^i r_{\GL_n, \opn{std}})$ under the normalised Satake isomorphism. We define:
    \[
    H_{\ell}(X) \defeq 1 - \ell^{(n-1)/2} T_{\ell, 1} X + \cdots + (-1)^i \ell^{i(n-1)/2} T_{\ell, i} X^i + \cdots + (-1)^n \ell^{n(n-1)/2} T_{\ell, n} X^n
    \]
    which is a polynomial with coefficients in $\mbf{T}^S$. We also let $H_{\ell}(X)$ denote the image of the polynomial under the natural map $\mbf{T}^S[X] \to \mbf{T}^S_{\GL_n}[X]$ (given by projecting to the $\GL_n$ factor in $M$). We also let
    \[
    H^{\vee}_{\ell}(X) \defeq 1 - \ell^{-(n-1)/2} T_{\ell, n}^{-1} T_{\ell, n-1} X + \cdots + (-1)^i \ell^{-i(n-1)/2} T_{\ell, n}^{-1} T_{\ell, n-i} X^i + \cdots + (-1)^n \ell^{-n(n-1)/2} T_{\ell, n}^{-1} X^n
    \]
    which is a polynomial in $\mbf{T}^S[X]$ (note that $T_{\ell, n}$ is invertible in $\mbf{T}^S$).
\end{definition}

We note the following key identity:
\begin{equation} \label{KeyIdentityOfHeckeEqn}
\mathcal{S}(\tilde{H}_{\ell}(X)) = (1-X) \cdot H_{\ell}(\ell \cdot X) \cdot H^{\vee}_{\ell}(\ell^{-1} \cdot X) 
\end{equation}
which can can be proven by decomposing $\wedge^i r_{\opn{GSpin}, \opn{std}}$ as a representation of $\widehat{M} \subset \opn{GSpin}_{2n+1}$ and comparing the normalised Satake isomorphisms for $\tilde{G}$ and $M$ (c.f., the discussion just before \cite[Corollary 5.2.6]{ScholzeTorsion}, noting that our normalisations differ from \emph{loc.cit.} by $\ell^{\pm 1}$).

\subsubsection{Locally symmetric spaces}

For $\mathscr{G} \in \{ \tilde{G}, M, \GL_n \}$ and $K \subset \mathscr{G}(\mbb{A}_f)$ a neat compact open subgroup, we let $X_{\mathscr{G}, K}$ denote the locally symmetric space associated with $\mathscr{G}$ of level $K$. Note that the (real) dimensional of $X_{\mathscr{G}, K}$ is $n(n+1) = 2d$ (resp. $n(n+1)/2 - 1 = d-1$) if $\mathscr{G} = \tilde{G}$ (resp. $\mathscr{G} \in \{ M, \GL_n \}$).

We consider the following compact open subgroups in the $\mbb{Q}_p$-points of these groups. For $c \geq b \geq 0$ with $c \geq 1$, we let 
\[
\tilde{\opn{Iw}}(b,c) \defeq \{ x \in \tilde{G}(\mbb{Z}_p) : x \text{ mod } p^c \in B(\mbb{Z}/p^c \mbb{Z}), x \text{ mod } p^b \in U(\mbb{Z}/p^b \mbb{Z}) \}
\]
recalling $B$ is the upper-triangular Borel in $\tilde{G}$, and where $U \subset B$ denotes its unipotent radical. We also define
\[
\opn{Iw}(b, c) \defeq \{ x \in M(\mbb{Z}_p) : x \text{ mod } p^c \in B_M(\mbb{Z}/p^c \mbb{Z}), x \text{ mod } p^b \in U_M(\mbb{Z}/p^b \mbb{Z}) \}
\]
where $B_M$ denotes the upper-triangular Borel subgroup in $M$ with unipotent radical $U_M$. We also write $\opn{Iw}(b, c)$ for the image of this group under the natural map $M(\mbb{Z}_p) \twoheadrightarrow \GL_n(\mbb{Z}_p)$.

\subsubsection{Borel--Serre compactifications}

For an integer $1 \leq j \leq n$, let $\mbf{j}$ denote a partition $j = j_1 + \cdots + j_a$. Let $Q_{\mbf{j}} \subset \tilde{G}$ denote the standard parabolic subgroup of $\tilde{G}$ with Levi 
\[
M_{\mbf{j}} \cong \opn{GL}_{j_1} \times \cdots \times \opn{GL}_{j_a} \times \opn{GSp}_{2(n-j)},
\]
where our convention is that $\opn{GSp}_0 = \opn{GL}_1$. Let 
    \[
    \ide{X}_{\tilde{G}} = \tilde{G}(\mbb{Q}) \backslash \left(\tilde{G}(\mbb{A}_f) \times X_{\tilde{G}} \right)
    \]
    where $X_{\tilde{G}}$ denotes the Hermitian symmetric domain for the standard Shimura datum associated with $\tilde{G} = \opn{GSp}_{2n}$. Let $\overline{X}_{\tilde{G}}$ denote the Borel--Serre compactification of $X_{\tilde{G}}$, and let $X_{Q_{\mbf{j}}} = e({Q_{\mbf{j}}})$ denote the boundary component corresponding to the parabolic ${Q_{\mbf{j}}}$ (see \cite[\S 3.1.2]{NewtonThorneTorsion}). Set
    \[
    \ide{X}_{Q_{\mbf{j}}} \defeq {Q_{\mbf{j}}}(\mbb{Q}) \backslash \left( {Q_{\mbf{j}}}(\mbb{A}_f) \times X_{Q_{\mbf{j}}} \right), \quad \quad  \opn{Ind}_{{Q_{\mbf{j}}^{\infty}}}^{\tilde{G}^{\infty}} \mathfrak{X}_{Q_{\mbf{j}}} \defeq {Q_{\mbf{j}}}(\mbb{Q}) \backslash \left( \tilde{G}(\mbb{A}_f) \times X_{Q_{\mbf{j}}} \right) .
    \]
    We also let $\mathfrak{X}_{M_{\mbf{j}}} = M_{\mbf{j}}(\mbb{Q}) \backslash (M_{\mbf{j}}(\mbb{A}_f) \times X_{M_{\mbf{j}}})$, where $X_{M_{\mbf{j}}}$ denotes the symmetric space associated with $M_{\mbf{j}}$. If we let $\overline{\ide{X}}_{\tilde{G}}$ denote the Borel--Serre compactification of $\mathfrak{X}_{\tilde{G}}$ then, as explained in \cite[\S 3.1.2]{NewtonThorneTorsion}, one has a $\tilde{G}(\mbb{A}_f)$-equivariant locally closed embedding
    \[
    \opn{Ind}_{Q_{\mbf{j}}^{\infty}}^{\tilde{G}^{\infty}} \ide{X}_{Q_{\mbf{j}}} \hookrightarrow \partial \overline{\ide{X}}_{\tilde{G}}
    \]
    where $\partial \overline{\ide{X}}_{\tilde{G}} = \overline{\ide{X}}_{\tilde{G}} -\ide{X}_{\tilde{G}}$ denotes the boundary. If $Q_{\mbf{j}}$ is a maximal proper parabolic (i.e., $j_1 = j$), we also have an open immersion $\ide{X}_{Q_{\mbf{j}}} \hookrightarrow \partial \overline{\ide{X}}_{\tilde{G}}$.

    Let $\tilde{K} \subset \tilde{G}(\mbb{A}_f)$ be a sufficiently small compact open subgroup, and let $\partial \overline{X}_{\tilde{G}, \tilde{K}}$ denote the boundary of the Borel--Serre compatification of $X_{\tilde{G}, \tilde{K}} = \ide{X}_{\tilde{G}}/\tilde{K}$. Then we have a locally closed embedding
    \[
    X^{Q_{\mbf{j}}}_{\tilde{K}} := \opn{Ind}^{\tilde{G}^{\infty}}_{Q_{\mbf{j}}^{\infty}} \ide{X}_{Q_{\mbf{j}}} / \tilde{K} \hookrightarrow \partial \overline{X}_{\tilde{G}, \tilde{K}} 
    \]
    which is an open immersion if $Q_{\mbf{j}}$ is maximal proper.

\subsubsection{Running set-up}\label{sec:chapter 9 set-up}
We finally fix our running set-up. Recall that $L/\mbb{Q}_p$ is a finite extension with ring of integers $\mathcal{O}$ and unformiser $\varpi$. Let $\ide{m} \subset \mbf{T}^S$ be a maximal ideal with residue field $k = \mathcal{O}/\varpi$, and let $\Theta \colon \mbf{T}^S \twoheadrightarrow k$ denote the corresponding $\mathcal{O}$-algebra morphism with kernel $\ide{m}$.

\begin{assumption} \label{GaloisTypeAssumption}
    We assume that there exists a continuous semisimple Galois representation
    \[
    \overline{\rho}_{\ide{m}} \colon G_{\Q} \to \GL_n(k) 
    \]
    such that for every finite prime $\ell \not\in S$, the Galois representation $\overline{\rho}_{\ide{m}}$ is unramified at $\ell$ and one has
    \[
    \opn{det}(1 - \opn{Frob}_{\ell}^{-1} X | \overline{\rho}_{\ide{m}} ) = \Theta( H_{\ell}(X) ) .
    \]
    We assume that $\ide{m}$ is non-Eisenstein, i.e., $\overline{\rho}_{\ide{m}}$ is absolutely irreducible.
\end{assumption}

Let $\tilde{\ide{m}} = \cS^*(\ide{m}) \subset \tilde{\mbf{T}}^S$ be the pullback of $\ide{m}$ under the unnormalised Satake morphism. We set
\[
\overline{\rho}_{\tilde{\ide{m}}} \defeq \overline{\rho}_{\ide{m}}^{\vee}(1) \oplus \mbf{1} \oplus \overline{\rho}_{\ide{m}}(-1)
\]
where $V(j)$ is shorthand for $V \otimes \omega^j$, and $\mbf{1}$ denotes the trivial representation. By (\ref{KeyIdentityOfHeckeEqn}), we see that $\overline{\rho}_{\tilde{\ide{m}}}$ is a continuous semisimple Galois representation, which is unramified outside $S$ and satisfies $\opn{det}(1 - \opn{Frob}_{\ell}^{-1} X | \overline{\rho}_{\tilde{\ide{m}}}) = (\Theta \circ \mathcal{S})(\tilde{H}_{\ell}(X))$ for $\ell \notin S$.

\subsection{Galois representations}

We now recall the existence of Galois representations associated with characteristic zero and characteristic $p$ cohomology classes following \cite{ArthurBook} and \cite{ScholzeTorsion}. These Galois representations all satisfy local-global compatibility for $\ell \not\in S$, as we make precise. In characteristic zero, we prove local-global compatibility at $\ell=p$ for $\tilde{G}(\A)$ in Theorem \ref{Thm:PiCARoftildeGLG}, and recall it for $\GL_n$ in Theorem \ref{Thm:Char0ExistenceOfGalRACAR}. In characteristic $p$, local-global compatibility for $\GL_n$ at $\ell=p$ (when the Galois representation is absolutely irreducible and decomposed generic) is one of our main results, and we do not prove it until Proposition \ref{Prop:MixedCharLGcompat} (see Remark \ref{Rem:Charpl=pLGcompat}).

\subsubsection{Characteristic zero results}

In this section, we fix an identification $\mbb{C} \cong \Qpb$, but omit this from the notation. Let $\sG$ denote either $\tilde{G}$ or $\GL_n$, let $\sT \subset \sB \subset \sG$ be the diagonal torus and upper-triangular Borel, and let $\sT^+$ be $T^+$ or $T^+_{\GL_n}$ (if $\sG$ is $\tilde{G}$ or $\GL_n$ respectively). let $\Pi$ be a regular algebraic cuspidal automorphic representation of $\sG(\mbb{A})$ of weight $-w_0^{\sG}\lambda$, with $\lambda \in X^*(\sT)^+$. 

\begin{definition} \label{Def:OrdinarityDefinition}
    Let $J_{\sB}(\Pi_p)$ denote the (normalised) Jacquet module of $\Pi_p$. We say $\Pi_p$ is \emph{ordinary} if there exists a one-dimensional subquotient of $J_{\sB}(\Pi_p)$ given by a smooth character $\chi \colon \sT(\mbb{Q}_p) \to \mbb{C}^{\times}$ satisfying
    \[
    v_p(\chi(t)) = v_p\left(\Big(\delta_{\sB}^{1/2} \cdot w_0^{\sG}\lambda\Big)(t)\right)
    \]
    for all $t \in \sT^+$ (via the identification $\mbb{C} \cong \Qpb$ above). 
\end{definition}

\begin{remark}\label{rem:ordinary principal series}
Note that, if $\Pi_p$ is ordinary, it is isomorphic to a subquotient of the normalised induction $\opn{Ind}_{\sB(\Qp)}^{\sG(\Qp)} \chi$. Furthermore, let $\mathscr{I}(b,b)$ be $\tilde{\opn{Iw}}(b,b)$ (if $\sG = \tilde{G}$) or $\opn{Iw}(b,b)$ (if $\sG = \GL_n$); then this ordinarity condition is equivalent to the existence of an integer $b \geq 1$ and a non-zero element $v \in \Pi_p^{\sI(b, b)}$ which satisfies
    \[
    [\sI(b, b) \cdot t_i \cdot \sI(b, b)] \cdot v = \alpha_i v
    \]
    for all $i=0, 1, \dots, n$, where $\alpha_i$ has $p$-adic valuation equal to that of $(w_0^{\sG}\lambda)(t_i)$. Here $[\sI(b, b) \cdot t_i \cdot \sI(b, b)]$ is the Hecke operator associated with the characteristic function of $\sI(b, b) \cdot t_i \cdot \sI(b, b)$. Note in particular that for $\sG = \GL_n$, Definition \ref{Def:OrdinarityDefinition} agrees with \cite[Def.\ 5.3]{Geraghty}.
\end{remark}

\begin{definition}
    With notation as in Remark \ref{rem:ordinary principal series}, we define $(\Pi_p^{\mathscr{I}(b, b)})^{\opn{ord}} \subset \Pi_p^{\mathscr{I}(b, b)}$ to be the maximal subspace stable under $[\sI(b, b) \cdot t_i \cdot \sI(b, b)]$ for all $i=0, 1, \dots, n$, and such that any eigenvalue for $[\sI(b, b) \cdot t_i \cdot \sI(b, b)]$ has valuation equal to the $p$-adic valuation of $(w_0^{\mathscr{G}} \lambda)(t_i)$. We define the \emph{ordinary part} $\Pi_p^{\opn{ord}}$ of $\Pi_p$ to be 
    \[
    \Pi_p^{\mathrm{ord}} \defeq \bigcup_{b \geq 1} (\Pi_p^{\mathscr{I}(b, b)})^{\opn{ord}} .
    \]
    If $\Pi_p$ is ordinary as in Definition \ref{Def:OrdinarityDefinition}, then $\Pi_p^{\opn{ord}}$ is one-dimensional, else it is zero (as $\lambda$ is dominant; for $\GL_n$, this is \cite[Lemma 5.4(2)]{Geraghty}, and for $\tilde{G}$, the same idea works).
\end{definition}

We first consider local-global compatibility at $\ell=p$ for cuspidal automorphic representations of $\tilde{G}(\mbb{A})$.

\begin{theorem} \label{Thm:PiCARoftildeGLG}
    Let $\Pi$ be a regular algebraic cuspidal automorphic representation of $\tilde{G}(\mbb{A})$ of weight $-w_0^{\tilde{G}}\lambda$, with $\lambda = (\lambda_1, \dots, \lambda_n; \lambda_0) \in X^*(T)^+$, and sufficiently regular in the sense that $\lambda_i - \lambda_{i+1} \geq 1$ for $i=1, \dots, n-1$ and $\lambda_n \geq 1$ (so, in particular, $w \cdot \lambda \neq \lambda$ for all $w \in W_{\tilde{G}}$). Let $S$ denote a finite set of primes containing $p$ and all primes where $\Pi$ is ramified. Then there exists a semisimple continuous Galois representation $\rho_{\Pi} \colon G_{\mbb{Q}} \to \opn{GL}_{2n+1}(\Qpb)$ satisfying the property that: for every prime $\ell \not\in S$, $\rho_{\Pi}$ is unramified at $\ell$ and one has
        \[
        \opn{det}(1 - \opn{Frob}_{\ell}^{-1} X | \rho_{\Pi}) = \Theta^S_{\Pi}( \tilde{H}_{\ell}(X) )
        \]
        where $\Theta^S_{\Pi} \colon \tilde{\mbf{T}}_{\Qpb}^S \to \Qpb$ denotes the eigencharacter for the action of the spherical Hecke algebra away from $S$ acting on $(\Pi_{f}^S)^{\tilde{G}(\widehat{\mbb{Z}}^S)}$.

        Moreover, suppose that $\Pi_p$ is ordinary (in the sense of Definition \ref{Def:OrdinarityDefinition}) and let $\Theta_{\Pi, p} \colon \Qpb[T^+] \to \Qpb$ denote the eigencharacter for the action of the normalised $U_p$-Hecke operators acting on $\Pi_p^{\opn{ord}}$. Let $\psi_{\lambda, i} \colon G_{\mbb{Q}_p} \to \Qpb^{\times}$ denote the characters as in Definition \ref{DefOfUnivGaloisChars} associated with $\Theta_{\Pi}^S \otimes \Theta_{\Pi, p}$. Then:
        \[
        \rho_{\Pi}|_{G_{\mbb{Q}_p}} \sim \left( \begin{array}{cccc} \psi_{\lambda,1} & * & * & * \\ & \psi_{\lambda,2} & * & * \\ & & \ddots & \vdots \\ & & & \psi_{\lambda, 2n+1} \end{array} \right) ,
        \]
        that is, we have local-global compatibility at $\ell = p$.
\end{theorem}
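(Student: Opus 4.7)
\medskip

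\noindent \emph{Proof sketch.} The plan is to deduce the theorem from the known existence of Galois representations attached to regular algebraic, essentially self-dual, cuspidal automorphic representations of $\opn{GL}_{2n+1}(\mbb{A})$, by transferring $\Pi$ via Arthur's endoscopic classification. Concretely, I would proceed in three steps. First, using the main results of \cite{ArthurBook} for $\opn{Sp}_{2n}$ (extended to $\tilde{G} = \opn{GSp}_{2n}$ via the similitude character), I would attach to $\Pi$ an isobaric automorphic representation $\Pi'$ of $\opn{GL}_{2n+1}(\mbb{A})$ whose partial $L$-function matches $L^S(\Pi, s, r_{\opn{GSpin}, \opn{std}})$ and which satisfies $(\Pi')^{\vee} \cong \Pi' \otimes \eta$ for an appropriate Hecke character $\eta$. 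The sufficient regularity hypothesis $\lambda_i - \lambda_{i+1} \geq 1$ and $\lambda_n \geq 1$ translates into regularity of the infinitesimal character of $\Pi'_{\infty}$, which combined with cuspidality of $\Pi$ forces $\Pi'$ to itself be a regular algebraic cuspidal representation of orthogonal type (any non-trivial isobaric summand decomposition would produce coinciding Hodge--Tate weights, contradicting regular algebraicity).

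Second, by the results of Clozel--Harris--Taylor, Shin, and Chenevier--Harris as consolidated in \cite{BGGTII}, the RACAR $\Pi'$ admits an associated semisimple Galois representation $\rho_{\Pi'} \colon G_{\mbb{Q}} \to \opn{GL}_{2n+1}(\Qpb)$ which is unramified outside $S$ and satisfies local-global compatibility at every $\ell \notin S$ with respect to the standard Hecke polynomial on the $\opn{GL}_{2n+1}$-side. I would set $\rho_{\Pi} \defeq \rho_{\Pi'}$, and the Frobenius-trace identity claimed in the theorem would then follow from the key identity \eqref{KeyIdentityOfHeckeEqn}, which is precisely the Satake-theoretic incarnation of the compatibility between the transfer and the standard representations of the dual groups (here one uses that $r_{\opn{GSpin}, \opn{std}}$ factors through the embedding $\widehat{\tilde{G}} = \opn{GSpin}_{2n+1} \to \opn{SO}_{2n+1} \hookrightarrow \opn{GL}_{2n+1}$ that also realises the endoscopic transfer).

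Finally, for local-global compatibility at $\ell = p$ in the ordinary case, I would first verify that ordinarity of $\Pi_p$ (in the sense of Definition \ref{Def:OrdinarityDefinition}) implies ordinarity of $\Pi'_p$ as a representation of $\opn{GL}_{2n+1}(\mbb{Q}_p)$ in the sense of \cite{Geraghty}. This follows by analysing the local Langlands parameter of $\Pi'_p$ under the transfer: if $\chi$ is the ordinary Jacquet module character for $\Pi_p$, then the Satake parameters of $\Pi'_p$ are explicitly given (cf.\ \eqref{KeyIdentityOfHeckeEqn}) by $\{\chi_i, 1, \chi_i^{-1}\omega_{\Pi'_p}\}$ for appropriate $i$, which one checks satisfies the ordinarity valuation condition for the infinitesimal character of $\Pi'$. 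Then \cite[Thm.\ 2.1.1]{BGGTII} (with the extension to the ordinary case) provides the upper-triangular form for $\rho_{\Pi'}|_{G_{\mbb{Q}_p}}$ with diagonal characters explicitly determined by the ordinary $U_p$-eigenvalues and the weight of $\Pi'$; translating these back along the transfer using Definition \ref{DefOfUnivGaloisChars} and \eqref{KeyIdentityOfHeckeEqn} yields exactly the characters $\psi_{\lambda, i}$ as asserted. The main obstacle I anticipate is not any single conceptual step but rather the careful bookkeeping in this last step: matching normalisations between the twisted Satake morphism $\mathcal{S}$ of \S\ref{SubSub:HeckeAlgebrasPrelims}, the ordinary $U_p$-eigenvalues (which enter both directly via $\tilde{U}_i$ and indirectly through the characters $\tilde{\langle u\rangle}_i$), and the Hodge--Tate shifts $\omega^{n+1-i}$ in Definition \ref{DefOfUnivGaloisChars}, so that the statement holds on the nose rather than up to an unspecified twist.
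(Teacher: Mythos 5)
Your overall approach is the same as the paper's — transfer $\Pi$ to $\opn{GL}_{2n+1}$ via Arthur's classification, then invoke known local-global compatibility for essentially self-dual RACARs of general linear groups. However, there is a genuine error in Step 1 that would derail the argument.

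You claim that regular algebraicity of $\Pi$ forces the transfer $\Pi'$ to be cuspidal, on the grounds that ``any non-trivial isobaric summand decomposition would produce coinciding Hodge--Tate weights, contradicting regular algebraicity.'' This is false: an isobaric sum $\Sigma_1 \boxplus \cdots \boxplus \Sigma_r$ is regular algebraic provided the Hodge--Tate weights of the summands are pairwise \emph{disjoint}, which happens constantly. What the regularity hypothesis $\lambda_i - \lambda_{i+1} \geq 1$, $\lambda_n \geq 1$ actually forces is that the elliptic $A$-parameter $\psi$ attached to $\Pi_0 \subset \Pi|_{\opn{Sp}_{2n}}$ has all Speh blocks of size $l_i = 1$ (i.e.\ $\psi$ is \emph{generic}), because a block with $l_i \geq 2$ contributes the shifted multiset $\{ (w_0 \mu_i)_j + j - k : j, k \}$ with repeats. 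It does \emph{not} force $r = 1$. The paper therefore does not attempt to prove cuspidality of the transfer; it takes $\rho_{\Pi_0} = \bigoplus_{i=1}^r \rho_{\sigma_i}$ where the $\sigma_i$ are suitable twists of the cuspidal constituents $\Sigma_i$, each of which is essentially self-dual of regular weight, and applies \cite{BGGTII} to each one. If you wanted to salvage your version you would need to do exactly this — apply the known LGC results per cuspidal summand and sum. You would also need to handle the $\ell = p$ step more carefully: the paper first restricts to $\opn{Sp}_{2n}$ so that Arthur's classification applies, uses Tadi\'c's irreducibility criterion together with the regularity to see that $\Pi_p$ is a \emph{full} unramified principal series (this is needed to identify the local $A$-parameter $\psi_p$ on the nose), and only then deduces ordinarity of each $\Sigma_{i,p}$. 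Your sketch compresses this into ``one checks'' and also treats $\opn{GSp}_{2n}$ as if Arthur's results apply to it directly, whereas the paper sidesteps this by restricting to $\opn{Sp}_{2n}$ and noting the standard representation of the dual group factors through $\opn{SO}_{2n+1}$.
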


We expect this result is well-known to experts in the field, but as we were unable to find a proof in the literature, we include one here.

\begin{proof}
    Without loss of generality, suppose that $\Pi$ is unitary. Let $\Pi_0 \subset \Pi|_{\opn{Sp}_{2n}(\mbb{A})}$ be any (unitary) irreducible constituent of the restriction to an automorphic representation of $\opn{Sp}_{2n}(\mbb{A})$. It is enough to establish the claim for $\Pi_0$ since the standard representation of $\opn{GSpin}_{2n+1} = {^L \opn{GSp}_{2n}}$ factors through $\opn{SO}_{2n+1} = {^L \opn{Sp}_{2n}}$.
    
    By the work of Arthur \cite{ArthurBook} (as explained in \cite[Theorem 5.1.2]{ScholzeTorsion} for example, although note that \emph{loc.cit.} is for scalar weight), $\Pi_0$ appears in the global $A$-packet associated with an elliptic $A$-parameter $\psi$; so there is a partition $2n+1 = l_1 N_1 + \cdots + l_r N_r$ and self-dual cuspidal automorphic representations $\Sigma_i$ of $\opn{GL}_{N_i}(\mbb{A})$, such that:
    \begin{itemize}
        \item For any $\ell \not\in S$, the representations $\Sigma_{i}$ are unramified at $\ell$, and 
        \[
        \eta \varphi_{\Pi_{0, \ell}} = \bigoplus_{i=1}^r \left( \varphi_{\Sigma_{i, \ell}}|\cdot |^{(1-l_i)/2} \oplus \varphi_{\Sigma_{i, \ell}}|\cdot |^{(3-l_i)/2} \oplus \cdots \oplus \varphi_{\Sigma_{i, \ell}}|\cdot |^{(l_i-1)/2} \right)
        \]
        where $\varphi_{\Pi_{0, \ell}}$ (resp. $\varphi_{\Sigma_{i,\ell}}$) denotes the unramified Langlands parameter associated with $\Pi_{0, \ell}$ (resp. $\Sigma_{i, \ell}$), and $\eta \colon {^L \opn{Sp}_{2n}} \to {^L \opn{GL}_{2n+1}}$ is the map of dual groups given by the standard representation of $\opn{SO}_{2n+1}$.
        \item The representation $\sigma_i \defeq \Sigma_{i} \otimes |\!|\cdot |\!|^{(N_i-l_i)/2}$ is regular algebraic of weight $-w_0^{\opn{GL}_{N_i}} \mu_i$, and the union
        \[
        \bigcup_{i=1}^r \{ (w_0^{\opn{GL}_{N_i}} \mu_i)_j + j - k : j=1, \dots, N_i \text{ and } k=1, \dots, l_i \}
        \]
        is equal to 
        \[
        \{ -\lambda_1-n, -\lambda_2 - n+1, \dots, -\lambda_n - 1, 0, \lambda_n + 1, \dots, \lambda_2 + n -1, \lambda_1 + n \}
        \]
        as multisets.\footnote{Note that, by regular algebraic, we mean regular $C$-algebraic (see \cite{BuzzardGee}).} Set $\xi_{i, k}^{(j)} = (w_0^{\opn{GL}_{N_i}} \mu_i)_j + j - k$.
        \item Since $\lambda$ is sufficiently regular, by the preceding bullet point we must have $l_i = 1$ for all $i=1, \dots, r$. In particular, $\psi$ is a generic $A$-parameter. For a finite prime $\ell$, let $\psi_{\ell}$ denote the local $A$-parameter at $\ell$ associated with $\psi$. By \cite[Theorem 1.2]{Caraiani12}, $\Sigma_{i, \ell}$ is tempered\footnote{To see this, choose an imaginary quadratic number field $F/\mbb{Q}$ in which $\ell$ splits and such that $\Sigma_i$ does not have CM by $F$. Then the quadratic base-change $\opn{BC}_F(\Sigma_i)$ of $\Sigma_i$ to an automorphic representation of $\opn{GL}_{N_i}(\mbb{A}_F)$ is cuspidal, regular algebraic, and conjugate self-dual, and satisfies $\opn{BC}_F(\Sigma_i)_{\lambda} \cong \Sigma_{i, \ell}$ for any prime $\lambda | \ell$.} hence $\psi_{\ell}$ is a tempered, generic $A$-parameter (or equivalently, a tempered $L$-parameter).
    \end{itemize}

    Every $\sigma_i$ is a cuspidal essentially self-dual automorphic representation, so by the work of many people (see \cite{ChenevierHarris, BGGTweight} for example), there exists a semisimple continuous representation
    \[
    \rho_{\sigma_i} \colon G_{\mbb{Q}} \to \opn{GL}_{N_i}(\Qpb)
    \]
    such that for every $\ell \not\in S$, one has 
    \[
    L_{\ell}(\rho_{\sigma_i}, s) = L(\varphi_{\sigma_{i, \ell} | \cdot |^{(1-N_i)/2}}, s) .
    \]
    Furthermore, $\rho_{\sigma_i}$ is de Rham at $p$ with Hodge--Tate weights $\{\xi_{i,1}^{(j)} : j=1, \dots, N_i \}$. We then define
    \[
    \rho_{\Pi_0} \defeq \bigoplus_{i=1}^r \left( \rho_{\sigma_i} \oplus \rho_{\sigma_i}(-1) \oplus \cdots \oplus \rho_{\sigma_i}(1-l_i) \right) = \bigoplus_{i=1}^r \rho_{\sigma_i},
    \]
    which satisfies the desired local-global compatibility at $\ell \not\in S$ by the first bullet point above.

    Suppose now that $\Pi_{p}$ is ordinary. By Remark \ref{rem:ordinary principal series}, and as $\lambda$ is sufficiently regular,  Tadi\'{c}'s irreducibility criterion \cite[Thm.\ 7.9]{Tad94} implies that $\Pi_p \cong \opn{Ind}_{B(\Qp)}^{\tilde{G}(\Qp)}\chi$ is the normalised induction of a smooth character $\chi = (\chi_1,...,\chi_n;\chi_0)$ (as in Definition \ref{Def:OrdinarityDefinition}). By Mackey theory, 
    \[
       \Pi_p|_{\opn{Sp}_{2n}(\Q_p)} \cong I_{B'(\Qp)}^{\opn{Sp}_{2n}(\Q_p)}\chi' \cong  \Pi_{0,p},
    \]
    where $B'$ is the standard Borel in $\opn{Sp}_{2n}$, and $\chi' = (\chi_1,\dots,\chi_n)$ is the corresponding character of the torus $T' = T \cap B'$. Here the second isomorphism follows from irreducibility of the middle induction (which is \cite[Thm.\ 7.1]{Tad94}; see also \cite[Thm.\ 2.2.4]{TaibiEigenvarieties}). 
   
    The representation $\Pi_{0, p}$ lies in the local $A$-packet associated with the tempered $L$-parameter $\psi_{p}$, hence $\Pi_{0, p}$ is tempered (see \cite[Theorem 1.5.1(b)]{ArthurBook}). Thus the characters $\chi_i$ are unitary. Since $\Pi_{0, p}$ lies in a unique $A$-packet (see \emph{loc.cit.}), we must have that $\psi_p$ is equivalent to the $A$-parameter
    \[
    \psi_p = \chi_1 \oplus \cdots \oplus \chi_n \oplus \mbf{1} \oplus \chi_n^{-1} \oplus \cdots \oplus \chi_1^{-1},
    \]
    where $\chi_i \colon W_{\mbb{Q}_p} \times \opn{SU}(2) \times \opn{SU}(2) \to \opn{GL}_1(\mbb{C})$ which is trivial on the $\opn{SU}(2)$-factors and coincides with $\chi_i$ on the $W_{\mbb{Q}_p}$-factor. If we let 
    \[
    \pi = \Sigma_1 \boxplus \cdots \boxplus \Sigma_r
    \]
    denote the global base-change of $\Pi_0$ to an automorphic representation of $\opn{GL}_{2n+1}(\mbb{A})$, then we see that $\pi_p \cong I_{B_{\opn{GL}_{2n+1}}(\mbb{Q}_p)}^{\opn{GL}_{2n+1}(\mbb{Q}_p)}(\chi_1 \oplus \cdots \oplus \chi_n \oplus \mbf{1} \oplus \chi_n^{-1} \oplus \cdots \oplus \chi_1^{-1})$. We note that $\pi_p \cong \Sigma_{1, p} \boxplus \cdots \boxplus \Sigma_{r, p}$. As $\Pi_{p}$ is ordinary, one may easily check that $\pi_p$ is also ordinary, and that this can only happen if $\Sigma_{i, p}$ are also ordinary, for all $i=1, \dots, r$.    
    
    Since $\lambda$ is sufficiently regular, so is $\mu_i$ and the Galois representation $\rho_{\sigma_i}$ satisfies local-global compatibility at $\ell = p$ (see \cite{BGGTII}), i.e.,
    \[
    \rho_{\sigma_i}|_{G_{\mbb{Q}_p}} \sim \left( \begin{array}{cccc} \chi_{\mu_i,1} & * & * & * \\ & \chi_{\mu_i,2} & * & * \\ & & \ddots & \vdots \\ & & & \chi_{\mu_i, N_i} \end{array} \right)
    \]
    where $\chi_{\mu_i, j}$ are the characters from Definition \ref{DefOfUnivGaloisChars} associated with the action of the Hecke algebra at $p$ acting on $\sigma_{i, p}$. We note that the collection $\{ \chi_{\mu_i, j} : i=1, \dots, r \text{ and } j=1, \dots, N_i \}$ coincides with the collection $\{ \psi_{\lambda, j} : j=1, \dots, 2n+1 \}$, and by noting that the Hodge--Tate weights of $\chi_{\mu_i, 1}, \dots, \chi_{\mu_i, N_i}$ have to be in increasing order, one can see that the filtration on 
    \[
    \rho_{\Pi_0}|_{G_{\mbb{Q}_p}} = \bigoplus_{i=1}^r \rho_{\sigma_i}|_{G_{\mbb{Q}_p}}
    \]
    can be reordered so that the local-global compatibility property in the theorem holds.
\end{proof}

A key ingredient in the above proof was the known local-global compatibility results for regular algebraic, essentially self-dual, cuspidal automorphic representations of general linear groups. We will also need similar results without the essentially self-dual condition, as found in \cite[\S5.5]{10author}. For this, first recall the following definition from \cite[Def.\ 4.3.1]{10author} (c.f.\ \cite[Def.\ 1.9]{CS17}, although note that the condition in \emph{loc.cit.} is slightly stronger than what we need).

\begin{definition}
    Let $\rho \colon G_{\mbb{Q}} \to \opn{GL}_n(\overline{\mbb{F}}_p)$ be a continuous Galois representation which is unramified outside $S$. We say that $\rho$ is \emph{decomposed generic at a prime $\ell \notin S$} if the eigenvalues of $\opn{Frob}_{\ell}$, denoted $\beta_1, \dots, \beta_n$, satisfy
    \[
    \beta_i \beta_j^{-1} \neq \ell \text{ for all } i \neq j .
    \]
    We say $\rho$ is \emph{decomposed generic} if there exists such a prime $\ell$ (note that if there exists one such prime, then there exist infinitely many by the Chebotarev density theorem).
\end{definition}

\begin{theorem} \label{Thm:Char0ExistenceOfGalRACAR}
    Let $\Pi$ be a regular algebraic cuspidal automorphic representation of $\GL_n(\mbb{A})$ of weight $-w_0^{\GL_n} \lambda$, with $\lambda = (\lambda_1, \dots, \lambda_n) \in X^*(T_{\GL_n})^+$. Let $S$ denote a finite set of primes containing $p$ and all primes where $\Pi$ is ramified. Then there exists a continuous semisimple Galois representation $\rho_{\Pi} \colon G_{\mbb{Q}} \to \GL_n(\Qpb)$ such that: for every $\ell \not\in S$, $\rho_{\Pi}$ is unramified at $\ell$ and one has
    \[
    \opn{det}(1 - \opn{Frob}_{\ell}^{-1} X | \rho_{\Pi}) = \Theta_{\Pi}^S(H_{\ell}(X))
    \]
    where $\Theta^S_{\Pi} \colon \mbf{T}_{\Qpb}^S \to \Qpb$ denotes the eigencharacter for the action of the spherical Hecke algebra away from $S$ acting on $(\Pi_{f}^S)^{\GL_n(\widehat{\mbb{Z}}^S)}$.

    Moreover, suppose that:
    \begin{itemize}
        \item $\Pi_p$ is ordinary, and
        \item the residual representation $\overline{\rho}_{\Pi} \colon G_{\mbb{Q}} \to \opn{GL}_n(\overline{\mbb{F}}_p)$ is decomposed generic and irreducible. 
    \end{itemize}
    Then one has
    \[
    \rho_{\Pi}|_{G_{\mbb{Q}_p}} \sim \left( \begin{array}{cccc} \chi_{\lambda,1} & * & * & * \\ & \chi_{\lambda,2} & * & * \\ & & \ddots & \vdots \\ & & & \chi_{\lambda, n} \end{array} \right)
    \]
    where $\chi_{\lambda, i}$ denotes the characters associated with $\Theta_{\Pi}^S \otimes \Theta_{\Pi, p}$ as in Definition \ref{DefOfUnivGaloisChars}, with $\Theta_{\Pi, p} \colon \Qpb[T^+_{\GL_n}] \to \Qpb$ denoting the eigencharacter for the action of the normalised $U_p$-operators acting on $\Pi_p^{\mathrm{ord}}$. 
\end{theorem}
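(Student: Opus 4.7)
The plan splits naturally into two parts. The existence of $\rho_{\Pi}$ together with the Chebotarev-type compatibility away from $S$ is well-known, and I would simply invoke \cite{HLTTrigid} and \cite{ScholzeTorsion}, noting that by strong multiplicity one the Hecke eigensystem $\Theta_{\Pi}^{S}$ on $(\Pi_{f}^{S})^{\GL_n(\widehat{\mbb{Z}}^{S})}$ determines the characteristic polynomials of Frobenius on $\rho_{\Pi}$ and hence pins it down up to semisimplification.

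The new content is the refined local-global statement at $\ell = p$, and for this I would reduce to Theorem \ref{FourthMainThmintroLGcompat} of Part 2. Because $\Pi$ is regular algebraic of weight $-w_{0}^{\GL_n}\lambda$ and $\Pi_{p}$ is ordinary, the unique $p$-stabilisation of $\Pi$ contributes a classical characteristic-zero point to the ordinary part of completed cohomology $R\tilde\Gamma(X_{\GL_n,K^{p}},\mathcal{O}_{L})^{\opn{ord}}$ for a suitable tame level $K^{p}$ that is hyperspecial outside $S$. This produces a maximal ideal $\ide{m}\subset \mathcal{T}\defeq \mbf{T}^{S,\opn{ord}}(R\tilde\Gamma(X_{\GL_n,K^{p}},\mathcal{O}_{L})^{\opn{ord}})$ whose attached residual Galois representation is $\overline{\rho}_{\ide{m}}\cong \overline{\rho}_{\Pi}$. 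By hypothesis this is absolutely irreducible and decomposed generic, so Theorem \ref{FourthMainThmintroLGcompat} applies and gives a nilpotent ideal $J\subset \mathcal{T}_{\ide{m}}$ and a continuous representation $\rho\colon G_{\mbb{Q}}\to \GL_n(\mathcal{T}_{\ide{m}}/J)$ satisfying the three listed compatibilities relative to the universal characters $\chi_i$ of Definition \ref{DefOfUnivGaloisChars}.

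Next, I would specialise $\rho$ at the height-zero prime of $\mathcal{T}_{\ide{m}}$ corresponding to the $p$-stabilisation of $\Pi$: since $J$ is nilpotent, this factors as $\mathcal{T}_{\ide{m}}/J\to L$ (possibly after enlarging $L$), and pushing $\rho$ forward produces a representation $G_{\mbb{Q}}\to \GL_n(L)$ which, by Chebotarev and property (1) of Theorem \ref{FourthMainThmintroLGcompat}, coincides with the $\rho_{\Pi}$ built in the first paragraph. Property (2) of that theorem specialises to the statement that the characteristic polynomial of $\rho_{\Pi}(g)$ is $\prod_{i=1}^{n}(X-\chi_{\lambda,i}(g))$ for every $g\in G_{\mbb{Q}_p}$, since by construction the universal characters $\chi_{i}$ specialise precisely to the characters $\chi_{\lambda,i}$ attached to $\Theta_{\Pi}^{S}\otimes\Theta_{\Pi,p}$. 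Property (3) then specialises to the Cayley--Hamilton-type identity
\[
\prod_{i=1}^{n}\bigl(\rho_{\Pi}(g_i)-\chi_{\lambda,i}(g_i)\bigr)=0 \qquad \text{for all }g_{1},\dots,g_{n}\in G_{\mbb{Q}_p},
\]
which is precisely the (Greenberg-style) ordinariness condition. Combined with the absolute irreducibility of $\rho_{\Pi}$ (inherited from $\overline{\rho}_{\Pi}$), a standard linear-algebra argument produces a full $G_{\mbb{Q}_p}$-stable flag on $\rho_{\Pi}|_{G_{\mbb{Q}_p}}$ with graded pieces $\chi_{\lambda,1},\dots,\chi_{\lambda,n}$ in the specified order, which is exactly the desired conclusion.

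With this reduction in place, all of the difficulty is absorbed into the proof of Theorem \ref{FourthMainThmintroLGcompat}, which is the main obstacle and occupies \S\ref{LGcompatAtl=pSec}: realising the Hecke eigensystem in boundary cohomology of the Shimura variety for $\opn{GSp}_{2n}$, invoking known local-global compatibility at $p$ for essentially self-dual automorphic representations of $\GL_{2n+1}$ via Theorem \ref{Thm:PiCARoftildeGLG}, and controlling the nilpotent ideal $J$ tightly enough to preserve both the characteristic-polynomial identity (2) and the Cayley--Hamilton relation (3). The delicate point is the last one: standard pseudodeformation arguments produce a $\rho$ satisfying (1) without any nilpotent ideal \cite[Thm.\ 1.1.6]{CGHJMRS}, but upgrading to (2) and (3) requires the vanishing input of \cite{YangZhu} for mod $p$ cohomology of abelian-type Shimura varieties, which is where the decomposed-generic hypothesis is consumed.
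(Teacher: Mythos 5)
Your proposal is circular. You propose to deduce Theorem \ref{Thm:Char0ExistenceOfGalRACAR} from Theorem \ref{FourthMainThmintroLGcompat} (equivalently, from Theorem \ref{Thm:LevelbcmLGcompat} / Corollary \ref{Cor:MainTriangulation}), but Theorem \ref{Thm:Char0ExistenceOfGalRACAR} is itself an input to the proof of those results. Concretely, the proof of Proposition \ref{Prop:Char0LGforHecke} (which underlies Proposition \ref{Prop:MixedCharLGcompat} and hence Theorem \ref{Thm:LevelbcmLGcompat}) twice invokes Theorem \ref{Thm:Char0ExistenceOfGalRACAR}: once in the proof of Claim \ref{claim:cuspidal} to see $\rho_\sigma$ has absolutely irreducible reduction and satisfies local-global compatibility at $p$, and again at the very end when handling maximal ideals $\ide{n}$ contributing to boundary cohomology. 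So your reduction cannot be used to establish the theorem.

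The paper's own proof is entirely different and much shorter: the existence of $\rho_\Pi$ with the prime-to-$S$ compatibility is cited directly from \cite{ScholzeTorsion} (or \cite{HLTTrigid}), and the local-global compatibility at $\ell = p$ is cited from \cite[Corollary 5.5.2]{10author}, after a quadratic base change to an imaginary quadratic field $F/\mbb{Q}$ in which $p$ splits (so that the restriction of $\Pi$ lands in the setting of \emph{op.\ cit.}). In other words, this is a ``black-box'' import of the $\ell=p$ ordinary local-global compatibility already established over CM fields in the 10-author paper; Theorem \ref{FourthMainThmintroLGcompat} then extends this from cuspidal characteristic-zero Hecke eigensystems to Hida families and torsion classes over $\mbb{Q}$ using (among other things) Theorem \ref{Thm:Char0ExistenceOfGalRACAR} as input. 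To fix your proposal you would need to either replace the use of Theorem \ref{Thm:Char0ExistenceOfGalRACAR} inside the proof of Proposition \ref{Prop:Char0LGforHecke} by an independent argument, or (more straightforwardly) simply invoke \cite[Corollary 5.5.2]{10author} with the base-change trick as the paper does.
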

\begin{proof}
    The existence of the Galois representation follows from \cite{ScholzeTorsion} (or \cite{HLTTrigid}), and the local-global compatibility at $\ell=p$ follows from \cite[Corollary 5.5.2]{10author} (by making a suitable quadratic base-change to an imaginary quadratic field $F/\mbb{Q}$ in which $p$ splits).
\end{proof}

\subsubsection{Characteristic $p$ results}

We now recall the construction of Galois representations associated with torsion classes appearing in \cite{ScholzeTorsion}.  

\begin{theorem}[Scholze] \label{Thm:GaloisRepForTorClass}
    Let $m \geq 1$ be an integer, and let $\ide{n}$ be a maximal ideal with finite residue field in the (prime-to-$S$) Hecke algebra $\tilde{\mbf{T}}^S\left( \opn{H}^*( X_{\tilde{G}, K}, \mathcal{O}/\varpi^m) \right)$ (resp. $\mbf{T}^S\left( \opn{H}^*(X_{\GL_n, K}, \mathcal{O}/\varpi^m) \right)$) for some sufficiently small compact open subgroup $K \subset \tilde{G}(\mbb{A}_f)$ (resp. $K \subset \GL_n(\mbb{A}_f)$). Then there exists a continuous semisimple Galois representation
    \[
    \overline{\rho}_{\ide{n}} \colon G_{\mbb{Q}} \to \opn{GL}_{2n+1}(\overline{\mbb{F}}_p) \quad \text{ (resp. } \overline{\rho}_{\ide{n}} \colon G_{\mbb{Q}} \to \opn{GL}_{n}(\overline{\mbb{F}}_p) \text{ ) }
    \]
    such that for all $\ell \not\in S$, we have $\overline{\rho}_{\ide{n}}$ is unramified at $\ell$ and
    \begin{align*}
    \opn{det}(1 - \opn{Frob}_{\ell}^{-1} X | \overline{\rho}_{\ide{n}}) &= \Theta_{\ide{n}}(\tilde{H}_{\ell}(X)) \\
    \text{ (resp. } \opn{det}(1 - \opn{Frob}_{\ell}^{-1} X | \overline{\rho}_{\ide{n}}) &= \Theta_{\ide{n}}(H_{\ell}(X)) \text{ )}.
    \end{align*}
    Here $\Theta_{\ide{n}} \colon \tilde{\mbf{T}}^S \to \overline{\mbb{F}}_p$ (resp. $\Theta_{\ide{n}} \colon \mbf{T}^S \to \overline{\mbb{F}}_p$) denotes the homomorphism with kernel $\ide{n}$.
\end{theorem}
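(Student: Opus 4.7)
The plan is to treat the two cases in sequence, first handling $\tilde G = \opn{GSp}_{2n}$ (a Shimura datum) directly and then deducing the $\opn{GL}_n$ case by realising $\opn{GL}_n$-cohomology in the boundary cohomology of $\tilde G$. This is the strategy of Scholze \cite{ScholzeTorsion}; I would follow it essentially verbatim, using Theorem \ref{Thm:PiCARoftildeGLG} as the characteristic-zero input in place of the CM-field local-global result Scholze uses.

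\textbf{Case $\tilde G = \opn{GSp}_{2n}$.} The strategy is to reduce to characteristic zero by congruences. First, I would reduce to the case where $\ide n$ occurs in $\opn{H}^*(X_{\tilde G,K},V_{\tilde G,\lambda}\otimes k)$ for some dominant weight $\lambda$ by twisting; this is standard since induction from the Iwahori level and Serre-like twisting can realise any Hecke eigenclass in trivial weight via a weight with $\lambda\equiv 0 \pmod{p^N}$. Next, for each $N\geq 1$, I would lift the eigensystem $\Theta_{\ide n}$ modulo $p^N$ to a characteristic-zero eigensystem: replace $\lambda$ by a sufficiently regular dominant weight $\lambda_N$ with $\lambda_N\equiv \lambda \pmod{p^N}$ (so in particular $\lambda_N$ is Shin-regular, satisfying the hypotheses of Theorem \ref{Thm:PiCARoftildeGLG}), and use a Hida/Deligne--Serre congruence argument in the Borel--Serre cohomology of $X_{\tilde G,K}$ to produce a system $\Theta_N$ occurring in $\opn{H}^*(X_{\tilde G,K}, V_{\tilde G,\lambda_N})$ that is congruent to $\Theta_{\ide n}$ modulo $p^N$. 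Such a $\Theta_N$ arises from (a finite collection of) regular algebraic cuspidal automorphic representations $\Pi^{(N)}$ of $\tilde G(\mathbb{A})$ together with Eisenstein contributions, and Theorem \ref{Thm:PiCARoftildeGLG} attaches to each $\Pi^{(N)}$ a semisimple Galois representation $\rho_{\Pi^{(N)}}$ satisfying the required Hecke-polynomial identity. Finally I would pass to the limit: the pseudocharacters $\opn{tr}\rho_{\Pi^{(N)}}$ converge $p$-adically, and the Chebotarev density theorem plus Carayol--Serre yields a continuous semisimple $\overline\rho_{\ide n}$ over $\overline{\mathbb F}_p$ with the advertised unramified local-global compatibility. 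Eisenstein contributions are absorbed into the semisimplification, so they do not affect the characteristic polynomial identity.

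\textbf{Case $\opn{GL}_n$.} Here I would exploit the (open) embedding of the boundary stratum $X^{Q_{\{n\}}}_{\tilde K}$ associated with the Siegel parabolic $Q_{\{n\}}$ into $\partial \overline X_{\tilde G,\tilde K}$, whose Levi is $\opn{GL}_n\times\opn{GL}_1$. The cohomology of $X^{Q_{\{n\}}}$ is computed, via Shapiro, by an induction from $\opn{GL}_n\times\opn{GL}_1$-cohomology, and contains $\opn{H}^*(X_{\opn{GL}_n,K},k)$ as a Hecke-equivariant summand under an explicit action of $\tilde{\mbf T}^S$ via the twisted Satake map $\mathcal S$. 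Given $\ide n \subset \mbf T^S$ with eigensystem $\Theta_{\ide n}$ for $\opn{GL}_n$, I would form the corresponding system on $X^{Q_{\{n\}}}$ and pull back through the long exact sequence for $\big(X_{\tilde G,\tilde K},\partial\overline X_{\tilde G,\tilde K}\big)$ to produce a maximal ideal $\tilde{\ide n}\subset \tilde{\mbf T}^S$ appearing in $\opn{H}^*(X_{\tilde G,\tilde K},k)$ (possibly after a twist or a shift in degree to accommodate the one-dimensional $\opn{GL}_1$ factor; choosing the auxiliary $\opn{GL}_1$-character amounts to making the central-character decomposition explicit). By the $\tilde G$ case, I obtain $\overline\rho_{\tilde{\ide n}}\colon G_{\mathbb Q}\to\opn{GL}_{2n+1}(\overline{\mathbb F}_p)$ whose characteristic polynomials compute $\Theta_{\tilde{\ide n}}(\tilde H_\ell(X))$. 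By the key identity \eqref{KeyIdentityOfHeckeEqn}, $\overline\rho_{\tilde{\ide n}}$ decomposes (after semisimplification and up to the trivial summand and a $\opn{GL}_n$-dual summand) as $\overline\rho_{\ide n}(1)\oplus\mathbf 1\oplus\overline\rho_{\ide n}^\vee(-1)$ after a suitable twist; extracting the $\opn{GL}_n$-part by decomposing the semisimplification according to Hodge--Tate weights (or, at the level of Hecke polynomials, by factoring $\mathcal S(\tilde H_\ell(X))$) yields the desired $\overline\rho_{\ide n}$.

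\textbf{Main obstacle.} The routine characteristic-zero Hecke algebra and Chebotarev arguments are standard. The genuine difficulty lies in the congruence step in the $\tilde G$ case: arranging a sufficiently regular lift $\lambda_N$ of $\lambda$ together with a characteristic-zero eigensystem $\Theta_N$ congruent to $\Theta_{\ide n}$ modulo $p^N$, and simultaneously making sure the contributing automorphic representations fall into the scope of Theorem \ref{Thm:PiCARoftildeGLG} (in particular, are cuspidal and have sufficiently regular weight). The Eisenstein cohomology contributions must be separated from the cuspidal ones, which one handles via Franke's spectral sequence and an induction on the parabolic rank, following \cite[\S5]{ScholzeTorsion}; alternatively, one can appeal directly to the derived Hecke-algebra framework of \emph{loc.\ cit.}, which sidesteps the need to control individual lifts. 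In the $\opn{GL}_n$ step, the subtlety is in correctly matching the twists by powers of the cyclotomic character introduced by the Satake normalisations in \eqref{KeyIdentityOfHeckeEqn}, which is a bookkeeping exercise but must be checked carefully against Definition \ref{HXHeckeDef}.
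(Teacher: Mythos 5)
The paper's proof of this statement is a one-line citation to \cite[Corollary 5.2.6]{ScholzeTorsion} (for $\tilde G$, plus Arthur) and to \cite{ScholzeTorsion} again (for $\opn{GL}_n$), so you are supplying an argument where the paper supplies none. Your account of the $\opn{GL}_n$ step — realising the eigensystem in the boundary stratum for the Siegel parabolic and extracting the $n$-dimensional factor from the $(2n+1)$-dimensional one via the Satake identity \eqref{KeyIdentityOfHeckeEqn} — does capture the essence of Scholze's argument, although the extraction is properly carried out at the level of Chenevier-style determinants valued in the Hecke algebra rather than by naively factoring characteristic polynomials over $\overline{\mathbb F}_p$; a factorisation of $\det(1-\opn{Frob}_\ell^{-1}X)$ for all $\ell$ does not by itself decompose a semisimple representation, and one has to know that the candidate $\opn{GL}_n$ polynomial actually arises from a determinant of $G_{\mathbb Q}$.

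The more serious gap is in your treatment of the $\tilde G = \opn{GSp}_{2n}$ case. You propose to reduce to characteristic zero by a Hida/Deligne--Serre congruence: find a sufficiently regular $\lambda_N \equiv \lambda \pmod{p^N}$ and lift $\Theta_{\ide n}$ to a characteristic-zero eigensystem $\Theta_N$ in $\opn{H}^*(X_{\tilde G,K},V_{\tilde G,\lambda_N})$. This is exactly the argument that \emph{fails} for genuine torsion classes, and it is the failure of this argument that makes the theorem hard. The long exact sequence coming from $0 \to V_\lambda \xrightarrow{\varpi} V_\lambda \to V_\lambda/\varpi \to 0$ only shows that a mod-$\varpi$ eigensystem either comes from $\opn{H}^i(X,V_\lambda)/\varpi$ (where Deligne--Serre applies) or lies in $\opn{H}^{i+1}(X,V_\lambda)[\varpi]$; in the latter case there is no characteristic-zero eigensystem to lift to, and no amount of Serre twisting fixes this. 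The content of \cite{ScholzeTorsion} for the $\opn{GSp}$ Shimura variety is precisely that one can nonetheless construct determinants for such torsion eigensystems by passing to completed cohomology at infinite $p$-level, using the Hodge--Tate period map and the perfectoidness of the minimal compactification to transfer the problem to the flag variety, and then $p$-adically interpolating the characteristic-zero determinants coming from Arthur's classification. Your parenthetical fallback ``appeal directly to the derived Hecke-algebra framework of \emph{loc.\ cit.}'' does not describe what Scholze does and does not resolve the issue. As written, your proof sketch of the $\tilde G$ case would only establish the theorem for eigensystems that lift to characteristic zero, which is strictly weaker than the statement being cited.
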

\begin{proof}
    For the standard representation for general symplectic groups, the existence is explained in the proof of \cite[Corollary 5.2.6]{ScholzeTorsion} (and relies on Arthur's endscopic classification for symplectic groups \cite{ArthurBook}). The existence for general linear groups again follows from \cite{ScholzeTorsion} (which, in fact, uses the existence for general symplectic groups as input). In both cases, Scholze constructs a continuous determinant with the desired properties, valued in a quotient of the Hecke algebra; further quotienting by $\ide{n}$, one gets determinants valued in $\overline{\F}_p$, and then concludes by \cite[Thm.\ 2.12]{ChenevierPadic}.
\end{proof}

\subsection{Vanishing results and boundary cohomology}

We now collect together key cohomological results; firstly, a vanishing result of Yang and Zhu in characteristic $p$, and secondly, an analysis of boundary cohomology that will allow the transfer of classes from $\tilde{G}$ to $M$ via a degree-shifting argument. Throughout, $\m$ and $\tilde\m$ are the maximal ideals from \S\ref{sec:chapter 9 set-up}; in particular, $\m$ satisfies Assumption \ref{GaloisTypeAssumption}.

\subsubsection{Vanishing results} \label{SubSub:VanishingResults} 

We will use the following vanishing result.

\begin{theorem}[{Yang--Zhu \cite{YangZhu}}] \label{Thm:YZVanishing}
    Suppose that $p > 2n$. Let $K \subset \tilde{G}(\mbb{A}_f)$ be a neat compact open subgroup of the form $K = K^S K_{S \backslash \{p\}} K_p$ with $K^S = \tilde{G}(\widehat{\mbb{Z}}^S)$, $K_{S \backslash \{p\}} \subset \tilde{G}(\mbb{Q}_{S \backslash \{p\}})$, and $K_p \subset \tilde{G}(\mbb{Q}_p)$. Let $\lambda \in X^*(T)^+$. Suppose that $\overline{\rho}_{\tilde{\ide{m}}}$ is decomposed generic at an odd prime $\ell \not\in S$. Then for all $m \geq 1$,
    \[
    \opn{H}^i\left( X_{\tilde{G}, K}, V_{\lambda}/\varpi^m \right)_{\tilde{\ide{m}}} = 0
    \]
    if $i < n(n+1)/2$, and
    \[
    \opn{H}_c^i\left( X_{\tilde{G}, K}, V_{\lambda}/\varpi^m \right)_{\tilde{\ide{m}}} = 0
    \]
    if $i > n(n+1)/2$. 
\end{theorem}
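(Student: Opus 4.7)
The plan is to adapt the strategy of Caraiani--Scholze \cite{CS17} for unitary Shimura varieties to $\tilde{G} = \opn{GSp}_{2n}$, which defines a Shimura variety of PEL (hence Hodge, hence abelian) type with reflex field $\mbb{Q}$. The essential input is the perfectoid Shimura variety $\mathcal{X}_{\tilde{G}, K^p}$ at infinite level at $p$, together with the Hodge--Tate period map
\[
\pi_{HT}\colon \mathcal{X}_{\tilde{G}, K^p} \longrightarrow \mathscr{F}\!\ell_{\tilde{G}, \mu},
\]
where $\mu$ is the Siegel minuscule cocharacter. The cohomology $\opn{H}^i(X_{\tilde{G}, K}, V_\lambda/\varpi^m)$ is then computed as the étale cohomology of $R\pi_{HT, *} \mathscr{F}_\lambda$ on $\mathscr{F}\!\ell_{\tilde{G}, \mu}$, for an appropriate pro-étale sheaf $\mathscr{F}_\lambda$ built from $V_\lambda/\varpi^m$.

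The flag variety $\mathscr{F}\!\ell_{\tilde{G}, \mu}$ carries a Newton stratification indexed by $B(\tilde{G}, \mu)$, with a unique open (ordinary/$\mu$-ordinary) stratum of dimension $d = n(n+1)/2$ and lower-dimensional non-ordinary strata. Over each stratum, the fibers of $\pi_{HT}$ are controlled by the corresponding Igusa variety (Mantovan-type product formulae for Hodge-type Shimura varieties, as developed by Hamacher, Kim, and others), yielding a filtration on $\opn{H}^*(X_{\tilde{G}, K}, V_\lambda/\varpi^m)$ indexed by Newton strata. Independently, the contribution of the open stratum is concentrated in middle degree $d$, since the ordinary Igusa tower is profinite over its base.

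The main obstacle -- and the heart of the proof -- is to show that the contributions of the non-open Newton strata vanish after localisation at $\tilde{\ide{m}}$. This is exactly where the decomposed generic hypothesis enters: the condition that the Frobenius eigenvalues of $\overline{\rho}_{\tilde{\ide{m}}}(\opn{Frob}_\ell)$ avoid ratios equal to $\ell^{\pm 1}$ ensures, via the unramified local-global compatibility statement at $\ell$ (which holds by construction of $\overline{\rho}_{\tilde{\ide{m}}}$) together with the compatibility of local Langlands with parabolic induction, that no mod $\varpi$ Hecke eigensystem arising from the cohomology of a non-ordinary Igusa variety can have an associated Galois representation satisfying the decomposed generic hypothesis. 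The argument parallels \cite[\S 4]{CS17}, but requires genuine input specific to $\opn{GSp}_{2n}$: the identification of the Galois representation attached to an Igusa eigenclass via Arthur's endoscopic classification for symplectic groups, and the combinatorial analysis of the $B(\tilde{G},\mu)$-strata. This yields the claimed vanishing of $\opn{H}^i(X_{\tilde{G}, K}, V_\lambda/\varpi^m)_{\tilde{\ide{m}}}$ for $i < d$.

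For the compactly-supported range $i > d$, I would appeal to Poincaré duality, which provides a Hecke-equivariant identification
\[
    \opn{H}_c^i(X_{\tilde{G}, K}, V_\lambda/\varpi^m) \;\cong\; \opn{H}^{2d - i}(X_{\tilde{G}, K}, V_{\lambda^\vee}/\varpi^m)^\vee
\]
up to a twist by a determinant character, for a suitable dual weight $\lambda^\vee$. Under this identification, localisation at $\tilde{\ide{m}}$ corresponds to localisation at a dual maximal ideal $\tilde{\ide{m}}^\vee$ whose associated semisimple mod $p$ Galois representation is $\overline{\rho}_{\tilde{\ide{m}}}^\vee$ up to cyclotomic twist. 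Since the decomposed generic property is preserved under taking duals and twists (the relations $\beta_i\beta_j^{-1} = \ell$ are permuted and scaled), the non-compact vanishing already established applies to the right-hand side in the range $2d - i < d$, yielding the claimed vanishing of $\opn{H}_c^i(X_{\tilde{G}, K}, V_\lambda/\varpi^m)_{\tilde{\ide{m}}}$ for $i > d$.
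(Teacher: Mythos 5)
Your proposal and the paper's proof take fundamentally different routes. The paper does \emph{not} re-derive the underlying vanishing theorem: it reduces the statement to the theorem of Yang--Zhu \cite{YangZhu} by (i) deepening $K_p$ and applying Hochschild--Serre to pass to trivial weight $\lambda$, (ii) noting that decomposed generic at $\ell$ implies that the associated local $L$-parameter into ${^LT}(\overline{\mathbb{F}}_p)$ is generic in the sense of \cite[Def.~1.1]{YangZhu} (using that $\opn{GSpin}_{2n+1}$ is a central extension of $\opn{SO}_{2n+1}$), (iii) invoking \cite[Thm.~1.5]{YangZhu} for $m=1$ (the hypothesis $p > 2n$ being exactly $p$ greater than the Coxeter number of $\opn{Sp}_{2n}$), and (iv) d\'evissage on $m$ via the short exact sequence $0 \to V_\lambda/\varpi^{m-1} \to V_\lambda/\varpi^m \to V_\lambda/\varpi \to 0$.

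What you propose instead is to re-prove the Yang--Zhu theorem itself via the original Caraiani--Scholze strategy for $\opn{GSp}_{2n}$: perfectoid Shimura variety, Hodge--Tate period map, Newton stratification, Igusa variety contributions, and the endoscopic classification. This is a genuine gap, not merely a stylistic difference. That route requires, among other things, torsion vanishing for the cohomology of non-ordinary Igusa varieties for $\opn{GSp}_{2n}$ and an identification of the mod $p$ Galois representations attached to Igusa eigenclasses via Arthur's classification -- inputs that are \emph{not} currently available in the generality needed. As the paper itself remarks after the theorem, this approach was carried out only for $\opn{GSp}_4$ (Hamann--Lee) and only under additional hypotheses; for general $n$ the required Igusa-variety input has not been established. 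The Yang--Zhu proof circumvents exactly these difficulties (it works with abelian-type Shimura varieties and a more geometric argument that avoids computing Igusa-variety cohomology), which is why it can be cited as a black box. Your Poincar\'e duality step to handle $i > d$ is fine in principle (and note $\overline{\rho}_{\tilde{\mathfrak{m}}}$ is already self-dual up to twist, so the ``dual ideal'' is essentially $\tilde{\mathfrak{m}}$ itself), but it is moot since Yang--Zhu's theorem covers both ranges directly. Finally, your proposal does not discuss how to reduce from general $(\lambda, m)$ to the case where the cited theorem applies, which is the one part of the paper's proof that is not a citation.
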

\begin{proof}
    By deepening $K_p$ and using the Hochschild--Serre spectral sequence, it suffices to prove the theorem when $\lambda$ is trivial. But now we note that $\overline{\rho}_{\tilde{\ide{m}}}$ being decomposed generic at $\ell$ implies that the $L$-parameter $W_{\mbb{Q}_{\ell}} \to {^LT}(\overline{\mbb{F}}_p)$ corresponding to $\tilde{\ide{m}}$ is generic in the sense of \cite[Def.\ 1.1]{YangZhu} (note that $\opn{GSpin}_{2n+1}$ is a central extension of $\opn{SO}_{2n+1}$). Applying \cite[Thm.\ 1.5]{YangZhu} yields the result for $m=1$, noting that $2n$ is the Coxeter number of $\opn{Sp}_{2n}$. To conclude for general $m$, we argue inductively using the long exact sequence associated with $0 \to V_{\lambda}/\varpi^{m-1} \to V_{\lambda}/\varpi^m \to V_{\lambda}/\varpi \to 0$.
\end{proof}

\begin{remark}
    We note that in the special case of $\opn{GSp}_4$, the above vanishing results were originally obtained by Hamann--Lee \cite{HamannLee} under some additional hypotheses. We are primarily interested in the group $\opn{GSp}_6$ in this article, which is not covered by their work. 
\end{remark}

\subsubsection{Boundary cohomology}

As a consequence of the existence of Galois representations above, we obtain certain vanishing results of the boundary cohomology of the Borel--Serre compactification of $\tilde{G}$. Let $\tilde{K} \subset \tilde{G}(\mbb{A}_f)$ be a sufficiently small compact open subgroup hyperspecial outside $S$.

\begin{proposition} \label{Prop:OnlyOneBoundarySurvives}
    Recall the ideal $\tilde{\ide{m}}$ from \S\ref{sec:chapter 9 set-up}.
    \begin{enumerate}
        \item If  $Q_{\mbf{j}} \neq P$, then $R\Gamma_c(X^{Q_{\mbf{j}}}_{\tilde{K}}, \mathcal{O})_{\tilde{\ide{m}}} = R\Gamma(X^{Q_{\mbf{j}}}_{\tilde{K}}, \mathcal{O})_{\tilde{\ide{m}}} = 0$.
        \item One has a natural $\tilde{\mbf{T}}^S$-equivariant quasi-isomorphism
        \[
        R\Gamma( X_{\tilde{K}}^P, \mathcal{O})_{\tilde{\ide{m}}} \xrightarrow{\sim} R\Gamma(\partial \overline{X}_{\tilde{G}, \tilde{K}}, \mathcal{O})_{\tilde{\ide{m}}} . 
        \]
    \end{enumerate}
\end{proposition}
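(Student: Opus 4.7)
The proof of (1) proceeds by relating $R\Gamma(X^{Q_{\mbf{j}}}_{\tilde{K}}, \mathcal{O})$ to the cohomology of a locally symmetric space for the Levi $M_{\mbf{j}}$, and then using the existence of Galois representations to argue that the resulting Hecke eigensystems cannot lie above $\tilde{\ide{m}}$. Specifically, the stratum $X^{Q_{\mbf{j}}}_{\tilde{K}} = \opn{Ind}^{\tilde{G}^{\infty}}_{Q_{\mbf{j}}^{\infty}}\ide{X}_{Q_{\mbf{j}}} /\tilde{K}$ is fibered by nilmanifolds $U_{\mbf{j}}(\mbb{Z}) \backslash U_{\mbf{j}}(\mbb{R})$ over a disjoint union of locally symmetric spaces for $M_{\mbf{j}}$, giving rise to a Hochschild--Serre spectral sequence expressing $R\Gamma(X^{Q_{\mbf{j}}}_{\tilde{K}}, \mathcal{O})$ in terms of $R\Gamma$ of these $M_{\mbf{j}}$-spaces with coefficients built from $H^*(\mathfrak{u}_{\mbf{j}}, \mathcal{O})$. (Kostant decomposes the latter as a sum of $M_{\mbf{j}}$-representations indexed by ${^{M_{\mbf{j}}}}W$.) Crucially, the action of $\tilde{\mbf{T}}^S$ on this spectral sequence factors through a twisted Satake morphism $\tilde{\mbf{T}}^S \to \mbf{T}^S_{M_{\mbf{j}}}$ into the spherical Hecke algebra for $M_{\mbf{j}}$.

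Now suppose for contradiction that $R\Gamma(X^{Q_{\mbf{j}}}_{\tilde{K}}, \mathcal{O})_{\tilde{\ide{m}}} \neq 0$. Then there exists a maximal ideal $\ide{n}$ in the Hecke algebra $\mbf{T}^{S, \operatorname{ord}}_{M_{\mbf{j}}}\bigl(R\Gamma(X_{M_{\mbf{j}}, K_{M_{\mbf{j}}}}, \mathcal{O}/\varpi)\bigr)$ whose pullback under the twisted Satake map equals $\tilde{\ide{m}}$. Since $M_{\mbf{j}} \cong \GL_{j_1}\times\cdots\times\GL_{j_a}\times \opn{GSp}_{2(n-j)}$, applying the Galois representation theorem for each factor (Theorem \ref{Thm:GaloisRepForTorClass} for the $\GL_{j_i}$ factors and its analogue for $\opn{GSp}_{2(n-j)}$) yields a collection of Galois representations $\overline{\rho}_{\ide{n}, i} : G_{\mbb{Q}} \to \GL_{j_i}(\overline{\mbb{F}}_p)$ and a pseudo-representation $\overline{\tau}$ of dimension $2(n-j)+1$. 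Comparing Hecke polynomials via the identity \eqref{KeyIdentityOfHeckeEqn} (extended inductively to smaller Siegel Levis), one finds that as semisimple Galois representations
\[
\overline{\rho}^{\vee}_{\ide{m}}(1) \oplus \mbf{1} \oplus \overline{\rho}_{\ide{m}}(-1) \;\cong\; \overline{\tau} \oplus \bigoplus_{i=1}^a \Bigl(\overline{\rho}_{\ide{n}, i}(c_i) \oplus \overline{\rho}_{\ide{n}, i}^{\vee}(c_i')\Bigr)
\]
for some explicit integer twists $c_i, c_i'$. Since $\overline{\rho}_{\ide{m}}$ is $n$-dimensional and absolutely irreducible, Krull--Schmidt forces $\overline{\rho}_{\ide{m}}$ (up to twist) to appear as one of the summands on the right-hand side, so there must be a factor on the right of dimension $\geq n$. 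This rules out all parabolics with $a \geq 2$ (where every $j_i < n$) and all parabolics with $n - j \geq 1$ except when in fact the contribution of $\overline{\tau}$ to the right-hand side forces $n - j = 0$ by dimension counting, which combined with $\sum j_i = n$ and $a \geq 1$ forces $(a, j_1) = (1, n)$, i.e., $Q_{\mbf{j}} = P$. This contradicts $Q_{\mbf{j}} \neq P$, proving the vanishing of $R\Gamma(X^{Q_{\mbf{j}}}_{\tilde{K}}, \mathcal{O})_{\tilde{\ide{m}}}$. The vanishing of $R\Gamma_c$ follows identically (or by Poincar\'e duality on the Levi).

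For part (2), one uses the stratification of $\partial\overline{X}_{\tilde{G}, \tilde{K}}$ by locally closed strata $X^{Q_{\mbf{j}}}_{\tilde{K}}$ for proper parabolics $Q_{\mbf{j}}$, together with the fact that $P$ is a maximal proper parabolic so that $X^P_{\tilde{K}}$ is open in $\partial\overline{X}_{\tilde{G}, \tilde{K}}$. Consider the excision triangle
\[
R\Gamma_c(X^P_{\tilde{K}}, \mathcal{O})_{\tilde{\ide{m}}} \to R\Gamma(\partial\overline{X}_{\tilde{G}, \tilde{K}}, \mathcal{O})_{\tilde{\ide{m}}} \to R\Gamma(Z, \mathcal{O})_{\tilde{\ide{m}}} \xrightarrow{+1}
\]
where $Z \defeq \partial\overline{X}_{\tilde{G}, \tilde{K}} \setminus X^P_{\tilde{K}}$. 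The space $Z$ admits a finite stratification by $X^{Q_{\mbf{j}}}_{\tilde{K}}$ for $Q_{\mbf{j}} \neq P$ (passing through maximal parabolics other than $P$ and their faces), and by iterated application of (1) the term $R\Gamma(Z, \mathcal{O})_{\tilde{\ide{m}}}$ vanishes. Thus $R\Gamma_c(X^P_{\tilde{K}}, \mathcal{O})_{\tilde{\ide{m}}} \xrightarrow{\sim} R\Gamma(\partial\overline{X}_{\tilde{G}, \tilde{K}}, \mathcal{O})_{\tilde{\ide{m}}}$. Finally, the boundary of $X^P_{\tilde{K}}$ inside its closure is stratified by $X^{Q_{\mbf{j}}}_{\tilde{K}}$ for non-Siegel parabolics $Q_{\mbf{j}} \subsetneq P$, which again have vanishing cohomology at $\tilde{\ide{m}}$ by (1); so the natural map $R\Gamma_c(X^P_{\tilde{K}}, \mathcal{O})_{\tilde{\ide{m}}} \to R\Gamma(X^P_{\tilde{K}}, \mathcal{O})_{\tilde{\ide{m}}}$ is a quasi-isomorphism, and chaining gives the desired identification.

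The main obstacle is the Galois-theoretic step in (1): carefully matching the semisimple Galois representation arising from a non-Siegel Levi with $\overline{\rho}_{\tilde{\ide{m}}}$, and extracting a contradiction with the absolute irreducibility of $\overline{\rho}_{\ide{m}}$. This requires knowing the existence of Galois representations for mod $p$ Hecke eigensystems on the symplectic factor $\opn{GSp}_{2(n-j)}$ (supplied by the symplectic analogue of Theorem \ref{Thm:GaloisRepForTorClass}, and ultimately resting on Scholze's construction) together with a bookkeeping of the twists appearing in the iterated Satake identity \eqref{KeyIdentityOfHeckeEqn}; everything else is essentially formal manipulation of spectral sequences on the Borel--Serre boundary.
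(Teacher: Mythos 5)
Your plan is correct and tracks the paper's own argument closely: reduce the boundary stratum to cohomology of a locally symmetric space for the Levi $M_{\mbf{j}}$, pull $\tilde{\ide{m}}$ back along the twisted Satake morphism, attach Galois representations to the factors of $M_{\mbf{j}}$, match constituents with $\overline{\rho}_{\ide{m}}^{\vee}(1)\oplus\mbf{1}\oplus\overline{\rho}_{\ide{m}}(-1)$, and use absolute irreducibility of $\overline{\rho}_{\ide{m}}$ to force $Q_{\mbf{j}}=P$; then excision for part (2). Two points deserve a sharper statement. First, in the reduction to the Levi, the paper does not literally run a Kostant-graded Hochschild--Serre argument; it instead invokes the argument of Newton--Thorne (their Theorems 4.2 and Lemma 4.3) to produce an ideal $\ide{n}$ in the Levi Hecke algebra with $\mathcal{S}^{*}_{M_{\mbf{j}}}(\ide{n})=\tilde{\ide{m}}$, \emph{after first deepening the level at primes in $S$} so that $\tilde{K}_{M_{\mbf{j}}}$ factors as $K_1\times\cdots\times K_a\times K_0$; this level manipulation plus the resulting K\"unneth factorisation $\ide{n}=\ide{n}_1\otimes\cdots\otimes\ide{n}_a\otimes\ide{n}_0$ is precisely what lets you invoke the Galois representation of Theorem \ref{Thm:GaloisRepForTorClass} separately on each $\GL_{j_i}$-factor and on the $\opn{GSp}_{2(n-j)}$-factor (the latter gives an honest semisimple representation $\overline{\rho}_{\ide{n}_0}$, not just a pseudo-representation). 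Your plan elides this factorisation step, which is not purely formal. Second, the dimension count at the end needs to be stated cleanly: since $\overline{\rho}_{\ide{m}}$ is absolutely irreducible of dimension $n$, $\overline{\rho}_{\ide{m}}(-1)$ must occur inside a single summand of the decomposition, and the same for $\overline{\rho}_{\ide{m}}^{\vee}(1)$; the $\GL_{j_i}$-summands have dimension $j_i<n$ unless $a=1$ and $j_1=n$, while forcing both copies into the $\opn{GSp}_{2(n-j)}$-summand gives $2n\leq 2(n-j)+1$, i.e.\ $j=0$, which is excluded. Your phrasing of this step is tangled (the sentence about ``the contribution of $\overline{\tau}$ forces $n-j=0$'' does not parse as written), even though the underlying argument is right. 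For part (2), your elaboration via two excision triangles (first $Z = \partial\overline{X}_{\tilde{G},\tilde{K}}\setminus X^P_{\tilde{K}}$, then the Borel--Serre boundary of $X^P_{\tilde{K}}$) is a fine unwinding of the one-line reference to \cite[Theorem 2.4.2]{10author} in the paper.
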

\begin{proof}
    The proof of this proposition is very similar to that of \cite[Theorem 2.4.2]{10author}. By Poincar\'e duality and the fact that $R\Gamma(X^{Q_{\mbf{j}}}_{\tilde{K}}, \mathcal{O})$ is a perfect complex, it suffices to show
    \[
    R\Gamma(X^{Q_{\mbf{j}}}_{\tilde{K}}, k)_{\tilde{\ide{m}}} = 0,
    \]
     i.e.\ that $\tilde{\ide{m}}$ doesn't descend to a maximal ideal of $\tilde{\mbf{T}}^S(H^*(X^{Q_{\mbf{j}}}_{\tilde{K}}, k))$. Suppose for contradiction that it does. We may freely increase the level $\tilde{K}$ at primes in $S$ such that we have a decomposition
    \[
    \tilde{K}_{Q_{\mbf{j}}} = \tilde{K} \cap Q_{\mbf{j}}(\mbb{A}_f) = (\tilde{K} \cap N_{\mbf{j}}(\mbb{A}_f)) \cdot \tilde{K}_{M_{\mbf{j}}},
    \]
    where $\tilde{K}_{M_{\mbf{j}}}$ is the projection of $\tilde{K}_{Q_{\mbf{j}}}$ to the Levi, and $N_{\mbf{j}} \subset Q_{\mbf{j}}$ is the unipotent radical. 
    
    In this proof, we will denote the Hecke algebra associated with $M_{\mbf{j}}$ by $\mbf{T}_{\mbf{j}}^S$. We note that we have an unnormalised Satake morphism 
\[
\mathcal{S}_{M_{\mbf{j}}} \colon \tilde{\mbf{T}}^S \to \mbf{T}_{\mbf{j}}^S
\]
defined in the exact same way as in \S \ref{SubSub:HeckeAlgebrasPrelims} (restricting to $Q_{\mbf{j}}$ and integrating along the fibres of the map $Q_{\mbf{j}} \to M_{\mbf{j}}$). We also have the following identity:
\begin{align*}
\mathcal{S}_{M_{\mbf{j}}}(\tilde{H}_{\ell}(X)) = \tilde{H}^{(2(n-j))}_{\ell}(X) \cdot H^{(j_1)}_{\ell}(\ell^{n+1-j_1} X) \cdot H^{(j_2)}_{\ell}(\ell^{n+1-j_1-j_2} X) \cdots H^{(j_a)}_{\ell}(\ell^{n+1-j} X) \cdot \\
\cdot H^{(j_a), \vee}_{\ell}(\ell^{j-1-n} X) \cdots H^{(j_2), \vee}_{\ell}(\ell^{j_1+j_2 -1 -n} X) \cdot H^{(j_1), \vee}_{\ell}(\ell^{j_1-1-n} X)
\end{align*}
analogous to (\ref{KeyIdentityOfHeckeEqn}), where $\tilde{H}^{(2(n-j))}_{\ell}(X)$, $H_{\ell}^{(j_i)}(X)$ and $H_{\ell}^{(j_i), \vee}(X)$ denote the versions of the Hecke polynomials in Definition \ref{HtildeHeckeDef} and Definition \ref{HXHeckeDef} respectively for the groups $\opn{GSp}_{2(n-j)}$ and $\opn{GL}_{j_i}$.
    
    By a similar argument as in \cite[Theorem 4.2]{NewtonThorneTorsion} (and possibly increasing the level $\tilde{K}$ further at primes in $S$ combined with \cite[Lemma 4.3]{NewtonThorneTorsion}), as $R\Gamma(X^{Q_{\mbf{j}}}_{\tilde{K}}, k)_{\tilde{\ide{m}}} \neq 0$ one has $\tilde{\ide{m}} = \mathcal{S}^*_{M_{\mbf{j}}}(\ide{n})$ for some ideal $\ide{n} \subset \mbf{T}_{\mbf{j}}^S(\opn{H}^*(X_{M_{\mbf{j}}, \tilde{K}_{M_{\mbf{j}}}}, k))$. By deepening the level further, we may assume that $\tilde{K}_{M_{\mbf{j}}}$ can be written as a product
    \[
    K_1 \times \cdots \times K_a \times K_0
    \]
    where $K_i \subset \opn{GL}_{j_i}(\mbb{A}_f)$ ($i=1, \dots, a$) and $K_0 \subset \opn{GSp}_{2(n-j)}(\mbb{A}_f)$. By the K\"unneth formula, the ideal $\ide{n}$ can be written as the product
    \[
    \ide{n}_1 \otimes \ide{n}_2 \otimes \cdots \otimes \ide{n}_a \otimes \ide{n}_0
    \]
    for $\ide{n}_i \subset \mbf{T}^S(\opn{H}^*(X_{\opn{GL}_{j_i}, K_i}, k))$ ($i=1, \dots, a$) and $\ide{n}_0 \subset \tilde{\mbf{T}}^S(\opn{H}^*(X_{\opn{GSp}_{2(n-j)}, K_0}, k))$.

    By Theorem \ref{Thm:GaloisRepForTorClass}, we have
    \[
    \overline{\rho}_{\tilde{\ide{m}}} \cong \overline{\rho}_{\ide{n}_1}(j_1 - n-1) \oplus \cdots \oplus \overline{\rho}_{\ide{n}_a}(j-n-1) \oplus \overline{\rho}_{\ide{n}_0} \oplus \overline{\rho}_{\ide{n}_a}^{\vee}(n+1-j) \oplus \cdots \oplus \overline{\rho}_{\ide{n}_1}(n+1-j_1)
    \]
    On the other hand, we know that $\overline{\rho}_{\tilde{\ide{m}}} \cong \overline{\rho}_{\ide{m}}(-1) \oplus \mbf{1} \oplus \overline{\rho}_{\ide{m}}^{\vee}(1)$, and $\overline{\rho}_{\ide{m}}$ is absolutely irreducible (since $\ide{m}$ is non-Eisenstein). The only way we can have both of these presentations of $\overline{\rho}_{\tilde{\ide{m}}}$ is if $a=1$ and $j=n$, which is a contradiction to our assumption that $Q_{\mbf{j}} \neq P$. This completes the proof of part (1). For part (2), we follow the same argument as in \cite[Theorem 2.4.2]{10author}, namely the isomorphism arises from the excision exact triangle.
\end{proof}

\subsubsection{} \label{Subsub:IndAppearsAsDirectSummand}

We now analyse the cohomology $R\Gamma( X_{\tilde{K}}^P, \mathcal{O})_{\tilde{\ide{m}}}$. Let $\tilde{K} = \tilde{K}^p \tilde{K}_p \subset \tilde{G}(\mbb{A}_f)$ be a sufficiently small compact open subgroup and suppose that 
\[
\tilde{K} \cap P(\mbb{A}_f) = (\tilde{K} \cap N(\mbb{A}_f)) \cdot K
\]
for some sufficiently small compact open subgroup $K = K^p K_p \subset M(\mbb{A}_f)$. Suppose that $\tilde{K}$ is hyperspecial outside $S$, and that $\tilde{K}^p_S$ consists of those matrices in $\tilde{G}(\mbb{Z}_{S-\{p\}})$ which are congruent to the identity modulo $N_S$, for some integer $N_S$ divisible only by primes in $S-\{p\}$. We let $R\Gamma(X^P_{\tilde{K}^p}, \mathcal{O}/\varpi^m)^{\opn{sm}}$ denote the (derived) $\tilde{G}(\mbb{Q}_p)$-smooth vectors in the cohomology of $X^P_{\tilde{K}^p} = \opn{Ind}_{P^{\infty}}^{G^{\infty}} \ide{X}_P / \tilde{K}^p$ (as in \cite[\S5.2.1]{10author}).

\begin{proposition} \label{Prop:IndAppearsAsDirectSummand}
    Let $m \geq 1$. Then we have a $\tilde{\mbf{T}}^S$-equivariant quasi-isomorphism
    \[
    R\Gamma(X^P_{\tilde{K}^p}, \mathcal{O}/\varpi^m)^{\opn{sm}} \cong \bigoplus_{P(\mbb{Q}) \backslash \tilde{G}(\mbb{Z}_{S-\{p\}})/\tilde{K}^p_S} \opn{Ind}_{P(\mbb{Q}_p)}^{\tilde{G}(\mbb{Q}_p)} R\Gamma(X_{M, K^p}, \mathcal{O}/\varpi^m )^{\opn{sm}} 
    \]
    in the derived category of smooth representations of $\tilde{G}(\mbb{Q}_p)$. Here $\tilde{\mbf{T}}^S$ acts on the right-hand side through the Satake morphism $\mathcal{S} \colon \tilde{\mbf{T}}^S \to \mbf{T}^S$. 
\end{proposition}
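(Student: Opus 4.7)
The plan is to decompose $X^P_{\tilde{K}^p}$ into $\tilde{G}(\mbb{Q}_p)$-equivariant pieces indexed by the finite double coset space $P(\mbb{Q}) \backslash \tilde{G}(\mbb{Z}_{S-\{p\}})/\tilde{K}^p_S$, and then identify each piece as an unnormalised smooth induction from $P(\mbb{Q}_p)$ of the corresponding Levi cohomology, following the strategy of \cite[\S5.2.1]{10author} closely.

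First, I would exploit the Iwasawa decomposition $\tilde{G}(\mbb{Q}_\ell) = P(\mbb{Q}_\ell)\tilde{G}(\mbb{Z}_\ell)$ at each $\ell \notin S$, together with $P(\mbb{Q}_\ell) \cap \tilde{G}(\mbb{Z}_\ell) = P(\mbb{Z}_\ell)$, to obtain a homeomorphism $\tilde{G}(\mbb{A}_f^S)/\tilde{K}^S \cong P(\mbb{A}_f^S)/P(\widehat{\mbb{Z}}^S)$. Combining this with a choice of representatives for $P(\mbb{Q})\backslash \tilde{G}(\mbb{Z}_{S-\{p\}})/\tilde{K}^p_S$ and using strong approximation for the unipotent radical $N$, one arrives at a $\tilde{G}(\mbb{Q}_p)$-equivariant homeomorphism
\[
X^P_{\tilde{K}^p} \cong \bigsqcup_{\sigma} \Gamma_\sigma \backslash \bigl( \tilde{G}(\mbb{Q}_p) \times X_P \bigr),
\]
where $\Gamma_\sigma \subset P(\mbb{Q})$ is an arithmetic subgroup determined by the $\sigma$-conjugate of $\tilde{K}^p$, acting diagonally and trivially on the $\tilde{G}(\mbb{Q}_p)$-factor. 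The compatibility assumption $\tilde{K} \cap P(\mbb{A}_f) = (\tilde{K} \cap N(\mbb{A}_f)) \cdot K$ forces the image of $\Gamma_\sigma$ in $M(\mbb{Q})$ to define precisely the arithmetic group defining $X_{M, K^p}$ (after passing to smooth vectors at $p$, which absorbs all variation of level at $p$).

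For each $\sigma$, the standard isomorphism
\[
R\Gamma\bigl(\Gamma_\sigma \backslash (\tilde{G}(\mbb{Q}_p) \times X_P), \mathcal{O}/\varpi^m\bigr)^{\opn{sm}} \simeq \opn{Ind}_{P(\mbb{Q}_p)}^{\tilde{G}(\mbb{Q}_p)} R\Gamma(\Gamma_\sigma \backslash X_P, \mathcal{O}/\varpi^m)^{\opn{sm}}
\]
expresses each piece as an unnormalised smooth induction. The remaining key step is to identify the right-hand side with $\opn{Ind}_{P(\mbb{Q}_p)}^{\tilde{G}(\mbb{Q}_p)} R\Gamma(X_{M, K^p}, \mathcal{O}/\varpi^m)^{\opn{sm}}$. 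This uses the Borel--Serre fibration $X_P \to X_M$ whose fibres are principal homogeneous spaces under $N(\mbb{R})$; after quotienting by $\Gamma_\sigma \cap N(\mbb{Q})$ and taking smooth vectors for the $N(\mbb{Q}_p) \subset \tilde{G}(\mbb{Q}_p)$-action implicit in the induced representation, the $N$-fibre cohomology collapses to degree zero, yielding the desired identification. Finally, Hecke equivariance through $\mathcal{S}$ is a formal consequence of unwinding the definition of the unnormalised Satake transform: convolution by $\tilde{T}_{\ell, i}$ on the induced space decomposes along Iwasawa cells at $\ell$ and integrates precisely along the $N$-fibres, matching the action of $\mathcal{S}(\tilde{T}_{\ell, i})$ on the Levi cohomology.

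The main obstacle is the cohomological analysis of the $N$-fibres in the Borel--Serre geometry and verification that the smooth-vectors formalism at $p$ correctly handles both the non-compactness of the boundary and the passage between levels. Fortunately, this is essentially the content of \cite[\S5.2.1]{10author}, and the argument transposes \emph{mutatis mutandis} to our setting, with the only additional bookkeeping being the compatibility between $\tilde{K}^p$ and $K^p$ enforced by our hypothesis on $\tilde{K} \cap P(\mbb{A}_f)$.
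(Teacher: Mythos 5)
Your proposal follows the same strategy the paper adopts by citing \cite[Theorem 5.4.1]{10author}: decompose $X^P_{\tilde{K}^p}$ via the Iwasawa decomposition away from $S$ together with strong approximation for $N$, identify each piece as an unnormalised smooth induction from $P(\mbb{Q}_p)$, observe that the nilmanifold-fibre cohomology collapses to degree zero in the smooth-vectors direct limit, and verify Hecke equivariance through the unnormalised Satake transform. The skeleton is correct, and the $N$-fibre collapse is exactly the content of \cite[\S 5.2--5.4]{10author} (building on \cite{NewtonThorneTorsion}).

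However, you skip the one non-routine point the paper's proof explicitly flags, namely that $\tilde{K}^p_S$ is a \emph{normal} subgroup of $\tilde{G}(\mbb{Z}_{S-\{p\}})$. Your sentence asserting that $\tilde{K}\cap P(\mbb{A}_f) = (\tilde{K}\cap N(\mbb{A}_f))\cdot K$ forces $\Gamma_\sigma$ to project to the arithmetic group defining $X_{M,K^p}$ is immediate only for the trivial double coset $\sigma = 1$. For a general representative $\sigma$ the relevant tame group is $\sigma\tilde{K}^p\sigma^{-1}$; since $\sigma$ acts only on the $\tilde{G}(\mbb{Q}_{S-\{p\}})$-factor, this conjugate is $\tilde{G}(\widehat{\mbb{Z}}^S)\times\sigma\tilde{K}^p_S\sigma^{-1}$, and without normality it would genuinely depend on $\sigma$, so the right-hand side of the proposition would involve Levi cohomology at $\sigma$-dependent prime-to-$p$ levels rather than the fixed $R\Gamma(X_{M,K^p},\mathcal{O}/\varpi^m)^{\opn{sm}}$. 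It is precisely the choice of $\tilde{K}^p_S$ as a principal congruence subgroup (hence normal in $\tilde{G}(\mbb{Z}_{S-\{p\}})$) that makes all the $\Gamma_\sigma$ coincide and lets the direct sum take the stated form; make this dependence explicit when filling in the argument.
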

\begin{proof}
    This follows from the exact same argument as in the proof of \cite[Theorem 5.4.1]{10author}, noting that $\tilde{K}^p_S$ is a normal subgroup of $\tilde{G}(\mbb{Z}_{S-\{p\}})$ (there is no place in the argument, nor in the citations to \cite{NewtonThorneTorsion}, which is specific to the authors' choice of reductive groups). 
\end{proof}

\subsection{The degree shifting argument}

Throughout this section, fix $m \geq 1$ and let $\mathcal{C}$ be a bounded from below complex in the derived category of smooth $\mathcal{O}/\varpi^m[M(\mbb{Q}_p)]$-modules. We will often view this as a representation of $P(\mbb{Q}_p)$ via inflation. We recall (see \cite[\S 5.2.1]{10author}) that the derived ordinary part of $\mathcal{C}$ is 
\[
\mathcal{C}^{\opn{ord}} = \opn{ord} R\Gamma(U_M(\mbb{Z}_p), \mathcal{C}) =  R\Gamma(U_M(\mbb{Z}_p), \mathcal{C}) \otimes^{L}_{\mathcal{O}/\varpi^m[T_M^+]} \mathcal{O}/\varpi^m[T(\mbb{Q}_p)] 
\]
where $U_M$ is the unipotent of the upper-triangular Borel in $M$. Here the action of $t \in T_M^+$ on $R\Gamma(U_M(\mbb{Z}_p), \mathcal{C})$ is through the Hecke operator
\[
[ U_M(\mbb{Z}_p) \cdot t \cdot U_M(\mbb{Z}_p)] \cdot v = \sum_{u \in U_M(\mbb{Z}_p)/t U_M(\mbb{Z}_p) t^{-1}} u t \cdot v . 
\]
We have a similar definition for smooth representations of $\tilde{G}(\mbb{Q}_p)$, by first passing to derived $U(\mbb{Z}_p)$ invariants and then localising along $T^+ \hookrightarrow T(\mbb{Q}_p)$. The goal of this section is to compute the ordinary part of the smooth induction $\opn{Ind}_{P(\mbb{Q}_p)}^{\tilde{G}(\mbb{Q}_p)} (\mathcal{C})$ following \cite[\S 5.3]{10author}. 

We recall the following functors from \emph{loc.cit.}. Let $w \in {^M W}$ and set $S_w = P(\mbb{Q}_p) \cdot w \cdot B(\mbb{Q}_p) = P(\mbb{Q}_p) \cdot w \cdot U(\mbb{Q}_p)$ and $S_w^{\circ} = P(\mbb{Q}_p) \cdot w \cdot U(\mbb{Z}_p)$. Set
\[
G_{\geq i} = \bigsqcup_{\substack{w \in {^M W} \\ l(w) \geq i}} S_w,
\]
which is an open subset of $\tilde{G}(\mbb{Q}_p)$.

\begin{definition}
    Suppose $\mathcal{C}$ is a smooth representation of $P(\mbb{Q}_p)$. We define
    \begin{align*}
        I_{\geq i}(\mathcal{C}) &= \left\{ f \colon G_{\geq i} \to \mathcal{C} : \begin{array}{c} f \text{ loc. constant, compact support mod } P(\mbb{Q}_p) \\ f(p \cdot -) = p \cdot f(-) \; \forall p \in P(\mbb{Q}_p)  \end{array} \right\} \\
        I_w(\mathcal{C}) &= \left\{ f \colon S_w \to \mathcal{C} : \begin{array}{c} f \text{ loc. constant, compact support mod } P(\mbb{Q}_p) \\ f(p \cdot -) = p \cdot f(-) \; \forall p \in P(\mbb{Q}_p)  \end{array} \right\} \\
        I^{\circ}_w(\mathcal{C}) &= \{ f \in I_w(\mathcal{C}) : \opn{supp}(f) \subset S^{\circ}_w \} .
    \end{align*}
    The first and second are representations of $B(\mbb{Q}_p)$ via right-translation. The third is a representation of $B(\mbb{Q}_p)^+ = U(\mbb{Z}_p) \cdot T^+ \cdot U(\mbb{Z}_p)$ via right-translation. Of course, $I_{\geq 0}$ is the usual smooth induction from $P(\mbb{Q}_p)$ to $\tilde{G}(\mbb{Q}_p)$, viewed as a representation of $B(\mbb{Q}_p)$. By the same argument as in the proof of \cite[Proposition 5.3.1]{10author} (see, also, \cite[\S 2]{Hauseux}), all of these functors are exact and for any smooth representation $\mathcal{C}$ of $P(\mbb{Q}_p)$, we have functorial short exact sequences 
    \[
0 \to I_{\geq i+1}(\mathcal{C}) \to I_{\geq i}(\mathcal{C}) \to \oplus_{\substack{w \in {^M W} \\ l(w) = i}} I_w(\mathcal{C}) \to 0 .
\]
Furthermore, by the same proof as in \cite[Lemma 5.3.4]{10author} (see, also, \cite[Lemme 3.3.1]{Hauseux}), we have $I^{\circ}_w(\mathcal{C})^{\opn{ord}} \cong I_w(\mathcal{C})^{\opn{ord}}$ for any bounded below complex $\mathcal{C}$ in the derived category of smooth representations of $P(\mbb{Q}_p)$.
\end{definition}

Set $U_w = P(\mbb{Q}_p) \cap w U(\mbb{Z}_p) w^{-1}$, which contains $U_M(\mbb{Z}_p)$, and recall that $N$ denotes the unipotent radical of $P$. For a character $\chi \colon T(\mbb{Q}_p) \to \mathcal{O}^{\times}$, let $\mathcal{O}(\chi)$ denote the rank one $\mathcal{O}$-module equipped with an action of $T(\mbb{Q}_p)$ through the character $\chi$.

\begin{proposition} \label{Prop:AbstractDegShifting}
Let $w \in {^MW}$ and let $\chi_w \colon T(\mbb{Q}_p) \to \mathcal{O}^{\times}$ denote the character given by
\[
\chi_w(t) = \frac{\opn{det}( \opn{Ad}(wtw^{-1}) | \opn{Lie}(N(\mbb{Q}_p) \cap w U(\mbb{Q}_p) w^{-1}) )^{-1} }{|\opn{det}( \opn{Ad}(wtw^{-1}) | \opn{Lie}(N(\mbb{Q}_p) \cap w U(\mbb{Q}_p) w^{-1}) )|_p} .
\]
Then:
    \begin{enumerate}
        \item One has a $T^+$-equivariant quasi-isomorphism $R\Gamma(U(\mbb{Z}_p), I^{\circ}_w(\mathcal{C})) \cong R\Gamma(U_w, \mathcal{C})$, where $T^+$ acts on the right-hand side through the Hecke operator $[U_w \cdot (wtw^{-1}) \cdot U_w]$.
        \item Suppose that the action of $P(\mbb{Q}_p)$ on $\mathcal{C}$ factors through $M(\mbb{Q}_p)$. Then
        \[
        I_w(\mathcal{C})^{\opn{ord}} \cong R\Gamma(U_w, \mathcal{C})^{\opn{ord}} \cong \mathcal{O}(\chi_w) \otimes_{\mathcal{O}} \tau_w^{-1} \mathcal{C}^{\opn{ord}} [l(w) - n(n+1)/2]
        \]
        where $\tau_w^{-1}$ means the action of $T(\mbb{Q}_p)$ is twisted through the map $t \mapsto wtw^{-1}$.
    \end{enumerate}
\end{proposition}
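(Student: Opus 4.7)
The proof plan is to follow closely the strategy of \cite[\S 5.3]{10author} (itself building on Hauseux's locally analytic arguments), which establishes the exact analogue for $U(n,n)$/imaginary quadratic CM fields. The two ingredients are Shapiro's lemma (for part (1)) and a combination of Hochschild–Serre with a Hecke-localization vanishing (for part (2)).

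For part (1), the plan is to exhibit $I_w^{\circ}(\mathcal{C})$ as a compact induction. Concretely, the map
\[
I_w^{\circ}(\mathcal{C}) \longrightarrow \{F \colon U(\Z_p) \to \mathcal{C}\}, \qquad f \longmapsto \bigl(u \mapsto f(wu)\bigr),
\]
is a $T^+$-equivariant injection whose image consists of those locally constant, compactly supported $F$ satisfying $F(uv) = (wvw^{-1}) \cdot F(u)$ for every $v \in w^{-1} U_w w$ (using that $P(\Q_p) \cap w U(\Q_p) w^{-1} = U_w$ and tracking the left $P(\Q_p)$-equivariance of $f$). This identifies $I_w^{\circ}(\mathcal{C})$ with $\mathrm{c\text{-}ind}_{w^{-1}U_w w}^{U(\Z_p)}(\mathcal{C})$, where the source acts on $\mathcal{C}$ via conjugation by $w$ into $U_w \subset P(\Q_p)$. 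Shapiro's lemma then yields $R\Gamma(U(\Z_p), I_w^{\circ}(\mathcal{C})) \cong R\Gamma(U_w, \mathcal{C})$, and the compatibility with the right translation action of $T^+$ is a direct calculation: for $t \in T^+$ one has $(tF)(u) = (wtw^{-1}) F(t^{-1}ut)$, which on the Shapiro side is precisely the action of the Hecke operator $[U_w \cdot (wtw^{-1}) \cdot U_w]$.

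For part (2), the key point is that because $P$ is the Siegel parabolic, $N$ is abelian, and $U_w \cap N$ is a commutative compact $p$-adic Lie group of dimension $d_w \defeq \tfrac{n(n+1)}{2} - l(w)$. The short exact sequence
\[
1 \to U_w \cap N \to U_w \to U_M(\Z_p) \to 1
\]
yields a Hochschild–Serre spectral sequence whose $E_2$-page computes $R\Gamma(U_w,\mathcal{C})$ in terms of $R\Gamma(U_M(\Z_p), R\Gamma(U_w \cap N, \mathcal{C}))$, and the inner cohomology is the Koszul complex $\wedge^{\bullet}\mathrm{Lie}(U_w \cap N)^{\vee} \otimes \mathcal{C}$. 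The crucial input, imported from \cite[Lemme 3.3.5 and Prop.\ 3.3.7]{Hauseux} (see also \cite[Prop.\ 5.3.5]{10author}), is that after localization at the Hecke operator $[U_w \cdot wtw^{-1} \cdot U_w]$ for $t \in T^+$ sufficiently contracting, only the top cohomology $\wedge^{d_w}\mathrm{Lie}(U_w \cap N)^{\vee} \otimes \mathcal{C}$ survives; in lower degrees the Hecke operator acts by a non-unit (the ``extra'' root factor is not a $p$-adic unit), so is killed by Hida's idempotent. The top cohomology carries the determinantal twist of $T$ on $\wedge^{d_w}\mathrm{Lie}(U_w \cap N)^{\vee}$, whose normalized version is precisely $\chi_w$. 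The surviving Hochschild–Serre term is then $R\Gamma(U_M(\Z_p), \mathcal{O}(\chi_w) \otimes \mathcal{C})$ placed in degree $d_w$, and taking ordinary parts produces $\mathcal{O}(\chi_w) \otimes \tau_w^{-1}\mathcal{C}^{\opn{ord}}[l(w) - \tfrac{n(n+1)}{2}]$, the $\tau_w^{-1}$-twist accounting for the replacement $t \leftrightarrow wtw^{-1}$ used to match the two definitions of ordinary part.

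The main obstacle is the Hecke-localization vanishing of the sub-top $U_w \cap N$-cohomology, together with the identification of the resulting character with $\chi_w$. This is standard in the literature but requires careful bookkeeping of modular characters, and a verification that the $T^+$-translate $wtw^{-1}$ indeed strictly contracts the relevant root subgroups so that Hida's ordinary idempotent kills every lower-degree Koszul term; the remaining steps (Shapiro, the Hochschild–Serre collapse, the $\chi_w$ and $\tau_w^{-1}$ bookkeeping) are essentially formal once this input is available, and adapt without change from the arguments in \cite{Hauseux} and \cite[\S 5.3]{10author}.
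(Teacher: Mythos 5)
Your proposal is correct and follows essentially the same approach as the paper. Both parts mirror the strategy of \cite[\S 5.3]{10author}: part (1) is Shapiro's lemma plus a Hecke-equivariance check for the evaluation map $f \mapsto f(w)$, and part (2) decomposes $U_w$ along $U_{w,N} = U_w \cap N(\Q_p)$, computes $R\Gamma(U_{w,N},\cO/\varpi^m)$ as a Koszul-type complex of rank $d-l(w)$, and uses the fact that the contracting element $z_p = \opn{diag}(p,\dots,p,1,\dots,1)$ scales $\h^i(U_{w,N},\cO/\varpi^m)$ by $p^{d-l(w)-i}$, so only the top degree survives localization; the resulting character is identified with $\chi_w$ via Hauseux's formula. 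The only cosmetic difference is that the paper's proof works directly with $z_p$ and cites \cite[Prop.\ 3.1.8]{Hauseux} for the character computation rather than organizing the collapse as a Hochschild--Serre argument, but the content is the same.
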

\begin{proof}
    For part (1), we follow \cite[Lemma 5.3.5]{10author}, namely we just need to show that the natural map 
    \[
    \opn{H}^0(U(\mbb{Z}_p), I^{\circ}_w(\mathcal{C})) \to \opn{H}^0(U_w, \mathcal{C})
    \]
    given by $f \mapsto f(w)$ is equivariant for the Hecke action. But this amounts to checking 
    \[
    \sum_{u \in U(\mbb{Z}_p)/t U(\mbb{Z}_p) t^{-1}} f(w \cdot u t) = \sum_{v \in U_w / (wtw^{-1}) U_w (w t w^{-1})^{-1}} v (wtw^{-1}) \cdot f(w)
    \]
    which follows from exactly the same proof as in \emph{loc.cit.}.

    For part (2), we follow \cite[Lemma 5.3.7]{10author} (note that, in our notation, the roles of $U$ and $N$ are reversed compared to \emph{loc.cit.}). Let $U_{w, N} = U_w \cap N(\mbb{Q}_p)$. The key point is to then compute $A \defeq R\Gamma(U_{w, N}, \mathcal{O}/\varpi^m)$ which is equipped with the natural action of $U_M(\mbb{Z}_p)$ (since $U_{w}/U_{w, N} = U_M(\mbb{Z}_p)$) and $t \in T^+$ via the Hecke action of $wtw^{-1}$. More precisely, we want to compute
    \[
    \alpha A = \alpha R\Gamma(U_{w, N}, \mathcal{O}/\varpi^m) \defeq R\Gamma(U_{w, N}, \mathcal{O}/\varpi^m) \otimes_{\mathcal{O}[T^+]} \mathcal{O}[T_M^+]
    \]
    where the localisation is via the map $T^+ \to T_M^+$ given by $t \mapsto wtw^{-1}$. Note that $R\Gamma(U_{w, N}, \mathcal{O}/\varpi^m)$ already carries an action of $T^{+}$ via the (non-twisted) Hecke action of $t \in T^+$ (since $t U_{w, N} t^{-1} \subset U_{w, N}$), and $\alpha A$ is then the localisation along the non-twisted map $T^+ \subset T_M^+$.

    Set $d = n(n+1)/2$ and $z_p = \opn{diag}(p, \dots, p, 1, \dots, 1) \in T^+$, where there are $n$ lots of $p$. Then this acts on $\opn{H}^i(U_{w, N}, \mathcal{O}/\varpi^{m})$ by multiplication by $p^{d-l(w)-i}$ (since $U_{w, N}$ has $\mbb{Z}_p$-rank $d - l(w)$), but applying $\alpha$ inverts this action. Hence $\alpha A$ is concentrated in degree $d-l(w)$, and \cite[Proposition 3.1.8]{Hauseux} tells us that $\alpha \opn{H}^{d-l(w)}(U_{w, N}, \mathcal{O}/\varpi^m)$ is free of rank one over $\mathcal{O}/\varpi^m$ with the action of $T_M^+$ given by the character 
    \[
    t \mapsto \frac{\opn{det}(\opn{Ad}(t) | \opn{Lie}U_{w, N})^{-1}}{|\opn{det}(\opn{Ad}(t) | \opn{Lie}U_{w, N})|_p} .
    \]
    The rest of the proposition follows as in \cite[Lemma 5.3.7]{10author}.
\end{proof}

\subsection{Transfer map on Hecke algebras}

We now specialise to the ordinary setting. We explain how to apply the degree shifting results in Proposition \ref{Prop:AbstractDegShifting} to construct a map of ordinary Hecke algebras (acting faithfully on certain  cohomology groups) from $\tilde{G}$ to $M$, and subsequently to $\GL_n$. 

Let $p > 2n$. Let $m \geq 1$ and $c \geq b \geq 0$ with $c \geq 1$, and let $\tilde{K}$ and $K$ be as in \S \ref{Subsub:IndAppearsAsDirectSummand} with the additional condition that $\tilde{K}_p = \tilde{\opn{Iw}}(b, c)$ and $K_p = \opn{Iw}(b, c)$. Set $d = n(n+1)/2$ for brevity. Let $\tilde{\lambda}$ be a dominant weight for $\tilde{G}$.  Recall $\m$ and $\tilde\m$ from \S\ref{sec:chapter 9 set-up}; we assume that $\overline{\rho}_{\tilde{\ide{m}}}$ (also defined in \S\ref{sec:chapter 9 set-up}) is decomposed generic. From the excision exact sequence,  we obtain a Hecke-equivariant map 
\begin{equation} \label{Eqn:FirstSurj}
\opn{H}^{d}( X_{\tilde{G}, \tilde{K}}, V_{\tilde{\lambda}}/\varpi^m )^{\opn{ord}}_{\tilde{\ide{m}}} \twoheadrightarrow \opn{H}^{d}( \partial \overline{X}_{\tilde{G}, \tilde{K}}, V_{\tilde{\lambda}}/\varpi^m )^{\opn{ord}}_{\tilde{\ide{m}}} 
\end{equation}
which is surjective by Yang--Zhu's vanishing result $\opn{H}^{d+1}_c(X_{\tilde{G}, \tilde{K}}, V_{\tilde{\lambda}}/\varpi^m )_{\tilde{\ide{m}}} = 0$ (Theorem \ref{Thm:YZVanishing}). Here, the ordinary part is as usual the direct summand on which the $\tilde{U}_t$-operators act invertibly. 

We will construct the transfer by applying degree-shifting to $\mathcal{C} = R\Gamma(X_{M, K^p}, \mathcal{O}/\varpi^m)^{\opn{sm}}_{\ide{m}}$. The next two lemmas isolate the weight contribution and relate the cohomologies of $\partial\overline{X}_{\tilde{G},\tilde{K}}$ and $X_{M,K}$. Let $T(b) \subset T(\mbb{Z}_p)$ denote the subgroup of elements which are congruent to the identity modulo $p^b$, and recall that $w_0^{\tilde{G}}$ (resp. $w_0^M$) denotes the longest Weyl element of $\tilde{G}$ (resp. $M$). 

\begin{lemma}\label{lem:T(b) cohomology}
Let $\tilde{\lambda}$ and $\lambda$ be $\tilde{G}$-dominant and $M$-dominant weights respectively, which we view as continuous characters $T(\mbb{Q}_p) \to \mathcal{O}^{\times}$ by precomposing with the natural map $T(\mbb{Q}_p) \twoheadrightarrow T(\mbb{Z}_p)$. We have a $\tilde{\mbf{T}}^{S,\opn{ord}}$-equivariant (resp.\ $\mbf{T}^{S, \opn{ord}}$-equivariant) quasi-isomorphism
\begin{align*}
R\Gamma( \partial \overline{X}_{\tilde{G}, \tilde{K}}, V_{\tilde{\lambda}}/\varpi^m )^{\opn{ord}} &\cong R\Gamma(T(b), \mathcal{O}(w_0^{\tilde{G}} \tilde{\lambda}) \otimes R\Gamma( \partial \overline{X}_{\tilde{G}, \tilde{K}^p}, \mathcal{O}/\varpi^m )^{\opn{ord}} ),\\
\text{(resp. }R\Gamma( X_{M, K}, V_{\lambda}/\varpi^m )^{\opn{ord}} &\cong R\Gamma(T(b), \mathcal{O}(w_0^{M} \lambda) \otimes R\Gamma(X_{M, K^p}, \mathcal{O}/\varpi^m )^{\opn{ord}})).
\end{align*}
\end{lemma}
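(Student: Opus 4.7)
The two statements will be proved by essentially the same argument; I describe the approach for the boundary statement in detail, as the one for $X_{M,K}$ follows \emph{mutatis mutandis} with $M$ in place of $\tilde{G}$ (and is in fact simpler, since no Borel--Serre reduction is needed). The plan is to introduce the tame-level completed cohomology complex
\[
\widetilde{C} \defeq R\Gamma(\partial \overline{X}_{\tilde{G}, \tilde{K}^p}, \mathcal{O}/\varpi^m)^{\mathrm{sm}},
\]
regarded as a bounded-below complex of smooth $\mathcal{O}/\varpi^m[\tilde{G}(\mbb{Q}_p)]$-modules carrying a compatible $T^+$-action (so that $R\Gamma(\partial \overline{X}_{\tilde{G}, \tilde{K}^p}, \mathcal{O}/\varpi^m)^{\opn{ord}}$ on the right-hand side is defined). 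Shapiro's lemma then identifies
\[
R\Gamma(\partial \overline{X}_{\tilde{G}, \tilde{K}}, V_{\tilde{\lambda}}/\varpi^m) \;\cong\; R\Gamma\bigl(\tilde{\opn{Iw}}(b,c),\; V_{\tilde{\lambda}}/\varpi^m \otimes \widetilde{C}\bigr).
\]

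I would next exploit the short exact sequence
\[
1 \to \tilde{\opn{Iw}}(c,c) \to \tilde{\opn{Iw}}(b,c) \to T(b)/T(c) \to 1,
\]
where $\tilde{\opn{Iw}}(c,c) = \{x \in \tilde{G}(\mbb{Z}_p) : x \bmod p^c \in U(\mbb{Z}/p^c \mbb{Z})\}$, together with the associated Hochschild--Serre spectral sequence. Passing to ordinary parts (which commutes with the $T(b)/T(c)$-invariants, as diamond operators commute with the $\tilde{U}_t$-action), this becomes
\[
R\Gamma(\partial \overline{X}_{\tilde{G}, \tilde{K}}, V_{\tilde{\lambda}}/\varpi^m)^{\opn{ord}} \cong R\Gamma\bigl(T(b)/T(c),\; R\Gamma(\tilde{\opn{Iw}}(c,c), V_{\tilde{\lambda}}/\varpi^m \otimes \widetilde{C})^{\opn{ord}}\bigr).
\]

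The crux is then a derived/equivariant version of the Hida--Emerton theorem asserting that the ordinary-part functor commutes with algebraic twists: for any bounded-below complex $W$ of smooth $\tilde{G}(\mbb{Z}_p)$-modules with compatible $T^+$-action, there is a canonical $T(b)$-equivariant quasi-isomorphism
\[
R\Gamma(\tilde{\opn{Iw}}(c,c), V_{\tilde{\lambda}}/\varpi^m \otimes W)^{\opn{ord}} \;\cong\; \mathcal{O}(w_0^{\tilde{G}} \tilde{\lambda}) \otimes R\Gamma(\tilde{\opn{Iw}}(c,c), W)^{\opn{ord}},
\]
in which $T(b)$ acts on the right-hand side as the tensor product of the character $w_0^{\tilde{G}}\tilde{\lambda}$ on the first factor and the natural action on the second. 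Morally, this is because the ordinary projection extracts precisely the extremal weight line of $V_{\tilde{\lambda}}/\varpi^m$, on which $\tilde{\opn{Iw}}(c,c)$ acts through $w_0^{\tilde{G}}\tilde{\lambda}$; at the level of smooth representations this is standard (e.g.\ Emerton's ``ordinary parts II'' or Hauseux's work), and the derived lift is by now routine. Substituting and identifying $R\Gamma(\tilde{\opn{Iw}}(c,c), \widetilde{C})^{\opn{ord}}$ with the $T(c)$-smooth vectors in the ordinary completed cohomology $R\Gamma(\partial \overline{X}_{\tilde{G}, \tilde{K}^p}, \mathcal{O}/\varpi^m)^{\opn{ord}}$, and then folding the finite $T(b)/T(c)$-cohomology into continuous $T(b)$-cohomology (using smoothness in $c$), yields the claimed expression.

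The hard part will be to verify the Hida--Emerton extraction rigorously in the derived and $T(b)$-equivariant setting: one must check that the quasi-isomorphism above lifts to the derived category with the correct equivariance, and that $\tilde{\mbf{T}}^{S,\opn{ord}}$-equivariance (in particular, compatibility of the Hecke $\tilde{U}_t$-action with the diamond-operator-induced $T(b)$-action on the completed cohomology factor) is preserved at every stage of the argument.
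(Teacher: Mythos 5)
Your proposal is correct and takes essentially the same route as the paper. The paper's proof of the $M$ case is simply a citation of \cite[Props.\ 5.2.15, 5.2.17]{10author}, and those two propositions are proved there by exactly the argument you sketch: Shapiro's lemma reduces to group cohomology of $\opn{Iw}(b,c)$, the Hochschild--Serre spectral sequence for $1 \to \opn{Iw}(c,c) \to \opn{Iw}(b,c) \to T(b)/T(c) \to 1$ isolates a derived $T(b)$-invariants step (using mod $\varpi^m$ coefficients to turn finite-level $T(b)/T(c)$-cohomology into continuous $T(b)$-cohomology as $c$ grows), and the derived Hida--Emerton comparison strips off the algebraic coefficient $V_{\tilde\lambda}/\varpi^m$ in exchange for the twist $\mathcal{O}(w_0^{\tilde G}\tilde\lambda)$ after taking ordinary parts. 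Your observation that the $\partial\overline{X}_{\tilde G}$ case goes through by the same argument matches the paper's one-line remark to that effect, so what you have written is, in effect, an unpacked version of the cited propositions rather than a genuinely different proof.
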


\begin{proof}
The second quasi-isomorphism is \cite[Props.\ 5.2.15, 5.2.17]{10author}. The first is proved via the same arguments.
\end{proof}

\begin{lemma} \label{Lem:SurjectionToI>=0}
    One has a surjective morphism
    \[
\opn{H}^d\left( R\Gamma(T(b), \mathcal{O}(w_0^{\tilde{G}} \tilde{\lambda}) \otimes R\Gamma( \partial \overline{X}_{\tilde{G}, \tilde{K}^p}, \mathcal{O}/\varpi^m )^{\opn{ord}}_{\tilde{\ide{m}}} ) \right)  \twoheadrightarrow \opn{H}^d\left( R\Gamma(T(b), \mathcal{O}(w_0^{\tilde{G}} \tilde{\lambda}) \otimes I_{\geq 0}(\mathcal{C})^{\opn{ord}}) \right) .
\]
This is $\tilde{\mbf{T}}^{S, \opn{ord}}$-equivariant, where the action of $\tilde{\mbf{T}}^{S}$ on the target is via $\cS \colon \tilde{\mbf{T}}^{S} \to \mbf{T}^{S}$ and the action of $T^+$ is induced from its action on $\mathcal{O}(w_0^{\tilde{G}} \tilde{\lambda}) \otimes I_{\geq 0}(\mathcal{C})^{\opn{ord}}$.
\end{lemma}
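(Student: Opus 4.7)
The strategy is to exhibit the target as a direct summand of the source in the derived category of smooth $\tilde{G}(\mbb{Q}_p)$-representations, so that the surjection comes from projection onto that summand. Since direct summand projections are preserved by any additive functor, it suffices to produce such a projection at the level of $R\Gamma(\partial \overline{X}_{\tilde{G}, \tilde{K}^p}, \mathcal{O}/\varpi^m)^{\opn{ord}}_{\tilde{\ide{m}}} \twoheadrightarrow I_{\geq 0}(\mathcal{C})^{\opn{ord}}$; the remaining operations $\mathcal{O}(w_0^{\tilde{G}}\tilde{\lambda})\otimes -$, $R\Gamma(T(b),-)$, and $\opn{H}^d$ then automatically preserve surjectivity and produce the Hecke-equivariance.

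The first step is to identify the localized boundary cohomology with its $P$-stratum. Proposition \ref{Prop:OnlyOneBoundarySurvives} states, at finite level $\tilde{K}$ and with $\mathcal{O}$-coefficients, that $R\Gamma(X^{Q_{\mbf{j}}}_{\tilde{K}}, \mathcal{O})_{\tilde{\ide{m}}}$ vanishes whenever $Q_{\mbf{j}} \neq P$ and that the excision sequence yields a quasi-isomorphism $R\Gamma(X^P_{\tilde{K}}, \mathcal{O})_{\tilde{\ide{m}}} \xrightarrow{\sim} R\Gamma(\partial \overline{X}_{\tilde{G}, \tilde{K}}, \mathcal{O})_{\tilde{\ide{m}}}$. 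Reducing modulo $\varpi^m$ using the exact triangle $\mathcal{O} \xrightarrow{\varpi^m} \mathcal{O} \to \mathcal{O}/\varpi^m$, passing to the colimit over $\tilde{K}_p$ (which commutes with localization at $\tilde{\ide{m}}$), and taking derived $\tilde{G}(\mbb{Q}_p)$-smooth vectors, we obtain a quasi-isomorphism
\[
R\Gamma(X^P_{\tilde{K}^p}, \mathcal{O}/\varpi^m)^{\opn{sm}}_{\tilde{\ide{m}}} \xrightarrow{\sim} R\Gamma(\partial \overline{X}_{\tilde{G}, \tilde{K}^p}, \mathcal{O}/\varpi^m)^{\opn{sm}}_{\tilde{\ide{m}}}.
\]
This step (in particular, ensuring compatibility of excision, the smooth vectors functor, and infinite-level limits with localization at $\tilde{\ide{m}}$) is the technical backbone of the argument, and is where most of the care has to be taken; it is analogous to the manipulations in \cite[\S 5.4]{10author}.

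For the second step, we apply Proposition \ref{Prop:IndAppearsAsDirectSummand} to identify the left-hand side with
\[
\bigoplus_{P(\mbb{Q}) \backslash \tilde{G}(\mbb{Z}_{S-\{p\}})/\tilde{K}^p_S} \opn{Ind}_{P(\mbb{Q}_p)}^{\tilde{G}(\mbb{Q}_p)} R\Gamma(X_{M, K^p}, \mathcal{O}/\varpi^m)^{\opn{sm}},
\]
on which $\tilde{\mbf{T}}^S$ acts through the Satake morphism $\mathcal{S} \colon \tilde{\mbf{T}}^S \to \mbf{T}^S$. Since by construction $\tilde{\ide{m}} = \mathcal{S}^*(\ide{m})$, localizing this direct sum at $\tilde{\ide{m}}$ is the same as localizing each summand at $\ide{m}$ on the $M$-factor, yielding a $\tilde{G}(\mbb{Q}_p)$-equivariant isomorphism with $\bigoplus I_{\geq 0}(\mathcal{C})$ where $\mathcal{C} = R\Gamma(X_{M, K^p}, \mathcal{O}/\varpi^m)^{\opn{sm}}_{\ide{m}}$. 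Projection to any single summand is a split surjection in the derived category of smooth $\tilde{G}(\mbb{Q}_p)$-representations.

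Finally, the derived ordinary functor $(-)^{\opn{ord}}$ commutes with direct sums (being the composition of $R\Gamma(U(\mbb{Z}_p), -)$ with a localization along $T^+ \subset T(\mbb{Q}_p)$), so the projection induces a direct summand surjection of $I_{\geq 0}(\mathcal{C})^{\opn{ord}}$ out of the $\tilde{\ide{m}}$-localized ordinary boundary cohomology. Tensoring with $\mathcal{O}(w_0^{\tilde{G}}\tilde{\lambda})$, applying $R\Gamma(T(b), -)$, and then taking $\opn{H}^d$ all preserve this direct summand structure, producing the required surjection. The Hecke-equivariance is inherited from the $\tilde{\mbf{T}}^S$-equivariance of the direct sum decomposition (with the action factoring through $\mathcal{S}$) and the compatibility of the $T^+$-action at $p$, the latter following from the explicit identifications in Proposition \ref{Prop:AbstractDegShifting}.
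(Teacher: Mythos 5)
Your proposal is correct and follows essentially the same route as the paper's proof: first use Proposition \ref{Prop:OnlyOneBoundarySurvives} to identify the $\tilde{\ide{m}}$-localized boundary cohomology (at infinite level at $p$, with derived smooth vectors) with the $P$-stratum, then invoke the direct sum decomposition of Proposition \ref{Prop:IndAppearsAsDirectSummand} and project to a single summand, noting that $(-)^{\opn{ord}}$, $\mathcal{O}(w_0^{\tilde{G}}\tilde{\lambda})\otimes-$, $R\Gamma(T(b),-)$, and $\opn{H}^d$ are additive and therefore preserve split surjections. One small point: your claim that localizing the direct sum at $\tilde{\ide{m}}$ ``is the same as localizing each summand at $\ide{m}$'' is slightly overstated — in principle, other maximal ideals $\ide{n}\subset\mbf{T}^S$ with $\mathcal{S}^*(\ide{n})=\tilde{\ide{m}}$ could contribute (e.g.\ one with $\overline{\rho}_{\ide{n}}\cong\overline{\rho}_{\ide{m}}^\vee(2)$) — but since the lemma only asserts a surjection, projection to the $\ide{m}$-piece is all you need, and your argument remains valid.
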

\begin{proof}
    We follow the proof of \cite[Theorem 5.4.1]{10author}. Let $R\Gamma_{\tilde{K}^p, \opn{sm}}$ denote the derived functor of taking $\tilde{K}^p$-invariants and $\tilde{G}(\mbb{Q}_p)$-smooth vectors on representations of $\tilde{G}^{\infty} = \tilde{G}(\mbb{A}_f)$.
    
    We first claim that one has a Hecke-equivariant quasi-isomorphism: 
    \begin{align*}
    (R\Gamma_{\tilde{K}^p, \opn{sm}}R\Gamma(\opn{Ind}_{P^{\infty}}^{\tilde{G}^{\infty}}\ide{X}_P, \mathcal{O}/\varpi^m))_{\tilde{\ide{m}}} &\cong (R\Gamma_{\tilde{K}^p, \opn{sm}}R\Gamma(\partial \overline{\ide{X}}_{\tilde{G}}, \mathcal{O}/\varpi^m))_{\tilde{\ide{m}}} \\
    &= \tilde{R\Gamma}(\partial \overline{X}_{\tilde{G},\tilde{K}^p}, \mathcal{O}/\varpi^m )^{\opn{sm}}_{\tilde{\ide{m}}} .
    \end{align*}
    Indeed, this follows from the same argument as in \emph{loc.cit.} using Proposition \ref{Prop:OnlyOneBoundarySurvives}. The lemma now follows from Proposition \ref{Prop:IndAppearsAsDirectSummand}.
\end{proof}

Now, following the same argument as in \cite[Lemma 5.3.3]{10author}, for any $i \geq 0$ we have Hecke-equivariant short exact sequences
\begin{align}
0 \to \opn{H}^d&\left( R\Gamma(T(b), \mathcal{O}(w_0^{\tilde{G}} \tilde{\lambda}) \otimes I_{\geq i+1}(\mathcal{C})^{\opn{ord}}) \right) \to \opn{H}^d\left( R\Gamma(T(b), \mathcal{O}(w_0^{\tilde{G}} \tilde{\lambda}) \otimes I_{\geq i}(\mathcal{C})^{\opn{ord}}) \right) \nonumber \\ &\to \bigoplus_{l(w) = i} \opn{H}^d\left( R\Gamma(T(b), \mathcal{O}(w_0^{\tilde{G}} \tilde{\lambda}) \otimes I_{w}(\mathcal{C})^{\opn{ord}}) \right) \to 0.  \label{Eqn:I>=SES}
\end{align}

Arguing inductively, and combining with Lemmas \ref{lem:T(b) cohomology} and \ref{Lem:SurjectionToI>=0}, for each $w \in {^M}W$ we obtain a $\tilde{\mbf{T}}^{S,\opn{ord}}$-equivariant subquotient
\begin{equation}\label{eq:transfer 1}
\begin{tikzcd}
{\h^d( \partial \overline{X}_{\tilde{G}, \tilde{K}}, V_{\tilde{\lambda}}/\varpi^m )^{\opn{ord}}_{\tilde{\ide{m}}}} \arrow[r, two heads, dotted, hook] & {\opn{H}^d\left( R\Gamma(T(b), \mathcal{O}(w_0^{\tilde{G}} \tilde{\lambda}) \otimes I_{w}(\mathcal{C})^{\opn{ord}}) \right).}
\end{tikzcd}
\end{equation}
Here the notation ``$\begin{tikzcd} \! \arrow[r, two heads, dotted, hook] & \! \end{tikzcd}$'' means the target is a subquotient of the source. Now we apply degree-shifting to the image. Let $x = w_0^M w w_0^{\tilde{G}} \in {^MW}$ and recall $\lambda_x = x \star \tilde{\lambda} = x \cdot (\tilde{\lambda} + \rho) - \rho$.  Note that $l(w) = d - l(x) = n(n+1)/2 - l(x)$.

\begin{lemma}\label{lem:applying degree shifting}
We have 
\[
\opn{H}^d\left( R\Gamma(T(b), \mathcal{O}(w_0^{\tilde{G}} \tilde{\lambda}) \otimes I_{w}(\mathcal{C})^{\opn{ord}}) \right) \cong \tau_w^{-1} \opn{H}^{l(w)}( X_{M, K}, V_{\lambda_{x}}/\varpi^m )^{\opn{ord}}_{\ide{m}}.
\]
\end{lemma}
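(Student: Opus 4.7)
The plan is to apply Proposition~\ref{Prop:AbstractDegShifting}(2) to the smooth $P(\mbb{Q}_p)$-representation $\mathcal{C} = R\Gamma(X_{M, K^p}, \mathcal{O}/\varpi^m)^{\opn{sm}}_{\ide{m}}$, whose action factors through the Levi $M(\mbb{Q}_p)$. That proposition yields a quasi-isomorphism
\[
I_w(\mathcal{C})^{\opn{ord}} \;\cong\; \mathcal{O}(\chi_w) \otimes_{\mathcal{O}} \tau_w^{-1}\mathcal{C}^{\opn{ord}}\,[l(w) - d].
\]
Substituting into the left-hand side of the lemma and using that the shift $[l(w)-d]$ turns $\opn{H}^d$ into $\opn{H}^{l(w)}$, the LHS becomes
\[
\opn{H}^{l(w)}\!\Big(R\Gamma\big(T(b),\, \mathcal{O}(w_0^{\tilde{G}}\tilde{\lambda} + \chi_w) \otimes \tau_w^{-1}\mathcal{C}^{\opn{ord}}\big)\Big).
\]

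Next, since $w \in N_{\tilde{G}}(T)$ normalises $T(b)$, the twist $\tau_w^{-1}$ commutes (as a relabelling of the exterior $T(\mbb{Q}_p)$-action) with the group cohomology functor $R\Gamma(T(b), -)$, and satisfies $\tau_w^{-1}\mathcal{O}(\mu) \cong \mathcal{O}(w^{-1}\mu)$ for any $\mu \in X^*(T)$, with respect to the natural Weyl action. Rewriting
\[
\mathcal{O}(w_0^{\tilde{G}}\tilde{\lambda} + \chi_w) \otimes \tau_w^{-1}\mathcal{C}^{\opn{ord}} \;\cong\; \tau_w^{-1}\Big(\mathcal{O}\big(w(w_0^{\tilde{G}}\tilde{\lambda} + \chi_w)\big) \otimes \mathcal{C}^{\opn{ord}}\Big)
\]
and applying Lemma~\ref{lem:T(b) cohomology} in reverse, the LHS is identified with $\tau_w^{-1}\opn{H}^{l(w)}(X_{M,K}, V_{\nu}/\varpi^m)^{\opn{ord}}_{\ide{m}}$, where $\nu$ is the $M$-dominant weight determined by $w_0^M \nu = w(w_0^{\tilde{G}}\tilde{\lambda} + \chi_w)$. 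It therefore suffices to show $\nu = \lambda_x$, equivalently
\[
w_0^M\lambda_x \;=\; w\big(w_0^{\tilde{G}}\tilde{\lambda} + \chi_w\big).
\]

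Finally, expanding $\lambda_x = x\cdot(\tilde{\lambda}+\rho) - \rho$ with $x = w_0^M w w_0^{\tilde{G}}$ and using $(w_0^M)^2 = 1$ gives $w_0^M\lambda_x = w w_0^{\tilde{G}}\tilde{\lambda} + w w_0^{\tilde{G}}\rho - w_0^M\rho$, so the required identity reduces to the Weyl-theoretic equality
\[
w\chi_w \;=\; w w_0^{\tilde{G}}\rho - w_0^M\rho.
\]
This is a direct computation using the explicit description of $\chi_w$ in Proposition~\ref{Prop:AbstractDegShifting}(2) (together with the integrality twist by $|\cdot|_p$) and standard identities expressing $\rho - w\rho$ and $w_0^M\rho$ as sums of roots. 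The main technical point is this final Weyl-theoretic identity, where one must carefully track half-sums of roots and the sign/normalisation conventions in $\chi_w$; one also verifies en route that $\lambda_x \in X^*(T)^{+}_M$ (so that $V_{\lambda_x}$ is defined), which follows from $\tilde{\lambda} \in X^*(T)^+$ and the defining property $w^{-1}\Phi_M^+ \subset \Phi^+$ of ${^M}W$.
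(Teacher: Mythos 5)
Your proposal is correct and follows essentially the same approach as the paper: apply Proposition~\ref{Prop:AbstractDegShifting}(2) to $\mathcal{C}$, commute $\tau_w^{-1}$ past the $T(b)$-cohomology, apply Lemma~\ref{lem:T(b) cohomology} in reverse, and reduce to a Weyl-group identity on weights restricted to $T(\mathbb{Z}_p)$. Your target identity $w\chi_w = w w_0^{\tilde{G}}\rho - w_0^M\rho$ is equivalent (after multiplying by $w_0^M$ and using $w_0^{\tilde{G}}\rho = -\rho$) to the paper's claim $w_0^M w(w_0^{\tilde{G}}\tilde{\lambda}+\chi_w) = \lambda_x$, which the paper also verifies by the same one-line computation from $\chi_w|_{T(\mathbb{Z}_p)} = (w^{-1}w_0^M w_0^{\tilde{G}})\cdot\rho - \rho$.
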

\begin{proof}
Applying Proposition \ref{Prop:AbstractDegShifting} to $\cC$, we have 
\begin{align*}
\opn{H}^d\left( R\Gamma(T(b), \mathcal{O}(w_0^{\tilde{G}} \tilde{\lambda}) \otimes I_{w}(\mathcal{C})^{\opn{ord}}) \right) &\cong \opn{H}^{l(w)}\left( R\Gamma(T(b), \mathcal{O}(w_0^{\tilde{G}} \tilde{\lambda}+\chi_w) \otimes \tau_w^{-1}\mathcal{C}^{\opn{ord}} ) \right)\\
&\cong \tau_w^{-1} \opn{H}^{l(w)}\left( R\Gamma(T(b), \mathcal{O}(w(w_0^{\tilde{G}} \tilde{\lambda}+\chi_w)) \otimes \mathcal{C}^{\opn{ord}} ) \right).
\end{align*}
Note $\chi_w$ factors through the natural map $T(\mbb{Q}_p) \twoheadrightarrow T(\mbb{Z}_p)$ and coincides with $(w^{-1}w_0^M w_0^{\tilde{G}}) \cdot \rho - \rho$ when restricted to $T(\mbb{Z}_p)$. We claim that $\lambda_x|_{T(\mbb{Z}_p)} = w_0^Mw(w_0^{\tilde{G}} \tilde{\lambda}+\chi_w)|_{T(\mbb{Z}_p)}$; then the result follows from  Lemma \ref{lem:T(b) cohomology} (as $w_0^M = (w_0^M)^{-1}$). But, after restricting to $T(\mbb{Z}_p)$, we have
\[
w_0^Mw(w_0^{\tilde{G}}\tilde{\lambda} + \chi_w) = w_0^Mww_0^{\tilde{G}}\tilde{\lambda} + w_0^{\tilde{G}}\rho - w_0^Mw\rho = x\tilde{\lambda} - \rho + x\rho = \lambda_x,
\]
as required. In the last step we have used the property $w_0^{\tilde{G}}\rho = - \rho$.
\end{proof}

Recall that $\tau_w^{-1}$ does not affect the underlying space; it means only that the $T(\Qp)$-action is twisted through the map $t \mapsto wtw^{-1}$. In particular, combining \eqref{Eqn:FirstSurj}, \eqref{eq:transfer 1} and Lemma \ref{lem:applying degree shifting}, we obtain a subquotient
\begin{equation}\label{eq:transfer 2}
\begin{tikzcd}
{\h^d( X_{\tilde{G}, \tilde{K}}, V_{\tilde{\lambda}}/\varpi^m )^{\opn{ord}}_{\tilde{\ide{m}}}} \arrow[r, two heads, dotted, hook] & {\opn{H}^{d-l(x)}( X_{M, K}, V_{\lambda_{x}}/\varpi^m )^{\opn{ord}}_{\ide{m}},}
\end{tikzcd}
\end{equation}
which is $\tilde{\mbf{T}}^{S,\opn{ord}}$-equivariant if $\tilde{\mbf{T}}^{S,\opn{ord}}$ acts on the source in the usual way, and  on the target via $\mathcal{S}_w \colon \tilde{\mbf{T}}^{S, \opn{ord}} \to \mbf{T}^{S, \opn{ord}}$.  Here recall $\mathcal{S}_w$ is the ring homomorphism coinciding with $\mathcal{S}$ away from $p$, and which at $p$ is induced from the map
\begin{align*}
\mathcal{O}[T^+] &\to \mathcal{O}[T_M^+] \\
[t] &\mapsto [w t w^{-1}] .
\end{align*}
The map on Hecke algebras induced from \eqref{eq:transfer 2} is the desired transfer map modulo $\varpi^m$. To obtain the map in characteristic zero, we pass to the inverse limit over $m$. Let
\begin{align*}
    \tilde{\mbf{T}}^{S, \opn{ord}}(b, c, \tilde{\lambda}) &\defeq \tilde{\mbf{T}}^{S, \opn{ord}}\left( \opn{H}^{d}(X_{\tilde{G}, \tilde{K}}, V_{\tilde{\lambda}})^{\opn{ord}}_{\tilde{\ide{m}}} \right) = \tilde{\mbf{T}}^{S, \opn{ord}}\left( \varprojlim_m \opn{H}^{d}(X_{\tilde{G}, \tilde{K}}, V_{\tilde{\lambda}}/\varpi^m)^{\opn{ord}}_{\tilde{\ide{m}}} \right) \\
    \mbf{T}^{S, \opn{ord}}(b, c, \lambda_x) &\defeq \mbf{T}^{S, \opn{ord}}\left( \opn{H}^{d-l(x)}(X_{M, K}, V_{\lambda_x})^{\opn{ord}}_{\ide{m}} \right) = \mbf{T}^{S, \opn{ord}}\left( \varprojlim_m \opn{H}^{d-l(x)}(X_{M, K}, V_{\lambda_x}/\varpi^m)^{\opn{ord}}_{\ide{m}} \right) .
\end{align*}
Combining everything in this section, we obtain the following:

\begin{proposition} \label{Prop:SwHomOnHecke}
    Let $p > 2n$. One has an $\mathcal{O}$-algebra homomorphism
    \[
    \mathcal{S}_w \colon \tilde{\mbf{T}}^{S, \opn{ord}}(b, c, \tilde{\lambda}) \to \mbf{T}^{S, \opn{ord}}(b, c, \lambda_x)
    \]
    induced by $\mathcal{S}_w$. 
\end{proposition}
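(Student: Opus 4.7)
The plan is to assemble the ingredients developed in this subsection. For each $m \geq 1$, I would produce (compatibly in $m$) a $\tilde{\mbf{T}}^{S,\opn{ord}}$-equivariant identification of a subquotient of $\opn{H}^d(X_{\tilde{G},\tilde{K}}, V_{\tilde{\lambda}}/\varpi^m)^{\opn{ord}}_{\tilde{\ide{m}}}$ with $\tau_w^{-1}\opn{H}^{d-l(x)}(X_{M,K}, V_{\lambda_x}/\varpi^m)^{\opn{ord}}_{\ide{m}}$, where $\tilde{\mbf{T}}^{S,\opn{ord}}$ acts on the target via $\mathcal{S}_w$. First, excision combined with the Yang--Zhu vanishing (Theorem \ref{Thm:YZVanishing}) gives the surjection \eqref{Eqn:FirstSurj} onto boundary cohomology. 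Next, Lemmas \ref{lem:T(b) cohomology} and \ref{Lem:SurjectionToI>=0}, which leverage the elimination of non-Siegel boundary strata in Proposition \ref{Prop:OnlyOneBoundarySurvives}, surject this further onto $\opn{H}^d(R\Gamma(T(b), \mathcal{O}(w_0^{\tilde{G}}\tilde{\lambda})\otimes I_{\geq 0}(\mathcal{C})^{\opn{ord}}))$ for $\mathcal{C} = R\Gamma(X_{M,K^p}, \mathcal{O}/\varpi^m)^{\opn{sm}}_{\ide{m}}$. Iterating the short exact sequences \eqref{Eqn:I>=SES} then extracts the $w$-graded piece as a subquotient, and Lemma \ref{lem:applying degree shifting} identifies this graded piece with the target above.

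Granted this subquotient relation, the descent to Hecke algebras should be formal. All of the modules involved are finite over $\mathcal{O}/\varpi^m$ and every construction is functorial in $m$, so passing to $\varprojlim_m$ preserves the subquotient structure. Suppose now that $T \in \tilde{\mbf{T}}^{S,\opn{ord}}$ lies in the kernel of $\tilde{\mbf{T}}^{S,\opn{ord}} \twoheadrightarrow \tilde{\mbf{T}}^{S,\opn{ord}}(b,c,\tilde{\lambda})$, so that $T$ acts trivially on $\varprojlim_m \opn{H}^d(X_{\tilde{G},\tilde{K}}, V_{\tilde{\lambda}}/\varpi^m)^{\opn{ord}}_{\tilde{\ide{m}}}$; then $T$ acts trivially on the subquotient in the limit, and via the $\mathcal{S}_w$-equivariant isomorphism, $\mathcal{S}_w(T)$ acts trivially on $\varprojlim_m \opn{H}^{d-l(x)}(X_{M,K}, V_{\lambda_x}/\varpi^m)^{\opn{ord}}_{\ide{m}}$. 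Since $\mbf{T}^{S,\opn{ord}}(b,c,\lambda_x)$ acts \emph{faithfully} on this inverse limit by definition, $\mathcal{S}_w(T)$ must vanish there, and the induced ring homomorphism drops out.

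The hard step---the one that makes the concluding faithfulness argument actually go through---will be Lemma \ref{lem:applying degree shifting}, which identifies the $w$-graded piece with the \emph{whole} $\tau_w^{-1}$-twisted $\opn{GL}_n$-cohomology rather than merely a proper subquotient of it. This in turn hinges on Proposition \ref{Prop:AbstractDegShifting}(2), whose proof collapses the ordinary unipotent cohomology into its unique top degree $d-l(w)$ via Hauseux's freeness result \cite[Prop.\ 3.1.8]{Hauseux} and matches the character twist $\chi_w$ exactly with the shifted weight $\lambda_x = x \star \tilde{\lambda}$. Everything else---inductively unpacking \eqref{Eqn:I>=SES}, tracking equivariance through the twists $\tau_w^{-1}$ and $\mathcal{S}_w$, and taking the inverse limit in $m$---is bookkeeping that is already implicitly handled by the preceding lemmas.
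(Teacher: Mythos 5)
Your argument is correct and follows the same route as the paper's proof: establish the subquotient relation \eqref{eq:transfer 2} mod $\varpi^m$ from \eqref{Eqn:FirstSurj}, Lemma~\ref{Lem:SurjectionToI>=0}, \eqref{Eqn:I>=SES}, and Lemma~\ref{lem:applying degree shifting}, pass to $\varprojlim_m$ using finiteness to preserve exactness, and then read off the Hecke-algebra morphism from the faithful actions on source and target. You usefully make explicit the faithfulness step (``$T$ acts as zero on the subquotient, hence $\mathcal{S}_w(T) = 0$'') which the paper leaves implicit in ``implying the existence of the desired morphism,'' and you correctly flag that the argument depends on Lemma~\ref{lem:applying degree shifting} identifying the graded piece with the \emph{full} $\GL_n$-cohomology.
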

\begin{proof}
 Note that the morphisms in (\ref{Eqn:FirstSurj}) and Lemma \ref{Lem:SurjectionToI>=0} remain surjective after passing to the limit over $m \geq 1$, because the modules are finite. Similarly, the sequence in (\ref{Eqn:I>=SES}) remains exact after passing to the limit over $m \geq 1$, for the same reason. Thus \eqref{eq:transfer 2} remains a subquotient in the limit, implying the existence of the desired morphism of Hecke algebras.
\end{proof}

We also note the following:

\begin{proposition} \label{Prop:DescFiltOnOrd}
    With notation as above, the ordinary part $\opn{H}^d(X_{\tilde{K}}^P, V_{\tilde{\lambda}})^{\opn{ord}}$ carries a Hecke-equivariant descending filtration 
    \[
    \opn{Fil}^i \opn{H}^d(X_{\tilde{K}}^P, V_{\tilde{\lambda}})^{\opn{ord}}
    \]
 with the properties that:
    \begin{itemize}
        \item $\opn{Fil}^{0}\opn{H}^d(X_{\tilde{K}}^P, V_{\tilde{\lambda}})^{\opn{ord}} = \opn{H}^d(X_{\tilde{K}}^P, V_{\tilde{\lambda}})^{\opn{ord}}$ and $\opn{Fil}^{d+1}\opn{H}^d(X_{\tilde{K}}^P, V_{\tilde{\lambda}})^{\opn{ord}} = 0$;
        \item The graded pieces for $i=0, \dots, d$ satisfy
        \[
        \opn{Fil}^i/\opn{Fil}^{i+1} = \bigoplus_{g \in P(\mbb{Q})\backslash \tilde{G}(\mbb{Z}_{S-\{p\}}) / \tilde{K}^p_S} \bigoplus_{\substack{w \in {^M W} \\ l(w) = i}} \opn{H}^{l(w)}(X_{M, K}, V_{\lambda_x})^{\opn{ord}}
        \]
        where $x = w_0^Mww_0^{\tilde{G}}$ and $\tilde{\mbf{T}}^{S, \opn{ord}}$ acts on the summands for $w$ through $\mathcal{S}_w$. 
    \end{itemize}
\end{proposition}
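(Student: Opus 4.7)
The plan is to extract the filtration by re-packaging the structural results already established in the section, and then pass to the inverse limit. The key observation is that the current proposition is essentially a direct summary of the cohomological analysis already performed in the construction of $\mathcal{S}_w$, just phrased in terms of $X_{\tilde K}^P$ directly rather than as subquotients obtained after the projection to $\partial\overline{X}_{\tilde G,\tilde K}$. In particular, no vanishing result (and hence no decomposed generic hypothesis) is needed, since we are working with $X_{\tilde K}^P$ directly.

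First, I would establish the analogue of Lemma \ref{lem:T(b) cohomology} for $X_{\tilde K}^P$ in place of $\partial\overline{X}_{\tilde G,\tilde K}$; this follows from the same argument (based on \cite[Props.\ 5.2.15, 5.2.17]{10author}). Combined with Proposition \ref{Prop:IndAppearsAsDirectSummand}, this yields, for each $m \geq 1$, a $\tilde{\mbf T}^{S,\mathrm{ord}}$-equivariant quasi-isomorphism
\[
R\Gamma(X_{\tilde K}^P, V_{\tilde\lambda}/\varpi^m)^{\mathrm{ord}} \cong \bigoplus_{g}\, R\Gamma\left(T(b),\, \mathcal{O}(w_0^{\tilde G}\tilde\lambda) \otimes I_{\geq 0}(\mathcal{C})^{\mathrm{ord}}\right),
\]
where the direct sum is over $P(\mathbb{Q})\backslash \tilde G(\mathbb{Z}_{S-\{p\}})/\tilde K^p_S$ and $\mathcal{C} = R\Gamma(X_{M,K^p},\mathcal{O}/\varpi^m)^{\mathrm{sm}}$, with $\tilde{\mbf T}^S$ acting via the Satake map $\mathcal{S}$.

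Next, the descending filtration $I_{\geq i+1}(\mathcal{C}) \subset I_{\geq i}(\mathcal{C})$ with graded pieces $\oplus_{l(w)=i} I_w(\mathcal{C})$ induces a filtration $\mathrm{Fil}^i$ on the right-hand side by taking the image of $\mathrm{H}^d$ of the $i$-th filtered piece. Applying the argument used to deduce \eqref{Eqn:I>=SES} (cf.\ \cite[Lemma 5.3.3]{10author}) shows that the induced sequences on $\mathrm{H}^d$ remain short exact, so $\mathrm{Fil}^i/\mathrm{Fil}^{i+1}$ is identified with $\bigoplus_g \bigoplus_{l(w)=i} \mathrm{H}^d\big(R\Gamma(T(b), \mathcal{O}(w_0^{\tilde G}\tilde\lambda) \otimes I_w(\mathcal{C})^{\mathrm{ord}})\big)$. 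Lemma \ref{lem:applying degree shifting} then identifies each such summand with $\tau_w^{-1}\mathrm{H}^{l(w)}(X_{M,K},V_{\lambda_x}/\varpi^m)^{\mathrm{ord}}$, with $x = w_0^Mww_0^{\tilde G}$, and the Hecke equivariance via $\mathcal{S}_w$ follows exactly as in the analysis preceding Proposition \ref{Prop:SwHomOnHecke}. The boundary conditions $\mathrm{Fil}^0 = \mathrm{H}^d$ and $\mathrm{Fil}^{d+1} = 0$ hold automatically since $I_{\geq 0}$ is the full smooth induction and $I_{\geq d+1} = 0$ (as $l(w) \leq d$ for all $w \in {}^MW$).

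Finally, I would pass to the inverse limit over $m \geq 1$; as the graded pieces are finitely generated $\mathcal{O}$-modules, the filtration and its graded pieces are preserved, yielding the statement with $V_{\tilde\lambda,\mathcal{O}}$-coefficients. The only mild subtlety is checking exactness of the sequences on $\mathrm{H}^d$ (which uses the concentration in a single degree of $R\Gamma(T(b),\mathcal{O}(w_0^{\tilde G}\tilde\lambda) \otimes I_w(\mathcal{C})^{\mathrm{ord}})$ coming from Proposition \ref{Prop:AbstractDegShifting}); but this has already been used implicitly to derive \eqref{Eqn:I>=SES} and \eqref{eq:transfer 2}, so no new work is required.
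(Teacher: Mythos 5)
Your proposal is correct and follows essentially the same route as the paper: establish the $X^P_{\tilde K}$-analogue of Lemma \ref{lem:T(b) cohomology}, apply Proposition \ref{Prop:IndAppearsAsDirectSummand} to obtain the direct sum decomposition over $P(\Q)\backslash\tilde G(\Z_{S-\{p\}})/\tilde K^p_S$, take the filtration coming from \eqref{Eqn:I>=SES} with graded pieces identified via Lemma \ref{lem:applying degree shifting}, and pass to the inverse limit over $m$ using finiteness of the modules (the paper invokes Mittag--Leffler for the same reason). Your observation that no localisation at $\tilde\m$ (and hence no decomposed generic hypothesis or vanishing theorem) is required here is accurate and matches the paper's explicit choice not to localise $\mathcal{C}$.
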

\begin{proof}
First we work over $\mathcal{O}/\varpi^m$. Let $\mathcal{C} = R\Gamma(X_{M,K^p}, \cO/\varpi^m)^{\mathrm{sm}}$ (i.e., we no longer localise at $\ide{m}$). We have Hecke-equivariant quasi-isomorphisms
\begin{align*}
R\Gamma(X^P_{\tilde{K}}, V_{\tilde{\lambda}}/\varpi^m )^{\opn{ord}} &\cong R\Gamma(T(b), \mathcal{O}(w_0^{\tilde{G}} \tilde{\lambda}) \otimes R\Gamma( X^P_{\tilde{K}^p}, \mathcal{O}/\varpi^m )^{\opn{ord}})\\
&\cong \bigoplus_{g\in P(\Q)\backslash \tilde{G}(\Z_{S-\{p\}})/\tilde{K}^p_S} R\Gamma\Big(T(b), \mathcal{O}(w_0^{\tilde{G}} \tilde{\lambda}) \otimes I_{\geq 0}(\cC))^{\opn{ord}}\Big);
\end{align*}
the first is directly analogous to Lemma \ref{lem:T(b) cohomology}, whilst the second is Proposition \ref{Prop:IndAppearsAsDirectSummand}. 
This inherits the desired filtration (modulo $\varpi^m$) via \eqref{Eqn:I>=SES}, using Lemma \ref{lem:applying degree shifting} to describe the graded pieces.

To get the claimed filtration in characteristic zero, we pass to the limit over $m$, noting that all of the relevant inverse systems satisfy the Mittag-Leffler property (as they are inverse systems of finite $\mathcal{O}/\varpi^m$-modules). 
\end{proof}

\subsection{Local-global compatibility}

We now discuss the local-global compatibility at $\ell = p$.

\subsubsection{Characteristic zero local-global compatibility}

Recall that $p > 2n$ and let $\ide{m} \subset \mbf{T}^S$ and $\tilde{\ide{m}} \subset \tilde{\mbf{T}}^S$ be as in \S\ref{sec:chapter 9 set-up}. Recall $\ide{m}$ is assumed to be decomposed generic and non-Eisenstein, and we assume that $\tilde{\ide{m}}$ is decomposed generic. Let $c \geq b \geq 0$ with $c \geq 1$, and let $\tilde{K}$ and $K$ be as in \S \ref{Subsub:IndAppearsAsDirectSummand} with the additional condition that $\tilde{K}_p = \tilde{\opn{Iw}}(b, c)$ and $K_p = \opn{Iw}(b, c)$.

\begin{proposition} \label{Prop:Char0LGforHecke}
    Let $\tilde{\lambda} = (\tilde{\lambda}_1, \dots, \tilde{\lambda}_n; \tilde{\lambda}_0) \in X^*(T)^+$ be a sufficiently regular weight, in the sense that $\tilde{\lambda}_i - \tilde{\lambda}_{i+1} \geq n$ for all $i=1, \dots, n-1$, and $\tilde{\lambda}_n \geq n$. Then:
    \begin{enumerate}
        \item $\opn{H}^d (X_{\tilde{G}, \tilde{K}}, V_{\tilde{\lambda}, \mathcal{O}} )_{\tilde{\ide{m}}}^{\opn{ord}}$ is $\varpi$-torsion free, and $\opn{H}^d (X_{\tilde{G}, \tilde{K}}, V_{\tilde{\lambda}, \mathcal{O}} )_{\tilde{\ide{m}}}^{\opn{ord}}[1/p]$ is a semisimple module for the Hecke algebra $\tilde{\mbf{T}}^{S, \opn{ord}}$.
        \item The Hecke algebra
        \[
        \tilde{\mbf{T}}^{S, \opn{ord}}\big(\opn{H}^d (X_{\tilde{G}, \tilde{K}}, V_{\tilde{\lambda}, \mathcal{O}} )_{\tilde{\ide{m}}}^{\opn{ord}}[1/p]\big)
        \]
        is a finite product of characteristic zero fields. Each maximal ideal $\ide{n}$ corresponds to a Hecke eigensystem appearing in $\opn{H}^d (X_{\tilde{G}, \tilde{K}}, V_{\tilde{\lambda}, L} )$ which is ordinary at $p$. Furthermore, there exists a continuous semisimple Galois representation $\rho_{\ide{n}} \colon G_{\mbb{Q}} \to \opn{GL}_{2n+1}(\Qpb)$, such that for all $\ell \not\in S$, $\rho_{\ide{n}}$ is unramified at $\ell$ and 
        \[
        \opn{det}(1 - \opn{Frob}_{\ell}^{-1} X | \rho_{\ide{n}} ) 
        \]
        equals the image of $\tilde{H}_{\ell}(X)$ in $\tilde{\mbf{T}}^{S, \opn{ord}}(\opn{H}^d (X_{\tilde{G}, \tilde{K}}, V_{\tilde{\lambda}, \mathcal{O}} )_{\tilde{\ide{m}}}^{\opn{ord}}[1/p])_{\ide{n}}[X]$. Moreover, the residual representation $\overline{\rho}_{\ide{n}}$ is isomorphic to $\overline{\rho}_{\tilde{\ide{m}}}$.
        \item One has local-global compatibility at $\ell = p$, namely
        \[
        \rho_{\ide{n}}|_{G_{\mbb{Q}_p}} \sim \left( \begin{array}{cccc} \psi_{\tilde{\lambda},1} & * & * & * \\ & \psi_{\tilde{\lambda},2} & * & * \\ & & \ddots & \vdots \\ & & & \psi_{\tilde{\lambda}, 2n+1} \end{array} \right)
        \]
        where $\psi_{\tilde{\lambda}, i} \colon G_{\mbb{Q}_p} \to \Qpb^{\times}$ is the character as in Definition \ref{DefOfUnivGaloisChars}, associated with the algebra homomorphism $\tilde{\mbf{T}}^{S, \opn{ord}}_L \to \Qpb$ with kernel $\ide{n}$. 
    \end{enumerate}
\end{proposition}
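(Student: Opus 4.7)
The plan is to bootstrap from Yang--Zhu's vanishing (Theorem \ref{Thm:YZVanishing}), the boundary analysis in Proposition \ref{Prop:OnlyOneBoundarySurvives}, and the automorphic local-global compatibility of Theorem \ref{Thm:PiCARoftildeGLG}. The sufficient regularity hypothesis on $\tilde{\lambda}$ is strictly stronger than what Theorem \ref{Thm:PiCARoftildeGLG} needs, so that theorem applies to any cuspidal $\Pi$ of $\tilde{G}(\mbb{A})$ contributing to our cohomology.

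\textbf{Torsion-freeness and semisimplicity.} By Theorem \ref{Thm:YZVanishing} with $m = 1$, one has $\opn{H}^{d-1}(X_{\tilde{G}, \tilde{K}}, V_{\tilde{\lambda}}/\varpi)_{\tilde{\ide{m}}} = 0$; the long exact sequence attached to $0 \to V_{\tilde{\lambda}, \mathcal{O}} \xrightarrow{\varpi} V_{\tilde{\lambda}, \mathcal{O}} \to V_{\tilde{\lambda}, \mathcal{O}}/\varpi \to 0$ then gives the $\varpi$-torsion-freeness of $\opn{H}^d(X_{\tilde{G}, \tilde{K}}, V_{\tilde{\lambda}, \mathcal{O}})_{\tilde{\ide{m}}}$. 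Rationalising, the Borel--Serre exact sequence and Matsushima's formula decompose $\opn{H}^d(X_{\tilde{G}, \tilde{K}}, V_{\tilde{\lambda}, L})_{\tilde{\ide{m}}}^{\opn{ord}}$ as a sum of cuspidal pieces, indexed by cuspidal $\Pi$ of $\tilde{G}(\mbb{A})$ of weight $-w_0^{\tilde{G}} \tilde{\lambda}$ with ordinary $\Pi_p$, together with boundary pieces which, by Proposition \ref{Prop:OnlyOneBoundarySurvives}, are all supported on $X^P_{\tilde{K}}$ and further decompose, via Propositions \ref{Prop:IndAppearsAsDirectSummand} and \ref{Prop:DescFiltOnOrd}, as ordinary $M$-eigensystems pulled back through the transfers $\mathcal{S}_w$ of Proposition \ref{Prop:SwHomOnHecke}. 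The resulting $\tilde{\mbf{T}}^{S, \opn{ord}}$-module structure is semisimple and each eigensystem is ordinary at $p$, giving (1), the first two sentences of (2), and the product-of-fields description of the Hecke algebra.

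\textbf{Galois representations and local-global compatibility.} For a cuspidal $\ide{n}$ attached to some $\Pi_{\ide{n}}$, Theorem \ref{Thm:PiCARoftildeGLG} immediately supplies $\rho_{\ide{n}} = \rho_{\Pi_{\ide{n}}}$ with the correct unramified Frobenius characteristic polynomials and the desired upper-triangular form at $p$: ordinarity of $\Pi_{\ide{n}, p}$ together with the sufficient regularity of $\tilde{\lambda}$ forces the diagonal entries to be exactly the characters $\psi_{\tilde{\lambda}, i}$ of Definition \ref{DefOfUnivGaloisChars}. For a boundary $\ide{n}$ appearing in the graded piece indexed by $w \in {^MW}$ (with $x = w_0^M w w_0^{\tilde{G}}$), the underlying $M$-eigensystem corresponds to an ordinary cuspidal $\pi$ of $\GL_n(\mbb{A})$ of weight $-w_0^{\GL_n} \lambda_x$ together with a Hecke character on $\GL_1$. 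Since $\overline{\rho}_{\pi} = \overline{\rho}_{\ide{m}}$ is by hypothesis absolutely irreducible and decomposed generic, Theorem \ref{Thm:Char0ExistenceOfGalRACAR} applies to $\pi$ and one builds $\rho_{\ide{n}}$ as the direct sum dictated by the Satake identity \eqref{KeyIdentityOfHeckeEqn}; this is still upper-triangular at $p$, with diagonal characters that one checks agree with the $\psi_{\tilde{\lambda}, i}$ after tracking the transfer $\mathcal{S}_w$. Residual compatibility $\overline{\rho}_{\ide{n}} \cong \overline{\rho}_{\tilde{\ide{m}}}$ is then Chebotarev applied modulo $\varpi$.

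\textbf{Main obstacle.} The hardest part is the bookkeeping for the boundary contributions: one must verify that the $\tilde{G}$-ordinary characters $\psi_{\tilde{\lambda}, i}$ attached to a boundary eigensystem coincide, under the matching $\mathcal{S}_w$, with the characters produced on the Galois side from $\rho_{\pi}$ via Definition \ref{DefOfUnivGaloisChars}(2) together with \eqref{KeyIdentityOfHeckeEqn}. The cyclotomic twists in \eqref{KeyIdentityOfHeckeEqn}, the weight shift $\tilde{\lambda} \mapsto \lambda_x$, and the Jacobian character $\chi_w$ from Proposition \ref{Prop:AbstractDegShifting} must all combine coherently with the normalisations of \S\ref{SubSub:HeckeAlgebrasPrelims} and of Definition \ref{DefOfUnivGaloisChars} to reproduce the same list of $\psi_{\tilde{\lambda}, i}$ as in the cuspidal case.
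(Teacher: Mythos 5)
Your proposal correctly sets up the torsion-freeness argument (via Yang--Zhu in degree $d-1$), correctly identifies the cuspidal/boundary split, and correctly uses Propositions~\ref{Prop:OnlyOneBoundarySurvives}, \ref{Prop:IndAppearsAsDirectSummand} and \ref{Prop:DescFiltOnOrd} together with the non-Eisenstein assumption to reduce the boundary contribution to ordinary cuspidal $\GL_n$-eigensystems. But there is a genuine gap where you write that ``the resulting $\tilde{\mbf{T}}^{S,\opn{ord}}$-module structure is semisimple.'' Proposition~\ref{Prop:DescFiltOnOrd} only gives a \emph{filtration} of $\opn{H}^d(X^P_{\tilde{K}}, V_{\tilde{\lambda},\mathcal{O}})^{\opn{ord}}$ indexed by $w \in {^MW}$; it does not produce a direct sum. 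To obtain a direct sum decomposition after localising at an eigensystem $\ide{b}$, one must show that \emph{at most one} $w \in {^MW}$ contributes to $\ide{b}$. This is the content of Claim~\ref{claim:at most one w} in the paper: two different $w, w'$ contributing to the same $\ide{b}$ would force the corresponding $M$-cuspidal representations $\sigma, \sigma'$ (coming from $\ide{a}, \ide{a}'$ with $\cS^*(\ide{a}) = \cS^*(\ide{a}') = \ide{b}$) to satisfy $\sigma' \cong \sigma^\vee(2)$ by strong multiplicity one, and then the explicit combinatorics of $\lambda_{x}$ vs.\ $\lambda_{x'}$ are shown to be incompatible with the $\tilde{\lambda}_i - \tilde{\lambda}_{i+1} \geq n$ regularity hypothesis. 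This is the step that actually needs the $\geq n$ regularity (rather than just $\geq 1$, which is all Theorem~\ref{Thm:PiCARoftildeGLG} requires), and it is what makes the Hecke algebra a product of fields in part~(2). Without it the filtration could have a non-split extension, and parts (1)--(2) of the proposition would not follow.

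You also identify the ``main obstacle'' as the bookkeeping matching the $\psi_{\tilde{\lambda},i}$ against $\cS_w$ and Definition~\ref{DefOfUnivGaloisChars}; that is a real but mechanical step (the reordering argument at the end of the paper's proof), and it is subordinate to the unstated semisimplicity issue above. Everything else in your sketch --- the decomposed generic/absolutely irreducible hypothesis producing cuspidality of the boundary eigensystems (Claim~\ref{claim:cuspidal} in the paper), the use of \cite[Corollary 2.3]{Schwermer94} to conflate interior and cuspidal cohomology, and the final assembly of $\rho_{\ide{n}}$ as $\rho_\sigma(-1) \oplus \mbf{1} \oplus \rho_\sigma^\vee(1)$ in the boundary case --- lines up with the paper's proof.
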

\begin{proof} 
Recall that we are assuming $ n \geq 2$. The torsion subgroup $\opn{H}^d(X_{\tilde{G}, \tilde{K}}, V_{\tilde{\lambda}, \mathcal{O}})_{\tilde{\ide{m}}}[\varpi]$ is a quotient of $\opn{H}^{d-1}(X_{\tilde{G}, \tilde{K}}, V_{\tilde{\lambda}}/\varpi)_{\tilde{\ide{m}}}$, which is zero by Theorem \ref{Thm:YZVanishing}; thus the cohomology is $\varpi$-torsion-free.

Fix an isomorphism $\mbb{C} \cong \Qpb$. Via this isomorphism, we can consider the singular cohomology
\[
\opn{H}^d(X_{\tilde{G}, \tilde{K}}, V_{\tilde{\lambda}, \mbb{C}}) \cong \opn{H}^d(X_{\tilde{G}, \tilde{K}}, V_{\tilde{\lambda}, \Qpb}).
\]
This cohomology group can be computed in terms of automorphic representations following \cite[\S 2.2]{FSdecomposition}. In particular, it has a Hecke-equivariant direct summand given by the cuspidal cohomology $\opn{H}^d_{\opn{cusp}}(X_{\tilde{G}, \tilde{K}}, V_{\tilde{\lambda}, \Qpb})$.  As $\tilde{\lambda}$ is sufficiently regular, by \cite[Corollary 2.3]{Schwermer94} we have $\opn{H}^d_{\opn{cusp}}(X_{\tilde{G}, \tilde{K}}, V_{\tilde{\lambda}, \Qpb}) = \opn{H}^d_{!}(X_{\tilde{G}, \tilde{K}}, V_{\tilde{\lambda}, \Qpb})$; explicitly, $\h^d_{\opn{cusp}}$ is the image of $\hc{d}$ in the excision exact sequence
\begin{equation}\label{eq:excision}
\hc{d}(X_{\tilde{G}, \tilde{K}}, V_{\tilde{\lambda}, \Qpb}) \to \h^d(X_{\tilde{G}, \tilde{K}}, V_{\tilde{\lambda}, \Qpb}) \to \h^d(\partial\overline{X}_{\tilde{G}, \tilde{K}}, V_{\tilde{\lambda}, \Qpb}) \to \hc{d+1}(X_{\tilde{G}, \tilde{K}}, V_{\tilde{\lambda}, \Qpb}).
\end{equation}
Now take ordinary parts and let $\ide{n}$ be a maximal ideal as in the statement. Base-changing to $\overline{\Q}_p$, we obtain a maximal ideal $\ide{b} \subset \tilde{\mbf{T}}^{S}_{\Qpb}$ lying above $\ide{n}$ and $\tilde{\ide{m}}$. Theorem \ref{Thm:YZVanishing} implies $\opn{H}^{d+1}_{c}(X_{\tilde{G}, \tilde{K}}, V_{\tilde{\lambda}, \mathcal{O}})^{\opn{ord}}_{\tilde\m} = 0,$  so the same is true after inverting $p$ and localising at $\ide{b}$; so \eqref{eq:excision} yields a short exact sequence $0 \to (\h^d_{\opn{cusp}})^{\opn{ord}}_{\ide{b}} \to (\h^d)^{\opn{ord}}_{\ide{b}} \to (\h^d_{\partial})^{\opn{ord}}_{\ide{b}} \to 0$. This splits by \cite[(7)]{FSdecomposition}, giving a Hecke-equivariant isomorphism
\begin{equation}\label{eq:cusp + boundary tildeG}
\opn{H}^d(X_{\tilde{G}, \tilde{K}}, V_{\tilde{\lambda}, \Qpb})^{\opn{ord}}_{\ide{b}} \cong \opn{H}^d_{\opn{cusp}}(X_{\tilde{G}, \tilde{K}}, V_{\tilde{\lambda}, \Qpb})^{\opn{ord}}_{\ide{b}} \oplus \opn{H}^d(\partial \overline{X}_{\tilde{G}, \tilde{K}}, V_{\tilde{\lambda}, \Qpb})^{\opn{ord}}_{\ide{b}}.
\end{equation}
 If $\ide{b}$ contributes to $\opn{H}^d(X_{\tilde{G}, \tilde{K}}, V_{\tilde{\lambda}, \Qpb})^{\opn{ord}}$, it thus contributes to at least one of cuspidal or boundary cohomology. Suppose first it contributes to the cuspidal part; then by the usual description of cuspidal cohomology, $\ide{b}$ is the prime-to-$S$ Hecke eigensystem associated with a regular algebraic cuspidal automorphic representation $\sigma$ of $\tilde{G}(\A)$ of weight $-w_0^{\tilde{G}}\tilde\lambda$ which is ordinary at $p$, and we may take $\rho_{\ide{n}} = \rho_\sigma$ via Theorem \ref{Thm:PiCARoftildeGLG}.

To prove the rest of (1) and (2), we use a similar result for boundary cohomology; namely, we'll show that if $\ide{b}$ contributes to boundary cohomology, then it arises as the pullback under $\cS$ of the Hecke eigensystem of a (uniquely determined) regular algebraic cuspidal automorphic representation $\sigma$ of $M(\A)$. 

Tensoring Proposition \ref{Prop:OnlyOneBoundarySurvives} by $\overline{\Q}_p$ and localising at $\ide{b}$ yields
\[
\opn{H}^d(\partial \overline{X}_{\tilde{G}, \tilde{K}}, V_{\tilde{\lambda}, \Qpb})_{\ide{b}}^{\opn{ord}} \cong \opn{H}^d(X^P_{\tilde{K}}, V_{\tilde{\lambda}, \Qpb})_{\ide{b}}^{\opn{ord}} .
\]
This latter space has a filtration induced from that in Proposition \ref{Prop:DescFiltOnOrd}, whose graded pieces are direct sums over ${^M}W$. Suppose that $\opn{H}^d(X^P_{\tilde{K}}, V_{\tilde{\lambda}, \Qpb})_{\ide{b}}^{\opn{ord}} \neq 0$; then some graded piece is non-zero, and hence there exists some $w \in {^M W}$ and an ideal $\ide{a} \subset \mbf{T}^S_{\Qpb}$ such that 
\[
    \mathcal{S}^*(\ide{a}) = \ide{b} \qquad \text{and}\qquad \opn{H}^{l(w)}(X_{M, K}, V_{\lambda_x, \Qpb})_{\ide{a}}^{\opn{ord}} \neq 0
\]
(recalling $x = w_0^M w w_0^{\tilde{G}}$). We now show, over two claims, that there is at most one such $w$, and that $\ide{a}$ is associated with a cuspidal automorphic representation of $M(\A)$.

\begin{claim}\label{claim:cuspidal}
Suppose $\opn{H}^d(X^P_{\tilde{K}}, V_{\tilde{\lambda}, \Qpb})_{\ide{b}}^{\opn{ord}} \neq 0$, and let $\ide{a} \subset \mbf{T}^S_{\Qpb}$  be as above. Then $\ide{a}$ is the prime-to-$S$ Hecke eigensystem associated with a regular algebraic cuspidal automorphic representation $\sigma$ of $M(\mbb{A})$ of weight $-w_0^M \lambda_x$ which is ordinary at $p$.
\end{claim}

\noindent\emph{Proof of claim:} The statements of Theorem \ref{Thm:GaloisRepForTorClass} also hold in characteristic zero (by the same arguments as in the proof of \cite[Corollary 5.4.2]{ScholzeTorsion}). This implies there exist semisimple Galois representations $\rho_{\ide{b}} \colon G_{\mbb{Q}} \to \opn{GL}_{2n+1}(\Qpb)$ and $\rho_{\ide{a}} \colon G_{\mbb{Q}} \to \opn{GL}_{n}(\Qpb)$ associated with $\ide{b}$ and $\ide{a}$ respectively satisfying local-global compatibility for $\ell \not\in S$. As $\mathcal{S}^*(\ide{a}) = \ide{b}$, by \eqref{KeyIdentityOfHeckeEqn} these compatibilities imply 
\[
\rho_{\ide{b}} \cong \rho_{\ide{a}}(-1) \oplus \mbf{1} \oplus \rho_{\ide{a}}^{\vee}(1).
\]
Since $\overline{\rho}_{\ide{b}} \cong \overline{\rho}_{\tilde{\ide{m}}} \cong \overline{\rho}_{\ide{m}}(-1) \oplus \mbf{1} \oplus \overline{\rho}_{\ide{m}}^{\vee}(1)$, we see that $\overline{\rho}_{\ide{a}}$ is isomorphic to either $\overline{\rho}_{\ide{m}}$ or $\overline{\rho}_{\ide{m}}^{\vee}(2)$; thus $\rho_{\ide{a}}$ (and its residual representation) is absolutely irreducible. This implies that
\[
\opn{H}^{l(w)}(X_{M, K}, V_{\lambda_x, \Qpb})^{\opn{ord}}_{\ide{a}} \cong \opn{H}_{!}^{l(w)}(X_{M, K}, V_{\lambda_x, \Qpb})^{\opn{ord}}_{\ide{a}} \cong \opn{H}_{\opn{cusp}}^{l(w)}(X_{M, K}, V_{\lambda_x, \Qpb})^{\opn{ord}}_{\ide{a}}.
\]
Indeed, $\ide{a}$ cannot contribute to the non-cuspidal part of the cohomology of $X_{M, K}$ via the decomposition in \cite{FSdecomposition}. If it did, then as explained in the proof of \cite[Theorem 2.4.10]{10author}, $\ide{a}$ would correspond to a Hecke eigensystem appearing in an induction of a cuspidal automorphic representation of a proper Levi subgroup of $M$, and its associated Galois representation $\rho_{\ide{a}}$ would therefore be reducible, a contradiction. Thus $\ide{a}$ contributes only to interior cohomology, giving the first isomorphism. The second isomorphism follows from \cite[Corollary 2.3]{Schwermer94} and the fact that $\lambda_x$ is regular (because $\tilde{\lambda}$ is sufficiently regular).

Claim \ref{claim:cuspidal} now follows from the usual description of cuspidal cohomology.

\medskip

We note that $\overline{\rho}_{\ide{a}} = \overline{\rho}_{\sigma}$ is non-Eisenstein and decomposed generic (because it is isomorphic to either $\overline{\rho}_{\ide{m}}$ or $\overline{\rho}_{\ide{m}}^{\vee}(2)$), hence $\rho_{\sigma}$ satisfies local-global compatibility at $\ell = p$ (Theorem \ref{Thm:Char0ExistenceOfGalRACAR}).

\begin{claim}\label{claim:at most one w}
There is at most one $w \in {^M}W$ such that $\h^{l(w)}(X_{M, K}, V_{\lambda_x, \Qpb})_{\ide{b}}^{\opn{ord}} \neq 0$.
\end{claim}

\noindent\emph{Proof of claim:} Suppose we have two distinct $w$ and $w'$ (with associated $x,x'$) with this non-vanishing property. Then as above, there exist $\ide{a}, \ide{a}'\subset \mbf{T}^S_{\overline{\Q}_p}$ such that $\ide{b} = \cS^*(\ide{a}) = \cS^*(\ide{a}')$ and
\[
\opn{H}^{l(w)}(X_{M, K}, V_{\lambda_x, \Qpb})_{\ide{a}}^{\opn{ord}} \neq 0 \text{ and } \opn{H}^{l(w')}(X_{M, K}, V_{\lambda_{x'}, \Qpb})_{\ide{a}'}^{\opn{ord}} \neq 0 .
\]
By Claim \ref{claim:cuspidal}, $\ide{a}$ and $\ide{a}'$ correspond to cuspidal automorphic representations $\sigma$ and $\sigma'$ of $M(\A)$. By strong multiplicity one for cuspidal automorphic representations of $\GL_n$, we either have $\sigma' \cong \sigma$ or $\sigma' \cong \sigma^{\vee}(2)$ (the dual of $\sigma$ twisted by $|\!|-|\!|^{2}$). The former cannot happen because $\lambda_x \neq \lambda_{x'}$; so the latter holds. This implies that in $X^*(T_{\GL_n})$ we have
\begin{align*}
\lambda_{x'} = -w_0^{\GL_n}\lambda_x + (-2, \dots, -2) = -ww_0^{\tilde{G}}\cdot(\tilde\lambda + \rho) + \big(w_0^{\GL_n}\cdot \rho\big) + (-2, \dots, -2).
\end{align*} 
Here the actions are computed in $X^*(T)$, but equality only holds after we project under $X^*(T) \to X^*(T_{\GL_n})$ (omitting the last coordinate). In $X^*(T_{\GL_n})$ we have $-w_0^{\tilde{G}}\cdot\tilde\lambda = \tilde\lambda$, so
\begin{align}\label{eq:x' to w}
\lambda_{x'} &= w\cdot(\tilde\lambda+\rho) + \big(w_0^{\GL_n}\cdot \rho\big) + (-2, \dots, -2)\\
&= \Big(w\cdot(\tilde\lambda+\rho) - \rho\Big) + \Big(\rho + \big(w_0^{\GL_n}\cdot \rho\big) + (-2, \dots, -2)\Big)= \lambda_w + (n-1, \dots, n-1),\notag
\end{align}
as $\rho = (n,\dots,1)$. In particular, since $n \geq 2$, we see $\lambda_{x'} \neq \lambda_w$, hence $x' \neq w$.

Write $\tilde{\lambda} = (\tilde{\lambda}_1, \dots, \tilde{\lambda}_n ; \tilde\lambda_0)$ with $\tilde{\lambda}_1 > \tilde{\lambda}_2 > \cdots > \tilde{\lambda}_n \geq n$ sufficiently regular (recalling $\tilde\lambda_i - \tilde\lambda_{i+1} \geq n$). Suppose that $w = w_{\cB}$ and $x' = w_{\cC}$ for $\cB,\cC \subset \{1,\dots,n\}$, with notation as in Lemma \ref{Lem:ExplicitMWset}. Explicitly, we have
\[
(\lambda_{w_{\cB}})_i = \left\{ \begin{array}{cc} (\tilde{\lambda}+\rho)_{b_i'} - \rho_i & \text{ if } 1 \leq i \leq \# \cB^c \\ -(\tilde{\lambda}+\rho)_{b_{n+1-i}} - \rho_i & \text{ if } \# \cB^c + 1 \leq i \leq n  \end{array} \right.,
\]
and similarly for $\lambda_{w_{\cC}}$. As $w \neq x'$ we have $\cB \neq \cC$, so $\cB^c \neq \cC^c$. We consider three cases:
\begin{itemize}
    \item If $\# \cC^c > \# \cB^c$, let $i = \# \cB^c + 1$. Then 
    \[
        (\tilde{\lambda}+\rho)_{c_i'} = -(\tilde{\lambda}+\rho)_{b_{n+1-i}} + (n-1) \iff \tilde{\lambda}_{c_i'} + \tilde{\lambda}_{b_{n+1-i}} = c_i' + b_{n+1-i} - n - 3,
    \]
    which can't happen as $\tilde\lambda$ is sufficiently regular; the left-hand side is $\geq 2n$, whilst the right-hand side is $\leq n -3$.
    
    \item If $\# \cB^c > \# \cC^c$, let $i = \# \cC^c + 1$. Then
    \[
        -(\tilde{\lambda}+\rho)_{c_{n+1-i}} = (\tilde{\lambda}+\rho)_{b_{i}'} + (n-1),
    \]
    impossible as the left-hand side is $<0$ whereas the right-hand side is $>0$. 
    \item If $\# \cB^c = \# \cC^c$, let $1 \leq i \leq \# \cB^c$ such that $b_i' \neq c_i'$. Then  $(\tilde{\lambda}+\rho)_{c_i'} = (\tilde{\lambda}+\rho)_{b'_{i}} + (n-1) \iff$
    \begin{equation}\label{eq:third case}
     \tilde{\lambda}_{c_i'} - \tilde{\lambda}_{b'_{i}} = c_i' - b_i' + n -1.
    \end{equation}
    As $\tilde\lambda$ is sufficiently regular, we have $|\tilde\lambda_r - \tilde\lambda_s| \geq |r-s|n$ for all $r,s \in \{1,\dots,n\}$. The equality \eqref{eq:third case} implies that
    \[
    |c_i' - b_i'|n \leq |\tilde{\lambda}_{c_i'} - \tilde{\lambda}_{b_i'}| =  |c_i' - b_i' + n-1| \leq |c_i' - b_i'| + n-1 ,
    \]
    hence $|c_i' - b_i'| = 1$ and $|\tilde\lambda_{c_i'} - \tilde\lambda_{b_i'}| = n$. This is incompatible with \eqref{eq:third case}, noting that by regularity $\tilde\lambda_r-\tilde\lambda_s$ and $r-s$ have different signs if $r\neq s$.
\end{itemize} 
Thus $\lambda_{x'} \neq \lambda_w + (n-1,...,n-1)$, whence $\sigma' \not\cong \sigma^\vee(2)$. By contradiction, Claim \ref{claim:at most one w} holds.

\medskip

We now complete the proof of Proposition \ref{Prop:Char0LGforHecke}. Combining \eqref{eq:cusp + boundary tildeG}, the filtration from Proposition \ref{Prop:DescFiltOnOrd}, and both claims, we have
\begin{equation}\label{eq:everything cuspidal}
\opn{H}^d(X_{\tilde{G}, \tilde{K}}, V_{\tilde{\lambda}, \Qpb})^{\opn{ord}}_{\ide{b}} = \opn{H}_{\opn{cusp}}^d(X_{\tilde{G}, \tilde{K}}, V_{\tilde{\lambda}, \Qpb})^{\opn{ord}}_{\ide{b}} \oplus \bigoplus_{g \in P(\mbb{Q})\backslash \tilde{G}(\mbb{Z}_{S-\{p\}}) / \tilde{K}^p_S} \opn{H}^{l(w)}_{\opn{cusp}}(X_{M, K}, V_{\lambda_x, \Qpb})^{\opn{ord}}_{\ide{a}}
\end{equation}
for some $w \in {^MW}$ and $\ide{a}$ as above (where the second summand is zero if no such $w$ exists). Thus $\ide{b}$ arises from a regular algebraic cuspidal automorphic representation $\sigma$ of $\tilde{G}(\mbb{A})$ (if it contributes to the first summand) or $M(\mbb{A})$ (the second).  As the ordinary part of $\sigma$ is at most 1-dimensional, we see that $\tilde{\mbf{T}}^{S, \opn{ord}}$ acts semisimply on $\opn{H}^d(X_{\tilde{G}, \tilde{K}}, V_{\tilde{\lambda}, \Qpb})^{\opn{ord}}_{\ide{b}}$, hence on $\opn{H}^d (X_{\tilde{G}, \tilde{K}}, V_{\tilde{\lambda}, \mathcal{O}} )_{\tilde{\ide{m}}}^{\opn{ord}}[1/p]$, completing the proof of (1). 

Finally, let $\ide{n}$ be as in the statement of the proposition. Base-changing to $\overline{\Q}_p$ and using the above, we see $\ide{n}$ arises from some $p$-ordinary regular algebraic cuspidal automorphic representation $\sigma$ of $\tilde{G}(\A)$ or $M(\A)$ via the above. The first two parts of (2) follow. If $\ide{n}$ contributes to the cuspidal cohomology, as above we let $\rho_{\ide{n}} = \rho_\sigma$ and conclude (2) and (3) by Theorem \ref{Thm:PiCARoftildeGLG}. If $\ide{n}$ contributes to the boundary cohomology, then we set
\[
\rho_{\ide{n}} = \rho_{\sigma}(-1) \oplus \mbf{1} \oplus \rho_{\sigma}^{\vee}(1) .
\]
By construction this satisfies the properties desired in (2). It remains to deduce (3) in this case. As explained above, the Galois representation $\rho_{\sigma}$ satisfies local-global compatibility at $\ell = p$ by Theorem \ref{Thm:Char0ExistenceOfGalRACAR}. Since $w \in {^M W}$, we may re-order the filtration on $\rho_{\ide{n}}|_{G_{\mbb{Q}_p}}$ so that all the desired properties are satisfied. More precisely the ordered set of parameters 
\[
\{ \psi_{\tilde{\lambda},n+2}, \dots, \psi_{\tilde{\lambda}, 2n+1}, \psi_{\tilde{\lambda}, 1}, \dots, \psi_{\tilde{\lambda}, n} \}
\]
for $\rho_{\ide{n}}$ (excluding the trivial parameter) is equal to the ordered set
\[
\{ \zeta_{w(1)}, \zeta_{w(2)}, \dots, \zeta_{w(n)}, \zeta_{w(n+1)}, \dots, \zeta_{w(2n)} \} 
\]
where 
\[
\{ \zeta_{1}, \dots, \zeta_{2n} \} = \{ \chi_{\lambda_x, 1} \otimes \omega^{-1}, \dots, \chi_{\lambda_x, n} \otimes \omega^{-1}, \chi_{\lambda_x, n}^{-1} \otimes \omega, \dots, \chi_{\lambda_x,1}^{-1} \otimes \omega \} 
\]
as ordered sets. Here $\chi_{\lambda_x, i}$ are the characters associated with $\sigma$ as in Definition \ref{DefOfUnivGaloisChars}.
\end{proof}

We note two important consequences of this proposition. After inverting $p$, we obtain a Galois representation attached to $\tilde{\m}$ valued in the Hecke algebra satisfying local-global compatibility at $\ell\not\in S$ and at $p$. Integrally, whilst we don't have a full representation, we can construct a Galois determinant.

\begin{corollary} \label{Cor:Char0HeckeContDeterminant}
Consider notation and assumptions as in Proposition \ref{Prop:Char0LGforHecke}. 
\begin{itemize}
\item[(1)] After inverting $p$, there exists a continuous semisimple Galois representation
\[
    \sigma_{\tilde{\m}} : G_{\Q} \to \GL_{2n+1}\Big(\tilde{\mbf{T}}^{S,\opn{ord}}\Big(\h^d(X_{\tilde{G},\tilde{K}}, V_{\tilde{\lambda},\cO})^{\opn{ord}}_{\tilde{\m}}\big[\tfrac{1}{p}\big]\Big)\Big)
\]
such that
    \begin{itemize}
        \item[(a)] for all $\ell \notin S$, the (reverse) characteristic polynomial $\opn{det}(1-\opn{Frob}_{\ell}^{-1}X|\sigma_{\tilde{\m}})$ is equal to the image of $\tilde{H}_{\ell}(X)$ in $\tilde{\mbf{T}}^{S,\opn{ord}}(\h^d(X_{\tilde{G},\tilde{K}}, V_{\tilde{\lambda},\cO})^{\opn{ord}}_{\tilde{\m}}[\tfrac{1}{p}])[X]$;
        \item[(b)] one has
        \[
        \sigma_{\tilde{\m}}|_{G_{\mbb{Q}_p}} \sim \left( \begin{array}{cccc} \psi_{\tilde{\lambda},1} & * & * & * \\ & \psi_{\tilde{\lambda}, 2} & * & * \\ & & \ddots & \vdots \\ & & & \psi_{\tilde{\lambda},2n+1} \end{array} \right),
        \]
         where $\psi_{\tilde{\lambda}, i} \colon G_{\mbb{Q}_p} \to \tilde{\mbf{T}}^{S,\opn{ord}}\Big(\h^d(X_{\tilde{G},\tilde{K}}, V_{\tilde{\lambda},\cO})^{\opn{ord}}_{\tilde{\m}}\big[\tfrac{1}{p}\big]\Big)^\times$ is as in Definition \ref{DefOfUnivGaloisChars}.
    \end{itemize}

\item[(2)] Integrally, there exists a continuous $(2n+1)$-dimensional determinant
    \[
    \tilde{D} \colon G_{\mbb{Q}} \to \tilde{\mbf{T}}^{S, \opn{ord}}( \opn{H}^d(X_{\tilde{G}, \tilde{K}}, V_{\tilde{\lambda}, \mathcal{O}})_{\tilde{\ide{m}}}^{\opn{ord}})
    \]
    such that $\tilde{D} = \opn{det}(\sigma_{\tilde{\m}})$.
    \end{itemize}
\end{corollary}
\begin{proof}
For (1), we take $\sigma = \oplus_{\ide{n}} \rho_{\ide{n}}$, where $\ide{n}$ ranges over the maximal ideals as Proposition \ref{Prop:Char0LGforHecke}.

For (2), by Proposition \ref{Prop:Char0LGforHecke}(1/2), we have an embedding
    \[
    \tilde{\mbf{T}}^{S, \opn{ord}}( \opn{H}^d(X_{\tilde{G}, \tilde{K}}, V_{\tilde{\lambda}, \mathcal{O}})_{\tilde{\ide{m}}}^{\opn{ord}}) \hookrightarrow \prod_{\ide{n}} \tilde{\mbf{T}}^{S, \opn{ord}}( \opn{H}^d(X_{\tilde{G}, \tilde{K}}, V_{\tilde{\lambda}, \mathcal{O}})_{\tilde{\ide{m}}}^{\opn{ord}}[1/p])_{\ide{n}} . 
    \]
    The result then follows from \cite[Example 2.32]{ChenevierPadic}.
\end{proof}

\subsubsection{Some important lemmas}

We pause to give two important technical lemmas.

For a weight $\lambda = (\lambda_1, \dots, \lambda_n) \in X^*(T_{\GL_n})$ and integer $a \in \mbb{Z}$, let $\lambda(a) \in X^*(T)$ denote the weight
\[
\lambda(a) = (\lambda(a)_1, \dots, \lambda(a)_n; 0) = (\lambda_1 + a, \dots, \lambda_n + a; 0) .
\]

\begin{lemma}[{c.f., \cite[Lemma 5.4.8]{10author}}] \label{Lem:choosinglambdawa}
    Let $m \geq 1$ be an integer. Then there exists a $\GL_n$-dominant weight $\lambda \in X^*(T_{\GL_n})$ such that:
    \begin{enumerate}
        \item $\mathcal{O}(\lambda)/\varpi^m \cong \mathcal{O}/\varpi^m$;
        \item For all $0 \leq j \leq d$, there exists $a_j \in (p-1)\mbb{Z}$ and $w_j \in {^MW}$ with $l(w_j) = d-j$, such that
        \[
        \tilde{\lambda}_j \defeq x_j^{-1} \star \lambda(a_j) = x_j^{-1} \cdot (\lambda(a_j) + \rho) - \rho
        \]
        is $\tilde{G}$-dominant. Here, $x_j = w_0^M w_j w_0^{\tilde{G}}$ and $\rho$ denotes the half-sum of the positive roots of $\tilde{G}$.
        \item The weight $\tilde{\lambda}_j$ in (2) is sufficiently regular as in Proposition \ref{Prop:Char0LGforHecke}.
    \end{enumerate}
\end{lemma}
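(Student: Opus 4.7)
The plan is to choose a single, very regular, highly divisible $\GL_n$-dominant weight $\lambda$ and trace the line $a \mapsto \lambda(a) + \rho$ as it sweeps across the $M$-dominant Weyl chamber $C_M^+$ of $\tilde{G}$. For condition (1), I would pick a positive integer $N$ such that $N\mbb{Z}$ induces the trivial character on $\mbb{Z}_p^\times$ modulo $\varpi^m$ (for example $N = (p-1)p^{em}$, where $e$ is the ramification index of $L/\mbb{Q}_p$), and take $\lambda = (\lambda_1, \ldots, \lambda_n) \in N\mbb{Z}^n$ strictly decreasing; the lower bound on $\lambda_n$ and on the gaps $\lambda_i - \lambda_{i+1}$ will be fixed by the regularity requirement at the end.

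The geometric input I would use is the following. Writing $\lambda(a) + \rho = (v_1(a), \ldots, v_n(a); 0)$ with $v_i(a) \defeq \lambda_i + a + (n+1-i)$, the line lies in $C_M^+$ for all $a \in \mbb{R}$, since the $v_i(a)$ are strictly decreasing. The $\tilde{G}$-walls inside $C_M^+$ are the $n$ hyperplanes $\{v_i = 0\}$ (coming from the roots $2e_i$) and the $\binom{n}{2}$ hyperplanes $\{v_i + v_j = 0\}$ for $i<j$ (coming from $e_i + e_j$), totaling $d$ walls. The line's direction $(1,\ldots,1;0)$ pairs non-trivially with each relevant coroot, so it crosses each wall transversely exactly once; for generic $\lambda$ the $d$ crossing values of $a$ are pairwise distinct, partitioning $\mbb{R}$ into $d+1$ open intervals $I_0, I_1, \ldots, I_d$ in order of decreasing $a$.

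The central claim would then be that the element $x_j \in {^M}W$ associated to the chamber above $I_j$ has $l(x_j) = j$. At $a \to +\infty$ one has $x_0 = \mathrm{id}$ of length $0$; at $a \to -\infty$ all $v_i(a) < 0$ with $|v_i(a)|$ strictly increasing in $i$, and the corresponding element is $x_d = w_0^M w_0^{\tilde{G}}$ of length $d$. At each wall crossing, the two chambers on either side are related by a single reflection in $W_{\tilde{G}}$, and since both remain inside $C_M^+$ their ${^M}W$-representatives have lengths differing by exactly $\pm 1$. The total length change of $d$ across $d$ crossings forces every signed increment to be $+1$, so the lengths are $0, 1, \ldots, d$ in order. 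Setting $w_j \defeq w_0^M x_j w_0^{\tilde{G}} \in {^M}W$ then yields $l(w_j) = d - j$, verifying (2).

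Finally, I would pick $a_j \in (p-1)\mbb{Z} \cap I_j$ at distance $\geq n+1$ from every crossing value, which is possible uniformly once $\lambda_n$ and the gaps $\lambda_i - \lambda_{i+1}$ are made sufficiently large. The coordinates $u_1 > \cdots > u_n > 0$ of the signed permutation $x_j^{-1}(\lambda(a_j) + \rho)$ then satisfy $u_i - u_{i+1} \geq n+1$ and $u_n \geq n+1$, so $\tilde{\lambda}_j$ has $(\tilde{\lambda}_j)_i - (\tilde{\lambda}_j)_{i+1} \geq n$ and $(\tilde{\lambda}_j)_n \geq n$, giving sufficient regularity (3). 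The principal obstacle is the length-by-one claim of the third paragraph, i.e.\ that crossing a $\tilde{G}$-wall inside $C_M^+$ changes the length of the associated ${^M}W$-element by exactly $\pm 1$; this follows from the Coxeter-theoretic fact that adjacent $W_{\tilde{G}}$-chambers are related by a single reflection, combined with the bijection between ${^M}W$ and the $\tilde{G}$-chambers inside $C_M^+$.
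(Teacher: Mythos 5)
Your argument is correct, and it takes a genuinely different and more conceptual route than the paper's. The paper works from the explicit parametrisation of ${}^M W$ by subsets $\mathcal{B}\subset\{1,\dots,n\}$ (Lemma~\ref{Lem:ExplicitMWset}), fixes the geometric progression $\lambda_i = (2^n-2^i)M$ with $M$ divisible by $8n(p-1)\#(\mathcal{O}/\varpi^m)^\times$, and for each $j$ writes down the four inequalities that $x_j^{-1}\cdot\lambda(a)$ must satisfy to be $\tilde{G}$-dominant, then solves them to exhibit an interval of admissible $a$'s of length $\geq M/2$. You instead track the line $a\mapsto\lambda(a)+\rho$ as it crosses the $d$ walls of $C_M^+$ and read off the lengths of the corresponding ${}^M W$-elements from the Coxeter fact that adjacent chambers $C_w$ and $C_{ws_{\alpha_i}}$ (both inside $C_M^+$, hence both in ${}^M W$) have lengths differing by exactly one, forcing the length sequence $0,1,\dots,d$ once the endpoint lengths are known. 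This is cleaner and shows \emph{why} the lemma is true, whereas the paper's proof shows \emph{that} it is true by brute force. The price you pay is that your uniformity step is thinner than the paper's: you assert that ``once $\lambda_n$ and the gaps $\lambda_i-\lambda_{i+1}$ are made sufficiently large'' one can find $a_j\in(p-1)\mbb{Z}\cap I_j$ with the needed clearance, and that clearance $\geq n+1$ from the crossings then yields regularity. The first half needs a lower bound on the interval lengths in terms of the $\lambda$-spacing (exactly what the paper's $M>8(C+n)$ is for); the second half is a bit more delicate than stated, since the gaps $u_i-u_{i+1}$ of $x_j(\lambda(a_j)+\rho)$ involve not just $|v_k(a_j)|$ individually but also the $a$-independent constants $\lambda_k-\lambda_l$ (for same-sign pairs) and the rate-$2$ quantities $v_k+v_l$ (for opposite-sign pairs). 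None of this is a gap in the sense of a wrong step — the needed bounds are clearly obtainable once $\lambda$ is spread out — but a full write-up would need to spell them out, as the paper does.
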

\begin{proof}
    Recall the explicit description of ${^MW}$ from Lemma \ref{Lem:ExplicitMWset}, and note that $w_0^M w_{\mathcal{B}} w_0^{\tilde{G}} = w_{{\mathcal{B}}^c}$. Let $1 \leq j \leq d$, and let
    \[
    {\mathcal{B}}_j = \{ 1, 2, \dots, y-1, y , z \}
    \]
    where $0 \leq y < z \leq n+1$ are integers such that $j = (y+1)(n+1) - (y+1)y/2 - z$ (if $y=0$, then ${\mathcal{B}}_j = \{ z \}$, and if $z=n+1$, then ${\mathcal{B}}_j = \{1, \dots, y \}$). We set ${\mathcal{B}}_0 = \varnothing$. Let $a \in \mbb{Z}$ be an integer, and set $x_j = w_{{\mathcal{B}}_j}$ and $w_j = w_{{\mathcal{B}}_{j}^c}$. 

    Let $C \gg n$ be a sufficiently large integer (depending only on $n$) such that the absolute value of any entry of $w^{-1} \cdot \rho - \rho$ (written as $(v_1, \dots, v_n; 0)$, say) for any $w \in {^MW}$ is $< C$. Let $M > 8(C+n)$ be a non-negative integer divisible by $8n(p-1)\# (\mathcal{O}/\varpi^m)^{\times}$. We claim that the weight 
    \[
    \lambda = ((2^n - 2)M, (2^n - 4)M, (2^n - 8)M, \dots, 0 ) 
    \]
    works (i.e., $\lambda = (\lambda_1, \dots, \lambda_n)$ with $\lambda_i = (2^n - 2^i)M$). Clearly part (1) of the lemma is satisfied for this weight, so we will check properties (2) and (3).

    We first calculate $x_j^{-1} \cdot \lambda(a)$ using the description in Lemma \ref{Lem:ExplicitMWset}. We have:
    \begin{align*} 
    x_j^{-1} \cdot \lambda(a) &= (\lambda(a)_{x_j(1)}, \dots, \lambda(a)_{x_j(n)}; *) \\
     &= (-\lambda(a)_{n}, \dots, -\lambda(a)_{n+1-y}, \lambda(a)_{1}, \dots, \lambda(a)_{z-y-1}, -\lambda(a)_{n-y}, \lambda(a)_{z-y}, \dots, \lambda(a)_{n-y-1}; *) .
    \end{align*} 
    If $j=0$ (resp. $j=d$), then this is $\tilde{G}$-dominant if only if $a \geq 0$ (resp. $a \leq -(2^n - 2)M$). Clearly, in both cases, we can find $a_j \in (p-1)\mbb{Z}$ such that $x_j^{-1} \star \lambda(a_j)$ is dominant and sufficiently regular (because $2(M-C) \geq n$). We therefore assume that $j \neq 0,d$.
    
    The weight $x_j^{-1} \cdot \lambda(a)$ is $\tilde{G}$-dominant if and only if the following four inequalities are satisfied:\footnote{Here, in any of these inequalities, if the argument on the left-hand side (resp. right-hand side) is undefined, we replace it with $+\infty$ (resp. $0$).}
    \begin{align*}
        -(2^n - 2^{n+1-y})M - a &\geq (2^n - 2)M + a, \\
        (2^n - 2^{z-y-1})M + a &\geq -(2^n - 2^{n-y})M - a, \\
        -(2^n - 2^{g-y})M - a &\geq (2^n - 2^{z-y})M + a, \\
        (2^n - 2^{g-y-1})M + a &\geq 0 .
    \end{align*}
    If $y = 0$ (so $z \leq n$ by our above assumption), then we see that this is implied if 
    \[
    -(2^n - 2^{z-1}) \tfrac{M}{2} \leq a \leq -(2^n-2^{z})\tfrac{M}{2} .
    \]
    Note that this range has length $\geq M/2$. In particular, we can take $a$ to be an integer in this range congruent to $M/8$ modulo $M/2$, and $\tilde{\lambda}_j$ is dominant and sufficiently regular (because one must have $ -(2^n - 2^{z-1}) \tfrac{M}{2} + C + g \leq a \leq -(2^n-2^{z})\tfrac{M}{2}- C-n$ by the condition $M > 8(C+n)$). 

    Finally, we assume that $1 \leq y \leq n-1$ and $z > y+1$. Then the four inequalities are implied by 
    \begin{align*}
        -(2^{n+1}-2^{n-y}-2^{z-y-1})\tfrac{M}{2} \leq a \leq -(2^{n+1}-2^{n-y}-2^{z-y})\tfrac{M}{2} & \quad \text{ if } z \neq n+1 \\
        -(2^{n+1} - 2^{n+1-y})\tfrac{M}{2} \leq a \leq -(2^{n+1} - 2^{n+1-y}-2)\tfrac{M}{2} & \quad \text{ if } z = n+1
    \end{align*}
    and these ranges again have size $\geq M/2$. We can then take $a_j$ in this range congruent to $M/8$ modulo $M/2$, and for the same reasons above, $\tilde{\lambda}_j$ will be dominant and sufficiently regular.
\end{proof}

\begin{lemma} \label{Lem:CharacterAvoidance}
    Let $p$ be an odd prime. Let $S_0$ be a finite set of primes containing $p$ and let $\ell \not\in S_0$. Then, for any finite set $T \subset \overline{\mbb{F}}^{\times}_p$, there exists a finite set $S$ of primes containing $S_0$ but not containing $\ell$, and a finite-order character $\psi \colon G_{\mbb{Q}} \to \overline{\mbb{F}}_p^{\times}$ such that
    \begin{itemize}
        \item $\psi$ is unramified outside $S \backslash S_0$, and
        \item $\psi(\opn{Frob}_{\ell}), \psi(\opn{Frob}_p) \not\in T$.
    \end{itemize}
\end{lemma}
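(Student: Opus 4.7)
The plan is to construct $\psi$ as a Dirichlet character ramified at precisely two auxiliary primes $q_1, q_2$, chosen via Chebotarev so that $\psi(\mathrm{Frob}_\ell)$ and $\psi(\mathrm{Frob}_p)$ can be prescribed independently in a large group of roots of unity.

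First I would fix a prime $M > |T|$ with $M \notin \{p,\ell\}$. A short norm argument shows that for such $M$ the elements $\ell, p$ are multiplicatively independent modulo $M$-th powers in $\mathbb{Q}(\zeta_M)^\times$: if $\ell^a p^b = \alpha^M$ for some $\alpha \in \mathbb{Q}(\zeta_M)^\times$, then applying $N_{\mathbb{Q}(\zeta_M)/\mathbb{Q}}$ gives
\[
\ell^{a(M-1)} p^{b(M-1)} = N(\alpha)^M,
\]
and comparing $\ell$- and $p$-adic valuations forces $M \mid a(M-1)$ and $M \mid b(M-1)$, whence $a \equiv b \equiv 0 \pmod M$ since $\gcd(M, M-1)=1$. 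Kummer theory then gives $\mathrm{Gal}(L/\mathbb{Q}(\zeta_M)) \cong (\mathbb{Z}/M\mathbb{Z})^2$ for $L = \mathbb{Q}(\zeta_M, \ell^{1/M}, p^{1/M})$, with the two factors detecting whether $\ell$ and $p$ are $M$-th powers.

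Next, Chebotarev's theorem applied to $L/\mathbb{Q}$ produces infinitely many unramified primes in each Frobenius class of $\mathrm{Gal}(L/\mathbb{Q}(\zeta_M))$. In particular, I can select distinct primes $q_1, q_2 \notin S_0 \cup \{\ell\}$, both congruent to $1$ modulo $M$, such that $p$ is an $M$-th power but $\ell$ is not modulo $q_1$, and $\ell$ is an $M$-th power but $p$ is not modulo $q_2$. For each $i$, choose a character $\psi_i : (\mathbb{Z}/q_i\mathbb{Z})^\times \to \overline{\mathbb{F}}_p^\times$ of order exactly $M$ (which exists since $M \mid q_i - 1$ and $M \neq p$). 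By construction $\psi_1(p) = \psi_2(\ell) = 1$, while $\psi_1(\ell)$ and $\psi_2(p)$ are primitive $M$-th roots of unity (they are nontrivial elements of $\mu_M$, which is of prime order $M$).

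Finally, view the Dirichlet character $\psi = \psi_1^a \psi_2^b$ (of conductor dividing $q_1 q_2$) via class field theory as a finite-order character $G_\mathbb{Q} \to \overline{\mathbb{F}}_p^\times$. Then
\[
\psi(\mathrm{Frob}_\ell) = \psi_1(\ell)^a, \qquad \psi(\mathrm{Frob}_p) = \psi_2(p)^b,
\]
and these range \emph{independently} over $\mu_M \times \mu_M$ as $(a,b) \in (\mathbb{Z}/M)^2$ varies. Since $|T| < M = |\mu_M|$, we can choose $a,b$ so that neither value lies in $T$. Setting $S = S_0 \cup \{q_1, q_2\}$ (so $\ell \notin S$), the character $\psi$ is unramified outside $S \setminus S_0 = \{q_1, q_2\}$, completing the proof.

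No step presents a serious obstacle -- the only subtle point is the two-auxiliary-prime construction, which is needed to decouple the values of $\psi$ at $\ell$ and $p$ (a single auxiliary prime would yield values lying in a cyclic subgroup, which might be entirely contained in $T$).
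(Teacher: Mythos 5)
Your proof is correct, but it takes a genuinely different route from the paper's. You construct $\psi$ from \emph{two} auxiliary primes $q_1, q_2$, chosen via Kummer theory and Chebotarev so that the resulting characters $\psi_1, \psi_2$ decouple $\ell$ and $p$ (one is trivial on $p$ but not $\ell$, the other the reverse); independence of the values $\psi(\opn{Frob}_\ell), \psi(\opn{Frob}_p)$ then lets you dodge $T$ coordinate-by-coordinate. The paper instead uses a \emph{single} auxiliary prime $q$: it takes $M$ to be the largest order of an element of $T$, picks $q$ with $q \not\equiv 1 \pmod p$ and $q > \ell^M, p^M$, and takes $\chi$ of full order $q-1$. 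The size bound forces the multiplicative orders $a,b$ of $\ell, p$ mod $q$ to exceed $M$ (since $\ell^j \equiv 1 \pmod q$ with $1 \le j \le M$ would force $\ell^j = 1$), and $q \not\equiv 1 \pmod p$ ensures $p \nmid a,b$ so the orders survive reduction mod $p$; thus $\psi(\opn{Frob}_\ell), \psi(\opn{Frob}_p)$ have order $> M$ and avoid $T$. The paper's argument is shorter and more elementary (no Kummer theory, no independence claim, just an order bound); yours is more structural and gives stronger control, realising any prescribed pair of values in $\mu_M \times \mu_M$. Your closing remark that a single auxiliary prime ``would yield values lying in a cyclic subgroup, which might be entirely contained in $T$'' is not actually a defect of the one-prime approach: the paper's trick is not to leave the cyclic subgroup but to push the \emph{orders} of the values above anything appearing in $T$.
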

\begin{proof}
    Let $M \geq 1$ be the largest integer such that $T$ contains an $M$-th root of unity in $\overline{\mbb{F}}_p^{\times}$ (note that $M$ always exists). Let $q$ be an odd prime with the following properties:
    \begin{itemize}
        \item $q$ is coprime to $\ell p$,
        \item $q \not\equiv 1$ modulo $p$, and
        \item $\ell^{M} < q$ and $p^{M} < q$.
    \end{itemize}
    Such a prime exists by the Chebotarev density theorem applied to $\mbb{Q}(\mu_p)$. Let $a$ denote the order of $\ell \in \left(\mbb{Z}/q\mbb{Z} \right)^{\times}$ and $b$ the order of $p \in \left(\mbb{Z}/q\mbb{Z}\right)^{\times}$. Note that $a, b$ are both prime to $p$ by the second bullet point, and $a,b > M$ by the third bullet point. 

    Let $S = S_0 \cup \{ q \}$ and let $\chi$ be a primitive Dirichlet character modulo $q$ which maps a fixed choice of primitive root modulo $q$ to a fixed choice of $(q-1)$-th root of unity $\zeta_{q-1} \in \overline{\mbb{Q}}^{\times}$. Then $\chi(\ell)$ is a primitive $a$-th root of unity and $\chi(p)$ is a primitive $b$-th root of unity. Let $\psi = \bar{\chi}$ denote the reduction of $\chi$ modulo $p$ (viewed as a Galois character in the usual way). Then as $a, b$ are prime to $p$, both $\psi(\opn{Frob}_\ell)$ and $\psi(\opn{Frob}_p)$ are roots of unity in $\overline{\mbb{F}}_p^{\times}$ of order $> M$, so not contained in $T$.
\end{proof}

\subsubsection{Mixed characteristic local-global compatibility}

We now discuss the mixed-characteristic local-global compatibility. Let $S$ be a finite set of primes containing $p$, and let $\ide{m}_0 \subset \mbf{T}_{\opn{GL}_n}^S$ denote a maximal ideal with residue field $k = \mathcal{O}/\varpi$. We suppose that there exists a continuous semisimple Galois representation
\[
\overline{\rho}_{\ide{m}_0} \colon G_{\mbb{Q}} \to \opn{GL}_n(k)
\]
associated with $\ide{m}_0$ as in Assumption \ref{GaloisTypeAssumption}. If $\psi \colon G_{\mbb{Q}} \to \mathcal{O}^{\times}$ is a continuous finite-order character which is unramified outside $S$, then we let $\ide{m}_0(\psi) \subset \mbf{T}_{\opn{GL}_n}^S$ denote the twist of $\ide{m}_0$ by $\psi$. Explicitly, if $\Theta \colon \mbf{T}^S_{\opn{GL}_n} \to k$ is the $\mathcal{O}$-algebra homomorphism with kernel $\ide{m}_0$, then $\ide{m}_0(\psi)$ is the kernel of 
\[
[\opn{GL}_n(\widehat{\mbb{Z}}^S) \cdot m \cdot \opn{GL}_n(\widehat{\mbb{Z}}^S)] \mapsto \overline{\psi}(\opn{det}m) \cdot \Theta([\opn{GL}_n(\widehat{\mbb{Z}}^S) \cdot m \cdot \opn{GL}_n(\widehat{\mbb{Z}}^S)]), \quad \quad m \in \opn{GL}_n(\mbb{A}_f^S) ,
\]
where $\overline{\psi}$ denotes the mod $\varpi$ reduction of $\psi$. Note that one has a Galois representation associated with $\ide{m}_0(\psi)$ as in Assumption \ref{GaloisTypeAssumption}, namely $\overline{\rho}_{\ide{m}_0(\psi)} = \overline{\rho}_{\ide{m}_0} \otimes \overline{\psi}$.

\begin{proposition}[{c.f.\ \cite[Proposition 5.4.18]{10author}}] \label{Prop:MixedCharLGcompat}
    Let $p > 2n$. Suppose that $\overline{\rho}_{\ide{m}_0}$ is absolutely irreducible and decomposed generic. Let $c \geq b \geq 0$ with $c \geq 1$, and let $K \subset \GL_n(\mbb{A}_f)$ denote a neat compact open subgroup which is hyperspecial outside $S$ and equal to $\opn{Iw}(b, c)$ at $p$. Let $m \geq 1$ and let $\lambda$, $\{a_j \}$ and $\{w_j\}$ be as in Lemma \ref{Lem:choosinglambdawa}. Let $1 \leq j \leq d$, and set
    \[
    \mbf{T}_{\GL_n}^{S, \opn{ord}}(b, c, \lambda; j) = \mbf{T}_{\GL_n}^{S, \opn{ord}}\left( \opn{H}^{d-j}(X_{\GL_n, K}, V_{\lambda, \mathcal{O}})^{\opn{ord}}_{\ide{m}_0} \right) .
    \] 
    Then there exists a nilpotent ideal $J \subset \mbf{T}_{\GL_n}^{S, \opn{ord}}(b, c, \lambda; j)$ (with nilpotence degree only depending on $n$), and a continuous semisimple representation 
    \[
    \varrho_{j} \colon G_{\mbb{Q}} \to \GL_n(\mbf{T}_{\GL_n}^{S, \opn{ord}}(b, c, \lambda; j)/J)
    \]
    such that:
    \begin{enumerate}
        \item For every $\ell \notin S$, $\varrho_{j}$ is unramified at $\ell$ and the characteristic polynomial
        \[
        \opn{det}(1 - \opn{Frob}_{\ell}^{-1} X | \varrho_j)
        \]
        equals the image of the Hecke polynomial $H_{\ell}(X)$ in $(\mbf{T}_{\GL_n}^{S, \opn{ord}}(b, c, \lambda; j)/J)[X]$.
        \item For every $x \in G_{\mbb{Q}_p}$, the characteristic polynomial of $\varrho_j(x)$ equals $\prod_{i=1}^n (X - \chi_{\lambda, i}(x))$. Here we are viewing the characters $\chi_{\lambda, i}$ from Definition \ref{DefOfUnivGaloisChars} as being valued in $(\mbf{T}_{\GL_n}^{S, \opn{ord}}(b, c, \lambda; j)/J)^{\times}$.
        \item For each $x_1, \dots, x_n \in G_{\mbb{Q}_p}$, we have
        \[
        (\varrho_j(x_1) - \chi_{\lambda, 1}(x_1)) \cdots (\varrho_j(x_n) - \chi_{\lambda, n}(x_n)) = 0 .
        \]
    \end{enumerate}
\end{proposition}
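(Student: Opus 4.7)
The proof will proceed by transferring information from the cohomology of Shimura varieties for $\tilde{G} = \opn{GSp}_{2n}$, where we have full local-global compatibility at $\ell=p$ by Proposition \ref{Prop:Char0LGforHecke}, back to the locally symmetric space for $\GL_n$ via the homomorphism $\mathcal{S}_{w_j}$ of Proposition \ref{Prop:SwHomOnHecke}. The weight $\tilde{\lambda}_j = x_j^{-1}\star\lambda(a_j)$ from Lemma \ref{Lem:choosinglambdawa} is engineered precisely so that $\mathcal{S}_{w_j}$ lands in the Hecke algebra acting on $\opn{H}^{d-j}(X_{M,K}, V_{\lambda(a_j),\mathcal{O}})^{\opn{ord}}_{\m}$, which after forgetting the (trivial) $\GL_1$-factor and using that $\cO(\lambda(a_j))/\varpi^m \cong \cO(\lambda)/\varpi^m \cong \cO/\varpi^m$ (the latter by Lemma \ref{Lem:choosinglambdawa}(1)) maps to $\mbf{T}_{\GL_n}^{S,\opn{ord}}(b,c,\lambda;j)/\varpi^m$.

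The first step is to produce an $n$-dimensional continuous determinant on $G_\Q$ valued in $\mbf{T}_{\GL_n}^{S,\opn{ord}}(b,c,\lambda;j)/\varpi^m$, and then pass to the limit in $m$ to get a determinant valued in $\mbf{T}_{\GL_n}^{S,\opn{ord}}(b,c,\lambda;j)$. Starting with the $(2n+1)$-dimensional determinant $\tilde{D}$ from Corollary \ref{Cor:Char0HeckeContDeterminant} on a $\tilde{G}$ Hecke algebra $\tilde{\mbf{T}}^{S,\opn{ord}}(b,c,\tilde{\lambda}_j)$, push forward via $\cS_{w_j}$. The image pseudorepresentation is congruent modulo $\varpi$ to $\overline{\rho}_{\m_0(\omega^{-1})} \oplus \mathbf{1} \oplus \overline{\rho}_{\m_0(\omega^{-1})}^\vee(2)$ (after accounting for the shift by $a_j$ and the Weyl reordering from the proof of Proposition \ref{Prop:Char0LGforHecke}). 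To isolate the $n$-dimensional $\overline{\rho}_{\m_0(\omega^{-1})}$-factor, one applies Lemma \ref{Lem:CharacterAvoidance} with $T$ chosen as the union of the eigenvalues of $\overline{\rho}_{\m_0}(\opn{Frob}_\ell)$ and $\overline{\rho}_{\m_0}(\opn{Frob}_p)$ together with $\ell^{\pm 1}$ and $1$: this produces an auxiliary character $\psi$ such that after twisting, the three constituents become pairwise residually disjoint, so Chenevier's theory of pseudocharacters \cite[Example 2.32]{ChenevierPadic} (applied after inverting the resultants that distinguish the factors) splits the determinant into a product of three factors. Untwisting and taking a suitable Tate twist recovers an $n$-dimensional determinant $D_j$ with the correct residual representation.

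Since $\overline{\rho}_{\ide{m}_0}$ is absolutely irreducible, this determinant lifts to a genuine continuous representation $\varrho_j \colon G_\Q \to \GL_n(\mbf{T}_{\GL_n}^{S,\opn{ord}}(b,c,\lambda;j))$, and property (1) for $\ell \not\in S$ follows directly from the corresponding property of $\tilde{D}$, combined with the key identity \eqref{KeyIdentityOfHeckeEqn} relating $\mathcal{S}(\tilde{H}_\ell(X))$ with $H_\ell(X)$ and $H_\ell^\vee(X)$, which lets us read off the $H_\ell(X)$-factor after the separation above. For property (2) at $p$, the local-global compatibility of $\rho_{\ide{n}}|_{G_{\Q_p}}$ in Proposition \ref{Prop:Char0LGforHecke}(3) together with the reindexing of parameters at the end of that proof's proof shows that on each component of $\tilde{\mbf{T}}^{S,\opn{ord}}(b,c,\tilde{\lambda}_j)[1/p]$, the characteristic polynomial of $\varrho_j(x)$ specialises to $\prod_i(X - \chi_{\lambda,i}(x))$ (where the shift by $a_j$ is absorbed into the fact that $a_j \in (p-1)\Z$ and $\langle u\rangle_i$-operators are trivial on the residue, using $\cO(\lambda(a_j))/\varpi^m \cong \cO(\lambda)/\varpi^m$); since $\mbf{T}_{\GL_n}^{S,\opn{ord}}(b,c,\lambda;j)$ embeds into this product, property (2) holds.

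The main obstacle, and where the nilpotent ideal $J$ enters, is property (3): the Cayley--Hamilton-type relation $\prod_i(\varrho_j(x_i) - \chi_{\lambda,i}(x_i)) = 0$. This is a strictly stronger statement than (2) and does not follow formally from knowing characteristic polynomials. Following \cite[\S5.4]{10author}, the strategy will be to show that the ideal
\[
J \defeq \bigl\langle (\varrho_j(x_1) - \chi_{\lambda,1}(x_1))\cdots(\varrho_j(x_n) - \chi_{\lambda,n}(x_n)) : x_i \in G_{\Q_p}\bigr\rangle
\]
is nilpotent with nilpotence degree bounded only in terms of $n$. The argument uses the filtration on $\rho_{\ide{n}}|_{G_{\Q_p}}$ from Proposition \ref{Prop:Char0LGforHecke}(3) to show this product vanishes after specialising to any maximal ideal of $\mbf{T}_{\GL_n}^{S,\opn{ord}}(b,c,\lambda;j)[1/p]$, together with Nakayama and an $L$-algebra argument using that the semisimple quotient is a finite product of fields, to bound the nilpotence degree by a function of $n$ (essentially the number of steps in a composition series of length $n$, combined with the $\varpi$-torsion structure). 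The key subtlety is to arrange the bookkeeping so that the nilpotence degree does not depend on the weight $\lambda$, level $(b,c)$, or the ideal $\m_0$, which will be needed to pass to the limit and to deduce Theorem \ref{FourthMainThmintroLGcompat} uniformly.
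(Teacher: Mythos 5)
Your outline captures the key structural components — transfer from $\tilde{G}$ via $\mathcal{S}_{w_j}$, character avoidance to arrange disjointness of residual constituents, pseudocharacter/determinant manipulation, and a Cayley–Hamilton relation controlled by idempotents — but there is a genuine gap in the deduction, and your treatment of $J$ diverges from the paper's in a way that creates a second problem.

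The main gap is that one auxiliary twist is not enough. After transferring, the information available is the $(2n+1)$-dimensional identity \eqref{Eqn:StdRepModMaxDecomp}: the semisimplification of $\overline{\varrho}_j'|_{G_{\mathbb{Q}_p}}(-1)\oplus\mathbf{1}\oplus(\overline{\varrho}'_j|_{G_{\mathbb{Q}_p}})^\vee(1)$ equals the sum of the residual characters $\overline{\chi}'_{\lambda(a_j),i}\omega^{-1}$, $\mathbf{1}$, $(\overline{\chi}'_{\lambda(a_j),i})^{-1}\omega$. Your character $\psi$ (the paper's $\psi$) ensures the three constituents $\overline{\varrho}'_j(-1)$, $\mathbf{1}$, $(\overline{\varrho}'_j)^\vee(1)$ are pairwise disjoint at $p$ and that everything stays decomposed generic, which is what lets the $(2n+1)$-dimensional algebra split as $M_n\times M_1\times M_n$. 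But \eqref{Eqn:StdRepModMaxDecomp} alone does \emph{not} determine the multiset of Jordan–Hölder factors of $\overline{\varrho}'_j|_{G_{\mathbb{Q}_p}}$: a priori some $\overline{\chi}'_{\lambda(a_j),i}\omega^{-1}$ could sit inside $(\overline{\varrho}'_j)^\vee(1)$ rather than $\overline{\varrho}'_j(-1)$, with a compensating swap. Your proposal skips this. The paper introduces a \emph{second} auxiliary character $\chi$, runs the whole argument a second time with $\ide{M}_0 = \ide{m}_0(\psi\chi)$ in place of $\ide{m}_0(\psi)$, and compares the two copies of \eqref{Eqn:StdRepModMaxDecomp} to pin down \eqref{Eqn:DesiredSS}, i.e.\ $(\overline{\varrho}'_j|_{G_{\mathbb{Q}_p}})^{\mathrm{ss}}\cong\bigoplus_i\overline{\chi}'_{\lambda(a_j),i}$. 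Without this comparison, you cannot choose the idempotent $e$ in $(A/J)[G_{\mathbb{Q}_p}]$ with the required compatibility between the three matrix blocks and the characters $\psi_{\tilde\lambda_j,i}$, and the Cayley–Hamilton argument underlying parts (2) and (3) does not go through.

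The secondary issue concerns $J$. In the paper, the Galois representation $\varrho_j$ valued in $\GL_n(\mbf{T}_{\GL_n}^{S,\opn{ord}}(b,c,\lambda;j)/J)$ satisfying (1) is taken as given from Scholze's construction, and $J$ — with its nilpotence degree bounded only in terms of $n$ — is part of that package; the proof then establishes (2) and (3) for that representation via the idempotent argument, it does not need to re-derive nilpotence. You instead propose to build an $n$-dimensional determinant directly by ``splitting'' $\mathcal{S}_{w_j}\circ\tilde D$, lift it over the \emph{full} Hecke algebra, and then \emph{define} $J$ as the defect ideal of the Cayley–Hamilton relation, arguing it is nilpotent from vanishing on the generic fibre. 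The sketch of that nilpotence estimate does not obviously yield a bound depending only on $n$: vanishing at all characteristic-zero points controls $\sqrt{0}$, not the nilpotence degree, and ``the $\varpi$-torsion structure'' can be arbitrarily complicated. You would need additional input (e.g.\ a direct appeal to Scholze's bound, which is what the paper does) to make this uniform, and that uniformity is exactly what the subsequent limit in Theorem \ref{Thm:LevelbcmLGcompat} requires.

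Two smaller remarks: the phrase ``inverting the resultants'' is not quite the right mechanism here — the splitting of the Cayley–Hamilton quotient into matrix blocks uses that the residual constituents are disjoint over a Henselian local ring and happens integrally, with no localisation of $\cO$; and your application of Lemma \ref{Lem:CharacterAvoidance} with the avoidance set ``eigenvalues at $\ell$ and $p$ together with $\ell^{\pm1}$ and $1$'' is roughly in the right direction but does not match the more delicate sets $T$ the paper needs (involving $\ell^{1/2}$, $\ell^{3/2}$, etc.) to simultaneously keep decomposed genericity and arrange all the required disjointness conditions at $p$ for both $\psi$ and $\chi$.
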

\begin{proof}
    We first note that a Galois representation $\varrho_{j}$ satisfying (1) already exists from the work of Scholze \cite{ScholzeTorsion}. We therefore want to verify (2) and (3) for this representation. Since this only concerns the behaviour at $p$, we are free to enlarge $S$, increase $\mathcal{O}$ and deepen the level $K$ at primes in $S - \{p\}$ as we wish. Here we are using the fact that semisimple Galois representations over the Henselian semi-local $\mathcal{O}$-algebra $\mbf{T}_{\GL_n}^{S, \opn{ord}}(b, c, \lambda; j)$, reducing to $\overline{\rho}_{\ide{m}_0}$ modulo each of its maximal ideals, are determined (up to isomorphism) by their associated determinant (see \cite[Theorem 2.22]{ChenevierPadic}). 

    The first step is to set things up so that our running assumptions in the previous sections are satisfied. Suppose that $\ell \not\in S$ is an odd prime where $\overline{\rho}_{\ide{m}_0}$ is decomposed generic. After possibly enlarging $\mathcal{O}$ and $S$, we can find a finite-order character $\psi \colon G_{\mbb{Q}} \to \mathcal{O}^{\times}$, unramified outside $S - \{p\}$, such that 
    \begin{equation} \label{Eqn:TwistedDecomGen}
    (\overline{\rho}_{\ide{m}_0} \otimes \overline{\psi} \otimes \omega^{-1}) \oplus \mbf{1} \oplus (\overline{\rho}_{\ide{m}_0}^{\vee} \otimes \overline{\psi}^{-1} \otimes \omega )
    \end{equation}
    is decomposed generic at $\ell$ (because $\overline{\rho}_{\ide{m}_0}$ is). Moreover, we may assume that $(\overline{\rho}_{\ide{m}_0} \otimes \overline{\psi} \otimes \omega^{-1})|_{G_{\mbb{Q}_p}}$, $\mbf{1}|_{G_{\mbb{Q}_p}}$, and $(\overline{\rho}_{\ide{m}_0}^{\vee} \otimes \overline{\psi}^{-1} \otimes \omega )|_{G_{\mbb{Q}_p}}$ have no Jordan--H\"older factors in common with each other. Indeed, the existence of $\psi$ follows from Lemma \ref{Lem:CharacterAvoidance} for the set 
    \[
    T = \{ \beta_i^{-1}, \beta_i^{-1}\ell^2, \pm \beta_i^{-1} \ell^{1/2}, \pm \beta_i^{-1} \ell^{3/2}, \pm \alpha_i^{-1} \}
    \]
    where $\beta_1, \dots, \beta_n$ are the eigenvalues of $\opn{Frob}_{\ell}$ on $\overline{\rho}_{\ide{m}_0}$, and $\alpha_1, \dots, \alpha_n$ are the eigenvalues of a fixed choice of lift of Frobenius $\opn{Frob}_p \in G_{\mbb{Q}_p}$ at $p$ on $\overline{\rho}_{\ide{m}_0}$. Note that $\ide{m}_0(\psi)$ satisfies the same assumptions as $\ide{m}_0$ after possibly deepening the level $K$ at primes in $S-\{p\}$. We will prove the result for this twist of $\ide{m}_0$, then untwist to get the result we actually want. 

    After enlarging $\mathcal{O}$ and increasing $S$, we may assume that there exists a character $\chi \colon G_{\mbb{Q}} \to \mathcal{O}^{\times}$ which is unramified outside $S - \{ p\}$, such that the representations $[(\ref{Eqn:TwistedDecomGen}) \otimes \bar{\chi}]|_{G_{\mbb{Q}_p}}$ and $[(\ref{Eqn:TwistedDecomGen}) \otimes \bar{\chi}^{-1}]|_{G_{\mbb{Q}_p}}$ have no common Jordan--H\"older factors, and 
    \[
    (\overline{\rho}_{\ide{m}_0} \otimes \overline{\psi}\bar{\chi} \otimes \omega^{-1}) \oplus \mbf{1} \oplus (\overline{\rho}_{\ide{m}_0}^{\vee} \otimes \overline{\psi}^{-1}\bar{\chi}^{-1} \otimes \omega )
    \]
    is decomposed generic at $\ell$. We may also assume that $\bar{\chi}|_{G_{\mbb{Q}_p}}$ is non-trivial and doesn't appear in $(\overline{\rho}_{\ide{m}_0} \otimes \overline{\psi} \otimes \omega^{-1})|_{G_{\mbb{Q}_p}}$. Indeed, the existence of $\chi$ follows from Lemma \ref{Lem:CharacterAvoidance} for the set
    \begin{align*}
    T = \{ \pm 1, \pm \alpha_i^{\pm 1/2} \bar{\psi}(\opn{Frob}_p)^{\pm 1/2},& \pm \alpha_i^{\pm 1} \bar{\psi}(\opn{Frob}_p)^{\pm 1}, \beta_i^{-1}\bar{\psi}(\opn{Frob}_{\ell})^{-1}, \beta_i^{-1}\bar{\psi}(\opn{Frob}_{\ell})^{-1}\ell^2 , \\ & \pm \beta_i^{-1}\bar{\psi}(\opn{Frob}_{\ell})^{-1} \ell^{1/2}, \pm \beta_i^{-1}\bar{\psi}(\opn{Frob}_{\ell})^{-1} \ell^{3/2} \}
    \end{align*}
    where $\{ \alpha_i \}$ and $\{ \beta_i \}$ are as above. We can deepen $K$ to include the ramification of $\chi$.

    \begin{notation*}
        Set $\ide{M}_0 \subset \mbf{T}^S_{\opn{GL}_n}$ to be either $\ide{m}_0(\psi)$ or $\ide{m}_0(\psi\chi)$. Here, $S$ is a finite set independent of the choice, which contains $p$ and all primes where $\psi$ and $\chi$ ramify. We let $K_M \subset M(\mbb{A}_f) = \GL_n(\mbb{A}_f) \times \opn{GL}_1(\mbb{A}_f)$ be a sufficiently small compact open subgroup, which decomposes as $K_M = K \times C$ for some compact open subgroup $C \subset \opn{GL}_1(\mbb{A}_f)$. After possibly shrinking $K$ at primes in $S - \{p\}$ and choosing $C$ appropriately, we may assume that there exists a sufficiently small compact open subgroup $\tilde{K} \subset \tilde{G}(\mbb{A}_f)$ such that 
    \[
    \tilde{K} \cap P(\mbb{A}_f) = (\tilde{K} \cap N(\mbb{A}_f)) K_M
    \]
    and $\tilde{K}_p = \tilde{\opn{Iw}}(b, c)$. We can also assume that $\tilde{K}$ is hyperspecial outside $S$ and $\tilde{K}^p_S$ consists of matrices in $\tilde{G}(\mbb{Z}_{S-\{p\}})$ which are congruent to the identity modulo $N_S$, for some integer $N_S$ divisible only by primes in $S$. In other words $\tilde{K}$ and $K$ satisfy the conditions in \S \ref{Subsub:IndAppearsAsDirectSummand}. Finally, $\overline{\rho}_{\ide{M}_0}$ is absolutely irreducible and decomposed generic, and $\overline{\rho}_{\ide{M}_0}(-1) \oplus \mbf{1} \oplus \overline{\rho}^{\vee}_{\ide{M}_0}(1)$ is decomposed generic.
    \end{notation*}
    
    Let $\ide{m} \subset \mbf{T}^S$ denote the preimage of $\ide{M}_0$ under the natural map $\mbf{T}^S \to \mbf{T}^S_{\opn{GL}_n}$ induced from the projection $M = \opn{GL}_n \times \opn{GL}_1 \to \opn{GL}_n$. Set $\tilde{\ide{m}} = \mathcal{S}^*(\ide{m})$ and note that $\overline{\rho}_{\tilde{\ide{m}}}$ is isomorphic to $\overline{\rho}_{\ide{M}_0}(-1) \oplus \mbf{1} \oplus \overline{\rho}^{\vee}_{\ide{M}_0}(1)$.

    Note that the (real) dimension of the locally symmetric space associated with $M$ is $d-1 = n(n+1)/2-1$. The K\"{u}nneth formula provides us with an isomorphism
    \[
    \opn{H}^{d-j}(X_{M, K_M}, V_{\lambda(a_j), \mathcal{O}}) \cong \opn{H}^{d-j}(X_{\GL_n, K}, V_{\lambda(a_j), \mathcal{O}}) \otimes_{\mathcal{O}} \opn{H}^0(X_{\opn{GL}_1, C}, \mathcal{O}) .
    \]
    Note $\opn{H}^0(X_{\opn{GL}_1, C}, \mathcal{O})$ is a finite free $\mathcal{O}$-module with rank equal to the number of connected components of $X_{\opn{GL}_1, C}$, and contains the trivial representation $\mathcal{O}$ of the Hecke algebra for $\opn{GL}_1$. This implies that we have a natural map
    \[
    \opn{pr}_j \colon \mbf{T}^{S, \opn{ord}}(\opn{H}^{d-j}(X_{M, K_M}, V_{\lambda(a_j), \mathcal{O}})^{\opn{ord}}_{\ide{m}}) \twoheadrightarrow \mbf{T}^{S, \opn{ord}}_{\GL_n}(\opn{H}^{d-j}(X_{\GL_n, K}, V_{\lambda(a_j), \mathcal{O}})^{\opn{ord}}_{\ide{M}_0})
    \]
    induced from mapping a Hecke operator for $M$ to one for $\GL_n$ (via the projection $M \twoheadrightarrow \GL_n$).

    Let $\chi_{\opn{cycl}}(x) = |\!|x|\!| x_p$ for $x \in \mbb{A}_f^{\times}$ denote the adelic $p$-adic cyclotomic character. By precomposing with the natural map $\GL_n \xrightarrow{\opn{det}^{-a_j}} \opn{GL}_1$, this character gives rise to a class in $\opn{H}^0(X_{\GL_n, K}, V_{\mu(a_j), \mathcal{O}})$, where $\mu(a_j) = (a_j, \dots, a_j)$. By considering the cup product with this class, we obtain an isomorphism
    \[
    \opn{H}^{d-j}(X_{\GL_n, K}, V_{\lambda(a_j), \mathcal{O}}) \cong \opn{H}^{d-j}(X_{\GL_n, K}, V_{\lambda, \mathcal{O}})\{a_j\}
    \]
    of $\mbf{T}^{S,\opn{ord}}_{\GL_n}$-modules, where the notation $\{ a_j \}$ means the $\mbf{T}^{S,\opn{ord}}_{\GL_n}$-action is twisted as follows:
    \begin{itemize}
    \item for $m \in \GL_n(\mbb{A}_f^S)$, the symbol ``$[K^S m K^S]$'' in $\mbf{T}^{S,\opn{ord}}_{\GL_n}$ acts as $\chi_{\opn{cycl}}(\opn{det}(m))^{-a_j}[K^S m K^S]$,
    \item for $u \in \mbb{Z}_p^{\times}$, the symbol ``$\langle u \rangle_i$'' acts via $u^{a_j}\langle u \rangle_i$,
    \item for $t \in T_{\GL_n}^+$, the operator $U_t$ acts as normal.
\end{itemize}
In particular, this isomorphism is $U_t$-equivariant (recalling that $U_t$ is automatically optimally normalised). This implies that the ordinary parts match up, and since $(p-1)$ divides $a_j$, one also obtains an isomorphism after localising both sides at $\ide{M}_0$. We therefore obtain an isomorphism
    \[
    \nu_j \colon \mbf{T}^{S, \opn{ord}}_{\GL_n}(\opn{H}^{d-j}(X_{\GL_n, K}, V_{\lambda(a_j), \mathcal{O}})^{\opn{ord}}_{\ide{M}_0}) \xrightarrow{\sim} \mbf{T}^{S, \opn{ord}}_{\GL_n}(\opn{H}^{d-j}(X_{\GL_n, K}, V_{\lambda, \mathcal{O}})^{\opn{ord}}_{\ide{M}_0})
    \]
    which sends $[K^S m K^S]$ to $\chi_{\opn{cycl}}(\opn{det}(m))^{-a_j}[K^S m K^S]$, $U_t$ to itself, and $\langle u \rangle_i$ to $u^{a_j}\langle u \rangle_i$.   

    \begin{notation*}
        The ideal $\ide{M}_0 \subset \mbf{T}_{\GL_n}^S$ satisfies the hypotheses of the proposition. Let
        \[
        \varrho_j \colon G_{\mbb{Q}} \to \GL_n\left( \mbf{T}^{S, \opn{ord}}_{\GL_n}(\opn{H}^{d-j}(X_{\GL_n, K}, V_{\lambda, \mathcal{O}})^{\opn{ord}}_{\ide{M}_0}) / J_j \right)
        \]
        denote the semisimple Galois representation attached to $\ide{M}_0$ in \cite{ScholzeTorsion}, which satisfies property (1) from the statement.  Set $\varrho'_j = (\nu^{-1}_j \circ \varrho_j) \otimes \omega^{-a_{j}}$, and $J_j' = \nu^{-1}_j(J_j)$; so the representation $\varrho'_j$ is valued in $\GL_n(\mbf{T}^{S, \opn{ord}}_{\GL_n}(\opn{H}^{d-j}(X_{\GL_n, K}, V_{\lambda(a_{j}), \mathcal{O}})^{\opn{ord}}_{\ide{M}_0})/J_{j}')$.  Let $\chi_{\lambda(a_{j}), i}' = \chi_{\lambda(a_{j}), i} \cdot \omega^{-a_{j}}$ valued in $\mbf{T}^{S, \opn{ord}}_{\GL_n}(\opn{H}^{d-j}(X_{\GL_n, K}, V_{\lambda(a_{j}), \mathcal{O}})^{\opn{ord}}_{\ide{M}_0})^{\times}$.
    \end{notation*}

  The twist by $\omega^{-a_j}$ ensures $\varrho_j'$ satisfies the following local-global compatibility at $\ell \not\in S$: the (reverse) characteristic polynomial $\det(1-\opn{Frob}_\ell^{-1}X|\rho_j')$ equals the image of the Hecke polynomial $H_\ell(X)$ in $(\mbf{T}_{\GL_n}^{S,\opn{ord}}(\h^{d-j}(X_{\GL_n,K},V_{\lambda(a_j),\cO})^{\opn{ord}}_{\ide{M}_0})/J_j')[X]$.

Recall we defined
\[
\tilde{\mbf{T}}^{S, \opn{ord}}(b, c, \tilde{\lambda}_{j}) = \tilde{\mbf{T}}^{S, \opn{ord}}\left( \opn{H}^{d}(X_{\tilde{G}, \tilde{K}}, V_{\tilde{\lambda}_j,\cO})^{\opn{ord}}_{\tilde{\ide{m}}} \right).
\]
As $d-l(x_j) = l(w_j) = d-j$ (by Lemma \ref{Lem:choosinglambdawa}), Proposition \ref{Prop:SwHomOnHecke} (and its proof) yields an $\mathcal{O}$-algebra morphism
    \[
    \mathcal{S}_{w_{j}} \colon \tilde{\mbf{T}}^{S, \opn{ord}}(b, c, \tilde{\lambda}_{j}) \to \mbf{T}^{S, \opn{ord}}(\opn{H}^{d-j}(X_{M, K_M}, V_{\lambda(a_{j}), \mathcal{O}})^{\opn{ord}}_{\ide{m}})
    \]
    induced from $\mathcal{S}_{w_{j}}$. For $i=1, \dots, 2n+1$, let $\psi_{\tilde{\lambda}_j, i} \colon G_{\mbb{Q}_p} \to \tilde{\mbf{T}}^{S, \opn{ord}}(b, c, \tilde{\lambda}_j)^{\times}$ denote the character from Definition \ref{DefOfUnivGaloisChars}.

    Let 
    \[
    \mathcal{T}_{w_j} \colon \tilde{\mbf{T}}^{S, \opn{ord}}(b, c, \tilde{\lambda}_j) \to \mbf{T}^{S, \opn{ord}}_{\GL_n}(\opn{H}^{d-j}(X_{\GL_n, K}, V_{\lambda(a_j), \mathcal{O}})^{\opn{ord}}_{\ide{M}_0})
    \]
    denote the composition $\opn{pr}_j \circ \; \mathcal{S}_{w_j}$. One has for $1 \leq i \leq n$:
    \begin{equation}\label{eq:image of tw}
    \mathcal{T}_{w_j} \circ \psi_{\tilde{\lambda}_j, n+1-i}^{-1} = \mathcal{T}_{w_j} \circ \psi_{\tilde{\lambda}_j, n+1+i} = \left\{ \begin{array}{cc} \chi_{\lambda(a_j), w_j(i)}' \otimes \omega^{-1} & \text{ if } 1 \leq w_j(i) \leq n \\ (\chi_{\lambda(a_j), 2n+1-w_j(i)}')^{-1} \otimes \omega & \text{ if } n+1 \leq w_j(i) \leq 2n \end{array} \right.
    \end{equation}
    and $\mathcal{T}_{w_j} \circ \psi_{\tilde{\lambda}_j, n+1}$ is the trivial character. Furthermore $\mathcal{T}_{w_j} \circ \tilde{H}_{\ell}(X)$ modulo $J_j'$ is the (reverse) characteristic polynomial of $\opn{Frob}_{\ell}^{-1}$ on $\varrho_j'(-1) \oplus \mbf{1} \oplus (\varrho'_j)^{\vee}(1)$.

    Our set-up allows us to apply Corollary \ref{Cor:Char0HeckeContDeterminant} to $\tilde{\ide{m}}$, giving a semisimple Galois representation  
    \[  
    \sigma_{\tilde{\m}} \colon G_{\mbb{Q}} \to \opn{GL}_{2n+1}(\tilde{\mbf{T}}^{S, \opn{ord}}(b, c, \tilde{\lambda}_j)[1/p])
    \]
    satisfying local-global compatibility at $\ell\not\in S$ and at $p$, and a continuous determinant 
    \[
    \tilde{D} \colon G_{\mbb{Q}} \to \tilde{\mbf{T}}^{S, \opn{ord}}(b, c, \tilde{\lambda}_j)
    \]
    such that $\tilde{D} = \det(\sigma_{\tilde\m})$.  After inverting $p$, and combining part (1b) of the corollary with \eqref{eq:image of tw}, we see $\cT_{w_j} \circ \sigma_{\tilde{\m}}|_{G_{\Qp}}$ is upper-triangular with diagonal entries $(\chi'_{\lambda(a_j),\bullet})^{\pm 1}\otimes\omega^{\mp 1}$ or the trivial character. In particular, if $D \defeq \mathcal{T}_{w_j} \circ \tilde{D}$, then $D|_{G_{\Qp}}$ is the continuous determinant of the semisimplification
    \begin{equation}\label{eq:semisimplification 1}
    (\cT_{w_j}\circ \sigma_{\tilde{\m}}|_{G_{\Qp}})^{\opn{ss}} = \mbf{1} \oplus \bigoplus_{i=1}^n \chi'_{\lambda(a_j), i}\omega^{-1} \oplus (\chi'_{\lambda(a_j), i})^{-1}\omega,
    \end{equation}
    a representation $G_{\Q} \to \GL_{2n+1}(\mbf{T}^{S, \opn{ord}}_{\GL_n}(\opn{H}^{d-j}(X_{\GL_n, K}, V_{\lambda(a_j), \mathcal{O}})^{\opn{ord}}_{\ide{M}_0}))$. Note this description of the semisimplification follows from (1b) of the corollary, and we thus deduce it takes integral values, even though $\cT_{w_j}\circ \sigma_{\tilde{\m}}|_{G_{\Qp}}$ itself is not necessarily integral.
    
     By considering local-global compatibility at $\ell \not\in S$, we see $D$ modulo $J_j'$ is the continuous determinant associated with $\varrho_j'(-1) \oplus \mbf{1} \oplus (\varrho'_j)^{\vee}(1)$. To ease notation, set 
    \begin{align*}
    \tilde{A}_0 &= \tilde{\mbf{T}}^{S, \opn{ord}}(b, c, \tilde{\lambda}_j)  = \tilde{\mbf{T}}^{S, \opn{ord}}(\opn{H}^{d}(X_{\tilde{G}, \tilde{K}}, V_{\tilde{\lambda}_j,\cO})^{\opn{ord}}_{\tilde{\ide{m}}}), \\
    A_0 &= \mbf{T}^{S, \opn{ord}}_{\GL_n}(\opn{H}^{d-j}(X_{\GL_n, K}, V_{\lambda(a_j), \mathcal{O}})^{\opn{ord}}_{\ide{M}_0})
    \end{align*}
    which are both semi-local finite $\mathcal{O}$-algebras. Let $A$ be a local direct factor of $A_0$ and let $\tilde{A}$ be the corresponding local direct factor of $\tilde{A}_0$. It suffices to work over these local algebras. Let $J$ denote the ideal $J'_j$ in the local factor $A$; then $D$ modulo $J$ is the continuous determinant associated with $\varrho_j'(-1) \oplus \mbf{1} \oplus (\varrho'_j)^{\vee}(1)$. Let $\ide{n}_A$ be the maximal ideal of $A$; then $D \newmod{\ide{n}_A} : G_{\Q} \to \overline{\F}_p$ is the continuous determinant of $\overline{\varrho}_j'(-1) \oplus \mbf{1} \oplus (\overline{\varrho}'_j)^{\vee}(1) = \overline{\rho}_{\tilde{\m}}$. Its restriction to $G_{\Qp}$ is also the continuous determinant associated to \eqref{eq:semisimplification 1} modulo $\ide{n}_A$. As semisimple Galois representations over $\overline{\F}_p$ are uniquely determined by their continuous determinant by \cite[Thm.\ 2.12]{ChenevierPadic}, we deduce 
    \begin{equation} \label{Eqn:StdRepModMaxDecomp}
    \left( \overline{\varrho}_j'|_{G_{\mbb{Q}_p}}(-1) \oplus \mbf{1} \oplus (\overline{\varrho}'_j|_{G_{\mbb{Q}_p}})^{\vee}(1) \right)^{\opn{ss}} \cong \mbf{1} \oplus \bigoplus_{i=1}^n \overline{\chi}'_{\lambda(a_j), i}\omega^{-1} \oplus (\overline{\chi}'_{\lambda(a_j), i})^{-1}\omega 
    \end{equation}
    where the overline denotes reduction modulo $\ide{n}_A$. 

    From now on, any of the the above notation refers to the objects associated with $\ide{M}_0 = \ide{m}_0(\psi)$ only. Rewriting (\ref{Eqn:StdRepModMaxDecomp}) for the case $\ide{M}_0 = \ide{m}_0(\psi\chi)$, we obtain the decomposition:
    \[
    \left( (\overline{\varrho}_j' \otimes \overline{\chi})|_{G_{\mbb{Q}_p}}(-1) \oplus \mbf{1} \oplus ((\overline{\varrho}'_j \otimes \overline{\chi})|_{G_{\mbb{Q}_p}})^{\vee}(1) \right)^{\opn{ss}} \cong \mbf{1} \oplus \bigoplus_{i=1}^n \overline{\chi}'_{\lambda(a_j), i}\overline{\chi} \omega^{-1} \oplus (\overline{\chi}'_{\lambda(a_j), i})^{-1} \overline{\chi}^{-1} \omega .
    \]
    Our conditions on $\chi$ then imply that
    \begin{equation} \label{Eqn:DesiredSS}
    (\overline{\varrho}'_j|_{G_{\mbb{Q}_p}})^{\opn{ss}} \cong \bigoplus_{i=1}^n \overline{\chi}'_{\lambda(a_j), i} .
    \end{equation}

    We now argue as in the proof of \cite[Proposition 4.4.6, Proposition 5.4.18]{10author}. More precisely, let $D_{A/J} = D \otimes_A A/J$; then we can identify
    \[
    (A/J)[G_{\mbb{Q}}]/\opn{ker}(D_{A/J}) = M_n(A/J) \times M_1(A/J) \times M_n(A/J)
    \]
    where the projection to the first factor corresponds to $\varrho_j'(-1)$, the second to $\mbf{1}$, and the third to $(\varrho_j')^{\vee}(1)$.

    On the other hand, the natural map $\tilde{A}[G_{\mbb{Q}}] \to (A/J)[G_{\mbb{Q}}]/\opn{ker}(D_{A/J})$ factors through $\tilde{A}[G_{\mbb{Q}}]/\opn{ker}(\tilde{D})$, and we have an algebra embedding 
    \[
    \tilde{A}[G_{\mbb{Q}}]/\opn{ker}(\tilde{D}) \subset M_{2n+1}(\tilde{A} \otimes_{\mathcal{O}} \Qpb)
    \]
    corresponding to $\sigma_{\tilde{\m}}$. From the local-global compatibility for $\sigma_{\tilde{\m}}|_{G_{\mbb{Q}_p}}$, we see that for elements $Y, Y_1, \dots, Y_{2n+1}$ in $(A/J)[G_{\mbb{Q}_p}]$, we have
    \begin{equation} \label{Eq:FirstIdentityInLG}
    \opn{det}(X - \varrho_j'(-1)(Y)) \cdot (X-\mbf{1}(Y)) \cdot \opn{det}(X - (\varrho_j')^{\vee}(1)(Y)) = \prod_{i=1}^{2n+1} (X - \psi_{\tilde{\lambda}_j, i}(Y))
    \end{equation}
    in $(A/J)[X]$ (noting that, under $\cT_{w_j}$, we may view $\psi_{\tilde{\lambda}_j, i}$ as valued in $A/J$), and
    \begin{equation} \label{Eq:SecondIdinLG}
    \bigg(\prod_{i=1}^{2n+1}\Big(\varrho'_j(-1)(Y_i) - \psi_{\tilde{\lambda}_j, i}(Y_i)\Big), \prod_{i=1}^{2n+1} \Big(\mbf{1}(Y) - \psi_{\tilde{\lambda}_j, i}(Y_i)\Big), \prod_{i=1}^{2n+1}\Big((\varrho'_j)^{\vee}(1)(Y_i) - \psi_{\tilde{\lambda}_j, i}(Y_i)\Big) \bigg) = (0, 0, 0)
    \end{equation}
    in $M_n(A/J) \times M_1(A/J) \times M_n(A/J)$. By choosing suitable idempotents as in the proof of \cite[Proposition 5.4.18]{10author}, we can deduce the desired properties for $\varrho'_j$. More precisely, by \eqref{Eqn:DesiredSS} and our running assumptions, the representations $\varrho_j'(-1)|_{G_{\Qp}}, \mathbf{1}$ and $(\varrho_j')^\vee(1)|_{G_{\Qp}}$ have no Jordan--H\"older factors in common, so we can find an idempotent $e \in (A/J)[G_{\mbb{Q}_p}]$ such that $\varrho'_j(-1)(e) = 1$ and the image of $e$ under the trivial representation and $(\varrho_j')^{\vee}(1)$ is zero. We can also assume that $\psi_{\tilde{\lambda}_j, i}(e) = 1$ if $\overline{\psi}_{\tilde{\lambda}_j, i}$ appears in $\overline{\varrho}'_j(-1)$, and is zero otherwise.

    Let $x, x_1, \dots, x_n \in G_{\mbb{Q}_p}$. Taking $Y = x e$ in the identity (\ref{Eq:FirstIdentityInLG}), and using the description \eqref{eq:image of tw} of $\cT_{w_j}\circ \psi_{\tilde{\lambda}_j,i}$, we see that
    \[
    X^{n+1} \opn{det}(X - \varrho'_j(-1)(x)) = X^{n+1} \prod_{i=1}^n (X - \chi'_{\lambda(a_j), i}\omega^{-1}(x)) .
    \]
    Similarly, let $Y_i = x_k e$ if $\mathcal{T}_{w_j} \circ \psi_{\tilde{\lambda}_j, i} = \chi'_{\lambda(a_j), k} \otimes \omega^{-1}$, and $Y_i = e$ otherwise. Then plugging this into (\ref{Eq:SecondIdinLG}), we find that
    \[
    \bigg( \prod_{k=1}^n \Big(\varrho'_j(-1)(x_k) - \chi'_{\lambda(a_j),k}\omega^{-1}(x_k)\Big), 0, 0\bigg) = (0, 0, 0) .
    \]
    Note that exactly these $n$ terms survive by \eqref{Eqn:DesiredSS}.
    
    This proves the proposition for $\varrho'_j(-1)$ and hence $\varrho'_j$. Noting that $\nu_j \circ \chi_{\lambda(a_j), i} = \chi_{\lambda, i}$, we now untwist to obtain the proposition for $\varrho_j$, i.e., for the maximal ideal $\ide{m}_0(\psi)$. 
    
    Finally, we note that because $\psi$ is unramified outside $S-\{p\}$, we have an isomorphism
    \[
    \mbf{T}_{\GL_n}^{S, \opn{ord}}\left( \opn{H}^{d-j}(X_{\GL_n, K}, V_{\lambda, \mathcal{O}})^{\opn{ord}}_{\ide{m}_0} \right) \cong \mbf{T}_{\GL_n}^{S, \opn{ord}}\left( \opn{H}^{d-j}(X_{\GL_n, K}, V_{\lambda, \mathcal{O}})^{\opn{ord}}_{\ide{m}_0(\psi)} \right)
    \]
    which identifies the Galois representation $\varrho_j$ (associated with $\ide{m}_0$) with the one associated with $\ide{m}_0(\psi)$ twisted by the character $\psi^{-1}$. This implies the required local-global compatibility at $\ell = p$ for $\ide{m}_0$.
\end{proof}

\begin{remark} \label{Rem:Charpl=pLGcompat}
    With notation as above, suppose that $\ide{m}_0$ is in the support of $\opn{H}^i(X_{\opn{GL}_n, K}, k)^{\opn{ord}}$ for some appropriately chosen $i$ and $b, c$, and let $\ide{m}_0' \subset \mbf{T}_{\opn{GL}_n}^{S, \opn{ord}}(\opn{H}^i(X_{\opn{GL}_n, K}, k)^{\opn{ord}})$ be any maximal ideal extending $\ide{m}_0$. After possibly increasing $k$, assume that the residue field of $\ide{m}_0'$ is $k$. Then, as part of the proof of Proposition \ref{Prop:MixedCharLGcompat}, we have also established local-global compatibility at $\ell = p$ for the Galois representation $\overline{\rho}_{\ide{m}_0}$ (assuming it is absolutely irreducible and decomposed generic). In particular, if the characters $\chi_i \colon G_{\mbb{Q}_p} \to k^{\times}$ (from Definition \ref{DefOfUnivGaloisChars} associated with the morphism $\Theta' \colon \mbf{T}^{S, \opn{ord}}_{\opn{GL}_n} \to \mbf{T}^{S, \opn{ord}}_{\opn{GL}_n}/\ide{m}_0' = k$) are distinct, then combining the above with \cite[Lemma 6.2.11]{10author} shows
    \[
    \overline{\rho}_{\ide{m}_0}|_{G_{\mbb{Q}_p}} \sim \left( \begin{array}{cccc} \chi_1 & * & * & * \\ & \chi_2 & * & * \\ & & \ddots & \vdots \\ & & & \chi_n \end{array} \right).
    \]

\end{remark}

\subsection{The main results}

We now discuss the main result of this section.

\begin{theorem} \label{Thm:LevelbcmLGcompat}
    Let $p > 2n$. Let $c \geq b \geq 0$ with $c \geq 1$, and let $K \subset \GL_n(\mbb{A}_f)$ denote a neat compact open subgroup which is hyperspecial outside $S$ and equal to $\opn{Iw}(b, c)$ at $p$. Let $m \geq 1$ and let 
    \[
    \mbf{T}_{\GL_n}^{S, \opn{ord}}(b, c; m) \defeq \mbf{T}_{\GL_n}^{S, \opn{ord}}( R\Gamma(X_{\GL_n, K}, \mathcal{O}/\varpi^m )^{\opn{ord}}) \subset \opn{End}_{\mbf{D}(\mathcal{O})}( R\Gamma(X_{\GL_n, K}, \mathcal{O}/\varpi^m )^{\opn{ord}}) .
    \]
    Let $\ide{m}_0 \subset \mbf{T}_{\GL_n}^{S, \opn{ord}}(b, c; m)$ denote a maximal ideal with residue field $k = \mathcal{O}/\varpi$, and suppose that the associated Galois representation $\overline{\rho}_{\ide{m}_0}$ (see Theorem \ref{Thm:GaloisRepForTorClass}) is absolutely irreducible and decomposed generic.

    Then there exists a nilpotent ideal $J \subset \mbf{T}_{\GL_n}^{S, \opn{ord}}(b, c; m)_{\ide{m}_0}$ with nilpotence degree only depending on $n$, and a continuous semisimple representation 
    \[
    \rho_{b, c, m} \colon G_{\mbb{Q}} \to \GL_n( \mbf{T}_{\GL_n}^{S, \opn{ord}}(b, c; m)_{\ide{m}_0}/J)
    \]
    such that:
    \begin{enumerate}
        \item For every $\ell \not\in S$, one has
        \[
        \opn{det}(1 - \opn{Frob}_{\ell}^{-1} X | \rho_{b, c, m}) = \vartheta(H_{\ell}(X))
        \]
        where $\vartheta \colon \mbf{T}^{S, \opn{ord}}_{\GL_n} \to \mbf{T}_{\GL_n}^{S, \opn{ord}}(b, c; m)_{\ide{m}_0}/J$ is the natural map.
        \item For every $x \in G_{\mbb{Q}_p}$, the characteristic polynomial of $\rho_{b, c, m}(x)$ equals $\prod_{i=1}^n(X -\chi_i(x))$, where $\chi_i$ are the characters from Definition \ref{DefOfUnivGaloisChars} associated with $\vartheta$.
        \item For each $x_1, \dots, x_n \in G_{\mbb{Q}_p}$, we have
        \[
        (\rho_{b, c, m}(x_1) - \chi_1(x_1)) \cdots (\rho_{b, c, m}(x_n) - \chi_n(x_n)) = 0 .
        \]
    \end{enumerate}
\end{theorem}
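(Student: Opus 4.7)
The plan is to reduce Theorem \ref{Thm:LevelbcmLGcompat} to the cohomology-group-by-cohomology-group statements of Proposition \ref{Prop:MixedCharLGcompat}, and then assemble the resulting Galois representations into a single one on the Hecke algebra of the full complex. Specifically, I would first consider the natural ring homomorphism
\[
\alpha \colon \mbf{T}_{\GL_n}^{S, \opn{ord}}(b, c; m)_{\ide{m}_0} \longrightarrow \prod_{i} \mbf{T}_{\GL_n}^{S, \opn{ord}}\big(\opn{H}^i(X_{\GL_n, K}, \cO/\varpi^m)^{\opn{ord}}\big)_{\ide{m}_0},
\]
where $i$ ranges over the (finite) cohomological range of $X_{\GL_n, K}$. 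A d\'evissage argument on the truncations of the bounded perfect ordinary complex of finite-length $\cO/\varpi^m$-modules shows that $\ker(\alpha)$ is nilpotent of degree bounded purely in terms of $n$ (the length of the cohomological range).

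For each $i$ in the range, I would then apply Lemma \ref{Lem:choosinglambdawa} with $j = d - i$ to produce a $\GL_n$-dominant weight $\lambda$ and an integer $a_j \in (p-1)\mbb{Z}$ with properties (1)--(3) listed there. Condition (1), $\cO(\lambda)/\varpi^m \cong \cO/\varpi^m$, combined with the $\GL_n$-analogue of Lemma \ref{lem:T(b) cohomology}, yields a Hecke-equivariant quasi-isomorphism $R\Gamma(X_{\GL_n, K}, V_{\lambda, \cO}/\varpi^m)^{\opn{ord}} \cong R\Gamma(X_{\GL_n, K}, \cO/\varpi^m)^{\opn{ord}}$, and taking $\opn{H}^i$ of both sides identifies the corresponding cohomology-level Hecke algebras. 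Reducing coefficients from $V_{\lambda,\cO}$ to $V_{\lambda,\cO}/\varpi^m$ then induces a surjection
\[
\mbf{T}_{\GL_n}^{S, \opn{ord}}(b, c, \lambda; j)_{\ide{m}_0} \twoheadrightarrow \mbf{T}_{\GL_n}^{S, \opn{ord}}\big(\opn{H}^i(X_{\GL_n, K}, \cO/\varpi^m)^{\opn{ord}}\big)_{\ide{m}_0}
\]
with nilpotent kernel (of degree bounded in terms of $n$, by the same d\'evissage as above applied to the two-term complex $V_{\lambda,\cO} \xrightarrow{\varpi^m} V_{\lambda,\cO}$). Crucially, under this surjection the weight-$\lambda$ characters $\chi_{\lambda, i'}$ of Definition \ref{DefOfUnivGaloisChars} are sent to the trivial-weight characters $\chi_{i'}$, since $\cO(\lambda)/\varpi^m \cong \cO/\varpi^m$ kills the weight-dependent factor.

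Next, apply Proposition \ref{Prop:MixedCharLGcompat} to obtain, for each $i$, a Galois representation $\varrho_j$ valued in $\mbf{T}_{\GL_n}^{S, \opn{ord}}(b, c, \lambda; j)_{\ide{m}_0}/J_j$, and push it forward through the surjection above to obtain a representation $\tilde{\varrho}_i$ on $\mbf{T}_{\GL_n}^{S, \opn{ord}}(\opn{H}^i(X_{\GL_n,K},\cO/\varpi^m)^{\opn{ord}})_{\ide{m}_0}/\tilde{J}_i$, with $\tilde{J}_i$ nilpotent of bounded degree and satisfying the analogues of (1), (2), (3) with respect to the trivial-weight characters $\chi_{i'}$. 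Assembling these componentwise over $i$ gives a Galois representation $\tilde{\varrho}$ into the product target of $\alpha$ modulo the nilpotent ideal $\prod_i \tilde{J}_i$. To descend $\tilde{\varrho}$ along $\alpha$ to $\mbf{T}_{\GL_n}^{S, \opn{ord}}(b, c; m)_{\ide{m}_0}/J$ (with $J$ the preimage of $\prod_i \tilde{J}_i$, which remains nilpotent of bounded degree once combined with $\ker\alpha$), I invoke Chenevier's theory of continuous determinants \cite[Theorem 2.22]{ChenevierPadic}: since $\overline{\rho}_{\ide{m}_0}$ is absolutely irreducible, it suffices to show that the continuous determinant of $\tilde{\varrho}$ descends to $\mbf{T}_{\GL_n}^{S, \opn{ord}}(b, c; m)_{\ide{m}_0}/J$. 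This is immediate because the coefficients of $\opn{det}(1 - \opn{Frob}_\ell^{-1}X | \tilde{\varrho})$ are images of the coefficients of $H_\ell(X)$ for $\ell\notin S$, and the characters $\chi_{i'}$ controlling behaviour at $p$ are built from the $U_p$- and diamond operators; both families of Hecke operators already lie in $\mbf{T}_{\GL_n}^{S, \opn{ord}}(b, c; m)_{\ide{m}_0}$. Properties (1), (2), (3) then transfer directly from the corresponding properties on each factor of the product.

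The main obstacle is controlling all the nilpotent ideals introduced along the way uniformly in $m$. The two places where nilpotence enters through d\'evissage (the kernel of $\alpha$, and the reduction from $V_{\lambda,\cO}$-coefficients modulo $\varpi^m$ to $V_{\lambda,\cO}/\varpi^m$-coefficients) must be handled with bounds depending only on the length of the complex, and hence only on $n$ rather than on $m$. This careful bookkeeping, together with the invocation of Chenevier's descent criterion which relies on absolute irreducibility of $\overline{\rho}_{\ide{m}_0}$, is the technical heart of the argument; once in place, the theorem follows.
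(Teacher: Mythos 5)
The claimed surjection
\[
\mbf{T}_{\GL_n}^{S, \opn{ord}}(b, c, \lambda; j)_{\ide{m}_0} \twoheadrightarrow \mbf{T}_{\GL_n}^{S, \opn{ord}}\big(\opn{H}^{d-j}(X_{\GL_n, K}, \cO/\varpi^m)^{\opn{ord}}\big)_{\ide{m}_0}
\]
does not exist in general, and this is a genuine gap. Reducing the characteristic-zero coefficients modulo $\varpi^m$ does not turn $\opn{H}^{d-j}(X, V_{\lambda, \cO})^{\opn{ord}}$ into $\opn{H}^{d-j}(X, \cO/\varpi^m)^{\opn{ord}}$; rather the universal coefficient sequence gives
\[
0 \to \opn{H}^{d-j}(X_{\GL_n, K}, V_{\lambda, \cO})^{\opn{ord}}/\varpi^m \to \opn{H}^{d-j}(X_{\GL_n, K}, \cO/\varpi^m)^{\opn{ord}} \to \opn{H}^{d-j+1}(X_{\GL_n, K}, V_{\lambda, \cO})^{\opn{ord}}[\varpi^m] \to 0,
\]
which is exactly the exact sequence the paper writes down. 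The cokernel involves $\varpi^m$-torsion in degree $d-j+1$, which need not be captured by the Hecke algebra acting in degree $d-j$. Since $\overline{\rho}_{\ide{m}_0}$ can contribute to torsion in several degrees simultaneously, there is no map from $\mbf{T}_{\GL_n}^{S, \opn{ord}}(b, c, \lambda; j)$ alone onto the mod-$\varpi^m$ Hecke algebra of degree $d-j$, even up to nilpotents. Your d\'evissage on the two-term complex $V_{\lambda,\cO} \xrightarrow{\varpi^m} V_{\lambda,\cO}$ is the right idea, but it naturally produces a map whose target receives contributions from $\opn{H}^{d-j}$ and $\opn{H}^{d-j+1}$ with $V_{\lambda,\cO}$-coefficients, not just $\opn{H}^{d-j}$. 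You must therefore invoke Proposition \ref{Prop:MixedCharLGcompat} for both $j$ and $j-1$, obtaining determinants on a fibre product of the two characteristic-zero Hecke algebras modulo a square-zero ideal, and only then glue.

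The rest of the architecture is sound and consistent with what the paper does implicitly: reduce to single cohomology degrees (with a nilpotent kernel of degree bounded by the cohomological length, hence by $n$ alone), appeal to Lemma \ref{Lem:choosinglambdawa} and the isomorphism $\cO(\lambda)/\varpi^m \cong \cO/\varpi^m$ so the weight-$\lambda$ characters $\chi_{\lambda,i'}$ become the trivial-weight $\chi_{i'}$, apply Proposition \ref{Prop:MixedCharLGcompat}, and descend the resulting determinant to $\mbf{T}_{\GL_n}^{S,\opn{ord}}(b,c;m)_{\ide{m}_0}/J$ using absolute irreducibility via Chenevier. You should also note that property (1) is already known by Scholze and Newton--Thorne; only (2) and (3) need to be established, which slightly simplifies the descent step since the determinant is known to land in the correct ring away from $p$ from the start. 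Once the missing $\opn{H}^{d-j+1}$ torsion term is incorporated into the gluing, the proof goes through.
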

\begin{proof}
To simplify notation, in this proof only we write $X \defeq X_{\GL_n,K}$ and $\mbf{T} \defeq \mbf{T}_{\GL_n}^{S,\opn{ord}}$. By  \cite[Lem.\ 2.5(2)]{KT17}, the kernel $J_1$ of the map
    \begin{align*}
   \theta_1 \colon    \mbf{T}_{\GL_n}^{S, \opn{ord}}(b, c; m) = \mbf{T}\Big( R\Gamma(X, \mathcal{O}/\varpi^m )^{\opn{ord}}\Big) &\longrightarrow \oplus_{q=0}^{d-1}\mbf{T}\Big( \opn{H}^q(X, \mathcal{O}/\varpi^m )^{\opn{ord}}\Big)
    \end{align*}
 is nilpotent with nilpotence degree at most $d$ (hence only depending on $n$). 
 
 We first construct a representation over the target, working separately with each summand. Fix then $1 \leq j \leq d$ (so $q = d-j$), and let $\lambda$ be as in Lemma \ref{Lem:choosinglambdawa}. We have a short exact sequence
    \[
    0 \to \opn{H}^{d-j}(X, V_{\lambda, \mathcal{O}})^{\opn{ord}}/\varpi^m \to \opn{H}^{d-j}(X, \mathcal{O}/\varpi^m )^{\opn{ord}} \to \opn{H}^{d-j+1}(X, V_{\lambda, \mathcal{O}})^{\opn{ord}}[\varpi^m] \to 0
    \]
    as $\mathcal{O}(\lambda)/\varpi^m \cong \mathcal{O}/\varpi^m$ and the ordinary part mod $\varpi^m$ only depends on the weight mod $\varpi^m$. Let 
    \begin{align*}
        \mbf{T}(d-j,\lambda;m) &\defeq \mbf{T}(\opn{H}^{d-j}(X, V_{\lambda, \mathcal{O}})^{\opn{ord}}/\varpi^m),\\
        \mbf{T}(d-j;m) & \defeq \mbf{T}(\opn{H}^{d-j}(X, \mathcal{O}/\varpi^m )^{\opn{ord}}),\\
        \mbf{T}'(d-j+1,\lambda;m) &\defeq \mbf{T}(\opn{H}^{d-j+1}(X, V_{\lambda, \mathcal{O}})^{\opn{ord}}[\varpi^m])
    \end{align*}
   be the respective Hecke algebras. From the short exact sequence, we obtain a surjective map
\[
    \theta_2^j \colon \mbf{T}(d-j;m) \longrightarrow \mbf{T}(d-j,\lambda;m) \times \mbf{T}'(d-j+1,\lambda;m).
\]
Let $J_{2,j} = \ker(\theta_2^j)$; a simple computation shows $T^2 = 0$ for all $T \in J_{2,j}$ (so $J_{2,j}$ is nilpotent). 

   Combining Proposition \ref{Prop:MixedCharLGcompat} with the natural maps of Hecke algebras, we have representations
    \[
        \varrho_j' : G_{\Q} \to \GL_n(\mbf{T}(d-j,\lambda;m)_{\m_0}/J_{j}'), \qquad \varrho''_j : G_{\Q} \to \GL_n(\mbf{T}'(d-j+1,\lambda;m)_{\m_0}/J_{j}'')
    \]
    satisfying the required conditions, where $J_{j}'$ and $J_{j}''$ are nilpotent. Let $J_{3,j} = (\theta_2^j)^{-1}(J_j' \times J_j'')$, which is nilpotent with nilpotence degree depending only on $n$ (noting $J_{2,j} \subset J_{3,j}$). Combining $\rho_j'$ and $\rho_j''$ thus gives a representation
    \[
        \varrho_j : G_{\Q} \to \GL_n(\mbf{T}(d-j;m)_{\m_0}/J_{3,j})
    \]
    satisfying the required compatibilities. Let $\varrho = \oplus_{j=1}^d \varrho_j$.
    
   Let $J \subset  \mbf{T}_{\GL_n}^{S, \opn{ord}}(b, c; m)_{\m_0}$ be the kernel of the composition of $\theta_1$ (localised at $\m_0$) with projection modulo $J_{3,1} \times \cdots \times J_{3,d}$. Note $J$, which contains $J_1$, is nilpotent of degree depending only on $n$, and $\theta_1$ induces an injective map
    \[
        \theta_1' \colon  \mbf{T}_{\GL_n}^{S, \opn{ord}}(b, c; m)_{\m_0}/J \hookrightarrow \oplus_{j=1}^{d}\mbf{T}\big(d-j;m\big)_{\m_0}/J_{3,j}.
    \]
    Now let $D_{\varrho}$ be the continuous determinant attached to $\varrho$, which satisfies the desired local-global compatibility (inherited from Proposition \ref{Prop:MixedCharLGcompat}). By \cite[Example 2.32]{ChenevierPadic}, $D$ takes values in $\opn{Im}(\theta_1')$, and pulling back under $\theta_1'$ we obtain a continuous (Cayley--Hamilton) determinant $D \colon G_{\Q} \to  \mbf{T}_{\GL_n}^{S, \opn{ord}}(b, c; m)_{\m_0}/J$ satisfying properties (1)--(3). We now obtain $\rho_{b,c,m}$ as the lift of $D$ under \cite[Theorem 2.22(i)]{ChenevierPadic}, using the fact that $\overline{\rho}_{\ide{m}_0}$ is assumed to be absolutely irreducible.
\end{proof}

\begin{remark}
The Galois representation $\rho_{b, c, m}$ satisfying (1) was previously constructed in the work of Scholze \cite{ScholzeTorsion} and Newton--Thorne \cite{NewtonThorneTorsion} (see also \cite{CGHJMRS}). 
\end{remark}

We note the following corollary, which is what we will use in practice. Note that, if we let $K(b, c) = K$ to emphasise the dependence on $b, c$, then since $R\Gamma( T(b), R\Gamma(X_{\GL_n, K(b', c')}, \mathcal{O}/\varpi^m)^{\opn{ord}}) \cong R\Gamma(X_{\GL_n, K(b, c)}, \mathcal{O}/\varpi^m)^{\opn{ord}}$ for $b' \geq b$ and $c' \geq c$, we obtain a natural $\mbf{T}^{S, \opn{ord}}_{\GL_n}$-algebra morphism $\mbf{T}^{S, \opn{ord}}_{\GL_n}(b', c'; m) \to \mbf{T}^{S, \opn{ord}}_{\GL_n}(b, c; m)$. Similarly, we have natural maps $\mbf{T}^{S, \opn{ord}}_{\GL_n}(b, c; m') \to \mbf{T}^{S, \opn{ord}}_{\GL_n}(b, c; m)$ for $m' \geq m$. Here it is important that we are working with Hecke algebras in the derived categories, otherwise these maps do not exist in general. 

Let $\mathcal{T} = \varprojlim_{b, c, m} \mbf{T}^{S, \opn{ord}}_{\GL_n}(b, c; m)$, which we equip with the inverse limit topology. The ring $\mathcal{T}$ is naturally a finite $\Lambda$-algebra, where $\Lambda = \mathcal{O}[\![T_{\opn{GL_n}}(\mbb{Z}_p)]\!]$ (equipped with the usual profinite topology), and the topology on $\mathcal{T}$ coincides with the canonical topology induced from that on $\Lambda$. Note that any maximal ideal $\ide{m}_0 \subset \mathcal{T}$ with finite residue field descends to a maximal ideal of some $\mbf{T}^{S, \opn{ord}}_{\GL_n}(b, c; m)$.

\begin{corollary} \label{Cor:MainTriangulation}
    Let $\ide{m}_0 \subset \mathcal{T}$ be a maximal ideal with residue field $k = \mathcal{O}/\varpi$, and suppose that the associated Galois representation $\overline{\rho}_{\ide{m}_0} \colon G_{\mbb{Q}} \to \opn{GL}_n(k)$ is absolutely irreducible and decomposed generic. Then there exists a nilpotent ideal $J \subset \mathcal{T}_{\ide{m}_0}$ (with nilpotence degree only depending on $n$) and a continuous semisimple representation
    \[
    \rho \colon G_{\mbb{Q}} \to \GL_n(\mathcal{T}_{\ide{m}_0}/J)
    \]
    such that:
    \begin{enumerate}
        \item For every $\ell \notin S$, the representation $\rho$ is unramified at $\ell$ and $\opn{det}(1 - \opn{Frob}_{\ell}^{-1} X | \rho)$ equals the image of $H_{\ell}(X)$ in $(\mathcal{T}_{\ide{m}_0}/J)[X]$.
        \item For every $x \in G_{\mbb{Q}_p}$, one has
        \[
        \opn{det}(X - \rho(x)) = \prod_{i=1}^n (X - \chi_i(x))
        \]
        where $\chi_i$ denote the characters of Definition \ref{DefOfUnivGaloisChars} valued in $(\mathcal{T}_{\ide{m}_0}/J)^{\times}$.
        \item For each $x_1, \dots, x_n \in G_{\mbb{Q}_p}$, one has
        \[
        (\rho(x_1) - \chi_1(x_1)) \cdots (\rho(x_n) - \chi_n(x_n)) = 0 .
        \]
    \end{enumerate}
    Moreover, let $L[\epsilon]$ denote the ring of dual numbers and suppose that $f \colon \mathcal{T}_{\ide{m}_0}/J \to L[\epsilon]$ is a $\mbf{T}^{S,\opn{ord}}_{\GL_n}$-algebra morphism. Set $\rho_{L[\epsilon]} = f \circ \rho$. Suppose that the characters $f \circ \chi_i$ are pairwise distinct modulo $\epsilon$. Then
    \[
    \rho_{L[\epsilon]}|_{G_{\mbb{Q}_p}} \sim \left( \begin{array}{cccc} f \circ \chi_1 & * & * & * \\ & f \circ \chi_2 & * & * \\ & & \ddots & \vdots \\ & & & f \circ \chi_n \end{array} \right) .
    \]
\end{corollary}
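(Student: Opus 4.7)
The statement has two parts: the global construction of $\rho$ satisfying (1)–(3), and the refinement giving the upper-triangular form of $\rho_{L[\epsilon]}|_{G_{\mbb{Q}_p}}$. I would attack them in sequence.

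For the construction of $\rho$, I would assemble the finite-level data of Theorem \ref{Thm:LevelbcmLGcompat} by inverse limit. Absolute irreducibility of $\overline{\rho}_{\ide{m}_0}$ combined with Chenevier's unique lifting of pseudorepresentations \cite[Thm.\ 2.22]{ChenevierPadic} ensures the finite-level representations $\rho_{b,c,m}$ are mutually compatible under the transition maps on the Hecke algebras $\mbf{T}^{S,\opn{ord}}_{\opn{GL}_n}(b,c;m)_{\ide{m}_0}$. Define
\[
J \defeq \ker\Big(\mathcal{T}_{\ide{m}_0} \longrightarrow \varprojlim_{b,c,m}\mbf{T}^{S,\opn{ord}}_{\opn{GL}_n}(b,c;m)_{\ide{m}_0}/J_{b,c,m}\Big).
\]
The uniform bound on the nilpotence degree of $J_{b,c,m}$ (depending only on $n$) forces the same uniform bound on $J$, and the inverse limit $\rho \defeq \varprojlim \rho_{b,c,m}$ lands in $\opn{GL}_n(\mathcal{T}_{\ide{m}_0}/J)$. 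Properties (1), (2) and (3) pass to the limit because they are closed conditions on continuous characteristic polynomial and matrix data.

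For the refinement, set $\rho_\epsilon \defeq f \circ \rho$ and $\delta_i \defeq f \circ \chi_i$. Applying $f$ to (2) and (3) yields, for every $x, x_1, \dots, x_n \in G_{\mbb{Q}_p}$,
\[
\det(X - \rho_\epsilon(x)) = \prod_{i=1}^n(X - \delta_i(x)), \qquad \prod_{i=1}^n(\rho_\epsilon(x_i) - \delta_i(x_i)) = 0.
\]
I plan to invoke an $L[\epsilon]$-coefficient version of the structural lemma \cite[Lem.\ 6.2.11]{10author}: distinctness of the residual $\overline{\delta}_i$ lets one choose $g_1, \dots, g_n \in G_{\mbb{Q}_p}$ separating them, so that the matrices $\overline{e}_i \defeq \prod_{j\neq i}(\overline{\delta}_i(g_j) - \overline{\delta}_j(g_j))^{-1}(\overline{\rho}_\epsilon(g_j) - \overline{\delta}_j(g_j))$ are orthogonal idempotents cutting out the $\overline{\delta}_i$-eigenspaces of $\overline{\rho}_\epsilon$. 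Their natural lifts $e_i \in M_n(L[\epsilon])$, defined by the same formula without the reduction bars, are idempotent modulo $\epsilon^2 = 0$ by the second displayed relation, and yield an $L[\epsilon]$-module decomposition of $L[\epsilon]^{\oplus n}$. The nilpotence identity (3) applied for variable $x_i$, together with the prescribed ordering of the $\delta_j$'s, then refines this decomposition into a $G_{\mbb{Q}_p}$-stable complete flag whose $i$-th graded piece carries the action of $\delta_i$; conjugating into a basis compatible with this flag produces the desired upper-triangular form.

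The principal obstacle lies in the first stage: showing that the $\rho_{b,c,m}$ genuinely descend from a single representation valued in $\mathcal{T}_{\ide{m}_0}/J$, rather than only from a compatible system of continuous determinants, and that the inverse-limit ideal $J$ retains the uniform nilpotence bound. Absolute irreducibility of $\overline{\rho}_{\ide{m}_0}$ is what makes Chenevier's lift canonical and forces the finite-level representations to match, while the uniformity in $(b,c,m)$ of the nilpotence degree in Theorem \ref{Thm:LevelbcmLGcompat} is what makes the passage to the inverse limit yield a single nilpotent ideal with bounded index. The refinement in the second stage is then largely formal, the only subtlety being that the projectors constructed using the fixed separating elements $g_j$ actually produce a $G_{\mbb{Q}_p}$-stable flag — a direct consequence of (3) applied with arbitrary $G_{\mbb{Q}_p}$-elements in place of the $g_j$'s.
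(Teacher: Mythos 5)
The first stage of your proposal matches the paper's proof in spirit: both use the uniform nilpotence bound in Theorem \ref{Thm:LevelbcmLGcompat} and Chenevier's result \cite[Thm.\ 2.22]{ChenevierPadic} to glue the finite-level data, with absolute irreducibility of $\overline{\rho}_{\ide{m}_0}$ making the lift canonical. The paper phrases this via inverse limits of \emph{determinants} (using \cite[Ex.\ 2.32]{ChenevierPadic}) and lifts to a representation only at the end, whereas you assert compatibility of the finite-level representations directly and then take $\varprojlim \rho_{b,c,m}$. This is a harmless reorganisation; what justifies compatibility is exactly that the determinants agree and the representation is uniquely determined by absolute irreducibility, so you have the right ingredients.

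The second stage, however, has a genuine gap. You want to build the upper-triangular form from idempotents
$\overline{e}_i = \prod_{j\neq i}(\overline{\delta}_i(g_j) - \overline{\delta}_j(g_j))^{-1}(\overline{\rho}_\epsilon(g_j) - \overline{\delta}_j(g_j))$
and their lifts. There are two problems. First, the elements $\overline{\rho}_\epsilon(g_j)$ need not commute, so this Lagrange-interpolation product is not even order-independent, and there is no reason for either $\overline{e}_i$ or $e_i$ to be idempotent; the relation $\prod_i(\rho_\epsilon(x_i) - \delta_i(x_i)) = 0$ does not imply idempotency of these products, and your appeal to ``the second displayed relation'' does not close this. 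Second, and more fundamentally: even in the favourable case where $\overline{\rho}_\epsilon(g_j)$ commute, the idempotents project onto eigenspaces of specific group elements, and these eigenspaces are \emph{not} $G_{\mbb{Q}_p}$-stable when the extension classes $*_i$ in $\rho_{\pi}|_{G_{\mbb{Q}_p}}$ are non-split — which is the whole point of the corollary's application (Assumption \ref{NonSplitExtensionsAssumption}). A decomposition into eigenlines cannot be ``refined'' into a $G_{\mbb{Q}_p}$-stable flag; taking partial sums $e_1V\oplus\cdots\oplus e_iV$ gives $G_{\mbb{Q}_p}$-stable subspaces only if the representation is block-diagonal in that basis, i.e.\ if the extensions split. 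The paper instead constructs a filtration $0 = V_0 \subset V_1 \subset \cdots \subset V_n = V$ \emph{directly}: $V_i/V_{i-1}$ is defined as the largest submodule of $V/V_{i-1}$ on which $G_{\mbb{Q}_p}$ acts through $f\circ\chi_i$, the totality $V_n=V$ follows from property (3), and the rank statement is proved by showing $V/V_{n-1}$ is $\epsilon$-torsion-free via an explicit computation with elements $x_i$ making $f\chi_n(x_i) - f\chi_i(x_i)$ a unit, followed by descending induction. Your sketch needs to be replaced by this filtration argument.
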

\begin{proof}
    There exists a nilpotent ideal $J$ and continuous determinant $D$ valued in $\mathcal{T}_{\ide{m}_0}/J$ with properties (1)--(3) by \cite[Example 2.32]{ChenevierPadic} and Theorem \ref{Thm:LevelbcmLGcompat} (crucially using that the nilpotence degree of the ideal is independent of $b, c, m$). Note that $\mathcal{T}_{\ide{m}_0}/J$ is a local Henselian ring and this determinant reduces to $\opn{det}(\overline{\rho}_{\pi})$ modulo the maximal ideal of $\mathcal{T}_{\ide{m}_0}/J$. The determinant $D$ is Cayley--Hamilton (as it is built from Cayley--Hamilton determinants). We now obtain $\rho$ by \cite[Theorem 2.22(i)]{ChenevierPadic} and the fact that $\overline{\rho}_{\ide{m}_0}$ is assumed to be absolutely irreducible.

    For the last part, we follow \cite[Lemma 6.2.11]{10author}. Namely, let $V$ denote the finite free $L[\epsilon]$-module (of rank $n$) on which $\rho_{L[\epsilon]}$ acts. Define, iteratively, $W_i \subset V/W_{i-1}$ to be the largest submodule for which $\rho_{L[\epsilon]}$ acts through the character $f \circ \chi_i$ (with $W_0 = 0$). Let $V_i \subset V$ denote the preimage of $W_i$ under the map $V \to V/V_{i-1}$. Note that by property (3), we have $V_n = V$. We need to show that $V_i$ is free of rank $i$.

    Consider $V/V_{n-1}$. By property (3), we already know that $G_{\mbb{Q}_p}$ acts through $f \circ \chi_n$ on $V/V_{n-1}$. Furthermore, by \cite[Lemma 6.2.11]{10author} (applied to the field $L = L[\epsilon]/\epsilon$), we  must have
    \[
    \opn{dim}_L(V/V_{n-1})/\epsilon = 1.
    \] 
   By Nakayama's lemma, we see that $V/V_{n-1}$ is cyclic over $L[\epsilon]$. We claim it is free of rank one; to see this, it suffices to show it is $\epsilon$-torsion free.

    Suppose there exists $v \in V$ which is non-zero in $V/V_{n-1}$ and satisfies $\epsilon v \in V_{n-1}$. By assumption, we can choose $x_i \in G_{\mbb{Q}_p}$ such that $f\chi_n(x_i) - f\chi_i(x_i) \in L[\epsilon]$ is invertible for all $i=1, \dots, n-1$. Then
    \[
    0 = \prod_{i=1}^{n-1}(\rho_{L[\epsilon]}(x_i) - f\chi_i(x_i)) \epsilon v = \epsilon \prod_{i=1}^{n-1}(f\chi_n(x_i) - f\chi_i(x_i)) v + \epsilon w
    \]
    for some $w \in V_{n-1}$ (because $\rho_{L[\epsilon]}(x_i) - f\chi_n(x_i)$ kills $V/V_{n-1}$). Since $V$ is $\epsilon$-torsion free, we see that
    \[
    v = -\prod_{i=1}^{n-1}(f\chi_n(x_i) - f\chi_i(x_i))^{-1} w \; \in \; V_{n-1},
    \]
    which is a contradiction. Hence $V/V_{n-1}$ is $\epsilon$-torsion free, and hence free of rank one over $L[\epsilon]$. This implies $V_{n-1}$ is free of rank $n-1$ over $L[\epsilon]$, and we conclude by an inductive argument.
\end{proof}
 
\subsection{Proof of Theorem \ref{TriangulationInFamilies}} \label{SubSub:ProofOfTriangThm}

We now specialise to $n = 3$ and use notation from \S \ref{SubSec:FamiliesDefect1} freely. In particular $\Sigma \subset \Omega \subset \cW$ are affinoids in weight space. We recall that $\pi$ is a regular algebraic, cuspidal automorphic representation of $\opn{GL}_3(\mbb{A})$ which has trivial cohomological weight and is Steinberg at $p$. In particular, $\pi_p$ is ordinary. Recall that Theorem \ref{TriangulationInFamilies} asks for a triangulation of the Galois representation in an infinitesimal neighbourhood of $\pi$ in the eigenvariety constructed with overconvergent cohomology; whilst Corollary \ref{Cor:MainTriangulation} gives such a triangulation for the eigenvariety constructed using completed cohomology. To prove Theorem \ref{TriangulationInFamilies}, we essentially need to compare these eigenvarieties (around ordinary points); we do this below. A general comparison around finite-slope points will appear in forthcoming work of Johansson, McDonald and Tarrach.

Let $\ide{m}_0 \subset \mbf{T}^{S, \opn{ord}}_{\opn{GL}_3}$ denote the mod $\varpi$ Hecke eigensystem associated with $\pi$, and note that $\ide{m}_0$ is in the support of $\mbf{T}^{S, \opn{ord}}_{\opn{GL}_3}(0, 1; m)$ for some $m \geq 1$. We are assuming that $\overline{\rho}_{\ide{m}_0} = \overline{\rho}_{\pi}$ is absolutely irreducible and decomposed generic. Recall the characters $\underline{\alpha}^S, \underline{\alpha}_p$ from \S\ref{SubSec:FamiliesDefect1}, describing families of eigenvalues appearing in overconvergent cohomology. We obtain an $\mathcal{O}$-algebra homomorphism 
\[
\mbf{T}^{S, \opn{ord}}_{\opn{GL}_3} \xrightarrow{\underline{\alpha}^S \otimes \underline{\delta}_p} \mathcal{O}(\Sigma) \to L[\epsilon].
\] 
Here the second map is induced from $T_1 \mapsto v_1 \epsilon$ and $T_2 \mapsto v_2 \epsilon$, and $\underline{\delta}_p \colon T^+ \to \mathcal{O}(\Sigma)^{\times}$ denotes the character 
\[
\underline{\delta}_p(t) = \underline{\alpha}_p(t) \kappa_{\Sigma}([t])
\]
with $\kappa_{\Sigma}$ equal to the universal character of $\Sigma$ and $[t] \in T(\mbb{Z}_p)$ denoting the image of $t$ under the natural projection map $T(\mbb{Q}_p) \twoheadrightarrow T(\mbb{Z}_p)$. To prove Theorem \ref{TriangulationInFamilies}, it suffices to show that this map factors through the natural map $\mbf{T}^{S, \opn{ord}}_{\opn{GL}_3} \to \mathcal{T}_{\ide{m}_0}^{\opn{red}}$ (where $\mathcal{T}_{\ide{m}_0}^{\opn{red}}$ is the maximal reduced quotient of $\mathcal{T}_{\ide{m}_0}$). We can then apply Corollary \ref{Cor:MainTriangulation}. Since $\mathcal{O}(\Sigma)$ is reduced, it suffices to show that $\underline{\alpha}^S \otimes \underline{\delta}_p$ factors through $\mbf{T}^{S, \opn{ord}}_{\opn{GL}_3} \to \mathcal{T}_{\ide{m}_0} = (\varprojlim_{b, c, m}\mbf{T}^{S, \opn{ord}}_{\opn{GL}_3}(b, c; m))_{\ide{m}_0}$.

At the expense of shrinking $\Omega$, we may assume that $\Omega \subset \Omega^{\circ} \subset \Omega_0$, where $\Omega_0$ is a (sufficiently small) open affinoid neighbourhood of the trivial weight in $\mathcal{W}$ and $\Omega^{\circ}$ is a wide open disc, i.e., $\Omega^{\circ}$ is the adic generic fibre of $\opn{Spf}\mathcal{O}[\![T_1/p^{\rho}, T_2/p^{\rho}]\!]$ for some $\rho \in \mbb{Q}_{>0}$. 

Let $A_{\Omega^{\circ}}^{\circ,r\opn{-an}}$ be the space of functions $f: \opn{Iw} \to \mathcal{O}^+(\Omega^{\circ})$ that are $r$-analytic in the variables corresponding to $\opn{Iw}$, and satisfy $f(b \cdot -) = \kappa_{\Omega^{\circ}}(b) \cdot f(-)$ for all $b \in \overline{B}(\mbb{Q}_p) \cap \opn{Iw}$. By restriction, we can identify $A_{\Omega^\circ}^{\circ,r\opn{-an}}$ with the space of $r$-analytic functions $N(\Zp) \cong \Zp^3 \to \cO^+(\Omega^\circ)$. Let $D_{\Omega^{\circ}}^{\circ,r\opn{-an}}$ denote the $\mathcal{O}^+(\Omega^{\circ})$-linear dual of $A_{\Omega^{\circ}}^{\circ,r\opn{-an}}$. The module $D_{\Omega^{\circ}}^{\circ,r\opn{-an}}$ carries actions of $(\opn{Iw}, T^+)$ via the same formulae as in \S \ref{SubSec:FamiliesDefect1}.

The representation $D_{\Omega^{\circ}}^{\circ,r\opn{-an}}$ is profinite, and has a $(\opn{Iw}, T^+)$-stable filtration $\opn{Fil}^i D_{\Omega^{\circ}}^{\circ,r\opn{-an}}$ satisfying (see \cite[\S 2]{Han17} and \cite[\S 5.1]{JoNewExt}): 
\begin{itemize}
    \item $\opn{Fil}^0 D_{\Omega^{\circ}}^{\circ,r\opn{-an}} = D_{\Omega^{\circ}}^{\circ,r\opn{-an}}$ and $\bigcap_i \opn{Fil}^i D_{\Omega^{\circ}}^{\circ,r\opn{-an}} = \{0\}$;
    \item $D_{\Omega^{\circ}}^{\circ,r\opn{-an}} = \varprojlim_i D_{\Omega^{\circ}}^{\circ,r\opn{-an}}/\opn{Fil}^i$;
    \item Each $D_{\Omega^{\circ}}^{\circ,r\opn{-an}}/\opn{Fil}^i$ is finite, and the action of $\opn{Iw}_{i+r} \defeq \opn{ker}(\opn{Iw} \to \opn{GL}_3(\mbb{Z}/p^{i+r} \mbb{Z}))$ on this module is trivial.
\end{itemize}
Let $F_{\Omega^{\circ}} \colon B(\mbb{Z}_p) \to T(\mbb{Z}_p) \xrightarrow{\kappa_{\Omega^{\circ}}} \cO^+(\Omega^{\circ})$ denote the natural character, which (for sufficiently large $r$) we view as an element of $A_{\Omega^{\circ}}^{\circ,r\opn{-an}}$ in the natural way. Note that $F_{\Omega^\circ}$ corresponds to the constant function $1$ on $N(\Zp)$. We have an $\mathcal{O}^+(\Omega^{\circ})$-linear map
\[
\Lambda \colon D_{\Omega^{\circ}}^{\circ,r\opn{-an}} \to \mathcal{O}^+(\Omega^{\circ})(\kappa_{\Omega^{\circ}}^{-1})
\]
given by evaluating at $F_{\Omega^{\circ}}$. This map is equivariant for the action of $B(\mbb{Z}_p)$ as well as the action of $T^+$ (where $T^+$ acts via the projection $T^+ \twoheadrightarrow T(\mbb{Z}_p)$ on the right-hand side). Let $\Lambda_m \colon D_{\Omega^{\circ}}^{\circ,r\opn{-an}}/\opn{Fil}^m \to (\mathcal{O}^+(\Omega^{\circ})/\ide{a}^m)(\kappa_{\Omega^{\circ}}^{-1})$ denote the induced map, where $\ide{a}$ denotes the maximal ideal of $\mathcal{O}^+(\Omega^{\circ})$. We have the following lemma:

\begin{lemma}
\begin{itemize}
\item[(i)]    Let $m \geq 1$ and $c \geq b \geq m+r$. The map $\Lambda_m$ induces a $\mbf{T}_{\opn{GL}_3}^{S, \opn{ord}}$-equivariant quasi-isomorphism
    \begin{align}
    R\Gamma(X_{\opn{GL}_3, K^p \opn{Iw}}, &D_{\Omega^{\circ}}^{\circ,r\opn{-an}}/\opn{Fil}^m)^{\opn{ord}} \cong R\Gamma(X_{\opn{GL}_3, K^p \opn{Iw}(0,c)}, D_{\Omega^{\circ}}^{\circ,r\opn{-an}}/\opn{Fil}^m)^{\opn{ord}} \notag\\ 
    &\xrightarrow{\sim} R\Gamma(X_{\opn{GL}_3, K^p \opn{Iw}(0, c)}, (\mathcal{O}^+(\Omega^{\circ})/\ide{a}^m)(\kappa_{\Omega^{\circ}}^{-1}))^{\opn{ord}} \label{eq:lambda_m}\\ 
    &\cong R\Gamma(T(\mbb{Z}_p)/T(b), R\Gamma(X_{\opn{GL}_3, K^p \opn{Iw}(b, c)}, \mathcal{O}/\varpi^m)^{\opn{ord}} \otimes_{\mathcal{O}/\varpi^m} (\mathcal{O}^+(\Omega^{\circ})/\ide{a}^m)(\kappa_{\Omega^{\circ}}^{-1})) .\notag
    \end{align}
   \item[(ii)] The map 
    \[
    \mbf{T}_{\opn{GL}_3}^{S, \opn{ord}} \to \mbf{T}_{\opn{GL}_3}^{\opn{S}, \opn{ord}}(R\Gamma(X_{\opn{GL}_3, K^p \opn{Iw}}, D_{\Omega^{\circ}}^{\circ,r\opn{-an}}/\opn{Fil}^m)^{\opn{ord}})
    \]
    which is the natural one on $\mbf{T}_{\GL_3}^S$ and given by $t \mapsto U_t \cdot \kappa_{\Omega^{\circ}}([t])$ for $t \in T^+$, factors through 
    \[
    \mbf{T}^{S, \opn{ord}}_{\opn{GL}_3}(b, c; m) =  \mbf{T}_{\opn{GL}_3}^{S, \opn{ord}}(R\Gamma(X_{\opn{GL}_3, K^p \opn{Iw}(b, c)}, \mathcal{O}/\varpi^m)^{\opn{ord}})
    \]
    compatibly with the change of $b, c, m$ maps.
\end{itemize}
\end{lemma}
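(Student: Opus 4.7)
The lemma packages three quasi-isomorphisms and a factorization claim; the latter follows formally from chasing Hecke-equivariance through the chain, so the main task is to prove the three isomorphisms in sequence.

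The first isomorphism rests on the triviality of the $\opn{Iw}_{m+r}$-action on the coefficient module $D_{\Omega^{\circ}}^{\circ,r\opn{-an}}/\opn{Fil}^m$, which ensures that this module descends to the finite quotient $\opn{Iw}/\opn{Iw}_{m+r}$. The natural restriction/transfer maps between level $K^p\opn{Iw}$ and the deeper level $K^p\opn{Iw}(0, m+r)$ give rise to a Hochschild--Serre-type spectral sequence; the group order is a power of $p$, but on ordinary parts the spectral sequence collapses, since for any strictly dominant $t \in T^+$ the composition of restriction and trace can be computed via $U_t$-type Hecke operators, which are invertible on ordinary parts. An analogous argument yields the third isomorphism: since $c \geq b \geq m+r$, we have $\opn{Iw}(b, c) \subset \opn{Iw}(0, m+r)$ with quotient surjecting onto $T(\mbb{Z}_p)/T(b)$, and the character coefficient is pulled back from this torus quotient, so Hochschild--Serre for the inclusion (combined once again with the invertibility of $U_t$ on the ordinary part to kill the unipotent contribution) produces the claimed identification.

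The second, and most substantive, isomorphism is the heart of the argument, and I expect this to be the main obstacle. The approach is to analyze the cokernel of $\Lambda_m$ via the natural $B(\mbb{Z}_p)$-stable filtration on $A_{\Omega^{\circ}}^{\circ,r\opn{-an}}$ by order of vanishing along the open cell of $\opn{Iw}$. Dualizing yields a decreasing $(B(\mbb{Z}_p), T^+)$-stable filtration on $D_{\Omega^{\circ}}^{\circ, r\opn{-an}}/\opn{Fil}^m$ whose top graded piece is exactly $(\mathcal{O}^+(\Omega^{\circ})/\ide{a}^m)(\kappa_{\Omega^{\circ}}^{-1})$, and whose lower pieces are (finitely many, modulo $\ide{a}^m$) characters of $T(\mbb{Z}_p)$ of the form $\kappa_{\Omega^{\circ}}^{-1} \cdot \alpha^{-k_\alpha}$ for $\alpha$ running over positive roots of $\opn{GL}_3$ and $k_\alpha \geq 1$. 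For any strictly dominant $t \in T^+$, the $U_t$-eigenvalue on each nontrivial graded piece has strictly greater $p$-adic valuation than on the top piece, so each lower piece is annihilated by the ordinary projector. The careful execution of this argument---correctly managing the interaction between the two filtrations $\opn{Fil}^\bullet$ and the polynomial-degree filtration, while working modulo $\varpi^m$---is the technical crux. Granted all three isomorphisms, the factorization follows immediately: $U_t$ on the LHS corresponds to $U_t \cdot \kappa_{\Omega^{\circ}}([t])$ on the RHS, which by definition factors through $\mathcal{T}_{\ide{m}_0}$, and compatibility in $(b, c, m)$ is functoriality.
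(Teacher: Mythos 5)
Your proposal follows essentially the same route as the paper: the first and third isomorphisms are treated by a Hochschild--Serre argument together with the invertibility of $U_t$ on ordinary parts (the paper cites \cite[Lemma 5.2.9]{10author} and independence of the ordinary part from $c$), while the heart of the matter is the second isomorphism, where both you and the paper argue that a strictly dominant $t$ shifts the kernel of $\Lambda_m$ into strictly higher $p$-adic valuation, so the ordinary part of that kernel vanishes; the paper states this more tersely as $t\cdot\ker\Lambda_m\subset p\cdot\ker\Lambda_m$ for $t=\opn{diag}(p^2,p,1)$, and your filtration argument is one way of establishing exactly that containment. One small correction: you speak of analyzing the \emph{cokernel} of $\Lambda_m$, but $\Lambda_m$ (evaluation at the highest-weight function $F_{\Omega^\circ}$) is surjective, so it is the \emph{kernel} that must vanish on ordinary parts --- and indeed the body of your argument is about the kernel, since your top graded piece maps isomorphically to the target and the lower graded pieces assemble to $\ker\Lambda_m$. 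A minor inaccuracy: the lower graded pieces are indexed by tuples $(k_\alpha)_{\alpha>0}$ of nonnegative integers (not individually by single positive roots), but this does not affect the conclusion that every nontrivial piece has weight strictly below $\kappa_{\Omega^\circ}^{-1}$ and so is killed by the ordinary projector.
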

\begin{proof}
Part (ii) is a direct consequence of the Hecke-equivariance in (i). To see (i), first we explain why the claimed map exists. The first quasi-isomorphism is standard, using the fact that the ordinary part at level $\opn{Iw}(b, c)$ is independent of $c$ . The morphism $\Lambda_m$ is equivariant for the action of $(\opn{Iw}(0, c), T^+)$, so the next map on cohomology complexes is well-defined. The third isomorphism is proved by first applying \cite[Lemma 5.2.9]{10author} to pass from the cohomology of $X_{\GL_3,K^p\opn{Iw}(0,c)}$ to that of $T(\Zp)$; then exploiting $R\Gamma(T(\Zp),-) = R\Gamma(T(\Zp)/T(b), R\Gamma(T(b),-))$; then applying \cite[Lemma 5.2.9]{10author} again to replace $R\Gamma(T(b),-)$ with the cohomology of $X_{\GL_3,K^p\opn{Iw}(b,c)}$, pulling out $(\cO^+(\Omega^\circ)/\mathfrak{a}^m)(\kappa_{\Omega^\circ}^{-1})$ (upon which $\opn{Iw}(b,c)$ acts trivially). 
    
    It remains to prove that the second map \eqref{eq:lambda_m} is a quasi-isomorphism. We use the following:

    \begin{claim}
    Let  $t=\opn{diag}(p^2, p, 1) \in T^+$. We have $t^{r+1}\cdot \opn{ker}(\Lambda_m) \subset p\opn{ker}(\Lambda_m)$.
    \end{claim}

    Given the claim, the $U_p$-operator acts on the cohomology with coefficients in $\opn{ker}\Lambda_m$ with strictly positive slope, and hence its ordinary part vanishes. The map \eqref{eq:lambda_m} is thus a quasi-isomorphism from the long exact sequence attached to $0 \to \ker(\Lambda_m) \to D_{\Omega^\circ}^{\circ,r\opn{-an}}/\opn{Fil}^m \to \cO^+(\Omega^\circ)/\mathfrak{a}^m \to 0$.

    \medskip

    \emph{Proof of claim:} If $f \in A^{\circ,r\opn{-an}}_{\Omega^\circ}$, then it is standard that $t^{-r} \cdot f \in A^{\circ, 0\opn{-an}}_{\Omega^\circ}$ is analytic (since $t^rN(\Zp)t^{-r} \subset N(p^r\Zp)$, upon which $f$ is analytic).  In particular, we can identify $t^{-r} \cdot f$ uniquely with an element of $\cO^+(\Omega^\circ)\langle X_1,X_2,X_3 \rangle$ under the identification $N(\Zp) \cong \Zp^3$ (with $X_1$ corresponding to the $(1,2)$ matrix entry, $X_2$ to the $(1,3)$ entry, and $X_3$ to the $(2,3)$ entry). Write $t^{-r} \cdot f = a_0 + g$, where $g(X_1,X_2,X_3) \defeq \sum_{\mathbf{i} \in \N^3\backslash \{(0,0,0)\}} a_{\mathbf{i}} X_1^{i_1}X_2^{i_2}X_3^{i_3}$. Note that 
    \[
        (t^{-r-1}\cdot f)(X_1,X_2,X_3) = a_0 + g(pX_1,p^2X_2,pX_3).
    \]
    In particular, as $g$ has no constant term, we can write $t^{-r-1}\cdot f = a_0 + pg'$, for $g' \in \cO^+(\Omega^\circ)\langle X_1,X_2,X_3\rangle$ with $g'(0,0,0) = 0$. Note the association $f \mapsto g'$ is $\cO^+(\Omega^\circ)$-linear.

    Now let $[\mu] \in \ker(\Lambda_m)$, represented by some $\mu$ in the dual of $r$-analytic functions on $N(\Zp$); then $\mu(a_0) = a_0 \mu(F_{\Omega^\circ}) \equiv 0 \newmod{\mathfrak{a}^m}$ for any constant function $a_0 \in \cO^+(\Omega^\circ)$ (recalling $F_{\Omega^\circ}$ corresponds to the constant function $1$). Define $\lambda \in D^{\circ,r\opn{-an}}_{\Omega^\circ}$ by $\lambda(f) = \mu(g')$, for notation as above. If $f = F_{\Omega^\circ}$, then $g' = 0$; so $\lambda(F_{\Omega^\circ}) = \mu(0) = 0$, and hence $[\lambda] \in \ker(\Lambda_m)$. We find
    \[
        (t^{r+1}\cdot\mu)(f) = \mu(t^{-r-1}\cdot f) = \mu(a_0 + pg') \equiv p\lambda(f) \newmod{\mathfrak{a}^m}.
    \]
    We see $t^{-r-1}\cdot [\mu] = [t^{-r-1}\cdot \mu] = p[\lambda] \in p \ker(\Lambda_m)$, as required.
\end{proof}

Let $\ide{n}_0 \subset \mbf{T}^{S, \opn{ord}}_{\opn{GL}_3, \mathcal{O}^+(\Omega^{\circ})}$ denote the maximal ideal given by the preimage of $\ide{m}_0$ under the map $\mbf{T}^{S, \opn{ord}}_{\opn{GL}_3, \mathcal{O}^+(\Omega^{\circ})} \to \mbf{T}^{S, \opn{ord}}_{\opn{GL}_3, \mathcal{O}}$ given by reduction modulo $(T_1/p^{\rho}, T_2/\rho^{\rho})$. Since $\ide{m}_0$ is non-Eisenstein, we also have 
\[
\mbf{T}_{\opn{GL}_3}^{\opn{S}, \opn{ord}}(R\Gamma(X_{\opn{GL}_3, K^p \opn{Iw}}, D_{\Omega^{\circ}}^{\circ,r\opn{-an}}/\opn{Fil}^m)_{\ide{n}_0}^{\opn{ord}}) = \mbf{T}_{\opn{GL}_3}^{\opn{S}, \opn{ord}}(R\Gamma_c(X_{\opn{GL}_3, K^p \opn{Iw}}, D_{\Omega^{\circ}}^{\circ,r\opn{-an}}/\opn{Fil}^m)_{\ide{n}_0}^{\opn{ord}}) .
\]
Passing to the limit, and using the fact that $R\Gamma_c(X_{\opn{GL}_3, K^p \opn{Iw}}, D_{\Omega^{\circ}}^{\circ,r\opn{-an}})_{\ide{n}_0}^{\opn{ord}}$ is a perfect complex in the derived category of $\mathcal{O}^+(\Omega^{\circ})$-modules, we see that the map
\begin{align*}
\mbf{T}_{\opn{GL}_3}^{S, \opn{ord}} &\to \varprojlim_m \mbf{T}_{\opn{GL}_3}^{S, \opn{ord}}(R\Gamma_c(X_{\opn{GL}_3, K^p \opn{Iw}}, D_{\Omega^{\circ}}^{\circ,r\opn{-an}}/\opn{Fil}^m)_{\ide{n}_0}^{\opn{ord}}) \\ &= \mbf{T}_{\opn{GL}_3}^{S, \opn{ord}}(R\Gamma_c(X_{\opn{GL}_3, K^p \opn{Iw}}, D_{\Omega^{\circ}}^{\circ,r\opn{-an}})_{\ide{n}_0}^{\opn{ord}}) \\ &\to \mbf{T}_{\opn{GL}_3}^{S, \opn{ord}}(\opn{H}^*_c(X_{\opn{GL}_3, K^p \opn{Iw}}, D_{\Omega^{\circ}}^{\circ,r\opn{-an}})_{\ide{n}_0}^{\opn{ord}})
\end{align*}
satisfying $t \mapsto U_t \cdot \kappa_{\Omega^{\circ}}([t])$ for $t \in T^+$, factors through $\mathcal{T}_{\ide{m}_0}$ (note that $\kappa_{\Omega^{\circ}}$ is trivial modulo $(T_1/p^{\rho}, T_2/p^{\rho})$). 

To conclude the proof of Theorem \ref{TriangulationInFamilies}, we now note that
\[
\opn{H}_c^*(X_{\opn{GL}_3, K^p \opn{Iw}}, D_{\Omega^{\circ}}^{\circ,r\opn{-an}})^{\opn{ord}} \otimes_{\mathcal{O}^+(\Omega^{\circ})} \mathcal{O}(\Omega) \cong \opn{H}_c^*(X_{\opn{GL}_3, K^p \opn{Iw}}, D_{\Omega}^{r\opn{-an}})^{\opn{ord}}
\]
Hecke-equivariantly, and we have a natural map 
\begin{equation} \label{Eqn:OrdEtoIrredComp}
\mbf{T}^{S, \opn{ord}}_{\opn{GL}_3}(\opn{H}_c^*(X_{\opn{GL}_3, K^p \opn{Iw}}, D_{\Omega}^{r\opn{-an}})^{\opn{ord}}) \to \mathcal{O}(\Sigma)
\end{equation}
given by identifying $\Sigma$ as an irreducible component of $\mathscr{E} \times_{\mathcal{W}} \Omega$ passing through the point $x_{\pi}$. Here $\mathscr{E} \to \mathcal{W}$ is the $\opn{GL}(3)$-eigenvariety in \cite{Han17} constructed from (compactly-supported) overconvergent cohomology, and $x_{\pi} \in \mathscr{E}$ is the point corresponding to the unique $p$-stabilisation in $\pi$. By design, the map \eqref{Eqn:OrdEtoIrredComp} factors $\underline{\alpha}^S \otimes \underline{\alpha}_p$, and the induced map $\mbf{T}^{S, \opn{ord}}_{\opn{GL}_3}(\opn{H}_c^*(X_{\opn{GL}_3, K^p \opn{Iw}}, D_{\Omega^{\circ}}^{\circ,r\opn{-an}})^{\opn{ord}}) \to \mathcal{O}(\Sigma)$ factors through the localisation at $\ide{n}_0$ (because the specialisation of $\underline{\alpha}^S \otimes \underline{\alpha}_p$ at the trivial weight is the Hecke eigensystem associated with $\pi$).

\begin{remark}
  We have specialised to $\GL_{3}$ for our intended application; but following this method, one can also obtain results about triangulations in families for ordinary points on the $\GL_n$-eigenvariety, for general $n \geq 2$.
\end{remark}

\newcommand{\etalchar}[1]{$^{#1}$}
\renewcommand{\MR}[1]{}
\providecommand{\bysame}{\leavevmode\hbox to3em{\hrulefill}\thinspace}
\providecommand{\MR}{\relax\ifhmode\unskip\space\fi MR }
% \MRhref is called by the amsart/book/proc definition of \MR.
\providecommand{\MRhref}[2]{%
  \href{http://www.ams.org/mathscinet-getitem?mr=#1}{#2}
}
\providecommand{\href}[2]{#2}

\Addresses

\end{document}